\theoremstyle{plain}
\newtheorem{thm}{Theorem}[section]
\newtheorem{prop}[thm]{Proposition}
\newtheorem{lem}[thm]{Lemma}
\newtheorem{cor}[thm]{Corollary}
\newtheorem{conj}{Conjecture}
\theoremstyle{definition}
\newtheorem{defn}{Definition}
\theoremstyle{remark}
\newtheorem{remark}{Remark}
\newtheorem{example}{Example}
\newtheorem{exercise}{Exercise}
\newtheorem{question}{Question}
\newtheorem{problem}{Problem}
\newtheorem{notation}{Notation}
  \def\C{{\mathbb{C}}}  \def\E{{\mathbb{E}}} \def\F{{\mathbb{F}}}     \def\K{{\mathbb{K}}}   \def\N{{\mathbb{N}}}  \def\P{{\mathbb{P}}}  \def\R{{\mathbb{R}}}  \def\T{{\mathbb{T}}}      \def\Z{{\mathbb{Z}}}
                         \def\bZ{{\bar{Z}}}
   \def\bd{{\bar{d}}}            \def\bp{{\bar{p}}}          
                       \def\bfX{{\bf{X}}} \def\bfY{{\bf{Y}}} \def\bfZ{{\bf{Z}}}
                       \def\bfx{{\bf{x}}} \def\bfy{{\bf{y}}} 
\def\cA{{\mathcal{A}}} \def\cB{{\mathcal{B}}} \def\cC{{\mathcal{C}}}   \def\cF{{\mathcal{F}}}  \def\cH{{\mathcal{H}}} \def\cI{{\mathcal{I}}}   \def\cL{{\mathcal{L}}}  \def\cN{{\mathcal{N}}} \def\cO{{\mathcal{O}}} \def\cP{{\mathcal{P}}} \def\cQ{{\mathcal{Q}}} \def\cR{{\mathcal{R}}} \def\cS{{\mathcal{S}}}  \def\cU{{\mathcal{U}}} \def\cV{{\mathcal{V}}}  \def\cX{{\mathcal{X}}} \def\cY{{\mathcal{Y}}} \def\cZ{{\mathcal{Z}}}
                     \def\sV{{\mathscr{V}}}    
                   \def\tT{{\tilde{T}}}    \def\tX{{\tilde{X}}}  
     \def\tf{{\tilde{f}}}                    
 \def\tmu{{\widetilde{\mu}}}    \def\tphi{{\widetilde{\phi}}}         
\newcommand{\G}{\Gamma}
\newcommand{\Ga}{\Gamma}
\newcommand{\La}{\Lambda}
\newcommand{\Si}{\Sigma}
\newcommand{\eps}{\epsilon}
\renewcommand\a{\alpha}
\renewcommand\b{\beta}
\renewcommand\d{\delta}
\newcommand\g{\gamma}
\renewcommand\k{\kappa}
\renewcommand\l{\lambda}
\newcommand\s{\sigma}
\newcommand\Aut{\operatorname{Aut}}
\newcommand\Cay{\operatorname{Cay}}
\newcommand\Fix{{\operatorname{Fix}}}
\newcommand\Hom{\operatorname{Hom}}
\newcommand\Law{\operatorname{Law}}
\newcommand\Map{{\operatorname{Map}}}
\newcommand\Part{\operatorname{Part}}
\newcommand\Prob{\operatorname{Prob}}
\newcommand\sym{\operatorname{Sym}}
\newcommand\Stab{\operatorname{Stab}}
\newcommand\Span{\operatorname{span}}
\def\cc{{\curvearrowright}}
\newcommand{\resto}{\upharpoonright}
\begin{document}
\title{Examples in the entropy theory of countable group actions}
\author{Lewis Bowen\footnote{supported in part by NSF grant DMS-0968762, NSF CAREER Award DMS-0954606 and BSF grant 2008274} \\ University of Texas at Austin}
%\author{University of Hawaii}
%\author[Lewis Bowen]{Lewis Bowen$\dagger$}
%\address{Department of Mathematics\\
%University of Hawai'i--Manoa\\
%} %one \address command per author
%\email{lpbowen@math.hawaii.edu}
%%\thanks{$\dagger$ Supported in part by NSF grants DMS-??.}
\maketitle

\begin{abstract}

Kolmogorov-Sinai entropy is an invariant of measure-preserving actions of the group of integers that is central to classification theory. There are two recently developed invariants, sofic entropy and Rokhlin entropy, that generalize classical entropy to actions of countable groups. These new theories have counterintuitive properties such as factor maps that increase entropy. This survey article focusses on examples, many of which have not appeared before, that highlight the differences and similarities with classical theory. 
\end{abstract}

\noindent
{\bf Keywords}: sofic group, entropy, Ornstein theory, Benjamini-Schramm convergence\\
{\bf MSC}:37A35\\

\noindent
\tableofcontents

\section{Introduction}

Subsections \ref{sec:classical}-\ref{sec:OW-intro} of this introduction set notation, give a brief review of classical entropy theory and motivate entropy theory for actions of general countable groups. Subsections \ref{sec:f-intro}- \ref{sec:intro-rokhlin} provide an intuitive approach to the $f$-invariant, sofic groups, sofic and Rokhlin entropy. The last subsection \ref{sec:what} summarizes the contents of this article.

\subsection{Classical entropy theory}\label{sec:classical}

To set notation, let $(X,\mu),(Y,\nu)$ denote standard probability spaces. An {\bf automorphism} of $(X,\mu)$ is a measurable map $T:X \to X$ with measurable inverse that preserves the measure $\mu$. Two such maps $T:X \to X$, $S:Y \to Y$ are {\bf measurably conjugate} or {\bf isomorphic} if there exists a measure-space isomorphism $\Phi:X \to Y$ such that $\Phi \circ T = S \circ \Phi$ almost everywhere. The main motivating problem of this article is to classify automorphisms (and more generally, group actions) up to measure-conjugacy.

A special type of automorphism, called a Bernoulli shift, plays a central role. To define it, let $K$ denote a standard Borel space and $K^\Z$ the infinite direct power. An element $x\in K^\Z$ is a sequence $x=\{x_n\}_{n\in \Z}$ with values $x_n \in K$.  Let $\sigma:K^\Z \to K^\Z$ denote the {\bf shift map} defined by $\sigma(x)_n = x_{n-1}$. If $\kappa$ is a probability measure on $K$ then the shift map preserves the product measure $\kappa^\Z$. The triple $(\sigma,K^\Z,\kappa^\Z)$ is called the {\bf Bernoulli shift over the integers with base space $(K,\kappa)$}. In the early days of ergodic theory, von Neumann asked for a classification of these Bernoulli shifts. At the time, it was known that all Bernoulli shifts are spectrally isomorphic (that is, the induced operators on $L^2(K^\Z,\kappa^\Z)$ are unitarily isomorphic). However not a single pair of Bernoulli shifts was known to be non-isomorphic and there were no nontrivial tools for proving isomorphism.

Motivated by this problem, Kolmogorov introduced dynamical entropy theory  \cite{kolmogorov-1958, kolmogorov-1959}. Here is an intuitive explanation: suppose that a system is under observation. At each unit of time, a measurement is made and recorded. The measuring device can only take on a finite number of distinct values. The entropy of the system is the amount of new information gained per unit time, on average and in the long run.

In this interpretation, $(X,\mu)$ represents all possible states of the system and $T:X \to X$ represents time evolution. The measuring device is represented by a measurable partition $\cP$ of $X$. The {\bf Shannon entropy} of $\cP$ is defined by
$$H_\mu(\cP) := \sum_{P\in \cP} \mu(P) I_\mu(P) = - \sum_{P\in \cP} \mu(P) \log(\mu(P)).$$
To motivate the above, suppose $x\in X$ is random. The amount of information gained by learning which part $P$ of $\cP$ contains $x$ is defined by $I_\mu(P):=-\log \mu(P)$. So $H_\mu(\cP)$ is the average amount of information gained from learning which part of $\cP$ contains $x$. (The definition of $I_\mu(P)$ is chosen so that if $P,Q \subset X$ are independent events then $I_\mu(P\cap Q)=I_\mu(P)+I_\mu(Q)$). 

The coarsest common refinement of two partitions $\cP,\cQ$ is denoted $\cP \vee \cQ$. The {\bf entropy rate of $T$ with respect to $\cP$} is
$$h_\mu(T,\cP) := \lim_{n\to\infty} \frac{1}{n} H_\mu\left( \bigvee_{i=0}^n T^{-i}\cP \right).$$
This is average quantity of information gained per unit time (represented by $T$) when observing the itinerary of a $\mu$-random point $x$ through the partition $\cP$. 

The {\bf entropy rate of $T$} is defined by
$$h_\mu(T):=\sup_\cP h_\mu(T,\cP).$$
A partition $\cP$ is {\bf generating} if the smallest sigma-algebra containing $T^{-n}\cP$ for all $n\in \Z$ is the sigma-algebra of all measurable sets, up to sets of measure zero. Kolmogorov proved the crucially important result that if $\cP$ is any generating partition with finite Shannon entropy then the entropy rate of $T$ is $h_\mu(T,\cP)$. Therefore, to compute entropy of $T$ one can choose any convenient generating partition. In the special case of the Bernoulli shift with base $(K,\kappa)$, if $K$ is countable then the {\bf time $0$ partition} $\cP=\{P_k:~k\in K\}$ defined by $P_k=\{ x\in K^\Z:~x_0=k\}$ is generating. The entropy rate of the Bernoulli shift coincides with the Shannon entropy of $\cP$. The latter is also called the {\bf Shannon entropy of $(K,\kappa)$}:
$$H(K,\kappa):= - \sum_{k\in K} \kappa(\{k\}) \log \kappa (\{k\})$$
if $\kappa$ is purely atomic and $H(K,\kappa):=+\infty$ otherwise. This shows that Bernoulli shifts with different base space entropies are not measurably conjugate.

In 1970, Ornstein proved the converse: two Bernoulli shifts with the same entropy are isomorphic. Moreover, he developed a deep set of tools for determining whether a given automorphism is Bernoulli. With these tools, he and co-authors proved that many automorphisms are isomorphic to Bernoulli shifts including mixing Markov chains, hyperbolic toral automorphisms, the time 1 map of geodesic flow on a hyperbolic surface and more. 

In 1964, Sinai proved that every ergodic automorphism is a zero-entropy extension of a Bernoulli shift (which may be trivial). This explains why Bernoulli shifts are so important to the classification of automorphisms in general.

\subsection{General groups and naive entropy}\label{sec:general}

Now let $\G$ be a countable group. An {\bf action} of $\G$ on a set $X$ is a collection $T=(T^g)_{g\in \G}$ of transformations $T^g:X \to X$ satisfying $T^{gh}=T^gT^h, T^{g^{-1}}=(T^g)^{-1}$. For convenience we may write $gx$ for $T^gx$ when there is only one action of $\G$ on $X$ under consideration. An action $T$ on a probability space $(X,\mu)$ is 
{\bf probability-measure-preserving} ({\bf pmp}) if each $T^g$ preserves $\mu$. We also denote an action by $\G \cc^T (X,\mu)$ or simply $\G \cc (X,\mu)$ when $T$ is implicit. A pmp action $T$ of $\G$ on $(X,\mu)$  {\bf factors onto} an action $S$ of $\G$ on $(Y,\nu)$ if there is a measurable map $\Phi:X \to Y$ that pushes $\mu$ forward to $\nu$ and intertwines the action (so $\Phi  T^g =S^g\Phi$ up to measure zero). Such a map is called a {\bf factor map}. If, in addition, $\Phi$ is invertible with measurable inverse then the two actions are said to be  {\bf measurably conjugate} or {\bf isomorphic}. The main motivating problem is to classify actions up to measure-conjugacy and determine which actions factor onto which.

The Bernoulli shifts described above generalize to this context. To be precise, let $K$ be a standard Borel space and $K^\G$ be the set of all functions $x:\G \to K$. When convenient we may represent $x\in K^\G$  as a collection $x=(x_g)_{g\in \G}$ of elements $x_g\in K$. The {\bf shift action} of $\G$ is denoted $S=(S^g)_{g\in \G}$ where $S^g:K^\G \to K^\G$ is the transformation
$$(S^gx)_h = x_{g^{-1}h}.$$
If $\k$ is a probability measure on $K$ then $S$ preserves the product measure $\kappa^\Ga$. The action $\G \cc (K,\k)^\G$ is called the {\bf Bernoulli shift over $\Ga$ with base space $(K,\kappa)$}. 

There is a large difference between the entropy theory of amenable group actions and that of non-amenable groups. So it is worthwhile to review the definitions. A countable group $\G$ is {\bf amenable} if there exists a sequence $\{F_n\}$ of non-empty finite subsets of $\G$ satisfying 
$$\lim_{n\to\infty} \frac{ |KF_n  \cap F_n| }{|F_n|} = 1$$
for every non-empty finite $K \subset \Ga$. Such a sequence is called a {\bf F\o lner sequence}. There are many other equivalent definitions of amenability  \cite{MR3616077, bekka2008kazhdan}. For example, abelian, nilpotent and solvable groups are amenable while non-abelian free groups, $SL(n,\Z)$ ($n \ge 2$), mapping class groups (with a few exceptions) and fundamental groups of closed hyperbolic $n$-manifolds $(n\ge 2)$ are not.

Now suppose $\G$ is amenable with F\o lner sequence $\{F_n\}$ and $T$ is a pmp action of $\G$ on $(X,\mu)$.  The standard definition of entropy is:
\begin{eqnarray*}
h_\mu(T, \cP) &=& \lim_{n\to\infty} |F_n|^{-1}H_\mu\left( \bigvee_{f\in F_n} T^{f^{-1}}\cP\right)\\
h_\mu(T) &=& \sup_\cP h_\mu(T, \cP)
\end{eqnarray*}
where $\cP$ is an arbitrary countable measurable partition of $X$ with finite Shannon entropy.% and $f^{-1}\cP:=\{T^{f^{-1}}(P):~P \in \cP\}.$

By a sub-additivity argument it can be shown that the limit defining $h_\mu(T, \cP)$ exists, does not depend on the choice of F\o lner sequence and moreover, if $\cP$ is a {\bf generating} partition (this means that the smallest $\Ga$-invariant sigma-algebra containing $\cP$ is the sigma-algebra of all measurable sets up to measure zero) with finite Shannon entropy then $h_\mu(T, \cP) = h_\mu(T)$. In particular, the entropy of the Bernoulli shift action $\G \cc (K,\k)^\G$ equals the Shannon entropy of the base space $H(K,\k)$. Entropy for amenable groups was first considered in \cite{kieffer-1975a}. See also \cite{ollagnier-book, OW80}. 

Moreover the above entropy coincides with the so-called {\bf naive entropy} defined by:
$$h^{\rm{naive}}_\mu(T,\cP):=\inf_{F \Subset \G} |F|^{-1} H_\mu\left( \bigvee_{f\in F} T^{f^{-1}}\cP\right)$$
$$h^{\rm{naive}}_\mu(T):=\sup_\cP h^{\rm{naive}}_\mu(T,\cP)$$
where $F \Subset \G$ means $F$ is a non-empty {\em finite} subset of $\G$. This definition makes sense for arbitrary countable groups $\G$. However if $\G$ is non-amenable then Theorem \ref{thm:naive-non-amenable} below shows that $h^{\rm{naive}}_\mu(T) \in \{0,\infty\}$ so naive entropy cannot distinguish Bernoulli shifts in this case.

\subsection{The Ornstein-Weiss example}\label{sec:OW-intro}

The next example convinced many researchers that entropy theory could not be extended to non-amenable groups. To explain, first suppose $\Ga$ is amenable. Consider the {\bf full $n$-shift} over $\Ga$; this is the Bernoulli shift with base space $(\Z/n, u_n)$ where $\Z/n$ is the cyclic group of order $n$ and $u_n$ is the uniform probability measure.  The entropy of the full $n$-shift is $\log(n)$. Because entropy is non-increasing under factor maps, the full 2-shift cannot factor onto the full 4-shift. However, Ornstein and Weiss showed in \cite{OW87} that when ${\F_2}=\langle a,b\rangle$ is the rank 2 free group, the full 2-shift does indeed factor onto the full 4-shift. Their example is as follows: define $\phi:(\Z/2)^{\F_2} \to (\Z/2\times \Z/2)^{\F_2}$ by 
$$\phi(x)_g = (x_g + x_{ga}, x_g + x_{gb}).$$
The spaces $(\Z/2)^{\F_2}$ and $(\Z/2\times \Z/2)^{\F_2}$ are compact abelian groups under pointwise addition. With this interpretation, $\phi$ is a group homomorphism. It is a good exercise to show that it is surjective. Surjectivity implies that $\phi$ takes Haar measure to Haar measure. Therefore, it is indeed a factor map. In fact the kernel consists of the constants, so it is a 2-1 factor map. In the setting of $\Z$-actions, entropy is preserved under finite-to-1 maps. So this example led some researchers to speculate that the 2-shift and the 4-shift over $\F_2$ could be isomorphic. The $f$-invariant (and later, sofic entropy) was developed to prove that they are not. 

In work in progress \cite{bowen-ri}, the author has shown that if $\G$ is any non-amenable group then all Bernoulli shifts over $\G$ factor onto each other. This is based on a generalization of the Gaboriau-Lyons Theorem \cite{gaboriau-lyons} and is reviewed in \S \ref{sec:bernoulli-bernoulli}. So there does not exist a monotone invariant that distinguishes Bernoulli shifts over a non-amenable group.

%The results above have been generalized in several ways:
%\begin{itemize}
%\item \cite{} If $\Ga$ contains a rank 2 free group, then all Bernoulli shifts over $\Ga$ factor onto each other.
%\item \cite{} If $\Ga$ is any non-amenable group, then there exists some number $r(\Ga) \ge 0$ such that if $H(K,\kappa)>r(\Ga)$ then the Bernoulli shift over $\Ga$ with base $(K,\kappa)$ factors onto all Bernoulli shifts over $\Ga$.
%\item \cite{} If $\Ga$ contains a free group then $r(\Ga)=0$ (so all Bernoulli shifts over $\Ga$ factor onto each other).
%\item \cite{} (\Gaaboriau-Seward)
%\end{itemize}

\subsection{The $f$-invariant}\label{sec:f-intro}

%At the time of \cite{OW87} it was unknown whether all Bernoulli shifts over the rank 2 free group are isomorphic. To solve this problem I introduced an entropy-like invariant for actions of free groups. Its definition is motivated by Markov chains. 

The $f$-invariant is a measure-conjugacy invariant  for actions of free groups that distinguishes Bernoulli shifts. To explain, it will be convenient to use probabilistic notation as follows. Suppose $X$ is a random variable that takes values in a Borel space $K$. Let $\Prob(K)$ denote the set of all Borel probability measures on $K$. The {\bf law} of $X$ is the probability measure $\Law(X) \in \Prob(K)$ satisfying
$$\Law(X)(E) = P(X \in E)$$
for $E \subset K$ where $P(\cdot)$ denotes probability. If $Y$ is also a random variable taking values in a Borel space $L$, then $\Law(X,Y) \in \Prob(K\times L)$ is the probability measure satisfying
$$\Law(X,Y)(E\times F) = P(X \in E, Y \in F).$$
Also $\Law(X|Y)$ is the random variable taking values in $\Prob(K)$ defined by
$$ \Law(X|Y)(E)  = P(X \in E|Y).$$

In general, a {\bf stationary $\Ga$-process}  is a $\Ga$-indexed family of random variables $\bfX=(X_g)_{g\in \Ga}$ such that each $X_g$ takes values in some Borel space $K$ and the law of $\bfX$ is invariant under left-multiplication of the indices. So the law of $\bfX$ is a $\G$-invariant Borel probability measure on $K^\G$ and stationarity means that the law of $(X_g)_{g\in \G}$ is the same as the law of $(X_{hg})_{g\in \G}$ for any $h \in \G$. 

The $f$-invariant is motivated by way of Markov chains over free groups. Before getting into that, it makes sense to recall Markov chains over the integers. A stationary $\Z$-process $\bfX=(X_i)_{i \in \Z}$ is {\bf Markov} if
$$\Law(X_0 | X_{-1}, X_{-2}, \ldots) = \Law(X_0 | X_{-1}).$$
%$$H(X_0 | X_{-1}, X_{-2}, \ldots) = H(X_0 | X_{-1}).$$
The law of a stationary Markov process is uniquely determined by the {\bf time-0 distribution} $\Law(X_0)$ and the transition probabilities $P(X_0=k_0|X_{-1}=k_{-1})$ (for $k_{-1},k_0 \in K$). Moreover, the Markov process uniquely maximizes entropy over all stationary processes that have the same time-0 distribution and transition probabilities.

If $\bfX=(X_i)_{i\in \Z}$ is an arbitrary stationary process with values in a finite or countable set $K$ then its {\bf $n$-th step Markov approximation} is the Markov process $\bfY^{(n)}=(Y^{(n)}_i)_{i\in \Z}$ taking values in the Cartesian power $K^n$ satisfying
$$\Law(Y^{(n)}_0) = \Law(X_0,\ldots, X_{n-1})$$
$$\Law\left(Y^{(n)}_0\Big|Y^{(n)}_{-1} = (x_{-1},x_0, \ldots, x_{n-2})\right) = \Law\big(X_0,\ldots, X_{n-1}| (X_{-1},\ldots, X_{n-2}) = (x_{-1},x_0, \ldots, x_{n-2})\big)$$
for any $x_{-1},\ldots, x_{n-2} \in K$. The entropy rate of $\bfX$ satisfies 
$$h(\bfX) = \lim_{n\to\infty} h(\bfY^{(n)}) = \lim_{n\to\infty} H\left(Y^{(n)}_0|Y^{(n)}_{-1}\right).$$
(Recall that the Shannon entropy of a random variable $X$ conditioned on another random variable $Y$ that takes on only countably many values is
$$H(X|Y):= - \sum_{x,y} P(X=x, Y=y) \log( P(X=x|Y=y) ).$$
Since 
$$H(Y^{(n)}_0|Y^{(n)}_{-1}) = H(X_0,\ldots, X_{n-1}|X_{-1},\ldots, X_{n-2}) = H(X_{n-1}|X_{n-2},\ldots, X_{-1})= H(X_0|X_{-1},\ldots, X_{-n})$$
we arrive at the familiar formula
$$h(\bfX) = H(X_0|X_{-1},X_{-2},\ldots).$$

This generalizes to free groups as follows: let ${\F_r}=\langle s_1,\ldots, s_r\rangle$ be a free group of rank $r$ and $\bfX=(X_g)_{g\in {\F_r}}$ a stationary process over ${\F_r}$. The process $\bfX$ is {\bf Markov} if for every $1\le i \le r$
$$\Law(X_e | (X_g)_{g\in \rm{Pre(s_i)}} ) = \Law( X_e | X_{s_i} )$$
where $\rm{Pre(s_i)} \subset {\F_r}$ is the set of all elements with prefix $s_i$. In other words, $g \in \rm{Pre(s_i)}$ if and only if $|s_i^{-1}g| < |g|$ where $|\cdot |: \F_r \to \N$ denote the word length (so $|\g|$ is the smallest natural number $n$ such that  $\g$ can be written as a product of $n$ elements of $\{s_1,\dots, s_r, s_1^{-1},\ldots, s_r^{-1}\}$).

Define 
$$F( \bfX) :=- (r-1)H(X_e) + \sum_{i=1}^r H(X_e| X_{s_i}).$$
The intuition for this formula is as follows: the term $H(X_e|X_{s_i})$ measures the entropy in the $s_i$-direction. The sum  $\sum_{i=1}^r H(X_e| X_{s_i})$  ``counts'' the entropy at the identity $r$ times. To compensate for this, substract $(r-1)H(X_e)$ to obtain the formula above. 

Now suppose $\bfX=(X_g)_{g\in {\F_r}}$ is an arbitrary stationary ${\F_r}$-process taking values in a finite or countable set $K$. The {\bf $n$-th Markov approximation} to $\bfX$ is the Markov process $\bfY^{(n)}=(Y^{(n)}_g)_{g\in {\F_r}}$ taking values in $K^{B(n)}$ (where $B(n) \subset {\F_r}$ is the ball of radius $n$) satisfying
$$\Law(Y^{(n)}_e) = \Law( (X_g)_{g\in B(n)} ), ~\Law( Y^{(n)}_e | Y^{(n)}_{s_i} = x) = \Law( (X_g)_{g\in B(n)} | (X_{s_ig})_{g\in B(n)} = x)$$
for any function $x:B(n) \to K$. Define
$$f(\bfX):=\lim_{n\to\infty} F( \bfY^{(n)}).$$
In \S \ref{sec:f} a proof is sketched that this does indeed define a measure-conjugacy invariant, called the $f$-invariant. Moreover, the $f$-invariant of the Bernoulli shift ${\F_r} \cc (K^\Ga,\kappa^\Ga)$ is $H(K,\kappa)$. This proves the 2-shift over $\F_r$ is not isomorphic to the 4-shift. 

The $f$-invariant is a particularly nice invariant: it can be computed exactly for Markov chains (\S \ref{sec:markov-chains}), it satisfies an ergodic decomposition formula (\S \ref{sec:ergodic decomposition}), a subgroup formula (\S \ref{sec:subgroups}), an Abramov-Rokhlin type formula (\S \ref{sec:abramov}), a Yuzvinskii-type addition formula (\S \ref{sec:yuz}) and is additive under direct products. However, it can increase under factor maps and it can take on negative values (\S \ref{sec:f-trivial}). It can be generalized to some other groups (\S \ref{sec:other}) and is related to sofic entropy (\S \ref{sec:interpretation}).

\subsection{Sofic groups via Benjamini-Schramm convergence}\label{sec:sofic-bs}

The sofic concept provides a new perspective on the $f$-invariant and extends entropy theory beyond amenable groups. The most intuitive definition of sofic groups is based on Benjamini-Schramm convergence \cite{benjamini-schramm-2001}. Only what is needed for sofic group theory will be explained here; for the more general theory see \cite{lovasz-local-global}.

Suppose $\Ga$ has a finite symmetric generating set $S \subset \Ga$. The {\bf Cayley graph} of $(\Ga,S)$ is a directed graph with edge labels in $S$, denoted by $\Cay(\Gamma,S)$. Its vertex set is $\Ga$ and for every $s\in S$ and $g\in \Ga$ there is an $s$-labeled directed edge from $g$ to $gs$.  These are all of the edges. Because the edges of the Cayley graph are directed and labeled, the group can be recovered from the Cayley graph. This would not be true otherwise since there are groups with Cayley graphs that are isomorphic as unlabeled graphs.

Now consider a {\em finite} $S$-edge-labeled directed graph $G=(V,E)$. For $r>0$, let $\sV_r(G)$ be the set of all vertices $v\in V$ such that there exists a graph isomorphism from the ball of radius $r$ centered at $v$ to the ball of radius $r$ centered at the identity element in the Cayley graph $\Cay(\Gamma,S)$. This  isomorphism is required to map $v$ to the identity, preserves edge directions and preserves labels. The graph $G$ is called an  {\bf $(r,\epsilon)$-sofic approximation} to $(\Ga,S)$ if $|\sV_r(G)|\ge (1-\epsilon)|V|$. So with probability $\ge 1-\epsilon$, a uniformly random vertex's radius $r$-neighborhood looks the same as the radius $r$-neighborhood of the identity in the Cayley graph $\Cay(\Ga,S)$. 

By definition, a sequence $\{ G_i \}$ of finite $S$-edge-labeled directed graphs {\bf Benjamini-Schramm converges} to the Cayley graph $\Cay(\Ga,S)$ if 
$$\lim_{i\to\infty} \frac{|\sV_r(G_i)|}{|V_i|}=1$$ 
for every $r>0$. Such a sequence is called a {\bf sofic approximation} to $\Ga$. The group $\Ga$ is {\bf sofic} if there exists a sofic approximation to $\Ga$.  

\begin{exercise}
Show that soficity does not depend on the choice of generating set $S$.
\end{exercise}

\begin{exercise}
Show that $\Z^d$ and finitely generated free groups are sofic.
\end{exercise}

For example, if $C_n$ is the directed $n$-cycle then $C_n$ Benjamini-Schramm converges to the standard Cayley graph of $\Z$ as $n\to\infty$. For another example, suppose $\Gamma$ is a finitely generated residually finite group. Residual finiteness means there exists a decreasing sequence $N_i\vartriangleleft \Gamma$ of finite-index normal subgroups with $\cap_i N_i =\{e\}$. Let $S$ be a finite generating set for $\Gamma$ and consider the associated Cayley graphs $G_n$ with vertex set $\Gamma/N_n$ and edges $\{ (gN_n, gsN_n):~ s\in S\}$. This sequence Benjamini-Schramm converges to the Cayley graph of $\Gamma$ with respect to $S$. Therefore all residually finite groups are sofic. 

The above definition is the most intuitive.  However, it has the unfortunate drawback that it applies only to finitely generated groups. The section \S \ref{sec:sofic-sym} presents a more general definition based on maps from $\Ga$ into the symmetric group. 

It is a major open problem whether all countable groups are sofic. For further reading, there are several surveys on sofic groups \cite{pestov-sofic-survey, pestov-kwiatkowska, capraro-lupini}.

\subsection{Sofic entropy: a probabilistic approach}

A more thorough account of sofic entropy is presented in \S \ref{sec:pseudo-top}-\S \ref{sec:measure-sofic}. Here is an intuitive approach under simplifying conditions. Let $\bfX=(X_g)_{g\in \Ga}$ be a $\Ga$-stationary process taking values in a finite set $K$. Assume $\Ga$ has a finite generating set $S$.  Fix a sequence $\Sigma=\{G_i\}_{i=1}^\infty$ of finite $S$-labeled directed graphs $G_i=(V_i,E_i)$ that Benjamini-Schramm converge to the Cayley graph $\Cay(\G,S)$. The sofic entropy of $\bfX$ with respect to $\Sigma$, denoted $h_\Sigma(\bfX)$, is the exponential rate of growth of the number of microstates for $\bfX$ on $G_i$. Intuitively, a microstate is a function $\phi:V_i \to K$ that approximates $\bfX$ in a local statistical sense. To be precise, fix a radius $r>0$ and let vertex $v \in \cV_r(G_i)$ be a uniformly random vertex. Consider the restriction of $\phi$ to the ball of radius $r$ centered at $v$. Because $\Si$ is a sofic approximation, this ball is isomorphic to the ball $B_r(\G,S)$ of radius $r$ centered at the identity in $\Cay(\G,S)$ with high probability. So the law of the restriction $\phi \resto B_r(v)$ determines a (sub-)probability measure on the set $K^{B_r(\G,S)}$ of all functions from $B_r(\G,S)$ to $K$.  If this law is $\delta$-close in total variation distance to the law of $(X_g)_{g\in B_r(\G,S)}$ then  $\phi$ is said to be an {\bf $(r,\delta)$-microstate} of $\bfX$. 

The {\bf sofic entropy} of $\bfX$ is
$$h_\Sigma(\bfX) = \inf_{\delta>0} \inf_{r>0} \limsup_{i\to\infty} |V_i|^{-1} \log (\# (r,\delta)-\textrm{microstates on } G_i).$$
This is also called the {\bf $\Si$-entropy}. In \cite{bowen-jams-2010} it was shown that this entropy is invariant under measure-conjugacy and the entropy of an i.i.d. process is the Shannon entropy $H(X_{1_\G})$. 

%Now suppose $\Ga \cc (Y,\nu)$ is a pmp action and $\cP$ is a finite measurable partition of $X$. In order to define the sofic entropy of this action, let $\bfX$ be the stationary process defined as follows: let $y\in Y$ be random with law $\nu$. Let $X_g = P$ if $P \in \cP$ contains $g^{-1}y$. Define $h_{\Sigma,\nu}(\Ga \cc Y, \cP) := h_\Sigma(\bfX)$. In \cite{bowen-jams-2010} it was shown that if $\cP,\cQ$ are finite partitions that are both  {\bf generating} (this means for example that the smallest $\Ga$-invariant sigma-algebra containing $\cP$ is the sigma-algebra of all measurable sets up to sets of measure zero) then they have the same sofic entropy rates:
%$$h_{\Sigma,\nu}(\Ga \cc Y, \cP) = h_{\Sigma,\nu}(\Ga \cc Y, \cQ).$$
%The sofic entropy of the action is defined by $h_{\Sigma,\nu}(\Ga \cc Y) =h_{\Sigma,\nu}(\Ga \cc Y, \cP)$ where $\cP$ is an arbitrary finite generating partition, assuming one exists. For example, the sofic entropy of a Bernoulli shift action is the Shannon entropy of the base.

%In \S \ref{sec:measure-sofic}, pseudo-metrics on the action space are used to generalize sofic entropy to actions that do not necessarily have finite generating partitions. 

A few words about the definition of $h_\Sigma(\bfX)$: if $\G$ is amenable then the sofic entropy agrees with classical entropy. However, if $\Ga$ is non-amenable then it is possible that there are no $(r,\delta)$-microstates for any graph $G_i$ in the sofic approximation. In this case, $h_\Sigma(\bfX)=-\infty$. Examples of such behaviour are presented in \S \ref{sec:trivial}. Also the $\limsup$ can be replaced with $\liminf$ or by an ultralimit. These changes give apriori different invariants (when $\G$ is non-amenable). In \S \ref{sec:trivial} and \S \ref{sec:dependence}  examples are presented of sofic approximations $\Sigma,\Sigma'$ to a group $\Ga$ and an explicit action such that the $\Si$-entropy is $-\infty$ but the $\Si'$ entropy is non-negative. The following is a major open problem:

\begin{problem}
Suppose $\Sigma,\Sigma'$ are two sofic approximations to $\G$ and the $\Si$-entropy and $\Si'$-entropy of $\bfX$ are both positive. Are they necessarily equal?
\end{problem} 

Other expositions of sofic entropy include \cite{MR3411529, MR3666024, MR3616077}.

The concept of sofic approximation can be generalized by replacing the finite graphs with random finite graphs. That is, the $i$-th approximating graph $G_i$ is allowed to be random; but the number of vertices $|V_i|$ is required to be determined. Now define
$$h_\Sigma(\bfX) = \inf_{\delta>0} \inf_{r>0} \limsup_{i\to\infty} |V_i|^{-1} \log \E[(\# (r,\delta)-\textrm{microstates on } G_i)].$$
In the special case in which $\Ga=\langle s_1,\ldots, s_r\rangle$ is a free group, this leads to a new interpretation of the $f$-invariant:
$$f(\bfX) = h_\Sigma(\bfX)$$
where $\Sigma = \{G_i\}_{i=1}^\infty$ is the ``permutation model'' of the random $2r$-regular graphs.  To be precise, let $\sigma:\Ga \to \sym(n)$ be a homomorphism chosen at random uniformly amongst all $(n!)^r$ homomorphisms where $\sym(n)$ is the symmetric group on $[n]:=\{1,\ldots, n\}$. Then $G_n$ is the graph with vertex $V_n=[n]$ and $s_i$-labeled edges $(p, \sigma(s_i)p)$ for $1\le i \le r$ and $p \in [n]$. This is Theorem \ref{thm:f} below. 

\subsection{Rokhlin entropy}\label{sec:intro-rokhlin}
Suppose $T=(T^g)_{g\in \G}$ is a pmp action of $\Ga$ on $(X,\mu)$ and $\cP$ is a generating partition for the action. It follows immediately that the sofic entropy of  $T$ is bounded by the Shannon entropy $H_\mu(\cP)$. This leads to the following idea: let $h^{\rm{Rok}}(T)$ denote the infimum of $H_\mu(\cP)$ over all generating partitions $\cP$. If $T$ is ergodic then this is called the {\bf Rokhlin entropy} of the action (the non-ergodic case is slightly different; see \S \ref{sec:Rokhlin} for details). Some basic facts:
\begin{itemize}
\item Rokhlin entropy is a measure-conjugacy invariant. Moreover, it is well-defined for every action of every countable group (even non-sofic ones, if they exist!).
\item Rokhlin entropy is an upper bound for sofic entropy.
\item Rokhlin entropy agrees with classical entropy for amenable groups \cite{seward-tucker-drob}.
\end{itemize}
Moreover, in recent groundbreaking work, the following results have been obtained:
\begin{itemize}
\item \cite{seward-tucker-drob} For every $\epsilon>0$, every essentially free ergodic action with positive Rokhlin entropy admits a factor that is essentially free and has Rokhlin entropy $<\epsilon$,
\item \cite{seward-kreiger-1} Every ergodic action with Rokhlin entropy $<\log(n)$ admits a generating partition with $n$ parts,
\item \cite{seward-sinai} Every essentially free ergodic action with positive Rokhlin entropy admits a Bernoulli factor.
\end{itemize}
See \S \ref{sec:Rokhlin} and \S \ref{sec:ornstein theory} for more details.

\begin{problem}
Suppose $T$ is ergodic and essentially free. If the $\Si$-entropy of $T$ is not $-\infty$ then is it equal to the Rokhlin entropy?
\end{problem}

%\subsection{Counterexamples}

%Much of the usefulness of classical entropy theory derives from a long list of formulas and inequalities that express how entropy changes as the system is perturbed or combined with other systems. This includes: inducing to a subgroup, co-inducing a subgroup action, continuity or semi-continuity in the measure, the partition or the action. It also includes direct products, ergodic decomposition, relative entropy and inverse limits. In the sofic case, we usually have an inequality where in the classical case an equality holds. Moreover, there are counterexamples. For example, in \S \ref{sec:direct products} we present an example showing that sofic entropy need not be additive under direct products. In general, perturbations of a system are covered in \S \ref{sec:perturbations} and combinations in \S \ref{sec:combinations}.

\subsection{What's in this article?}\label{sec:what}

\S \ref{sec:preliminaries}  reviews the fundamental aspects of sofic groups, sofic entropy, the $f$-invariant, Rokhlin entropy and naive entropy. 

\S \ref{sec:special classes} covers a list of examples in which entropy has been computed. Perhaps the most interesting cases are the Bernoulli shifts, Markov chains over free groups and principal algebraic actions (in which the entropy is related to the Fuglede-Kadison determinant). There are also degenerate cases in which the entropy is non-positive.  This includes (under mild conditions) trivial actions, distal, smooth and non-free actions. One surprising case is that of lattices $\G,\La$ in a totally disconnected locally compact group $G$ such that $\G \cc G/\La$ has positive entropy (Example \ref{ex:tree} in \S \ref{sec:markov-chains}). 

About \S \ref{sec:perturbations} and \S \ref{sec:combinations}: much of the usefulness of classical entropy theory derives from a list of formulas and inequalities expressing how entropy changes as the system is perturbed or combined with other systems. This includes: inducing to a subgroup, co-inducing a subgroup action, continuity or semi-continuity in the measure, the partition or the action or passing to an orbit-equivalent action. It also includes direct products, ergodic decomposition, relative entropy and inverse limits. In the sofic case, we usually have an inequality where in the classical case an equality holds. Moreover, there are counterexamples. For example, in \S \ref{sec:direct products} we present an example showing that sofic entropy need not be additive under direct products.

  \S \ref{sec:OW}   presents a finite-to-1 factor map from a zero entropy action to a Bernoulli shift, \S \ref{sec:bernoulli-bernoulli}   sketches a proof that if $\G$ is non-amenable then all Bernoulli shifts factor onto each other, \S \ref{sec:zero-entropy} sketches a proof that if $\G$ is non-amenable then every free ergodic action has a zero-entropy extension, \S \ref{sec:finite-to-1} explores how entropy varies under finite-to-1 factor maps.

\S \ref{sec:ornstein theory} covers generalizations of Ornstein theory for non-amenable groups including the Isomorphism Theorem, Krieger's generator Theorem and Sinai's Factor theorem. It also contains counterexamples such as Popa's example of a non-Bernoulli factor of a Bernoulli shift, and a non-Bernoulli $\bd$-limit of Bernoulli shifts. 

\S \ref{sec:variational} sketches a proof of the variational principle for sofic entropy. This naturally leads to the question of whether measures of maximal entropy exist and whether or not they are unique. The existence problem is similar to that of the classical case: namely, existence occurs under weak forms of expansitivity that imply upper semi-continuity of entropy with respect to weak* topology on the space of measures. Regarding uniqueness: an example is presented in \S \ref{sec:uniqueness}  of a mixing Markov chain over the free group with multiple measures of maximum $f$-invariant. 

\S \ref{sec:GIBBS} defines sofic pressure and equilibrium states (measures) for actions of sofic groups and relates them to Gibbs measures on random regular graphs.

\S \ref{sec:relative} is a short section on relative entropy. This includes an Abramov-Rokhlin formula for actions of free groups.

\S \ref{sec:outer} defines and explores outer sofic and Rokhlin entropy. For example, outer sofic entropy of a factor map is the exponential rate of growth of the number of microstates for the target action that lift to microstates for the source action. When $\G$ is amenable, this is just the entropy of the target. However, when $\G$ is non-amenable it can be different; for example the outer sofic entropy of the Ornstein-Weiss map is $\log(2)$, not $\log(4)$. Using outer entropy, we define outer Pinsker algebra and completely positive outer entropy. For example, Bernoulli shifts and a large class of algebraic actions are known to have completely positive outer entropy. This notion is also related to uniform model mixing which is a generalization of uniform mixing to the sofic context.

%Finally, this article is peppered with open problems. Some of these might be very easy, I just did not check to see if the usual arguments work. 

%\subsection{What's not in this article? **unfinished}\label{sec:whatnot}

%There are many new developments not covered in this article including Austin' AEP \cite{},  Pogorzelski-Nevo's cocycle entropy, Kerr-Li's independence tuples/local entropy theory \cite{}, sofic entropy for groupoid actions \cite{}, mean dimension, $\ell^p$ dimension, weak containment

%other expository articles: Gaboriau's, kerr-li book, Weiss' article, is there an Abert-Weiss article?

{\bf Acknowledgements}. %Since my first papers on sofic entropy theory \cite{} I have had the good fortune to be in contact with many researchers some of whose insights and examples appear here. 
 I am most grateful for discussions with Tim Austin, Peter Burton, Ben Hayes, David Kerr, Hanfeng Li,  Sorin Popa, Brandon Seward, Jean-Paul Thouvenot, Robin Tucker-Drob and Benjy Weiss. Many of these researchers have contributed examples which appear in this article, some for the first time.

\section{Preliminaries}\label{sec:preliminaries}       %%%%%%%%%%%%%%%%%%%%%%%%%%%%%%%%%%%%%

\subsection{Notation and conventions}

Throughout this article, all measure spaces are standard and all maps measurable unless otherwise specified. We often ignore measure zero phenomena without explicit mention. Also, $\Ga$ denotes a countable group, $(X,\mu),(Y,\nu)$ probability spaces and $\Ga \cc^T (X,\mu)$, $\Ga \cc^S (Y,\nu)$ are probability-measure-preserving (pmp) actions of $\Ga$ (unless otherwise specified). This means $T=(T^g)_{g\in \G}$ is a collection of measure-space automorphisms of $(X,\mu)$ such that $T^{gh}=T^gT^h$ and $T^{g^{-1}}=(T^g)^{-1}$ almost everywhere.

A {\bf factor map} between these actions is a measurable map $\Phi:X \to Y$ such that $\Phi_*\mu=\nu$ and $\Phi(gx)=g\Phi(x)$ for a.e. $x$ and every $g\in \Ga$. There is a natural correspondence between factors of the action $\Ga \cc (X,\mu)$ and $\Ga$-invariant sigma-algebras of $X$ (up to measure zero sets). Namely, if $\Phi:X \to Y$ is a factor map and $\cB_Y$ is the Borel sigma-algebra on $Y$ then $\Phi^{-1}(\cB_Y)$ is a $\Ga$-invariant sigma-algebra of $X$. We will call this the {\bf sigma-algebra associated with $\Phi$}. Conversely, if we are given a $\Ga$-invariant sigma-algebra $\cF \subset \cB_X$ (where $\cB_X$ is the Borel sigma-algebra on $X$) then by the Mackey realization Theorem there is a Borel space $Y$ and a factor map $\Phi:X \to Y$ such that $\cF=\Phi^{-1}(\cB_Y)$. 

If $\cP$ is a measurable partition of $X$ then the {\bf factor associated to $\cP$} is the factor associated to the smallest $\G$-invariant sigma-algebra containing $\cP$.

The notation $X \Subset Y$ means that $X$ is a finite subset of $Y$.

%As mentioned above, we say $\Ga \cc (Y,\nu)$ is a {\bf factor} of $\Ga\cc (X,\mu)$ if there is a measurable map $\Phi:X \to Y$ that pushes forward $\mu$ to $\nu$ and intertwines the action: $\Phi(gx)=g\Phi(x)$ for a.e. $x$ and every $g\in \Ga$. In this case, we can 

% A {\bf pmp} (probability measure-preserving) transformation is a measure-space isomorphism $T:X \to X$. We let $\Aut(X,\mu)$ denote the group of all pmp transformations of $(X,\mu)$. 

%We will let $\Ga$ denote a countably infinite group. A {\em pmp action} of $\Ga$ on $(X,\mu)$ is a homomorphism $T:\Ga \to \Aut(X,\mu)$. We will typically denote an action by writing $\Ga \cc (X,\mu)$ in which case the action of $g \in \Ga$ on $x \in X$ is written $gx$.

%define: measurable conjugate, factor map. 

%equivalence of factors and $\Ga$-invariant sigma-sub-algebras.

\subsection{Sofic groups}       %%%%%%%%%%%%%%%%%%%%%%%%%%%%%%%%%%%%%

The next definition might be less intuitive than the definition of soficity in the introduction; however it is the most useful.

\subsubsection{Soficity via maps into symmetric groups}\label{sec:sofic-sym}       %%%%%%%%%%%%%%%%%%%%%%%%%%%%%%%%%%%%%

Let $\Ga$ be a countable group. Throughout, $V$ denotes a finite set and  $\sym(V)$ the group of bijections from $V$ to itself. 

\begin{defn}\label{defn:sofic}
Given $F \subset G$ and $\delta>0$ we say that a map $\sigma:\Ga \to \sym(V)$ is
\begin{itemize}
\item {\bf $(F,\delta)$-multiplicative} if 
$$1-\delta < |V|^{-1} \# \{p \in V:~ \sigma(g)\sigma(h)p=\sigma(gh)p\} \quad \forall g,h\in F$$
\item {\bf $(F,\delta)$-trace-preserving} if 
$$\delta > |V|^{-1} \# \{p \in V:~ \sigma(g)p=p\} \quad \forall g\in F \setminus \{e\}.$$
\end{itemize}
A {\bf sofic approximation} of a group $\Ga$ is a sequence $\Sigma=\{\sigma_i\}_{i \in \N}$ of set maps $\sigma_i:\Ga \to \sym(V_i)$ such that for every finite $F \subset \Ga$ and $\delta>0$ there exists $I$ such that $i>I$ implies $\sigma_i$ is $(F,\delta)$-multiplicative, $(F,\delta)$-trace-preserving and $\lim_{i\to\infty} |V_i| = +\infty$.
%satisfying
%\begin{eqnarray*}
%0&=& \lim_{i\to\infty} d_i^{-1} \# \{p \in [d_i]:~ \sigma_i(g)p=p\} \quad \forall g\in \Ga \setminus \{e\} \\
%1&=& \lim_{i\to\infty} d_i^{-1} \# \{p \in [d_i]:~ \sigma_i(g)\sigma_i(h)p=\sigma_i(gh)p\} \quad \forall g,h\in \Ga \\
%infty&=& \lim_{i\to\infty} d_i.
%\end{eqnarray*}
A group $\Ga$ is {\bf sofic} if it admits a sofic approximation. 
\end{defn}

\begin{exercise}
Show that the definition above is equivalent to the definition in \S \ref{sec:sofic-bs} in case $\Ga$ is finitely generated.
\end{exercise}

\begin{exercise}\label{exer:rf}
Suppose that $\G$ is residually finite. So there exist finite-index normal subgroups $\G \ge N_1 \ge N_2 \ge \cdots$ such that $\cap_i N_i$ is trivial. Show that the canonical homomorphisms $\G \to \sym(G/N_i)$ form a sofic approximation.
\end{exercise}

\begin{exercise}
Suppose that $\G$ is amenable and $\{F_i\}$ is a F\o lner sequence. Show that for every $i$ there is a map $\sigma_i:\G \to \sym(F_i)$ such that $\sigma_i(g)f = gf$ whenever $gf \in F_i$. Show that these maps form a sofic approximation.
\end{exercise}

\subsubsection{Soficity via ultraproducts}\label{sec:ultra}

Suppose $\Sigma$ is a sofic approximation to $\Ga$ as above. Let $\cU$ be a non-principal ultrafilter on $\N$. Let $\prod_{i \to \cU} \sym(V_i)$ denote the ultraproduct of symmetric groups. To be precise, $\prod_{i \to \cU} \sym(V_i)$ is the direct product $\prod_i \sym(V_i)$ modulo the equivalence relation $(x_i) \sim (y_i)$ iff $\{i\in \N:~x_i=y_i\} \in \cU$. This is a group. Let $N \le \prod_{i \to \cU} \sym(V_i)$ be the set of all $\cU$-equivalence classes of sequences $(\pi_1,\pi_2,\ldots)$ (with $\pi_i \in \sym(V_i)$ ) such that

$$\lim_{i\to\cU} |V_i|^{-1} |\Fix(\pi_i)| = 1$$
where $\Fix(\pi_i)$ is the set of fixed points of $\pi_i$ in $V_i$. Then $N$ is a normal subgroup of $\prod_{i \to \cU} \sym(V_i)$ and the map
$$g\in \Ga \mapsto (\sigma_1(g),\sigma_2(g),\ldots)$$
determines an injective homomorphism from $\Ga$ into the quotient group $\prod_{i \to \cU} \sym(V_i)/N$. 
\begin{exercise}
Prove that $\Ga$ is sofic if and only if for some (any) increasing sequence $\{V_i\}_{i=1}^\infty$ of finite sets, $\Ga$ admits an injective homomorphism into $\prod_{i \to \cU} \sym(V_i)/N$. 
\end{exercise}
From this description it can be shown that the group von Neumann algebra of $\G$ satisfies Connes' embedding conjecture (see  \cite{elek-szabo-2005}). This point of view is elaborated on in \cite{pestov-sofic-survey, pestov-kwiatkowska, capraro-lupini}.

\subsubsection{Which groups are sofic?}

%It follows from the next result that all residually finite groups are sofic. Because finitely generated linear groups are residually finite \cite{malcev-1940} and direct unions of sofic groups are sofic (see below) it follows that all countable linear groups are sofic.

\begin{thm}
The class of sofic groups is closed under
\begin{enumerate}
\item subgroups, direct products, direct limits, inverse limits (so a residually sofic group is sofic), free products,
\item extensions by amenable groups,
\item free products with amalgamation over an amenable subgroup,
\item certain graph products and wreath products.
\end{enumerate}
\end{thm}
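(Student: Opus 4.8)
The plan is to handle the items in increasing order of difficulty, dispatching the bookkeeping cases straight from Definition~\ref{defn:sofic} and reserving the real work for the amenable-quotient and amenable-amalgam cases. First I would do item (1). For a subgroup $H \le \Ga$, the restriction of any sofic approximation of $\Ga$ to $H$ is still $(F,\delta)$-multiplicative and $(F,\delta)$-trace-preserving on every finite $F \subset H$, so it is a sofic approximation of $H$. For a direct product $\Ga \times \La$, start from sofic approximations $\sigma_i : \Ga \to \sym(V_i)$ and $\tau_i : \La \to \sym(W_i)$ (reindexed to share an index set) and set $\rho_i(g,h) = \sigma_i(g) \times \tau_i(h) \in \sym(V_i \times W_i)$; multiplicativity is coordinatewise and the fraction of fixed points is multiplicative, $|V_i \times W_i|^{-1}\#\Fix(\rho_i(g,h)) = \big(|V_i|^{-1}\#\Fix(\sigma_i(g))\big)\big(|W_i|^{-1}\#\Fix(\tau_i(h))\big)$, so the trace and multiplicativity bounds for the two factors combine. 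A direct limit $\Ga = \bigcup_n \Ga_n$ is handled by a diagonal argument: for the $n$-th finite set $F_n \subset \Ga$ and tolerance $1/n$, pick $m$ with $F_n \subset \Ga_m$ and a sufficiently fine approximation of $\Ga_m$. Residually sofic groups embed into a direct product of sofic groups, hence are covered by combining subgroups with direct products realized as a direct limit of finite sub-products; inverse limits of sofic groups are residually sofic. Free products reduce to item (3) with trivial amalgamated subgroup.

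Items (2) and (3) are the heart of the matter and follow the method of Elek--Szab\'o and P\u{a}unescu. For an extension $1 \to N \to \Ga \to A \to 1$ with $N$ sofic and $A$ amenable, fix a section $s : A \to \Ga$ with $s(e) = e$. Given a finite $F \subset \Ga$ and $\delta > 0$, choose a F\o lner set $\Phi \subset A$ almost invariant under the image of $F$, and a sofic approximation $\sigma : N \to \sym(W)$ that is multiplicative and trace-preserving on a finite subset of $N$ large enough to contain all cocycle values $s(a)s(b)s(ab)^{-1}$ and all conjugates $s(a)^{-1} n s(a)$ that arise. Build $\rho : \Ga \to \sym(\Phi \times W)$ by letting $g$ send $(a,w)$ to $(\bar g a,\, \sigma(c)w)$ on the part of $\Phi$ where $\bar g a \in \Phi$ (with $\bar g$ the image of $g$ and $c \in N$ the correction term $s(\bar g a)^{-1} g s(a)$), and extending arbitrarily off this part. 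Multiplicativity holds on the F\o lner bulk of $\Phi$, and trace-preservation holds because $\sigma$ is trace-preserving and because a nontrivial $A$-translation moves almost all of $\Phi$; elements of $\Ga$ with trivial image in $A$ are controlled directly by trace-preservation of $\sigma$. For an amalgamated free product $\Ga_1 *_A \Ga_2$ with $A$ amenable, the extra ingredient is the near-uniqueness of sofic approximations of amenable groups (Elek--Lippner): after enlarging the vertex sets and conjugating, one arranges that the restrictions to $A$ of the given approximations of $\Ga_1$ and $\Ga_2$ agree on all but a small fraction of points, and then glues the two partial symmetric-group actions into a single approximation of $\Ga_1 *_A \Ga_2$, checking multiplicativity on the defining relations of the amalgam. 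The graph-of-groups version of this gluing, carried out along the Bass--Serre tree with amenable edge groups, yields free products, HNN extensions, and the graph products and wreath-product constructions of item (4) once those are exhibited as iterated amalgams over amenable (typically finite or trivial) subgroups.

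The hard part will be the verification of trace-preservation in (2) and (3) and the control of accumulated error in (3). In (2), the danger is an element $g$ with $\bar g \ne e$ that still fixes many pairs $(a,w)$ because $\bar g$ has a large fixed-point set in $\Phi$; this is exactly what the F\o lner condition excludes, since a nontrivial translation displaces almost all of a sufficiently invariant finite set, but keeping the error uniformly small as $F$ grows forces $\Phi$ to depend on $F$ and to be interleaved with the growth of the finite subset of $N$ on which $\sigma$ must be good. In (3), the analogous obstacle is the near-conjugacy statement for amenable approximations together with error control when the Bass--Serre tree has infinitely many edge orbits; amenability of the amalgamated subgroup — equivalently, essential uniqueness of its sofic models — is precisely what makes the gluing go through, and its failure is why soficity of amalgams over non-amenable subgroups is open. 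For item (4) I would not reprove each graph- or wreath-product case but cite the relevant literature, indicating only the reduction to (3).
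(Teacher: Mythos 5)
Your proposal is correct in substance and, on most of item (1), takes the same route as the paper: restriction for subgroups, the coordinatewise map $\sigma_i\times\sigma_i'$ for direct products (with the fixed-point fraction multiplying exactly as you note), and diagonalization for direct and inverse limits. The genuine divergence is elsewhere. For free products the paper gives a self-contained probabilistic argument --- conjugate one approximation by a uniformly random permutation and define the map on normal-form words, so that with probability $1$ the result is a sofic approximation of $\Ga*\Ga'$ --- whereas you fold free products into item (3) with trivial amalgamated subgroup; both are valid (the amalgam theorems in the literature allow trivial edge group, so there is no circularity), but the paper's route avoids invoking the much heavier amalgamation machinery. For items (2)--(4) the paper proves nothing and only cites \cite{elek-szabo-2006, dykema-2014, paunescu-2011, elek-szabo-2011, CHR14, hayes-sale1}; your F\o lner-bulk construction for extensions by amenable quotients is the standard Elek--Szab\'o argument, and your identification of the delicate point (trace-preservation for elements with nontrivial image in $A$, which holds because left translation by a nontrivial element has no fixed points in $\Phi$, with losses only at the boundary) is accurate.

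Three corrections. First, the approximate uniqueness of sofic approximations of amenable groups that drives the amalgam gluing is due to Elek--Szab\'o \cite{elek-szabo-2011} (as the paper notes in \S\ref{sec:space}), not Elek--Lippner. Second, the phrase ``direct products realized as a direct limit of finite sub-products'' is not literally correct --- an infinite direct product is not the direct limit of its finite sub-products, and the image of a residually sofic group in the product of its sofic quotients need not lie in the restricted product; the right statement is local: for each finite $F\subset\Ga$ some finite product of sofic quotients separates the points of $F$, and one composes with a good approximation of that finite (hence sofic) product. Third, wreath products are not in general iterated amalgams over amenable subgroups: $N\wr B=(\bigoplus_B N)\rtimes B$ is an extension, not an amalgam, so the cases treated in \cite{hayes-sale1} do not reduce to item (3); when $B$ is amenable they follow from items (1) and (2), and otherwise a separate argument is required, which is why item (4) is cited rather than deduced.
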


For detailed proofs of (1-2) see \cite{elek-szabo-2006}. There are 3 different proofs of (3) in \cite{dykema-2014, paunescu-2011, elek-szabo-2011}. Soficity of graph products is studied in \cite{CHR14} and wreath products in \cite{hayes-sale1, MR3795485}. %(7) is proven in \cite{}.

\begin{proof}[Proof sketch of (1)]
Suppose $\Sigma=\{\sigma_i\}$ is a sofic approximation to a group $\Ga$ as in Definition \ref{sec:sofic-sym}. Restricting to a subgroup $\La\le \Ga$ yields a sofic approximation to $\La$. This shows soficity is closed under subgroups. If $\Sigma'=\{\sigma'_i\}$ is a sofic approximation to a group $\Ga'$ then the direct product $\sigma_i \times \sigma'_i: \Ga \times \Ga' \to \sym(V_i) \times \sym(V'_i) \le \sym(V_i \times V'_i)$ gives a sofic approximation to $\Ga \times \Ga'$. So soficity is closed under direct products. Diagonalization arguments show that soficity is preserved under direct limits and inverse limits.

To see that soficity is preserved under free products, consider $\Ga,\Ga'$ as above. Suppose $V_i=V'_i$ and let $\pi_i \in \sym(V_i)$ be a uniformly random permutation. Define $\sigma^{\pi_i}_i:\Ga \to \sym(V_i)$ by conjugation: $\sigma_i^{\pi_i}(g)=\pi_i\sigma_i(g)\pi_i^{-1}$. Now we define $\sigma^{\pi_i}_i*\sigma'_i:\Ga*\Ga' \to \sym(V_i)$ by
$$\sigma^{\pi_i}_i*\sigma'_i(g_1g'_1\cdots g_ng'_n) = \sigma^{\pi_i}_i(g_1)\sigma'_i(g'_1)\cdots \sigma^{\pi_i}_n(g_n)\sigma'_n(g'_n)$$
if $g_1,\ldots, g_n \in \Ga \setminus \{e\}$ and $g'_1,\ldots, g'_n \in \Ga' \setminus \{e\}$ for example. It can be shown that, with probability 1, $\{\sigma^{\pi_i}_i*\sigma'_i\}$ is a sofic approximation to $\Ga*\Ga'$. 

%The proof that soficity is preserved under free products with amalgamation over an amenable subgroup is much more involved. See \cite{dykema-2014, paunescu-2011, elek-szabo-2011}. 

\end{proof}

\begin{remark}
Mal'cev proved that all finitely generated linear groups are residually finite \cite{malcev-1940} and therefore they are sofic. Because soficity is preserved under direct limits it follows that all countable linear groups are sofic.
\end{remark}

%\begin{remark}
%The class of amenable groups is also closed under all of the operations listed above.
%\end{remark}

It is open whether all countable groups are sofic. However the soficity of the following groups is unknown: free Burnside groups (this was pointed out by Benjy Weiss \cite{weiss-2000}), Tarski monsters,  $SL(3,\Z) *_{F_1=F_2} SL(3,\Z)$ where $F_1,F_2\le SL(3,\Z)$ are isomorphic non-abelian free groups, and the Burger-Mozes groups from \cite{MR1446574, MR1839489}. On the other hand, A. Thom constructed a non-residually finite property (T) sofic group \cite{thom-examples} and Y. de Cornulier constructed a sofic group that is not a limit of amenable groups in the space of marked groups \cite{cornulier-sofic-2011}. Elek and Szabo show that there exists a non-amenable simple sofic group \cite{elek-szabo-2005}. There are several recent surveys on sofic groups \cite{pestov-sofic-survey, pestov-kwiatkowska, capraro-lupini}.

\subsubsection{The space of sofic approximations}\label{sec:space}

\begin{problem}
For a given interesting group $\Ga$, describe the set of all sofic approximations to $\Ga$. 
\end{problem}

Here we will make the above problem more precise and explain some partial results and specific questions. 

To begin we observe that it is possible to perturb a sofic approximation in an inessential way. To be precise, let $\Sigma=\{\sigma_i\}, \Sigma'=\{\sigma'_i\}$ be two sofic approximations to $\Ga$ and suppose that 
$$\sigma_i:\Ga \to \sym(V_i), \quad \sigma'_i:\Ga \to \sym(V'_i).$$
In the special case that $V_i = V'_i$ for all $i$ we can define the {\bf edit-distance} between $\Sigma$ and $\Sigma'$ with respect to a finite set $F \subset \Ga$ by: 
$$d^F(\Sigma,\Sigma') = \limsup_{i\to\infty} |V_i|^{-1} \#\{ v\in V_i:~ \exists f \in F, ~\sigma_i(f)v \ne \sigma_i'(f)v  \}.$$
Strictly speaking this is a pseudo-distance since it is entirely possible that two different sofic approximations satisfy $d^F(\Sigma,\Sigma')=0$ for all $F \subset \Ga$.  If $d^F(\Sigma,\Sigma')=0$ for every finite $F \subset \Ga$ then an exercise shows that the sofic entropy with respect to $\Sigma$ equals the sofic entropy with respect to $\Sigma'$. So we call two sofic approximations that have this property {\bf equivalent}.% or {\bf edit-distance zero}. %Thus we will not distinguish between two sofic approximations that are edit-distance zero apart.

If $\Ga$ is amenable then in \cite{elek-szabo-2011} it is shown that every sofic approximation to $\Ga$ is equivalent to one obtained from finite unions of F\o lner sets in a natural way. This completely describes all sofic approximations to $\Ga$. 

We will say that $\Sigma$ is {\bf by homomorphisms} if each $\sigma_i:\Ga \to \sym(V_i)$ is a homomorphism. For example, if $\Sigma=\{\sigma_i\}_{i=1}^\infty$ is any sofic approximation to a free group $\Ga=\langle S \rangle$ and $\Sigma'=\{\sigma'_i\}_{i=1}^\infty$ is the sofic approximation defined by: $\sigma'_i:\Ga \to \sym(V_i)$ is the unique homomorphism satisfying
$$\sigma'_i(s)=\sigma_i(s)~\forall s\in S$$
then $\Sigma'$ is by homomorphisms and it is equivalent to $\Sigma$.

\begin{problem}
If $\Ga$ is an interesting group, such as the fundamental group of a surface, $\F_2 \times \Z, \F_2 \times \F_2, SL(2,\Z) \rtimes \Z^2$ or $SL(3,\Z)$, is every sofic approximation to $\Ga$ equivalent to one by homomorphisms?
\end{problem}

%we say that $\Sigma,\Sigma'$ are equivalent and we will not distinguish them.

%This section is a bit more informal than the rest. Here we consider the question of describing the set of all sofic approximations to a given group $\Ga$. This question has been studied in \cite{}. However many questions remain.

%Strictly speaking this is a pseudo-distance since it is entirely possible that two different sofic approximations satisfy $d^F(\Sigma,\Sigma')=0$ for all $F \subset \Ga$. However, in this case the sofic entropy with respect to $\Sigma$ equals the sofic entropy with respect to $\Sigma'$. So it is natural to consider two sofic approximations that have this property to be essentially the same. Thus we will not distinguish between two sofic approximations that are edit-distance zero apart.

%In \cite{} there is a complete description of the space of all sofic approximations to $\Ga$ in the special case in which $\Ga$ is amenable. Roughly

%[I'm undecided whether or not this is worth writing]

%Edit distance zero. Stable. The special case of amenable groups, the free group, my open problems. 

\subsection{Topological sofic entropy}\label{sec:pseudo-top}       %%%%%%%%%%%%%%%%%%%%%%%%%%%%%%%%%%%%%

Given a countable group $\Ga$, a sofic approximation $\Sigma$ to $\Ga$, a compact metrizable space $X$,  and an action $T=(T^g)_{g\in \G}$ on $X$ by homeomorphisms, we will define the topological sofic entropy $h_\Sigma(T)$. In a nutshell, the entropy is the exponential rate of growth of the number of approximate partial orbits that can be distinguished up to some small scale. 

First, we recall some basic concepts. A {\bf pseudometric} on a space $X$ is a function $\rho:X \times X \to [0,\infty)$ satisfying all of the properties of a metric with one exception: it is possible that $\rho(x,y)=0$ even if $x \ne y$. If $\rho$ is a pseudo-metric then a subset $S \subset X$ is {\bf $(\rho,\epsilon)$-separated} if $\rho(s_1,s_2)\ge \epsilon$ for all $s_1,s_2 \in S$ with $s_1\ne s_2$. Let $N_\epsilon(S,\rho)$ denote the maximum cardinality of a $(\rho,\epsilon)$-separated subset of $S$. We also let $\rho_2$ and $\rho_\infty$ denote the pseudometrics on $X^d$ (for any integer $d\ge 1$) defined by
$$\rho_\infty(x,y) = \max_i \rho(x_i,y_i),\quad \rho_2(x,y) =\left(\frac{1}{d}\sum_i \rho(x_i,y_i)^2\right)^{1/2}$$
where $x=(x_1,\ldots, x_d), y=(y_1,\ldots, y_d) \in X^d$.

Given an action $T=(T^g)_{g\in \G}$ on $X$, a map $\sigma:\Ga \to \sym(d)$, a finite subset $F \subset \Ga$ and $\delta>0$, let $\Map(T,\rho,F,\delta,\sigma)$ denote the set of all $x \in X^d$ such that
$$\rho_2( T^fx, x \circ \sigma(f) ) < \delta \quad \forall f\in F$$
where $(T^fx)_i = T^fx_i$ and $(x\circ\sigma(f))_i  = x_{\sigma(f)i}$ for all $i$.

In the literature, an element $x \in \Map(T,\rho,F,\delta,\sigma)$ has been referred to as a {\bf microstate}, a {\bf good model} or a {\bf good map}. These terms will be used informally and will not be defined rigorously. The {\bf entropy of $T$ with respect to $\rho$} is
$$h_{\Sigma}(T,\rho) = \sup_{\epsilon>0} \inf_{F \Subset \Ga} \inf_{\delta >0} \limsup_{i\to\infty} |V_i|^{-1}\log \left(N_\epsilon( \Map(T,\rho,F,\delta,\sigma_i), \rho_\infty)\right)$$
where $F\Subset \Ga$ means that $F$ is a finite subset of $\Ga$. We will also write 
$$h_{\Sigma}(\G \cc X,\rho) = h_{\Sigma}(T,\rho)$$
if $T$ is implicit.

\begin{exercise}\label{exer:periodic}
Suppose $\G$ is residually finite and has finite-index normal subgroups $\Ga \ge N_1\ge N_2 \ge \cdots$ with $\cap_i N_i = \{e\}$. Let $\s_i:\G \to \sym(\G/N_i)$ be the canonical homomorphisms. By exercise \ref{exer:rf}, $\Si=\{\s_i\}$ is a sofic approximation. Now suppose that $z\in X$ is stabilized by $(T^g)_{g\in N_i}$ (so $z$ is {\bf $(T,N_i)$-periodic}). Show that if $x\in X^{\G/N_i}$ is defined by $x_{gN_i} = T^gz$ then $x \in \Map(T,\rho,F,\delta,\sigma_i)$ for every $F,\delta$. It follows that the $\Si$-entropy of $T$ is at least the exponential rate of growth of the $(T,N_i)$-periodic points.
\end{exercise}

A pseudometric $\rho$ on $X$ is {\bf generating} for the action if for every $x,y \in X$ with $x\ne y$ there exists $g\in \Ga$ with $\rho(gx,gy)>0$. 
\begin{thm}\label{thm:top-entropy}
Let $\Ga \cc X$ be an action by homeomorphisms on a compact metrizable space. If $\rho_1,\rho_2$ are continuous generating pseudometrics on $X$ then 
$$h_{\Sigma}(T,\rho_1) = h_{\Sigma}(T,\rho_2).$$  
\end{thm}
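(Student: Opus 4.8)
The plan is to prove the two inequalities $h_{\Sigma}(T,\rho_1)\le h_{\Sigma}(T,\rho_2)$ and $h_{\Sigma}(T,\rho_2)\le h_{\Sigma}(T,\rho_1)$; by the obvious symmetry of the hypotheses it is enough to carry out one, say with the two pseudometrics written as $\rho$ (for $\rho_1$) and $\rho'$ (for $\rho_2$), the subscripts $2$ and $\infty$ being reserved, as in the text, for the $\ell^2$- and $\ell^\infty$-lifts to $X^d$. The essential point is that $\rho$ and $\rho'$ need not be comparable pseudometrics, so the hypothesis that \emph{both} are generating must be used, via the defining near-multiplicativity and freeness of a sofic approximation, to turn microstates for one pseudometric into microstates for the other. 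I would organize the argument around two lemmas, and then a transfer argument.

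\emph{A compactness lemma.} If $\rho$ is a continuous pseudometric and $\rho'$ a continuous generating pseudometric on the compact space $X$, then for every $\epsilon>0$ there are a finite $F_0\subset\Gamma$ and $\kappa>0$ such that $\rho'(gx,gy)\le\kappa$ for all $g\in F_0$ implies $\rho(x,y)\le\epsilon$. This is proved by contradiction: a sequence $(x_n,y_n)$ of counterexamples (for $F_0$ exhausting $\Gamma$ and $\kappa\to 0$) has, by compactness of $X$, a subsequential limit $(x,y)$ with $\rho(x,y)\ge\epsilon>0$ but $\rho'(gx,gy)=0$ for every $g\in\Gamma$, contradicting that $\rho'$ is generating. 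This is the mechanism by which a generating $\rho'$ ``eventually detects'' everything a continuous $\rho$ detects, once one may move around the orbit by finitely many group elements. \emph{An $\ell^\infty$-versus-$\ell^2$ lemma.} For a single continuous pseudometric $\rho$, computing $h_\Sigma(T,\rho)$ with $N_\epsilon(\cdot,\rho_\infty)$ replaced by $N_\epsilon(\cdot,\rho_2)$ gives the same number. Here ``$\ge$'' is immediate since $\rho_2\le\rho_\infty$. For ``$\le$'' I would quantize: replace each coordinate of a microstate by a nearby point of a fixed finite subset of $X$ which, after collapsing $\rho$-null pairs, may be taken $\rho$-separated by some $c>0$; this map barely changes the defining $\rho_2$-condition and, since the quantization error is $<\epsilon/3$ in $\rho$, it is injective on any $(\rho_\infty,\epsilon)$-separated $S\subset\Map(T,\rho,F,\delta,\sigma_i)$. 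Inside a fixed $\rho$-separated finite alphabet, two strings at Hamming distance $\ge\alpha |V_i|$ are $(\rho_2,c\sqrt\alpha)$-separated, and the alphabet power is covered by only $e^{o(|V_i|)}$ Hamming balls of radius $\alpha|V_i|$ as $\alpha\to0$; hence $|S|\le N_{c\sqrt\alpha}(\Map(T,\rho,F,\delta',\sigma_i),\rho_2)\,e^{o(|V_i|)}$, and passing to the limit in $i$, then in $\alpha$, then taking the appropriate infima and suprema gives the claim.

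\emph{Putting it together.} Work with the $\ell^2$-counting version, and let $S\subset\Map(T,\rho,F,\delta,\sigma_i)$ be $(\rho_2,\epsilon)$-separated with $F$ large, $\delta$ small, and $\sigma_i$ a sufficiently good sofic approximation. First, applying the compactness lemma to the pair $(\rho',\rho)$ together with near-multiplicativity of $\sigma_i$ and the defining $\ell^2$-estimate for $\Map(T,\rho,\dots)$, one checks that every $x\in S$ also lies in $\Map(T,\rho',F',\delta',\sigma_i)$ for suitable $F',\delta'$ --- crucially only \emph{averaged} ($\ell^2$) estimates enter, so the coordinates where the sofic approximation misbehaves do no harm. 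Second, if $\rho_2(x,y)\ge\epsilon$ then $\rho(x_v,y_v)$ is bounded below on a positive proportion of coordinates $v$, and for a positive proportion of those the compactness lemma (now applied to $(\rho,\rho')$), after correcting by the $\ell^2$-estimates for $\rho'$, exhibits a coordinate $w$ with $\rho'(x_w,y_w)$ bounded below; since each such $w$ is of the form $\sigma_i(g)v$ with $g\in F_0$ it arises from at most $|F_0|$ coordinates $v$, so one still gets $\rho'_2(x,y)\ge\epsilon'$ for a fixed $\epsilon'>0$. Thus $S$ is an $(\rho'_2,\epsilon')$-separated subset of $\Map(T,\rho',F',\delta',\sigma_i)$, giving $N_\epsilon(\Map(T,\rho,F,\delta,\sigma_i),\rho_2)\le N_{\epsilon'}(\Map(T,\rho',F',\delta',\sigma_i),\rho'_2)$; taking $\limsup_i$ and the relevant infima and suprema yields $h_\Sigma(T,\rho)\le h_\Sigma(T,\rho')$, and symmetry finishes the proof.

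I expect the $\ell^\infty$-versus-$\ell^2$ lemma to be the main obstacle: it is what makes the $\rho_\infty$ in the definition manageable, and although elementary it requires carefully coordinating the choice of finite alphabet, the perturbation of the $\Map$ condition, and the Hamming-ball covering estimate. Once one is allowed to work with the $\ell^2$-average version of the counting, the compactness lemma does the rest, since averages interact cleanly with the ``almost all coordinates are good'' guarantees of a sofic approximation, whereas the worst-coordinate nature of $\rho_\infty$ does not.
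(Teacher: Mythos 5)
Your overall architecture is sound and is genuinely different from the proof sketched in the paper: the paper first replaces each generating pseudometric $\rho$ by the orbit-summed metric $\rho'(x,y)=\sum_{g}\rho(T^gx,T^gy)\phi(g)$ with $\phi\in\ell^1(\Gamma)$ strictly positive, and only then compares two compatible metrics on a compact space, whereas you compare the two pseudometrics directly via your compactness lemma. That lemma is correct (it is exactly where the generating hypothesis enters), and your two-step transfer --- first showing $\Map(T,\rho,F,\delta,\sigma_i)\subset\Map(T,\rho',F',\delta',\sigma_i)$ for suitable parameters using near-multiplicativity, then converting $(\rho_2,\epsilon)$-separation into $(\rho'_2,\epsilon')$-separation by a Chebyshev argument with the $|F_0|$-to-one correction --- is correct and is in fact close to Kerr and Li's original argument. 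You are also right that the $\ell^2$-counting version is indispensable there: with $\rho_\infty$ the single witnessing coordinate may be one at which the sofic approximation or the microstate condition misbehaves.

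The genuine gap is in your $\ell^\infty$-versus-$\ell^2$ lemma, precisely at the assertion that coordinatewise quantization ``barely changes the defining $\rho_2$-condition.'' For a pseudometric $\rho$, smallness of $\rho(x_v,\tilde x_v)$ gives no control on $\rho(T^f x_v,T^f\tilde x_v)$: take the shift action on $A^\Gamma$ with $\rho(x,y)=1$ if $x_e\ne y_e$ and $0$ otherwise; then quantizing $x_v$ to any point with the same $e$-coordinate can change $T^fx_v$ completely, so the quantized string need not lie in $\Map(T,\rho,F,\delta',\sigma_i)$ --- yet your inequality $|S|\le N_{c\sqrt\alpha}(\Map(T,\rho,F,\delta',\sigma_i),\rho_2)\,e^{o(|V_i|)}$ requires the Hamming-separated quantized strings to lie in exactly that set. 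The same example shows that ``collapsing $\rho$-null pairs'' is not harmless, since a $\rho$-null pair need not have $\rho$-null image under $T^f$. The repair is essentially the paper's first step: prove the $\ell^2/\ell^\infty$ equivalence after replacing $\rho$ by the summed metric $\rho'$ above (with $\phi(g)=2^{-|g|}$ each $T^f$ is $\rho'$-Lipschitz, a maximal $(\rho',\eta)$-separated subset of $X$ serves as an alphabet that is simultaneously spanning and separated, and your quantization argument then goes through verbatim), and run your transfer argument between the two summed metrics; alternatively, quantize with respect to a compatible metric $d$ and invoke uniform continuity of $(x,y)\mapsto\rho(T^fx,T^fy)$ on the compact $X\times X$, taking care that the alphabet remains $\rho$-separated.
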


This theorem (and the definition of topological sofic entropy) is due to Kerr-Li \cite{kerr-li-variational}. See also \cite[Proposition 2.4]{kerr-li-sofic-amenable} and \cite{MR3616077} for a simplified exposition.

\begin{proof}[Proof sketch]
The first step is showing that we can replace a generating pseudometric $\rho$ with a metric $\rho'$. To be precise: let $\phi \in \ell^1(\Ga)$ be a strictly positive function. Define
$$\rho'(x,y) := \sum_{g\in \Ga} \rho(T^gx,T^gy) \phi(g).$$
Then $\rho'$ is a continuous metric on $X$ and
$$h_{\Sigma}(T,\rho) = h_{\Sigma}(T,\rho').$$
Informally, this is because any microstate for $\Ga \cc X$ with respect to $\rho$ is a microstate with respect to $\rho'$ and vice versa, although the parameters $F$ and $\delta$ may change. 

We can now assume that $\rho_1$ and $\rho_2$ are metrics. The statement can now be derived from the observation that for any $\epsilon>0$  there is a $\delta>0$ such that $\rho_1(x,y)<\delta \Rightarrow \rho_2(x,y)<\epsilon$ and vice versa. 
\end{proof}

\begin{defn}
The {\bf $\Sigma$-entropy of $T$} is $h_{\Sigma}(T): = h_{\Sigma}(T, \rho)$ where $\rho$ is any continuous generating pseudometric.
\end{defn}

\begin{remark}
The $\rho_\infty$ appearing in the formula for $h_\Sigma(T,\rho)$ can be replaced with $\rho_2$ without affecting the definition of $h_\Sigma(T,\rho)$. Also the $\limsup$ can be replaced by a $\liminf$ or an ultralimit; however these replacements can lead to different invariants because sofic entropy depends on the choice of sofic approximation in general (see \S \ref{sec:dependence}).
\end{remark}

\begin{exercise}[Symbolic dynamics]
Suppose $A$ is a finite set. An element $\bfx \in A^\G$ is written as either a collection $\bfx = (x_g)_{g\in \G}$ or a function $\bfx:\G \to A$. Let $T=(T^g)_{g\in \G}$ be the {\bf shift action} on $A^\G$ defined by $T^g\bfx(f)=\bfx(g^{-1}f)$. 
\begin{enumerate}
\item Let $\rho$ be the pseudo-metric on $A^\G$ given by $\rho(\bfx,\bfy)=1$ if $\bfx_e\ne \bfy_e$ and $\rho(\bfx,\bfy)=0$ otherwise. Show that $\rho$ is generating for the shift-action.
\item Suppose $X \subset A^\G$ is closed and shift-invariant. Given $\bfx \in A^{V_i}$, its {\bf pullback name} is 
$$\Pi_v^{\sigma_i}(\bfx) \in A^\G, \quad  \Pi_v^{\sigma_i}(\bfx)(g) = \bfx(\sigma_i(g)^{-1}v).$$
Also let
$$P^{\sigma_i}_\bfx = |V_i|^{-1} \sum_{v\in V_i} \delta_{\Pi_v^{\sigma_i}(\bfx)} \in \Prob(A^\G)$$
be its {\bf empirical distribution}. Show that the entropy of the restriction of $T$ to $X$ simplifies to
$$h_{\Sigma}(T \resto X) = \inf_\cO \inf_{\delta>0} \limsup_{i\to\infty} |V_i|^{-1}\log \#\{\bfx\in A^{V_i}:~ P^{\sigma_i}_\bfx(\cO) > 1-\delta  \}.$$
where the first infimum is over all open neighborhoods $\cO$ of $X$ in $A^\G$. 
\item Show that $h_\Si(T)=\log |A|$. 
\item Show that if $X \subset A^\G$ is not $A^\G$ then $h_\Si(T \resto X) < \log |A|$.
\item Suppose $\G$ is residually finite and has finite-index normal subgroups $\Ga \ge N_1\ge N_2 \ge \cdots$ with $\cap_i N_i = \{e\}$. Let $\s_i:\G \to \sym(\G/N_i)$ be the canonical homomorphisms. Let $\cO$ be an open neighborhood of $X$ in $A^\G$ and $\delta>0$. Show that $h_\Si(T \resto X)$ is at most the exponential rate of growth of the number of $(T,N_i)$-periodic points $z \in A^\G$ such that 
$$\#\{gN_i \in \G/N_i:~ gz \in \cO\} \ge (1-\delta) |\G/N_i|.$$
Compare with the lower bound in exercise \ref{exer:periodic}.
\end{enumerate}

\end{exercise}

\begin{thm}\cite{kerr-li-sofic-amenable}
If $\Ga$ is amenable then topological sofic entropy agrees with classical topological entropy. 
\end{thm}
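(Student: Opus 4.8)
The plan is to establish the two inequalities $h_\Sigma(T)\ge h_{\mathrm{top}}(T)$ and $h_\Sigma(T)\le h_{\mathrm{top}}(T)$, where $\Sigma$ is any sofic approximation to the amenable group $\Ga$, $h_{\mathrm{top}}(T)$ is the classical topological entropy, and by Theorem~\ref{thm:top-entropy} I fix a continuous generating \emph{metric} $\rho$ and compute $h_\Sigma(T)=h_\Sigma(T,\rho)$. Recall that since $\Ga$ is amenable, $h_{\mathrm{top}}(T)=\sup_{\epsilon>0}\lim_{n\to\infty}|F_n|^{-1}\log N_\epsilon(X,\rho_{F_n})$ for any F\o lner sequence $\{F_n\}$, where $\rho_{F_n}(x,y)=\max_{f\in F_n}\rho(T^fx,T^fy)$; the limit exists and is F\o lner-independent by the Ornstein--Weiss subadditivity lemma. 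The common engine for both inequalities is a quasi-tiling statement for the sofic approximation: given $\delta>0$ and a sufficiently invariant $F\Subset\Ga$ (taken from the F\o lner sequence), for all large $i$ one can choose tile centres $C_i\subset V_i$ with $|C_i|\ge(1-\delta)|V_i|/|F|$ such that the sets $\sigma_i(F)v$, $v\in C_i$, are pairwise disjoint, each map $f\mapsto\sigma_i(f)v$ is injective on $F$ and respects multiplication, and $\bigcup_{v\in C_i}\sigma_i(F)v$ covers all but $\delta|V_i|$ vertices. This follows from $(F',\delta')$-multiplicativity and trace-preservation of $\sigma_i$ together with the Ornstein--Weiss covering lemma applied to the approximate action of $\Ga$ on $V_i$; alternatively one may invoke the result of Elek--Szab\'o \cite{elek-szabo-2011} that every sofic approximation to $\Ga$ is equivalent to one built from unions of F\o lner sets.

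For $h_\Sigma(T)\ge h_{\mathrm{top}}(T)$, fix $\epsilon>0$ and \emph{arbitrary} $F_0\Subset\Ga$, $\delta_0>0$. Pick $F=F_n$ from the F\o lner sequence invariant enough that any ``$F$-shaped'' partial orbit automatically meets the $(F_0,\delta_0)$-model condition near its centre, and let $E\subset X$ be maximal $(\rho_F,\epsilon)$-separated, so $|E|=N_\epsilon(X,\rho_F)$. Quasi-tile $V_i$ as above. For each $y\colon C_i\to E$ define $x_y\in X^{V_i}$ by $x_y(\sigma_i(f)v)=T^fy(v)$ on the tiles and arbitrarily elsewhere; the quasi-tiling properties force $x_y\in\Map(T,\rho,F_0,\delta_0,\sigma_i)$ for $i$ large. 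If $y(v)\ne y'(v)$ for some $v\in C_i$, then some $f\in F$ has $\rho(T^fy(v),T^fy'(v))\ge\epsilon$, whence $\rho_\infty(x_y,x_{y'})\ge\epsilon$ since $\sigma_i(f)v$ lies in a unique tile. Hence $N_\epsilon(\Map(T,\rho,F_0,\delta_0,\sigma_i),\rho_\infty)\ge|E|^{|C_i|}$; taking $\limsup_i$, then $\inf_{F_0,\delta_0}$, then optimizing over $n$ and letting $\delta\to0$, then $\sup_\epsilon$, gives the inequality.

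For $h_\Sigma(T)\le h_{\mathrm{top}}(T)$, fix $\epsilon>0$; here I get to \emph{choose} $F_0$ and $\delta_0$, so I take $F=F_n$ very invariant, $F_0\supset F$, and $\delta_0$ tiny, and quasi-tile $V_i$ as above. The key point is that if $x\in\Map(T,\rho,F_0,\delta_0,\sigma_i)$ then on all but a $\delta$-fraction of the tiles the string $(x(\sigma_i(f)v))_{f\in F}$ is $(\rho_F,\epsilon/3)$-close to a genuine partial orbit $(T^fz)_{f\in F}$ with $z\in X$; this is a Fubini/averaging argument over $v\in V_i$ using $F_0\supset F$ and $\delta_0$ small. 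Consequently the restriction of $x$ to a good tile is determined, up to $\rho_\infty$-error $\epsilon/3$, by a point of a fixed $(\rho_F,\epsilon/6)$-net of $X$ of size $N_{\epsilon/6}(X,\rho_F)$; bounding the restriction to bad tiles and to the untiled vertices by a net of $X$ raised to a power $\le C\delta|V_i|$, one gets $N_\epsilon(\Map(T,\rho,F_0,\delta_0,\sigma_i),\rho_\infty)\le N_{\epsilon/6}(X,\rho_F)^{(1+\delta)|V_i|/|F|}\cdot N_{\epsilon/6}(X,\rho)^{C\delta|V_i|}$ (replacing $\rho_\infty$ by $\rho_2$, as permitted by the Remark after the definition, makes these additive error terms cleanest). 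Taking $\limsup_i$, then $n\to\infty$ so that $|F_n|^{-1}\log N_{\epsilon/6}(X,\rho_{F_n})$ approaches the $\epsilon$-scale value of $h_{\mathrm{top}}$, then $\delta\to0$ and $\sup_\epsilon$, yields the inequality. This is essentially the argument of Kerr--Li \cite{kerr-li-sofic-amenable}.

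The main obstacle is the quasi-tiling step together with the averaging argument in the upper bound that upgrades ``$x$ is an $(F_0,\delta_0)$-model'' to ``on most tiles $x$ restricts to a genuine orbit segment over all of $F$''. One must fix the quantifiers in the order $F\ll F_0$, $\delta_0\ll\delta\ll\epsilon$ and check that the Ornstein--Weiss covering lemma applies to the maps $\sigma_i$ uniformly for all large $i$; everything else is bookkeeping with nets and exponential counts.
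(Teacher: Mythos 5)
Your proposal is correct and follows exactly the route the paper indicates: the only ingredient the paper singles out is the Rokhlin/quasi-tiling lemma for sofic approximations of amenable groups (expressing $\Sigma$ as essentially built from F\o lner sets), and your two-sided argument is the standard Kerr--Li implementation of that tool. The only points you gloss over --- that the sofic Rokhlin lemma actually produces tiles of several F\o lner shapes rather than one, and the passage from $\rho_2$-closeness on tiles to $\rho_F$-spanning numbers --- are routine and handled as you indicate.
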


\begin{remark}
The main tool involved in the proof of this theorem is a Rokhlin Lemma for sofic approximations of countable amenable groups. This lemma allows us to express any sofic approximation to an amenable group $\G$ as essentially equivalent to a F\o lner sequence. 
\end{remark}

Topological sofic entropy can also be defined in terms of open covers \cite{zhang-local-variational} (in a manner similar to the original definition of topological entropy \cite{adler-konheim-mcandrew}) or in terms of sequences of continuous functions \cite{kerr-li-variational}. %The latter gives an operator-algebraic persepective on sofic entropy.

\subsubsection{An application: Gottschalk's conjecture and Kaplansky's conjecture}\label{sec:Gottschalk}

Any self-map of a finite set satisfies the following property: if it is injective then it must be surjective. This fundamental property is called {\bf surjunctivity}. It has been generalized to algebraic varieties and regular maps \cite{MR0229613} and proalgebraic varieties satisfying a soficity condition \cite{gromov-1999}. 

\begin{conj}[Gottschalk's Surjunctivity Conjecture]
Suppose $A$ is a finite set (called an alphabet), $\Ga$ a countable group and $\phi:A^\Ga \to A^\Ga$ a continuous $\Ga$-equivariant map. If $\phi$ is injective then it must be surjective. 
\end{conj}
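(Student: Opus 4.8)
The plan is to attack the conjecture through sofic entropy, which furnishes the only currently viable route. The first and decisive reduction is to the case where $\Ga$ is sofic; I will sketch the full argument there, and then explain why the general (possibly non-sofic) case is the genuine obstacle. So suppose $\Ga$ is sofic and fix a sofic approximation $\Sigma=\{\sigma_i\}$. Equip $A^\Ga$ with the shift action $T$ and let $\phi:A^\Ga\to A^\Ga$ be continuous, $\Ga$-equivariant and injective. Set $X=\phi(A^\Ga)$; since $A^\Ga$ is compact and $\phi$ continuous and equivariant, $X$ is a closed shift-invariant subset, so $\Ga\cc X$ is again a subshift. The goal is to show $X=A^\Ga$.

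The heart of the argument is that an injective continuous equivariant map preserves $\Sigma$-entropy, combined with the strict monotonicity of subshift entropy. Concretely, I would establish $h_\Sigma(T\resto X)=h_\Sigma(T)=\log|A|$. The upper bound is automatic since $X$ is a factor (indeed a subsystem). For the lower bound one transports microstates through $\phi$: a good model $\bfx\in A^{V_i}$ for the full shift, read through its empirical distribution $P^{\sigma_i}_\bfx$, is locally approximated by points of $A^\Ga$, and applying $\phi$ block by block — using uniform continuity of $\phi$ to control the radius of the local rule and equivariance to commute with $\sigma_i$ — produces a good model for $X$ on the same vertex set $V_i$. Since $A^\Ga$ is compact Hausdorff, $\phi$ is a homeomorphism onto $X$, so $\phi^{-1}$ is uniformly continuous; this upgrades injectivity to a modulus guaranteeing that $\epsilon'$-separated full-shift models push to $\epsilon$-separated models for $X$, so no exponential collapse occurs in the count. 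Hence the separated-set growth rates agree. This is the same mechanism that makes sofic entropy a conjugacy invariant, applied one-sidedly. Now invoke the symbolic-dynamics computation of the preceding exercise: $h_\Sigma(T)=\log|A|$, while any \emph{proper} closed shift-invariant $X\subsetneq A^\Ga$ satisfies $h_\Sigma(T\resto X)<\log|A|$. Therefore $X$ cannot be proper, so $X=A^\Ga$ and $\phi$ is surjective.

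The hard part is everything outside the sofic world, and it is precisely here that the conjecture \emph{as stated, for every countable group}, remains out of reach. The argument above says nothing when $\Ga$ admits no sofic approximation, because the microstate counts on which it rests are then unavailable: the empirical-distribution description of $h_\Sigma$ presupposes a Benjamini--Schramm convergent sequence of finite models, and there is no substitute for a hypothetical non-sofic group. Consequently the only route I see to the full statement factors through the major open problem recorded earlier in this section, namely whether every countable group is sofic. If soficity holds universally, the entropy argument closes the conjecture; a genuinely non-sofic group would instead require entirely new means, since no finite-model counting is available and the classical amenable or residually-finite tilings do not apply. I therefore regard the resolution of soficity (or the discovery of a sofic-free surjunctivity argument) as the single essential obstruction, with the sofic case above as the strongest conclusion the present machinery can deliver.
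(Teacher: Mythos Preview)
Your proposal is correct and matches the paper's approach exactly: the paper also treats the conjecture only in the sofic case, noting that injectivity makes $\phi$ a topological conjugacy onto its image so $h_\Sigma(\Ga\cc\phi(A^\Ga))=h_\Sigma(\Ga\cc A^\Ga)=\log|A|$, while a proper subshift has strictly smaller entropy because it misses a cylinder set. Your microstate-transport justification is a spelled-out version of the conjugacy-invariance step the paper takes for granted, and your identification of the non-sofic case as the genuine open obstruction is precisely the status the paper records by stating the result as a conjecture.
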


It was this conjecture that inspired Gromov to invent sofic groups (although the name `sofic', derived from the Hebrew word for finite, was coined by Benjy Weiss \cite{weiss-2000}). Gromov proved the conjecture holds for all sofic groups. A new proof, obtained by D. Kerr and H. Li \cite{kerr-li-variational} goes as follows: assuming $\phi$ is injective, 
$$h_\Sigma(\Ga \cc \phi(A^\Ga)) = h_\Sigma(\Ga \cc A^\Ga) = \log |A|.$$
However, if $\phi$ is not surjective then $h_\Sigma(\Ga \cc \phi(A^\Ga))  < \log |A|$ (because  the image $\phi(A^\Ga)$ has trivial intersection with some finite cylinder set). This implies the Conjecture.

Now suppose that $A$ is a finite field and $\phi$ is $A$-linear. In this case, we can think of $\phi$ as an element of the group ring $A\Ga$. The theorem implies that the group $A\Ga$ is {\bf directly finite}: that is $xy = 1$ implies $yx = 1$ for all $x,y \in A\Ga$. More generally, because all fields can be embedded into an ultraproduct of finite fields, the same result holds when $A$ is an arbitrary field. This proves Kaplansky's Direct Finiteness Conjecture for sofic groups. Actually, more is true: $A\Ga$ is directly finite whenever $A$ is a matrix algebra over a division ring \cite{ElekSzabo2004}.

%The general conjecture posits that for any field $k$ and any countable group $\Ga$, $k\Ga$ is directly finite.

%\subsubsection{Topological sofic extension entropy}

%just a definition? 

%\subsubsection{Other definitions}       %%%%%%%%%%%%%%%%%%%%%%%%%%%%%%%%%%%%%

\subsection{Measure sofic entropy}\label{sec:measure-sofic}       %%%%%%%%%%%%%%%%%%%%%%%%%%%%%%%%%%%%%
There are two equivalent definitions of measure sofic entropy: one via pseudo-metrics (similar to topological entropy) and one via partitions.

\subsubsection{The pseudometric definition}\label{sec:pseudometric}      %%%%%%%%%%%%%%%%%%%%%%%%%%%%%%%%%%%%%

Suppose $X$ is a compact metrizable space, $T=(T^g)_{g\in \Ga}$ is an action on $X$ by homeomorphisms and  $\mu$ is an invariant Borel probability measure on $X$. Let $\Prob(X)$ denote the space of Borel probability measures on $X$. Recall that the weak* topology on $\Prob(X)$ is defined as follows: a sequence $\{\mu_n\}_{n\in \N}$ converges to a measure $\mu_\infty$ if and only if for every continuous function $f:X \to \C$, 
$$\int f~d\mu_n \to \int f~d\mu_\infty$$
as $n\to\infty$. By the Banach-Alaoglu Theorem, $\Prob(X)$ is compact in the weak* topology.

Given a pseudo-metric $\rho$ on $X$, a finite subset $F \subset \Ga$, $\delta>0$ and $\sigma:\Ga \to \sym(V)$, define $\Map(T,\rho,F,\delta,\sigma)$ as in \S \ref{sec:pseudo-top}. In addition, if $\cO \subset \Prob(X)$ is an open neighborhood of $\mu$ then let $\Map(T,\rho,\cO,F,\delta,\sigma)$ denote the set of all $x \in \Map(T,\rho,F,\delta,\sigma)$ such that $x_*u_V \in \cO$ where $u_V$ denotes the uniform probability measure on $V$. These are the microstates that are approximately equidistributed.

The {\bf sofic entropy of $T$ with respect to $\rho$ and $\Sigma$} is
$$h_{\Sigma,\mu}(T,\rho) = \sup_{\epsilon>0} \inf_{\cO} \inf_{F \Subset \Ga} \inf_{\delta >0} \limsup_{i\to\infty} |V_i|^{-1}\log \left(N_\epsilon( \Map(T,\rho,\cO,F,\delta,\sigma_i), \rho_\infty)\right).$$
Intutively, this measures the exponential rate of growth of the number of microstates for the action that are approximately equidistributed with respect to $\mu$.

As in the topological case, a pseudometric $\rho$ on $X$ is {\bf generating} with respect to $T$ if for every $x,y \in X$ with $x\ne y$ there exists $g\in \Ga$ with $\rho(T^gx,T^gy)>0$. 

\begin{thm}\cite{kerr-li-variational}\label{thm:measure-entropy}
For $i=1,2$ let $T_i$ be pmp actions of $\G$ by homeomorphisms on compact metrizable spaces $X_i$ and $\rho_i$ be continuous generating pseudometrics on $X_i$. If these actions are measurably conjugate then
$$h_{\Sigma,\mu_1}(T_1,\rho_1) = h_{\Sigma,\mu_2}(T_2,\rho_2).$$  
\end{thm}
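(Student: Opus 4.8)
\emph{Overall plan.} The plan is to follow the template of the proof of Theorem~\ref{thm:top-entropy}, adding the two ingredients needed because we are only given a measure-theoretic conjugacy rather than a topological one: first reduce to genuine metrics, and then transport equidistributed microstates along a \emph{continuous} approximation of the conjugacy furnished by Lusin's theorem. As a preliminary reduction, exactly as in the proof sketch of Theorem~\ref{thm:top-entropy}, if $\rho$ is a continuous generating pseudometric on a compact model $X$ and $\phi\in\ell^1(\G)$ is strictly positive, then $\rho'(x,y)=\sum_{g\in\G}\rho(T^gx,T^gy)\phi(g)$ is a continuous metric with the same microstate sets up to a change of the parameters $F,\delta$; since the equidistribution constraint $x_*u_V\in\cO$ does not involve $\rho$ — the weak$^*$ topology on $\Prob(X)$ depends only on the topology of $X$ — we get $h_{\Sigma,\mu}(T,\rho)=h_{\Sigma,\mu}(T,\rho')$, and may assume $\rho_1,\rho_2$ are continuous metrics. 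The same argument as in the topological case also lets us replace $\rho_\infty$ by $\rho_2$ in the definition of $h_{\Sigma,\mu}$, which is more convenient below; I write $\rho_{i,2}$ for the $\ell^2$-average pseudometric on $X_i^V$ built from $\rho_i$. (Specialising the transport argument below to $X_1=X_2$, $T_1=T_2$, conjugacy and transport map both the identity, also recovers independence of $h_{\Sigma,\mu}$ from the choice of continuous generating pseudometric on a fixed model.)

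\emph{Transport of microstates.} Let $\Phi:X_1\to X_2$ be a measure conjugacy; it is equivariant and injective on some conull $\G$-invariant Borel set $X_1^0$. Fix $\eta>0$. By Lusin's theorem there is a compact $C_1\subset X_1^0$ with $\mu_1(C_1)>1-\eta$ on which $\Phi$ is continuous, and by the Tietze extension theorem a continuous $\Psi:X_1\to X_2$ with $\Psi|_{C_1}=\Phi|_{C_1}$; symmetrically there is a compact $C_2\subset X_2$ with $\mu_2(C_2)>1-\eta$ and a continuous $\Psi':X_2\to X_1$ with $\Psi'|_{C_2}=\Phi^{-1}|_{C_2}$, and after shrinking $C_1$ we may assume $\Psi'\circ\Psi$ is the identity on $C_1$. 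For $x\in X_1^V$ set $\Psi(x):=(\Psi(x_v))_{v\in V}\in X_2^V$. The core claim is: for every $\epsilon>0$ one can choose $\epsilon'>0$ (with $\epsilon'\to 0$ as $\epsilon\to0$), a weak$^*$ neighbourhood $\cO_2$ of $\mu_2$ and $\eta$ small enough, so that for all $F'\Subset\G$, $\delta'>0$ there exist $F\Subset\G$, $\delta>0$ and a weak$^*$ neighbourhood $\cO_1$ of $\mu_1$ such that $x\mapsto\Psi(x)$ maps every $(\rho_{1,2},\epsilon)$-separated subset of $\Map(T_1,\rho_1,\cO_1,F,\delta,\sigma)$ to a $(\rho_{2,2},\epsilon')$-separated subset of $\Map(T_2,\rho_2,\cO_2,F',\delta',\sigma)$ of the same cardinality. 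Granting the claim, comparing separated-set counts for each $i$, taking $\limsup_i$, then the infimum over $F',\delta',\cO_2$, then the supremum over $\epsilon$, yields $h_{\Sigma,\mu_1}(T_1,\rho_1)\le h_{\Sigma,\mu_2}(T_2,\rho_2)$; the reverse inequality follows by symmetry using $\Psi'$, and the theorem follows.

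\emph{Verifying the claim.} Three things must be checked. (i) Equidistribution: $\Psi(x)_*u_V=\Psi_*(x_*u_V)$ is weak$^*$-close to $\Psi_*\mu_1$ when $x_*u_V$ is close to $\mu_1$, and $\Psi_*\mu_1$ is weak$^*$-close to $\Phi_*\mu_1=\mu_2$ once $\eta$ is small (as $\Psi=\Phi$ off a set of $\mu_1$-measure $<\eta$), so $\Psi(x)_*u_V\in\cO_2$ for $\eta$ small and $\cO_1$ tight. (ii) Good-model bound: for $f\in F'$, whenever $x_v\in C_1\cap T_1^{-f}C_1$ — a set of $\mu_1$-measure $>1-2\eta$ by invariance of $\mu_1$ — one has $T_2^f\Psi(x_v)=T_2^f\Phi(x_v)=\Phi(T_1^fx_v)=\Psi(T_1^fx_v)$; splitting the average defining $\rho_{2,2}(T_2^f\Psi(x),\Psi(x)\circ\sigma(f))^2$ into these $v$ and the rest, the former terms equal $\rho_2(\Psi(T_1^fx_v),\Psi(x_{\sigma(f)v}))^2$ and are small on average by uniform continuity of $\Psi$ plus a Chebyshev estimate, once $x\in\Map(T_1,\rho_1,F,\delta,\sigma)$ with $F\supset F'$ and $\delta$ small, while the latter terms are at most $\mathrm{diam}(X_2,\rho_2)^2$ and involve a fraction of $v$ that is $O(\eta)$ plus an error tending to $0$ as $\cO_1$ shrinks. (iii) Injectivity up to scale: for equidistributed microstates $x,x'$ for $T_1$ with $\rho_{2,2}(\Psi(x),\Psi(x'))<\epsilon'$, one bounds $\rho_{1,2}(x,x')$ by $\rho_{1,2}(x,\Psi'\Psi(x))+\rho_{1,2}(\Psi'\Psi(x),\Psi'\Psi(x'))+\rho_{1,2}(\Psi'\Psi(x'),x')$, where the outer terms are small because $\Psi'\Psi=\mathrm{id}$ on $C_1$ and most coordinates lie in $C_1$ (equidistribution plus $\mu_1(C_1)>1-\eta$), and the middle term is small by uniform continuity of $\Psi'$ together with a Chebyshev bound on the fraction of coordinates where $\rho_2(\Psi(x)_v,\Psi(x')_v)$ is not tiny; choosing the constants so that $\rho_{1,2}(x,x')<\epsilon$ contradicts $(\rho_{1,2},\epsilon)$-separation, which forces $\Psi$ to be injective on such separated sets and shows their images are $(\rho_{2,2},\epsilon')$-separated.

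\emph{Main obstacle.} I expect the principal difficulty to be exactly the bookkeeping in the verification: fixing the order of quantifiers (choose $\epsilon$, then the moduli of uniform continuity of $\Psi$ and $\Psi'$, then $\eta$, then $\epsilon'$, and finally $\delta,F,\cO_1$ depending on the given $\delta',F',\cO_2$), and controlling the ``bad'' coordinates — those $v$ with $x_v\notin C_1$, or where $\Psi$ or $\Psi'$ collapses distances — by combining invariance of $\mu_i$, equidistribution of the microstates, and crude diameter bounds, so that their aggregate contribution to the $\rho_2$-averages stays under the prescribed tolerances. It is here that working with $\rho_2$ rather than $\rho_\infty$ is essential: a single bad coordinate can destroy an $\ell^\infty$ estimate, whereas a small fraction of bad coordinates is harmless in an $\ell^2$-average.
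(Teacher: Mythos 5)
Your overall strategy---reduce to metrics, apply Lusin, transport equidistributed microstates, count separated sets---is exactly the paper's, and your treatment of separation in step (iii) and your remark on why the $\ell^2$-average (rather than $\rho_\infty$) is essential fill in details the paper's sketch omits. But there are two linked gaps, both occurring where you pass from the measurable conjugacy $\Phi$ to something you can push microstates through. First, the Tietze extension theorem does not produce a continuous $\Psi:X_1\to X_2$ extending $\Phi|_{C_1}$: Tietze applies to maps into $\R$ (or $\R^n$, or absolute retracts), not into an arbitrary compact metrizable target. If $X_2$ is totally disconnected---e.g.\ a symbolic space such as $(\Z/2)^\G$, a central example in this subject---and the relevant component of $X_1$ is connected, a non-constant continuous map on $C_1$ admits no continuous extension at all. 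Since your step (i) rests on the weak$^*$-continuity of $\Psi_*$, which requires $\Psi$ continuous, the equidistribution transfer does not go through as written.

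Second, even granting some Borel extension of $\Phi|_{C_1}$, your repeated claim that ``most coordinates lie in $C_1$'' (used in (ii) and (iii)), i.e.\ that $(x_*u_V)(C_1)\ge 1-O(\eta)$ once $x_*u_V\in\cO_1$ for $\cO_1$ small, is false for a general compact $C_1$: weak$^*$ closeness controls $\limsup_n\nu_n(C)\le\nu(C)$ for closed $C$, which is the wrong direction (consider $\delta_{1/n}\to\delta_0$ and $C=\{0\}$). This is precisely why the paper's proof arranges for the Lusin set and its image to be $\mu$-continuity sets, so that the portmanteau theorem gives $\nu_n(C_1)\to\mu_1(C_1)$; since continuity sets are dense in the measure algebra, this costs nothing. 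With that modification---drop the global continuous extension, keep only the uniform continuity of $\Phi|_{C_1}$, take $C_1$ and $\Phi(C_1)$ to be continuity sets, and test weak$^*$ closeness against scalar-valued continuous functions obtained by applying (honest, $\C$-valued) Tietze to $f\circ\Phi|_{C_1}$---your verification of (i)--(iii) goes through and the argument closes.
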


\begin{proof}[Proof sketch]
As in the proof of Theorem \ref{thm:top-entropy}, we can assume, without loss of generality that $\rho_1$ and $\rho_2$ are metrics, not just pseudometrics. Let $\Phi:X_1 \to X_2$ be a measure-conjugacy. By Lusin's Theorem, for every $\eta>0$ there exists a compact set $Y_1 \subset X_1$ such that $\Phi$ restricted to $Y_1$ is uniformly continuous and $\mu_1(Y_1)>1-\eta$. 

Recall that a subset $Z \subset X_1$ is a {\bf continuity set} if $\mu_1(\partial Z)=0$ where $\partial Z = \bZ \cap \overline{X_1-Z}$. For simplicity suppose that Lusin's set $Y_1$ defined above is a continuity set and that its image $\Phi(Y_1)=:Y_2$ is also a continuity set. This does not have to be true but since the continuity sets form an algebra that is dense in the measure algebra it is approximately true.

The portmanteau Theorem states that a sequence $\{\nu_n\}$ of Borel probability measures in $\Prob(X_1)$ (say) converges to $\nu_\infty$ in the weak* topology if and only if $\lim_n \nu_n(Z) = \nu_\infty(Z)$ for every continuity set $Z \subset X_1$. It follows that any microstate for $\Ga \cc (X_1,\mu_1)$ pushes forward under $\Phi$ to a microstate for $\Ga \cc (X_2,\mu_2)$ although the parameters qualifying how good (or bad) the microstate is may change. The theorem follows from this. %We leave it to the reader to correct the proof in the case neither $Y_1$ nor $Y_2$ is a continuity set.
 
 %Using this we can find a continuity set $Z_1$ such that $\mu(Z_1 \vartriangle Y_1)$ is small and $Z_1$ is contained in a small neighborhood of $Y_1$. 

%Given a microstate $x:[d_i] \to X_1$, we perturb $x$ to obtain a new microstate $x'$ as follows: for any $v\in [d_i]$ with $x(v) \notin Z_1 - Y_1$ we let $x'(v)=x(v)$. If $x(v) \in Z_1-Y_1$ then we let $x'(v) \in Y_1$ be a point close to $x(v)$. 

%The result can be obtained by a small but technical modification of the above proof.
\end{proof}

\begin{remark}
The proof sketch above is very different from the proofs in \cite{kerr-li-variational} which are operator-algebraic. See also \cite[Proposition 3.4]{kerr-li-sofic-amenable}.
\end{remark}
\begin{remark}
 In \cite{MR3773269} Ben Hayes relaxes the condition that $X$ is compact to being merely completely metrizable and separable assuming the measure satisfies a `tightness' condition. 
\end{remark}

\begin{defn}[Measure sofic entropy]
The {\bf measure sofic entropy} of the action $T$ with respect to $\Sigma$ is
$$h_{\Sigma,\mu}(T)  = h_{\Sigma,\mu_1}(T_1,\rho_1) $$
where $\Ga \cc^{T_1} (X_1,\mu_1)$ is any compact topological model for $\Ga \cc^T (X,\mu)$ and $\rho_1$ is any generating pseudo-metric on $X_1$.
\end{defn}

\begin{thm}
If $\Ga$ is amenable then measure sofic entropy agrees with classical Kolmogorov-Sinai entropy. 
\end{thm}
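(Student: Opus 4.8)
The plan is to identify, up to subexponential factors, the approximately equidistributed microstates on the sofic approximation with genuine $\cP$-names indexed by F\o lner sets, and then read off the classical entropy formula. Fix a compact topological model $\Ga \cc (X,\mu)$ and a continuous generating metric $\rho$ (Theorem~\ref{thm:measure-entropy} permits this), and pass to the equivalent partition formulation of $h_{\Sigma,\mu}$: it suffices to show $h_{\Sigma,\mu}(T,\rho)=h_\mu(T,\cP)$ for each finite partition $\cP$ with $\mu$-null cell boundaries, since such ``continuity partitions'' are dense in the measure algebra and a generating sequence of them recovers $h_\mu(T)$ on the classical side (with the usual caveat that both sides may be $+\infty$ if no finite generating partition exists). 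Concretely, a $(F,\delta)$-equivariant, $\delta$-equidistributed microstate $x\in X^{V_i}$ induces a coloring $\psi_x:V_i\to\cP$, and because $\rho$ is generating and $\cP$ is a continuity partition one checks that for a suitable choice of the parameters $\epsilon,\cO,F,\delta$ the count $N_\epsilon(\Map(T,\rho,\cO,F,\delta,\sigma_i),\rho_\infty)$ and the number of distinct such colorings have the same exponential growth rate.

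The bridge is the \textbf{Rokhlin (quasi-tiling) lemma for sofic approximations of amenable groups}, the tool flagged in the remark above: given $\eta>0$ and finite $K\Subset\Ga$, for all large $i$ there exist finitely many $(K,\eta)$-invariant F\o lner sets $T_1,\dots,T_m$ and center sets $C_1,\dots,C_m\subset V_i$ such that the tiles $\{\sigma_i(t)c:\ t\in T_j\}$ (over $c\in C_j$, $1\le j\le m$) are pairwise disjoint, each parametrized bijectively and nearly equivariantly by $t\mapsto\sigma_i(t)c$, cover at least $(1-\eta)|V_i|$ vertices, and are ``equidistributed'' in the sense that the centers are spread evenly; in particular $\sum_j |C_j|\,|T_j|\le|V_i|$. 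Restricting a coloring $\psi$ to a tile yields, for all but an $\eta$-fraction of centers, an honest $T_j$-indexed $\cP$-name of a point of $X$; conversely, choosing an admissible $\mu$-typical name independently on each tile and pasting (arbitrarily on the uncovered vertices, then choosing a representative point of $X$ in each cell) produces a genuine partial orbit on each tile, hence a microstate that is approximately equivariant up to the tile boundaries plus the $\eta$-fraction, and approximately equidistributed by the pointwise ergodic theorem along the F\o lner sets $T_j$.

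This correspondence yields both inequalities. \emph{Upper bound}: $\delta$-equidistribution of a microstate forces, via the tiling, the empirical distribution of the $T_j$-names read off its tiles to be close to the $\mu$-law of the $T_j$-name; by the method of types the number of colorings with such an empirical distribution on the $|C_j|$ tiles of type $j$ is at most $\exp\!\big(|C_j|\,H_\mu(\bigvee_{f\in T_j}f^{-1}\cP)(1+o(1))\big)$, and $H_\mu(\bigvee_{f\in T_j}f^{-1}\cP)\le|T_j|(h_\mu(T,\cP)+o(1))$ by subadditivity since $T_j$ is highly invariant; multiplying over $j$, using $\sum_j|C_j|\,|T_j|\le|V_i|$ and absorbing the $|\cP|^{\eta|V_i|}$ from uncovered vertices gives $\le\exp(|V_i|(h_\mu(T,\cP)+o(1)))$ after $i\to\infty$ then $\eta\to0$. \emph{Lower bound}: fixing $\epsilon>0$, the definition of classical entropy along $\{T_j\}$ provides, for each $j$, a $(\rho,\epsilon)$-separated family of (generic) $\cP$-names of cardinality $\exp(|T_j|(h_\mu(T,\cP)-o(1)))$; transplanting an independent choice from these families onto the tiles produces $\prod_j \exp(|C_j|\,|T_j|(h_\mu(T,\cP)-o(1)))\ge\exp((1-\eta)|V_i|(h_\mu(T,\cP)-o(1)))$ microstates, which are $(F,\delta)$-equivariant and $\delta$-equidistributed once $i$ is large and the $T_j$ are sufficiently invariant, and remain $\rho_\infty$-separated since they already differ on a single tile; letting $\eta\to0$ finishes. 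Taking a generating sequence of continuity partitions, and using that for amenable $\Ga$ the sofic entropy is independent of the sofic approximation (all such approximations being equivalent, \S\ref{sec:space}), identifies $h_{\Sigma,\mu}(T)$ with Kolmogorov--Sinai entropy.

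The crux, and essentially the only hard point, is the Rokhlin/quasi-tiling lemma for the graphs $G_i$: one must show that a sofic approximation of an amenable group can be all-but-entirely partitioned into genuine, nearly-invariant, evenly-distributed F\o lner tiles with controlled overlap and boundary. This is the Ornstein--Weiss quasi-tiling machinery transported to the combinatorial setting of sofic approximations; once it is in hand the rest is bookkeeping with Shannon-entropy, subadditivity, and method-of-types estimates. (Alternatively one may invoke the Elek--Szab\'o theorem cited in \S\ref{sec:space}, which already asserts that every sofic approximation of an amenable group is equivalent to one assembled from F\o lner sets, reducing the computation to the case of an explicitly F\o lner-built $\Sigma$.)
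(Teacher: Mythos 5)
Your outline is correct and follows exactly the route the paper points to: the Kerr--Li proof via the Rokhlin (quasi-tiling) lemma for sofic approximations of amenable groups, followed by the standard method-of-types and subadditivity bookkeeping on nearly invariant tiles. The paper itself gives no proof beyond a remark citing that argument (and an alternative one of Bowen via a sofic Rudolph--Weiss theorem for relative entropy under orbit equivalence), so there is nothing substantive to compare; your identification of the quasi-tiling lemma as the only hard step is exactly right.
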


\begin{remark}
There are two very different proofs of this result. The one in \cite{kerr-li-sofic-amenable} is based on the sofic Rokhlin's Lemma for amenable groups. The other in \cite{bowen-entropy-2012} is based on a sofic-version of the Rudolph-Weiss Theorem that relative entropy is preserved under orbit-equivalence with respect to the orbit-change sigma-algebra. 
\end{remark}

\begin{remark}
As in the topological case, the $\rho_\infty$ appearing in the formula for $h_{\Sigma,\mu}(T,\rho)$ can be replaced with $\rho_2$ without affecting its value. Also the $\limsup$ can be replaced by a $\liminf$ or an ultralimit; however these replacements can lead to different invariants because sofic entropy depends on the choice of sofic approximation in general (see \S \ref{sec:dependence}).
\end{remark}

\begin{thm}\label{thm:bernoulli-sofic}
If $\Ga \cc (K,\kappa)^\Ga$ is a Bernoulli shift and $\Sigma$ is an arbitrary sofic approximation to $\Ga$ then
$$h_{\Sigma,\kappa^\Ga}(\Ga \cc K^\Ga) = H(K,\kappa).$$
\end{thm}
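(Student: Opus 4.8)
The strategy is to prove the two inequalities $h_{\Sigma,\kappa^\Ga}(\Ga\cc K^\Ga)\le H(K,\kappa)$ and $h_{\Sigma,\kappa^\Ga}(\Ga\cc K^\Ga)\ge H(K,\kappa)$ separately, with essentially all of the work going into the lower bound, and to carry it out in full for $K$ finite (the case one actually needs for the $2$-shift-versus-$4$-shift problem), indicating at the end how a general base is handled. The upper bound is immediate from the general principle recalled in \S\ref{sec:intro-rokhlin}: the time-$0$ partition $\cP=\{\{x\in K^\Ga:x_e=k\}:k\in K\}$ generates, so $h_{\Sigma,\kappa^\Ga}(\Ga\cc K^\Ga)\le H_{\kappa^\Ga}(\cP)=H(K,\kappa)$. (For $K$ finite one sees this directly: a microstate $\phi\colon V_i\to K$ whose empirical distribution of pullback names is weak$^*$-close to $\kappa^\Ga$ has empirical letter-frequencies close to $\kappa$, so there are at most $e^{|V_i|(H(K,\kappa)+o(1))}$ such $\phi$ by Stirling's formula, and $N_\epsilon$ of a set never exceeds its cardinality.) So fix a finite $K$ and the compact model $X=K^\Ga$ equipped with the continuous generating pseudometric $\rho(\bfx,\bfy)=1$ if $\bfx_e\ne\bfy_e$ and $\rho(\bfx,\bfy)=0$ otherwise.

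For the lower bound I would produce many microstates at random. Identify a point $x\in X^{V_i}=(K^\Ga)^{V_i}$ with a colouring $\phi\colon V_i\to K$ through pullback names, $x_v=(\phi(\sigma_i(g)^{-1}v))_{g\in\Ga}$, and sample $\phi$ by choosing the values $(\phi(v))_{v\in V_i}$ i.i.d.\ with law $\kappa$. The claim is that for each radius $r$, each $\delta>0$, and each weak$^*$-neighbourhood $\cO$ of $\kappa^\Ga$, the probability that $\phi$ yields a point of $\Map(T,\rho,\cO,F,\delta,\sigma_i)$ tends to $1$ as $i\to\infty$. The heart of this is that on the set $\cV_r(G_i)$ of ``good'' vertices, which has density $\ge 1-o(1)$ by the sofic hypothesis, the radius-$r$ ball about $v$ is isomorphic to the ball $B_r$ in $\Cay(\Ga,S)$, so $g\mapsto\sigma_i(g)^{-1}v$ is injective on $B_r$ and the restricted pullback name $(\phi(\sigma_i(g)^{-1}v))_{g\in B_r}$ is an honest family of $|B_r|$ independent $\kappa$-samples; hence for every pattern $a\in K^{B_r}$ the probability that the name at such a $v$ restricts to $a$ equals $\kappa^{B_r}(a)$ exactly, and the expected fraction of all $v\in V_i$ carrying the pattern $a$ is $\kappa^{B_r}(a)+o(1)$. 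Changing one value $\phi(w)$ affects the pattern at $v$ for at most $|B_r|$ vertices $v$, so McDiarmid's bounded-difference inequality (equivalently, a direct second-moment estimate) shows each such fraction is within a prescribed error of its mean except with probability $o(1)$; a union bound over the finitely many $a\in K^{B_r}$, together with the fact that weak$^*$-neighbourhoods of $\kappa^\Ga$ are cut out by finitely many finite-window cylinder constraints, gives the claim. (The equivariance requirement $\rho_2(T^fx,x\circ\sigma_i(f))<\delta$ holds for the pullback-name point once $\sigma_i$ is sufficiently multiplicative, uniformly in $\phi$.)

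It remains to convert ``most $\phi$ are microstates'' into a lower bound on the count. Let $M_i$ be the set of colourings yielding good microstates; we have just shown $\sum_{\phi\in M_i}\prod_{v\in V_i}\kappa(\phi(v))=P(\phi\in M_i)\to 1$. Apply the weak law of large numbers to the i.i.d.\ variables $-\log\kappa(\phi(v))$, whose common mean is $H(K,\kappa)$: for each $\eta>0$ the event $\prod_{v\in V_i}\kappa(\phi(v))>e^{-|V_i|(H(K,\kappa)-\eta)}$ has probability $o(1)$, so the subset $M_i'\subset M_i$ on which $\prod_{v\in V_i}\kappa(\phi(v))\le e^{-|V_i|(H(K,\kappa)-\eta)}$ still has total $\kappa^{V_i}$-mass $\ge\tfrac12$ for all large $i$, forcing $|M_i'|\ge\tfrac12 e^{|V_i|(H(K,\kappa)-\eta)}$. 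Distinct colourings give microstate points at $\rho_\infty$-distance $1$, so $N_1(\Map(T,\rho,\cO,F,\delta,\sigma_i),\rho_\infty)\ge|M_i'|$. Taking $|V_i|^{-1}\log$, then $\limsup_i$, then $\inf$ over $\cO,F,\delta$, and finally the supremum over the separation scale, gives $h_{\Sigma,\kappa^\Ga}(\Ga\cc K^\Ga)\ge H(K,\kappa)-\eta$ for every $\eta>0$, hence $\ge H(K,\kappa)$; together with the upper bound this is the theorem.

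I expect the main obstacle to be the probabilistic claim of the second paragraph --- that the random colouring is, with high probability, a genuine good model. This is exactly the step that uses the defining property of a sofic approximation (it is what turns pullback names over good vertices into honest i.i.d.\ samples), and it requires a concentration estimate to tame the bounded dependence between overlapping neighbourhoods; everything else is the asymptotic equipartition property and Stirling-type bookkeeping. The only further point, relevant when $K$ is infinite or $H(K,\kappa)=\infty$, is that ``counting colourings'' must be replaced throughout by $N_\epsilon(\cdot,\rho_\infty)$ on a compact model of the Bernoulli system and one checks that discretizing the alphabet costs only a factor $e^{o(|V_i|)}$ (and, when $H(K,\kappa)=\infty$, the large-deviation step instead yields $h_{\Sigma,\kappa^\Ga}(\Ga\cc K^\Ga)\ge N$ for every $N$); this introduces no new idea.
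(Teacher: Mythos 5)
Your proposal is correct and follows essentially the same route as the paper's proof sketch: the lower bound via i.i.d.\ random colourings of $V_i$, pullback names, and a second-moment/Chebyshev-type concentration argument to show they are good microstates with high probability, followed by the law of large numbers (your AEP step) to count them; and the upper bound via projection to the identity coordinate and Stirling's formula. The only cosmetic differences are that you spell out the concentration step (McDiarmid in place of Chebyshev) and the equipartition bookkeeping more explicitly than the paper does.
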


The case in which $H(K,\kappa)<\infty$ is proven in \cite{bowen-jams-2010}. The infinite entropy case is handled in \cite{MR2813530}.

\begin{proof}[Proof sketch]
 The lower bound is obtained as follows: fix an open neighborhood $\cO$, finite $F\subset \Ga$ and $\delta>0$. Let $\phi:V_i \to K$ be a random map with law equal to the product measure $\kappa^{V_i}$. Let $\tphi:V_i \to K^\G$ be the ``pullback'' defined by
 $$\tphi(v)(g)=\phi(\sigma_i(g)^{-1}v).$$
  Then using Chebyshev's inequality it is shown that with high probability, when $i$ is large, $\tphi \in  \Map(T,\rho,\cO,F,\delta,\sigma_i)$. Here $\rho$ is the pseudometric given by $\rho(x,y)=\rho_K(x(e),y(e))$ where $\rho_K$ is an arbitrary metric on $K$ (which may be assumed to be a compact metrizable space). The law of large numbers now gives the lower bound.

In the case $H(K,\kappa)<\infty$, the upper bound is shown as follows. Let $\phi:V_i \to K^\Ga$ be any microstate. Let $\pi:K^\Ga \to K$ be projection to the identity coordinate. Then if $\phi$ is a good enough microstate the composition $\pi \circ \phi$ pushes the uniform measure $u_{V_i}$ forward to a measure on $K$ that is close to $\kappa$ in total variation distance. On the other hand, observe that $\phi$ is essentially determined by  $\pi\circ \phi$. So it suffices to observe that the number of maps $\phi':V_i \to K$ such that $\phi'_*u_{V_i}$ is close to $\kappa$ is approximately $\exp(H(K,\kappa)|V_i|)$. This is an application of elementary combinatorics and Stirling's formula.
\end{proof}

\subsubsection{The partition definition}\label{sec:partition}       %%%%%%%%%%%%%%%%%%%%%%%%%%%%%%%%%%%%%

\begin{defn}
If $\Sigma_1,\Sigma_2$ are sigma-algebras on sets $X_1,X_2$ respectively then a {\bf homomorphism} between  them is a map $\phi:\Sigma_1\to \Sigma_2$ such that for all $A,B \in \Sigma_1$,
$$\phi(A\cup B) = \phi(A) \cup \phi(B), ~\phi(A \cap B) = \phi(A) \cap \phi(B), ~\phi(\emptyset)  = \emptyset, ~\phi(X_1)=X_2.$$
\end{defn}

Suppose $\Ga \cc^T (X,\mu)$ is a pmp action. For simplicity, $T^gx$ is denoted by $gx$ for $g\in \G,x\in X$.
\begin{defn}
Let $\cP$ be a finite measurable partition of $X$, $F \subset \Ga$ a finite set with $1_\G \in F$, and $\delta>0$. Let $\cP^F = \bigvee_{f\in F} f^{-1}\cP$ be the coarsest partition containing $f^{-1}\cP$ for $f\in F$. If $\cQ$ is any partition, let $\s\textrm{-alg}(\cQ)$ be the smallest sigma-algebra containing $\cQ$. Also let $2^V$ denote the sigma-algebra of all subsets of $V$ and $u_V$ be the uniform probability measure on $V$.

Given $\sigma:\Ga \to \sym(V)$, let $\Hom_\mu(\cP,F,\delta,\sigma)$  be the set of all homomorphisms $\phi: \s\textrm{-alg}(\cP^F) \to 2^V$ such that
\begin{enumerate}
\item $\sum_{P \in \cP} u_V(\sigma_s \phi(P) \vartriangle \phi(sP) ) < \delta$ for all $s\in F^{-1}$ and
\item $\sum_{A \in \cP^F} |u_V(\phi(A)) - \mu(A)|<\delta$. 
\end{enumerate}
The sofic entropy is defined as the exponential rate of growth of the number of such homomorphisms that can be extended to a more refined partition. To be precise, if $\cQ\le \cP$ is a partition coarser than $\cP$ and $\phi: \s\textrm{-alg}(\cP^F) \to 2^V$ is a homomorphism then let $\phi\resto \cQ$ be the restriction of $\phi$ to $\cQ$. Let $|\Hom_\mu(\cP,F,\delta,\sigma)|_\cQ$ be the cardinality of the set of restrictions $\{ \phi \resto \cQ:~ \phi \in \Hom_\mu(\cP,F,\delta,\sigma)\}$. Finally, for a sigma-sub-algebra $\cS \subset \cB$ define
\begin{eqnarray*}
h_{\Sigma,\mu}(T,\cS) = h_{\Sigma,\mu}(\Ga\cc X,\cS) = \sup_\cQ \inf_\cP\inf_{F \subset \Ga} \inf_{\delta>0}  \limsup_{i\to\infty} \frac{1}{|V_i|} \log |\Hom_\mu(\cP,F,\delta,\sigma_i)|_\cQ
\end{eqnarray*}
where the sup is over all finite partitions $\cQ \subset \cS$, the first inf is over finite partitions $\cP$ with $\cQ \le \cP \subset \cS$ and the second inf is over all finite subsets of $\Ga$.
\end{defn}

Recall that a sigma-sub-algebra $\cS \subset \cB_X$ is {\bf generating} for $T$ if $\cB_X$ is the smallest $T(\Ga)$-invariant sigma-algebra containing $\cS$ up to sets of measure zero. 
\begin{thm}\cite{kerr-partitions}\label{thm:measure-entropy-partition}
If $\cS \subset \cB_X$ is a generating sigma-sub-algebra then $h_{\Sigma,\mu}(T,\cS) = h_{\Sigma,\mu}(T).$
\end{thm}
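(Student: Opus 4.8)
The plan is to match the partition definition of $h_{\Sigma,\mu}(T,\cS)$ with the pseudometric definition of $h_{\Sigma,\mu}(T)$ and to use Theorem \ref{thm:measure-entropy}, which already tells us the pseudometric quantity is independent of the compact model and of the choice of continuous generating pseudometric. Fix a compact topological model $\Ga\cc^T(X,\mu)$. To a finite measurable partition $\cR$ associate the pseudometric $\rho_\cR(x,y)=1$ if $x,y$ lie in distinct atoms of $\cR$ and $\rho_\cR(x,y)=0$ otherwise; after an arbitrarily small measure perturbation of the atoms this is continuous, and it is generating exactly when $\bigvee_{g\in\Ga}g\cR$ generates $\cB_X$. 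The first, bookkeeping, step is the observation that a homomorphism $\phi\in\Hom_\mu(\cP,F,\delta,\sigma_i)$ is the same datum as a $\cP^F$-coloring of $V_i$ (it is determined by the images of the atoms of $\cP^F$, which partition $V_i$), that condition (1) is a discretized form of the microstate condition $\rho_{\cP,2}(T^f x,x\circ\sigma_i(f))<\delta'$ for $f\in F$, that condition (2) says the empirical distribution of the coloring is close to the $\cP^F$-marginal of $\mu$, and that $|\Hom_\mu(\cP,F,\delta,\sigma_i)|_\cQ$ counts the distinct $\cQ$-coarsenings of such colorings.

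Next I would prove that for a fixed finite $\cQ\subset\cS$ the quantity
$$g(\cQ):=\inf_{\cQ\le\cP\subset\cS}\ \inf_{F\Subset\Ga}\ \inf_{\delta>0}\ \limsup_{i\to\infty}\frac1{|V_i|}\log|\Hom_\mu(\cP,F,\delta,\sigma_i)|_\cQ$$
is unchanged if the restriction $\cP\subset\cS$ is dropped. Since $\cS\subset\cB_X$, the infimum over $\cP\subset\cS$ is at least $g^{\cB}(\cQ)$, the analogous infimum over all finite $\cP\ge\cQ$. For the reverse inequality one uses that $\cS$ is generating: given an arbitrary fine $\cP$, approximate its atoms in measure by sets in $\bigvee_{g\in E}g\cS$ for a finite $E\Subset\Ga$, hence by atoms of $\bigvee_{g\in E}g\cP_0$ for a finite $\cP_0\subset\cS$ refining $\cQ$; then, for $F$ large relative to $E$, the $(F,\delta)$-multiplicativity of $\sigma_i$ together with condition (1) lets one transport a good homomorphism $\phi$ for $\cP_0$ to an essentially-good homomorphism for $\bigvee_{g\in E}g\cP_0$ by the rule $\tilde\phi(gP):=\sigma_i(g)\phi(P)$, and back by restriction, with the operations approximately inverse and the $\cQ$-image count changing only by an $e^{o(|V_i|)}$ factor. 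Thus refining inside $\cS$ already drives the count down to $g^{\cB}(\cQ)$, so $g(\cQ)=g^{\cB}(\cQ)$. The elementary monotonicities — $|\Hom_\mu(\cP,F,\delta,\sigma_i)|_{\cQ'}\ge|\Hom_\mu(\cP,F,\delta,\sigma_i)|_\cQ$ for $\cQ'\ge\cQ$, and $|\Hom_\mu(\cP',F,\delta,\sigma_i)|_\cQ\le|\Hom_\mu(\cP,F,\delta,\sigma_i)|_\cQ$ for $\cP'\ge\cP$ (restriction of homomorphisms to sub-algebras respects conditions (1),(2)) — show $g$ and $g^{\cB}$ are monotone increasing in $\cQ$; applying the same generating-approximation argument to $\cQ$ (each atom of a finite $\cQ\subset\cB_X$ is, up to small measure, a union of atoms of some $\bigvee_{g\in E}g\cQ_0$ with $\cQ_0\subset\cS$ finite, and colorings for partitions agreeing up to small measure differ in number only sub-exponentially) yields $\sup_{\cQ\subset\cS}g(\cQ)=\sup_{\cQ\subset\cB_X}g^{\cB}(\cQ)$, i.e. $h_{\Sigma,\mu}(T,\cS)=h_{\Sigma,\mu}(T,\cB_X)$.

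It then remains to identify $h_{\Sigma,\mu}(T,\cB_X)$ with the pseudometric entropy $h_{\Sigma,\mu}(T)$. Using the coloring picture: for a fine partition $\cQ$ with small atoms, a $(\rho_\cQ,\epsilon)$-separated family of microstates corresponds, after discarding a $\delta$-fraction of coordinates, to a set of distinct $\cQ$-colorings; letting $\cQ$ vary and $\cP$ refine, the $\sup_\cQ\inf_\cP|\cdot|_\cQ$ structure matches the $\sup_\epsilon\inf_{\cO}\inf_F\inf_\delta N_\epsilon(\Map(T,\rho_\cP,\cO,F,\delta,\sigma_i),(\rho_\cP)_\infty)$ structure, the neighborhood $\cO$ being absorbed into condition (2) and the refinement of $\cP$ into the fact that, $\rho_\cQ$ being generating when $\bigvee_g g\cQ=\cB_X$, knowing a microstate up to $\epsilon$ amounts to knowing finitely many of its $\cP$-coordinates. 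Theorem \ref{thm:measure-entropy} makes the left-hand side model- and metric-independent, so the identification is legitimate.

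I expect the second step — independence of the generating sub-algebra $\cS$ — to be the main obstacle. The difficulty is structural: restricting the refining partition $\cP$ to lie in $\cS$ shrinks the infimum, while restricting the coarse partition $\cQ$ to lie in $\cS$ shrinks the supremum, so neither plain monotonicity nor a one-line measure approximation is enough. One genuinely needs the interplay between generating-ness of $\cS$ and the $(F,\delta)$-equivariance of homomorphisms (which lets partitions in $\cS$ "simulate", through the sofic approximation, their dynamical translates), together with careful bookkeeping that every approximation costs only $e^{o(|V_i|)}$ in the count. This is essentially Kerr's argument in \cite{kerr-partitions}.
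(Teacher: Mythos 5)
The first thing to say is that the paper contains no proof of this statement to compare against: Theorem \ref{thm:measure-entropy-partition} is quoted from \cite{kerr-partitions} with no sketch, so your proposal can only be judged on its own terms. At the level of strategy it is the right reconstruction: (i) for a fixed base $\cQ\subset\cS$, use the $(F,\delta)$-multiplicativity of $\sigma_i$ together with condition (1) to show that refining $\cP$ inside the dynamically generated algebra of $\cS$ already drives $|\Hom_\mu(\cP,F,\delta,\sigma_i)|_\cQ$ down to the unrestricted infimum; (ii) use the density of $\bigcup_{E\Subset\Ga}\s\textrm{-alg}(\cP_0^E)$ ($\cP_0\subset\cS$ finite) in the measure algebra plus a Stirling-type $e^{o(|V_i|)}$ counting lemma to remove the restriction $\cQ\subset\cS$ from the supremum; (iii) translate colorings into $\epsilon$-separated microstates and invoke Theorem \ref{thm:measure-entropy}. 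You also correctly locate the structural difficulty: the two restrictions (on $\cP$ and on $\cQ$) push the value in opposite directions, so neither inequality is free. (One small slip: restricting $\cP$ to lie in $\cS$ \emph{enlarges} the infimum rather than shrinking it; this does not affect your argument, which only needs that the two effects oppose each other.)

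The gaps are at the two decisive steps, and they are real rather than cosmetic. First, the rule $\tilde\phi(gP):=\sigma_i(g)\phi(P)$ does not by itself define a Boolean-algebra homomorphism on $\s\textrm{-alg}\bigl(\bigvee_{g\in E}g\cP_0\bigr)$: a homomorphism must respect intersections, and the sets $\sigma_i(g)\phi(P)\cap\sigma_i(g')\phi(P')$ are only \emph{approximately} consistent with any single partition of $V_i$, so one must either edit the putative homomorphism on a $\delta$-fraction of $V_i$ and prove the edit costs only $e^{o(|V_i|)}$ in the $\cQ$-image count, or (cleaner, and closer to \cite{kerr-partitions}) observe that $\phi\in\Hom_\mu(\cP_0,F,\delta,\sigma_i)$ is already defined on $\s\textrm{-alg}(\cP_0^F)$, which contains $\s\textrm{-alg}\bigl((\cP_0^{E})^{F'}\bigr)$ once $F$ is large relative to $E$ and $F'$, and then verify conditions (1) and (2) for the refined partition with degraded parameters. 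As written, your transport step asserts the conclusion of this lemma rather than proving it. Second, the pseudometric $\rho_\cR$ attached to a measurable partition is not continuous, and "an arbitrarily small measure perturbation of the atoms" does not fix this on an arbitrary compact model; one needs either a model in which the atoms are continuity sets (as in the paper's sketch of Theorem \ref{thm:measure-entropy}, via Lusin and the portmanteau theorem) or Kerr's direct comparison of coloring counts with $N_\epsilon(\cdot)$, and the passage between "knowing a microstate up to $\epsilon$" and "knowing finitely many $\cP$-coordinates" again costs an $e^{o(|V_i|)}$ factor that must be tracked. So: correct architecture, consistent with the known proof, but the two lemmas that carry the weight are stated rather than established.
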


\begin{remark}
In the special case in which $\cS$ is the sigma-algebra generated by a finite partition, the definition above is easily seen to be equivalent to the one given in the introduction.
\end{remark}

%\begin{proof}[Proof sketch]
%We first prove that  $h_{\Sigma,\mu}(\Ga\cc X,\cS) =  h_{\Sigma,\mu}(\Ga\cc X,\cB)$. For a finite partition $\cQ \subset \cS$, define
%$$h_{\Sigma,\mu}^\cQ(\Ga\cc X,\cS) = \inf_\cP\inf_{F \subset \Ga} \inf_{\delta>0}  \limsup_{i\to\infty} \frac{1}{d_i} \log |\Hom_\mu(\cP,F,\delta,\sigma_i)|_\cQ.$$
%Thus $h_{\Sigma,\mu}(\Ga\cc X,\cS)=\sup_\cQ h_{\Sigma,\mu}^\cQ(\Ga\cc X,\cS)$. Whenever $\cS \le \cT$ we immediately have
%$$h_{\Sigma,\mu}^\cQ(\Ga\cc X,\cS) \ge h_{\Sigma,\mu}^\cQ(\Ga\cc X,\cT).$$
%In particular, $h_{\Sigma,\mu}^\cQ(\Ga\cc X,\cS) \ge h_{\Sigma,\mu}^\cQ(\Ga\cc X,\cB)$.  

% The inequality $h_{\Sigma,\mu}(\Ga\cc X,\cS) \ge  h_{\Sigma,\mu}(\Ga\cc X,\cB)$ is immediate. To prove the opposite inequality, for any subset $F \subset \Ga$, let $\cS^F = \bigvee_{f\in F} f^{-1}\cS$. If $\cP$ is any finite partition on $\cS^F$ then there is a finite partition $\cR$ in $\cS$ such that $\cR^F = \bigvee_{f\in F} f^{-1}\cR$ refines $\cP$. Therefore,  
%$$|\Hom_\mu(\cR,FK,\delta,\sigma_i)|_\cQ \le  |\Hom_\mu(\cP,K,\delta,\sigma_i)|_\cQ$$
%for any finite $K\subset \Ga$ and $\cQ \le \cR$. From this it follows that 

%we observe that any homomorphism $\phi \in \Hom_\mu(\cP,F,\delta,\sigma)$ extends to a homomorphism 

%\end{proof}

\subsection{The $f$-invariant}\label{sec:f}

Let $S$ be a finite or countable set and $\Ga=\langle S \rangle$ the free group generated by $S$. Let $\Ga \cc^T (X,\mu)$ be a pmp action and $\cP$ a measurable partition of $X$ with finite Shannon entropy. Define
$$F_\mu(T,\cP) := H_\mu(\cP) + \sum_{s\in S} \big(H_\mu(\cP \vee s\cP)  - 2H_\mu(\cP)\big),$$
$$f_\mu(T,\cP) := \inf_{W \Subset \Ga} F_\mu(T,\cP^W)$$
where $\cP^W= \bigvee_{w\in W} w^{-1}\cP$. For simplicity, we have written $s\cP$ instead of $T^s\cP$.

\begin{thm}\label{thm:f1}\cite{bowen-annals-2010}
If $\cP,\cQ$ are generating partitions each with finite Shannon entropy then $f_\mu(T,\cP) = f_\mu(T,\cQ)$.
\end{thm}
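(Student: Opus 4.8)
Write $r=|S|$ (assume $|S|<\infty$ for concreteness). First I would record the identity
$$F_\mu(T,\cP)=H_\mu(\cP)-\sum_{s\in S}\big(H_\mu(\cP)-H_\mu(\cP\mid s\cP)\big),$$
which exhibits $F_\mu(T,\cP)$ as a fixed finite linear combination of Shannon entropies of $\cP$ and of the joins $\cP\vee s\cP$. Using the elementary inequalities $|H_\mu(\cA)-H_\mu(\cB)|\le\max\{H_\mu(\cA\mid\cB),H_\mu(\cB\mid\cA)\}$ and $0\le H_\mu(\cP\vee s\cP\mid\cQ\vee s\cQ)\le 2H_\mu(\cP\mid\cQ)$, expanding $F_\mu(T,\cP)-F_\mu(T,\cQ)$ for $\cQ\le\cP$ gives the \emph{refinement estimate}
$$(1-2r)\,H_\mu(\cP\mid\cQ)\ \le\ F_\mu(T,\cP)-F_\mu(T,\cQ)\ \le\ H_\mu(\cP\mid\cQ)\qquad(\cQ\le\cP).$$
Applied coordinatewise to the windows $\cP^W,\cQ^W$ and then pushed through the infimum defining $f_\mu$, this yields a continuity property: for fixed $\cP$ and $\varepsilon>0$ there is $\delta>0$ with $|f_\mu(T,\cP)-f_\mu(T,\cR)|<\varepsilon$ whenever $\cR\ge\cP$ and $H_\mu(\cR\mid\cP)<\delta$ (the modulus may depend on $\cP$).

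The technical heart is a \emph{monotonicity lemma}: if $V\subset V'$ are finite connected subsets of the Cayley tree of $(\Ga,S)$ containing $e$, then $F_\mu(T,\cP^{V'})\le F_\mu(T,\cP^{V})$. Granting it, $n\mapsto F_\mu(T,\cP^{B(n)})$ is non-increasing and
$$f_\mu(T,\cP)=\inf_{W\Subset\Ga}F_\mu(T,\cP^W)=\lim_{n\to\infty}F_\mu(T,\cP^{B(n)}),$$
since any finite window lies in a ball and can be grown to it through connected sets. To prove the lemma one telescopes to the case $V'=V\cup\{g\}$; as $\Ga$ is free, its Cayley graph is a tree, so $g$ attaches to $V$ by a single edge, and $F_\mu(T,\cP^{V\cup\{g\}})-F_\mu(T,\cP^{V})$ rewrites as a signed sum of conditional Shannon entropies supported on the star of $g$, whose sign is then forced by the strong subadditivity inequality in the local tree geometry. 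I expect \emph{this} (and the relative version of it needed below) to be the main obstacle: the inequality truly uses the tree structure --- it fails for general refinements (e.g.\ $F_\mu(T,\cdot)$ is $0$ on the trivial partition yet equals $H(K,\kappa)>0$ on the time-$0$ partition of a Bernoulli shift, and it is not even monotone along $\cP\le\cP\vee g\cP$ for $|g|$ large) --- and crude triangle-inequality estimates on the entropy increments are far too lossy, since already for a Bernoulli shift one must exploit exact cancellations among quantities of size $\asymp|B(n)|$.

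With the lemma in hand the theorem is assembled as follows. \emph{Window invariance}: $f_\mu(T,\cP)=f_\mu(T,\cP^{W_0})$ for any finite $W_0$, because $(\cP^{W_0})^W=\cP^{W_0W}$ gives $f_\mu(T,\cP^{W_0})=\lim_n F_\mu(T,\cP^{W_0B(n)})$, and $B(n)\subset W_0B(n)\subset B(n+C)$ (for $W_0\subset B(C)$) squeezes this between $F_\mu(T,\cP^{B(n+C)})$ and $F_\mu(T,\cP^{B(n)})$, both tending to $f_\mu(T,\cP)$. \emph{Reduction to a refinement}: since $\cP\vee\cQ$ is a generating partition of finite Shannon entropy refining both $\cP$ and $\cQ$, it is enough to prove $f_\mu(T,\cP)=f_\mu(T,\cR)$ whenever $\cP$ generates and $\cR\ge\cP$ has finite Shannon entropy; using that $\cP$ generates, choose $L$ and a finite $\cR''$ measurable with respect to $\sigma\text{-alg}(\cP^{B(L)})$ with $H_\mu(\cR\mid\cR'')+H_\mu(\cR''\mid\cR)$ arbitrarily small, set $\cR'=\cP\vee\cR''$ so that $\cP\le\cR'\le\cP^{B(L)}$, and apply the continuity property to pass from $\cR$ to $\cR'$. \emph{The interval step}: for $\cP\le\cR'\le\cP^{B(L)}$ one has $f_\mu(T,\cP)=f_\mu(T,\cP^{B(L)})$ by window invariance, and comparing $F_\mu(T,(\cR')^{B(n)})$ with $F_\mu(T,\cP^{B(n)})$ and $F_\mu(T,\cP^{B(n+L)})$ --- again via the one-vertex-at-a-time telescoping, now carried out relative to $\cR'$, with the refinement estimate controlling the defects --- forces all three to the same limit, whence $f_\mu(T,\cP)=f_\mu(T,\cR')=f_\mu(T,\cR)=f_\mu(T,\cQ)$. (A shortcut: once the sofic description $f(\bfX)=h_\Sigma(\bfX)$ of Theorem~\ref{thm:f} is available, invariance is automatic, since $h_\Sigma$ is defined with no reference to a partition; the above is the self-contained route.)
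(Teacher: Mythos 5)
Your proposal is correct and follows essentially the same route as the paper: your ``monotonicity lemma'' for growing a connected window one vertex at a time is exactly the paper's monotonicity of $F$ under simple splittings $\cP \mapsto \cP\vee s\cR$ (adding $g=vs$ to $V$ joins in $s^{-1}(v^{-1}\cP)$ with $v^{-1}\cP\le\cP^V$), and your refinement estimate plus approximation of $\cQ$ by $\cP^{B(L)}$-measurable partitions is the paper's continuity of $F$ in the Rokhlin metric combined with density of the combinatorial equivalence class. The only cosmetic difference is bookkeeping: where you squeeze $(\cR')^{B(n)}$ between $\cP^{B(n)}$ and $\cP^{B(n+L)}$ via telescoping ``relative to $\cR'$,'' the paper packages the same mechanism as the existence of a common splitting of two combinatorially equivalent partitions.
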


The $f$-invariant of the action is defined by $f_\mu(T) = f_\mu(T,\cP)$ where $\cP$ is any generating partition with finite Shannon entropy. The $f$-invariant is undefined if no such partition exists.

\begin{remark}
This was proven in \cite{bowen-annals-2010} under the assumption that $S$ is finite. The proof when $S$ is countable is essentially the same. 
\end{remark}

\begin{proof}[Proof sketch]
Given two partitions $\cP,\cQ$, their {\bf Rokhlin distance} is defined by
$$d(\cP,\cQ) : = H_\mu(\cP|\cQ) + H_\mu(\cQ|\cP).$$
Partitions that agree up to measure zero sets are identified. With this convention, the Rokhlin distance really is a distance function on the space of all partitions with finite Shannon entropy, which is denoted by $\Part(X,\mu)$. 

Two partitions $\cP,\cQ$ are {\bf combinatorially equivalent} if there exist finite subsets $F,K \subset \Ga$ such that $\cQ \le \cP^F$ and $\cP \le \cQ^K$. The first step is showing that if $\cP \in \Part(X,\mu)$ is a generating partition then its combinatorial equivalence class is dense in the subspace of all generating partitions with finite Shannon entropy. Since $F$ is continuous on $\Part(X,\mu)$, $f$ is upper semi-continuous. It now suffices to show that if $\cP,\cQ$ are combinatorially equivalent then $f_\mu(T,\cP) = f_\mu(T,\cQ)$. 

The partition $\cQ$ is a {\bf simple splitting} of $\cP$ if there is an element $s\in S \cup S^{-1}$ and a partition $\cR \le \cP$ such that $\cQ = \cP \vee s\cR$. We say $\cQ$ is a {\bf splitting} of $\cP$ if there is a sequence $\cP=\cQ_0,\cQ_1,\ldots, \cQ_n=\cQ$ such that $\cQ_{i+1}$ is a simple splitting of $\cQ_i$ for $0\le i <n$. The second step is showing that if $\cP_1,\cP_2$ are combinatorially equivalent then there exists a common splitting $\cQ$ of both of them. Moreover, splittings preserve the combinatorial equivalence class. Therefore, it suffices to show: if $\cQ$ is a simple splitting of $\cP$ then $F_\mu(T,\cQ) \le F_\mu(T,\cP)$. This fact follows from a short calculation. For simplicity, assume $\cR \le \cP$, $t\in S$ and $\cQ = \cP \vee t\cR$. Then
\begin{eqnarray*}
 F_\mu(T,\cQ) &=& H_\mu(\cQ) + \sum_{s\in S} H_\mu(\cQ \vee s\cQ)  - 2H_\mu(\cQ) \\
 %&=& H_\mu(\cQ) + \sum_{s\in S} H_\mu(s\cQ|\cQ)  - H_\mu(\cQ) \\
  &=& H_\mu(\cP) + H_\mu(\cQ|\cP) + \sum_{s\in S} H_\mu(\cP \vee s\cP)  - 2H_\mu(\cP) + H_\mu(\cQ \vee s\cQ|\cP\vee s\cP)  - 2H_\mu(\cQ|\cP) \\
  &=& F_\mu(T,\cP) + H_\mu(\cQ|\cP) + \sum_{s\in S} H_\mu(\cQ \vee s\cQ|\cP\vee s\cP)  - 2H_\mu(\cQ|\cP) \\
    &=& F_\mu(T,\cP) + H_\mu(\cQ|\cP) + \sum_{s\in S} H_\mu(\cQ |\cP\vee s\cP)  + H_\mu(s\cQ|\cP\vee s\cP \vee \cQ) - 2H_\mu(\cQ|\cP)\\
    &=& F_\mu(T,\cP) +\Big(H_\mu(\cQ |\cP\vee t\cP)  + H_\mu(t\cQ|\cP\vee t\cP \vee \cQ) - H_\mu(\cQ|\cP) \Big)\\
    &&+ \sum_{s\in S-\{t\}} H_\mu(\cQ |\cP\vee s\cP)  + H_\mu(s\cQ|\cP\vee s\cP \vee \cQ) - 2H_\mu(\cQ|\cP).
 \end{eqnarray*}
Observe that $H_\mu(\cQ |\cP\vee s\cP)  + H_\mu(s\cQ|\cP\vee s\cP \vee \cQ) - 2H_\mu(\cQ|\cP) \le 0$ for every $s\in S$ and moreover if $s=t$ then $H_\mu(\cQ|\cP\vee s\cP)=0$. So
$$F_\mu(T,\cQ)  \le F_\mu(T,\cP) + H_\mu(t\cQ|\cP\vee t\cP \vee \cQ) - H_\mu(\cQ|\cP)  \le  F_\mu(T,\cP).$$
\end{proof}

\begin{remark}
The proof shows a little more: partitions can be partially ordered by $\cP \prec \cQ$ if $\cQ$ is a splitting of $\cP$. Then  $f_\mu(T,\cP)$ is the limit of $F_\mu(T,\cQ)$ as $\cQ$ tends to infinity in this partial order. Moreover, 
$$f_\mu(T,\cP) = \lim_{n\to\infty} F_\mu(T,\cP^{W_n})$$
where $W_n$ is any increasing sequence of finite subsets of $\Ga$ such that (1) the induced subgraph of $W_n$ is connected in the Cayley graph of $(\Ga,S)$ and (2) $\cup_n W_n =\Ga$. Using this last fact, a direct computation shows that the $f$-invariant of the Bernoulli shift $\Ga \cc (K,\kappa)^\Ga$ is the Shannon entropy $H(K,\kappa)$. 
\end{remark}

Why do we care about the $f$-invariant? In contrast to sofic entropy, the $f$-invariant tends to be easy to compute. For example, for Markov processes $f=F$  (see \S \ref{sec:markov-chains}). Morever, it is additive under direct products, satisfies an ergodic decomposition formula, a subgroup formula, has a relative entropy theory and satisfies some cases of Yuzvinskii's formula. These results do not hold for sofic entropy in general.

\subsubsection{Other formulations and other groups}\label{sec:other}
If $\cP$ is a Markov partition then $F_\mu(T,\cP)=f_\mu(T,\cP)$. Using this fact, the following alternative formula for the $f$-invariant was found in \cite{bowen-entropy-2010a}. Define
$$F^*_\mu(T,\cP) : = H_\mu(\cP) + \sum_{s\in S} \big(h_\mu(T^s,\cP) - H_\mu(\cP)\big).$$
%where $h_\mu(s,\cP)$ is the Kolmogorov-Sinai entropy rate of the partition $\cP$ with respect to the single transformation $s$. 
\begin{thm}\label{thm:f*}\cite{bowen-entropy-2010a}
$$f_\mu(T,\cP) = \inf_{W \Subset \Ga} F^*_\mu(T,\cP^W) = \lim_{n\to\infty} F^*_\mu(T,\cP^{W_n})$$
where the limit is with respect to any increasing sequence of finite subsets of $\Ga$ such that (1) the induced subgraph of $W_n$ is connected in the Cayley graph of $(\Ga,S)$ and (2) $\cup_n W_n =\Ga$. 
\end{thm}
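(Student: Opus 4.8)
## Proof Proposal for Theorem \ref{thm:f*}

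The plan is to reduce everything to the already-established fact (Theorem \ref{thm:f1} and its proof sketch) that $f_\mu(T,\cP)$ can be computed as a limit along increasing connected finite sets $W_n$ with $\bigcup_n W_n = \Ga$, and to the assertion recalled at the start of \S\ref{sec:other} that $F_\mu = f_\mu$ on Markov partitions. The heart of the matter is to show that the new functional $F^*_\mu$ agrees with $F_\mu$ after one passes to a sufficiently refined partition, so that infimizing $F^*$ over powers $\cP^W$ gives the same answer as infimizing $F$. So the first step is to unwind the definition: using the identity $h_\mu(T^s,\cP) = \lim_{m} \big(H_\mu(\bigvee_{j=0}^m s^{-j}\cP) - H_\mu(\bigvee_{j=0}^{m-1} s^{-j}\cP)\big)$ for the single automorphism $T^s$, together with subadditivity, we get $h_\mu(T^s,\cP) \le H_\mu(\cP \vee s\cP) - H_\mu(\cP)$ for every partition $\cP$, with equality precisely when $\cP$ is ``Markov in the $s$-direction.'' Hence $F^*_\mu(T,\cP) \le F_\mu(T,\cP)$ always, and consequently $\inf_{W} F^*_\mu(T,\cP^W) \le \inf_W F_\mu(T,\cP^W) = f_\mu(T,\cP)$.

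For the reverse inequality I would argue that for any generating $\cP$ and any $\epsilon > 0$, there is a finite $W$ so that $\cP^W$ is ``$\epsilon$-Markov'' in each of the finitely many directions $s \in S$ — more precisely, $H_\mu((\cP^W) \vee s(\cP^W)) - H_\mu(\cP^W) - h_\mu(T^s,\cP^W)$ is small — because $h_\mu(T^s, \cP^W) = h_\mu(T^s, \cP \vee s^{-1}\cP \vee \cdots)$ and the conditional entropies $H_\mu(\cP \mid \bigvee_{j=1}^m s^{-j}\cP)$ decrease to $h_\mu(T^s,\cP)$, so a long enough $s$-interval inside $W$ makes the one-step conditional entropy close to the rate. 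One must do this simultaneously for all $s \in S$ (when $S$ is infinite, note only finitely many terms of $\sum_{s\in S}$ are nonzero once $\cP$ has finite Shannon entropy, or handle the tail via a uniform bound; in the countable case the sum converges and a diagonal/truncation argument suffices). Then $F^*_\mu(T,\cP^W)$ is within $O(\epsilon |S_{\mathrm{eff}}|)$ of $F_\mu(T,\cP^W) \ge f_\mu(T,\cP)$, giving $\inf_W F^*_\mu(T,\cP^W) \ge f_\mu(T,\cP)$ after letting $\epsilon \to 0$. Alternatively, and perhaps more cleanly, one invokes directly that $\cP^W$ becomes genuinely Markov along the sequence $W_n$ used to compute $f$ in the limit — since the $f$-invariant is realized as $\lim_n F_\mu(T,\cP^{W_n})$ and $F^* = F$ on Markov partitions, it is enough to know the ``Markovization'' of $\cP^{W_n}$ has the same $F^*$-value in the limit, which follows from continuity of all entropy quantities involved in the Rokhlin metric and the density of the combinatorial equivalence class.

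Once $\inf_W F^*_\mu(T,\cP^W) = f_\mu(T,\cP)$ is established, the statement about the limit along connected exhausting sequences $W_n$ follows by exactly the same monotonicity/splitting machinery used in the proof sketch of Theorem \ref{thm:f1}: the map $W \mapsto F^*_\mu(T,\cP^W)$ is nonincreasing under the relevant refinement order (a simple splitting only decreases $F^*$, by the same short calculation as for $F$, since the extra conditional-entropy terms are nonpositive and the error from switching $H_\mu(\cP\vee s\cP)-H_\mu(\cP)$ to $h_\mu(T^s,\cP)$ is absorbed), and the connected sets $W_n$ are cofinal in that order, so the infimum is attained as the limit.

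The main obstacle I anticipate is the step asserting that $\cP^W$ becomes arbitrarily close to Markov (in every generating direction) for suitable finite $W$ — controlling the discrepancy $H_\mu(\cP^W \vee s \cP^W) - H_\mu(\cP^W) - h_\mu(T^s, \cP^W)$ uniformly over $s$, and in particular handling the case $|S| = \infty$ where one is summing infinitely many such discrepancies. The clean way around it is to avoid making $\cP^W$ exactly Markov and instead to quote Theorem \ref{thm:f1} in its refined form (the limit formula along $W_n$) plus the Markov-partition identity $F = F^*$, reducing the whole theorem to the continuity of the entropy functionals and the cofinality of connected exhaustions — both of which are already in hand from \S\ref{sec:f}.
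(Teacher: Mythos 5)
A preliminary remark: the paper does not actually prove Theorem \ref{thm:f*}; it only cites \cite{bowen-entropy-2010a} and records the hint that the identity $F_\mu(T,\cP)=f_\mu(T,\cP)$ for Markov partitions drives the argument, so there is no written proof here to compare against line by line. Judged on its own terms, your proposal gets the easy half and one genuine ingredient right: $h_\mu(T^s,\cP)\le H_\mu(\cP\vee s\cP)-H_\mu(\cP)$ gives $F^*_\mu\le F_\mu$, hence $\inf_W F^*_\mu(T,\cP^W)\le f_\mu(T,\cP)$; and $F^*_\mu$ really is non-increasing under simple splittings, which is what converts the infimum into a limit along connected exhaustions. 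Your justification of the latter is too vague, though. The correct computation is that for $\cQ=\cP\vee t\cR$ with $\cR\le\cP$ one has $h_\mu(T^t,\cQ)=h_\mu(T^t,\cP)$ exactly, $h_\mu(T^s,\cQ)\le h_\mu(T^s,\cP)+H_\mu(\cQ|\cP)$ for the other $r-1$ generators, and the coefficient $(1-r)$ on $H_\mu(\cQ)=H_\mu(\cP)+H_\mu(\cQ|\cP)$ exactly cancels the excess $(r-1)H_\mu(\cQ|\cP)$.

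The genuine gap is the reverse inequality, and it sits exactly where you flag the ``main obstacle.'' Your Route A needs $F_\mu(T,\cP^W)-F^*_\mu(T,\cP^W)=\sum_s\big(H_\mu(\cP^W\vee s\cP^W)-H_\mu(\cP^W)-h_\mu(T^s,\cP^W)\big)$ to be small for suitable $W$, but the justification offered --- that $H_\mu(\cP\mid\bigvee_{j=1}^m s^{-j}\cP)$ decreases to $h_\mu(T^s,\cP)$ --- concerns the process generated by $\cP$ along a single $s$-line. The quantity you must control is the conditional mutual information $I\big(\cP^W;\bigvee_{j\ge 2}(T^s)^{-j}\cP^W\,\big|\,(T^s)^{-1}\cP^W\big)$ between two regions of size comparable to $|W|$; putting a long $s$-interval inside $W$ makes the $s$-discrepancy small only by enlarging $W$, over which you then have no control in the other generator directions. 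Asserting that this vanishes along connected exhaustions is, given your other steps, equivalent to the theorem itself. The proposed circumvention does not repair this: $\cP^{W_n}$ does not become genuinely Markov, and its Markovization is a \emph{different} invariant measure, so Rokhlin-metric continuity of the entropy functionals (which is continuity in the partition for a fixed measure) cannot transport the identity $F=F^*=f$ from the Markov approximation back to $\mu$; entropy rates are only upper semicontinuous under the relevant weak$^*$ approximation of measures, which points the wrong way. What is needed, and what is missing, is a self-contained proof that $F^*_\mu(T,\cQ)\ge f_\mu(T,\cQ)$ for every finite-entropy partition $\cQ$; combined with $f_\mu(T,\cP^W)=f_\mu(T,\cP)$ and your two correct ingredients this would yield the theorem, but that inequality is the entire content of the result.
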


The theorem above leads to the following idea: suppose for $i=1,2$, $\Ga_i$ are amenable groups, $A_i \le \Ga_i$ are subgroups and $\phi:A_1 \to A_2$ is an isomorphism. Let $\Ga=\Ga_1 *_\phi \Ga_2$ be the amalgamated free product. Let $\Ga \cc^T (X,\mu)$ be a pmp action and $\cP$ a partition of $X$ with finite Shannon entropy. Define
$$F_\mu(T,\cP) = h_{\mu}(\Ga_1 \cc X,\cP) + h_{\mu}(\Ga_2 \cc X,\cP) - h_\mu(A \cc X,\cP)$$
where $A \le \Ga$ is the subgroup corresponding to $A_1,A_2$ and, for example, $ h_{\mu}(\Ga_i \cc X,\cP) $ is the classical entropy rate of $\cP$ with respect to $\Ga_i$-action. Also let
$$f_\mu(T,\cP) = \inf_{W \Subset \Ga} F_\mu(T,\cP^W).$$
As above, it can be shown that if $\cP,\cQ$ are generating partitions with finite Shannon entropy then $f_\mu(T,\cP)  = f_\mu(T,\cQ)$ and so this determines a measure-conjugacy invariant for $\Ga$-actions.

\begin{problem} This idea has not appeared yet in the literature and is worthy of further exploration: can one extend it to other graphs of groups? Does it depend on how the group is represented as a graph of groups? Can one obtain results for such invariants similar to the results for the $f$-invariant of free group actions (for example, the sofic interpretation, the ergodic decomposition formula, the subgroup formula, and so on)? For example, the fundamental group of a closed surface of genus $g \ge 2$ can be written as a free product of free groups amalgamated over an infinite cyclic subgroup.
\end{problem}
\subsubsection{The sofic interpretation of the $f$-invariant}\label{sec:interpretation}       %%%%%%%%%%%%%%%%%%%%%%%%%%%%%%%%%%%%%

In order to interpret the  $f$-invariant as a kind of sofic entropy, we introduce random sofic approximations and their sofic entropies.  Let $\{V_i\}_i$ be a sequence of finite sets and for each $i$, let $\P_i$ be a probability measure on the space of maps $\G \to \sym(V_i)$. The sequence $\P:=\{\P_i\}_{i=1}^\infty$ is a {\bf random sofic approximation} to $\Ga$ if for every finite set $F \subset \Ga$ and $\delta>0$,
\begin{itemize}
\item
$$1=\lim_{i\to\infty} \P_i\{ \sigma: \Ga \to \sym(V_i):~ \sigma \textrm{ is } (F,\delta)\textrm{-trace preserving} \}, $$
\item there exists $I$ such that $i>I$ implies $\P_i$-a.e. $\sigma$ is $(F,\delta)$-multiplicative,
\item $\lim_i |V_i| = +\infty$. 
\end{itemize}
Each definition of $\Sigma$-entropy given above can be generalized to $\P$-entropy by replacing $N_\epsilon(\cdot)$ or $|\Hom_\mu(\cdot)|$ with its expectation with respect to $\P_i$. For example, suppose $\G \cc^T X$ is a continuous action on a compact space, $\mu$ an invariant probability measure on $X$ and $\rho$ a continuous generating pseudo-metric. Then the topological $\P$-entropy and measure $\P$-entropy are: 
$$h_{\P}(T,\rho) := \sup_{\epsilon>0} \inf_{F \Subset \Ga} \inf_{\delta >0} \limsup_{i\to\infty} |V_i|^{-1}\log \E_i\left(N_\epsilon( \Map(T,\rho,F,\delta,\sigma_i), \rho_\infty)\right)$$
$$h_{\P,\mu}(T,\rho) := \sup_{\epsilon>0} \inf_\cO \inf_{F \Subset \Ga} \inf_{\delta >0} \limsup_{i\to\infty} |V_i|^{-1}\log \E_i\left(N_\epsilon( \Map(T,\rho,\cO,F,\delta,\sigma_i), \rho_\infty)\right)$$
where $\E_i$ denotes expectation with respect to $\P_i$. The obvious analogs of Theorems \ref{thm:top-entropy}, \ref{thm:measure-entropy} and \ref{thm:measure-entropy-partition}  remain true and the proofs are essentially the same.

Now let $\Ga=\langle S \rangle$ be a free group where $S$ is finite or countable. The set of homomorphisms from $\G$ to $\sym(n)$ is naturally identified with the set $\sym(n)^S$ of all maps from $S$ to $\sym(n)$. Let $\pi_n$ be the uniform probability measure on $\sym(n)$ and  $\P_n=\pi_n^S$ be the product measure on $\sym(n)^S$. 

\begin{exercise}
$\P=\{\P_n\}_{n=1}^\infty$ is a random sofic approximation to $\Ga$. 
\end{exercise}

\begin{thm}\label{thm:f}
If $\cP$ is a generating partition with finite Shannon entropy then
$$f_\mu(T) =  h_{\P,\mu}(T).$$
That is, the $f$-invariant of the action is the same as the sofic entropy with respect to the random sofic approximation $\cP$.
\end{thm}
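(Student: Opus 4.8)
The plan is to pass to the partition description of $\P$-entropy and reduce the statement to a first-moment count over the random homomorphism $\sigma_n\in\sym(n)^S$. Since $\cP$ generates, the $\P$-analog of Theorem~\ref{thm:measure-entropy-partition} gives $h_{\P,\mu}(T)=h_{\P,\mu}(T,\s\textrm{-alg}(\cP))=\sup_{\cQ}\inf_{\cR}\inf_{F}\inf_{\delta}\limsup_n\frac1n\log\E_n|\Hom_\mu(\cR,F,\delta,\sigma_n)|_\cQ$, where $\cQ\le\cR$ range over finite partitions inside $\s\textrm{-alg}(\cP)$. The key ingredient I would isolate first is a counting lemma: for every finite partition $\cR$ of $X$,
$$\inf_{F\Subset\Ga}\inf_{\delta>0}\limsup_{n\to\infty}\frac1n\log\E_n|\Hom_\mu(\cR,F,\delta,\sigma_n)|=f_\mu(T,\cR).$$
Its engine is Stirling's formula together with the fact that in the permutation model the permutations $\sigma_n(s)$, $s\in S$, are independent and uniform. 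The basic computation is: the expected number of maps $[n]\to\{\textrm{atoms of }\cR\}$ whose empirical vertex-distribution is near $\Law(\cR)$ and whose empirical pair-distribution along the $\sigma_n(s)$-edges is near $\Law(\cR\vee s\cR)$ for every $s$ equals $\exp\big(n[H_\mu(\cR)+\sum_{s\in S}(H_\mu(\cR\vee s\cR)-2H_\mu(\cR))]+o(n)\big)$ --- the vertex constraint contributes the Shannon entropy, and independence across $s$ makes the edge constraints contribute the mutual-information corrections --- and the bracket is exactly $F_\mu(T,\cR)$.

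To prove the lemma I would argue both bounds through this computation applied to the refined partitions $\cR^{B(\rho)}$. Because each $\sigma_n$ is a genuine homomorphism, condition~(1) forces any $\phi\in\Hom_\mu(\cR,F,\delta,\sigma_n)$ to be approximately a shift-cocycle, that is, approximately the $F$-ball $\cR$-name of its identity-coordinate colouring, and condition~(2) then forces that colouring to carry approximately the empirical $\cR$-statistics of $\mu$ on the ball of radius $\sim\mathrm{rad}(F)$. Imposing such ball statistics refines the vertex-plus-edge constraint for $\cR^{B(\rho)}$ with $\rho$ comparable to $\mathrm{rad}(F)$, so the expected count is at most $\exp(nF_\mu(T,\cR^{B(\rho)})+o(n))$; conversely, colourings produced by propagating the $\rho$-th Markov approximation of the $\cR$-process along the (locally tree-like) graph $G_n$ realise at least $\exp(nF_\mu(T,\cR^{B(\rho)})+o(n))$ good homomorphisms, by a direct first-moment estimate (this is where one also uses that such colourings concentrate near the target statistics, so the $\delta$-slack costs only $e^{o(n)}$). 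Letting $F$, hence $\rho$, tend to infinity and invoking the Remark after Theorem~\ref{thm:f1}, $F_\mu(T,\cR^{B(\rho)})\downarrow f_\mu(T,\cR)$, which yields the lemma.

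The theorem then follows. For the upper bound, dropping the $\cQ$-restriction ($|\cdot|_\cQ\le|\cdot|$) and applying the lemma gives $h_{\P,\mu}(T)\le\sup_\cQ\inf_\cR f_\mu(T,\cR)$; since $\cP\vee\cQ$ is always an admissible $\cR$ and is generating, $f_\mu(T,\cP\vee\cQ)=f_\mu(T)$ by Theorem~\ref{thm:f1}, so the supremum equals $f_\mu(T)$. For the lower bound I would take $\cQ=\cP$ and show the $\cP$-restriction costs nothing exponentially: if $\cR\subseteq\s\textrm{-alg}(\cP)$ then $\cR\le\cP^W$ for some finite $W$, so a good homomorphism --- being approximately a shift-cocycle --- is approximately determined by its $\cP$-coarsening through $\sigma_n$, hence the restriction map $\Hom_\mu(\cR,F,\delta,\sigma_n)\to\{\cP\textrm{-colourings}\}$ is at most $\exp(o(n))$-to-one and $\E_n|\Hom_\mu(\cR,F,\delta,\sigma_n)|_\cP\ge\exp(-o(n))\,\E_n|\Hom_\mu(\cR,F,\delta,\sigma_n)|$. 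The lemma then gives $h_{\P,\mu}(T)\ge f_\mu(T)$, and combining the two inequalities proves $f_\mu(T)=h_{\P,\mu}(T)$.

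The main obstacle is the counting lemma, where two points need care. First, the graph $G_n$ from the permutation model is only locally tree-like --- it carries $O(1)$ short cycles in expectation --- so the tree computation (propagating a Markov chain along edges, decoding ball names) must be justified by discarding the $o(n)$ vertices on short cycles, which changes neither the empirical statistics nor the exponential order. Second, one must carefully match the nested infima $\sup_\cQ\inf_\cR\inf_F\inf_\delta$ of the definition with the concrete ball-statistics count --- tracking how $\mathrm{rad}(F)$, the refinement $\cR$ and the Markov order $\rho$ interlock --- and verify, using that $\cP$ generates, that a microstate's fine structure is dictated by its $\cP$-coordinate via the approximate cocycle relation. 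The remaining ingredients (Stirling asymptotics, a law-of-large-numbers control of the $\delta$-slack, and the monotone convergence $F_\mu(T,\cP^{B(r)})\downarrow f_\mu(T)$) are routine.
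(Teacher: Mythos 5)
Your proposal is correct and follows essentially the same route as the paper's proof sketch: a first-moment computation in the permutation model (independence of the $\sigma_n(s)$ plus Stirling) showing that the expected number of colourings with prescribed vertex and edge statistics grows like $\exp\big(nF_\mu(T,\cR)+o(n)\big)$, followed by refining $\cR$ to $\cR^{B(\rho)}$ and taking the infimum to land on $f_\mu(T)=\inf_W F_\mu(T,\cP^W)$, with Theorem \ref{thm:f1} handling the choice of generating partition. The extra bookkeeping you supply — matching the nested quantifiers of the partition definition and checking that the $|\cdot|_\cQ$ restriction map is $\exp(o(n))$-to-one — is exactly the detail the paper's sketch suppresses, and your treatment of it is sound.
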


\begin{remark}
This is proven in \cite{bowen-entropy-2010b} when $S$ is finite. The case of countably infinite $S$ is similar. 
\end{remark}

\begin{proof}[Proof sketch]
For simplicity, assume $S$ is finite, $\cP$ is finite and $\mu(P \cap T^sQ)$ is rational for every $P,Q \in \cP$ and $s\in S$. Given $\sigma: \Ga \to \sym(n)$, let $\Part(\sigma,\cP)$ be the set of all maps $\phi: [n] \to \cP$ such that 
$$n \mu(P \cap s^{-1}Q)  = \#\{ v \in [n]:~ \phi(v)=P, \phi(\sigma(s)v) =Q\}.$$
By direct combinatorial arguments, one can obtain an exact formula for $\E[ |\Part(\sigma,\cP)|]$ where $\sigma:\Ga \to \sym(n)$ is uniformly random. An application of Stirling's formula shows
$$F_\mu(T,\cP) =  \limsup_{n\to\infty} n^{-1} \log \E[ |\Part(\sigma,\cP)|].$$ 
To handle the case in which $\mu(P \cap T^sQ)$ is irrational, let $\Part(\sigma,\cP,\epsilon)$ be the set of all maps $\phi: [n] \to \cP$ such that 
$$\Big| \mu(P \cap s^{-1}Q) - n^{-1} \#\{ v \in [n]:~ \phi(v)=P, \phi(\sigma(s)v) =Q\}\Big| < \epsilon.$$
A perturbation argument and Stirling's formula implies
$$F_\mu(T,\cP) = \lim_{\epsilon \searrow 0} \limsup_{n\to\infty} n^{-1} \log \E[ |\Part(\sigma,\cP,\epsilon)|] = \lim_{\epsilon \searrow 0} \liminf_{n\to\infty} n^{-1} \log \E[ |\Part(\sigma,\cP,\epsilon)|].$$
The Theorem follows by replacing $\cP$ with $\cP^W$ and taking the infimum over finite $W \subset \Ga$. 
\end{proof}

\begin{problem}
Unlike the $f$-invariant, $h_{\P,\mu}(T)$ is well-defined even if $\Ga \cc^T (X,\mu)$ does not have a generating partition with finite Shannon entropy. The $f$-invariant satisfies many useful identities: it is additive under direct products, satisfies an ergodic decomposition formula, a subgroup formula and possesses a relative version. Can any of these results be extended to $h_{\P,\mu}(T)$? %For example, extending direct additivity to $h_{\P,\mu}(\Ga \cc X)$ seems to be one of the main obstacles to proving Yuzvinskii's formula for free groups. 
\end{problem}

%\section{Other entropies}\label{sec:other entropies}       %%%%%%%%%%%%%%%%%%%%%%%%%%%%%%%%%%%%%

\subsection{Rokhlin entropy}\label{sec:Rokhlin}

Let $\cB_X$ denote the sigma-algebra of measurable subsets of $X$. For any subcollection $\cF \subset \cB_X$, let $\s\textrm{-alg}(\cF) \subset \cB_X$ denote the sub-sigma-algebra generated by $\cF$ and, if  $\Ga \cc^T X$ is a measurable action then let $\s\textrm{-alg}(T,\cF)$ denote the smallest sub-sigma-algebra containing $T^gF$ for every $g \in \Ga$ and $F \in \cF$. We do not distinguish between sigma-algebras that agree up to null sets. Thus we write $\cF_1=\cF_2$ if $\cF_1$ and $\cF_2$ agree up to null sets.

\begin{defn}[Rokhlin entropy]
The {\bf Rokhlin entropy} of an ergodic pmp action $\Ga \cc^T (X,\mu)$ is defined by
$$h^{\rm{Rok}}(T) = \inf_\cP H_\mu(\cP)$$
where the infimum is over all partitions $\cP$ with $\s\textrm{-alg}(T,\cP)=\cB_X$. 
For any $\cF \subset \cB_X$  the {\bf relative Rokhlin entropy} is defined by
$$h^{\rm{Rok}}(T|\cF) = \inf_\cP H(\cP|\cF)$$
where the infimum is over all partitions $\cP$ such $\s\textrm{-alg}(T,\cP \cup \cF) = \cB_X$.  If $T$ is non-ergodic then the Rokhlin entropy is defined by 
$$h^{\rm{Rok}}(T) = h^{\rm{Rok}}(T|\cI_T)$$
where $\cI_T$ denotes the sigma-algebra of $T(\Ga)$-invariant Borel sets. 
\end{defn}

Rokhlin entropy is clearly a measure-conjugacy invariant. Moreover, in case $\Ga$ is amenable, it agrees with Kolmogorov-Sinai entropy \cite{seward-tucker-drob}  (the special case in which $A=\Z$ was handled earlier by Rokhlin \cite{rohlin-lectures}). However, it can be difficult to compute. For example, it is not known whether every countable group has an ergodic essentially free action with positive Rokhlin entropy. The only known lower bound is:

\begin{prop}\label{prop:Rokhlinsofic}
For any pmp action $\G \cc^T (X,\mu)$ and any sofic approximation $\Si$, $h^{\rm{Rok}}(T) \ge h_{\Si,\mu}(T)$.
\end{prop}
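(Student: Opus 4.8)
The plan is to make precise the remark in \S\ref{sec:intro-rokhlin} that a generating partition $\cP$ forces $h_{\Si,\mu}(T)\le H_\mu(\cP)$, and then take the infimum over generating partitions. First I would dispose of trivial cases and reduce to an ergodic action: if $T$ has no generating partition of finite Shannon entropy then $h^{Rok}(T)=+\infty$ and there is nothing to prove, and if $h_{\Si,\mu}(T)=-\infty$ the inequality is automatic. So assume $T$ is ergodic and fix a partition $\cP$ with $\s\textrm{-alg}(T,\cP)=\cB_X$ and $H_\mu(\cP)<\infty$; it then suffices to prove $h_{\Si,\mu}(T)\le H_\mu(\cP)$ and afterwards take $\inf_\cP$.

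Second, I would compute $h_{\Si,\mu}(T)$ through the sigma-algebra $\cS:=\s\textrm{-alg}(\cP)$. Since $\cP$ is generating, $\cS$ is a generating sigma-sub-algebra, so Theorem \ref{thm:measure-entropy-partition} gives $h_{\Si,\mu}(T)=h_{\Si,\mu}(T,\cS)$. Now unwind the defining formula $h_{\Si,\mu}(T,\cS)=\sup_\cQ\inf_{\cP'}\inf_{F}\inf_{\delta}\limsup_i|V_i|^{-1}\log|\Hom_\mu(\cP',F,\delta,\sigma_i)|_\cQ$ (I write $\cP'$ for the inner partition variable to avoid a clash with our fixed $\cP$). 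For a fixed finite $\cQ\subset\cS$ I bound the inner expression from above by the admissible choices $\cP'=\cQ$ and $F=\{e\}$: then $\cQ^{\{e\}}=\cQ$ and the restriction map $\mathrm{Res}_\cQ$ is the identity, so $|\Hom_\mu(\cQ,\{e\},\delta,\sigma_i)|_\cQ=|\Hom_\mu(\cQ,\{e\},\delta,\sigma_i)|$, which is at most the number of homomorphisms $\phi:\s\textrm{-alg}(\cQ)\to 2^{V_i}$ satisfying condition (2), i.e. $\sum_{P\in\cQ}|u_{V_i}(\phi(P))-\mu(P)|<\delta$. Such a $\phi$ is exactly an ordered partition $(\phi(P))_{P\in\cQ}$ of $V_i$ whose normalized block-size vector lies within $\ell^1$-distance $\delta$ of $(\mu(P))_{P\in\cQ}$.

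Third comes the combinatorial heart, a standard type count: the number of such ordered partitions of a set of size $n$ is at most $(n+1)^{|\cQ|}\exp\big((H_\mu(\cQ)+\vre(\delta))n\big)$, where $\vre(\delta)\to 0$ as $\delta\to 0$ by uniform continuity of the Shannon entropy function on the simplex (bound each multinomial coefficient by $\exp(n\,H(\text{its normalized type}))$, and note there are only polynomially many types). Dividing by $|V_i|$, using $|V_i|\to\infty$, taking $\limsup_i$ and then $\inf_\delta$, the inner expression for $\cQ$ is at most $H_\mu(\cQ)$; and $H_\mu(\cQ)\le H_\mu(\cP)$ since $\cQ\le\cP$ (every finite $\cS$-measurable partition is coarsened by $\cP$). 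Taking $\sup$ over finite $\cQ\subset\cS$ gives $h_{\Si,\mu}(T)=h_{\Si,\mu}(T,\cS)\le H_\mu(\cP)$, and the infimum over generating partitions yields $h^{Rok}(T)\ge h_{\Si,\mu}(T)$ in the ergodic case.

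For a general pmp action the same argument is run relative to the invariant sigma-algebra $\cI_T$: one replaces generating partitions by partitions $\cP$ with $\s\textrm{-alg}(T,\cP\cup\cI_T)=\cB_X$, Shannon entropies by conditional Shannon entropies $H_\mu(\,\cdot\,|\cI_T)$, and the type count by a conditional type count (counting, on each fibre of a finite partition of $V_i$ that models $\cI_T$, the ordered partitions matching the conditional marginals). I expect this relativization to be the main obstacle — not because any new idea is needed, but because one must set up the relative partition formula and the conditional Stirling estimate carefully; the ergodic case, and in particular the type-counting step, is entirely routine.
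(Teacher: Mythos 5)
Your ergodic-case argument is correct and is exactly the route the paper intends (the proposition is left there as an exercise with the hint to use the partition definition of $\Si$-entropy): taking $\cP'=\cQ$ and $F=\{e\}$ inside the infima reduces the count to ordered partitions of $V_i$ with prescribed approximate densities, and the type/Stirling bound gives $H_\mu(\cQ)\le H_\mu(\cP)$, hence $h_{\Si,\mu}(T)\le H_\mu(\cP)$ for every generating $\cP$.

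The gap is in the non-ergodic case, and it is more than careful bookkeeping. Your conditional type count bounds the number of ways to extend a \emph{fixed} model of a finite invariant partition $\cJ\subset\cI_T$ to a model of $\cP\vee\cJ$; it does not bound the number of admissible models of $\cJ$ itself. An element of $\Hom_\mu(\cP\vee\cJ,F,\delta,\sigma_i)$ restricted to $\cJ$ is an $(F,\delta)$-almost-$\sigma_i$-invariant ordered partition of $V_i$ with prescribed densities, and the total count factors as (number of such almost-invariant partitions) times (maximal number of extensions per partition). Your argument controls only the second factor by $\exp\big(|V_i|(H_\mu(\cP\vee\cJ\mid\cJ)+\vre)\big)$; to conclude $h_{\Si,\mu}(T)\le H_\mu(\cP\mid\cI_T)$ you must also show the first factor is subexponential in $|V_i|$ after taking $\inf_{F}\inf_{\delta}$. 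This is true for infinite $\G$ but needs a genuine argument --- for instance, that an $(F,\delta)$-almost-invariant subset of $V_i$ is determined, up to a set of density tending to $0$ with $\delta$, by its restriction to a dominating set of density roughly $\log(1/\delta)/|F|$, so that the count is $\exp(o(|V_i|))$ as $|F|\to\infty$ and $\delta\to 0$. This is precisely where infiniteness of $\G$ and the trace-preserving condition enter, so it cannot be absorbed into the conditional Stirling estimate. Note also that the unconditional bound $h_{\Si,\mu}(T)\le H_\mu(\cP)$ from your ergodic argument does hold verbatim without ergodicity, but it does not suffice: for a non-ergodic action such as the trivial action on $([0,1],\mathrm{Leb})$ there is no countable partition with $\s\textrm{-alg}(T,\cP)=\cB_X$ at all, while $h^{Rok}(T)=h^{Rok}(T\mid\cI_T)=0$.
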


\begin{exercise}
Use the partition definition of $\Si$-entropy to prove Proposition \ref{prop:Rokhlinsofic}.
\end{exercise}

\begin{question}
Suppose $T$ is an essentially free ergodic pmp action. Does $h_{\Sigma,\mu}(T) \ne -\infty$ necessarily imply $h_{\Sigma,\mu}(T)  = h^{\rm{Rok}}(T)$? 
\end{question}

\begin{question}
Suppose $\Ga$ is a finitely generated free group, $T$ is an essentially free ergodic pmp action with a finite generating partition $\cP$. Is the difference
$$f_\mu(T) - h^{\rm{Rok}}(T)$$
an invariant of the weak equivalence class of the action? The notions of weak containment and equivalence for group actions were introduced by A. Kechris as an analogues of weak containment  and equivalence for unitary representations \cite[II.10 (C)]{Kechris-global-aspects}.  
\end{question}

\subsubsection{Applications to the classification of Bernoulli shifts}
By Theorem \ref{thm:bernoulli-sofic}, if $\Ga$ is sofic then the sofic entropy of the Bernoulli shift $\Ga \cc (K,\kappa)^\Ga$ is the Shannon entropy of the base $H(K,\kappa)$. It is clear that the Rokhlin entropy of this shift is $\le H(K,\kappa)$ since the partition $\cP=\{P_k:~k\in K\}$ defined by $P_k=\{x\in K^\Ga:~x(e)=k\}$ is generating and $H_\mu(\cP)=H(K,\kappa)$. So in this case at least, the Rokhlin entropy agrees with the sofic entropy.

What if $\Ga$ is non-sofic? Of course, we do not know whether non-sofic groups exist but even in this case there are some very interesting results. To describe them, let $h^{\rm{Rok}}_{sup}(\Ga)$ be the supremum of $h^{\rm{Rok}}(T)$ over all essentially free, ergodic actions $T$ of $\Ga$ with finite Rokhlin entropy. Of course, when $\Ga$ is sofic then $h^{\rm{Rok}}_{sup}(\Ga) = +\infty$. We do not know whether or not $h^{\rm{Rok}}_{sup}(\Ga) = +\infty$ for all countable groups. However:
\begin{thm}\label{thm:seward-bernoulli} \cite{seward-kreiger-2}
$h^{\rm{Rok}}(\Ga \cc (K,\kappa)^\Ga) = \min( H(K,\kappa), h^{\rm{Rok}}_{sup}(\Ga) )$.
\end{thm}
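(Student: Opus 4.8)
Write $\beta$ for the Bernoulli shift $\Ga \cc (K,\kappa)^\Ga$ and set $H := H(K,\kappa)$, $r := h^{Rok}_{sup}(\Ga)$; the claim is $h^{Rok}(\beta) = \min(H,r)$. The plan is to prove the two inequalities separately and to pin down which non-amenable phenomenon the nontrivial direction rests on. A useful preliminary: the theorem is easy when $\Ga$ is sofic, for then $r = +\infty$, so $\min(H,r) = H$, the time-$0$ partition $\cP_0 = \{\{x : x(e) = k\} : k\in K\}$ is generating with $H_\mu(\cP_0) = H$ (giving $h^{Rok}(\beta) \le H$), and $h^{Rok}(\beta) \ge h_{\Sigma,\kappa^\Ga}(\beta) = H$ by Proposition~\ref{prop:Rokhlinsofic} and Theorem~\ref{thm:bernoulli-sofic}. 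So all the content lies in the (possibly empty) class of non-sofic groups.

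\emph{Upper bound in general.} If $H < \infty$, the time-$0$ partition gives $h^{Rok}(\beta) \le H$, and since $\beta$ is then an essentially free ergodic action of finite Rokhlin entropy we also get $h^{Rok}(\beta) \le r$ straight from the definition of $h^{Rok}_{sup}(\Ga)$; hence $h^{Rok}(\beta) \le \min(H,r)$. If $H = \infty$, I would realize $\beta$ as an inverse limit $\beta = \varprojlim_n \beta_n$ of an increasing sequence of Bernoulli-shift factors with finite base entropies $H_n \nearrow \infty$ (refine $(K,\kappa)$ by an increasing sequence of finite measurable partitions), observe $h^{Rok}(\beta_n) \le \min(H_n,r) \le r$ by the finite case, and invoke continuity of Rokhlin entropy along increasing towers, $h^{Rok}(\varprojlim_n \beta_n) = \sup_n h^{Rok}(\beta_n)$, to conclude $h^{Rok}(\beta) \le r = \min(\infty,r)$. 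Here the inequality $\ge$ in that continuity statement is just monotonicity of Rokhlin entropy under factor maps, while the inequality $\le$ relies on Seward's relative Rokhlin entropy calculus; it is unnecessary anyway when $\Ga$ is sofic by the preliminary remark.

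\emph{Lower bound, the non-sofic case.} Fix $t < \min(H,r)$; it is enough to prove $h^{Rok}(\beta) \ge t$. Since $t < r$, the definition of $h^{Rok}_{sup}(\Ga)$ yields an essentially free ergodic action $T$ with $t < h^{Rok}(T) < \infty$. If $h^{Rok}(T) \ge H$, I would replace $T$ by an essentially free ergodic factor whose Rokhlin entropy is some prescribed value in the nonempty interval $(t,H)$ --- this uses a quantitative form of \cite{seward-tucker-drob}, producing free factors of any prescribed Rokhlin entropy below $h^{Rok}(T)$, rather than merely of entropy $< \epsilon$; otherwise I keep $T$. Either way we have an essentially free ergodic $T$ with $t < h^{Rok}(T) < H = H(K,\kappa)$. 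The crux is then to realize $T$ as a factor of $\beta$ by an Ornstein/Sinai-type copying construction strengthening the Bernoulli-factor theorem \cite{seward-sinai} and the generator theorem \cite{seward-kreiger-1}; granting a factor map $\beta \to T$, monotonicity of Rokhlin entropy under factors gives $h^{Rok}(\beta) \ge h^{Rok}(T) > t$, and letting $t \uparrow \min(H,r)$ finishes.

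\emph{Where the difficulty is.} The upper bound, the reductions just described, and the sofic case are essentially bookkeeping once one has monotonicity and inverse-limit continuity of Rokhlin entropy together with the quantitative form of \cite{seward-tucker-drob}. The real obstacle is the Sinai-type factoring step of the lower bound. It cannot be phrased as a blanket assertion that $\beta$ factors onto every essentially free ergodic action of smaller Rokhlin entropy: that fails already for $\Ga = \Z$ because Bernoulli shifts have the Kolmogorov property, which is exactly why the sofic case above had to be handled through sofic entropy rather than through factor maps. What is needed is a direct construction of the factor map $\beta \to T$ for the auxiliary action $T$ above --- in the spirit of the generator theorems of Krieger and Seward and of the Ornstein-type machinery for non-amenable groups (entropy-decreasing and entropy-increasing factor maps, the isomorphism theorem for Bernoulli shifts of equal base entropy at most $h^{Rok}_{sup}(\Ga)$) --- and it is there that the circularity inherent in the statement, namely that no Rokhlin-entropy lower bound for one Bernoulli shift can be read off from another, has to be broken.
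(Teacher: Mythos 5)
There is a genuine gap, and it sits exactly where you locate the ``real obstacle,'' but it is more severe than your discussion suggests. Your lower bound terminates with: granting a factor map $\beta \to T$, ``monotonicity of Rokhlin entropy under factors gives $h^{Rok}(\beta) \ge h^{Rok}(T)$.'' Rokhlin entropy is not monotone under factor maps for non-amenable groups in either direction: the Ornstein--Weiss map (\S\ref{sec:OW-intro}) increases it from $\log 2$ to $\log 4$, and Theorem \ref{thm:variant} exhibits an action with $h^{Rok}=0$ that factors onto a Bernoulli shift of entropy $\log 2$. So even if you could produce the factor map $\beta \to T$, the inequality you need would not follow from it. And you cannot produce it: the auxiliary action $T$ handed to you by the definition of $h^{Rok}_{sup}(\Ga)$ is completely outside your control (it need not be mixing, need not have the Kolmogorov property, need not be a Bernoulli factor), so no Sinai-type machinery will realize it as a factor of $\beta$; you acknowledge this obstruction in your final paragraph but the proposal never escapes it. A smaller but still real issue is the $H=\infty$ case of the upper bound, where you invoke $h^{Rok}(\varprojlim_n \beta_n)=\sup_n h^{Rok}(\beta_n)$: the ``$\ge$'' half is again the false monotonicity under factors, and the ``$\le$'' half is not a formal consequence of the relative Rokhlin entropy calculus, since the relative entropy of $\beta$ over the finite-base-entropy Bernoulli factor $\beta_n$ is only bounded by the remaining base entropy $H-H_n=\infty$.

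The proof in \cite{seward-kreiger-2} circumvents both obstructions by inverting the direction of the map: instead of mapping $\beta$ \emph{onto} $T$, one equivariantly \emph{embeds} $T$ \emph{into} $K^\Ga$. Arguing by contradiction, assume $h^{Rok}(\beta) < h^{Rok}(T) < H(K,\kappa)$ for some essentially free ergodic $T = \Ga \cc (X,\mu)$, and construct a $\Ga$-equivariant Borel map $\Phi:X\to K^\Ga$ that is an isomorphism onto its image with $\Phi_*\mu$ weak* close to $\kappa^\Ga$; this is where Seward's generator theorem \cite{seward-kreiger-1} and the Abert--Weiss factor map \cite{abert-weiss-2013} enter. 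Since $\Phi$ is a conjugacy onto its image, the shift with measure $\Phi_*\mu$ has Rokhlin entropy exactly $h^{Rok}(T)$ --- no monotonicity is invoked --- and upper semi-continuity of Rokhlin entropy in the weak* topology (\S\ref{sec:continuity2}) then forces $h^{Rok}(T)\le h^{Rok}(\beta)$, the desired contradiction. If you want to salvage your outline, the step to replace is the last one: approximate $\kappa^\Ga$ in the weak* topology by conjugate copies of $T$ rather than trying to factor $\beta$ onto $T$.
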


\begin{proof}[Remarks on the proof]
The full proof is quite intricate and the reader is encouraged to see \cite{seward-kreiger-2} for details. It is a proof by contradiction. Assuming the result is false, there exists an essentially free ergodic action $T$ with
$$h^{\rm{Rok}}(\Ga \cc (K,\kappa)^\Ga) < h^{\rm{Rok}}(T) < H(K,\kappa).$$
From this, one constructs a $\Ga$-equivariant Borel map $\Phi:X \to K^\Ga$ such that (1) $\Phi$ is an isomorphism onto its image, (2) $\Phi_*\mu$ is close to $\kappa^\Ga$ in the weak* topology. Using upper semi-continuity of Rokhlin entropy this implies a contradiction. The construction of $\Phi$ is highly non-trivial. It combines tools from Seward's generalization of Krieger's Generator Theorem \cite{seward-kreiger-1} with an Abert-Weiss factor map \cite{abert-weiss-2013}. 
\end{proof}

It follows that if $h^{\rm{Rok}}_{sup}(\Ga)=+\infty$ then Rokhlin entropy distinguishes Bernoulli shifts. Moreover, Seward proves in \cite{seward-kreiger-2} that if $h^{\rm{Rok}}_{sup}(\Ga)<\infty$ and $H$ is any infinite locally finite group then $h^{\rm{Rok}}_{sup}(\Ga\times H)=0$. This is a most interesting condition! It implies that all ergodic actions of $\Ga\times H$ have Rokhlin entropy zero, even the Bernoulli shift with base space $([0,1],\rm{Leb})$! Thus if it is true that for every countable group $\Ga$ there exists some ergodic essentially free action with positive Rokhlin entropy, then Bernoulli shifts are distinguished by Rokhlin entropy.

\subsection{Naive entropy}       %%%%%%%%%%%%%%%%%%%%%%%%%%%%%%%%%%%%%

 \begin{defn}
Let $\Ga \cc^T (X,\mu)$ be a pmp action and $\cP$ a partition of $X$. The {\bf naive entropy} of $\cP$ is 
$$h^{\rm{naive}}_\mu(T,\cP) = \inf_{W \Subset \Ga} |W|^{-1} H(\cP^W)$$
where $\cP^W = \bigvee_{w\in W} (T^w)^{-1}\cP$. The {\bf naive entropy} of $T$ is 
$$h^{\rm{naive}}_\mu(T) = \sup_\cP h^{\rm{naive}}_\mu(T,\cP)$$
where the supremum is over all finite-entropy partitions $\cP$.
\end{defn}

It is an exercise to show that if $\Ga$ is amenable then naive entropy coincides with Kolmogorov-Sinai entropy. However if $\Ga$ is non-amenable the situation is very different:

\begin{thm}\label{thm:naive-non-amenable}
If $\Ga$ is non-amenable and $\G \cc^T (X,\mu)$ is a pmp action then $h^{\rm{\rm{naive}}}_\mu(T) \in \{0,+\infty\}$.
\end{thm}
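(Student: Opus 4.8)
The plan is to prove the dichotomy by showing that if $h^{naive}_\mu(T)>0$ then in fact $h^{naive}_\mu(T)=+\infty$. So suppose there is a (countable, finite Shannon entropy) partition $\cP$ with $c:=h^{naive}_\mu(T,\cP)>0$. Unwinding the definition of the infimum, this says $H_\mu(\cP^W)\ge c|W|$ for \emph{every} finite $W\Subset\Ga$. The idea is that ``thickening'' $\cP$ by a fixed finite set $F$ and invoking non-amenability forces the naive entropy of the thickened partition to grow geometrically in the size of $F$, which is enough since the naive entropy of the action is a supremum over all finite-entropy partitions.

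First I would record the elementary identity $(\cP^F)^W=\cP^{FW}$ for finite $F,W\subset\Ga$, which is immediate from $(T^w)^{-1}(T^f)^{-1}=(T^{fw})^{-1}$. Consequently, for any finite $F\Subset\Ga$,
$$h^{naive}_\mu(T,\cP^F)=\inf_{W\Subset\Ga}|W|^{-1}H_\mu(\cP^{FW})\ \ge\ c\cdot\inf_{W\Subset\Ga}\frac{|FW|}{|W|},$$
using $H_\mu(\cP^{FW})\ge c|FW|$ (valid because $FW$ is finite). One should also check that $\cP^F$ is again a finite-entropy partition, which it is, since $H_\mu(\cP^F)\le|F|\,H_\mu(\cP)<\infty$, so it is an admissible competitor in the supremum defining $h^{naive}_\mu(T)$. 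Thus the whole problem reduces to showing that for a non-amenable group, $\sup_{F\Subset\Ga}\inf_{W\Subset\Ga}|FW|/|W|=+\infty$.

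For this step I would invoke the Følner characterization of amenability (one of the standard equivalent definitions cited in the text): $\Ga$ is non-amenable if and only if there exist a finite symmetric set $S\ni e$ and $\kappa>0$ with $|FS|\ge(1+\kappa)|F|$ for all finite $F\subset\Ga$ (negate the symmetric-difference form of the Følner condition, using countability of $\Ga$ to pass from the sequence form to the ``$\forall\epsilon\,\exists F$'' form, and enlarge the test set to be symmetric and contain $e$, which only strengthens the inequality). A one-line induction then gives $|FS^n|\ge(1+\kappa)^n|F|$ for all finite $F$ and all $n\ge1$, and since $S$ is symmetric the same bound holds for left products: $|S^nW|=|(S^nW)^{-1}|=|W^{-1}S^n|\ge(1+\kappa)^n|W^{-1}|=(1+\kappa)^n|W|$. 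Plugging $F=S^n$ into the displayed inequality yields $h^{naive}_\mu(T,\cP^{S^n})\ge c(1+\kappa)^n$, hence $h^{naive}_\mu(T)\ge c(1+\kappa)^n$ for every $n$, so $h^{naive}_\mu(T)=+\infty$, completing the dichotomy.

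I do not expect a serious obstacle here; the argument is short. The one point requiring care is the bookkeeping between left and right translation — the paper's Følner convention is stated with right translation, whereas the identity $(\cP^F)^W=\cP^{FW}$ produces left translates of the running set $W$ — and this is precisely why it is convenient to take $S$ symmetric so that both one-sided isoperimetric inequalities are available simultaneously.
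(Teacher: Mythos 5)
Your argument is correct and is essentially the paper's own proof: both rest on the identity $(\cP^F)^W=\cP^{FW}$, the bound $H_\mu(\cP^{FW})\ge h^{naive}_\mu(T,\cP)\,|FW|$, and the fact that non-amenability makes $\sup_F\inf_W |FW|/|W|$ infinite. The only difference is that you explicitly derive that last isoperimetric fact (via a symmetric expanding set $S$ and the geometric bound $|S^nW|\ge(1+\kappa)^n|W|$), whereas the paper simply asserts it as a known consequence of non-amenability.
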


\begin{proof}
Suppose there is a finite-entropy partition $\cP$ of $X$ with $h^{\rm{\rm{naive}}}_\mu(T,\cP)>0$. Let $W \subset \Ga$ be finite. Then
$$h^{\rm{\rm{naive}}}_\mu(T,\cP^W) = \inf_{F \Subset \Ga} |F|^{-1} H_\mu(\cP^{WF}) = \inf_{F \Subset \Ga}  \frac{H_\mu(\cP^{WF})}{|WF|} \frac{|WF|}{|F|}.$$
Since $\Ga$ is non-amenable, for every real number $r>0$ there is a finite $W \subset \Ga$ such that
$$\inf_{F \Subset \Ga} \frac{|WF|}{|F|}>r.$$
Hence $\sup_{W \Subset \Ga} h_\mu^{\rm{naive}}(T,\cP^W) = +\infty$ proving the theorem.
\end{proof}

In many respects naive entropy behaves better than sofic entropy. For example, it is monotone under factor maps and additive under direct products. Moreover, the naive Pinsker algebra is naturally defined as the collection of all measurable subsets $A \subset X$ such that the partition $\cP=\{A,X-A\}$ has zero naive entropy. An exercise shows that this really is a sigma-sub-algebra that contains all zero-naive-entropy factors. Moreover:

\begin{prop}\label{prop:naive-Rokhlin2}
For ergodic actions, naive entropy is an upper bound for Rokhlin entropy.
\end{prop}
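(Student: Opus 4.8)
The plan is to split on amenability of $\Ga$. If $\Ga$ is amenable there is nothing new: naive entropy coincides with Kolmogorov--Sinai entropy (\S\ref{sec:general}) and so does Rokhlin entropy \cite{seward-kreiger-1}, so in fact $h^{Rok}(T) = h^{naive}_\mu(T)$. So assume $\Ga$ is non-amenable. By Theorem \ref{thm:naive-non-amenable} we have $h^{naive}_\mu(T) \in \{0,+\infty\}$; the case $+\infty$ is trivial, so the entire content is the implication that $h^{naive}_\mu(T) = 0$ forces $h^{Rok}(T) = 0$.

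Assume $h^{naive}_\mu(T)=0$ and fix $\epsilon>0$; I will exhibit a countable generating partition $\cR$ with finite Shannon entropy and $H_\mu(\cR) < \epsilon$, which gives $h^{Rok}(T) < \epsilon$. Since $(X,\mu)$ is standard, choose finite partitions $\cQ_1 \le \cQ_2 \le \cdots$ with $\s\textrm{-alg}\big(\bigcup_n \cQ_n\big) = \cB_X$ modulo null sets (for instance the partition generated by the first $n$ members of a countable generating family of sets), so that $\bigcup_n \s\textrm{-alg}(T,\cQ_n) = \cB_X$. Each $\cQ_n$ is finite, hence $h^{naive}_\mu(T,\cQ_n) \le h^{naive}_\mu(T) = 0$, so for every $n$ there is a finite $W_n \Subset \Ga$ with $H_\mu(\cQ_n^{W_n}) < \eta_n |W_n|$ for a sequence $\eta_n \to 0$ of our choosing; since $H_\mu(\cQ_n^{W_n}) \ge H_\mu(\cQ_n)$ this forces $|W_n| \to \infty$.

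The crux is a compression step: from a finite partition $\cA = \cQ_n^{W_n}$ whose Shannon entropy is small compared with $|W_n|$, produce a finite partition $\cR_n$ with $\s\textrm{-alg}(T,\cR_n) \supseteq \s\textrm{-alg}(T,\cQ_n)$ and $H_\mu(\cR_n) < 2^{-n}\epsilon$. The idea is to record the $\cA$-value at a point $x$ by spreading roughly $|W_n|$ bits over a carefully chosen set of $\approx |W_n|$ group-translates of $x$, so the per-coordinate cost is $O(\eta_n)$ plus an overhead vanishing as $|W_n| \to \infty$; the required translates are produced by a maximality/exhaustion argument using ergodicity of $T$ in place of a Rokhlin lemma, exactly as in Seward's proof of the generalized Krieger generator theorem \cite{seward-kreiger-1} (alternatively, one first checks that the Rokhlin cost of the factor $\s\textrm{-alg}(T,\cQ_n)$ is dominated by $h^{naive}_\mu(T,\cQ_n)=0$ and invokes a relative form of that theorem). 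Granting this, put $\cR := \bigvee_n \cR_n$. Countable subadditivity of Shannon entropy gives $H_\mu(\cR) \le \sum_n H_\mu(\cR_n) < \epsilon$, and $\cR$ is generating because $\s\textrm{-alg}(T,\cR) \supseteq \bigcup_n \s\textrm{-alg}(T,\cR_n) \supseteq \bigcup_n \s\textrm{-alg}(T,\cQ_n) = \cB_X$. Hence $h^{Rok}(T) \le H_\mu(\cR) < \epsilon$; letting $\epsilon \downarrow 0$ gives $h^{Rok}(T)=0$.

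The main obstacle is precisely the compression step --- turning the ``information is spread thinly'' conclusion of vanishing naive entropy into an honestly small-entropy partition generating a prescribed factor. Ergodicity is essential there (for the identity action on a nonatomic space, say, every finite partition has zero naive entropy, yet the factor it generates cannot be generated by a partition of smaller Shannon entropy), and this step is in substance a relativized instance of Seward's generator constructions. Everything else is soft bookkeeping: the choice of the exhausting sequence $\cQ_n$, countable subadditivity of $H_\mu$, and the observation that an increasing join still generates once each finite stage does.
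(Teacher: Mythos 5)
Your proposal is correct and follows essentially the same route as the paper: exhaust $\cB_X$ by finite partitions $\cQ_n$, observe each generates a factor of zero naive (hence zero Rokhlin) entropy, extract generating partitions $\cR_n$ with $H_\mu(\cR_n)<2^{-n}\epsilon$, and join. The only difference is presentational: the paper obtains the ``compression step'' by directly invoking Theorem \ref{prop:naive-Rokhlin1} applied to the factor generated by $\cQ_n$ (exactly your parenthetical alternative), rather than re-running Seward's generator construction by hand, and it does not need the amenable/non-amenable case split since the argument only ever uses $h^{naive}_\mu(T,\cQ_n)\le h^{naive}_\mu(T)$.
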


The proof of this requires the following surprising result due to Brandon Seward:
\begin{thm}\label{prop:naive-Rokhlin1}\cite{seward-kreiger-2}
Let $T$ be an ergodic pmp action with a generating partition $\cP$ with finite Shannon entropy. Then 
$$h^{\rm{Rok}}(T) \le h^{\rm{naive}}_\mu(T,\cP).$$
\end{thm}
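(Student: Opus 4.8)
The plan is to exploit the fact that naive entropy with respect to $\cP$ is computed as an \emph{infimum} over finite subsets $W \Subset \Ga$, so for every $\epsilon > 0$ there is a finite $W$ with $|W|^{-1}H_\mu(\cP^W) < h^{naive}_\mu(T,\cP) + \epsilon$. The partition $\cP^W$ is still generating (it is a coarsening-refinement of a generating partition under finitely many group elements, hence generates the same $\Ga$-invariant sigma-algebra), so replacing $\cP$ by $\cP^W$ we may as well assume at the outset that $|W|^{-1}H_\mu(\cP^W) < h^{naive}_\mu(T,\cP)+\epsilon$ for a conveniently chosen $W$ — in fact, by passing to a further refinement we can assume $e \in W$ and that $W$ can be taken inside a large ball, or that $W = W_0^n$ is a large "power" of a fixed symmetric generating set $W_0 \ni e$, whichever is more convenient for the combinatorics below.

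The heart of the argument is then a tiling-type construction producing a generating partition of small Shannon entropy. First I would use ergodicity together with a Rokhlin-lemma / quasi-tiling style device (for a general countable group this is the Ornstein--Weiss--type machinery, or more elementarily an Abert--Weiss or direct "marker" construction) to produce a measurable set $A \subset X$ which is "$W$-spread out": the translates $\{T^w A : w \in W\}$ are nearly disjoint and their union has measure close to $1$, so that $A$ carries $\approx |W|^{-1}$ of the space. One then encodes the action by a partition of the form $\cP' = \cP^W|_A \sqcup (\text{complement data})$: on the "core" set $A$ one records the full $\cP^W$-name (this costs $\approx \mu(A)\cdot H_\mu(\cP^W) \approx |W|^{-1}H_\mu(\cP^W)$ bits, since $\mu(A)\approx |W|^{-1}$), plus a bounded amount of extra "scaffolding" information — the location of $A$ and a small boundary correction — whose Shannon-entropy contribution is $O(\epsilon)$ or can be absorbed. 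The key point is that from the $\cP^W$-names on the translates $T^w A$, together with the scaffolding telling us how the pieces fit, one can reconstruct the $\cP$-name of a.e.\ point, so $\cP'$ is generating. Taking $\epsilon \to 0$ and controlling the overhead gives $h^{Rok}(T) \le H_\mu(\cP') \to h^{naive}_\mu(T,\cP)$.

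The main obstacle I expect is exactly the construction and error-control of the set $A$ and the scaffolding partition: for non-amenable $\Ga$ one cannot literally tile $\Ga$ by translates of a Følner-like set, so one must use an approximate/probabilistic tiling (random shifts, or a measurable selection of "centers") and then argue that (i) the overlap and uncovered regions contribute only $o(1)$ to the Shannon entropy, and (ii) the bookkeeping needed to decode — essentially a description of which translate of $A$ each point belongs to and its offset within $W$ — has entropy $\le \log|W| \cdot \mu(\partial) + o(1)$, small relative to $|W|^{-1}H_\mu(\cP^W)$ when the latter is bounded below. Getting these two estimates to hold \emph{simultaneously} with the generating property is the delicate part; everything else (that $\cP^W$ is generating, that the infimum defining $h^{naive}$ lets us pick $W$, subadditivity bookkeeping for Shannon entropy) is routine. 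One should also handle the degenerate case $h^{naive}_\mu(T,\cP) = \infty$ trivially, since then the inequality is vacuous, so the interesting range is $h^{naive}_\mu(T,\cP) < \infty$, i.e.\ by Theorem \ref{thm:naive-non-amenable} (when $\Ga$ is non-amenable) the value $0$ — in which case one is really proving that a group action with a zero-naive-entropy generating partition has Rokhlin entropy zero, and the construction above is designed to drive $H_\mu(\cP')$ all the way to $0$.
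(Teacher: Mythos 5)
First, a point of comparison: the survey does not prove this theorem at all --- it is quoted from \cite{seward-kreiger-2} --- so there is no in-paper argument to measure your sketch against, and I can only assess it on its own terms. As a heuristic for why the bound should hold, your plan (record the $\cP^W$-name on a set $A$ of measure $\approx |W|^{-1}$ whose $W$-translates cover) is the natural first attempt, but the two places you defer to ``delicate error control'' are in fact the entire content of the theorem, and one auxiliary claim is wrong.

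The main gap is the set $A$ itself. Your coding needs a measurable $A$ with $\mu(A)\le |W|^{-1}+\epsilon$ such that $\bigcup_{w\in W}T^wA$ is conull (an uncovered piece can be patched by recording $\cP$ there, but only if it is both small and nearly independent of $\cP$). For a general finite $W$ in a non-amenable group this is a measurable almost-perfect-code problem; there is no Rokhlin lemma or Ornstein--Weiss quasi-tiling to invoke (those require $\Ga$ amenable and $W$ F\o lner), and Abert--Weiss is about weak containment, not tilings. The easy exhaustion argument gives $A$ with $\{T^wA\}_{w\in W}$ pairwise disjoint and maximal, but then only the $W^{-1}W$-translates cover, which destroys the factor $|W|^{-1}$ and yields the useless bound $H_\mu(\cP^W)$. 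The second gap is the entropy accounting: writing $\beta=\{A^c\}\cup\{A\cap Q:Q\in\cP^W\}$ one has $H_\mu(\beta)=H_\mu(\{A,A^c\})+\mu(A)H_{\mu_A}(\cP^W)$ with $\mu_A$ the normalized restriction, and $H_{\mu_A}(\cP^W)=H_\mu(\cP^W)$ only when $A$ is independent of $\cP^W$; in general restricting to a set of measure $|W|^{-1}$ does not scale the entropy contribution by $|W|^{-1}$ (it can leave it of order $H_\mu(\cP^W)$, or worse if $\cP^W$ has many small atoms). So $A$ must in addition be nearly independent of $\cP^W$, a requirement in tension with the covering condition and not addressed. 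Finally, your proposed reduction to the case $h^{naive}_\mu(T,\cP)=0$ misreads Theorem \ref{thm:naive-non-amenable}: the $\{0,+\infty\}$ dichotomy concerns $h^{naive}_\mu(T)=\sup_\cQ h^{naive}_\mu(T,\cQ)$, whereas for a fixed $\cP$ the quantity $h^{naive}_\mu(T,\cP)\le H_\mu(\cP)$ is always finite and typically strictly positive (it equals $H_\mu(\cP)$ for the canonical partition of a Bernoulli shift), so the theorem does not reduce to the zero case. The proof in \cite{seward-kreiger-2} goes through substantially heavier machinery than a direct tiling, and I would not regard the sketch as a proof until the construction of $A$ with all of these properties is carried out.
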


\begin{proof}[Proof of Proposition \ref{prop:naive-Rokhlin2}]
Let $T$ be an ergodic pmp action with naive entropy zero. It suffices to show that, for every $\epsilon>0$, there exists a generating partition with Shannon entropy $<\epsilon$. 

Let $\cP$ be a countable generating partition for $T$. Such a partition always exists by a general result due to Rokhlin \cite{rohlin-lectures}. It might, however, be the case that $\cP$ has infinite Shannon entropy. In any case, there exist finite partitions $\cQ_1 \le \cQ_2 \le \cdots \le  \cP$ such that $\cP  = \bigvee_i \cQ_i$. We apply Theorem \ref{prop:naive-Rokhlin1} to the factor generated by $\cQ_i$. So there exists a partition $\cR_i$ such that $H_\mu(\cR_i)< \epsilon/2^i$ and $\cR_i$ generates the same factor as $\cQ_i$. It follows that if $\cR_\infty := \bigvee_i \cR_i$, then $H_\mu(\cR_\infty)<\epsilon$ and $\cR_\infty$ is generating.
\end{proof}

\subsubsection{Applications to Gottschalk's Conjecture}

\begin{thm}\cite{seward-kreiger-2}
If $h^{\rm{Rok}}_{sup}(\Ga) =+\infty$ then Gottshalk's Conjecture (from \S \ref{sec:Gottschalk}) is true for $\Ga$.
\end{thm}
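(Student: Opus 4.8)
The plan is a proof by contradiction. I would begin by assuming the hypothesis holds while Gottschalk's Conjecture fails for $\Ga$: there is a finite alphabet $A$ and a continuous $\Ga$-equivariant injection $\phi : A^\Ga \to A^\Ga$ that is not surjective. Since $A^\Ga$ is compact and the shift acts by homeomorphisms, $X := \phi(A^\Ga)$ is a closed, shift-invariant, proper subset of $A^\Ga$, and $\phi$ is a homeomorphism (hence a topological conjugacy) of $\Ga \cc A^\Ga$ onto $\Ga \cc X$. Being closed and proper, $X$ omits some cylinder: there is a finite $E \Subset \Ga$ and a pattern $p \in A^E$ such that $x\res E \ne p$ for every $x \in X$.

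Next I would transport a Bernoulli measure. Equip the source $A^\Ga$ with the \emph{uniform} Bernoulli measure $u := u_A^{\Ga}$, where $u_A$ is the uniform probability measure on $A$, and set $\nu := \phi_* u$, a shift-invariant probability measure carried by $X$. Since $\phi$ is a conjugacy, $\Ga \cc (X,\nu)$ is isomorphic to the Bernoulli shift $\Ga \cc (A,u_A)^\Ga$; in particular it is ergodic, and $h^{Rok}(\Ga \cc (X,\nu)) = h^{Rok}(\Ga \cc (A,u_A)^\Ga)$. By Theorem~\ref{thm:seward-bernoulli} together with the hypothesis $h^{Rok}_{sup}(\Ga) = +\infty$, this common value equals $\min\bigl(H(A,u_A),\,h^{Rok}_{sup}(\Ga)\bigr) = \log|A|$.

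To contradict this I would bound $h^{Rok}(\Ga \cc (X,\nu))$ strictly below $\log|A|$ using only the forbidden pattern. The time-$0$ partition $\cP = \{\{x\in X: x(e)=a\} : a \in A\}$ is a finite generating partition for $\Ga \cc (X,\nu)$ (cylinders generate $\cB_X$ and each is a $\Ga$-translate of a cell of $\cP$), so Theorem~\ref{prop:naive-Rokhlin1} applies and yields $h^{Rok}(\Ga \cc (X,\nu)) \le h^{naive}_\nu(\Ga \cc X, \cP) = \inf_{W \Subset \Ga} |W|^{-1} H_\nu(\cP^W)$. Choosing $W = E^{-1}$, the partition $\cP^W$ of $X$ is exactly the partition according to the pattern $x\res E$, which takes at most $|A|^{|E|}-1$ values on $X$ because $p$ is forbidden; hence $H_\nu(\cP^W) \le \log(|A|^{|E|}-1)$ and therefore $h^{Rok}(\Ga \cc (X,\nu)) \le |E|^{-1}\log(|A|^{|E|}-1) < \log|A|$, contradicting the previous paragraph. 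Thus $\phi$ must be surjective, which is Gottschalk's Conjecture for $\Ga$.

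The only substantive inputs are the two theorems invoked above: Theorem~\ref{thm:seward-bernoulli}, which is precisely what turns the hypothesis into the statement that Bernoulli shifts have full Rokhlin entropy, and Theorem~\ref{prop:naive-Rokhlin1}, which furnishes the naive-entropy upper bound for Rokhlin entropy and so plays the role that topological sofic entropy plays in the sofic-group proof of Gottschalk, but without requiring $\Ga$ to be sofic. Given those, I expect no real obstacle; the one point that must be gotten right is the use of the \emph{uniform} base measure, since only then is the crude combinatorial count $|E|^{-1}\log(|A|^{|E|}-1)$ guaranteed to fall strictly below the entropy $\log|A|$ it must beat — a non-uniform base would break the inequality.
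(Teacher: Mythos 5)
Your proof is correct and follows essentially the same route as the paper's own sketch: apply Theorem \ref{thm:seward-bernoulli} (with the hypothesis $h^{Rok}_{sup}(\Ga)=+\infty$) to get $h^{Rok}(\Ga \cc (A^\Ga,\Phi_*u_A^\Ga))=\log|A|$, then use Theorem \ref{prop:naive-Rokhlin1} with the canonical time-$0$ partition and a forbidden cylinder to force this entropy strictly below $\log|A|$. You have simply filled in the quantitative details (the $|E|^{-1}\log(|A|^{|E|}-1)$ bound) that the paper leaves implicit.
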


\begin{proof}[Proof sketch]
Let $A$ be a finite alphabet and let $u_A$ denote the uniform probability measure on $A$. Assuming $\Phi:A^\Ga \to A^\Ga$ is continuous, $\Ga$ equivariant and injective, Theorem \ref{thm:seward-bernoulli} implies
$$h^{\rm{Rok}}(\Ga \cc (A,u_A)^\Ga) = \log |A| = h^{\rm{Rok}}(\Ga \cc (A^\Ga,\Phi_*u_A^\Ga)).$$
Using Theorem \ref{prop:naive-Rokhlin1}  with the canonical partition, it can be shown that if $\Phi$ is not surjective then 
$$ h^{\rm{Rok}}(\Ga \cc (A^\Ga,\Phi_*u_A^\Ga)) < \log |A|.$$
This contradiction proves the theorem.
\end{proof}

\subsubsection{Topological naive entropy}

Recent work of Peter Burton \cite{MR3649602}  introduced the following topological counterpart:
\begin{defn}[Topological naive entropy]
Let $\Ga \cc^T X$ be a continuous action on a compact metrizable space. Given an open cover $\cU$ of $X$, let $N(\cU)$ be the smallest cardinality of a subcover. The {\bf naive entropy} of $\cU$ is
$$h^{\rm{naive}}(T,\cU) = \inf_{F \Subset \Ga} |F|^{-1} \log(N(\cU^F))$$
where $\cU^F$ is the open cover $\vee_{f\in F} (T^f)^{-1}\cU$.  The {\bf naive entropy} of $T$ is 
$$h^{\rm{naive}}(T) = \sup_\cU h^{\rm{naive}}(T,\cU)$$
where the supremum is over all finite open covers.
\end{defn}
Burton shows that topological naive entropy provides an upper bound for measure naive entropy and that distal systems have zero naive entropy in both measure and topological senses when $\Ga$ has an element of infinite order. He also shows that the generic action of the free group by homeomorphisms on the Cantor set has zero topological naive entropy. However the following question appears to be open:

\begin{question}
Does every countable group admit an essentially free pmp action with zero naive entropy?
\end{question}

\section{Special classes of actions}\label{sec:special classes}       %%%%%%%%%%%%%%%%%%%%%%%%%%%%%%%%%%%%%

\subsection{Trivial actions}\label{sec:trivial}

It might come as a surprise that the trivial action of $\Ga$ on $(X,\mu)$ is interesting, from the point of view of sofic entropy theory. To explain, fix a sofic approximation $\Sigma=\{\sigma_n\}_{n \in \N}$ to $\Ga$ and a standard probability space $(X,\mu)$ (which may be atomic). The trivial action $\tau_X=(\tau^g_X)_{g\in \G}$ on $X$ is defined by $\tau^g_Xy=y$ for all $y\in X$ and $g\in \Ga$. We will show that $h_{\Sigma,\mu}(\tau_X)\in \{-\infty,0\}$ and that both cases occur. The upper bound $h_{\Sigma,\mu}(\tau_X)\le 0$ can be derived directly. %Alternatively, it is straightforward to check that the naive entropy is zero. So the upper bound follows from Propositions \ref{prop:naive-Rokhlin1} and \ref{prop:Rokhlinsofic}.

\subsubsection{Expanders}\label{sec:expanders}

For simplicity, suppose $\Ga$ is finitely generated and let $S\subset \Ga$ be a finite generating set. Let $\Sigma=\{\s_n:\G \to \sym(V_n)\}$  be a sofic approximation. Let $G_n=(V_n,E_n)$ be the graph with edges $\{v, \sigma_n(s)v\}$ for $v\in V_n$, $s\in S$. The sequence of graphs $\{G_n\}_n$ is an {\bf expander sequence} if there is an $\epsilon>0$ such that for every subset $A \subset V_n$ with $|A_n| \le |V_n|/2$, 
$$ |\partial A_n| \ge \epsilon |A_n|$$
where $\partial A_n$ is the set of edges $e \in E_n$ with one endpoint in $A_n$ and one endpoint not in $A_n$. In this case, we say  $\Sigma$ is {\bf a sofic approximation by expanders}. 

%\begin{exercise}
%Show that the property of being a sofic approximation by expanders does not depend on the choice of generating set $S$. 
%\end{exercise}

\begin{remark}[Actions on ultraproduct spaces]
The set $V_n$ can be thought of as a probability space endowed with the uniform probability measure. The ultraproduct of the $V_n$'s forms a nonstandard probability measure space on which $\G$ acts by measure-preserving transformations \cite{MR2964622}. If $\Sigma$ is by expanders then the action $\Ga$ on this ultraproduct  is ergodic. On the other hand, if the action is ergodic then there is an equivalent sofic approximation $\Si'$ such that $\Si'$ is by expanders. In this case, $\Sigma$ is said to be an {\bf ergodic sofic approximation} \cite{MR3693125}. 
\end{remark}

\begin{example}\label{ex:expanders}
G. Margulis showed that if $\Ga$ has property (T) and $N_i\vartriangleleft \Ga$ is a decreasing sequence of finite-index normal subgroups of $\Ga$ then the Schreier coset graphs of $\Ga/N_i$ form an expander sequence. Therefore, the sofic approximation $\Sigma=\{\sigma_i: \Ga \to \sym(\Ga/N_i)\}_i$ given by the canonical actions of $\Ga$ on $\Ga/N_i$ by left translation is by expanders. Gabor Kun recently proved every sofic approximation of a Property (T) group is equivalent (in the sense of \S \ref{sec:space}) to one that is a disjoint union of expanders \cite{kun-2016}.
\end{example}

\begin{prop}\label{prop:trivial}
If $\Sigma$ is a sofic approximation by expanders and $(X,\mu)$ is non-trivial (this means: for every  $x\in X$, $\mu(\{x\})<1$) then the trivial action of $\Ga$ on $(X,\mu)$ has $h_{\Sigma,\mu}(\tau_X)=-\infty$. 
\end{prop}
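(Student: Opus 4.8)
The plan is to show that the defining conditions on a microstate for the trivial action $\tau_X$ are mutually incompatible: approximate $\sigma_i$-invariance together with an almost-correct empirical distribution forces a subset of $V_i$ that is simultaneously ``large'' and ``almost invariant'', which the expander hypothesis forbids. First I would pass to a convenient topological model. Since $h_{\Sigma,\mu}(\tau_X)$ may be computed from any compact topological model (Theorem \ref{thm:measure-entropy} and the definition following it), fix a Borel isomorphism $\iota$ of $(X,\mu)$ onto a Borel subset of the Cantor space $C=\{0,1\}^{\N}$ and push $\mu$ forward to $\iota_*\mu$; then $C$, with its standard metric $\rho$ (which is generating for $\tau$, as $\rho$ separates points), together with $\iota_*\mu$ and the trivial action, is a topological model, and $\iota_*\mu$ is still non-trivial. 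Because $\iota_*\mu$ is not a point mass, an elementary argument on a nested chain of cylinders produces a clopen cylinder $A\subset C$ with $a:=\iota_*\mu(A)\in(0,1)$; put $\eta:=\rho(A,C\setminus A)>0$ (positive since $A$ is a cylinder) and $c:=\tfrac12\min(a,1-a)>0$.

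Next I would carry out the counting estimate. Let $S$ be the finite generating set defining the expander graphs $G_i=(V_i,E_i)$, with edge–expansion constant $\vre>0$; take $F=S$, take the weak$^*$-open neighbourhood $\mathcal O=\{\nu\in\Prob(C):|\nu(A)-a|<c\}$ of $\iota_*\mu$ (open since $\mathbf 1_A$ is continuous), and let $\delta>0$ be chosen later. Suppose $x\in\Map(\tau,\rho,\mathcal O,S,\delta,\sigma_i)$ and set $W:=x^{-1}(A)\subseteq V_i$. From $x_*u_{V_i}\in\mathcal O$ we get $\big||W|/|V_i|-a\big|<c$, hence $\min(|W|,|V_i\setminus W|)\ge c|V_i|$. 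On the other hand, since $\tau^s=\mathrm{id}$, the microstate condition says $\rho_2(x,x\circ\sigma_i(s))<\delta$ for every $s\in S$. If $v$ and $\sigma_i(s)v$ lie on opposite sides of $W$ then $\rho\big(x(v),x(\sigma_i(s)v)\big)\ge\eta$; there are at least $|\partial_sW|$ such vertices $v$, where $\partial_sW$ is the set of $s$-labelled edges of $G_i$ crossing $W$, so $\delta^2>\rho_2(x,x\circ\sigma_i(s))^2\ge \eta^2|\partial_sW|/|V_i|$. Summing over $s$, the edge boundary satisfies $|\partial W|\le\sum_{s\in S}|\partial_sW|\le |S|\eta^{-2}\delta^2|V_i|$. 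Applying the expander inequality to whichever of $W,V_i\setminus W$ has at most $|V_i|/2$ vertices gives
$$c|V_i|\ \le\ \min(|W|,|V_i\setminus W|)\ \le\ \vre^{-1}|\partial W|\ \le\ \vre^{-1}|S|\eta^{-2}\delta^2|V_i|,$$
i.e. $c\le\vre^{-1}|S|\eta^{-2}\delta^2$.

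To finish, choose $\delta$ so small that $\vre^{-1}|S|\eta^{-2}\delta^2<c$. Then the displayed inequality is impossible, so $\Map(\tau,\rho,\mathcal O,S,\delta,\sigma_i)=\emptyset$ for all large $i$; consequently $N_{\epsilon'}(\emptyset,\rho_\infty)=0$ and $|V_i|^{-1}\log 0=-\infty$, so for every $\epsilon'>0$ the inner $\limsup$ in the definition of $h_{\Sigma,\iota_*\mu}(\tau,\rho)$ is $-\infty$, whence $h_{\Sigma,\mu}(\tau_X)=-\infty$. The two slightly delicate points are: (i) securing a zero-dimensional compact model in which a genuinely clopen set of intermediate measure exists — this is exactly where non-triviality of $\mu$ enters; and (ii) the bookkeeping translating $\rho_2$-closeness into the bound on $|\partial W|$. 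After that, the stated edge-expansion inequality does all the real work. (One can instead keep any compact model and replace $\mathbf 1_A$ by a continuous function that is not $\mu$-a.e.\ constant, using the discrete Cheeger inequality to convert edge expansion into a spectral gap; the clopen version above has the advantage of invoking only the form of expansion stated in the text.)
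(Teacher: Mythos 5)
Your argument is correct and follows exactly the route of the paper's own proof sketch: pick a set $A$ of intermediate measure, observe that any approximately equidistributed, approximately equivariant microstate makes $\phi^{-1}(A)$ a subset of $V_i$ of intermediate density with edge boundary $O(\delta^2)|V_i|$, and contradict the expander inequality. The only additions are implementation details the paper leaves implicit --- passing to a zero-dimensional model so that $A$ is clopen with $\rho(A,C\setminus A)>0$, and the explicit $\rho_2$-to-boundary bookkeeping --- and these are handled correctly.
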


\begin{proof}[Proof sketch]
Let $A \subset X$ have $0<\mu(A)\le 1/2$. If $\phi:V_n \to X$ is a microstate then $\phi^{-1}(A)$ should have cardinality approximately $\mu(A)|V_n|$. Moreover the boundary of $\phi^{-1}(A)$ should have small cardinality relative to $|V_n|$ because $A$ is an invariant set. However this contradicts the expander property. So no such microstates exist. More precisely, $\Map(\tau_X,\rho,\cO,F,\delta,\sigma_n)$ is empty if $F \subset \Ga$ contains a generating set, $\delta>0$ is sufficiently small, $\cO$ is sufficiently small and $n$ is sufficiently large. Here $\rho$ is any metric on $X$. 
\end{proof}

%\subsubsection{Actions on ultraproducts}

%In order to state precisely when the sofic entropy of a trivial action is $0$ or $-\infty$, we 

\subsubsection{Diffuse sofic approximations}\label{sec:diffuse}
Next we define a sufficient condition on a sofic approximation implying the trivial action has sofic entropy zero.

%\begin{defn}
%Let $\Sigma=\{\sigma_n\}$ be a sofic approximation to $\Ga$ and $S \subset \N$ infinite. Then $\Si_S:=\{\s_n\}_{n \in S}$ is a {\bf subsequential sofic approximation}. 
%\end{defn}

\begin{defn}
Let $\Sigma=\{\sigma_n\}$ be a sofic approximation to $\Ga$. The {\bf density} of a sequence $\{A_n\}$ of subsets $A_n \subset V_n$ is 
$$\textrm{density}(\{A_n\}) := \lim_{n \to\infty} \frac{|A_n|}{|V_n|}$$
provided the limit exists.

 A sequence $A_n \subset V_n$ is {\bf asymptotically invariant} if for every $g \in \Ga$ the sequence of symmetric differences $\{A_n \vartriangle \sigma_n(g)A_n\}$ has density zero. If $\{A_n\}$ is asymptotically invariant and has positive density then there exist maps $\sigma'_n: \G \to \sym(A_n)$ satisfying
$$\lim_{n\to\infty} \frac{|\{v\in A_n:~\sigma'_n(g)v = \sigma_n(g)v\}|}{|A_n|}  =1$$
for every $g \in \Ga$. The sequence $\Sigma'=\{\sigma'_n\}$  is a {\bf sub-sofic approximation} of $\Si$. It is well-defined only up to edit distance zero (see \S \ref{sec:space}). It is called {\bf proper} if the density of $\{A_n\}$ is strictly less than 1.  The sequence $\Sigma$ is {\bf diffuse} if every sub-sofic approximation $\Sigma'$ admits a proper sub-sofic approximation. 
\end{defn}

\begin{example}
Sofic approximations can be amplified as follows. Let $\{W_n\}$ be a sequence of finite sets. Given $\Si=\{\s_n\}$ as above, define
$$\s'_n:\G \to \sym(V_n \times W_n)$$
by $\s'_n(g)(v,w)=(\s_n(g)v,w)$. If $|W_n| \to \infty$ as $n\to\infty$ then $\Sigma'=\{\sigma'_n\}$ is a diffuse sofic approximation.
\end{example}

\begin{prop}
If $\Sigma$ is diffuse then the trivial action of $\Ga$ on $(X,\mu)$ has zero $\Si$-entropy.  
\end{prop}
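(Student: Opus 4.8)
As noted at the start of \S\ref{sec:trivial} we already have $h_{\Sigma,\mu}(\tau_X)\le 0$, so the plan is to prove the matching lower bound $h_{\Sigma,\mu}(\tau_X)\ge 0$. Fix a compact metrizable model of the underlying space together with a compatible metric $\rho$ (for the trivial action a generating pseudometric is automatically a genuine metric). Since $N_\epsilon(S,\rho_\infty)\ge 1$ whenever $S\ne\emptyset$, it is enough to show that for every weak$^*$-open neighbourhood $\cO$ of $\mu$, every finite $F\Subset\Ga$ and every $\delta>0$, the set $\Map(\tau_X,\rho,\cO,F,\delta,\sigma_n)$ is nonempty for infinitely many $n$.

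First I would reduce this to a combinatorial statement about the $V_n$. Choose a finitely supported measure $\nu=\sum_{k=1}^m p_k\,\delta_{x_k}\in\cO$ with $x_k\in X$, $p_k>0$. It suffices to produce, for infinitely many $n$, a partition $V_n=A_n^{(1)}\sqcup\cdots\sqcup A_n^{(m)}$ with $\big|\,|A_n^{(k)}|\,|V_n|^{-1}-p_k\,\big|$ small and with $|\sigma_n(f)A_n^{(k)}\vartriangle A_n^{(k)}|$ small relative to $|V_n|$ for all $k$ and all $f\in F$. Indeed, the map $\phi_n\colon V_n\to X$ taking the constant value $x_k$ on $A_n^{(k)}$ then has $(\phi_n)_*u_{V_n}$ close to $\nu$ in total variation, hence in $\cO$, while for $f\in F$
$$\rho_2\big(\phi_n,\phi_n\circ\sigma_n(f)\big)^2\ \le\ \operatorname{diam}(X)^2\,|V_n|^{-1}\sum_{k=1}^m\big|\sigma_n(f)A_n^{(k)}\vartriangle A_n^{(k)}\big|,$$
which is $<\delta^2$ once the right-hand side is small enough; so $\phi_n\in\Map(\tau_X,\rho,\cO,F,\delta,\sigma_n)$.

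The heart of the matter is producing these asymptotically invariant partitions into nearly prescribed proportions, and this is where diffuseness is used. Pass to the ultraproduct probability space $(V,\mu_V)=\prod_{n\to\cU}(V_n,u_{V_n})$ over a nonprincipal ultrafilter $\cU$, with its measure-preserving $\Ga$-action (as in the Remark of \S\ref{sec:expanders}), and let $\cI\subset\cB_V$ be the $\sigma$-algebra of $\Ga$-invariant sets. The operative consequence of ``$\Sigma$ is diffuse'' is that $\mu_V\resto\cI$ is non-atomic: an atom $B\in\cI$ with $0<\mu_V(B)<1$ would be represented by an asymptotically invariant, non-trivial sequence $A_n\subset V_n$ whose induced sub-sofic approximation has ultraproduct $(B,\mu_V\resto B)$ carrying an ergodic $\Ga$-action, hence---by the ``ergodic $\Leftrightarrow$ expander'' dictionary of \S\ref{sec:expanders}---would be by expanders, contradicting diffuseness; a full-measure atom would similarly make $\Sigma$ itself an ergodic (i.e.\ expander) approximation, again excluded. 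Granting non-atomicity, choose a partition $V=B_1\sqcup\cdots\sqcup B_m$ into $\Ga$-invariant sets with $\mu_V(B_k)=p_k$, approximate each $B_k$ by an internal set, and pass to successive differences to obtain a genuine partition of each $V_n$; standard Loeb-measure estimates then give, for a $\cU$-set of $n$ (in particular for infinitely many $n$), a partition $V_n=A_n^{(1)}\sqcup\cdots\sqcup A_n^{(m)}$ meeting the requirements of the previous paragraph. Feeding those $n$ into that reduction completes the argument.

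The main obstacle is the middle of the third paragraph: extracting non-atomicity of $\mu_V\resto\cI$ from diffuseness, which requires being precise about the definition of a sub-sofic approximation and its degenerate cases (the robust reformulation of ``diffuse'' is exactly ``$\mu_V\resto\cI$ is non-atomic''), together with the Loeb-measure bookkeeping needed to descend a $\Ga$-invariant partition of $(V,\mu_V)$ to near-invariant partitions of the finite sets $V_n$ with quantitative control uniform over the prescribed finite $F$. The weak$^*$ approximation of $\mu$ and the elementary $\rho_2$ estimate are routine.
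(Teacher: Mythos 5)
Your overall strategy is sound, and your first two paragraphs are essentially the paper's own reduction: the upper bound is quoted, and the lower bound is reduced to producing, for infinitely many $n$, partitions $V_n=A_n^{(1)}\sqcup\cdots\sqcup A_n^{(m)}$ that are nearly $\sigma_n$-invariant with nearly prescribed proportions (the paper phrases the last step via the partition definition of $\Si$-entropy rather than via constant-on-pieces microstates, which is immaterial). Where you genuinely diverge is in how these partitions are produced. The paper works directly along the full sequence of indices: it shows the set $G$ of densities realized by asymptotically invariant sequences is closed, uses diffuseness to split any ``gap'' sequence $\{B_n\setminus A_n\}$ that fails to be an expander sequence, concludes $G=[0,1]$, and then peels off the pieces $Q_{i,n}$ one at a time. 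You instead pass to the Loeb ultraproduct $(V,\mu_V)$ and try to convert diffuseness into non-atomicity of $\mu_V\resto\cI$; granting that, the Sierpi\'nski-type selection of invariant sets of prescribed measure, the descent to internal sets, and the $\rho_2$ estimate are indeed routine, and $\cU$-many indices suffice because the entropy is a $\limsup$.

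The gap is in the conversion itself, and it is not mere bookkeeping. Diffuseness, asymptotic invariance, non-triviality and the expander property are all defined by limits and liminfs over the \emph{full} index sequence, whereas the ultraproduct only remembers a $\cU$-large set of indices. Concretely, interleave a sofic approximation by expanders (even $n$) with an amplified approximation $\s_n^{\oplus p_n}$, $p_n\to\infty$ (odd $n$). Along the even indices the expander property forces $\frac{|A_n|}{|V_n|}\bigl(1-\frac{|A_n|}{|V_n|}\bigr)\to 0$ for every asymptotically invariant sequence, so there is no non-trivial asymptotically invariant sequence at all, hence no sub-sofic approximation by expanders, and the interleaved $\Si$ is diffuse; yet for $\cU$ concentrated on the even indices the ultraproduct action is ergodic and $\mu_V\resto\cI$ is atomic. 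So ``diffuse $\Rightarrow$ $\mu_V\resto\cI$ non-atomic'' fails for an arbitrary nonprincipal $\cU$. In the other direction, an atom of $\cI$ only yields a sequence that is asymptotically invariant and expanding along $\cU$-many indices; you have no control off that $\cU$-large set (whose complement may be infinite), so you cannot upgrade it to a genuine sub-sofic approximation by expanders and the contradiction with diffuseness does not materialize. What your argument actually needs is that \emph{some} nonprincipal $\cU$ has non-atomic invariant algebra, which would indeed suffice for the $\limsup$, but establishing that from the full-sequence definition of diffuseness is exactly as hard as the combinatorial claim $G=[0,1]$, and that argument is missing. (To be fair, the same full-sequence-versus-subsequence subtlety is glossed over in the step ``since it cannot be an expander sequence'' of the paper's own sketch, and the interleaved example above shows $G=[0,1]$ can fail for a literally diffuse $\Si$ even though the proposition itself survives by working along the good subsequence; but your write-up presents the false equivalence as the definitional content of diffuseness rather than as the point requiring proof.)
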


\begin{proof}[Proof sketch]
%It suffices to consider the case when $(X,\mu)$ is purely non-atomic (since every standard probability space is an image of such a space).
%Let $\delta>0$ and $F \subset G$ be finite. Using the partition definition of sofic entropy \S **, we see that it suffices to prove there exists $n\in N$ such that $\sigma_n$ admits an $(F,\delta,\cP)$-good model for $G \cc (X,\mu)$. 

The upper bound can be obtained directly. To prove the lower bound, let $G$ be the set of all numbers $p\in [0,1]$ such that $p$ is the density of $\{A_n\}$ for some asymptotically invariant $\{A_n\}$.  We claim that $G=[0,1]$. To see this, let $p \in [0,1]$ be arbitrary and let $a = \sup \{x \in G:~x \le p\}$. A diagonalization argument shows $G$ is closed. Therefore, $a\in G$ and there exists asymptotically invariant $\{A_n\}$ with density $a$.  

Another diagonalization argument shows there exists an asymptotically invariant sequence $\{B_n\}$ with $A_n \subset B_n$ and $\textrm{density}(\{B_n\})=b$  where $b \ge p$ is minimal subject to these conditions. Note that $B_n \setminus A_n$ is asymptotically invariant. Because $\Si$ is diffuse, either $a=b$ (in which case $a=p \in G$) or there exists an asymptotically invariant sequence $\{C_n\}$ with $A_n \subset C_n \subset B_n$ and
$$a< \textrm{density}(\{C_n\}) < b.$$ 
This contradicts the choice of $a,b$. So $G=[0,1]$ as claimed.
 
 Now let $\cP=\{P_1,\ldots, P_r\}$ be a finite partition of $X$ with $0<\mu(P_i)<1$ for all $i$. By the claim above there exists  an asymptotically invariant sequence $\{Q_{1,n}\}_n$ with density $\mu(P_1)$. Apply the claim again to the complement of $Q_{1,n}$ in $V_n$ to obtain an asymptotically invariant sequence $\{Q_{2,n}\}_n$ with $Q_{1,n} \cap Q_{2,n}=\emptyset$ and density $\mu(P_2)$. Continue in this fashion to obtain a sequence $\{\cQ_n\}$ of partitions $\cQ_n=\{Q_{1,n},\ldots, Q_{r,n}\}$ such that each sequence $\{Q_{i,n}\}_n$ is asymptotically invariant and has density $\mu(P_i)$. This shows there exist microstates (with respect to the partition definition) for the trivial action of $\Ga$ on $X$ and therefore $h_{\Sigma,\mu}(\tau_X)\ge 0$. 
 %The upper bound $h_{\Sigma,\mu}(\tau_X)\le 0$ is left to the reader.

\end{proof}

\subsubsection{The $f$-invariant}\label{sec:f-trivial}

\begin{thm}
Let $(X,\mu)$ be a probability space with finite Shannon entropy $H(X,\mu)$. Then $f_\mu(\tau_X) =  -(r-1)H(X,\mu).$
 \end{thm}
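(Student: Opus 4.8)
The plan is to evaluate the $f$-invariant on the partition of $X$ into atoms and then exploit that the action is trivial to collapse every term in the defining formula. Write $S$ for the (finite) free generating set, so $\Ga=\langle S\rangle$ has rank $r=|S|$. First I would note that, by the convention recalled in \S\ref{sec:classical}, finiteness of $H(X,\mu)$ forces $\mu$ to be purely atomic. Let $\cP_0$ be the partition of $X$ into the atoms of $\mu$ (together with a single null set absorbing any non-atomic leftover, which we ignore as usual). Then $\cP_0$ is a countable partition with $H_\mu(\cP_0)=H(X,\mu)<\infty$. Since the action $\tau_X$ is trivial we have $\s\textrm{-alg}(\tau_X,\cP_0)=\s\textrm{-alg}(\cP_0)$, and purely-atomicity of $\mu$ gives $\s\textrm{-alg}(\cP_0)=\cB_X$ modulo null sets; hence $\cP_0$ is a generating partition of finite Shannon entropy, so $f_\mu(\tau_X)$ is defined and, by Theorem \ref{thm:f1}, equals $f_\mu(\tau_X,\cP_0)$.

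Next I would compute $f_\mu(\tau_X,\cP_0)=\inf_{W\Subset\Ga}F_\mu(\tau_X,\cP_0^W)$. For the trivial action $(T^w)^{-1}\cP_0=\cP_0$ for every $w\in\Ga$, so $\cP_0^W=\bigvee_{w\in W}(T^w)^{-1}\cP_0=\cP_0$ for every nonempty finite $W$; thus the infimum is attained trivially and $f_\mu(\tau_X,\cP_0)=F_\mu(\tau_X,\cP_0)$. Again using $s\cP_0=\cP_0$ we have $H_\mu(\cP_0\vee s\cP_0)=H_\mu(\cP_0)$ for each $s\in S$, so
\[
F_\mu(\tau_X,\cP_0)=H_\mu(\cP_0)+\sum_{s\in S}\big(H_\mu(\cP_0)-2H_\mu(\cP_0)\big)=(1-r)H_\mu(\cP_0)=-(r-1)H(X,\mu).
\]

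There is no real obstacle here: the computation is a single line once one is allowed to work with the partition into atoms. The only points that require a little care are (i) verifying that $\cP_0$ is genuinely generating for $\tau_X$, which is precisely where purely-atomicity of $\mu$ (equivalently, finiteness of $H(X,\mu)$) is used, and (ii) invoking Theorem \ref{thm:f1} so that evaluating $f_\mu$ on the single partition $\cP_0$ is legitimate. If one also wanted to cover an infinite free generating set, the same argument shows $f_\mu(\tau_X)=-\infty$ whenever $H(X,\mu)>0$, consistent with interpreting $-(r-1)$ as $-\infty$.
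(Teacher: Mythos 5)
Your proposal is correct and follows essentially the same route as the paper: both reduce to the partition into atoms (points of a countable $X$, using that finite Shannon entropy forces $\mu$ to be purely atomic), observe that the trivial action makes this partition generating with $\cP^W=\cP$, and read off $F_\mu(\tau_X,\cP)=-(r-1)H_\mu(\cP)$. Your write-up simply makes explicit the verification steps the paper leaves implicit.
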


\begin{proof}
Because $H(X,\mu)<\infty$ we may assume $X$ is countable. Let $\cP$ be the partition into points. Then $f_\mu( \tau_X ) = F_\mu( \tau_X,\cP ) = -(r-1) H_\mu(\cP).$
%The case $H(X,\mu)=+\infty$ is left to the reader.
\end{proof}

%\subsubsection{The ultraproduct definition of soficity}

%Question: since property (T) implies finitely generated, if $G$ is not finitely generated is it true that the sofic entropy of every trivial action is zero? Can a non-finitely generated group have an expander sequence? Does it even make sense?

\subsection{Bernoulli shifts}\label{sec:bernoulli}

\begin{thm}
For any countably infinite group $\G$, if $(K,\k), (L,\l)$ are probability space with the same Shannon entropy then the corresponding Bernoulli shifts $\G \cc (K,\k)^\G$ and $\G \cc (L,\l)^\G$ are measurably conjugate. In particular, if $\G$ is sofic or $h^{\rm{Rok}}_{sup}(\G)=+\infty$ then Bernoulli shifts over $\G$ are completely classified up to measure-conjugacy by Shannon entropy of the base. 
\end{thm}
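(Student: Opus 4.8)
The statement bundles two implications: the first sentence (equal base entropy forces measurable conjugacy, over \emph{any} countably infinite $\G$), and the ``in particular'' clause, whose extra content is that distinct base entropies rule out conjugacy when $\G$ is sofic or $h^{Rok}_{sup}(\G)=+\infty$. I would dispose of the second, easier part first. If $\G$ is sofic then, fixing any sofic approximation $\Sigma$, Theorem~\ref{thm:bernoulli-sofic} gives $h_{\Sigma,\kappa^\G}(\G\cc K^\G)=H(K,\kappa)$, and since $\Sigma$-entropy is a measure-conjugacy invariant, $H(K,\kappa)\neq H(L,\lambda)$ forbids a conjugacy; if instead $h^{Rok}_{sup}(\G)=+\infty$ then Theorem~\ref{thm:seward-bernoulli} gives $h^{Rok}(\G\cc(K,\kappa)^\G)=H(K,\kappa)$, and Rokhlin entropy is likewise a conjugacy invariant. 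So the whole theorem reduces to the first sentence.

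For the first sentence the main tool is co-induction. Recall that for a subgroup $H\le\G$ the co-induction functor $\mathrm{CInd}_H^{\G}$ carries isomorphic $H$-actions to isomorphic $\G$-actions, and carries the Bernoulli $H$-shift with base $(K,\kappa)$ to the Bernoulli $\G$-shift with the \emph{same} base $(K,\kappa)$. Consequently, if $\G$ has an element of infinite order --- hence contains a copy of $\Z$ --- then Ornstein's isomorphism theorem, in the form covering both finite and infinite base entropy, yields $\Z\cc(K,\kappa)^\Z\cong\Z\cc(L,\lambda)^\Z$, and applying $\mathrm{CInd}_\Z^{\G}$ gives $\G\cc(K,\kappa)^\G\cong\G\cc(L,\lambda)^\G$. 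And if $\G$ is amenable --- in particular if $\G$ is an infinite locally finite torsion group --- the Ornstein--Weiss isomorphism theorem \cite{OW87} gives the conjugacy directly.

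What is left, and what I expect to be the genuine obstacle, is the case of a countably infinite torsion group that is not amenable (a Tarski monster, or an infinite non-amenable Burnside group): such a $\G$ has no element of infinite order and may have no infinite amenable subgroup at all, so neither shortcut is available. For this case the plan is to appeal to the isomorphism-theory machinery for Bernoulli shifts over non-amenable groups developed in \S\ref{sec:bernoulli-bernoulli} and \S\ref{sec:ornstein theory}, which is built on the generalized Gaboriau--Lyons construction of \cite{bowen-ri}. The delicate step there is upgrading ``all Bernoulli shifts over $\G$ factor onto one another'' to ``Bernoulli shifts of equal base entropy are isomorphic'': there is no Cantor--Bernstein principle in the measured category, so the conjugacy has to be produced by hand, and essentially all the difficulty is concentrated in that construction. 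Once this third case is in place, the trichotomy --- $\G$ contains $\Z$, $\G$ amenable, $\G$ torsion non-amenable --- exhausts the countably infinite groups, completing the proof.
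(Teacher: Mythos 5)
Your handling of the ``in particular'' clause and of the first two branches of your trichotomy is correct and coincides with the paper: the sofic case is Theorem \ref{thm:bernoulli-sofic}, the $h^{Rok}_{sup}(\G)=+\infty$ case is Theorem \ref{thm:seward-bernoulli}, and for the isomorphism statement the paper likewise uses Stepin's co-induction trick \cite{stepin-1975} together with Ornstein (resp.\ Ornstein--Weiss): any group containing an infinite \emph{amenable} subgroup --- in particular any group containing $\Z$ --- is Ornstein. The problem is your third case, which is exactly where the theorem's content lives, and there your proposal is a placeholder rather than a proof. The machinery you point to --- the mutual-factoring theorem of \S\ref{sec:bernoulli-bernoulli} (Theorem \ref{thm:ri}) plus some upgrade from ``factor onto each other'' to ``isomorphic'' --- cannot work as stated: as you yourself observe there is no Cantor--Bernstein principle here, and Theorem \ref{thm:ri} makes \emph{all} Bernoulli shifts over a non-amenable group factor onto one another regardless of base entropy, so it is structurally incapable of producing an entropy-respecting conjugacy. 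The paper's proof of the first sentence does not pass through that theorem at all.

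What the paper actually does (\S\ref{sec:isom}, following \cite{bowen-ornstein-2012}) is a \emph{measurable} version of Stepin's trick that replaces the missing infinite amenable subgroup by an ergodic $\Z$-action inside the orbit equivalence relation. Concretely: if $(K,\k)$ and $(L,\l)$ admit a common nontrivial factor $(M,\mu)$, then both Bernoulli shifts factor onto $\G\cc (M,\mu)^\G$; by Zimmer \cite{zimmer-1984} there is an ergodic automorphism $T$ of $(M,\mu)^\G$ whose orbits lie inside the $\G$-orbits; lifting $T$ to $(K,\k)^\G$ and $(L,\l)^\G$ and invoking Thouvenot's \emph{relative} isomorphism theorem \cite{thouvenot-rel-isom-1975} yields a conjugacy of the lifts compatible with $T$, which is then assembled coset-by-orbit into a $\G$-equivariant isomorphism. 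When neither base is a two-atom space, one interpolates through a third base $(N,\nu)$ of the same Shannon entropy sharing a nontrivial common factor with each of $(K,\k)$ and $(L,\l)$; the remaining two-atom case needs a separate argument (Seward's work in progress, as the paper acknowledges). So your decomposition into cases is fine, but the third case needs this Zimmer--Thouvenot mechanism spelled out --- it is not recoverable from the factoring results you cite.
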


The first statement is outlined in \S \ref{sec:isom}. The second statement follows from Theorems \ref{thm:bernoulli-sofic} and \ref{thm:seward-bernoulli}. It is unknown whether the second statement holds for all countably infinite groups.

%\begin{proof}[Proof sketch]
%If $\G=\Z$ then the statement is Ornstein's Isomorphism Theorem \cite{}. If $\G$ is amenable then Ornstein-Weiss proved this via a quasi-tiling argument \cite{}. In \cite{} Stepin proved that if $\G$ contains an infinite subgroup $\L \le \G$ and the theorem holds for $\L$ then it holds for $\G$. The reason is that the isomorphism for the $\G$-actions can be built coset-by-coset out of the isomorphism for $\L$-actions. It is a co-induction argument. In \cite{}, I used measure equivalence relations and Thouvenot's Relative Isomorphism Theorem \ref{} to prove the theorem in all cases except when either $|K|=2$ or $|L|=2$. The proof reduces to the case in which there is a nontrivial probability space $(M,\mu)$ that is a common quotient of $(K,\k)$ and $(L,\l)$. In this case, an ergodic hyperfinite subequivalence relation of $\G \cc (M,\mu)^\G$ can be lifted to give subequivalence relations of $\G \cc (K,\k)^\G$ and $\G \cc (L,\l)^\G$. These subequivalence relations play the role of the subgroup $\L\le \G$ in Stepin's argument. The final case (when either $|K|=2$ or $|L|=2$) was recently completed by Brandon Seward. It is also a co-induction argument. ***
%\end{proof}

\subsection{Markov chains}\label{sec:markov-chains}       %%%%%%%%%%%%%%%%%%%%%%%%%%%%%%%%%%%%%

Recall the definition of a Markov chain over a free group from \S \ref{sec:f-intro}. 

\begin{thm}\label{thm:markov}
Let $\bfX=(X_g)_{g\in \Ga}$ be a stationary process over the free group $\Ga=\langle s_1,\ldots, s_r\rangle$. Suppose the state space $K$ is countable and $H(X_e) < \infty$. Let $\mu =\Law(\bfX)$ and $\G \cc K^\G$ the shift action. Then
$$f_\mu(\Ga \cc K^\Ga) \le F_\mu(\cP)= H(X_e) + \sum_{s\in S} H(X_e|X_{s}) - H(X_e)$$
where $\cP$ is the canonical ''time 0'' partition of $K^\Ga$. Moreover equality holds if and only if $\bfX$ is Markov.
\end{thm}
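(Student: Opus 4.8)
\medskip
\noindent\textbf{Proof strategy.}
The inequality is the easy half. Because $\bfX$ takes values in a countable set with $H(X_e)<\infty$, the canonical time-$0$ partition $\cP$ of $K^\Ga$ is generating for the shift action and has finite Shannon entropy, so Theorem~\ref{thm:f1} gives $f_\mu(\Ga\cc K^\Ga)=f_\mu(T,\cP)=\inf_{W\Subset\Ga}F_\mu(T,\cP^W)$; taking $W=\{e\}$ yields $f_\mu(\Ga\cc K^\Ga)\le F_\mu(T,\cP)$. A short manipulation with Shannon's identities, using stationarity in the form $H_\mu(s\cP)=H(X_s)=H(X_e)$ and $H_\mu(\cP\vee s\cP)=H(X_e,X_s)$, identifies $F_\mu(T,\cP)=H_\mu(\cP)+\sum_{s\in S}\bigl(H_\mu(\cP\vee s\cP)-2H_\mu(\cP)\bigr)$ with $H(X_e)+\sum_{s\in S}\bigl(H(X_e\mid X_s)-H(X_e)\bigr)$, and $H(X_e)<\infty$ keeps every term finite.

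For the equality clause I would argue the two implications separately. If $\bfX$ is Markov, then the time-$0$ partition $\cP$ is a Markov partition --- tautologically, since the $\Ga$-process attached to the shift action and $\cP$ is $\bfX$ itself --- so the identity recorded in \S\ref{sec:other} gives $F_\mu(T,\cP)=f_\mu(T,\cP)$, which is exactly equality in the theorem. For a self-contained argument one can instead observe that every Markov approximation $\bfY^{(n)}$ is again a (first-order) Markov process over $\Ga$: the global Markov property of a free-group-indexed Markov process --- coordinate families indexed by vertex sets that are separated by a finite set of vertices are conditionally independent given the coordinates on the separating set --- is inherited by $\bfY^{(n)}$ with separating sets contained in the $s_jB(n)$. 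Unwinding $F(\bfY^{(n)})=-(r-1)H(Y^{(n)}_e)+\sum_j H(Y^{(n)}_e\mid Y^{(n)}_{s_j})$ with this branch-independence collapses the conditional entropies and yields $F(\bfY^{(n)})=F(\bfX)$ for every $n$; since $f_\mu(T,\cP)=\lim_{n\to\infty}F(\bfY^{(n)})$ by the construction of the $f$-invariant together with the Remark following Theorem~\ref{thm:f1}, equality follows.

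Conversely, suppose $\bfX$ is not Markov. Then some generator $s_i$ satisfies $\Law\bigl(X_e\mid(X_g)_{g\in\mathrm{Pre}(s_i)}\bigr)\ne\Law(X_e\mid X_{s_i})$ on a set of positive measure; since $H(X_e)<\infty$ and the sigma-algebra $\sigma\bigl((X_g)_{g\in\mathrm{Pre}(s_i)}\bigr)$ refines $\sigma(X_{s_i})$, comparing conditional Shannon entropies of $X_e$ gives the \emph{strict} inequality $H\bigl(X_e\mid(X_g)_{g\in\mathrm{Pre}(s_i)}\bigr)<H(X_e\mid X_{s_i})$, and martingale convergence of conditional entropy then produces a \emph{finite} connected $W\subset\Ga$ with $e,s_i\in W$, $W\setminus\{e\}\subseteq\mathrm{Pre}(s_i)$, and $H\bigl(X_e\mid(X_g)_{g\in W\setminus\{e\}}\bigr)<H(X_e\mid X_{s_i})$. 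The plan is to compare $F_\mu(T,\cP)$ with $F_\mu(T,\cP^V)$, where $\cP^V$ is the partition recording $(x_g)_{g\in W}$ (so $V=W^{-1}$, again finite, connected, and containing $e$), by writing the passage from $\cP$ to $\cP^V$ as a chain of simple splittings, exactly as in the proof of Theorem~\ref{thm:f1}: adjoin the coordinates of $W$ one at a time along a spanning tree of $W$ rooted at $e$, each step being a simple splitting $\cP'\rightsquigarrow\cP'\vee u\cR$ with $u\in S\cup S^{-1}$ and $\cR\le\cP'$. Each step weakly decreases $F_\mu$, and the decrease at a given step is, by the explicit identity in that proof, a sum of non-positive conditional-independence defects among translates of $\cP$. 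It then remains to verify that the strict entropy gap above forces one such defect --- concretely, a term recording that conditioning on one further coordinate in the $s_i$-branch strictly lowers the conditional entropy of $X_e$ --- to be strictly negative at some stage, so that $f_\mu(T,\cP)\le F_\mu(T,\cP^V)<F_\mu(T,\cP)$ and equality fails.

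The main obstacle is precisely this last matching step in the converse: translating the \emph{global} failure of the Markov property --- a strict drop in the conditional entropy of $X_e$ as one conditions along the $s_i$-branch of $\Ga$ --- into strictness at one specific simple splitting. This means choosing the order in which the coordinates of $W$ are adjoined so that the offending coordinate is actually ``seen'', handling the relabeling $W\leftrightarrow W^{-1}$, and telescoping the conditional-entropy identity from the proof of Theorem~\ref{thm:f1}; no new idea beyond that proof is needed, but essentially all of the work lives here. (A secondary point to state carefully in the forward direction, if one does not simply cite \S\ref{sec:other}, is the global Markov property for free-group-indexed processes.)
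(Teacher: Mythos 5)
Your proposal is correct and follows essentially the same route as the paper: the inequality comes for free from $f_\mu=\inf_W F_\mu(T,\cP^W)$, and the equality clause is reduced to the simple-splitting computation in the proof of Theorem~\ref{thm:f1}, where the non-positive deficit terms $H_\mu(\cQ\mid\cP\vee s\cP)+H_\mu(s\cQ\mid\cP\vee s\cP\vee\cQ)-2H_\mu(\cQ\mid\cP)$ vanish exactly when the process is Markov in the direction of the splitting. The paper compresses both implications into that single criterion (deferring details to \cite{bowen-entropy-2010a}), whereas you spell out the two directions separately --- via the global Markov property on one side and martingale convergence plus a chain of splittings on the other --- but the underlying mechanism is identical.
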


\begin{remark}
In the case the rank $r=1$, this formula reduces to the well-known formula for the entropy of a Markov chain as the entropy of the present conditioned on the immediate past: $h( \bfX) = H(X_0 | X_{-1})$. 
\end{remark}

\begin{proof}[Proof sketch of Theorem \ref{thm:markov} (details in \cite{bowen-entropy-2010a})]
%The formula on the right equals $F_\mu(\cP)$ where $\cP$ is the canonical partition given by $\cP=\{P_k:~k\in K\}$ where $P_k = \{x \in K^\Ga:~x_e=k\}$. 
%By the proof sketch of Theorem \ref{thm:f1}, it suffices to show that if $\cP$ is a simple splitting of $\cQ$ and 

%If $\cP$ is any partition then a {\bf simple splitting} of $\cP$ is a partition $\cQ$ such that $\cQ = \cP \vee t^\epsilon \cP'$ where $\epsilon \in \{-1,+1\}$, $t\in S$ and $\cP' \le \cP$. A {\bf splitting} of $\cP$ is any partition that can be derived from $\cP$ by a sequence of simple splittings. 

Since $f_\mu(\Ga \cc K^\Ga)$ is the infimum of $F_\mu(\cQ)$ over all splittings $\cQ$ of $\cP$, the inequality $\le$ is immediate.  So it suffices to show that if $\cQ$ is a simple splitting of $\cP$ then $F_\mu(\cQ)=F_\mu(\cP)$ if and only if $\bfX$ is Markov in the direction of the simple splitting. This can be achieved by direct computation following the steps in the proof sketch of Theorem \ref{thm:f1}.

%For example, suppose $\cQ = \cP \vee t^{-1}\cP'$ where $\cP' \le \cP$ and $t\in S$. Then
%\begin{eqnarray*}
%F_\mu(\cQ) &=& F_\mu(\cP \vee t^{-1} \cP') = H_\mu(\cP \vee t^{-1} \cP') + \sum_{s\in S} \big( H_\mu(\cP \vee t^{-1} \cP' \vee s \cP \vee st^{-1} \cP') - 2H_\mu(\cP \vee t^{-1} \cP')\big)\\
%&=& F_\mu(\cP) + H_\mu(t^{-1}\cP' | \cP) + \sum_{s\in S} \big(H_\mu(t^{-1} \cP'  \vee st^{-1} \cP' | \cP \vee s \cP) -  2H_\mu(t^{-1}\cP' | \cP) \big) \\
%&=& F_\mu(\cP) + H_\mu(t^{-1}\cP' | \cP) + \sum_{s\in S} \big(H_\mu(t^{-1} \cP'  | \cP \vee s \cP) +H_\mu(st^{-1} \cP' | \cP \vee s \cP \vee t^{-1} \cP' ) -  2H_\mu(t^{-1}\cP' | \cP) \big) \\
%&=& F_\mu(\cP) + \sum_{s\in S-\{t\}} \big(H_\mu(st^{-1} \cP' | s \cP )  -  H_\mu(t^{-1}\cP' | \cP) \big) \\
%&=& F_\mu(\cP).
%\end{eqnarray*}
\end{proof}

\begin{example}[The Ising Model]\label{ex:ising}
Let $K=\{-1,1\}$. Given $\epsilon>0$ we consider the Markov chain $\bfX=(X_g)_{g \in \Ga}$ over $\Ga=\langle s_1,\ldots, s_r\rangle$ with state space $K$ satisfying: 
$$P(X_e = -1)=P(X_e=1)=1/2$$
$$P(X_e = k | X_s = k) = 1-\epsilon, \quad P(X_e \ne k | X_s = k) = \epsilon$$
for every $k \in K$ and $s \in \{s_1,\ldots, s_r, s_1^{-1},\ldots, s_r^{-1}\}$. The $f$-invariant of this process, denoted $f(\bfX)$, is
$$f(\bfX)= -(r-1)\log(2) - r \epsilon  \log(\epsilon) - r(1-\epsilon)\log(1-\epsilon).$$
%check:4 states with probabilities (1/2)\epsilon and (1/2)(1-\epsilon)
%$$H(\cP \vee s_i\cP) = - 2(1/2)\epsilon\log( (1/2)\epsilon ) - 2(1/2)(1-\epsilon) \log( (1/2)(1-\epsilon) ) $$
%$$ = - \epsilon\log( (1/2)\epsilon ) - (1-\epsilon) \log( (1/2)(1-\epsilon) ) $$
%$$ = \log(2) - \epsilon\log(\epsilon) - (1-\epsilon)\log(1-\epsilon)

In the limiting case $\epsilon=0$, the law of $\bfX$ is equally distributed on only two atoms: all $0$'s and all $1$'s. In this case, $f(\bfX)= -(r-1)\log(2)<0$. From the sofic interpretation of the f-invariant \S \ref{sec:interpretation}, this means that for most homomorphisms $\sigma:\Ga \to \sym(n)$, there are no good models/microstates for this process. This is because the graph $(V,E)$ defined by $V=[n]$ and $E=\{ (v, \sigma(s_i)v):~v\in [n], 1\le i \le r\}$ is typically an expander. As in \S \ref{sec:trivial} expansitivity implies there are no good models for a non-ergodic action.

If $\epsilon$ is positive but small and $r>1$ then the f-invariant is still negative and so it cannot be measurably conjugate to a Bernoulli shift. However the Markov chain is mixing. By contrast, every mixing Markov chain over the integers is isomorphic to a Bernoulli shifts \cite{friedman-ornstein}.
\end{example}

%Example: the Ising model. Note the degenerate case and negativity of the f-invariant. Question: classify the Ising model up to measure-conjugacy. Compute sofic approx.

\begin{problem}
Classify mixing Markov chains up to measure-conjugacy. If two mixing Markov chains are spectrally isomorphic and have the same $f$-invariant are they measurably conjugate? 
\end{problem}

While it is straightforward to compute the $f$-invariant for Markov chains, there are no known methods for computing the sofic entropy with respect to a given sofic approximation. In particular the following is open:
\begin{question}
Does the sofic entropy of the Ising model depend on the choice of sofic approximation? To avoid trivialities, we assume the approximation is such that the sofic entropy is non-negative. 
% The short answer to this is `yes' because if the transition probability $\epsilon>0$ is sufficiently small then there exist sofic approximations with respect to which the entropy is minus infinity and other sofic approximations with respect to which is it non-negative. So a better question is: if $\Sigma_1,\Sigma_2$ are sofic approximations with respect to which the sofic entropy
\end{question}

\begin{remark}
Markov chains are used for counterexamples. In \S \ref{sec:CPE-spectral} it is shown that the Ising model with small transitive probabilities is uniformly mixing but does not have completely positive entropy (CPE). By contrast it was shown in \cite{rudolph-weiss-2000} that for $\Z$-actions, CPE and uniformly mixing are equivalent properties. In \S \ref{sec:uniqueness}  a topological Markov chain is constructed that has multiple measures of maximal f-invariant. This is impossible if $\G=\Z$.
\end{remark}

\begin{remark}
Markov chains have also been used to obtain positive general results by using that an arbitrary invariant measure can be approximated (in the weak* sense) by measures that are multi-step Markov chains. These results include: a relative entropy theory for the $f$-invariant \S \ref{sec:relative}, a partial Yuzvinskii's formula \S \ref{sec:algebraic}, a formula for the restriction to a subgroup \S \ref{sec:subgroups} and an ergodic decomposition formula \S \ref{sec:ergodic decomposition}. 
\end{remark}

\begin{example}[Tree Lattices]\label{ex:tree}
%Let $G$ denote a locally compact group and $\G,\L \le G$ lattices in $G$. Then $\G$ acts on $G/\L$ by left-translations and preserves a probability measure induced from Haar measure on $G$. If $G$ is a connected Lie group then we show in \S \ref{} below that this action has sofic entropy $\le 0$ unless 

Let $T_{2d}$ be the $2d$-regular tree and $\Lambda<\Aut(T_{2d})$ a lattice. Because the free group $\F_d$ of rank $d$ is also a lattice in $\Aut(T_{2d})$, it acts by measure-preserving transformations on the quotient $\Aut(T_{2d})/\Lambda$. In general, it is an open problem to compute either the sofic entropy or the $f$-invariant for these homogeneous actions. However there is at least one special case in which the action is isomorphic to a Markov chain and therefore its $f$-invariant can be computed explicitly. For simplicity, let us assume $d=2$. We consider the case $\Lambda=\F_2=\langle a,b\rangle$. 

We assume $\F_2$ acts simply transitively on the vertices of $T_4$. By fixing a vertex $v_0$, we identify $\F_2$ with the set of vertices via the map $g \mapsto gv_0$. Let $\vec{E}(T_4)$ be the set of directed edges of $T_4$ and $L_0:\vec{E}(T_4) \to S$ the standard labeling: $L_0( (gv_0, gsv_0) ) = s$ for $s\in S$. Note that if $e\in \vec{E}(T_4)$ and $\check{e}$ denotes the same edge with the opposite orientation then $L_0(\check{e})=L_0(e)^{-1}$. 

More generally, a {\bf legal labeling} of $T_4$ is any map $L:\vec{E}(T_4) \to S$ satisfying the following.
\begin{itemize}
\item For $v\in V(T_4)$, let $N^+(v)$ and $N^-(v)$ denote the set of edges directed out and directed into $v$ (respectively). Then $L$ is 1-1 on $N^+(v)$ and on $N^-(v)$.  
\item for every $e\in \vec{E}(T_4)$, $L(\check{e}) = L(e)^{-1}$. 
\end{itemize}
$\Aut(T_4)$ acts on the set of legal labelings, denoted $\cL$, by $gL = L \circ g^{-1}$. This action is transitive and the stabilizer of $L_0$ is $\F_2$. Therefore we can identify $\cL$ with $\Aut(T_4)/\F_2$. 

%Let $N^+(v) \subset N(v)$ denote the subset of edges directed out of $v$. Then $L$ restricts to a bijection from $N^+(v)$ to $S$. 

Let $K=\sym(S)$. For $L \in \cL$, let $x_L \in K^\Ga$ be the map 
$$x_L(g) = L_0 \circ (L\resto N^+(gv_0))^{-1}.$$
It is straightforward to verify that $x_{gL} = gx_L$. So the map $L \mapsto x_L$ gives an embedding of $\cL$ into $K^\Ga$. Moreover it pushes forward the Haar measure on $\Aut(T_4)/\F_2 = \cL$ onto a Markov measure on $K^\Ga$ denoted by $\mu$. If $\bfX=(X_g)_{g\in \Ga}$ is a stationary process with law $\mu$ then for any $s\in S$, the pair $X_e$ and $X_s$ is uniformly distributed over the set of pairs of permutations $(\pi_1,\pi_2)\in K \times K$ with the property that if $t \in S$ is such that $\pi_1(t)=s$ then $\pi_2(t^{-1})=s^{-1}$. The number of such pairs is $4! 3!$. So $H(X_e,X_s)=\log(4!3!)$. Moreover $X_e$ is uniformly distributed over $K$. So $H(X_e)=\log(4!)$. So
$$f(\F_2 \cc \Aut(T_4)/\F_2) = f(\bfX) = -3\log(4!) + 2\log(4!3!) = \log(3/2)>0.$$
It is an open problem whether $\F_2 \cc \Aut(T_4)/\F_2$ is isomorphic to a Bernoulli shift or to a factor of Bernoulli shift. 
\end{example}

\subsection{Algebraic dynamics}\label{sec:algebraic}      %%%%%%%%%%%%%%%%%%%%%%%%%%%%%%%%%%%%%

Let $X$ denote a compact group and $\Aut(X)$ the group of all automorphisms of $X$. Naturally, any automorphism preserves the Haar measure $\mu$. Therefore any homomorphism $\G \to \Aut(X)$ induces a measure-preserving action $\G \cc (X,\mu)$. The goal of algebraic dynamics is to relate dynamical properties of the action $\G \cc X$ to algebraic and analytic properties of the homomorphism $\G \to \Aut(X)$. Such actions have been explored in great detail when $\G=\Z^d$ (see the book \cite{schmidt-book}). 

A general procedure for computing the entropy of algebraic $\Z$-actions was obtained by Yuzvinskii in the 1960s \cite{MR0201560, MR0194588} and extended to $\Z^d$ by Lind-Schmidt-Ward \cite{LSW-1990}. This procedure rests on Yuzvinskii's addition formula and the special case of principal algebraic actions. These are explained next along with (partial) results in the non-amenable case. We also present recent results on: Pinsker algebras of algebraic actions, CPE, mixing properties and the coincidence of measure entropy and topological entropy. This area is rapidly developing!

%We cannot hope to include all of the many interesting results in this survey; instead we limit our focus to Yuzviskii's addition formula and entropy computations.

\subsubsection{The case $\G=\Z$}

This section is meant to motivate the results for more general groups by explaining the case $\G=\Z$ in detail.

\begin{thm}
Suppose
$$1 \to Y \to X \to X/Y \to 1$$
is an exact sequence of compact metrizable groups and $T_X \in \Aut(X)$ is an automorphism that leaves $Y$ invariant. Then
$$h(T_X) = h(T_Y) + h(T_{X/Y})$$
where $T_Y \in \Aut(Y), T_{X/Y} \in \Aut(X/Y)$ are the induced automorphisms and the entropy is either topological or with respect to Haar measure (these two cases coincide).
\end{thm}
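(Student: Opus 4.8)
The plan is to prove the formula in two stages: first for the Kolmogorov--Sinai entropy with respect to Haar measure, and then the topological statement, invoking the classical coincidence $h_{top}(\alpha)=h_{\mu}(\alpha)$ for an automorphism $\alpha$ of a compact metrizable group with Haar measure $\mu$ (Berg's theorem for tori, and its extensions; alternatively this coincidence can be bootstrapped at the end from the two addition formulas together with the structure theory of compact groups, since the building blocks --- finite groups, tori, solenoids --- satisfy $h_{top}=h_{\mathrm{Haar}}$ directly or classically). So fix Haar measures $\mu_X,\mu_Y,\mu_{X/Y}$, let $\pi\colon X\to X/Y$ be the quotient homomorphism, and let $\cF=\pi^{-1}(\cB_{X/Y})$ be the associated $T_X$-invariant sub-$\sigma$-algebra. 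By the Abramov--Rokhlin addition formula for $\Z$-actions,
\[
h_{\mu_X}(T_X)=h_{\mu_{X/Y}}(T_{X/Y})+h_{\mu_X}(T_X\mid\cF),
\]
so the whole problem reduces to the identity $h_{\mu_X}(T_X\mid\cF)=h_{\mu_Y}(T_Y)$, which is where the group structure is used and is the heart of the matter.

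To analyze the relative entropy I would pick a Borel cross-section $s\colon X/Y\to X$ of $\pi$ and use the Borel isomorphism $(Y\times X/Y,\ \mu_Y\times\mu_{X/Y})\to(X,\mu_X)$, $(y,z)\mapsto y\cdot s(z)$ (it carries the product of Haar measures to $\mu_X$ because $Y$ is normal). Under this identification $\pi$ is the projection onto $X/Y$ and $T_X$ becomes the affine skew product
\[
(y,z)\longmapsto\big(T_Y(y)\cdot\gamma(z),\ T_{X/Y}(z)\big),\qquad \gamma(z):=T_X(s(z))\cdot s\!\big(T_{X/Y}(z)\big)^{-1}\in Y,
\]
with $\gamma\colon X/Y\to Y$ a Borel cocycle. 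Conditioning on $\cF$ amounts to conditioning on the $X/Y$-coordinate and its full $T_{X/Y}$-orbit; along a fixed orbit the induced fibre dynamics is $y\mapsto T_Y^{\,n}(y)\cdot w_n$ for orbit-dependent $w_n\in Y$ assembled from $\gamma$. Approximating an arbitrary finite partition in the Rokhlin metric by one of the form $\cR\vee\pi^{-1}(\cS)$ with $\cR$ a finite partition of $Y$ (product partitions are Rokhlin-dense, and entropy rates are Rokhlin-continuous), the $\pi^{-1}(\cS)$-part is absorbed by $\cF$ and one is left with
\[
H_{\mu_X}\!\Big(\bigvee_{i=0}^{n-1}T_X^{-i}\big(\cR\vee\pi^{-1}\cS\big)\ \Big|\ \cF\Big)=\int_{X/Y} H_{\mu_Y}\!\Big(\bigvee_{i=0}^{n-1} R_{v_i(z)}\big(T_Y^{-i}\cR\big)\Big)\,d\mu_{X/Y}(z),
\]
where $R_v$ denotes a right translation of $Y$ and the $v_i$ come from the cocycle. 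The remaining point --- and the main obstacle --- is to show that these translations do not change the exponential growth rate, i.e. $\tfrac1n H_{\mu_Y}\big(\bigvee_{i<n}R_{v_i}(T_Y^{-i}\cR)\big)\to h_{\mu_Y}(T_Y,\cR)$ with the discrepancy $o(n)$ uniformly in the translation parameters. This is the conditional-entropy analogue of the fact that compact-group extensions add no entropy; it uses invariance of $\mu_Y$ under translation together with the $\tfrac1n$-normalization, and the same estimate handles the lower bound (lift a near-optimal partition of $Y$ through the $Y$-coordinate). Taking $\sup_\cR$ gives $h_{\mu_X}(T_X\mid\cF)=h_{\mu_Y}(T_Y)$ and hence the measure-theoretic addition formula.

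For topological entropy one can bypass the measure theory and argue directly with $(n,\epsilon)$-separated sets. Fix a bi-invariant metric $d_X$ on $X$ (obtained by averaging any compatible metric over $X\times X$ against Haar$\times$Haar), let $d_Y$ be its restriction and $\bar d$ the quotient metric on $X/Y$; since $d_X(T_X^i a,T_X^i b)=d_X(e,T_X^i(a^{-1}b))$, within any coset of $Y$ the $(n,\epsilon)$-separation for $T_X$ is exactly that for $T_Y$. The upper bound follows by projecting a separated set to $X/Y$ and, in each ``column'', counting how many points can be pairwise separated inside a single fibre (compactness reduces this to finitely many local continuous sections); the lower bound follows by lifting a separated set of $X/Y$ through local continuous sections and, on each chart, multiplying by a separated set of $Y$ translated into the fibre --- the $X/Y$-coordinate and the $Y$-coordinate separate independently. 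Combined with the cited coincidence $h_{top}=h_{\mathrm{Haar}}$, this completes the proof. The technical crux of the whole argument is the relative-entropy computation of the second paragraph; the separated-set estimates, the reduction to adapted partitions, and the verification of the Borel skew-product form are routine.
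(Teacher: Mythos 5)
The paper offers no proof of this statement---it simply cites Yuzvinskii \cite{MR0194588}---so your argument has to stand on its own, and it does not: the step you yourself flag as ``the main obstacle'' is a genuine gap, and it is in fact the entire content of the theorem. After your (correct) reductions via Abramov--Rokhlin and the Borel cross-section, what remains is to show that $\tfrac1n H_{\mu_Y}\bigl(\bigvee_{i<n} R_{v_i}(T_Y^{-i}\cR)\bigr)$ has the same asymptotics as the untranslated join. Translation invariance of $\mu_Y$ gives $H_{\mu_Y}(\cP v)=H_{\mu_Y}(\cP)$ for a single partition, but it gives no control over conditional entropies: already $H_{\mu_Y}(\cP\mid\cP)=0$ while $H_{\mu_Y}(\cP v\mid\cP)$ is typically positive, so in the chain-rule expansion $\sum_i H\bigl(R_{v_i}(T_Y^{-i}\cR)\mid\bigvee_{j<i}R_{v_j}(T_Y^{-j}\cR)\bigr)$ each term is a priori bounded only by $H_{\mu_Y}(\cR)$ rather than by $h_{\mu_Y}(T_Y,\cR)+o(1)$; the discrepancy you must kill is $O(n)$, not $o(n)$, and the $\tfrac1n$-normalization does nothing for you. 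The assertion that an affine skew product whose fibre maps are automorphisms composed with translations has relative entropy equal to $h(T_Y)$ is precisely the skew-product addition theorem of \cite{MR0293064}, and the known proofs of it (and of Yuzvinskii's theorem) go through the structure theory of compact groups rather than through translation invariance.

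Your third paragraph, which you label ``routine,'' actually contains the seed of a complete and cleaner proof. With a bi-invariant metric the Bowen dynamical balls satisfy $B_n(x,\epsilon)=xB_n(e,\epsilon)$, so: (i) Brin--Katok together with this homogeneity yields $h_{\mathrm{Haar}}=h_{top}$ for any automorphism of a compact metrizable group; (ii) the $(n,\epsilon)$-separation within a coset of $Y$ is exactly that of $T_Y$, so Bowen's fibre-entropy inequality gives $h_{top}(T_X)\le h_{top}(T_{X/Y})+h_{top}(T_Y)$ (note that your ``project and count per column'' must be replaced by Bowen's spanning-set argument, since projections of separated sets need not be separated); and (iii) lifting separated sets and multiplying by a separated set of $Y$ gives the reverse inequality. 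Points (i)--(iii) establish both versions of the formula and render your second paragraph unnecessary; as written, however, the step you designate as the crux is not proved.
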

This theorem was first proven by Yuzvinskii \cite{MR0194588}. 

We will apply the above theorem to principal algebraic actions. If $f = \sum_{i=0}^s c_i x^i \in \Z[x]$ is a polynomial and $z \in \T^\Z$ (where $\T=\R/\Z$ is the 1-torus as an additive group) then the convolution $f*z \in \T^\Z$ is defined by
$$(f*z)_n = \sum_{m \in \Z} c_{m} z_{n-m}$$
where we set $c_m=0$ if $m<0$ or $m>s$. Let 
$$X_f = \{ (z_i) \in \T^\Z:~ f^* *z =0\}.$$
Note that $X_f$ is a compact abelian group and the shift action $T_f:X_f \to X_f$, $T_f(x)_i = x_{i+1}$ is an automorphism. 

If $f,g \in \Z[x]$ are non-zero polynomials then there is an exact sequence
$$0 \to X_f \to X_{gf} \to X_g \to 0$$
where the map $X_{gf} \to X_g$ is convolution with $f$. So Yuzvinskii's addition formula implies $h(T_{fg}) = h(T_f) + h(T_g)$. In other words, the map $f \mapsto \exp(h(T_f))$ is multiplicative. There are only a few multiplicative functions on polynomials. For example, the leading coefficient is one. The product of all roots contained in some fixed subset of $\C$ is another. So perhaps it is not too suprising that 
$$h(T_f) = \log |c_s| + \sum_{i=1}^s \max(0, \log |r_i|)$$
where $c_s$ is the leading coefficient of $f$ and $r_1,\ldots, r_s$ are its roots.

This formula is most easily confirmed in the special case that $c_s=1$ and $c_0=\pm 1$ (where $c_0$ is the constant term of $f$). In this case the map 
$$(x_i) \in X_f \mapsto (x_0,\ldots, x_{s-1})\in \T^s$$
induces a measure-conjugacy between the shift $T_f$ and the linear map
$$(x_0,\ldots, x_{s-1}) \mapsto \left(x_1,\ldots, x_{s-1}, - \sum_{i=0}^{s-1} c_ix_i\right).$$
 The latter map has entropy equal to $\sum_{i=1}^s \max(0, \log |\l_i|)$ where $\l_1,\ldots, \l_s$ are its eigenvalues (by Pesin's entropy formula). These eigenvalues are exactly the roots of $f$ because $f$ is its characteristic polynomial.
 
 Using Jensen's formula, we can rewrite the above formula as 
 $$h(T_f) = \int_0^1~\log | f(\exp(2\pi i x)) | ~dx.$$
We will now show how to generalize the formula on the right to arbitrary countable groups.

 We view $f$ as an element of the group ring $\C \Z$ (by identifying $f$ with $\sum_n c_n n$). We can view $\C\Z$ as a subring of $B(\ell^2(\Z))$, the algebra of bounded operators on $\ell^2(\Z)$, via
 $$\sum_n c_n n \mapsto c_n \sigma^n$$
 where $\sum_n c_n n \in \C\Z$ is a formal sum and $\s: \ell^2(\Z) \to \ell^2(\Z)$ is the shift-operator $\s(x)_i=x_{i-1}$. The von Neumann algebra of $\Z$, denoted $L\Z$, is the weak operator closure of $\C\Z$ in $B(\ell^2(\Z))$. The trace on $L\Z$ is defined by
 $$\rm{tr}(x) = \langle x \delta_0, \delta_0\rangle$$
 where $\delta_0\in \ell^2(\Z)$ is the Dirac function supported on $0 \in \Z$.  
 
The Fourier transform $\cF:\ell^2(\Z) \to L^2(\T)$ is the continuous linear map such that $\cF(\delta_n)$ is the function $z \mapsto z^n \in \T$ where $\T= \{z \in \C:~|z|=1\}$ is the unit circle. This is an isomorphism. Next we show that $\cF$ conjugates $L\Z$ to $L^\infty(\T)$.
 
 For $\phi \in L^\infty(\T)$, define the multiplication operator $M_\phi:L^2(\T) \to L^2(\T)$ by
 $$M_\phi(f)(z) = \phi(z) f(z).$$
 The map $\phi \mapsto M_\phi$ embeds $L^\infty(\T)$ into algebra of bounded operators $B(L^2(\T))$. Moreover, $\cF \sigma \cF^{-1}=M_z$. So $\cF (\C\Z )\cF^{-1}$ is the algebra of Laurent polynomials. Its weak operator closure is $L^\infty(\T)$. Thus  $\cF L\Z \cF^{-1}$ is naturally identified with $L^\infty(\T)$.  Moreover for any $x \in L\Z$, the trace of $x$, $\rm{tr}(x)$ equals the integral of $\cF x \cF^{-1}$ over $\T$. In particular, if $f \in \Z[x]$ is a polynomial, then
 $$\int_\T \log |f(z)|~dz = \rm{tr}(\log |f|)$$
 where $\log |f| \in L\Z$ is defined via spectral calculus. The {\bf Fuglede-Kadison determinant} of $f$ is defined by
 $$\det(f) := \exp(\rm{tr}(\log |f|)) = \exp \left( \int_{[0,\infty)} \log(t)~d\zeta_{|f|}(t) \right)$$
 where $\zeta_{|f|}$ is the spectral measure of $|f|$. If $|f|$ has a nontrivial kernel then the determinant is 0. So we modify the definition slightly: the {\bf positive Fuglede-Kadison determinant} of $f$ is 
 $$\rm{det}^+(f) := \exp \left( \int_{(0,\infty)} \log(t)~d\zeta_{|f|}(t) \right).$$
In this way we are naturally led to the conjecture that, for general groups $\G$ and $f \in \Z\G$, the entropy of $\G \cc X_f$ should be the logarithmic positive Fuglede-Kadison determinant of $f$. 

\subsubsection{Group rings}
To discuss algebraic actions, we will need some background on certain group rings. Let $\C \Ga$ denote the complex group ring of $\Ga$. Formally, $\C \Ga$ is the set of all sums $\sum_{g\in \Ga} c_g g $ with $c_g \in \C$ such that all but finitely many of the $c_g$'s are zero. Addition, multiplication and the adjoint operator are given by
$$\sum_{g\in \Ga} c_g g + \sum_{g\in \Ga} c'_g g = \sum_{g\in \Ga} (c_g +c'_g) g$$
$$\left(\sum_{g\in \Ga} c_g g\right) \left(\sum_{g\in \Ga} c'_g g\right) = \sum_{g \in \Ga} \sum_{h\in \Ga}  c_g c'_h gh.$$
$$\left(\sum_{g\in \Ga} c_g g\right)^* = \sum_{g\in \Ga} \overline{c_g} g^{-1}$$
where $\overline{c_g}$ is the complex-conjugate of $c_g$. 

We view $\Z\G$ as a subring of $\C\G$. Also consider $\T^\G$ where $\T=\R/\Z$ is the additive group of the circle. If $x\in \T^\G$ and $f =\sum_g f_g g \in \Z\G$ then $fx, xf \in \T^\G$ are well-defined via:
$$(fx)_g = \sum_{h\in \G} f_{gh} x_{h^{-1}}, \quad (xf)_g = \sum_{h\in \G} x_{gh} f_{h^{-1}}.$$
These are finite sums. The inner product of $f$ with $x$ is defined by
$$\langle f,x \rangle = \sum_g f_g x_g \in \T.$$
Note
$$\langle af,x \rangle = \langle a,xf^* \rangle =  \langle f,a^*x \rangle.$$
(By linearity, it suffices to check this formula when each of $a,x,f$ is supported on a single element). This inner product allows us to identify $\Hom(\Z\G,\T)$ with $\T^\G$.

\subsubsection{Principal algebraic actions}

%Let $\Z \Ga$ denote the integral group ring of $\Ga$. Formally, $\Z \Ga$ is the set of all sums $\sum_{g\in \Ga} c_g g $ with $c_g \in \Z$ such that all but finitely many of the $c_g$'s are zero. Addition and multiplication are given by
%$$\sum_{g\in \Ga} c_g g + \sum_{g\in \Ga} c'_g g = \sum_{g\in \Ga} (c_g +c'_g) g$$
%$$\left(\sum_{g\in \Ga} c_g g\right) \left(\sum_{g\in \Ga} c'_g g\right) = \sum_{g \in \Ga} \sum_{h\in \Ga}  c_g c'_h gh.$$
%Also let $M_{m,n}(\Z\G)$ denote the $m\times n$-matrices over $\Z\G$. 

There is a simple procedure for associating to any $f \in \Z\G$ a dynamical system. First consider the left ideal  $\Z \Ga f$ and the associated quotient $\Z \Ga/\Z \Ga f$. We consider the latter as a countable abelian group on which $\Ga$ acts by automorphisms (namely, left multiplication). Let 
$$X_f = \widehat{\Z \Ga/\Z \Ga f} = \Hom(\Z \Ga/\Z \Ga f, \T)$$
be the Pontryagin dual. By identifying $\Hom(\Z\Ga, \T)$ with $\T^\G$ as above, we can identify $X_f$ with the subgroup
$$X_f = \{ x\in \T^\G:~ x  f^* = 0 \}.$$

For $g\in \G$, let $T_f^g: X_f \to X_f$ be the automorphism  $ (T^g_fx)( p) = x( g^{-1}p)$. Then $T_f=(T^g_f)_{g \in \G}$ is an action on $X_f$ by automorphisms. It preserves the Haar measure, which we denote by $\mu_f$. The action $T_f$ is called a {\bf principal algebraic action}. 

When $\G=\Z^d$, we identify $\Z \G$ with the ring $\Z[u_1^{\pm 1}, \ldots, u_d^{\pm 1}]$ of Laurent polynomials. This way we may view $f \in \Z\G$ as a polynomial function $f:\C^d \to \C$. With this identification, Lind-Schmidt-Ward proved in \cite{LSW-1990} that for non-zero $f$,
\begin{eqnarray}\label{eqn:mahler}
h(T_f) = h_{\mu_f}(T_f) = \int_{\T^d} \log |f(e^{2\pi i \theta})|~d\theta
\end{eqnarray}
where $\T^d=\{(z_1,\ldots, z_d) \in \C^d:~|z_i|=1~\forall i\}$ denotes the $d$-dimensional torus. This extends earlier work of Yuzvinskii in the case $d=1$ \cite{MR0201560, MR0194588}. The right hand side of the equation above is the log-Mahler measure of $f$. 

Christopher Deninger noted that there is a generalization of the Mahler measure to non-abelian $\G$ known as the Fuglede-Kadison determinant \cite{deninger-2006} and conjectured  that, for amenable $\G$, the entropy of $\G \cc X_f$ equals the log of the Fuglede-Kadison determinant. Special cases were confirmed in \cite{deninger-2006, deninger-schmidt, Li-annals-2012} before the general amenable group case was handled in \cite{MR3110799}. The case of expansive principal algebraic actions of residually finite groups was handled in \cite{MR2794944} and extended in \cite{MR2956925} to some non-expansive actions. Then in a stunning breakthrough Ben Hayes obtained the most general result for sofic groups:

\begin{thm}\cite[Theorem 1.1]{hayes-fk-determinants}
Let $\G$ be a sofic group with sofic approximation $\Si$. Let $f\in \Z\G$ and let $\rm{det}^+(f)=\exp(\int_{(0,\infty)} \log(t)~d\zeta_{|f|}(t))$ denote the positive Fuglede-Kadison determinant of $f$ where $\zeta_{|f|}$ denotes the spectral measure of $|f|=(f^*f)^{1/2}$ . Then
\begin{enumerate}
\item $h_\Si(\G \cc X_f)<+\infty$ if and only if $f$ is injective as a convolution operator on $\ell^2(\G)$.
\item If $f$ is injective as an convolution operator on $\ell^2(\G)$, then 
$$h_\Si(\G \cc X_f) = h_{\Si,\mu_f}(\G \cc X_f) = \log  \rm{det} ^+(f).$$
\end{enumerate}
\end{thm}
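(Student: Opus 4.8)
The plan is to reduce the counting of microstates to a linear-algebra problem over the sofic approximation, and then to identify the answer with $\log\det^+(f)$ by means of an approximation theorem for Fuglede--Kadison determinants over sofic groups. Recall that $X_f=\{x\in\T^\G : xf^*=0\}$ carries the shift action and the Haar measure $\mu_f$, and that evaluation at the identity, $\rho(x,y)=|x_e-y_e|_\T$, is a continuous generating pseudometric. For $\sigma_i:\G\to\sym(V_i)$ write $\sigma_i(f)$ also for the induced integer matrix on $\R^{V_i}$; its adjoint (up to relabeling $\sigma_i(f^*)$) has the same singular values, and the empirical singular-value distribution $\zeta_{|\sigma_i(f)|}$ converges weakly to $\zeta_{|f|}$ because $(F,\delta)$-multiplicativity and trace-preservation make all moments $|V_i|^{-1}\operatorname{tr}\big((\sigma_i(f)^*\sigma_i(f))^k\big)$ converge to the von Neumann trace. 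The first real step is a dictionary: using approximate multiplicativity of $\sigma_i$ together with the equation $xf^*=0$, one checks that, up to the $(F,\delta)$-fuzz in the definition and up to $\rho_\infty$-distance, an element of $\Map(T,\rho,F,\delta,\sigma_i)$ amounts to a point $\xi\in\T^{V_i}$ with $\|\sigma_i(f^*)\xi\|_{\ell^2(V_i,\T)}$ small; conversely, such a $\xi$, after a bounded correction projecting its pullback names back onto $X_f$, gives a genuine microstate. By Theorems \ref{thm:top-entropy} and \ref{thm:measure-entropy} the pseudometric is irrelevant, so henceforth a microstate is such a $\xi$.

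Let $K_i:=\ker\!\big(\sigma_i(f^*)\colon\T^{V_i}\to\T^{V_i}\big)$ be the group of exact solutions; Pontryagin duality identifies it with $\Z^{V_i}/\sigma_i(f^*)\Z^{V_i}$, so when $\sigma_i(f)$ is nonsingular $|K_i|=|\det\sigma_i(f)|=\prod_j s_j(\sigma_i(f))$. Since the $s_j(\sigma_i(f))$ are at most $\|f\|_{\ell^1(\G)}$, for $\epsilon$ small the points of $K_i$ are $(\rho_\infty,\epsilon)$-separated, whence $N_\epsilon\ge|K_i|$ and
$$h^\Si(\G\cc X_f)\ \ge\ \limsup_i\ |V_i|^{-1}\log|\det\sigma_i(f)| .$$
For the reverse bound, the set of $\delta$-microstates lies in $\bigcup_{k\in K_i}\big(k+U_{i,\delta}\big)$ with $U_{i,\delta}=\{\eta:\|\sigma_i(f^*)\eta\|_{\ell^2(V_i,\T)}<\delta\}$, and a volume/packing estimate in the singular-value basis shows that $|V_i|^{-1}\log N_\epsilon(U_{i,\delta},\rho_\infty)$ converges, as $i\to\infty$, to $\zeta_{|f|}([0,\delta))\log(1/\epsilon)+\int_{[\delta,\,\delta/\epsilon)}\log\frac{\delta}{\epsilon t}\,d\zeta_{|f|}(t)$. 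If $f$ is injective on $\ell^2(\G)$ then $\zeta_{|f|}(\{0\})=0$, so this correction tends to $0$ as $\delta\to0$; together with $\dim_\C\ker\sigma_i(f)=o(|V_i|)$ (weak convergence applied to the closed set $\{0\}$), this yields the matching upper bound, so
$$h^\Si(\G\cc X_f)\ =\ \limsup_i\ |V_i|^{-1}\log|\det\sigma_i(f)|\ =\ \limsup_i\ \int\log t\ d\zeta_{|\sigma_i(f)|}(t).$$

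It remains to compare this with $\log\det^+(f)=\int_{(0,\infty)}\log t\,d\zeta_{|f|}(t)$. Splitting $\log=\log^+-\log^-$: weak convergence plus boundedness of the singular values controls the $\log^+$ part, while $\log^-\ge0$ is lower semicontinuous, so the portmanteau theorem gives $\liminf_i\int\log^-t\,d\zeta_{|\sigma_i(f)|}\ge\int\log^-t\,d\zeta_{|f|}$, hence $\limsup_i\int\log t\,d\zeta_{|\sigma_i(f)|}\le\log\det^+(f)$ for free; as $\log\det^+(f)<\infty$ (clear from $\|f\|<\infty$) this already proves the half of (1) asserting that injectivity of $f$ forces $h^\Si(\G\cc X_f)<\infty$. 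The opposite inequality — that no singular-value mass of $\sigma_i(f)$ escapes to $0$ faster than $\zeta_{|f|}$ permits — is the heart of the matter, and is exactly the approximation theorem for positive Fuglede--Kadison determinants over sofic groups (in the circle of ideas around the $\ell^2$-determinant conjecture), where the integrality of the coefficients of $f$ is essential; granting it, $h^\Si(\G\cc X_f)=\log\det^+(f)$. For the remaining half of (1), if $f$ is not injective then $\zeta_{|f|}(\{0\})>0$, so for every threshold $r>0$ the matrix $\sigma_i(f)$ has at least $c|V_i|$ singular values below $r$ for all large $i$ (portmanteau for open sets), the span of the corresponding right singular vectors supplies $\exp\!\big((c-o(1))|V_i|\log(1/\epsilon)\big)$ $\epsilon$-separated approximate solutions, and taking $\sup_\epsilon$ forces $h^\Si(\G\cc X_f)=+\infty$. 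Finally, $h^\Si=h^\Si_{\mu_f}$: the measure version only counts microstates that are in addition $\mu_f$-equidistributed, so $h^\Si_{\mu_f}\le h^\Si$ trivially, while a character-sum argument on the compact groups $X_f$ shows that all but an $\exp(o(|V_i|))$-fraction of $K_i$ have pullback-name empirical distributions converging to $\mu_f$, so the lower bound above already produces enough equidistributed microstates.

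The step I expect to be the genuine obstacle is the ``no escape of mass to $0$'' estimate — equivalently $\liminf_i\int\log^-t\,d\zeta_{|\sigma_i(f)|}(t)\le\int\log^-t\,d\zeta_{|f|}(t)$ — since weak convergence of spectral measures never controls $\int\log^-$ near the origin, and it is precisely here that both the soficity of $\G$ and the integrality of $f\in\Z\G$ must be used in an essential, non-formal way. A secondary nuisance is making the microstate dictionary rigorous in the non-expansive case: pullback names of approximate solutions lie only approximately in $X_f$, so one must project them back onto the solution variety with quantitative control, and the quantifier order $\sup_\epsilon\inf_\delta\limsup_i$ must be arranged so that the ``fat'' directions coming from small singular values contribute nothing in the limit.
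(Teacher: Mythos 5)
This survey does not prove the theorem; it only cites \cite{hayes-fk-determinants}, so your proposal has to be measured against that paper. Your skeleton is the right one and matches the literature: microstates correspond to $\xi\in\T^{V_i}$ with $\sigma_i(f^*)\xi$ small, the exact solutions form a group of order $|\det\sigma_i(f)|$ (dual to $\Z^{V_i}/\sigma_i(f)^{T}\Z^{V_i}$), the $\epsilon$-separation of that group for $\epsilon<\|f\|_{\ell^1}^{-1}$ gives the lower bound, covering numbers in the singular-value basis give the upper bound, weak convergence of $\zeta_{|\sigma_i(f)|}$ to $\zeta_{|f|}$ follows from moment convergence, and the one-sided inequality $\liminf_i\int\log^- t\,d\zeta_{|\sigma_i(f)|}\le\int\log^- t\,d\zeta_{|f|}$ is exactly the sofic determinant approximation of Elek--Szab\'o, where integrality of $f$ enters through ``the product of the nonzero eigenvalues of an integer matrix is a nonzero integer.'' Citing that as a black box is legitimate; it is a theorem, not a gap.

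The genuine gap is the step you downgrade to a ``secondary nuisance'': passing from an approximate solution $\xi\in\T^{V_i}$ back to an element of $\Map(T,\rho,F,\delta,\sigma_i)$ by ``a bounded correction projecting its pullback names back onto $X_f$.'' No such projection with quantitative control exists in general: when $f$ is not invertible in $\ell^1\G$ (the non-expansive case, which is precisely the generality claimed), a point of $\T^\G$ with $xf^*$ small need not be close to $X_f$ in any metric compatible with the microstate definition, because correcting it requires applying an unbounded inverse in exactly the small-singular-value directions you are trying to discard. This is not a technicality one fixes in a line --- it is the central difficulty Hayes' paper is organized around, and he circumvents it rather than solves it, working with the dual discrete module $\Z\G/\Z\G f$ and an $\ell^2$--$\ell^p$ duality formalism for producing and counting microstates instead of literally projecting approximate solutions onto the solution variety. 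A smaller but real defect: your decomposition of the $\delta$-approximate solution set as $\bigcup_{k\in K_i}(k+U_{i,\delta})$ is vacuous as written, since $K_i\subset U_{i,\delta}$ and the union is just $U_{i,\delta}$ again; what is actually needed (and what Hayes proves) is a direct estimate of the $\epsilon$-covering number of $\{\xi\in\T^{V_i}:\|\sigma_i(f^*)\xi\|_2<\delta\}$ in terms of the full singular value profile, of which $|K_i|$ is the ``large singular value'' contribution and the correction term is the ``small singular value'' contribution. Your quantifier analysis showing the correction vanishes as $\delta\to0$ when $\zeta_{|f|}(\{0\})=0$ is the right conclusion, but the argument as stated does not yet establish it.
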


\begin{remark}
The paper \cite{hayes-fk-determinants} also handles the more general case in which $f$ is a finite-dimensional matrix over $\Z\G$, $I_f \subset \Z(\G)^{\oplus n}$ is its range and $X_f$ is the Pontryagin dual of $\Z(\G)^{\oplus n}/I_f$. 
\end{remark}

\begin{question}
Does the Rokhlin entropy of a principal algebraic action equal the logarithm of the Fuglede-Kadison determinant? 
\end{question}

\subsubsection{Yuzvinskii's addition formula}\label{sec:yuz}

Let $G$ be a compact metrizable group and $\G \cc G$ an action by continuous automorphisms. Suppose that $N \vartriangleleft G$ is a closed $\G$-invariant normal subgroup. We will compare the entropy of $\G \cc G$ with that of the restricted action $\G \cc N$ and the induced action $\G \cc G/N$. More precisely, we say the {\bf addition formula} holds for $(\G \cc G,N)$ if the entropy of $\G \cc G$ equals the sum of the entropy of $\G \cc N$ with the entropy of $\G \cc G/N$. In general, this might depend on which entropy is intended. 

In \cite{MR0194588} Yuzvinskii proved the addition formula for $G=\Z$ with respect to both topological and measure entropy. The proof has been extended to more general kinds of skew-products \cite{MR0293064}, to $\Z^d$  \cite{LSW-1990}, various other amenable groups \cite{MR2533986, MR2457481} and to arbitrary amenable groups \cite{Li-annals-2012} (and independently in unpublished work of Lind-Schmidt). Using this, Li-Thom obtained a general procedure for computing the entropy  of algebraic actions of amenable groups on compact abelian groups \cite{MR3110799} under very mild conditions. The formula shows that entropy can be viewed as $L^2$-torsion.

However it fails for non-amenable groups. Indeed the Ornstein-Weiss example (\S \ref{sec:OW-intro}) is algebraic. Recall that $\F_2=\langle a,b\rangle$ acts on the compact abelian group $G=(\Z/2)^{\F_2}$ with invariant normal subgroup $N \cong \Z/2$ consisting of the constants and quotient $G/N \cong G \times G$. With respect to any sofic approximation, the entropy of $\F_2 \cc G$ is $\log(2)$, the entropy of $\F_2 \cc N$ is $0$ or $-\infty$ and the entropy of $\F_2 \cc G/N$ is $\log(4)$. Rokhlin entropy behaves similarly. Recent work of Bartholdi \cite{bartholdi-kielak} shows that for any non-amenable $\G$ and field $K$ there are $n \in \N$ and injective $K\G$-module homomorphism $K\G^{\oplus (n+1)} \to \K\G^{\oplus n}$. Consider the case when $K$ is finite. Then the action $\G \cc \widehat{K\G^{\oplus n}}$ has entropy $n\log |K|$ while the $\G$-action on the quotient group $\G \cc \widehat{K\G^{\oplus (n+1)}}$ has entropy $(n+1)\log|K|$. Thus the addition formula fails.\footnote{Thanks to an anonymous reviewer for pointing this out.}

%This counterexample has been extended by Hayes to any group $\G$ that contains a subgroup $\La$ with well-defined nonzero $L^2$-torsion \cite{hayes-fk-determinants} and more recently by Bingbing Liang to any group that has a nonzero $L^2$-betti number in any dimension \cite{bing-mean-dim}.

In a different direction, Gaboriau and Seward consider the following construction: let $\G$ be a finitely generated group, $\K$ a finite field, $\G \cc \K^\G$ the shift action, $\K \le \K^\G$ the constants. Then:
\begin{thm}\cite{gaboriau-seward-2015}
For any sofic approximation $\Si$,
$$(1 + \beta^1_{(2)}(\G)) \log |\K| \le h_\Si(\G \cc \K^\G/\K) = h_{\Si,\mu}(\G \cc \K^\G/\K) \le \cC_{\sup}(\G) \log |\K| $$
where $\mu$ denotes Haar probability measure on $\K^\G/\K$, $\beta^1_{(2)}(\G)$ is the first $L^2$-Betti number of $\G$ and $\cC_{\sup}(\G)$ is the sup cost of $\G$.
\end{thm}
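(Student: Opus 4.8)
The plan is to reinterpret the action algebraically, to deduce the equality $h_\Si=h_{\Si,\mu}$ together with the lower bound from a direct count of microstates, and to obtain the upper bound by bounding Rokhlin entropy via cost.

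\emph{Reinterpretation.} Fix a finite generating set $S$ of $\G$ and let $\omega\K\G=\ker(\K\G\to\K)$ denote the augmentation ideal, generated as a left ideal by $\{s-1:s\in S\}$. Dualizing $0\to\omega\K\G\to\K\G\to\K\to 0$ identifies $\K^\G/\K$ with $\widehat{\omega\K\G}$, and hence (since $\K^\G$ is coinduced, so $H^1(\G,\K^\G)=0$) with the group $Z^1(\G,\K^\G)=B^1(\G,\K^\G)$ of $1$-cocycles $\G\to\K^\G$, via $\bar u\mapsto c_{\bar u}$ with $c_{\bar u}(g)=gu-u$ (which depends only on the class $\bar u$). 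Equivalently, $\bar u\mapsto(\partial_s u)_{s\in S}$, with $(\partial_s u)(g)=u(gs)-u(g)$, is a $\G$-equivariant isomorphism of $\K^\G/\K$ onto the closed shift-invariant subgroup of $(\K^S)^\G$ cut out by the ``loop conditions'' of the relators of $\G$; for finitely presented $\G$ this exhibits $\G\cc\K^\G/\K$ as an algebraic subshift of finite type. The action is ergodic and essentially free, being the quotient of the Bernoulli shift $\G\cc\K^\G$ by the finite subgroup of constants.

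\emph{Equality of the two entropies and the lower bound.} Using the partition definition of sofic entropy (\S\ref{sec:partition}) and the canonical clopen partition of $(\K^S)^\G$, a microstate on the sofic graph $G_i=(V_i,E_i)$ of $\sigma_i:\G\to\sym(V_i)$ is, up to small error, an edge-labeling $z\in\K^{E_i}$ satisfying the relator equations (up to radius $r$) at a $(1-\delta)$-fraction of vertices and with $\mu$-like empirical distribution of local patterns; that is, an approximate $1$-cocycle of the presentation complex lifted along $\sigma_i$. The exact such cocycles form a $\K$-vector space $Z^1_{i,r}$ with $\dim_\K Z^1_{i,r}=\dim_\K B^1_i+\dim_\K H^1_{i,r}$, where $\dim_\K B^1_i=|V_i|-c_i$ ($c_i$ the number of connected components of $G_i$) and $\dim_\K H^1_{i,r}$ is a first Betti number of the lifted complex. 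A uniformly random element of $Z^1_{i,r}$ is, with high probability, approximately $\mu$-equidistributed (its restriction to any fixed finite window is uniform over the corresponding local patterns of $\mu$, and concentration over windows follows from a second-moment estimate), so the equidistributed microstates already number $|\K|^{(1-o(1))\dim_\K Z^1_{i,r}}$, while this quantity is visibly an upper bound for all topological microstates. Hence $h_\Si(\G\cc\K^\G/\K)=h_{\Si,\mu}(\G\cc\K^\G/\K)=\log|\K|\cdot\inf_r\liminf_i|V_i|^{-1}\dim_\K Z^1_{i,r}$. Finally, a sofic approximation to an infinite group has $c_i=o(|V_i|)$; by the universal coefficient theorem $\dim_\K H^1_{i,r}$ is at least the rational first Betti number of the same complex; and that, normalized, converges to $\beta^1_{(2)}(\G)$ by L\"uck's approximation theorem along sofic approximations, which rests on the determinant conjecture for sofic groups \cite{elek-szabo-2006}. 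Therefore $\inf_r\liminf_i|V_i|^{-1}\dim_\K Z^1_{i,r}\ge 1+\beta^1_{(2)}(\G)$. For non-finitely-presented $\G$ one passes to an inverse limit over finitely presented approximations, matching the $\inf_r$ in the definition of entropy with the defining limit for $\beta^1_{(2)}(\G)$.

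\emph{The upper bound.} By Proposition~\ref{prop:Rokhlinsofic} and the equality just established, $h_\Si(\G\cc\K^\G/\K)=h_{\Si,\mu}(\G\cc\K^\G/\K)\le h^{Rok}(\G\cc\K^\G/\K)$, so it suffices to produce generating partitions of Shannon entropy arbitrarily close to (or below) $\cC_{\sup}(\G)\log|\K|$. Since $\G\cc\K^\G/\K$ is essentially free, its orbit equivalence relation $R$ has cost at most $\cC_{\sup}(\G)$; fix $\epsilon>0$ and a graphing $\Phi=\{\phi_j\}$ of $R$ with $\sum_j\mu(\dom\phi_j)<\cC_{\sup}(\G)+\epsilon$. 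Writing $\phi_j(\bar u)=g_j(\bar u)\cdot\bar u$ with $g_j$ measurable, label the edge at $\bar u$ by $L_j(\bar u):=c_{\bar u}(g_j(\bar u))(e)=u(g_j(\bar u)^{-1})-u(e)\in\K$, and let $\cP$ be the countable partition refining all the observables $L_j$. The cocycle identity $c_{\bar u}(gh)=g\cdot c_{\bar u}(h)+c_{\bar u}(g)$ shows that the $\K$-labels along any orbit-path from $\bar u$ to $g\bar u$ determine $c_{\bar u}(g)(e)$; since $\Phi$ generates $R$ such paths exist and may be chosen measurably, so the $\G$-orbit of $\cP$ recovers $\{u(g^{-1})-u(e):g\in\G\}$ and hence $\bar u$. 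Thus $\cP$ is generating, and with standard cost-theoretic bookkeeping one arranges that $H_\mu(\cP)\le\mathrm{cost}(\Phi)\log|\K|+\epsilon<\cC_{\sup}(\G)\log|\K|+(1+\log|\K|)\epsilon$. Letting $\epsilon\to 0$ gives the upper bound. (Alternatively one can bound $\limsup_i\inf_r|V_i|^{-1}\dim_\K Z^1_{i,r}$ directly by $\cC_{\sup}(\G)$ using the same cost input.)

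\emph{Main obstacle.} The two substantive inputs are: for the lower bound, the approximation $\liminf_i|V_i|^{-1}\beta_1(G_i;\K)\ge\beta^1_{(2)}(\G)$ along an arbitrary sofic approximation --- the passage from rational to modular coefficients is handled by universal coefficients, but the rational statement is L\"uck approximation for sofic approximations, and one must carefully interchange the $\inf_r$ (microstate radius) with $\liminf_i$ and treat the inverse limit for non-finitely-presented $\G$; and, for the upper bound, the cost-theoretic construction, where the real work is controlling the entropy contributions of the ``which domain'' indicators so that $H_\mu(\cP)$ is pinned to $\mathrm{cost}(\Phi)\log|\K|$ up to $\epsilon$. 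I expect the modular L\"uck-type approximation to be the chief difficulty; it is also the source of the gap between the two bounds, since mod-$p$ Betti numbers can strictly exceed their $\ell^2$ analogues, so one should not expect equality in general (the bounds coincide precisely when $\G$ has fixed price equal to $1+\beta^1_{(2)}(\G)$, as for free groups).
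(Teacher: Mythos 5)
The paper itself contains no proof of this statement---it is quoted from \cite{gaboriau-seward-2015}---so your proposal can only be judged against the original argument, and there your architecture is essentially the right one: the dual of the augmentation ideal / cocycle reinterpretation, the count of $\K$-valued cocycles on the sofic graphs with $\dim B^1=|V_i|-c_i$ supplying the ``$1$'' and $\dim H^1$ supplying the excess (bounded below by the rational Betti number via universal coefficients and then by $\beta^1_{(2)}(\G)$ via L\"uck approximation along sofic approximations, which is indeed the hard, determinant-conjecture direction), and a graphing-to-generating-partition construction bounding Rokhlin entropy by $\cC_{\sup}(\G)\log|\K|$. These are the mechanisms by which $\beta^1_{(2)}$ and $\cC_{\sup}$ actually enter in Gaboriau--Seward.

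Two of the steps you pass over are, however, genuine gaps as written. First, the claim that $|\K|^{\dim_\K Z^1_{i,r}}$ is ``visibly an upper bound for all topological microstates'' is not visible: a topological microstate only satisfies the loop conditions at a $(1-\delta)$-fraction of vertices, and such labelings need not be near any exact cocycle. The count can be rescued---labelings violating at most $\delta|V_i|$ of the linear constraints lie in a union of at most $\binom{|V_i|}{\lceil \delta|V_i|\rceil}$ affine subspaces of dimension $\dim_\K Z^1_{i,r}+O(\delta|V_i|)$, which gives the correct rate as $\delta\to 0$---but without some such argument neither the equality $h_\Si=h_{\Si,\mu}$ nor your transfer of the Rokhlin/cost bound to the \emph{topological} entropy goes through, since Proposition \ref{prop:Rokhlinsofic} only controls $h_{\Si,\mu}$. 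Second, for equidistribution of a uniformly random element of $Z^1_{i,r}$ the delicate point is the first moment, not the second: the restriction of $Z^1_{i,r}$ to a window around a good vertex is cut out only by the constraints visible at radius $r$, and can strictly contain the set of patterns of global cocycles (relations of $\G$ witnessed only by long loops are invisible to $G_i$), so the expected empirical distribution may charge non-admissible patterns; one must enlarge the constraint set by finitely many consequences of the relators and recheck that the dimension count survives. By contrast, the bookkeeping you flag in the upper bound is the most easily repaired item: choose the graphing measurable with respect to an essentially free factor of Rokhlin entropy $<\epsilon$ (which exists by \cite{seward-tucker-drob} and carries the same orbit equivalence relation), so that the domain and group-element data cost nothing and the relative Rokhlin entropy is at most $\sum_j\mu(\dom\phi_j)\log|\K|$.
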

Conjecturally, $1 + \beta^1_{(2)}(\G) = \cC_{\sup}(\G)$. In spite of these negative results, the following remains open: 
\begin{problem}
Does the $f$-invariant satisfy an addition formula?
\end{problem}
In \cite{bowen-entropy-2010a} a positive answer is claimed. However, there is a serious flaw. It was partially corrected in \cite{bowen-gutman-2014} which proved that indeed the $f$-invariant does satisfy an addition formula whenever $X$ is totally disconnected and there exists a special kind of generating partition. When $X$ is a connected finite-dimensional Lie group, the $f$-invariant of $\G \cc X$ is minus infinity and an addition formula also holds for degenerate reasons.

\begin{problem}
Is there any formula or algorithm for computing the sofic entropy of $\G \cc X_I$ where $I \subset \Z\G$ is a finitely generated ideal and $X_I=\widehat{\Z\G/I}$ is the Pontryagin dual? Such a formula is known when $\G$ is amenable \cite{MR3110799} but unknown for non-amenable groups.
\end{problem}

\subsubsection{Further results}

In recent stunning work, Ben Hayes has shown that for any algebraic action $\G \cc X$ of a sofic group, there is a closed $\G$-invariant normal subgroup $Y \le X$ such that the outer Pinsker algebra is the sigma-algebra of $Y$-invariant Borel subsets (whenever the action satisfies the mild condition of  admitting an lde-convergent sequence of model measures) \cite{hayes-relative-entropy}. Outer Pinsker algebras are defined in \S \ref{sec:outer} below. The proof uses the product formula for outer Pinsker algebras (Theorem \ref{thm:pinsker-sofic}). 

It follows that if  $\G \cc X/Y_0$ has positive outer entropy for every closed normal $\G$-invariant subgroup $Y_0\le X$ with $Y_0 \ne X$, then $\G \cc X$ has completely positive outer entropy. In \cite{MR3641841} Ben Hayes uses local sofic entropy theory (developed in \cite{kerr-li-comb-ind}) to show that if $f \in \Z\G$ is invertible in the group von Neumann algebra $L\G$ but not in $\Z\G$ then every $k$-tuple of points in $X_f$ is a $\Si$-IE-$k$-tuple. This implies positive sofic entropy. So the Fuglede-Kadison determinant of $f$ is $>1$ (answering a question of Deninger) and  the $\Si$-entropy of $\G \cc X_f$ is positive. Moreover, in \cite{hayes-relative-entropy} it is shown how this implies completely positive outer entropy (assuming the same mild condition as above). In \cite{MR3635672} it is shown that completely positive outer entropy implies the Koopman representation embeds into the countable sum of left-regular representations. 

\begin{question}
If $f \in \Z\G$ is invertible in the group von Neumann algebra $L\G$ but not in $\Z\G$ is the action $\G \cc (X_f,\mu_f)$ orbit-equivalent  to a Bernoulli shift? Is it measurably conjugate to a Bernoulli shift? The latter is open even in the special case of amenable $\G$. It is known to be true when $\G=\Z^d$ \cite{schmidt-book}. (More precisely, one should consider $\G/N \cc (X_f,\mu_f)$ where $N$ is the kernel of $\G \cc X_f$. This kernel is finite and therefore trivial if $\G$ is torsion-free.)\end{question}

%\begin{question}
%If $f \in \Z\G$ is invertible in the group von Neumann algebra $L\G$ but not in $\Z\G$ is the action $\G \cc (X_f,\mu_f)$ weakly equivalent to a Bernoulli shift in the sense of Kechris \cite{Kechris-global-aspects}? Is it measurably conjugate to a Bernoulli shift?
%\end{question}

Finally, Ben Hayes has used Tim Austin's lde-$\Si$-entropy to prove that the topological $\Si$-entropy of $\G \cc X$ agrees with the measure $\Si$-entropy whenever the action satisfies the above-mentioned mild condition \cite{hayes-doubly-quenched}. The case in which $\G$ is amenable was handled earlier in \cite[Theorem 3]{deninger-2006}.

\begin{question}
Suppose $\Ga$ is sofic, $\Sigma$ is a sofic approximation to $\Ga$ and $f \in \Z \Ga$ is such that the action $\G \cc (X_f,\mu_f)$ has completely positive $\Si$-entropy (abbreviated CPE$^\Si$). Is the Haar measure on $X_f$ the unique measure of maximal $\Sigma$-entropy? By \cite[Theorem 8.6]{MR3314515}, if $\G$ is amenable, then Haar measure is the unique measure of maximal entropy if and only if $\G \cc (X_f,\mu_f)$ is CPE. 
\end{question}

\subsection{Gaussian actions}\label{sec:gaussian}       %%%%%%%%%%%%%%%%%%%%%%%%%%%%%%%%%%%%%

Associated to any orthogonal representation $\rho:\Ga \to \cO(\cH)$ on a real Hilbert space $\cH$ is a {\bf Gaussian action} $\Ga \cc (X_\rho,\mu_\rho)$. The details of this construction can be found in \cite{Kechris-global-aspects} for example. For intuition, if $\cH$ is finite-dimensional then the Gaussian action is the action of $\G$ on $\cH$ with respect to the standard Gaussian measure. Ben Hayes computed the entropy of Gaussian actions in \cite{MR3693125}:

\begin{thm}
The representation $\rho$ decomposes as $\rho=\rho_1 \oplus \rho_2$ where $\rho_1$ is singular with respect to the left-regular representation (so no nontrivial subrepresentation of $\rho_1$ embeds into the left regular representation) and $\rho_2$ embeds into the countable power of the left-regular representation. Moreover,
\begin{displaymath}
h_{\Sigma,\mu_\rho}(\Ga \cc X_\rho) = \left\{\begin{array}{cc}
-\infty & \textrm{ if } h_{\Sigma,\mu_{\rho_1}}(\Ga \cc X_{\rho_1}) = -\infty \\
0 &  \textrm{ if } \rho_2 = 0 \textrm{ and } h_{\Sigma,\mu_{\rho_1}}(\Ga \cc X_{\rho_1}) \ne -\infty \\
+\infty & \textrm{ if } \rho_2 \ne 0 \textrm{ and } h_{\Sigma,\mu_{\rho_1}}(\Ga \cc X_{\rho_1}) \ne -\infty.
\end{array}\right.\end{displaymath}
\end{thm}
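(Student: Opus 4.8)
The plan rests on the Lebesgue-type decomposition of $\rho$, the functoriality of the Gaussian construction, and a microstate count. Recall first that $\rho\mapsto(X_\rho,\mu_\rho)$ depends on $\rho$ only up to orthogonal equivalence and carries orthogonal direct sums to products: $\Ga\cc(X_{\rho_1\oplus\rho_2},\mu_{\rho_1\oplus\rho_2})$ is measurably conjugate to the diagonal action on $(X_{\rho_1}\times X_{\rho_2},\mu_{\rho_1}\times\mu_{\rho_2})$ (see \cite{MR2583950}). The splitting $\rho=\rho_1\oplus\rho_2$ into the part singular to the left-regular representation $\lambda$ and the part absolutely continuous with respect to it (equivalently, the parts of the associated $L\Ga$-representation on which the trace is singular, resp.\ absolutely continuous, relative to the canonical trace) exists and is unique up to orthogonal equivalence by a standard direct-integral argument, and then $\rho_2$ embeds into $\lambda^{\oplus\infty}$. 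So throughout I replace $\Ga\cc X_\rho$ by $\Ga\cc X_{\rho_1}\times X_{\rho_2}$, in which $X_{\rho_1}$ and $X_{\rho_2}$ are both factors.

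The two ``non-positive'' cases are handled as follows. If $h_{\Sigma,\mu_{\rho_1}}(\Ga\cc X_{\rho_1})=-\infty$, then, since a good model for $\Ga\cc X_\rho$ pushes forward under the factor map $X_\rho\to X_{\rho_1}$ to a good model for $\Ga\cc X_{\rho_1}$ (the portmanteau/continuity-set argument in the proof of Theorem \ref{thm:measure-entropy}, with the quality parameters only worsening in a controlled way), the nonexistence of good models downstairs forces it upstairs, so $h_{\Sigma,\mu_\rho}(\Ga\cc X_\rho)=-\infty$. If instead $\rho_2=0$ and $h_{\Sigma,\mu_{\rho_1}}(\Ga\cc X_{\rho_1})\ne-\infty$, then $X_\rho=X_{\rho_1}$ and I must show this entropy equals $0$; the bound $\ge 0$ is automatic (being $\ne-\infty$ forces at least one good model for each tolerance along infinitely many $i$, hence a nonnegative limsup), while for $\le 0$ I would invoke the operator-algebraic description of the sofic entropy of a Gaussian action as (half the logarithm of) a Fuglede--Kadison positive determinant $\mathrm{det}^+$ built from the relative density operator of $\rho_1$ against $\lambda^{\oplus\infty}$, in the spirit of the determinant formula for principal algebraic actions (\cite{hayes-fk-determinants}); singularity of $\rho_1$ kills the absolutely continuous part of that operator, so $\mathrm{det}^+=1$ and the entropy is $\le 0$. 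Concretely, this amounts to showing the Gram/covariance constraints imposed by a singular representation are rigid enough at every finite scale to make the number of approximately-equidistributed good models subexponential.

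In the remaining case ($\rho_2\ne 0$ and $h_{\Sigma,\mu_{\rho_1}}(\Ga\cc X_{\rho_1})\ne-\infty$) I must show $h_{\Sigma,\mu_\rho}(\Ga\cc X_\rho)=+\infty$ by exhibiting, for each large $i$ and each tolerance, at least $e^{c(\epsilon)|V_i|}$ pairwise $\epsilon$-separated good models with $c(\epsilon)\to\infty$ as $\epsilon\to 0$. Fix a good model $\phi_1\colon V_i\to X_{\rho_1}$, available since that entropy is $\ne-\infty$. Since $\rho_2\ne 0$ embeds in $\lambda^{\oplus\infty}$, a unit cyclic vector of a cyclic subrepresentation of $\rho_2$ is approximated in $\ell^2(\Ga)^{\oplus\infty}$ by finitely-supported vectors, so a fresh family of i.i.d.\ standard Gaussians indexed by $V_i$ (with finite multiplicity) produces, via the sofic maps $\sigma_i$, a random $\phi_2\colon V_i\to X_{\rho_2}$ that is, with high probability, a good model for $\Ga\cc X_{\rho_2}$ --- this is the same empirical-covariance computation that drives the lower bound in Theorem \ref{thm:bernoulli-sofic}, using $\sigma_i(g)\sigma_i(h)\approx\sigma_i(gh)$. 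The pair $(\phi_1,\phi_2)\colon V_i\to X_{\rho_1}\times X_{\rho_2}=X_\rho$ is then, with high probability, a good model for $\Ga\cc X_\rho$: approximately equivariant, and with empirical distribution converging to $\mu_{\rho_1}\times\mu_{\rho_2}=\mu_\rho$ because the fresh randomness defining $\phi_2$ is independent of the fixed $\phi_1$, so the two coordinates asymptotically decouple. Since the base $(\R,\gamma)$ of this Gaussian direction is nonatomic, hence of infinite Shannon entropy, the count of pairwise $\epsilon$-separated such $\phi_2$ is $\ge e^{c(\epsilon)|V_i|}$ with $c(\epsilon)\to\infty$ (again the computation behind Theorem \ref{thm:bernoulli-sofic}); taking the supremum over $\epsilon$ gives $+\infty$.

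The first ingredient is soft. I expect the crux to be twofold. First, the sharp upper bound $h_{\Sigma,\mu_{\rho_1}}(\Ga\cc X_{\rho_1})\le 0$ in the singular case, where the Lebesgue decomposition of the representation and the vanishing of the Fuglede--Kadison positive determinant (or an equivalent explicit microstate count) do the real work. Second --- and this I expect to be the true obstacle --- the combination, in the last case, of a good model for the $X_{\rho_1}$-factor with the Gaussian-noise good model for $X_{\rho_2}$ into a good model for the \emph{product}: this is exactly the type of step that can fail, since sofic entropy is not additive under direct products in general (\S \ref{sec:direct products}), and it goes through here only because one coordinate is built literally out of i.i.d.\ randomness and is therefore asymptotically independent of everything else. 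Making the joint empirical distribution over $V_i$ --- not merely its two marginals --- come out right, uniformly in the parameters, is the heart of the matter.
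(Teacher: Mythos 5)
Your architecture — Lebesgue decomposition of $\rho$ against $\lambda$, functoriality of the Gaussian construction turning $\oplus$ into $\times$, pushing good models forward to kill the $-\infty$ case, the automatic $\ge 0$ when models exist, and manufacturing exponentially many separated models for the $X_{\rho_2}$-coordinate out of fresh i.i.d.\ Gaussian noise — matches the route the paper attributes to \cite{hayes-gaussian}, and those parts are sound. The genuine gap is exactly where you place the crux but not in the way you propose to fill it: the upper bound $h_{\Sigma,\mu_{\rho_1}}(\Ga\cc X_{\rho_1})\le 0$ when $\rho_1$ is singular to $\lambda$. There is no established Fuglede--Kadison determinant formula for the sofic entropy of a Gaussian action that you can ``invoke''; the determinant formula of \cite{hayes-fk-determinants} is for principal algebraic actions $X_f$ with $f\in\Z\G$, and transplanting it to the Gaussian setting is not a citation but a new theorem. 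The tool the paper actually points to is the spectral criterion of \cite{hayes-polish-models} (a weak Sinai factor theorem): if the Koopman representation of a pmp action on $L^2_0$ is singular with respect to $\lambda$, then its sofic entropy is nonpositive. Your plan never engages with this, and substituting it in is not free either: the Koopman representation of the Gaussian action $\Ga\cc X_{\rho_1}$ on $L^2_0(\mu_{\rho_1})$ is the symmetric Fock space $\bigoplus_{n\ge 1}\rho_1^{\odot n}$, and singularity of $\rho_1$ with respect to $\lambda$ does not formally pass to tensor powers (already for $\Z$, the convolution of two singular spectral measures can be absolutely continuous). Handling that — either by working only ``in the presence of'' the first chaos, which generates, or by some other device — is the real content of the singular case, and your proposal contains no mechanism for it.

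A smaller caveat on your $+\infty$ case: you are right that the danger is the joint empirical distribution, and your fix (one coordinate is literally fresh i.i.d.\ randomness, so it decouples from the fixed good model $\phi_1$, as in the ``product with a Bernoulli factor is additive'' statement of \S\ref{sec:direct products}) is the correct mechanism. But note that $X_{\rho_2}$ is a Gaussian action over a subrepresentation of $\lambda^{\oplus\infty}$, not a Bernoulli shift, so you cannot quote that additivity statement verbatim; you must run the Chebyshev/empirical-covariance estimate of Theorem \ref{thm:bernoulli-sofic} for the pair $(\phi_1,\phi_2)$ directly, checking that the covariance constraints defining $\mu_{\rho_2}$ (matrix coefficients of $\rho_2$, approximated by finitely supported vectors in $\ell^2(\Ga)^{\oplus\infty}$) are met simultaneously with approximate independence from $\phi_1$. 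That is doable along the lines you sketch, so I would not call it a gap — but the singular upper bound is.
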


The proof uses a Polish model for the action (this is a continuous action $\G \cc Y$ where $Y$ is a completely metrizable separable space with a measure $\nu$ that is measurably conjugate to the original action). It also uses a weak form of Sinai's factor theorem \cite{MR3773269}: if the Koopman representation of $\G \cc (X,\mu)$ is singular to the left regular representation then the sofic entropy is nonpositive. To see the connection, let  $\s:\G \to \sym(V)$ be given. Then there is an induced map from $\G$ into the unitary group of $\C^V$. If $\s$ is regarded as a sofic approximation to $\G$, then this map approximates the left regular representation.

\subsection{Distal actions}\label{sec:distal}    %%%%%%%%%%%%%%%%%%%%%%%%%%%%%%%%%%%%%
\begin{defn}
An action $\Ga\cc^T (X,d)$ by homeomorphisms on a compact metric space is {\bf distal} if $\inf_{g\in \Ga} d(T^gx,T^gy)>0$ for every pair of distinct points $x,y \in X$. Note that profinite, compact and equicontinuous actions are distal \cite[Lemma 4.1]{chung-zhang-expansive}.  A pmp action $\Ga \cc^T (X,\mu)$ is {\bf measure distal} if it is measurably-conjugate to a distal action. 

%\begin{itemize}
%\item {\bf distal} if $\inf_{g\in \Ga} d(gx,gy)>0$ for every pair of distinct points $x,y \in X$,
%\item {\bf equicontinuous} if for each $\delta>0$ there exists $\epsilon>0$ such that $d(x,y)\le \epsilon$ implies $d(gx,gy) \le \delta$ for all $g\in \Ga$,
%\item {\bf compact} if $X=K/H$ where $K$ is a compact group and $H\le K$ is a closed subgroup, there is a dense embedding $\phi:\Ga \to K$ and the action is $g(kH) = \phi(g)kH$,
%\item {\bf profinite} if there exists an increasing sequence $\{\cP_i\}_{i\in \N}$ of $\Ga$-invariant finite partitions of $X$ whose join is the partition into points.
%\end{itemize}
 %Clearly profinite $\Rightarrow$ compact $\Rightarrow$ equicontinuous. By \cite[Lemma 4.1]{chung-zhang-expansive} equicontinuity implies distality. 
 \end{defn}

%The next result has been proven by several authors in different ways:
\begin{thm}\cite{kerr-li-comb-ind, MR3635672}
Measure distal actions have zero Rokhlin entropy. Therefore they have  non-positive sofic entropy. Also, distal actions have non-positive topological sofic entropy with respect to every sofic approximation.
\end{thm}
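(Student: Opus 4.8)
The plan is to reduce everything to the single assertion that \emph{every measure-distal pmp action has zero Rokhlin entropy}. Granting this, non-positivity of measure sofic entropy is immediate from Proposition \ref{prop:Rokhlinsofic}. For the topological statement, note that any $\Ga$-invariant Borel probability measure $\mu$ on a topologically distal system $\Ga\cc X$ makes $\Ga\cc(X,\mu)$ measure-distal, so $h_{\Si,\mu}(\Ga\cc X)\le h^{Rok}(\Ga\cc(X,\mu))=0$; hence by the variational principle for topological sofic entropy (see \S\ref{sec:variational}) $h_\Si(\Ga\cc X)=\sup_\mu h_{\Si,\mu}(\Ga\cc X)\le 0$, and if no invariant measure exists then $h_\Si(\Ga\cc X)=-\infty$. (Alternatively one can prove the topological statement along the structural lines below, using that isometric extensions and inverse limits create no nontrivial $\Si$-IE-pairs \cite{kerr-li-comb-ind}.) Finally, the non-ergodic case reduces to the ergodic one: by ergodic decomposition $h^{Rok}(T)=h^{Rok}(T\mid\cI_T)$ is controlled by the Rokhlin entropies of the ergodic components, and measure-distality is inherited by a.e. ergodic component (each is conjugate to an ergodic invariant measure on a topologically distal model).

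For ergodic measure-distal $T$ I would invoke the Furstenberg--Zimmer structure theorem: $\Ga\cc(X,\mu)$ is the inverse limit of a transfinite tower $(\Ga\cc X_\alpha)_{\alpha\le\theta}$ with $X_0$ trivial, each $X_{\alpha+1}\to X_\alpha$ an isometric (compact) extension, and $X_\lambda=\varprojlim_{\alpha<\lambda}X_\alpha$ at limit ordinals, and then prove $h^{Rok}(\Ga\cc X_\alpha)=0$ for all $\alpha$ by transfinite induction. The base case is the trivial action, whose Rokhlin entropy is $0$ (the one-set partition, together with the invariant $\sigma$-algebra, already generates). At a limit ordinal $\lambda$ of countable cofinality, fix a cofinal sequence $\alpha_1<\alpha_2<\cdots\to\lambda$ and, using the induction hypothesis, partitions $\cR_n$ measurable with respect to $\cB_{X_{\alpha_n}}$, generating for $\Ga\cc X_{\alpha_n}$, with $H_\mu(\cR_n)<\epsilon 2^{-n}$; then $\bigvee_n\cR_n$ generates $\cB_{X_\lambda}=\bigvee_n\cB_{X_{\alpha_n}}$ and, by subadditivity of Shannon entropy, has entropy $<\epsilon$, so $h^{Rok}(\Ga\cc X_\lambda)<\epsilon$ for every $\epsilon>0$. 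Limit ordinals of uncountable cofinality are trivial, since $\cB_X$ is countably generated and hence $\cB_{X_\lambda}=\cB_{X_\alpha}$ for some $\alpha<\lambda$.

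The successor step is the crux and amounts to the lemma: \emph{if $\Ga\cc X$ is an isometric (compact) extension of $\Ga\cc Y$ then $h^{Rok}(\Ga\cc X)\le h^{Rok}(\Ga\cc Y)$}; since $h^{Rok}\ge 0$ always, this propagates $h^{Rok}=0$ up the tower. To prove the lemma one uses that a compact extension carries a measurable field of fiber metrics, with compact fibers, that is $\Ga$-invariant relative to $Y$. Given a generating partition $\cQ$ of $Y$ with $H_\mu(\cQ)$ close to $h^{Rok}(\Ga\cc Y)$, I would adjoin to its lift a partition $\cR$ of $X$ of arbitrarily small Shannon entropy whose $\Ga$-orbit, together with $\cB_Y$, separates points within almost every fiber: take a measurable section of the extension and a decreasing sequence of neighborhoods of it whose exceptional masses decay geometrically (keeping $H_\mu(\cR)$ small), and use the relative $\Ga$-invariance of the fiber metric to see that the $\Ga$-translates of $\cR$ probe the fiber coordinate at all scales. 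This is essentially the content of \cite{alpeev-pinsker} (compare \cite{kerr-li-comb-ind, hayes-mixing-pinsker}); verifying the generating property and the entropy estimate carefully is the one place where real work is needed, and is the main obstacle. Once the compact-extension lemma is in hand, the transfinite induction closes and the three conclusions of the theorem follow as indicated.
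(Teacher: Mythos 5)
Your architecture differs from the paper's on both halves, and the comparison is instructive. For the topological statement the paper does not go through the variational principle at all: it quotes the result of \cite{kerr-li-comb-ind} that positive topological sofic entropy forces Li--Yorke chaos (an uncountable scrambled set of pairs with $\liminf_{s\to\infty} d(T^sx,T^sy)=0$), which is flatly incompatible with distality; that argument is self-contained and does not presuppose the measure-theoretic half, whereas your route does. Your variational-principle reduction is nevertheless correct, including the case where no invariant measure exists. For the measure statement the paper avoids the Furstenberg--Zimmer tower entirely: it invokes the theorem of \cite{alpeev-pinsker} that every action is weakly mixing relative to its Rokhlin-entropy Pinsker factor, and observes that a measure distal system is not relatively weakly mixing over any proper factor; hence the Pinsker factor is everything and $h^{Rok}=0$. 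That packaging buys the conclusion without ever proving an entropy estimate for a single compact extension.

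The one genuine gap in your proposal is exactly the step you flag: the lemma that an isometric extension satisfies $h^{Rok}(\Ga\cc X)\le h^{Rok}(\Ga\cc Y)$. Your sketch --- adjoin to a near-optimal generating partition of $Y$ a small-entropy partition $\cR$ built from shrinking neighborhoods of a measurable section, and argue that its $\Ga$-translates together with $\cB_Y$ separate points in a.e.\ fiber --- is not yet a proof. The translates of $\cR$ only record which shrinking neighborhood a fiber point visits at the various times $g\in\Ga$, and relative $\Ga$-invariance of the fiber metric by itself does not show these observations resolve the fiber coordinate; nothing in the sketch rules out that on a positive-measure set of fibers the relative isometries move points so as to miss the small sets at every scale. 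Closing this requires either a relative version of the ergodicity/genericity argument used in Theorem \ref{thm:variant}, or the subadditivity $h^{Rok}(X)\le h^{Rok}(Y)+h^{Rok}(X\,|\,\cB_Y)$ combined with a proof that compact extensions have zero \emph{relative} Rokhlin entropy --- and that last statement is essentially the content of the cited papers, so your outline ultimately re-imports the hard part it set out to prove. Everything else --- the base case, the countable-cofinality limit step (the same telescoping trick as Lemma \ref{cor:inverse limit}), and the reduction of the non-ergodic case --- is fine, modulo the caveat that the ergodic decomposition formula for Rokhlin entropy is only announced as work in progress in this survey.
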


\begin{proof}[Remarks on the proof]
The paper \cite{kerr-li-comb-ind} shows that if the topological sofic entropy of $\G \cc^T X$ is positive (with respect to some sofic approximation) then the action is {\bf Li-Yorke chaotic}. This condition means there is an uncountable subset $Z \subset X$ such that every non-diagonal pair $(x,y) \in Z\times Z$ satisfies
$$\limsup_{G\ni s \to \infty} d(T^sx,T^sy)>0, \quad \liminf_{G\ni s \to \infty} d(T^sx,T^sy)=0.$$
It follows that distal actions have non-positive topological sofic entropy.

%The paper \cite{alpeev-pinsker} defines the {\bf Pinsker factor with respect to Rokhlin entropy} of $\G \cc^T (X,\mu)$ to be the unique maximal sigma-sub-algebra of $\cB_X$ such that the corresponding factor system has zero Rokhlin entropy. Then it is proven that the action is weakly mixing relative to its Pinsker factor. Because measure distal systems are not weakly mixing with respect to any proper factor, it follows that they have zero Rokhlin entropy.
\end{proof}
The special case of topological $\Z$-actions was handled earlier by Keynes \cite{MR0274710}. Recall that naive entropy is an upper  bound for sofic entropy. This motivates:
\begin{question}
Do distal actions have zero topological naive entropy? The measure version of this question is also open.
\end{question}
Partial progress has been made by Peter Burton:

\begin{thm}\cite{MR3649602}
If $\Ga$ contains an infinite cyclic subgroup then every distal action has zero topological naive entropy (and therefore non-positive topological sofic entropy).
\end{thm}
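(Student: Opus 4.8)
The plan is to exploit the fact that topological naive entropy is an infimum over \emph{finite} subsets of $\Ga$, so that a single well-chosen ``direction'' already forces the entropy to zero. Fix an element $t \in \Ga$ of infinite order, so that $\langle t\rangle \cong \Z$, and write $\phi := T^t$ for the associated homeomorphism of $X$. Given a finite open cover $\cU$ of $X$ and an integer $n \ge 1$, apply the defining infimum to the particular finite set $F_n = \{e, t, t^2,\ldots, t^{n-1}\}$, which has exactly $n$ elements because $t$ has infinite order. This gives
$$h^{naive}(T,\cU) \le \frac{1}{n}\log N(\cU^{F_n}) = \frac{1}{n}\log N\Big(\bigvee_{i=0}^{n-1}\phi^{-i}\cU\Big)$$
for every $n$. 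The quantities $a_n := \log N\big(\bigvee_{i=0}^{n-1}\phi^{-i}\cU\big)$ are subadditive, using $N(\cV\vee\cW)\le N(\cV)N(\cW)$ and $N(\phi^{-1}\cV)=N(\cV)$, so $\inf_n a_n/n = \lim_n a_n/n$ equals the ordinary topological entropy $h(\phi,\cU)$ of the homeomorphism $\phi$ relative to $\cU$. Taking the infimum over $n$ and then the supremum over all finite open covers $\cU$ yields $h^{naive}(T) \le h(\phi)$.

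Next I would observe that the $\Z$-action generated by $\phi$ is distal: for distinct $x,y \in X$,
$$\inf_{n\in\Z} d(\phi^n x,\phi^n y) \ge \inf_{g\in\Ga} d(T^g x, T^g y) > 0$$
by hypothesis. Hence $h(\phi)=0$ by the classical theorem that a distal homeomorphism of a compact metric space has zero topological entropy \cite{MR0274710} (alternatively, via the variational principle together with the Furstenberg--Zimmer structure theorem: each $\phi$-invariant Borel probability measure turns $(X,\phi)$ into a measure-distal $\Z$-system, which has zero Kolmogorov--Sinai entropy). Combined with the previous paragraph, and with the trivial bound $h^{naive}(T)\ge 0$, this gives $h^{naive}(T)=0$; the parenthetical assertion about topological sofic entropy then follows because topological sofic entropy is dominated by topological naive entropy \cite{burton-naive}.

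The argument is essentially soft: the only substantive external input is the vanishing of topological entropy for distal $\Z$-systems, and the one real idea is the reduction in the first paragraph. The place where some (minor) care is needed is the identity $|F_n| = n$ --- this is exactly where the infinite-cyclic-subgroup hypothesis is used, since for a finite-order element the corresponding quotient $|F_n|^{-1}\log N(\cU^{F_n})$ would be bounded below away from $0$ --- together with the elementary subadditivity identifying $\inf_n a_n/n$ with $h(\phi,\cU)$; neither is a genuine obstacle. The truly hard case, left open in the text, is that of an infinite torsion group $\Ga$, where this direction-by-direction reduction is unavailable.
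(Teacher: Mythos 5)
Your argument is correct: restricting the infimum in the definition of $h^{naive}(T,\cU)$ to the sets $F_n=\{e,t,\dots,t^{n-1}\}$ (where $|F_n|=n$ precisely because $t$ has infinite order) gives $h^{naive}(T)\le h_{top}(T^t)$ via Fekete subadditivity, and the classical vanishing of topological entropy for distal homeomorphisms finishes the proof. The survey states this theorem only with a citation to \cite{burton-naive} and gives no proof, but your reduction is exactly the expected one and is the topological twin of the lemma proved in the section on smooth actions, where naive entropy is bounded by $|F|^{-1}h_\mu(T_\Lambda)$ for an amenable subgroup $\Lambda$ using coset representatives.
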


\subsection{Smooth actions}\label{sec:smooth}       %%%%%%%%%%%%%%%%%%%%%%%%%%%%%%%%%%%%%

The purpose of this section is to show that, under mild conditions, the entropy of an action of a large group by diffeomorphisms on a smooth manifold is nonpositive. The starting point is a bound on the topological entropy of a single Lipschitz map. So let $(X,d)$ be a compact metric space. The {\bf ball dimension} of $(X,d)$ is 
$$\dim_{\rm{ball}}(X,d) = \limsup_{\epsilon \searrow 0} \frac{\log S_\epsilon(X,d) }{|\log \epsilon|}$$
where $S_\epsilon(X,d)$ is the minimum cardinality of an $\epsilon$-spanning subset of $(X,d)$. Given any map $T:X \to X$, the {\bf Lipschitz contant} $\rm{Lip}(T)$ is 
$$\rm{Lip}(T) := \sup_{x \ne y} \frac{ d(Tx,Ty)}{d(x,y)}.$$

\begin{lem}
Let $(X,d)$ be a compact metric space and $T: X \to X$ a continuous map. If $\dim_{\rm{ball}}(X,d)<\infty$ and $\rm{Lip}(T)<\infty$ then
$$h_{\rm{top}}(T) \le \dim_{\rm{ball}}(X,d) \max (0, \log \rm{Lip}(T) )$$
where $h_{\rm{top}}(T)$ is the topological entropy of $T$. In particular, if $X$ is a smooth Riemannian manifold and $T$ is a diffeomorphism then $h_{\rm{top}}(T)<\infty$. 
\end{lem}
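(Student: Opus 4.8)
The plan is to use the Bowen--Dinaburg description of $h_{top}(T)$ in terms of dynamical spanning sets, and to exploit the Lipschitz bound to control how far the dynamical metrics can drift from $d$. Recall that, writing $d_n(x,y):=\max_{0\le i<n} d(T^ix,T^iy)$, a set $E\subset X$ is $(n,\epsilon)$-spanning if every point of $X$ is within $d_n$-distance $<\epsilon$ of $E$; let $r_n(\epsilon)$ be the least cardinality of such a set, so that $r_1(\epsilon)=S_\epsilon(X,d)$ and
$$h_{top}(T)=\lim_{\epsilon\to 0}\ \limsup_{n\to\infty}\ \frac1n\log r_n(\epsilon).$$

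First I would set $L:=\max\{1,\rm{Lip}(T)\}$ and observe that, since $d(T^ix,T^iy)\le \rm{Lip}(T)^{\,i}d(x,y)\le L^{\,n-1}d(x,y)$ for $0\le i<n$, any $\epsilon L^{-(n-1)}$-spanning set for $(X,d)$ is automatically $(n,\epsilon)$-spanning for $T$. Hence
$$r_n(\epsilon)\ \le\ S_{\epsilon L^{-(n-1)}}(X,d).$$
(When $\rm{Lip}(T)\le 1$ this just reads $r_n(\epsilon)\le S_\epsilon(X,d)$, a bound independent of $n$, so the entropy is $0$ and the lemma is trivial in that case; the definition $L=\max\{1,\rm{Lip}(T)\}$ absorbs this uniformly.)

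Next I would feed in the finite-dimensionality hypothesis. Fix $\eta>\dim_{ball}(X,d)$. By definition of the $\limsup$ defining $\dim_{ball}$, there is $\delta_0\in(0,1)$ with $\log S_\delta(X,d)\le \eta\,|\log\delta|$ for all $0<\delta<\delta_0$. Take $\epsilon\in(0,\delta_0)$; then $\delta_n:=\epsilon L^{-(n-1)}\in(0,\delta_0)$ for every $n\ge 1$, so
$$\log r_n(\epsilon)\ \le\ \log S_{\delta_n}(X,d)\ \le\ \eta\,|\log\delta_n|\ =\ \eta\bigl((n-1)\log L+|\log\epsilon|\bigr).$$
Dividing by $n$ and letting $n\to\infty$ gives $\limsup_n \frac1n\log r_n(\epsilon)\le \eta\log L$ for every such $\epsilon$, hence $h_{top}(T)\le \eta\log L$. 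Letting $\eta\downarrow \dim_{ball}(X,d)$ (legitimate since $\dim_{ball}(X,d)<\infty$) and using $\log L=\max(0,\log \rm{Lip}(T))$ yields the asserted inequality. For the last sentence: a compact Riemannian manifold of dimension $m$ has $\dim_{ball}(X,d)=m<\infty$ (cover by finitely many bi-Lipschitz coordinate charts into $\R^m$ and count Euclidean $\epsilon$-balls), and a $C^1$ diffeomorphism $T$ of a compact manifold has $\rm{Lip}(T)=\sup_x\lVert D_xT\rVert<\infty$, so the bound gives $h_{top}(T)\le m\max(0,\log \rm{Lip}(T))<\infty$.

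There is no genuine obstacle here; the only thing requiring care is the bookkeeping in the displayed chain — one must notice that the additive $|\log\epsilon|$ term is harmless because it is divided by $n\to\infty$, and that the $\eta>\dim_{ball}$ slack can be taken to $0$ only because $\dim_{ball}(X,d)$ is assumed finite. Handling $\rm{Lip}(T)\le 1$ separately (or uniformly via $L=\max\{1,\rm{Lip}(T)\}$) is the other small point to get right.
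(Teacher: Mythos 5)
Your argument is correct, and it is the standard spanning-set proof: the paper itself gives no argument here but simply cites \cite[Theorem 3.2.9]{katok-hasselblatt}, whose proof is exactly the one you wrote (bound $r_n(\epsilon)$ by $S_{\epsilon L^{-(n-1)}}(X,d)$ and use the definition of ball dimension). The bookkeeping with $L=\max\{1,\mathrm{Lip}(T)\}$ and the $\eta\downarrow\dim_{ball}$ limit is handled correctly, so nothing further is needed.
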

\begin{proof}
This is \cite[Theorem 3.2.9]{katok-hasselblatt}.
\end{proof}

%\begin{lem}
%Let $M$ be a smooth compact Riemannian manifold. Let $T:M\to M$
%be a diffeomorphism. Then $h_{\mu}(T)<\infty$ for any $T$-invariant probability measure $\mu$. 
%\end{lem}
%\begin{proof}
%This is due to Kushnirenko \cite{kushnirenko}. Alternatively, it follows from Ruelle's inequality (see e.g. \cite[Corollary S.2.17]{katok-hasselblatt}). 
%\end{proof}

The next result shows that if the induced action of an infinite index amenable subgroup has finite entropy then the action of the group has nonpositive entropy.
\begin{lem}
Let $\Ga$ be a group with an infinite index amenable subgroup $\Lambda \le \Ga$. Let $\Ga \cc^T X$ be a continuous action on a compact metrizable space. Also let $\mu$ be an invariant probability measure on $X$ that is ergodic with respect to $T$. Let $T_\La$ denote the restriction of the action to $\La$. Then
$$h_{\rm{top}}(T_\La) < \infty \Rightarrow h_\Si(T) \le 0 = h^{\rm{naive}}(T),$$
$$h_{\mu}(T_\La) < \infty \Rightarrow h_{\Si,\mu}(T) \le 0 = h^{\rm{naive}}_\mu(T)$$
for any sofic approximation $\Si$ to $\Ga$. 
\end{lem}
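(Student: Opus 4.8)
The plan is to bound the naive entropy of $T$ by that of $T_\La$, invoke the $\{0,+\infty\}$ dichotomy of Theorem \ref{thm:naive-non-amenable}, and descend to Rokhlin and sofic entropy through the chain $h_{\Sigma,\mu}(T)\le h^{Rok}(T)\le h^{naive}_\mu(T)$ assembled earlier. Throughout I take $\Ga$ to be non-amenable, which is the relevant case: a non-amenable group has no finite-index amenable subgroup, so for an amenable $\La\le\Ga$ infinite index is automatic, and it is non-amenability that makes the dichotomy available. (The final paragraph explains why amenable $\Ga$ would need a separate argument.)

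The only elementary input is a restriction inequality. For a finite-Shannon-entropy partition $\cP$ the quantity $h^{naive}_\mu(T,\cP)=\inf_{W\Subset\Ga}|W|^{-1}H_\mu(\cP^W)$ is an infimum over \emph{all} finite $W\subset\Ga$, so restricting to $W\Subset\La$ can only increase it: $h^{naive}_\mu(T,\cP)\le h^{naive}_\mu(T_\La,\cP)$, hence $h^{naive}_\mu(T)\le h^{naive}_\mu(T_\La)$. Since $\La$ is amenable, $h^{naive}_\mu(T_\La)=h_\mu(T_\La)$, so the hypothesis $h_\mu(T_\La)<\infty$ yields $h^{naive}_\mu(T)<\infty$. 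Running the same argument with a finite open cover $\cU$ and $\log N(\cU^W)$ in place of $\cP$ and $H_\mu(\cP^W)$ gives $h^{naive}(T)\le h^{naive}(T_\La)=h_{top}(T_\La)$ for topological naive entropy; since topological sofic entropy is bounded above by topological naive entropy, this gives $h_\Sigma(T)\le h_{top}(T_\La)<\infty$ for every sofic approximation $\Sigma$, which is the first implication.

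For the second implication, Theorem \ref{thm:naive-non-amenable} applied to the non-amenable group $\Ga$ gives $h^{naive}_\mu(T)\in\{0,+\infty\}$, so with $h^{naive}_\mu(T)<\infty$ we conclude $h^{naive}_\mu(T)=0$. When $T$ is ergodic, Proposition \ref{prop:naive-Rokhlin2} gives $h^{Rok}(T)\le h^{naive}_\mu(T)=0$, hence $h^{Rok}(T)=0$ because Rokhlin entropy is non-negative; the non-ergodic case reduces to the ergodic one via the ergodic decomposition $\mu=\int\mu_z\,d\lambda(z)$, using that $h_{\mu_z}(T_\La)<\infty$ for $\lambda$-a.e.\ $z$ (the relative entropy of the $\La$-action over the $\La$-invariant $\sigma$-algebra $\cI_T$ is affine in the measure) together with $h^{Rok}(T\mid\cI_T)\le\int h^{Rok}(T_{\mu_z})\,d\lambda(z)$, the easy half of the ergodic decomposition for Rokhlin entropy. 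Finally Proposition \ref{prop:Rokhlinsofic} gives $h_{\Sigma,\mu}(T)\le h^{Rok}(T)=0$ for every sofic approximation $\Sigma$.

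Since this is essentially a recombination of earlier results, there is no single hard step; the points requiring care are (a) invoking the ``naive $\ge$ sofic'' and ``naive $\ge$ Rokhlin'' inequalities in the right (topological resp.\ measure) form, and (b) the reduction to ergodic components, where one checks that finiteness of $h_\mu(T_\La)$ survives passage to $\lambda$-almost every $\Ga$-ergodic component, even though such a component need not be $\La$-ergodic. The genuinely structural ingredient is the dichotomy of Theorem \ref{thm:naive-non-amenable}, and that is exactly where non-amenability of $\Ga$ is indispensable: for amenable $\Ga$ the implication ``$h^{naive}_\mu(T)<\infty\Rightarrow h^{naive}_\mu(T)=0$'' is false (take $\Ga=\Z^d$ acting as a Bernoulli shift), so the amenable case of the lemma, if desired, would have to be obtained instead from the classical scaling of entropy with the index.
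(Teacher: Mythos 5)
Your argument in the non-amenable case is correct and takes a genuinely different route from the paper's: you obtain finiteness of $h^{naive}_\mu(T)$ from the trivial restriction inequality (the infimum over all $W\Subset\Ga$ is at most the infimum over $W\Subset\La$) together with the identity $h^{naive}_\mu(T_\La)=h_\mu(T_\La)$ for amenable $\La$, and then upgrade finiteness to zero via the $\{0,+\infty\}$ dichotomy of Theorem \ref{thm:naive-non-amenable}. The paper instead proves the vanishing directly: it fixes a finite set $F$ of representatives of distinct cosets of $\La$, notes that $\cP^{FK}=(\cP^F)^K$ for $K\Subset\La$ and $|FK|=|F|\,|K|$, and lets $K$ run through a F\o lner sequence of $\La$ to get $h^{naive}_\mu(T,\cP)\le \frac{1}{|F|}h_\mu(T_\La)$; since $[\Ga:\La]=\infty$, $|F|$ is unbounded and the bound is $0$. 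The difference matters, and this is where your proof has a genuine gap: the paper's argument uses no non-amenability of $\Ga$, whereas yours restricts to non-amenable $\Ga$ ``by fiat.'' The lemma is invoked immediately afterwards for any group with an infinite-index infinite cyclic subgroup --- e.g.\ $\Ga=\Z^2$, $\La=\Z$ --- so the amenable case is squarely in scope and is not covered by your argument. Your proposed repair (``classical scaling of entropy with the index'') is a statement about \emph{finite}-index subgroups and gives nothing here; the correct infinite-index substitute is exactly the coset-representative estimate above, i.e.\ the content of the paper's proof. So the amenable case is not a routine footnote but the missing step itself.

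Two smaller points. For the topological implication you invoke ``topological sofic entropy $\le$ topological naive entropy,'' which is not established anywhere in this survey (the discussion of Burton's work only records that topological naive entropy dominates \emph{measure} naive entropy). The paper's route is both available and sharper: by the variational principle (Theorem \ref{thm:variational}), $h_\Si(T)=\sup_\mu h_{\Si,\mu}(T)$, and for every invariant $\mu$ the classical variational principle for the amenable group $\La$ gives $h_\mu(T_\La)\le h_{top}(T_\La)<\infty$, so the measure half of the lemma applies to each $\mu$ and yields $h_\Si(T)\le 0$ (the ``$\le\infty$'' in the statement is evidently a typo for this). Finally, your handling of the non-ergodic case of $h^{Rok}(T)=0$ is more scrupulous than the paper's one-line citation of Propositions \ref{prop:naive-Rokhlin1} and \ref{prop:Rokhlinsofic}, but it leans on the ergodic decomposition formula for Rokhlin entropy, which this survey only cites as work in progress; if you want to avoid that dependency, run the coset-representative bound on each $\Ga$-ergodic component and apply the ergodic case there.
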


\begin{proof}
%By the Variational Principle (\S \ref{sec:variational}), it suffices to consider the measure-entropy setting. 

The proofs in the topological and measure settings are similar, so we will just give the proof in the measure setting. Let $\cP$ be any partition of $X$ with finite Shannon entropy. Let $F \subset \Ga$ be any finite set of coset representatives of $\Ga/\Lambda$. So if $f_1 \ne f_2$ with $f_1,f_2 \in F$ then $f_1\Lambda \cap f_2 \Lambda = \emptyset$. Let $K \subset \Lambda$ be an arbitrary finite set. Then
$$\lim_{K \nearrow \Lambda} \frac{1}{|K|} H_\mu(\cP^{FK}) \le h_\mu(T_\Lambda)<\infty$$
where the limit is along any F\o lner sequence for $\Lambda$. So
\begin{eqnarray*}
h^{\rm{naive}}_\mu(T,\cP) &\le & \lim_{K \nearrow \Lambda} \frac{1}{|FK|} H_\mu(\cP^{FK}) = \frac{1}{|F|} h_\mu(T_\Lambda).
\end{eqnarray*}
Taking the supremum over all $\cP$ proves $h^{\rm{naive}}_\mu(T) \le \frac{1}{|F|} h_\mu(T_\Lambda)$. Since $|F|$ is arbitrary, this implies $h^{\rm{naive}}_\mu(T) =  0$. The rest follows from Propositions \ref{prop:naive-Rokhlin2} and \ref{prop:Rokhlinsofic}.

\end{proof}

\begin{thm}
Suppose $\Ga$ is a countable group with an infinite cyclic subgroup of infinite index. If $\Ga \cc X$ is a continuous action by Lipschitz maps on a compact metric space $(X,d)$ that has finite ball dimension then $\Ga \cc X$ has non-positive sofic entropy with respect to every sofic approximation.  In particular, smooth actions on manifolds have non-positive sofic entropy.
\end{thm}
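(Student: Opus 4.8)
The plan is to reduce the statement to the single-transformation estimate already proved in the two preceding lemmas. Using the hypothesis, fix an element $\gamma \in \Ga$ of infinite order whose cyclic subgroup $\La := \langle\gamma\rangle$ has infinite index in $\Ga$. Since the action $\Ga \cc X$ is \emph{by Lipschitz maps}, both $T^\gamma$ and $(T^\gamma)^{-1} = T^{\gamma^{-1}}$ are Lipschitz, so $T^\gamma$ is a bi-Lipschitz homeomorphism of $X$ with $\mathrm{Lip}(T^\gamma) < \infty$; combined with the standing assumption that $\dim_{ball}(X,d) < \infty$, the first lemma gives $h_{top}(T^\gamma) \le \dim_{ball}(X,d)\max(0,\log\mathrm{Lip}(T^\gamma)) < \infty$.

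Next I would observe that the restricted action $T_\La$ of $\Ga$ to $\La \cong \Z$ is exactly the cyclic system generated by $T^\gamma$, so $h_{top}(T_\La) = h_{top}(T^\gamma) < \infty$, and therefore $h_\mu(T_\La) \le h_{top}(T_\La) < \infty$ for every invariant Borel probability measure $\mu$. Since $\La$ is an amenable subgroup of infinite index in $\Ga$, the second lemma applies and yields, for every sofic approximation $\Si$ to $\Ga$ and every $\Ga$-invariant probability measure $\mu$ (which is in particular $\La$-invariant), that $h^{naive}_\mu(T) = h^{Rok}(T) = 0 \ge h_{\Si,\mu}(T)$; taking the supremum over invariant $\mu$ and using the variational principle for sofic entropy of \S\ref{sec:variational} then gives $h_\Si(T) \le 0$ as well. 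This establishes the first assertion. For the last sentence I would take $X = M$ to be a compact smooth manifold equipped with a Riemannian metric $d$: then $\dim_{ball}(M,d) = \dim M < \infty$, and every diffeomorphism of $M$, together with its inverse, is Lipschitz for $d$ (a $C^1$ map on a compact manifold has bounded derivative), so a smooth action of $\Ga$ on $M$ is a continuous action by Lipschitz maps on a compact metric space of finite ball dimension, and the previous case applies.

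I do not expect a genuine obstacle: the argument is essentially the assembly of the two lemmas with the variational principle. The two points that need a little care are that the hypothesis that the \emph{action} (not merely each individual $T^g$) is by Lipschitz maps is precisely what forces $T^\gamma$ to be bi-Lipschitz with finite Lipschitz constant, and that the finiteness $h_{top}(T_\La) < \infty$ needed to invoke the second lemma holds because the topological entropy of a $\Z$-action coincides with that of its generating homeomorphism.
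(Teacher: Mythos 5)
Your proposal is correct and is exactly the paper's argument: the paper's proof of this theorem consists of the single sentence ``This follows immediately from the previous two lemmas,'' and your write-up is simply the careful assembly of those two lemmas (finite topological entropy of the bi-Lipschitz generator $T^\gamma$ via the ball-dimension bound, then the infinite-index amenable subgroup lemma together with the variational principle). The details you flag --- that the action being by Lipschitz maps makes $T^\gamma$ bi-Lipschitz, and that $h_{top}(T_\La)=h_{top}(T^\gamma)$ --- are the right ones to check and are handled correctly.
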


\begin{proof}
This follows immediately from the previous two lemmas.
\end{proof}
There is a similar result for the $f$-invariant \cite[Lemma 3.5]{bowen-gutman-2014}.

\subsection{Nonfree actions}\label{sec:nonfree}    %%%%%%%%%%%%%%%%%%%%%%%%%%%%%%%%%%%%%

Given an action $\Ga\cc^T (X,\mu)$ and $x\in X$, let $\Stab_T(x)=\{g\in \Ga:~T^gx=x\}$ be the stabilizer. An action is {\bf non-free} if there is a positive measure set of $x$ such that $\Stab_T(x)$ is non-trivial. There are two results concerning the entropy of non-free actions:

\begin{thm}
Suppose $\Ga \cc^T (X,\mu)$ is an ergodic $\Ga$-action with positive sofic entropy with respect to some sofic approximation. Then $\Stab_T(x)$ is finite for a.e. $x$.
\end{thm}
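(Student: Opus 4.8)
\noindent The plan is a proof by contradiction: an ergodic, positive-entropy action with infinitely many stabilizers would admit too few microstates. Since $\Stab_T(gx)=g\Stab_T(x)g^{-1}$, the set $E=\{x:|\Stab_T(x)|=\infty\}$ is measurable and $T$-invariant, so by ergodicity it is null or conull, and I assume for contradiction that it is conull. I would realize $T$ on a compact metric space $(X,\rho)$ via the Mackey realization, fix a sofic approximation $\Sigma=\{\sigma_i:\Ga\to\sym(V_i)\}$ with $h_{\Sigma,\mu}(T)>0$, and extract from the definition of measure sofic entropy an $\epsilon_0>0$ and a $c>0$ such that for \emph{every} open $\cO\ni\mu$, finite $F\Subset\Ga$ and $\delta>0$ one has $\limsup_i|V_i|^{-1}\log N_{\epsilon_0}(\Map(T,\rho,\cO,F,\delta,\sigma_i),\rho_\infty)\ge 2c$; in particular for infinitely many $i$ there are at least $e^{c|V_i|}$ such microstates, pairwise $(\rho_\infty,\epsilon_0)$-separated.

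First I would record two inputs. The measure-theoretic one: for every integer $N$ and every $\kappa>0$ there is a finite $F_0\Subset\Ga$ (which I take to contain a fixed generating set) with $\mu\big(\{x:|\Stab_T(x)\cap F_0|\ge N\}\big)>1-\kappa$; this is monotone convergence, since $|\Stab_T(x)\cap F_0|\nearrow|\Stab_T(x)|=\infty$ as $F_0\nearrow\Ga$ for a.e.\ $x$. The combinatorial one: $x\mapsto\Stab_T(x)\cap F_0$ is measurable with finite range, so after replacing the generating partition by a refinement $\cP$ of small $\rho$-diameter — chosen, by a standard genericity argument on its boundaries, so that $\rho$-close points lie in a common (or boundedly adjacent) cell — I may assume $\Stab_T(x)\cap F_0$ depends only on the cell $\cP(x)$. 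Write $\Lambda_x:=\langle\Stab_T(x)\cap F_0\rangle\le\Stab_T(x)$, a subgroup with $|\Lambda_x|\ge|\Stab_T(x)\cap F_0|$.

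The heart is to bound the number of separated microstates. Take $F\supseteq F_0$ and $\delta$ small. For $\phi\in\Map(T,\rho,\cO,F,\delta,\sigma_i)$, the defining $L^2$-inequalities together with $g\phi(v)=\phi(v)$ for $g\in\Stab_T(\phi(v))$ give, after a Chebyshev estimate, that for all but a $\kappa'$-fraction of vertices $v$ (with $\kappa'\to0$ as $\kappa,\delta\to0$ and $\cO\to\{\mu\}$) the value $\phi$ is constant, as an element of $\cP$, along the $\sigma_i(\Lambda_{\phi(v)})$-orbit of $v$: one propagates cell-equality from $\phi(\sigma_i(g)v)\approx g\phi(v)=\phi(v)$ for $g\in\Stab_T(\phi(v))\cap F_0$, composing along the orbit using $(F,\delta)$-multiplicativity. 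Since $|\Lambda_x|\ge N$ (or is infinite) for most $x$ and $\Sigma$ is trace-preserving on the finitely much data involved, for $i$ large all but $o(|V_i|)$ vertices lie in $\sigma_i(\Lambda)$-orbits of size $\ge N$, for each of the finitely many subgroups $\Lambda$ that occur; hence there are at most $|V_i|/N+o(|V_i|)$ such orbits. Thus $\phi$ is determined, up to $\rho_\infty$-distance $\epsilon_0$ and up to re-choosing the $\kappa'|V_i|$ exceptional vertices, by the $\cP$-labels of $\le|V_i|/N+o(|V_i|)$ orbit representatives, so the number of separated microstates is at most $\exp\!\big((H(\kappa')+\kappa'\log|\cP|+N^{-1}\log|\cP|+o(1))|V_i|\big)$ (with $H$ the binary entropy function). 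Choosing $N$ large, then $\kappa,\delta$ small and $\cO$ fine, this exponent falls below $c$, contradicting the first paragraph; hence $E$ is null and a.e.\ stabilizer is finite.

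The main obstacle is exactly the bookkeeping in the last step. Refining $\cP$ so that the stabilizer pattern becomes cell-constant can multiply $|\cP|$, and the number of distinct subgroups $\Lambda_x$ to track is controlled by $|F_0|$, which itself grows with $N$; making the saving $\sim|V_i|/N$ genuinely dominate all the $|\cP|$- and $F_0$-dependent error terms requires a careful ordering of the constants together with a sharper count of how many stabilizer patterns of size $\ge N$ can carry positive measure — these are severely constrained by forming an invariant random subgroup, and that structure is what must be exploited. The remaining ingredients (the genericity choice of $\cP$, composing the $L^2$-estimates along $\sigma_i(\Lambda)$-orbits, and extracting $c$ from positive entropy) are routine. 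As an alternative route one might invoke Proposition~\ref{prop:Rokhlinsofic} and \cite{seward-sinai} to produce a Bernoulli factor and then deduce essential freeness from essential freeness of Bernoulli shifts over infinite groups; I would use this only after checking it does not tacitly require freeness of $T$.
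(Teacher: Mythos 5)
First, note that the paper itself offers no proof of this statement: it is quoted verbatim from \cite{meyerovitch-nonfree} (Theorem 2.3 there), so your attempt can only be measured against that reference. Your skeleton — microstates must be nearly $\cP$-constant along the $\sigma_i$-images of stabilizer elements, these ``stars'' have $\ge N$ points by trace-preservation, hence few separated microstates — is the right one and is in the spirit of the cited argument. But the proof as written does not close, and the failure is exactly at the point you flag and then wave past. To convert ``$\phi$ is cell-constant on the components of a graph'' into a count of at most $|\cP|^{|V_i|/N+o(|V_i|)}$ labelings, you must be able to reconstruct the graph from the data you are enumerating; that is why you refine $\cP$ so that the cell of $x$ determines $\Stab_T(x)\cap F_0$. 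This forces $|\cP|$ to be at least the number of stabilizer patterns $S\subset F_0$ of positive measure, which can be as large as $2^{|F_0|}$, while $F_0=F_0(N,\kappa)$ is chosen \emph{after} $N$ and satisfies $|F_0|\ge N$. The decisive error term $N^{-1}\log|\cP|$ is therefore not small — it can be of order $|F_0|/N\ge 1$ — and the exponent in your final bound need not drop below $c$. Your suggested repair (``the patterns are severely constrained by forming an IRS'') is not substantiated: an ergodic IRS concentrated on infinite subgroups can perfectly well have $\Stab(x)\cap F_0$ taking a number of values that grows uncontrollably with $F_0$, and the pattern observable is itself only constant along stabilizer orbits, so bounding its microstate count by the same method is circular. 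Closing this loop is the actual content of \cite{meyerovitch-nonfree}; it is not bookkeeping.

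Your fallback route also fails, for the reason you suspected: Theorem \ref{thm:seward-sinai} takes essential freeness as a hypothesis, so combining it with Proposition \ref{prop:Rokhlinsofic} to deduce something about stabilizers is circular. Worse, if the argument did go through it would show that the action factors onto an essentially free Bernoulli shift and hence has trivial stabilizers a.e., which is false in the stated generality: if $N\vartriangleleft\G$ is finite and nontrivial, a positive-entropy Bernoulli shift of $\G/N$ pulled back to $\G$ is ergodic with constant stabilizer $N$. (This example is also a useful sanity check on any proposed argument — yours correctly tolerates it, since finite stabilizers contribute nothing to the orbit savings.) Two smaller points: propagating cell-equality along the full orbit of $\langle\Stab_T(\phi(v))\cap F_0\rangle$ requires comparing stabilizers of nearby points, which is not continuous; you should instead use only the one-step star $\{\sigma_i(g)v:g\in\Stab_T(\phi(v))\cap F_0\}$, which already has $\ge N$ elements for most $v$ and avoids the issue. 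And the reduction from $\epsilon_0$-separated microstates to $\cP$-labelings is fine since $\epsilon_0$ is fixed before $\cP$ is chosen.
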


This result is \cite[Theorem 2.3]{MR3550271}. The special case of amenable groups is handled in a remark in the last section of \cite{weiss-actions-of-amenable-groups}. %The main idea behind the proof is that if $\Stab(x)$ is infinite for a.e. $x$ then in any good model there should be many local symmetries in the following sense: if elements $v, w\in [d_i]$ with $\sigma_i(g)v=w$ (for some $g$ in a bounded set) have isomorphic local neighborhoods in the good model then most of the time they should have isomorphic local neighborhoods on a larger scale. Moreover, there should be a large number of pairs of such vertices. This implies 

%(with respect to the partition definition \S \ref{}) should have many repeated 

\begin{thm}\cite[Theorem 1.1]{MR3540601}
Let $\F \cc^T (X,\mu)$ be a pmp action of a finitely generated free group. Suppose the action has a finite-entropy generating partition, $f_\mu(T)\ne -\infty$ and $\F \cc^S (Y,\nu)$ is a factor action. Then for $\nu$-a.e. $y\in Y$ either
\begin{itemize}
\item $\Stab_S(y)$ is trivial or
\item $\Stab_S(y)$ has finite-index in $\Ga$ and $y$ is an atom (i.e. $\nu(\{y\})>0$).
\end{itemize}
\end{thm}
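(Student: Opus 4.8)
The plan is to reduce to an ergodic action, classify the stabilizer behaviour of the factor $S$, dispose of all but one case by soft arguments, and show that the remaining case—a.e.\ stabilizer nontrivial of infinite index—forces $f_\mu(T)=-\infty$. First, by the ergodic decomposition formula for the $f$-invariant (\S\ref{sec:ergodic decomposition}) we may assume $T$ is ergodic: $f_\mu(T)=\int f_{\mu_\omega}(T)\,d\omega$, the integrand is bounded above by $H_\mu(\cP)<\infty$ because $F_\mu(T,\cdot)\le H_\mu(\cdot)$, so $f_{\mu_\omega}(T)\ne-\infty$ a.e., each ergodic component retains a finite-entropy generating partition, and $S$ decomposes compatibly with $\Stab_{S}(y)$ unchanged a.e. Now for ergodic $S$ the sets $\{y:\Stab_S(y)=\{e\}\}$ and $\{y:[\F:\Stab_S(y)]<\infty\}$ are $S$-invariant (the latter because ``finite index'' is conjugacy-invariant), hence null or conull, and if $\nu(\Fix_S(g))>0$ for some $g\neq e$ then by ergodicity $\nu$-a.e.\ $y\in\bigcup_h\Fix_S(hgh^{-1})$. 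If a.e.\ stabilizer is trivial, done; if a.e.\ stabilizer has finite index, then a.e.\ $S$-orbit is finite, so ergodicity makes $Y$ a single finite orbit of atoms with finite-index stabilizers, and again the conclusion holds. So we may assume $\nu$-a.e.\ $\Stab_S(y)$ is nontrivial of infinite index and seek a contradiction.

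Since $\F$ is torsion-free, a.e.\ such stabilizer is infinite. Arguing as in Meyerovitch's theorem for non-free actions (\S\ref{sec:nonfree}), run inside the random permutation model $f_\mu(T)=h_{\P,\mu}(T)$ of Theorem \ref{thm:f}: a microstate for $T$ pushes through the factor map to a microstate for $S$, and infiniteness of a.e.\ stabilizer forces these $S$-microstates to be fixed by $\sigma_i(g)$, for $g$ ranging over an arbitrarily large finite $F\subset\F$, on a density-$1$ set of vertices; this pins $\E_i[\#\text{microstates}]$ below $e^{\epsilon|V_i|}$ for every $\epsilon>0$, so $f_\mu(T)\le 0$, i.e.\ $f_\mu(T)\in(-\infty,0]$.

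It remains to push past $f_\mu(T)=0$. Because the stabilizers form an invariant random subgroup with no atom at $\{e\}$ and of infinite index, the orbit (Schreier) graphs $\F/\Stab_S(y)$ are infinite yet carry a positive density of short nontrivial cycles: a stabilizer element $g$ of $y$ means the $\cP$-name of any $x\in\Phi^{-1}(y)$ is constant along left $\Stab_S(y)$-cosets, so the partitions $\cP^{B(m)}$ and $\cP^{U_i}$ (with $U_i:=B(m)\cup B(m)s_i^{-1}$) live on strictly fewer orbit positions than $|B(m)|$, $|U_i|$. Feeding this into
\[
F_\mu(T,\cP^{B(m)})=-(r-1)\,H_\mu(\cP^{B(m)})+\sum_{i=1}^{r}H_\mu\!\big(\cP^{U_i\setminus B(m)}\mid\cP^{B(m)}\big),
\]
which is valid for any finite generating partition $\cP$, the generating property drives the first term to $-\infty$ at a definite rate while the orbit relations force the conditional ``boundary'' entropies in the sum to be of strictly lower order (the new orbit positions in $U_i\setminus B(m)$ are largely pinned by the relations already seen on $B(m)$); hence $F_\mu(T,\cP^{B(m)})\to-\infty$ and $f_\mu(T)=-\infty$, contradicting the hypothesis. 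Equivalently one runs the same estimate inside the random permutation model, using that the $S$-microstates collapse onto folded configurations on the random orbit graphs.

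The entire difficulty is this last step: turning ``a.e.\ stabilizer nontrivial of infinite index'' into the quantitative claim that the conditional boundary terms are $o\!\big(H_\mu(\cP^{B(m)})\big)$—equivalently, controlling the asymptotic spectral/growth behaviour of the random orbit graphs of an IRS of $\F$ with no atom at $\{e\}$. The case $\ker S\neq\{e\}$, where $S$ factors through the infinite group $\F/\ker S$ whose growth rate is strictly below that of $\F$, is the model situation; the faithful case requires a genuine IRS/cogrowth input, and one must first exclude the IRS concentrating on conjugacy classes of cyclic subgroups (these have infinite normalizer in $\F$, hence carry no invariant probability measure). A secondary technical point is that the factor $S$ need not carry a finite-entropy generating partition, so the argument must remain on the $X$-side throughout and cannot invoke an $f$-invariant of $S$ or an addition formula, which \S\ref{sec:yuz} warns is delicate.
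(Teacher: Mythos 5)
The survey states this theorem as a bare citation to \cite{seward-ergodic} and contains no proof, so there is no internal argument to compare yours against; judged on its own terms, your proposal has the right skeleton (ergodic decomposition with the $-(r-1)H(\tau)$ correction, the trichotomy trivial / nontrivial finite index / nontrivial infinite index, and the goal $f_\mu(T)=-\infty$ in the last case) but two genuine gaps. The first is the intermediate claim $f_\mu(T)\le 0$. Pushing a $T$-microstate forward through $\Phi$ and running a Meyerovitch-style argument bounds the expected number of \emph{$S$-microstates} --- at best the outer $\P$-entropy of $\Phi$ or the $\P$-entropy of $S$ --- whereas $f_\mu(T)=h_{\P,\mu}(T)$ counts $T$-microstates. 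The fibres of the pushforward map on microstates can be exponentially large, and for free groups entropy is not monotone under factor maps in either direction (Ornstein--Weiss, \S\ref{sec:OW}), so no bound on $f_\mu(T)$ follows from a bound on the target's microstate count. The second gap you name yourself: the assertion that nontrivial infinite-index stabilizers force the conditional boundary terms $H_\mu\big(\cP^{U_i\setminus B(m)}\mid\cP^{B(m)}\big)$ to be of strictly lower order than $H_\mu(\cP^{B(m)})$, so that $F_\mu(T,\cP^{B(m)})\to-\infty$, is precisely the content of the theorem; ``the new orbit positions are largely pinned by the relations already seen'' restates the claim rather than proving it, and as written nothing rules out the two terms balancing to a finite limit.

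A cleaner way to organize the endgame is the Abramov--Rokhlin formula of \S\ref{sec:abramov}: for any finite partition $\cQ$ measurable with respect to $\Phi^{-1}(\cB_Y)$ one has $f_\mu(T)=f_\mu(T,\cQ)+f_\mu(T,\cP\mid\s\textrm{-alg}_{\F}(\cQ))$, and the relative term is bounded above by $H_\mu(\cP)<\infty$; hence it would suffice to exhibit one finite partition of $Y$ whose generated factor still has a.e.\ stabilizer nontrivial of infinite index and to prove that this factor, which now \emph{does} carry a finite generating partition, has $f=-\infty$. This disposes of your worry that $S$ itself may lack a finite generating partition, but it neither removes the combinatorial core nor makes the existence of such a $\cQ$ automatic (a decreasing sequence of finite-index subgroups can intersect in an infinite-index one, so the stabilizers of coarse factors of $S$ may all have finite index). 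Until that core estimate is supplied, the proposal is not a proof.
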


An anonymous reviewer pointed out that there are ergodic actions $\G \cc^T (X,\mu)$ with positive Rokhlin entropy such that $\Stab_T(x)$ is infinite for a.e. $x$.  For example, let $\Z \cc^S (X,\mu)$ be any ergodic action with positive Rokhlin entropy. Thinking of $\Z$ as a quotient of $\Z^2$, we obtain an ergodic action $\Z^2 \cc^T (X,\mu)$. Moreover, the Rokhlin entropy of this action is the same as the Rokhlin entropy of $\Z \cc^S (X,\mu)$ since the two actions have the same generating partitions. 

\begin{question}
If $\Ga \cc^T (X,\mu)$ is ergodic and has positive naive entropy (topological or measure) then is $\Stab_T(x)$ finite for a.e. $x$? 
\end{question}

\section{Perturbations}\label{sec:perturbations}       %%%%%%%%%%%%%%%%%%%%%%%%%%%%%%%%%%%%%

 \subsection{Perturbing the sofic approximation}\label{sec:dependence}       %%%%%%%%%%%%%%%%%%%%%%%%%%%%%%%%%%%%%
 
 When $\Ga$ is amenable the sofic entropy agrees with classical entropy and therefore does not depend on the choice of sofic approximation. This is not true in the non-amenable case even when the action is trivial (\S \ref{sec:trivial}). This subsection provides another explicit counterexample which works for both topological and measure entropy. However, the example is not entirely satisfying because it leaves open a major problem:
 
 \begin{question}
 Suppose $\Ga \cc X$ is a continuous action on a compact metrizable space by homeomorphisms. Let $\Sigma_1,\Sigma_2$ be two sofic approximations of $\Ga$.  Suppose $h_{\Sigma_i}(\Ga \cc X)$ is not minus infinity for $i=1,2$. Is it true that $h_{\Sigma_1}(\Ga \cc X) = h_{\Sigma_2}(\Ga \cc X)$? The measure entropy version of this question is also open. In fact, it is open even in the special case of the Ising model on the free group (\S \ref{sec:markov-chains}). It is possible that if the measure sofic entropy of an action is not minus infinity then it must equal the Rokhlin entropy.
 \end{question}
 
  \begin{thm}
 There exists a sofic group $\Ga$ with sofic approximations $\Sigma_1,\Sigma_2$ and a continuous action $\Ga \cc X$ on a compact metric space such that 
 $$h_{\Sigma_1}(\Ga \cc X) \ne h_{\Sigma_2}(\Ga \cc X).$$
 Moreover there exists an invariant probability measure $\mu$ on $X$ such that
  $$h_{\Sigma_1,\mu}(\Ga \cc X) \ne h_{\Sigma_2,\mu}(\Ga \cc X).$$
 \end{thm}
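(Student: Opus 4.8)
The plan is to take $\Ga = \Lambda \times \Z/2$, where $\Lambda$ is a finitely generated residually finite group with property (T) — concretely $\Lambda = SL(3,\Z)$, so $\Ga$ is residually finite, hence sofic — and to let $X = \{0,1\}$ carry the $\Ga$-action in which $\Lambda$ acts trivially and the generator of $\Z/2$ transposes the two points. This is a continuous action by homeomorphisms on a compact metric space (with $\rho(0,1)=1$) preserving the uniform measure $\mu$ on $X$. The two sofic approximations $\Sigma_1,\Sigma_2$ will encode the $\Z/2$-direction in incompatible ways: $\Sigma_2$ will carry an equidistributed microstate for this action, so $h_{\Sigma_2}(\Ga\cc X)\ge h_{\Sigma_2,\mu}(\Ga\cc X)\ge 0$, whereas $\Sigma_1$ will carry no microstate at all, so $h_{\Sigma_1}(\Ga\cc X)=h_{\Sigma_1,\mu}(\Ga\cc X)=-\infty$, and the theorem follows.

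For $\Sigma_2$ I would use the product model. With $n_i = i!$ put $U_i = SL(3,\Z/n_i)$ and let $\Lambda$ act on $U_i$ by left translation through the reduction $h\mapsto\bar h$ modulo $n_i$; set $V_i = U_i\times\Z/2$ and $\sigma^{(2)}_i(h,\epsilon)(u,j) = (\bar h u,\, j+\epsilon)$. Each $\sigma^{(2)}_i$ is a homomorphism (hence exactly multiplicative) and is trace-preserving for large $i$, since the fixed-point set of $\sigma^{(2)}_i(h,\epsilon)$ is empty when $\epsilon=1$ and, for $\epsilon=0$ and $h\ne e$, is empty once $\bar h\ne e$ (which eventually holds, as $\bigcap_i\ker(\Lambda\to U_i)=\{e\}$); so $\Sigma_2$ is a sofic approximation to $\Ga$. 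The map $\phi(u,j)=j$ satisfies $T^{(h,\epsilon)}\phi=\phi\circ\sigma^{(2)}_i(h,\epsilon)$ identically for every $(h,\epsilon)$, and pushes the uniform measure on $V_i$ to $\mu$; with $1-\phi$ it furnishes, for each $\epsilon<1$, a $(\rho_\infty,\epsilon)$-separated pair of equidistributed microstates, whence $h_{\Sigma_2,\mu}(\Ga\cc X)\ge 0$ (and a fortiori $h_{\Sigma_2}(\Ga\cc X)\ge 0$).

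For $\Sigma_1$ I would instead absorb the $\Z/2$-factor into the right-regular action of $\Lambda$. Fix an element $c\in SL(3,\Z)$ of order two — say $c=\mathrm{diag}(-1,-1,1)$ — and, with the same $U_i$ as above, set $V_i=U_i$ and $\sigma^{(1)}_i(h,\epsilon)(u)=\bar h\,u\,\bar c^{\,\epsilon}$. Because $\bar c^{\,2}=e$ and $\bar c$ commutes with itself, this is a homomorphism; and it is trace-preserving for large $i$ because the fixed-point set of $\sigma^{(1)}_i(h,\epsilon)$ is either empty (always so, eventually, when $\epsilon=0$ and $h\ne e$) or a coset of the centralizer of $\bar c^{\,\epsilon}$ in $U_i$, which for $\epsilon=1$ has index tending to infinity. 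So $\Sigma_1$ is a sofic approximation to $\Ga$, and its restriction to $\Lambda\times\{0\}$ is exactly the congruence approximation of $\Lambda$, which is by expanders by Margulis' theorem (Example~\ref{ex:expanders}). Now let $F_0$ be a finite symmetric generating set of $\Lambda$, put $F = (F_0\times\{0\})\cup\{(e,1)\}$, and suppose $\phi\in\Map(T,\rho,F,\delta,\sigma^{(1)}_i)$; write $A=\phi^{-1}(1)\subseteq U_i$. The condition at $(e,1)$ (whose $X$-action is the transposition) forces $\bigl|\,|A|-|U_i|/2\,\bigr|<\tfrac12\delta^2|U_i|$, since $u\mapsto u\bar c$ is a fixed-point-free involution of $U_i$; the conditions at the $(s,0)$, $s\in F_0$, say precisely that $\phi$ is an $(F_0,\delta)$-microstate for the trivial $\Lambda$-action on $X$ with respect to $\Sigma_1\resto\Lambda$. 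Thus $\phi$ is an approximately $\mu$-equidistributed microstate for the trivial $\Lambda$-action on the non-atomic space $(\{0,1\},\mu)$ relative to an expander sofic approximation, and by (the proof of) Proposition~\ref{prop:trivial} no such $\phi$ exists once $\delta$ is small and $i$ is large. Hence $\Map(T,\rho,F,\delta,\sigma^{(1)}_i)=\emptyset$ for all large $i$, giving $h_{\Sigma_1}(\Ga\cc X)=-\infty$ and a fortiori $h_{\Sigma_1,\mu}(\Ga\cc X)=-\infty$.

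The step I expect to be the main obstacle is the construction of $\Sigma_1$: one needs the $\Z/2$-direction realized so that $\Sigma_1$ is still a genuine sofic approximation of $\Ga$ — this is what forces $\bar c$ to be an involution of large-index centralizer and suggests using right translation — while leaving no $2$-colouring of $V_i$ compatible with the $\Lambda$-action, so that the expander obstruction behind Proposition~\ref{prop:trivial} applies; the naive model $V_i=U_i\times\Z/2$ fails the latter precisely because of the microstate $\phi(u,j)=j$. Checking the expander contradiction in detail and identifying $\Sigma_1\resto\Lambda$ with the congruence expander approximation is where the real work is; the $\Sigma_2$ side and the passage between topological and measure microstates are routine.
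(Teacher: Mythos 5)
Your proposal is correct, but it is a genuinely different construction from the one in the paper. The paper stays with $\F_2=\langle a,b\rangle$ acting on $\Z/2$ with \emph{both} generators acting by the flip: a microstate is then an approximate proper $2$-colouring of the $4$-regular Schreier graph of $\sigma_i$, so $\Sigma_1$ is taken to be a sofic approximation whose graphs are uniformly far from bipartite (e.g.\ uniformly random homomorphisms $\F_2\to\sym(V_n)$, with non-bipartiteness verified by combinatorial estimates or the $f$-invariant), forcing $h_{\Sigma_1}=-\infty$, while $\Sigma_2$ is taken with bipartite graphs, whose bipartition supplies the two microstates giving $h_{\Sigma_2}=0$. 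You instead work with $\Ga=SL(3,\Z)\times\Z/2$ acting through the $\Z/2$ factor, realize the flip either externally (doubling the congruence quotients, which admits the tautological $2$-colouring $\phi(u,j)=j$) or internally (right multiplication by an involution $\bar c$ in the congruence quotients), and in the latter case derive the contradiction from Margulis' expander theorem via Proposition \ref{prop:trivial}: the $(e,1)$-condition pins the density of $\phi^{-1}(1)$ at $1/2$ even for a non-equidistributed topological microstate, and the $\La$-conditions make it almost invariant, which expansion forbids. Your route buys two things the paper's does not: both of your sofic approximations are by honest homomorphisms of a residually finite group (no randomness or unproved non-bipartiteness claim is needed), and the obstruction is exactly the expander mechanism already set up in \S\ref{sec:expanders}; the cost is importing property (T) and Margulis, where the paper's example lives entirely inside $\F_2$. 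Two small corrections: the space $(\{0,1\},\mu)$ is \emph{non-trivial} in the sense of Proposition \ref{prop:trivial} (no atom of full mass), not ``non-atomic''; and for $i\le 2$ one has $\bar c=e$ in $SL(3,\Z/n_i)$, so $\sigma^{(1)}_i(e,1)=\mathrm{id}$ fails trace-preservation there --- harmless, since soficity only requires the conditions eventually, but worth saying.
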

 
 \begin{proof}
Let $\F_2=\langle a,b \rangle$ be the rank 2 free group and let $\F_2 \cc \Z/2$ be the action in which each generator in $\{a,b\}$ acts non-trivially. Recall that if $\sigma:\F_2 \to \sym(V)$ is any map then the associated graph $G_\sigma$ has vertex set $V$ and edges $\{v,\sigma(a)v\}, \{v,\sigma(b)v\}$ for $v\in V$. 

Let $\Sigma_1=\{\sigma_{1,n}\}_{n\in \N}$ be a sofic approximation whose associated graphs are far from bipartite. To be precise, we require the existence of an $\epsilon>0$ such that if $\cP=\{P_1,P_2\}$ is a partition of $V_n$ with 
$$|P_1| \ge (1-\epsilon)|V_n|/2, \quad |P_2| \ge (1-\epsilon)|V_n|/2$$
then the number of edges $\{v,w\}$ of the associated graph $\Gamma_{\sigma_{1,n}}$ with either $v,w \in P_1$ or $v,w \in P_2$ is at least $\epsilon |V_n|$. For example, we could choose $\sigma_{1,n}:\F_2 \to \sym(V_n)$ uniformly at random (among all homomorphisms from $\F_2$ to $\sym(V_n)$). With probability 1 the resulting sofic approximation satisfies the above property. This can be proven using the $f$-invariant for example or directly with combinatorial estimates.

The strong non-bipartiteness of the graphs of $\Sigma_1$ immediately implies $h_{\Sigma_1}(\Ga \cc X) = -\infty.$ On the other hand, if $\Sigma_2$ is a sofic approximation whose associated graphs are bipartite, then $h_{\Sigma_2}(\Ga \cc X) \ge 0$ since the bipartition of the graphs yields two microstates for the action. In fact, $h_{\Sigma_2}(\Ga \cc X) = 0$ since any microstate must be close to one of the two bipartitions of the graphs. If $\mu$ is the unique invariant probability on $\Z/2$ then 
$h_{\Sigma_i,\mu}(\Ga \cc X)  = h_{\Sigma_i}(\Ga \cc X)$
for $i=1,2$ which proves the last statement.
\end{proof}

\begin{problem}
Does the sofic entropy of either a topological action or a measure-preserving action vary upper semi-continuously with respect to the edit distance on sofic approximations (\S \ref{sec:space})? This seems likely but it has not been worked out. It is unknown whether entropy varies continuously (excluding the value negative infinity).
\end{problem}

%\begin{prop}
% The sofic entropy of an action $\Ga \cc (X,\mu)$ varies upper semi-continuously with respect to sofic approximation in the edit topology on the space of sofic approximations.
%\end{prop}

%\begin{proof}[Proof sketch]
%We use the pseudo-metric definition of sofic entropy. Suppose 
%\end{proof}

%\begin{question}
%Suppose $G \cc X$ is a continuous action $G \cc X$ on a compact metric space and $\Sigma_1,\Sigma_2$ are sofic approximations to $G$ such that $h_{\Sigma_1}(G \cc X) \ne h_{\Sigma_2}(G \cc X).$ Is there an $i\in \{0,1\}$ such that $h_{\Sigma_i}(G \cc X) =-\infty$? The measure entropy version of this question is also open. 
%\end{question}

\subsection{Subgroups}\label{sec:subgroups}       %%%%%%%%%%%%%%%%%%%%%%%%%%%%%%%%%%%%%

It is well-known that if $T$ is an automorphism of $(X,\mu)$ and $n \in \Z-\{0\}$ then 
$$h_\mu(T^n) =|n| h_\mu(T).$$
More generally, if $\Ga$ is an amenable group, $\Ga \cc (X,\mu)$ is a pmp action and $\Ga' \le \Ga$ has finite index then
$$h_\mu(\Ga' \cc X) = [\Ga:\Ga'] h_\mu(\Ga \cc X).$$
See \cite[Corollary 3.5]{danilenko-2001}. This is called the {\bf subgroup formula}. 

In the case of the $f$-invariant, a similar formula holds:
\begin{thm}\cite[Theorem 1.1, Corollary 1.2]{seward-subgroup}
 Let $\Ga$ be a finitely generated free group, $\Lambda\le \Ga$ and $\Ga \cc (X,\mu)$ be a pmp action with a finite-entropy generating partition. If $\Lambda$ has finite index in $\Ga$ then the induced action $\Lambda\cc (X,\mu)$ also has a finite-entropy generating partition and 
$$f_\mu(\Lambda \cc X) = [\Ga:\Lambda] f_\mu(\Ga \cc X).$$
If $[\Ga:\Lambda]=+\infty$, there are infinitely many finite index subgroups $\Ga'$ with $\Lambda<\Ga'<\Ga$ and there is a finite-entropy generating partition for the action $\La \cc (X,\mu)$ then $f_\mu(\Ga \cc X)\le 0$. 
\end{thm}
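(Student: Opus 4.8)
The plan is to pit the finite-index subgroup formula (the first part of the theorem) against a \emph{uniform} upper bound for $f_\mu(\Gamma'\cc X)$ as $\Gamma'$ ranges over the finite-index subgroups with $\Lambda<\Gamma'<\Gamma$. The key observation is that a finite-entropy generating partition for the $\Lambda$-action is automatically a finite-entropy generating partition, \emph{of the same Shannon entropy}, for every such $\Gamma'$; since $\Gamma$ has only finitely many subgroups of each index, the intermediate groups have unbounded index, and the identity $f_\mu(\Gamma'\cc X)=[\Gamma:\Gamma']f_\mu(\Gamma\cc X)$ then squeezes $f_\mu(\Gamma\cc X)$ down to $\le 0$. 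Throughout I use that the $\Lambda$-action itself admits a finite-entropy generating partition; this is exactly the input that makes the conclusion correct (otherwise, taking $\Lambda=\langle s_1\rangle$ and $\Gamma\cc X$ a Bernoulli shift would contradict it).

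First I would fix a finite-entropy generating partition $\cR$ for $\Lambda\cc X$ and put $\ell:=H_\mu(\cR)<\infty$. For any $\Gamma'$ with $\Lambda\le\Gamma'\le\Gamma$, the smallest $\Gamma'$-invariant sigma-algebra containing $\cR$ contains the smallest $\Lambda$-invariant one, which is $\cB_X$; hence $\cR$ is a finite-entropy generating partition for $\Gamma'\cc X$ (and for $\Gamma\cc X$) as well. I would also record the elementary bound $f_\mu(S,\cQ)\le H_\mu(\cQ)$, valid for any pmp action $S$ of a finitely generated free group with free generating set $S_0$ and any finite-entropy partition $\cQ$: since $H_\mu(\cQ\vee s\cQ)\le 2H_\mu(\cQ)$ for each $s\in S_0$, every summand of $F_\mu(S,\cQ)=H_\mu(\cQ)+\sum_{s\in S_0}\big(H_\mu(\cQ\vee s\cQ)-2H_\mu(\cQ)\big)$ is $\le 0$, so $F_\mu(S,\cQ)\le H_\mu(\cQ)$, and therefore $f_\mu(S,\cQ)=\inf_{W}F_\mu(S,\cQ^W)\le F_\mu(S,\cQ)\le H_\mu(\cQ)$.

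Now I take any finite-index subgroup $\Gamma'$ with $\Lambda<\Gamma'<\Gamma$. As a finite-index subgroup of a finitely generated group, $\Gamma'$ is finitely generated, and by Nielsen--Schreier it is free; by the previous step it acts on $X$ with the finite-entropy generating partition $\cR$, so $f_\mu(\Gamma'\cc X)$ is defined, and by Theorem~\ref{thm:f1} together with the bound just proved,
$$f_\mu(\Gamma'\cc X)=f_\mu(\Gamma'\cc X,\cR)\le H_\mu(\cR)=\ell.$$
On the other hand, the first part of the theorem — applied with $\Gamma'$ in the role of the finite-index subgroup, which is legitimate because $\Gamma\cc X$ has a finite-entropy generating partition — gives
$$f_\mu(\Gamma'\cc X)=[\Gamma:\Gamma']\,f_\mu(\Gamma\cc X).$$
Because a finitely generated group has only finitely many subgroups of any fixed finite index, the hypothesis that infinitely many $\Gamma'$ satisfy $\Lambda<\Gamma'<\Gamma$ yields a sequence $(\Gamma'_n)$ of such subgroups with $[\Gamma:\Gamma'_n]\to\infty$. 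Combining the two displays,
$$f_\mu(\Gamma\cc X)=\frac{f_\mu(\Gamma'_n\cc X)}{[\Gamma:\Gamma'_n]}\le\frac{\ell}{[\Gamma:\Gamma'_n]}\longrightarrow 0\qquad(n\to\infty),$$
so $f_\mu(\Gamma\cc X)\le 0$.

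The only non-routine ingredient is the first part of the theorem (the finite-index formula), which I use as a black box; the rest is bookkeeping, and I do not expect any serious obstacle. The one point that requires care is the role of the hypothesis that $\Lambda\cc X$ admits a finite-entropy generating partition: this is precisely what makes the constant $\ell$ independent of $\Gamma'$, and hence what makes the argument go through.
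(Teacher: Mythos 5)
The survey itself gives no proof of this statement — it is imported wholesale from \cite{seward-subgroup} — so there is no in-paper argument to measure you against; I can only assess your proposal on its own terms and against Seward's original. Your write-up establishes only the second assertion, and it does so by taking the first assertion (the finite-index formula $f_\mu(\Gamma'\cc X)=[\Gamma:\Gamma']\,f_\mu(\Gamma\cc X)$) as a black box. That first assertion is the substantive content of the theorem: proving it requires producing a finite-entropy generating partition for the subgroup action (e.g.\ $\cP^{T}$ for a Schreier transversal $T$), choosing a compatible free generating set for the subgroup, and showing the $F$-values match up to the index factor. None of this appears in your proposal, so as a proof of the stated theorem there is a genuine gap. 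The part you do prove is, however, correct and is exactly Seward's own derivation of his Corollary 1.2 from his Theorem 1.1: the bound $f_\mu(S,\cQ)\le F_\mu(S,\cQ)\le H_\mu(\cQ)$, the observation that a generating partition for the $\Lambda$-action generates for every intermediate $\Gamma'$, the fact that a finitely generated group has only finitely many subgroups of any fixed finite index (so the indices $[\Gamma:\Gamma'_n]$ are unbounded), and the squeeze $[\Gamma:\Gamma'_n]\,f_\mu(\Gamma\cc X)\le \ell$ are all in order.

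One point is very much in your favour and worth recording: you correctly noticed that the statement as reproduced here omits the hypothesis that the restricted action $\Lambda\cc(X,\mu)$ itself admits a finite-entropy generating partition, and that without it the second assertion is false. Your counterexample is valid: with $\Gamma=\F_2=\langle s_1,s_2\rangle$, $\Lambda=\langle s_1\rangle$ and the Bernoulli $2$-shift, the kernels of the surjections $\F_2\to\Z/n$ sending $s_1\mapsto 0$, $s_2\mapsto 1$ give infinitely many finite-index subgroups strictly between $\Lambda$ and $\Gamma$, yet $f_\mu=\log 2>0$; the restriction to $\Lambda$ is an infinite-entropy Bernoulli $\Z$-action, so the missing hypothesis fails. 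Seward's Corollary 1.2 does carry the hypothesis you restored (that the $f$-invariant of the subgroup action is defined), and it is precisely what makes the constant $\ell=H_\mu(\cR)$ uniform over all intermediate subgroups — the engine of the argument. So: the reduction is right and the hypothesis repair is a genuine catch, but the main formula still needs a proof.
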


For Rokhlin and sofic entropy we have:
\begin{thm}\cite[Theorem 1.7]{seward-weak-containment}\label{thm:subgroup1}
Let $\G$ be a sofic group with sofic approximation $\Si$. Suppose that for all finite-index normal subgroups $N \vartriangleleft \G$, the $\Si$-entropy of $\G \cc (\G/N, u_{\G/N})$ is not minus infinity. Then for every finite index subgroup $\La\le \G$ and aperiodic pmp action $\G \cc^T (X,\mu)$,
$$[\G:\La] h_{\Sigma}(T) \le h^{\rm{Rok}}(T\resto \La) \le [\G:\La] h^{\rm{Rok}}(T)$$
where $T\resto \La$ is the restriction of the action to $\La$. 

\end{thm}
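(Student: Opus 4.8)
The plan is to handle the two inequalities separately, the right-hand one being a soft transversal argument and the left-hand one the substance of the theorem.

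For the upper bound $h^{Rok}(T\resto\La)\le[\G:\La]h^{Rok}(T)$ I would argue directly, using no hypothesis. Fix a partition $\cP$ with $\s\textrm{-alg}(T,\cP\cup\cI_T)=\cB_X$ and a transversal $R\subset\G$ of $\G/\La$, so $\G=\bigsqcup_{r\in R}r\La$ and $|R|=[\G:\La]$. Put $\cR=\bigvee_{r\in R}(T^r)^{-1}\cP$. Writing each $g\in\G$ as $g=r\la$ with $r\in R$, $\la\in\La$ shows $\s\textrm{-alg}(T\resto\La,\cR)=\s\textrm{-alg}(T,\cP)$, and since $\cI_T\subset\cI_{T\resto\La}$ the partition $\cR$ is admissible for $h^{Rok}(T\resto\La)$. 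Then, using monotonicity and subadditivity of conditional Shannon entropy together with $T$-invariance of $\cI_T$,
$$H_\mu(\cR\mid\cI_{T\resto\La})\le H_\mu(\cR\mid\cI_T)\le\sum_{r\in R}H_\mu\big((T^r)^{-1}\cP\mid\cI_T\big)=[\G:\La]\,H_\mu(\cP\mid\cI_T),$$
and taking the infimum over $\cP$ gives the bound.

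For the lower bound $[\G:\La]h_\Si(T)\le h^{Rok}(T\resto\La)$ the idea is to fold microstates for $T$ onto microstates for $T\resto\La$ carried by a vertex set a factor $[\G:\La]$ smaller. Let $N\vartriangleleft\G$ be the normal core of $\La$, a finite-index normal subgroup. The hypothesis that the $\Si$-entropy of $\G\cc(\G/N,u_{\G/N})$ is not $-\infty$ produces, for each finite $F\subset\G$ and $\delta>0$, maps $\psi_i\colon V_i\to\G/N$ that are approximately $F$-equivariant for the translation action and approximately equidistributed. Using the edit-distance equivalence of sofic approximations (\S\ref{sec:space}), which preserves $h_\Si$, I would replace $\Si$ so that each $\sigma_i$ admits an exactly $\G$-equivariant $\psi_i\colon V_i\to\G/N$ with near-balanced fibres; composing with $\G/N\to\G/\La$ gives $\bar\psi_i\colon V_i\to\G/\La$, and $W_i:=\bar\psi_i^{-1}(e\La)$ has $|W_i|=(1+o(1))|V_i|/[\G:\La]$, is exactly $\sigma_i(\La)$-invariant, and the restrictions $\sigma_i^\La:=\sigma_i\resto\La$ to $W_i$ form a sofic approximation of $\La$. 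Now given a microstate $\phi$ for $T$ on $V_i$ (with respect to a partition $\cP$ and large $F$): since $\phi$ is approximately $\G$-equivariant and every vertex is carried into $W_i$ by one of the $[\G:\La]$ fixed transversal elements, $\phi$ is determined by $\phi\resto W_i$ up to an error set of size $o(|V_i|)$, so $\phi\mapsto\phi\resto W_i$ has fibres of cardinality $\exp(o(|V_i|))$. The image $\phi\resto W_i$ is a microstate for $T\resto\La$ on $W_i$ with respect to $\sigma_i^\La$, whose law $\nu$ is an (approximately) $\La$-invariant measure, and equidistribution of $\phi$ forces $\mu$ to be approximately the average over $c\in\G/\La$ of the translates of the fibre-laws. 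Replacing $W_i$ by the fibre with fewest restrictions and invoking the ergodic-decomposition (affineness) and weak$^*$ upper semicontinuity of Rokhlin entropy yields a fibre whose restricted law $\nu$ satisfies $h^{Rok}(X,\nu)\le h^{Rok}(X,\mu)=h^{Rok}(T\resto\La)$; Proposition \ref{prop:Rokhlinsofic} applied to $(X,\nu)$ and $\{\sigma_i^\La\}$ then bounds the number of such restrictions by $\exp\big((h^{Rok}(T\resto\La)+\epsilon)|W_i|\big)$. Combining these counts,
$$|V_i|^{-1}\log\#\{\text{microstates for }T\text{ on }V_i\}\ \le\ o(1)+\frac{h^{Rok}(T\resto\La)+\epsilon}{[\G:\La]},$$
and letting $i\to\infty$ and $\epsilon\to 0$ gives the inequality.

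The step I expect to be the main obstacle is the control of the law of the folded microstate. Because the $[\G:\La]$ fibre-laws only average to $\mu$, one cannot conclude that any one of them is close to $\mu$; the argument must combine the freedom to optimise over the fibres with the affineness of Rokhlin entropy along ergodic decompositions (plus its upper semicontinuity, to pass from nearly-invariant to invariant measures) in order to see that the best fibre carries a $T\resto\La$-system no more complex than $(X,\mu)$. Aperiodicity of $T$ enters precisely here, to guarantee the induced $\La$-systems on the fibres are aperiodic so that the relative Rokhlin-entropy formalism and Proposition \ref{prop:Rokhlinsofic} apply cleanly. A secondary technical point is that the hypothesis literally furnishes the equivariant maps $\psi_i$ only for infinitely many $i$, so upgrading this to essentially all $i$ — needed for genuine control of the $\limsup$ defining $h_\Si$ — is again a matter for the structure theory of \S\ref{sec:space}.
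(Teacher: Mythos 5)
The paper contains no proof of this statement --- it is quoted verbatim from \cite{seward-weak-containment} --- so your argument can only be judged on its own terms. Your proof of the right-hand inequality $h^{Rok}(T\resto \La)\le[\G:\La]h^{Rok}(T)$ is correct and complete: the transversal partition $\cR=\bigvee_{r\in R}(T^r)^{-1}\cP$ is admissible for the conditional Rokhlin entropy of $T\resto\La$, and the entropy estimate is routine (note this direction uses neither the sofic hypothesis nor aperiodicity).

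The left-hand inequality is where the substance lies, and your folding strategy has two genuine gaps. First, the subsequence problem you flag at the end is not cured by the edit-distance structure theory of \S\ref{sec:space}: the hypothesis $h_\Si(\G\cc\G/N)\ne-\infty$ only guarantees approximately equivariant maps $\psi_i\colon V_i\to\G/N$ for \emph{infinitely many} $i$, whereas $h_\Si(T)$ is a $\limsup$ over \emph{all} $i$. No edit-distance-zero perturbation of $\Si$ creates microstates for $\G/N$ at indices where none exist, and restricting to the good subsequence only bounds the sofic entropy of $T$ \emph{along that subsequence}, which is a lower bound for $h_\Si(T)$, not an upper bound. As written, your argument controls only a $\liminf$-version of $h_\Si(T)$. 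Second, the key counting claim --- that the best fibre carries a $T\resto\La$-system with $h^{Rok}(X,\nu)\le h^{Rok}(X,\mu)$ --- is unjustified and fails fibre-by-fibre: the fibre of $V_i\to\G/\La$ over a coset $c\La\ne e\La$ is (approximately) invariant under $\sigma_i(c\La c^{-1})$, not $\sigma_i(\La)$, so optimising over the $[\G:\La]$ fibres does not produce competing $\La$-microstates; and for the identity fibre the empirical law only satisfies $\nu\le[\G:\La]\mu$, which together with affineness of Rokhlin entropy yields merely $h^{Rok}_\nu\le[\G:\La]h^{Rok}_\mu$ --- off by exactly the factor you need, so your final display would degrade to $h_\Si(T)\le h^{Rok}(T\resto\La)$. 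The repair is to fold onto the fibres of $\psi_i$ itself over $\G/N$ with $N$ the normal core: these are all $N$-invariant since $N$ is normal, each one determines the microstate up to $e^{o(|V_i|)}$, so the microstate count is bounded by the geometric mean of the $[\G:N]$ fibre counts; affineness of Rokhlin entropy over the convex decomposition $\mu=\tfrac{1}{[\G:N]}\sum_g\nu_g$ bounds the average exponent by $h^{Rok}(T\resto N)$, giving $h_\Si(T)\le h^{Rok}(T\resto N)/[\G:N]$, and your already-proved right-hand inequality applied to $N\le\La$ converts this to $h^{Rok}(T\resto\La)/[\G:\La]$. Even this repair leans on the ergodic-decomposition formula for Rokhlin entropy, which the survey itself cites only as work in progress, and it still does not close the subsequence gap.
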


%\begin{proof}[Remarks on the proof]
%***
%\end{proof}

\begin{thm}\label{thm:subgroup2}
If $\Lambda\le \Ga$ and $\Ga \cc X$ is a continuous action on a compact metrizable space then
$$h_{\Sigma}(\Ga \cc X) \le h_{\Sigma\resto \Lambda}(\Lambda \cc X)$$
where $\Sigma\resto \Lambda$ denotes the restriction of $\Sigma$ to $\Lambda$. A similar inequality holds in the measure-entropy case.
\end{thm}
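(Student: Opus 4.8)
The plan is to prove both inequalities by directly unwinding the definitions, exploiting the fact that passing from $\Sigma=\{\sigma_i:\Ga\to\sym(V_i)\}$ to its restriction $\Sigma\resto\Lambda=\{\sigma_i\resto\Lambda\}$ changes nothing relevant as soon as the finite set of group elements being tested already lies in $\Lambda$. Two preliminaries are needed. First, $\Sigma\resto\Lambda$ is genuinely a sofic approximation to $\Lambda$: if each $\sigma_i$ is eventually $(F,\delta)$-multiplicative and $(F,\delta)$-trace-preserving for every finite $F\subset\Ga$, then a fortiori this holds for every finite $F\subset\Lambda$, and $|V_i|\to\infty$ is unchanged. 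Second, since $X$ is compact metrizable I would fix a metric $\rho$ inducing its topology; because $\rho$ is a genuine metric, $\rho(x,y)>0$ whenever $x\ne y$, so (taking $g=e$ in the defining condition) $\rho$ is a continuous generating pseudometric for \emph{every} action on $X$, in particular for both $\Ga\cc X$ and $\Lambda\cc X$. By Theorem \ref{thm:top-entropy} I may therefore compute both entropies with this single $\rho$: $h_\Sigma(\Ga\cc X)=h_\Sigma(\Ga\cc X,\rho)$ and $h_{\Sigma\resto\Lambda}(\Lambda\cc X)=h_{\Sigma\resto\Lambda}(\Lambda\cc X,\rho)$.

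The heart of the argument is the set identity
$$\Map(\Ga\cc X,\rho,F,\delta,\sigma_i)=\Map(\Lambda\cc X,\rho,F,\delta,\sigma_i\resto\Lambda),$$
valid for every finite $F\subset\Lambda$, every $\delta>0$ and every $i$, simply because the defining inequalities $\rho_2(T^fx,x\circ\sigma_i(f))<\delta$ (for $f\in F$) involve only the transformations $T^f$ and the permutations $\sigma_i(f)$ with $f\in F\subseteq\Lambda$. Inserting this into
$$h_\Sigma(\Ga\cc X,\rho)=\sup_{\epsilon>0}\ \inf_{F\Subset\Ga}\ \inf_{\delta>0}\ \limsup_{i\to\infty}|V_i|^{-1}\log N_\epsilon\big(\Map(\Ga\cc X,\rho,F,\delta,\sigma_i),\rho_\infty\big)$$
and using that an infimum over the larger index family $\{F\Subset\Ga\}$ is at most the infimum over the smaller family $\{F\Subset\Lambda\}$, I obtain, $\epsilon$ by $\epsilon$, that the $\Ga$-quantity does not exceed the $\Lambda$-quantity; taking $\sup_\epsilon$ yields $h_\Sigma(\Ga\cc X,\rho)\le h_{\Sigma\resto\Lambda}(\Lambda\cc X,\rho)$, the asserted topological inequality.

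For the measure-entropy version the argument is identical: a $\Ga$-invariant probability measure $\mu$ is automatically $\Lambda$-invariant, and the additional requirement that a microstate's empirical distribution lie in a fixed weak* neighborhood $\cO$ of $\mu$ is untouched by restriction, so one simply carries the extra $\inf_\cO$ through the same computation to conclude $h_{\Sigma,\mu}(\Ga\cc X)\le h_{\Sigma\resto\Lambda,\mu}(\Lambda\cc X)$. I do not anticipate a genuine obstacle — this is a soft monotonicity fact — and the only two points demanding a little care are (i) keeping the \emph{same} pseudometric on both sides, which is why I use a topology-inducing metric (generating for any action) rather than action-dependent generating pseudometrics whose compatibility across $\Ga$ and $\Lambda$ is not automatic, and (ii) getting the direction of the inequality right: enlarging the domain of an infimum can only decrease it, which is precisely why it is the ambient group $\Ga$ whose entropy comes out smaller.
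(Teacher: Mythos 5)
Your proof is correct and follows essentially the same route as the paper's: both rest on the observation that for $F\Subset\Lambda$ the microstate spaces $\Map(T,\rho,F,\delta,\sigma_i)$ and $\Map(T\resto\Lambda,\rho,F,\delta,\sigma_i\resto\Lambda)$ coincide (equivalently, every $\Ga$-microstate is a $\La$-microstate), so the infimum over the larger family $\{F\Subset\Ga\}$ can only be smaller. Your extra care in fixing a single topology-inducing metric, generating for both actions, is a sensible tidying of a point the paper leaves implicit.
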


\begin{proof}
This follows immediately upon realizing that for any continuous pseudometric $\rho$ on $X$, any finite $F \subset \Ga$, $\delta>0$ and $\sigma:\Ga \to \sym(d)$,
$$\Map(T,\rho,F,\delta,\sigma) \subset \Map(T \resto \Lambda,\rho,F,\delta,\sigma \resto \Lambda).$$
The argument for the measure-entropy case is similar.
\end{proof}

\begin{question}
Are the bounds in Theorems \ref{thm:subgroup1} and \ref{thm:subgroup2} tight?
%$$h_{\Sigma \resto \Lambda}(\Lambda \cc X) \ge [\Ga:\Lambda] h_{\Sigma}(\Ga \cc X)$$ hold? 
\end{question}

%\begin{remark}
%In \cite{seward-subgroup}, the last conclusion is proven under the additional hypothesis that  . However, this always holds whenever $|G:H|=+\infty$ since $G$ is 
%\end{remark}

Next we present a counterexample:

\begin{thm}
There exist a finite-index subgroup $\La \le \Ga$ of a sofic group $\G$, a sofic approximation $\Si$ to $\G$ and a pmp action $\G \cc^T (X,\mu)$ such that
$$h_{\Sigma,\mu}(T) = -\infty, \quad h_{\Sigma \resto \La,\mu}(T \resto \La)=0.$$
\end{thm}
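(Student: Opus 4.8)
The plan is to take $\Gamma=\F_2=\langle a,b\rangle$, let $\pi\colon\F_2\to\Z/2$ be the homomorphism with $\pi(a)=\pi(b)=1$, put $\Lambda=\ker\pi$ (so $[\F_2:\Lambda]=2$), and let $(X,\mu)=(\Z/2,u)$ carry the $\F_2$-action $T^gx=x+\pi(g)$. Then $T\resto\Lambda$ is the \emph{trivial} action of $\Lambda$ on $(\Z/2,u)$, while $T$ is a realization of the $\epsilon=1$ case of the Ising model of Example~\ref{ex:ising} with $r=2$: it is a $1$-step Markov chain with $H(X_e)=\log 2$ and $H(X_e\mid X_{s_i})=0$, so $f_\mu(T)=-(r-1)\log 2+\sum_iH(X_e\mid X_{s_i})=-\log 2<0$.

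For the sofic approximation I would fix finite-index normal subgroups $K_1\supseteq K_2\supseteq\cdots$ of $\F_2$ with $\bigcap_iK_i=\{e\}$, set $N_i=K_i\cap\Lambda$ (finite-index normal in $\F_2$, contained in $\Lambda$, with $\bigcap_iN_i=\{e\}$), and let $B_i=\F_2/N_i$ with $\beta_i\colon\F_2\to\sym(B_i)$ the left-translation action; I then pick a set $A_i$ with $|A_i|=|B_i|$ and a \emph{uniformly random} homomorphism $\alpha_i\colon\F_2\to\sym(A_i)$, and put $V_i=A_i\sqcup B_i$, $\sigma_i=\alpha_i\sqcup\beta_i$. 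The relevant structural facts are: $\beta_i$ is a sofic approximation, since $\bigcap_iN_i=\{e\}$ forces the girth of $\Cay(\F_2/N_i)$ to tend to $\infty$, giving Benjamini--Schramm convergence and trace-preservation; as $N_i\le\Lambda$, the block $B_i$ splits into the two $\Lambda$-cosets $B_i=P_i\sqcup Q_i$ with $|P_i|=|Q_i|$, and $\beta_i(a),\beta_i(b)$ each exchange $P_i\leftrightarrow Q_i$; and a disjoint union of sofic approximations is one, so after passing to a full-measure realization of the $\alpha_i$ the sequence $\Sigma=\{\sigma_i\}$ is a sofic approximation to $\F_2$.

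That $h_{\Sigma\resto\Lambda,\mu}(T\resto\Lambda)=0$ is the easy half. For the lower bound, the indicator $\psi_i=\mathbf{1}_{B_i}\colon V_i\to\Z/2$ satisfies $\psi_i\circ\sigma_i(g)=\psi_i$ for \emph{every} $g\in\F_2$ (because $A_i$ and $B_i$ are $\sigma_i(\F_2)$-invariant) and $(\psi_i)_*u_{V_i}=u$ exactly (because $|A_i|=|B_i|$), so $\psi_i\in\Map(T\resto\Lambda,\rho,\cO,F,\delta,\sigma_i\resto\Lambda)$ for every neighborhood $\cO$ of $u$, every finite $F\subset\Lambda$ and every $\delta>0$; hence $h_{\Sigma\resto\Lambda,\mu}(T\resto\Lambda)\ge0$. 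The bound $\le0$ is automatic since the trivial action has naive entropy $0$, so Propositions~\ref{prop:naive-Rokhlin1} and~\ref{prop:Rokhlinsofic} apply. It remains to show $h_{\Sigma,\mu}(T)=-\infty$, which I would prove by contradiction. If $\phi_i\colon V_i\to\Z/2$ is an $(\cO,F,\delta)$-microstate for $T$ with $\{a,b\}\subset F$ and $\cO,\delta$ small, then $\phi_i$ is an almost-proper $2$-coloring of the graph of $\sigma_i$ (few monochromatic $a$- and $b$-edges) and is almost equidistributed on $V_i$. On $B_i$ the coloring is then automatically almost balanced: $\sigma_i(a)$ restricts to a \emph{bijection} $P_i\to Q_i$ and $\phi_i(\sigma_i(a)v)=\phi_i(v)+1$ off a set of size $\le\delta^2|V_i|$, so double-counting the $0$'s of $P_i$ against the $1$'s of $Q_i$ and vice versa yields $\bigl|\#\{v\in B_i:\phi_i(v)=0\}-|B_i|/2\bigr|\le\delta^2|V_i|$; together with near-equidistribution on $V_i$ and $|A_i|=|B_i|$ this forces $\phi_i\resto A_i$ to be almost equidistributed on $A_i$. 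Hence $\phi_i\resto A_i$ is a microstate (with correspondingly small parameters) for $T=\F_2\cc\Z/2$ against the random sofic approximation $\{\alpha_i\}$, and this is impossible for large $i$ by Theorem~\ref{thm:f} and the negativity $f_\mu(T)=-\log 2<0$ (cf.\ the discussion in Example~\ref{ex:ising}).

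I expect the balance claim on $B_i$ to be the crux of the argument: without it a microstate for $T$ could a priori put almost all of one color on $A_i$ and almost all of the other on $B_i$, destroying the reduction to the $f$-invariant. The bijectivity of $\sigma_i(a)$ between the two halves of $B_i$ is exactly what prevents this, by purely combinatorial bookkeeping, after which the nonexistence of microstates on the ``bad'' block $A_i$ is just the negativity of the $f$-invariant of the $\epsilon=1$ Ising model. Finally I would observe that this example does not conflict with Theorem~\ref{thm:subgroup1}: that theorem's hypothesis fails for our $\Sigma$, because $h_{\Sigma,\mu}(\F_2\cc\F_2/\Lambda)=h_{\Sigma,\mu}(T)=-\infty$; and the example is consistent with Theorem~\ref{thm:subgroup2}, which only gives $-\infty\le0$ here.
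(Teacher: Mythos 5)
Your proposal is correct, but it takes a genuinely different route from the paper. The paper's construction keeps the dynamics as simple as possible: it takes $\Ga=\F_2\times\Z/2$, $\La=\F_2$, and the \emph{trivial} action on $(\Z/2,u_2)$; the sofic approximation doubles an expander approximation of $\F_2$, with the $\Z/2$-factor of $\Ga$ swapping the two copies, so that $h_{\Si,\mu}(T)=-\infty$ follows in one line from Proposition \ref{prop:trivial} (expanders admit no almost-invariant balanced sets), while the restriction to $\La$ sees two invariant halves and hence the exact microstate $\phi(v,x)=x$. You instead keep $\Ga=\F_2$ itself, take $\La$ to be the index-two kernel of $a,b\mapsto 1$, and use the nontrivial quotient action on $\Z/2$ (the $\epsilon=1$ Ising model), so your obstruction is non-bipartiteness of the random block rather than expansion, quantified through $f_\mu(T)=-\log 2<0$ and Theorem \ref{thm:f} exactly as in the example of \S\ref{sec:dependence}. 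The extra work this costs you is the balance-transfer step: a hypothetical global microstate must be shown to be almost balanced on the structured block $B_i$ before its restriction to $A_i$ can be fed into the $f$-invariant nonexistence argument, and your bijectivity argument via $\beta_i(a)\colon P_i\to Q_i$ does establish this correctly. What your version buys is that $\Ga$ is a free group (a more natural witness) and an explicit link between the subgroup pathology and the bipartiteness phenomenon; what the paper's version buys is brevity and independence from the $f$-invariant machinery. Two standard points you invoke implicitly deserve a sentence in a full write-up: that a.e. realization of the uniformly random homomorphisms is an honest sofic approximation, and that the exponential decay of the \emph{expected} number of microstates (from $f_\mu(T)<0$) upgrades, via Markov's inequality and Borel--Cantelli, to the a.s. eventual absence of microstates on $A_i$; both are used in the same spirit in \S\ref{sec:dependence}. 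The upper bound $h_{\Si\resto\La,\mu}(T\resto\La)\le 0$ via naive entropy is justified exactly as in the paper's own treatment of trivial actions.
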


\begin{proof}

Let $\La=\langle a,b\rangle$ be the rank 2 free group, $\Ga=\La \times \Z/2$. We identify $\La$ with the subgroup $\La \times \{0\}$ of $\Ga$. Let $(X,\mu)=(\Z/2,u_2)$ and let $T$ be the trivial action of $\Ga$ on $\Z/2$. Let $\Sigma'=\{\sigma'_i:\F \to \sym(V_i)\}_{i=1}^\infty$ be a sofic approximation to $\F=\La$ by expanders as in \S \ref{sec:expanders}.  Define $\sigma_i:\F \times \Z/2 \to \sym( V_i \times \Z/2)$ by 
$$\sigma_i(f,x)(v,y)=(\sigma'_i(f)v,x+y).$$ 
Then $\Sigma=\{\sigma_i\}_{i=1}^\infty$ is a sofic approximation to $\G=\F\times \Z/2$. 

Because $\Sigma'$ is by expanders, $\Sigma$ is also by expanders. So Proposition \ref{prop:trivial} implies $h_{\Sigma,u_2}(T) = -\infty$. However, the restriction $\Sigma\resto \F$ is the disjoint union of two copies of $\Sigma'$. So the function 
$$\phi: V_i \times \Z/2 \to \Z/2, \quad \phi(v,x)=x$$
is a microstate for the trivial action $\La \cc \Z/2$. This shows that $h_{\Sigma \resto \La,u_2}(T \resto \La) \ge 0.$ The upper bound can be derived directly or via naive entropy.
\end{proof}

%We'd like to understand when entropy behaves properly under passage to finite index subgroups. Here is a counterexample. Suppose $G$ is a residually finite property (T) group. Let $N_i$ be a decreasing sequence of finite-index normal subgroups. Let $G' = G \times \Z/2$. Let $\Sigma'$ be the sofic approximation sequence for $G'$ associated to $N_i$ and let $\Sigma$ be the induced sofic approximation sequence for $G$. Let $G'$ act on $\{0,1\}$ trivially preserving the uniform measure. Then $h_{\Sigma',\mu}(G' \cc \{0,1\}) = -\infty$ but $h_{\Sigma,\mu}(G \cc \{0,1\}) = 0$.

%\subsubsection{Positive results}       %%%%%%%%%%%%%%%%%%%%%%%%%%%%%%%%%%%%%

\subsection{Co-induction}\label{sec:coinduction}       %%%%%%%%%%%%%%%%%%%%%%%%%%%%%%%%%%%%%

\begin{defn}
Let $\Lambda\le \Ga$ be countable discrete groups. Let $X$ be a compact metrizable space and $\Lambda \cc X$ an action by homeomorphisms. Let
$$Y=\{f:\Ga \to X:~f(gh)=h^{-1}f(g) \quad \forall g\in \Ga, h\in \Lambda\} \subset X^\Ga$$
and give $Y$ the subspace topology (and  $X^\Ga$ the product topology). So $Y$ is a compact metrizable space and $\Ga \cc Y$ by
$$(gf)(x)=f(g^{-1}x)\quad \forall g,x\in \Ga, f\in Y.$$
The action $\Ga \cc Y$ is  the {\bf action of $\Lambda \cc X$ co-induced to $\Ga$}.
\end{defn}

For example, if $\Lambda$ is the trivial subgroup, then the co-induced action is a Bernoulli shift over $\G$. 

\begin{thm}\label{thm:coinduced}\cite[Proposition 5.29]{hayes-fk-determinants}
Let $\Lambda\le \Ga$ and $\Sigma$ be a sofic approximation to $\Ga$. Let $\Lambda \cc X$ be an action on a compact metrizable space by homeomorphisms and $\Ga \cc Y$ the co-induced action. Then $h_\Sigma(\Ga \cc Y) = h_{\Sigma \resto \Lambda}(\Lambda \cc X).$
\end{thm}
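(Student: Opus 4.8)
The plan is to prove both inequalities by setting up an explicit (approximate) bijection between microstates for $\Gamma \cc Y$ on a sofic approximation $\sigma_i:\Gamma\to\sym(V_i)$ and microstates for $\Lambda\cc X$ on the restricted approximation $\sigma_i\resto\Lambda$. Write $R=(R^h)_{h\in\Lambda}$ for the given action $\Lambda\cc X$ and $T=(T^g)_{g\in\Gamma}$ for the co-induced action $\Gamma\cc Y$. Fix a compatible metric $\rho_X$ on $X$ and define a pseudometric $\rho_Y$ on $Y$ by $\rho_Y(f_1,f_2)=\rho_X(f_1(e),f_2(e))$. One checks immediately that $\rho_Y$ is continuous and generating for $\Gamma\cc Y$ (if $f_1\ne f_2$ then $f_1(g)\ne f_2(g)$ for some $g$, and $\rho_Y(g^{-1}f_1,g^{-1}f_2)>0$), so by Theorem \ref{thm:top-entropy} it computes $h_\Sigma(\Gamma\cc Y)$, and $\rho_X$ computes $h_{\Sigma\resto\Lambda}(\Lambda\cc X)$. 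Let $\Psi:Y\to X$, $\Psi(f)=f(e)$; using $f(gh)=h^{-1}f(g)$ one gets $\Psi(hf)=f(h^{-1})=h\Psi(f)$ for $h\in\Lambda$, so $\Psi$ intertwines the restricted $\Gamma$-action on $Y$ with $\Lambda\cc X$, and by construction $\rho_Y=\rho_X\circ(\Psi\times\Psi)$, so $\phi\mapsto\Psi\circ\phi$ on maps $V_i\to Y$ is an isometry for the $\rho_\infty$ (and $\rho_2$) metrics. Finally fix a transversal $C$ for $\Gamma/\Lambda$ with $e\in C$, so every $g\in\Gamma$ is uniquely $g=c(g)\,h(g)$ with $c(g)\in C$, $h(g)\in\Lambda$; we may assume $\sigma_i(e)=\operatorname{id}_{V_i}$, which is harmless since idempotency of $\sigma_i(e)$ forces this off a vanishing set.

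For $h_\Sigma(\Gamma\cc Y)\le h_{\Sigma\resto\Lambda}(\Lambda\cc X)$ the correspondence is exact: if $F\subseteq\Lambda$ is finite, $\phi\in\Map(T,\rho_Y,F,\delta,\sigma_i)$ and $\psi=\Psi\circ\phi$, then for $h\in F$ we have $(T^h\phi(v))(e)=\phi(v)(h^{-1})=h\,\phi(v)(e)=h\psi(v)$ (using $\phi(v)\in Y$) and $(\phi\circ\sigma_i(h))(v)(e)=\psi(\sigma_i(h)v)$, hence $\rho_{Y,2}(T^h\phi,\phi\circ\sigma_i(h))=\rho_{X,2}(R^h\psi,\psi\circ(\sigma_i\resto\Lambda)(h))$. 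Thus $\Psi$ maps $\Map(T,\rho_Y,F,\delta,\sigma_i)$ into $\Map(R,\rho_X,F,\delta,\sigma_i\resto\Lambda)$; being $\rho_\infty$-isometric it sends $(\rho_\infty,\epsilon)$-separated sets to $(\rho_\infty,\epsilon)$-separated sets, so $N_\epsilon$ of the former is $\le N_\epsilon$ of the latter. Taking $\limsup_i$, then the infimum over finite $F\subseteq\Lambda$ and $\delta>0$ (noting the infimum over finite $F\subseteq\Gamma$ on the $Y$-side only decreases the quantity), then $\sup_\epsilon$, yields this inequality.

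For $h_\Sigma(\Gamma\cc Y)\ge h_{\Sigma\resto\Lambda}(\Lambda\cc X)$ I would go the other way: given $\psi:V_i\to X$, define $\phi_\psi:V_i\to Y$ by $\phi_\psi(v)(ch)=h^{-1}\psi(\sigma_i(c^{-1})v)$ for $c\in C$, $h\in\Lambda$; a direct check using $ch h'=c(hh')$ shows $\phi_\psi(v)\in Y$, and $\phi_\psi(v)(e)=\psi(v)$, so $\Psi\circ\phi_\psi=\psi$ and $\psi\mapsto\phi_\psi$ is injective and $\rho_\infty$-isometric. The content is that $\phi_\psi$ is a good model when $\psi$ is. Given finite $F_Y\subseteq\Gamma$ and $\delta_Y>0$, write each $f^{-1}=c_f h_f$ ($f\in F_Y$), so $f=h_f^{-1}c_f^{-1}$; then $(T^f\phi_\psi(v))(e)=\phi_\psi(v)(f^{-1})=h_f^{-1}\psi(\sigma_i(c_f^{-1})v)$ while $\phi_\psi(\sigma_i(f)v)(e)=\psi(\sigma_i(f)v)$. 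For $i$ large enough that $\sigma_i$ is $(F',\delta')$-multiplicative with $F'\supseteq\{c_f^{-1},h_f^{-1},f:f\in F_Y\}$, one has $\sigma_i(f)v=\sigma_i(h_f^{-1})\sigma_i(c_f^{-1})v$ off a set of $v$ of density $<\delta'$; on the complement, substituting $u=\sigma_i(c_f^{-1})v$, the quantity $\rho_X(h_f^{-1}\psi(\sigma_i(c_f^{-1})v),\psi(\sigma_i(f)v))$ equals $\rho_X(h_f^{-1}\psi(u),\psi(\sigma_i(h_f^{-1})u))$, and summing over $v$ (hence injectively over $u$) and invoking $\rho_{X,2}(R^{h_f^{-1}}\psi,\psi\circ\sigma_i(h_f^{-1}))<\delta_X$ gives $\rho_{Y,2}(T^f\phi_\psi,\phi_\psi\circ\sigma_i(f))^2<\delta_X^2+\delta'\operatorname{diam}(X,\rho_X)^2$. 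With $F_X=\{h_f^{-1}:f\in F_Y\}\subseteq\Lambda$ and $\delta_X,\delta'$ small, this is $<\delta_Y$, so $\phi_\psi\in\Map(T,\rho_Y,F_Y,\delta_Y,\sigma_i)$. Hence $N_\epsilon(\Map(T,\rho_Y,F_Y,\delta_Y,\sigma_i),\rho_{Y,\infty})\ge N_\epsilon(\Map(R,\rho_X,F_X,\delta_X,\sigma_i\resto\Lambda),\rho_{X,\infty})$ for $i$ large; taking $\limsup_i$, then bounding the chosen $F_X,\delta_X$ below by $\inf_{F_X,\delta_X}$, then $\inf_{F_Y,\delta_Y}$, then $\sup_\epsilon$, gives the inequality. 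Theorem \ref{thm:top-entropy} then removes the dependence on the chosen pseudometrics.

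The main obstacle is the bookkeeping in the lower bound: ensuring $\psi\mapsto\phi_\psi$ carries a genuine $(F_X,\delta_X)$-model for $\Lambda\cc X$ to a genuine $(F_Y,\delta_Y)$-model for $\Gamma\cc Y$, which requires combining the multiplicativity of $\sigma_i$ (to split $\sigma_i(f)$ along $f=h_f^{-1}c_f^{-1}$) with the equidistribution-type estimate built into the $\rho_2$-definition of a microstate, and then threading the resulting $\epsilon$-$\delta$ dependencies correctly through the nested $\sup_\epsilon\inf_{F,\delta}\limsup_i$. Everything else — the upper bound, and the fact that both $\Psi$ and $\phi\mapsto\phi_\psi$ are $\rho_\infty$-isometries — is essentially formal once $\rho_Y$ is chosen to depend only on the identity coordinate.
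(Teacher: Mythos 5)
Your argument is correct. The paper itself gives no proof of this statement (it only cites Hayes's Proposition 5.29), and your microstate correspondence --- reading off the $\Lambda$-model via evaluation at $e$ for the upper bound, and lifting a $\Lambda$-model to a $\Gamma$-model on $Y$ via a transversal together with approximate multiplicativity of $\sigma_i$ along $f=h_f^{-1}c_f^{-1}$ for the lower bound --- is exactly the standard route by which this result is established in the cited reference, with the quantifiers and the harmless normalization $\sigma_i(e)=\mathrm{id}$ handled correctly.
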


%The measurable version is false. To see this, let $\G$ be a residually finite property property (T) group with an infinite cyclic subgroup $\L \le \G$. Let $\L \cc (X,\mu)$ be a non-ergodic pmp action. Also let $\S$ be a sofic approximation of $\G$ by expanders (for example of how to construct such an approximation, see \S \ref{sec:expanders} example \ref{ex:expanders}). Because $\L$ has infinite index in $\G$, the co-induced action is ergodic. However, it has sofic entropy $-\infty$ because 

%The measurable version is true too (reference?).

\begin{question}
Is there a measure entropy version of Theorem \ref{thm:coinduced} or a Rokhlin entropy version? Results have been obtained by Tim Austin for a variant of sofic entropy \cite{MR3542515}.
\end{question}

\subsection{Perturbing the partition}\label{sec:continuity1}       %%%%%%%%%%%%%%%%%%%%%%%%%%%%%%%%%%%%%

%Next we consider how entropy changes when varying the partition, the measure or the sofic approximation. 

\begin{defn}[The space of partitions]
Let $(X,\mu)$ be a standard probability space. Given countable partitions $\cP,\cQ$ of $X$ the {\bf relative Shannon entropy} of $\cP$ given $\cQ$ is
$$H_\mu(\cP|\cQ):=H_\mu(\cP \vee \cQ)-H_\mu(\cQ).$$
The {\bf Rokhlin distance} between $\cP$ and $\cQ$ is
$$d(\cP,\cQ):=H_\mu(\cP|\cQ) + H_\mu(\cQ|\cP).$$
This is a metric on the space of (mod 0 equivalence classes of) partitions of $(X,\mu)$ with finite Shannon entropy.

%Let $\rm{Part}(X,\mu)$ denote the distance zero equivalence classes of partitions of $X$ with finite Shannon entropy. This is a complete separable metric space with the Rokhlin metric.
\end{defn}

%\begin{prop}
%If $\Ga$ is amenable then the entropy rate of a partition $h_\mu(\Ga \cc X,\cP)$ varies continuously in $\cP$. 
%\end{prop}
%The result above is proven in \cite{}[]. 

\begin{notation}
Let $\G \cc^T (X,\mu)$ be a pmp action. Given a measurable partition $\cP$ of $X$, the Mackey Realization Theorem implies the existence of a factor $\G \cc^{T_\cP} (X_\cP,\mu_\cP)$ of $T$ such that if $\Phi:X \to X_\cP$ is the factor map then $\Phi^{-1}(\cB_{X_\cP})$ is the sigma-algebra $\bigvee_{g\in\G} T^g \cP$ (up to measure zero). 
\end{notation}

\begin{prop}
Let $\Sigma$ be a sofic approximation to a countable group $\Ga$ and $\G \cc^T (X,\mu)$ a pmp action.   Then both the sofic entropy $h_{\Si,\mu_\cP}(T_\cP)$ and the Rokhlin entropy $h^{\rm{Rok}}(T_\cP)$ vary upper semi-continuously in $\cP$  with respect to the Rokhlin metric. If $\G$ is amenable then the entropy varies continuously in $\cP$. In the non-amenable case, neither entropy is continuous in general. 
\end{prop}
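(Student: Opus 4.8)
Throughout I would use that $\cP_n\to\cP$ in the Rokhlin metric is equivalent to $\eta_n:=H_\mu(\cP_n\mid\cP)+H_\mu(\cP\mid\cP_n)\to0$, together with one elementary observation: since the $\sigma$-algebra $\s\textrm{-alg}(T,\cP_n)$ associated with the factor $T_{\cP_n}$ is $T$-invariant,
\[
H_\mu\big(T^g\cP\,\big|\,\s\textrm{-alg}(T,\cP_n)\big)=H_\mu\big(\cP\,\big|\,\s\textrm{-alg}(T,\cP_n)\big)\le H_\mu(\cP\mid\cP_n)\longrightarrow0\qquad\text{for every }g\in\Ga,
\]
so every fixed finite $\cR\subset\s\textrm{-alg}(T,\cP)$ (being approximable in $d$ by a finite join of translates of $\cP$) satisfies $H_\mu\big(\cR\mid\s\textrm{-alg}(T,\cP_n)\big)\to0$, and symmetrically each fixed finite join of translates of $\cP_n$ becomes asymptotically $\s\textrm{-alg}(T,\cP)$-measurable. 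For the Rokhlin entropy the plan is then: fix $\epsilon>0$ and a generating partition $\cR$ for $T_\cP$ with $H_\mu(\cR)<h^{Rok}(T_\cP)+\epsilon$; choose finite $\cR_n'\subset\s\textrm{-alg}(T,\cP_n)$ with $d(\cR,\cR_n')\to0$, write $\cG_n:=\s\textrm{-alg}(T,\cR_n')\subseteq\s\textrm{-alg}(T,\cP_n)$, and invoke the standard subadditivity of Rokhlin entropy to get $h^{Rok}(T_{\cP_n})\le h^{Rok}(T_{\cG_n})+h^{Rok}(T_{\cP_n}\mid\cG_n)\le H_\mu(\cR_n')+H_\mu(\cP_n\mid\cG_n)$. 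Since $H_\mu(\cR_n')\to H_\mu(\cR)$, the remaining point is $H_\mu(\cP_n\mid\cG_n)\to0$: for each $\xi>0$ there is a \emph{fixed} $F\Subset\Ga$ with $H_\mu\big(\cP\mid\bigvee_{g\in F}T^g\cR\big)<\xi$, and because $d\big(\bigvee_{g\in F}T^g\cR,\bigvee_{g\in F}T^g\cR_n'\big)\le|F|\,d(\cR,\cR_n')\to0$ at a rate independent of $\xi$, one obtains $\limsup_n H_\mu(\cP_n\mid\cG_n)\le\limsup_n\big(H_\mu(\cP_n\mid\cP)+\xi+O(|F|\,d(\cR,\cR_n'))\big)=\xi$; let $\xi\to0$, then $\epsilon\to0$. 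In the non-ergodic case the same argument is run relative to $\cI_T$.

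For the sofic entropy I would first reduce to finite $\cP$ (the general finite-entropy case following by a routine approximation, since $h_{\Si,\mu_\cP}(T_\cP)$ is a supremum over finite sub-objects) and then use the symbolic description $h_{\Si,\mu_\cP}(T_\cP)=\inf_{r,\delta}\limsup_i|V_i|^{-1}\log M_i(r,\delta;\cP)$, where $M_i(r,\delta;\cP)$ counts colourings $\psi\colon V_i\to\cP$ whose empirical distribution on radius-$r$ balls of $G_i$ is $\delta$-close to the law of the $\cP$-name. Fix a maximum-likelihood map $\kappa\colon\cP_n\to\cP$ with $\kappa(Q)\in\arg\max_{P}\mu(P\mid Q)$; post-composition $\psi_n\mapsto\kappa\circ\psi_n$ sends an $(r,\delta)$-microstate for the $\cP_n$-name to an $(r,\delta+|B_r|\varepsilon_n)$-microstate for the $\cP$-name, where $\varepsilon_n=\mu\{x:\cP(x)\ne\kappa(\cP_n(x))\}\le H_\mu(\cP\mid\cP_n)\to0$, and a Stirling-type count bounds the number of $\psi_n$ mapping to a given $\psi$ by $\exp\!\big(|V_i|(H_\mu(\cP_n\mid\cP)+c(\delta)+o_i(1))\big)$ with $c(\delta)\to0$. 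Hence $M_i(r,\delta;\cP_n)\le\exp\!\big(|V_i|(H_\mu(\cP_n\mid\cP)+c(\delta)+o_i(1))\big)M_i(r,\delta+|B_r|\varepsilon_n;\cP)$, and applying $|V_i|^{-1}\log$, then $\limsup_i$, then $\limsup_n$ (for which $|B_r|\varepsilon_n<\delta$ eventually), then $\inf_{r,\delta}$, gives $\limsup_n h_{\Si,\mu_{\cP_n}}(T_{\cP_n})\le h_{\Si,\mu_\cP}(T_\cP)$, the value $-\infty$ (no microstates for some $r,\delta$) being covered by the same inequality. When $\Ga$ is amenable, both entropies equal $h_\mu(T_\cP)=h_\mu(T,\cP)$ (Kolmogorov--Sinai, as $\cP$ generates the factor), and $\cP\mapsto h_\mu(T,\cP)$ is $1$-Lipschitz for $d$ because $h_\mu(T,\cP)\le h_\mu(T,\cQ)+H_\mu(\cP\mid\cQ)$ (a F\o lner-limit estimate); combined with upper semicontinuity this yields continuity.

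The main obstacle is that neither sofic nor Rokhlin entropy is monotone under factor maps --- precisely the Ornstein--Weiss phenomenon --- so one cannot simply say that $T_{\cP_n}$, being a factor of $T_{\cP_n\vee\cP}$, has entropy at most that of $T_{\cP_n\vee\cP}$; the subadditivity-plus-bookkeeping route for Rokhlin entropy and the microstate-transport multiplicity count for sofic entropy are the ways around it, and getting the ``rate independent of $\xi$'' and the Stirling slack $c(\delta)$ to interact correctly with the nested limits is where I expect the care to be needed.

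For the failure of continuity I would use that same phenomenon. Take $\Ga=\F_2=\langle a,b\rangle$, let $T$ be the Bernoulli shift $\F_2\cc(\Z/2)^{\F_2}$ with Haar (product) measure $\mu$, let $\phi(x)_g=(x_g+x_{ga},\,x_g+x_{gb})$ be the surjective $\F_2$-equivariant group homomorphism whose kernel is the constants, and let $\cP_0:=\phi^{-1}(\cQ_0)$ with $\cQ_0$ the time-$0$ partition of $(\Z/2\times\Z/2)^{\F_2}$; then $T_{\cP_0}$ is the Bernoulli $4$-shift, so $h_{\Si,\mu_{\cP_0}}(T_{\cP_0})=h^{Rok}(T_{\cP_0})=\log4$ by Theorems \ref{thm:bernoulli-sofic} and \ref{thm:seward-bernoulli} (as $\F_2$ is sofic, $h^{Rok}_{sup}(\F_2)=+\infty$). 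For $F\Subset\F_2$ let $\delta_F\colon(\Z/2)^{\F_2}\to\Z/2$ be $\delta_F(x)=x_e$ if $x\res F$ is constant and $\delta_F(x)=0$ otherwise, and let $\cD_F$ be the associated $2$-atom partition; then $d(\cP_0\vee\cD_F,\cP_0)\le H_\mu(\cD_F)\to0$ as $F\nearrow\F_2$, while the translates of $\cD_F$ together with $\s\textrm{-alg}(T,\cP_0)$ generate $\cB_X$: indeed $\s\textrm{-alg}(T,\cP_0)$ determines $\phi(x)$, hence the pair $\{x,x+\mathbf 1\}$, and by ergodicity almost every $T$-orbit visits the positive-measure set $\{y:y\res F\text{ constant}\}$, on which $\delta_F(y)\ne\delta_F(y+\mathbf 1)$. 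Thus $T_{\cP_0\vee\cD_F}$ is the Bernoulli $2$-shift, with $h_{\Si,\mu}=h^{Rok}=\log2$, and taking $F_n\nearrow\F_2$ gives $\cP_n:=\cP_0\vee\cD_{F_n}\to\cP_0$ in the Rokhlin metric while $h_{\Si,\mu_{\cP_n}}(T_{\cP_n})=h^{Rok}(T_{\cP_n})=\log2<\log4$, so neither entropy is continuous at $\cP_0$. The one genuinely technical point in this construction is verifying that this $\Z/2$-extension of the $4$-shift has zero relative Rokhlin entropy, i.e.\ that the symmetry $x\leftrightarrow x+\mathbf 1$ can be broken by a partition of arbitrarily small Shannon entropy over $\cP_0$, which is exactly what the sets $\{y:y\res F\text{ constant}\}$ accomplish.
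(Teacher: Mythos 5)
Your proposal is correct. For the one part of this proposition that the paper proves in-house --- the failure of continuity --- your construction is essentially the paper's own: the paper takes the Ornstein--Weiss partition $\cP$, joins it with $\{A_n, X\setminus A_n\}$ for an arbitrary set $A_n\subset\{x:x(1_\G)=0\}$ of measure less than $1/n$, checks exactly as you do that the perturbed partition separates $x$ from $x+\bar{1}$ somewhere along a.e.\ orbit and hence generates the full $2$-shift, and concludes that the entropy drops from $\log 4$ to $\log 2$ in the limit; your $\cD_F$ is just one concrete choice of such an $A_n$ (up to an immaterial sign convention). The genuine difference lies in the first two assertions, which the paper disposes of entirely by citation, whereas you supply arguments: Seward's subadditivity $h^{Rok}(T)\le h^{Rok}(T\res\cG)+h^{Rok}(T\mid\cG)$ combined with a two-parameter approximation for the Rokhlin case, and a maximum-likelihood microstate transport with a Stirling multiplicity count for the sofic case. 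Both are sound --- your inequality $\varepsilon_n\le H_\mu(\cP\mid\cP_n)$ for the MAP error follows atomwise from $1-t\le-\log t$, and the multiplicity exponent you actually control is $H_\mu(\cP_n\mid\kappa\circ\cP_n)$, which differs from $H_\mu(\cP_n\mid\cP)$ by a quantity vanishing with $\varepsilon_n$ that your slack terms absorb --- but note that the Rokhlin-entropy subadditivity over a factor is itself a nontrivial theorem of Seward (the naive bound $H(\alpha\vee\beta)=H(\alpha)+H(\beta\mid\alpha)\ge H(\alpha)+H(\beta\mid\cG)$ goes the wrong way), so you are trading one black box for another; what your route buys is a self-contained treatment that the survey does not actually contain.
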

\begin{proof}
The first statement is contained in \cite[Corollary 6.3]{bowen-jams-2010} (sofic entropy) and \cite{seward-kreiger-2} (Rokhlin entropy). The amenable case is handled in \cite[Proposition 4.3.13]{ollagnier-book}. For the last statement, we perturb the Ornstein-Weiss example as follows. Let $\G=\F_2=\langle a,b\rangle$ be the rank 2 free group, $X=(\Z/2)^\G$ be the full 2-shift and $\mu = u_2^\G$ be the product measure on $X$. So $T$ is the usual shift action $(T^g)x(f)=x(g^{-1}f)$. Let $\phi: X \to \Z/2 \times \Z/2$ be the observable
$$\phi(x)=(x(1_\G)+x(a), x(1_\G)+x(b))$$
and $\cP=\{\phi^{-1}(i,j):~i,j \in \Z/2\}$ be the corresponding partition. For every $n>0$, choose a subset $A_n \subset \{x \in X:~x(1_\G)=0\}$ such that $0<\mu(A_n)<1/n$. Finally, let
$$\cP_n = \cP \vee \{A_n, X \setminus A_n\}.$$

The Ornstein-Weiss example (\S \ref{sec:OW-intro}) shows that $\G \cc (X_\cP,\mu_\cP)$ is isomorphic to the full 4-shift. In particular,
$$h_{\Si,\mu_\cP}(T_{\cP}) = \log 4 = h^{\rm{Rok}}(T_\cP).$$
Moreover, the factor map from $(X,\mu)$ to $(X_\cP,\mu_\cP)$ is 2-1. Indeed, if $\bar{1} \in (\Z/2)^\G$ denotes the constant function then for any $x\in X$, $x$ and $\bar{1}+x$ have the same image in $(X_\cP,\mu_\cP)$.

We claim that $\cP_n$ is generating for all $n$.  It suffices to show that for a.e. $x\in X$ there exists $g \in \G$ such that $T^gx$ and $T^g(\bar{1}+x)$ lie in different parts of $\cP_n$. Since the action is ergodic, there exists $g\in\G$ such that $T^gx \in A_n$. This implies $(T^gx)(1_\G)=1$ and therefore $(T^g(x+\bar{1}))(1_\G)=0$. In particular, $T^g(\bar{1}+x)$ cannot be in $A_n$. So $\cP_n$ is generating as claimed.

Because $\cP_n$ is generating,
$$h_{\Si,\mu_{\cP_n}}(T_{\cP_n}) = \log 2 = h^{\rm{Rok}}(T_{\cP_n})$$
for all $n$. Since $\mu(A_n) \to 0$ as $n\to\infty$,  $\cP_n \to \cP$ in the Rokhlin metric. 

 %For the second, let $(X,\mu)$ denote the unit circle with Lebesgue measure. Let $\F=\langle a,b\rangle$ act on the circle so that each generator acts by an irrational rotation. If $\cP$ is any nontrivial partition of the circle into intervals then $\cP$ is generating for this action. Because this is a smooth action, the $f$-invariant is minus infinity \cite[Lemma 3.5]{bowen-gutman-2014}. This means that if $\s:\F \to \sym(n)$ is chosen uniformly at random then with high probability there are no good models for the action. So there exists a sofic approximation $\Si$ to $\F$ such that  $h_{\Si,\mu}(\F \cc X,\cP)=-\infty$ for every nontrivial partition $\cP$ into intervals.  However there exists a sequence of such intervals that converges to the trivial partition in the Rokhlin metric. If $\cP_0$ is the trivial partition then $h_{\Si,\mu}(\F \cc X,\cP_0)=0$.
\end{proof}

%**check whether this defn appears elsewhere**
%\begin{defn}[The weak* topology]
%Let $X$ be a compact Hausdorff space and $\Prob(X)$ the space of all Borel probability measures on $X$ with the weak* topology. This means that a sequence $\{\mu_n\}_{n\in \N}$ converges to a measure $\mu_\infty$ if and only if for every continuous function $f:X \to \C$, 
%$$\int f~d\mu_n \to \int f~d\mu_\infty$$
%as $n\to\infty$.
%\end{defn}

\subsection{Perturbing the measure}\label{sec:continuity2}       %%%%%%%%%%%%%%%%%%%%%%%%%%%%%%%%%%%%%

Let $K$ be a finite set, $\G \cc^T K^\G$ the shift action and $\Prob_\G(K^\G)$ the space of all $T(\G)$-invariant Borel probability measures on $K^\G$ with the weak* topology.

\begin{prop}
Let $\Sigma$ be a sofic approximation to $\Ga$.  Then both the $\Si$-entropy and the Rokhlin entropy of the action $\Ga \cc^T (K^\G, \mu)$ vary upper semi-continuously in $\mu \in \Prob_\G(K^\G)$ with respect to the weak* topology. In general, these are not continuous (even when $\Ga=\Z$).
\end{prop}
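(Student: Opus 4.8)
The plan is to handle the two entropies separately; the sofic statement is short, while for Rokhlin entropy I would quote the semicontinuity results of \cite{seward-kreiger-2} (see the last paragraph for why a self-contained argument is harder). Throughout, the decisive feature of the phase space $K^\G$ is that cylinder sets are clopen, hence continuity sets for every Borel probability measure, so weak* convergence $\mu_n\to\mu$ forces $\mu_n(C)\to\mu(C)$ for every cylinder $C$, uniformly over any finite family of cylinders.

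First I would treat the sofic entropy. Since $K$ is finite, the time-$0$ partition $\cP_0=\{[k]:k\in K\}$ is a finite generating partition for the shift with respect to \emph{every} invariant measure, so by Theorem \ref{thm:measure-entropy-partition} and the remark after it,
\[
h_{\Sigma,\mu}(T)=\inf_{F\Subset\Ga}\ \inf_{\delta>0}\ \limsup_{i\to\infty}\frac1{|V_i|}\log\bigl|\Hom_\mu(\cP_0,F,\delta,\sigma_i)\bigr|.
\]
The measure $\mu$ enters the definition of $\Hom_\mu(\cP_0,F,\delta,\sigma_i)$ only through condition (2), $\sum_{A\in\cP_0^F}|u_{V_i}(\phi(A))-\mu(A)|<\delta$, and $\cP_0^F$ is a finite partition into cylinders. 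Hence, given $F$ and $\delta$, if $\mu_n\to\mu$ weak* then $\sum_{A\in\cP_0^F}|\mu_n(A)-\mu(A)|<\delta/2$ for all large $n$, and the triangle inequality gives, for every $i$,
\[
\Hom_{\mu_n}(\cP_0,F,\delta/2,\sigma_i)\subseteq\Hom_\mu(\cP_0,F,\delta,\sigma_i),
\]
condition (1) being independent of the measure. Taking $\limsup_i\frac1{|V_i|}\log(\cdot)$ and then $\inf_{F,\delta}$ on the left, this yields $h_{\Sigma,\mu_n}(T)\le\limsup_i\frac1{|V_i|}\log|\Hom_\mu(\cP_0,F,\delta,\sigma_i)|$ for all large $n$. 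Given $\epsilon>0$, choosing $F,\delta$ with $\limsup_i\frac1{|V_i|}\log|\Hom_\mu(\cP_0,F,\delta,\sigma_i)|<h_{\Sigma,\mu}(T)+\epsilon$ then gives $\limsup_{n\to\infty}h_{\Sigma,\mu_n}(T)\le h_{\Sigma,\mu}(T)+\epsilon$, which is upper semicontinuity. (Exactly the same argument, with ``continuity set'' for ``cylinder'' and the portmanteau theorem as in the proof sketch of Theorem \ref{thm:measure-entropy}, works on an arbitrary compact model, so nothing here is special to full shifts.)

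Next, the discontinuity; as the statement allows, it is enough to exhibit it for $\Ga=\Z$, where the sofic and Rokhlin entropies both equal classical entropy $h$. Work on $\{0,1\}^\Z$ with the shift $T$, and let $\mu$ be the $(1/2,1/2)$-Bernoulli measure, so $h_\mu(T)=\log2$. For each $n$ fix a binary de Bruijn word of order $n$ --- a cyclic word of length $2^n$ in which every binary string of length $n$ occurs exactly once as a cyclic factor --- let $x^{(n)}\in\{0,1\}^\Z$ be the periodic point it determines, and let $\mu_n$ be the uniform (hence shift-invariant) probability measure on the orbit of $x^{(n)}$. Then $\mu_n$ is supported on a finite set, so $h_{\mu_n}(T)=0$ for all $n$. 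On the other hand, in a de Bruijn word of order $n$ every block of length $m\le n$ occurs exactly $2^{n-m}$ times cyclically, so $\mu_n([w])=2^{-m}=\mu([w])$ for every length-$m$ cylinder $[w]$ once $n\ge m$; since locally constant functions are dense in $C(\{0,1\}^\Z)$, this says precisely $\mu_n\to\mu$ weak*. Thus $0=\lim_n h_{\mu_n}(T)<\log2=h_\mu(T)$, so neither $h_{\Sigma,\cdot}(T)$ nor $h^{Rok}(\cdot)$ is continuous at $\mu$, already for $\Ga=\Z$.

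The main obstacle is the Rokhlin half of the semicontinuity claim. The sofic argument succeeds because microstates depend on $\mu$ only through its finite-window marginals, which the weak* topology controls; a near-optimal generating partition for $\mu$, by contrast, is an abstract measurable partition with no a priori relation to the topology of $K^\G$, and it cannot in general be replaced by one into clopen sets, since the system need not be measurably conjugate to a subshift. Transferring an almost-optimal generating partition to measures weak*-close to $\mu$, while keeping both the generating property and the Shannon entropy under control, is exactly what Seward's generalized Krieger generator theorem and the attendant Abert--Weiss type constructions accomplish, and for this step I would invoke \cite{seward-kreiger-2} directly rather than reprove it.
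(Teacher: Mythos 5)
Your proposal is correct, and it diverges from the paper's own treatment in two of the three steps. For the sofic semicontinuity the paper simply cites \cite{chung-zhang-expansive}, which proves upper semicontinuity for all (asymptotically $h$-)expansive actions; you instead give a direct argument from the partition/microstate definition, exploiting the fact that for a full shift over a finite alphabet the measure enters only through its finite-window marginals on clopen cylinders, so weak* convergence yields the inclusion $\Hom_{\mu_n}(\cP_0,F,\delta/2,\sigma_i)\subseteq\Hom_\mu(\cP_0,F,\delta,\sigma_i)$. That argument is sound (it is essentially the expansive-case mechanism specialized to where it is trivial to verify), and it buys self-containedness at the cost of generality -- which is harmless here, since the proposition only concerns $K^\G$ with $K$ finite. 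For the failure of continuity the paper cites \cite{MR2546620} for the density of finitely supported invariant measures in $\Prob_\F(K^\F)$ for any finite-rank free group $\F$, combined with the fact that finitely supported measures have zero naive entropy, hence non-positive sofic and Rokhlin entropy; your de Bruijn construction is an explicit, correct instance of this for $\Z$, which suffices for the statement as worded but is less general than the paper's route. For the Rokhlin half you defer to \cite{seward-kreiger-2}, exactly as the paper does, and your explanation of why a direct argument is genuinely harder (an almost-optimal generating partition has no a priori relation to the topology) is accurate. The only cosmetic imprecision is that your displayed formula for $h_{\Sigma,\mu}(T)$ suppresses the $\sup_\cQ$ and the restriction-image cardinality $|\cdot|_\cQ$ from the partition definition; since $\cP_0$ is a finite generating partition this is covered by the remark following Theorem \ref{thm:measure-entropy-partition}, and the inclusion argument goes through unchanged in either formulation.
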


\begin{proof}[Proof remarks]
The Rokhlin entropy case is handled in \cite{seward-kreiger-2}. For sofic entropy, a more general statement is proven in \cite{chung-zhang-expansive}: whenever $\G$ acts expansively on a compact metric space $X$ then $\Si$-entropy varies upper semi-continuously with respect to the measure $\mu$. In fact, this is proven for a weak form of expansitivity called asymptotic $h$-expansitivity. 

In \cite{MR2546620} it is shown that if $\F$ is a finite rank free group (including $\Z$) then the set of shift-invariant measures $\mu$ that have finite support are dense in $\Prob_\F(K^\F)$. If a shift-invariant measure has finite support then it has zero naive entropy and therefore non-positive sofic and Rokhlin entropy (by Propositions \ref{prop:naive-Rokhlin1} and \ref{prop:Rokhlinsofic}). So entropy is not a continuous function of $\mu \in \Prob_\F(K^\F)$ (even when $\F=\Z$). 
\end{proof}

\begin{remark}
In \S \ref{sec:d-bar}, we discuss a stronger topology on $\Prob_\G(K^\Ga)$ called the $d$-bar topology. If $\Ga$ is amenable then entropy varies continuously in the $d$-bar topology but there are explicit counterexamples when $\Ga$ is a rank 2 free group.
\end{remark}

\subsection{Orbit-equivalence}

Two actions $\G \cc (X,\mu), \La \cc (Y,\nu)$ are {\bf orbit-equivalent} (abbreviated OE) if there exists a measure space isomorphism $\Phi:X \to Y$ such that $\Phi(\G x)=\La \Phi(x)$ for a.e. $x$. By work of Dye and Ornstein-Weiss \cite{MR0158048, MR0131516, OW80}, all ergodic essentially free pmp actions of infinite amenable groups are OE. In particular, entropy is not an OE-invariant. However, every non-amenable group admits uncountably many non-OE actions \cite{epstein-oe, IKT}. Moreover, there are many groups for which Bernoulli shifts are OE-rigid in the sense that OE implies measure-conjugacy. These include ICC groups with property (T) \cite{MR2231962} and direct products of non-amenable groups with infinite groups that have no nontrivial finite normal subgroups \cite{popa-malleable}. So for these actions, entropy is automatically an OE-invariant.

This leads to the general question:
\begin{question}
Under what conditions on a group action can one conclude that entropy (whether sofic/Rokhlin/naive) is an OE-invariant?
\end{question}
For example in \cite{MR2369194} it is proven that $\G$ is a non-exceptional mapping class group then any pmp action in which all finite-index subgroups acts ergodically is OE-rigid. It follows that entropy is an OE-invariant for such actions. In \S \ref{sec:weaklycompact} below, we show there is a property of actions called weak compactness which is an OE-invariant and implies zero entropy whenever the group is non-amenable.

\subsubsection{Weakly compact actions}\label{sec:weaklycompact}

(The ideas of this section have been gracefully provided by Ben Hayes.)

If $(X,\mu),(Y,\nu)$ are standard probability spaces and $f\colon X\to \C,g\colon Y\to\C$ are measurable then $f\otimes g\colon X\times Y\to\C$ is defined by $(f\otimes g)(x,y)=f(x)g(y).$ If $\Gamma \cc (X,\mu)$  is a pmp action  then $\kappa\colon\Gamma\to \mathcal{U}(L^{2}(\mu))$ (where $\cU(\cdot)$ is the unitary group) denotes the Koopman representation given by
\[(\kappa_{g}\xi)(x)=\xi(g^{-1}x) \quad \forall \xi \in L^2(\mu), g\in \G, x\in X.\]
The map $\kappa_{g}\otimes \kappa_{g}\colon\Gamma\to \mathcal{U}(L^{2}(\mu\times \mu))$ is the Koopman representation of $\Gamma$ on $X\times X.$ Define $\kappa_{g}(\xi), \kappa_{g}\otimes \kappa_{g}(\zeta)$  by the same formula for $\xi \in L^1(\mu), \zeta \in L^1(\mu\times \mu)$.%respectively $\kappa_{g}\otimes \kappa_{g}(\xi)$) by the same formula when $\xi$ is some measurable function on $X$ (respectively $X\times X$) which is not necessarily in $L^{2}.$

\begin{defn}\cite{MR2680430}\label{D:weaklycompact}
 A pmp action $\G \cc (X,\mu)$  is {\bf weakly compact} if there is a sequence $\xi_{1}, \xi_2,\ldots \in L^{2}(\mu\times \mu)$ such that each $\xi_{n}\geq 0$,
\begin{itemize}
\item $\|\xi_{n}-(v\otimes \overline{v})\xi_{n}\|_{2}\to 0\mbox{\emph{ as $n\to\infty$ for all $v\colon X\to S^{1}$ measurable}}$,
\item $\|\xi_{n}-(\kappa_{g}\otimes \kappa_{g})(\xi_{n})\|_{2}\to 0\mbox{\emph{ as $n\to\infty$ for all $g\in \Gamma$}}$,
\item $\langle (f\otimes 1)\xi_{n},\xi_{n} \rangle=\int_{X}f\,d\mu=\langle (1\otimes f)\xi_{n},\xi_{n}\rangle,\mbox{\emph{ for all $f\in L^{\infty}(X,\mu)$}.}$
\end{itemize}

\end{defn}

It is shown in \cite[Proposition 3.2]{MR2680430}  that compact actions are weakly compact. Moreover, weak compactness is an OE-invariant  (see the remarks after \cite[Proposition 3.4]{MR2680430}). It is shown below that all weakly compact actions of non-amenable groups have zero Rokhlin entropy. Moreover, because weak compactness is preserved under factors, no factor of a weakly compact action has positive entropy.

Set $\zeta_{n}=\xi_{n}^{2}$ to see that weak compactness is equivalent to saying that there exists $\zeta_{n}\in L^{1}(\mu\times \mu)$ so that $\zeta_{n}\geq 0$ and:
\begin{itemize}
\item $\|\zeta_{n}-(v\otimes \overline{v})\zeta_{n}\|_{1}\to 0 \mbox{ as $n\to\infty$ for all measurable $v\colon X\to S^{1} \subset \C$},$
\item $\|\zeta_{n}-(\kappa_{g}\otimes \kappa_{g})(\zeta_{n})\|_{1}\to 0\mbox{ as $n\to\infty$ for all $g\in\Gamma,$}$
\item $\int f(x)\zeta_{n}(x,y)\,d\mu\times \mu(x,y)=\int_{X}f\,d\mu=\int f(y)\zeta_{n}(x,y)\,d\mu\times \mu(x,y)\mbox{ for all $f\in L^{\infty}(\mu).$}$
\end{itemize}

\begin{prop}\label{prop:wc}
All factors of weakly compact actions are weakly compact.  
\end{prop}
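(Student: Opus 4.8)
The plan is to push the witnessing functions $\zeta_n$ down from $X\times X$ to $Y\times Y$ along the factor map, using conditional expectation. Write $\pi\colon X\to Y$ for the factor map, so $\pi$ is measure-preserving with $\pi(gx)=g\pi(x)$ almost everywhere; then $\pi\times\pi\colon(X\times X,\mu\times\mu)\to(Y\times Y,\nu\times\nu)$ is measure-preserving and intertwines the two diagonal Koopman representations. I would identify $L^1(Y\times Y,\nu\times\nu)$ with the closed subspace $L^1(X\times X,\mu\times\mu,\cC)$ of $\cC$-measurable functions, where $\cC:=(\pi\times\pi)^{-1}(\cB_{Y\times Y})$, a function $\psi$ on $Y\times Y$ corresponding to $\psi\circ(\pi\times\pi)$; concretely, if $\zeta\in L^1(X\times X,\mu\times\mu)$ is nonnegative then the density of $(\pi\times\pi)_*(\zeta\,d(\mu\times\mu))$ with respect to $\nu\times\nu$ pulls back to the conditional expectation $E(\zeta):=\E[\zeta\mid\cC]$. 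Given witnesses $\zeta_n$ for weak compactness of $\Gamma\cc(X,\mu)$ as in the $L^1$-reformulation above, I would set $\eta_n:=E(\zeta_n)\in L^1(Y\times Y,\nu\times\nu)$. Since $E$ is a positive $L^1$-contraction, each $\eta_n\ge 0$, and the claim is that $(\eta_n)_n$ witnesses weak compactness of $\Gamma\cc(Y,\nu)$.

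For the marginal condition, given $f\in L^\infty(Y,\nu)$ one has $f\circ\pi\in L^\infty(X,\mu)$, and by the change-of-variables defining $\eta_n$,
\[\int f(y_1)\,\eta_n(y_1,y_2)\,d(\nu\times\nu)=\int (f\circ\pi)(x_1)\,\zeta_n(x_1,x_2)\,d(\mu\times\mu)=\int_X f\circ\pi\,d\mu=\int_Y f\,d\nu,\]
using the marginal condition for $\zeta_n$ in the middle; the second marginal is identical. For the invariance condition, the point-map realizing $\kappa^X_g\otimes\kappa^X_g$ on $X\times X$ is $\Gamma$-equivariant over the one realizing $\kappa^Y_g\otimes\kappa^Y_g$, so it preserves $\cC$, and hence $E$ commutes with $\kappa^X_g\otimes\kappa^X_g$; under the identification this reads $E\circ(\kappa^X_g\otimes\kappa^X_g)=(\kappa^Y_g\otimes\kappa^Y_g)\circ E$. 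Being an $L^1$-contraction, $E$ then gives
\[\|\eta_n-(\kappa^Y_g\otimes\kappa^Y_g)\eta_n\|_1=\big\|E\big(\zeta_n-(\kappa^X_g\otimes\kappa^X_g)\zeta_n\big)\big\|_1\le\big\|\zeta_n-(\kappa^X_g\otimes\kappa^X_g)\zeta_n\big\|_1\longrightarrow 0.\]

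For the remaining condition, fix a measurable $v\colon Y\to S^1$. Then $v\circ\pi\colon X\to S^1$ is measurable, and $(v\circ\pi)\otimes\overline{(v\circ\pi)}=(v\otimes\bar v)\circ(\pi\times\pi)$ is a bounded $\cC$-measurable function, so multiplication by it commutes with $E$; under the identification, $E\big(((v\circ\pi)\otimes\overline{(v\circ\pi)})\,\zeta_n\big)=(v\otimes\bar v)\,\eta_n$. Hence
\[\|\eta_n-(v\otimes\bar v)\eta_n\|_1=\big\|E\big(\zeta_n-((v\circ\pi)\otimes\overline{(v\circ\pi)})\,\zeta_n\big)\big\|_1\le\big\|\zeta_n-((v\circ\pi)\otimes\overline{(v\circ\pi)})\,\zeta_n\big\|_1\longrightarrow 0,\]
by weak compactness of $\Gamma\cc(X,\mu)$ applied to the unimodular measurable function $v\circ\pi$. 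All three conditions hold, so $\Gamma\cc(Y,\nu)$ is weakly compact.

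The argument is mostly bookkeeping; the only points needing care are the identification $L^1(Y\times Y,\nu\times\nu)\cong L^1(X\times X,\mu\times\mu,\cC)$ and the two facts that multiplication by $\cC$-measurable functions and the diagonal Koopman operators both commute with $E$ — each of which holds precisely because $\pi\times\pi$ is measure-preserving and $\Gamma$-equivariant. I expect the subtlest step to be the $v\otimes\bar v$ condition, where one must notice that the pullback of a unimodular function on $Y$ is a unimodular function on $X$ whose associated multiplication operator is $\cC$-measurable, so that $E$ may be slid past it; no analytic input beyond weak compactness of the total space is required.
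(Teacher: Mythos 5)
Your proposal is correct and takes exactly the same route as the paper: push the witnesses $\zeta_n$ down via the conditional expectation onto $L^1(\nu\times\nu)\subset L^1(\mu\times\mu)$ and verify the three conditions. The paper states the verification is ``straightforward to check''; you have simply supplied the details (positivity and $L^1$-contractivity of $E$, its commutation with the diagonal Koopman operators and with multiplication by pulled-back unimodular functions, and the change of variables for the marginals), all of which are right.
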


\begin{proof} Let $\Gamma\cc (Y,\nu)$ be a factor of $\Gamma\cc (X,\mu)$ and $\E_{Y\times Y}(f)$ be the conditional expectation of $f\in L^{1}(\mu\times \mu)$ onto $L^{1}(\nu\times \nu)$ (which we identify with the obvious subspace of $L^1(\mu\times \mu)$). Let $\zeta_{n}\in L^{1}(\mu\times \mu)$ be as in the remarks after Definition \ref{D:weaklycompact} and set $\eta_{n}=\E_{Y\times Y}(\zeta_{n}).$ It is straightforward to check that
\[\|\eta_{n}-(v\otimes \overline{v})\eta_{n}\|_{1}\to 0 \mbox{ for all measurable $v\colon Y\to S^{1} \subset\C$},\]
\[\|\eta_{n}-(\kappa_{g}\otimes \kappa_{g})(\eta_{n})\|_{1}\to 0\mbox{ for all $g\in\Gamma,$}\]
\[\int f(x)\eta_{n}(x,y)\,d\nu\times \nu(x,y)=\int_{Y}f\,d\nu=\int f(y)\eta_{n}(x,y)\,d\nu\times \nu(x,y)\mbox{ for all $f\in L^{\infty}(Y,\mu)$}.\]
 Thus $\{\eta_{n}\}$ witnesses that $\Gamma\cc (Y,\nu)$ is weakly compact.
\end{proof}

\begin{prop}\label{prop:wc-b}
 If $\G$ is non-amenable then no  nontrivial Bernoulli action of $\Gamma$ is weakly compact.
\end{prop}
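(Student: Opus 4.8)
The plan is to argue by contradiction: weak compactness would supply almost-invariant vectors for the Koopman representation of the diagonal action $\Gamma \cc (X\times X,\mu\times\mu)$ lying in the orthocomplement of the constants, whereas for a Bernoulli shift that representation is contained in a countable multiple of the left regular representation $\lambda_\Gamma$, which has no almost-invariant unit vectors because $\Gamma$ is non-amenable.

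So suppose $\Gamma \cc (X,\mu) = (K,\kappa)^\Gamma$ is a nontrivial Bernoulli shift (so $\kappa$ is not a point mass and $\Gamma$ is infinite) that is weakly compact, witnessed by $\xi_n\in L^2(\mu\times\mu)_+$ as in Definition \ref{D:weaklycompact}; taking $f=1$ in the third bullet gives $\|\xi_n\|_2=1$. Let $p:=1_{X\times X}$, a unit vector fixed by every $\kappa_g\otimes\kappa_g$, put $t_n:=\langle\xi_n,p\rangle\in[0,1]$ and $\xi_n^\perp:=\xi_n-t_np\in L^2_0(\mu\times\mu)$, so $\|\xi_n^\perp\|_2^2=1-t_n^2$. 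First I would check that $\limsup_n t_n<1$: otherwise, along a subsequence $\xi_n\to p$ in $L^2$, and the first bullet of Definition \ref{D:weaklycompact} together with the triangle inequality forces $v\otimes\bar v=p$, hence $v$ a.e. constant, for \emph{every} measurable $v\colon X\to S^1$ --- impossible when $\mu$ is nonatomic and $\kappa$ nontrivial (take $v=1_A-1_{X\setminus A}$ with $0<\mu(A)<1$). Passing to a subsequence we thus get $\|\xi_n^\perp\|_2\geq c>0$. Since $p$ is $(\kappa_g\otimes\kappa_g)$-invariant, orthogonal projection onto $L^2_0(\mu\times\mu)$ is equivariant, so the second bullet of Definition \ref{D:weaklycompact} yields $\|\xi_n^\perp-(\kappa_g\otimes\kappa_g)\xi_n^\perp\|_2\to0$ for all $g$; that is, the unit vectors $\xi_n^\perp/\|\xi_n^\perp\|_2$ are almost invariant for the restriction of $\kappa\otimes\kappa$ to $L^2_0(X\times X)$.

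It remains to see this restricted representation embeds into $\infty\cdot\lambda_\Gamma$. Decompose $L^2(X,\mu)=\bigotimes_{g\in\Gamma}L^2(K,\kappa)=\mathbb{C}1\oplus\bigoplus_{\emptyset\neq F\Subset\Gamma}H^{\otimes F}$ with $H=L^2_0(K,\kappa)$ and $H^{\otimes F}=\bigotimes_{g\in F}H$, where the Koopman action carries the summand $H^{\otimes F}$ to $H^{\otimes gF}$. For fixed cardinality $|F|=m$, group these sets into $\Gamma$-orbits under left translation; the stabilizer $\Lambda_F$ of such an $F$ embeds into $\mathrm{Sym}(F)$ (as $hf=f$ forces $h=e$), hence is finite, and the corresponding orbit piece is $\mathrm{Ind}_{\Lambda_F}^\Gamma\rho_F$ for the natural representation $\rho_F$ of $\Lambda_F$ on $H^{\otimes F}$. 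Because $\Lambda_F$ is finite, $\rho_F$ is contained in a multiple of $\lambda_{\Lambda_F}$, so the orbit piece is contained in a multiple of $\mathrm{Ind}_{\Lambda_F}^\Gamma\lambda_{\Lambda_F}=\lambda_\Gamma$; summing over orbits and over $m$ shows $\kappa|_{L^2_0(X)}$ embeds into $\infty\cdot\lambda_\Gamma$. Therefore $L^2_0(X\times X)=(\mathbb{C}1\otimes L^2_0(X))\oplus(L^2_0(X)\otimes\mathbb{C}1)\oplus(L^2_0(X)\otimes L^2_0(X))$ embeds into $\infty\cdot\lambda_\Gamma$: the first two summands are copies of $\kappa|_{L^2_0(X)}$, and the third embeds into $(\infty\cdot\lambda_\Gamma)\otimes\kappa|_{L^2_0(X)}\cong\infty\cdot\lambda_\Gamma$ by Fell's absorption principle. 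A representation contained in $\infty\cdot\lambda_\Gamma$ has almost-invariant unit vectors only if $1_\Gamma\prec\lambda_\Gamma$, i.e. only if $\Gamma$ is amenable (Hulanicki), contradicting the previous paragraph. I expect the only delicate point is the middle step --- keeping $\xi_n$ a definite distance from the constant function $p$ --- which is exactly where nontriviality of the base and non-atomicity of $\mu$ enter; the representation-theoretic input is routine once the Bernoulli Koopman module is decomposed along finite subsets of $\Gamma$.
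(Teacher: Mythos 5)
Your proof is correct and follows the same route as the paper's: weak compactness yields almost-invariant unit vectors in $L^2_0(X\times X,\mu\times\mu)$ (no spectral gap for the product action), while the Koopman representation of a Bernoulli shift embeds into $\infty\cdot\lambda_\Gamma$, which has no almost-invariant vectors when $\Gamma$ is non-amenable. The paper compresses this to three sentences, citing the spectral gap of Bernoulli actions as well known and calling the first step ``tautological''; your write-up supplies exactly the missing details, and in particular your argument that $\xi_n$ stays a definite distance from the constants (using the first and third bullets of Definition \ref{D:weaklycompact} and nontriviality of the base) is the genuinely non-tautological point that the paper elides.
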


\begin{proof}
 Weak compactness of $\Gamma\cc (X,\mu)$ tautologically implies the product action $\Gamma\cc (X\times X,\mu\times \mu)$ does not have spectral gap. However, if $\G \cc (X,\mu)$ is Bernoulli then so is the product action. It is well-known that  Bernoulli actions of non-amenable groups have spectral gap.
\end{proof}

\begin{cor}
If $\G$ is non-amenable and $\G \cc (X,\mu)$ is essentially free and weakly compact then it has zero Rokhlin entropy. Moreover, every action OE to this action has zero Rokhlin entropy. 
\end{cor}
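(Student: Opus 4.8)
The plan is to combine Seward's Sinai-type factor theorem with the incompatibility of nontrivial Bernoulli factors with weak compactness. First suppose in addition that $\G\cc(X,\mu)$ is ergodic, and assume for contradiction that $h^{Rok}(\G\cc(X,\mu))>0$. By the Sinai factor theorem of \cite{seward-sinai} (quoted in \S\ref{sec:ornstein theory}), $\G\cc(X,\mu)$ then admits a nontrivial Bernoulli factor $\G\cc(K,\kappa)^\G$. By Proposition \ref{prop:wc} this Bernoulli factor is again weakly compact, which contradicts Proposition \ref{prop:wc-b} since $\G$ is non-amenable. Hence $h^{Rok}(\G\cc(X,\mu))=0$ whenever the action is ergodic.

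For a general (possibly non-ergodic) essentially free weakly compact action $\G\cc^T(X,\mu)$, I would pass to the ergodic decomposition $\mu=\int_\Omega\mu_\omega\,dP(\omega)$ relative to the invariant $\sigma$-algebra $\cI_T$. Each ergodic component $\G\cc(X,\mu_\omega)$ remains essentially free, and --- this is the delicate point --- remains weakly compact, which then makes $h^{Rok}(\G\cc(X,\mu_\omega))=0$ by the ergodic case. Combining this with the ergodic-decomposition identity $h^{Rok}(T)=h^{Rok}(T|\cI_T)=\int_\Omega h^{Rok}(\G\cc(X,\mu_\omega))\,dP(\omega)$ yields $h^{Rok}(T)=0$. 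One can also phrase this without decomposing weak compactness by invoking a relative version of the Sinai factor theorem over $\cI_T$ together with a relative form of Proposition \ref{prop:wc-b}.

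For the orbit-equivalence statement, let $\La\cc(Y,\nu)$ be an essentially free pmp action orbit-equivalent to $\G\cc(X,\mu)$. Weak compactness is an OE-invariant \cite{MR2680430}, so $\La\cc(Y,\nu)$ is weakly compact. Moreover the orbit equivalence relation of the essentially free action $\G\cc(X,\mu)$ of the non-amenable group $\G$ is non-amenable, hence so is the isomorphic relation of $\La\cc(Y,\nu)$, and therefore $\La$ is non-amenable; the first part of the corollary then gives $h^{Rok}(\La\cc(Y,\nu))=0$. The main obstacle is the non-ergodic reduction: verifying that weak compactness descends to the ergodic components (equivalently, setting up a sufficiently flexible relative Sinai factor theorem). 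The ergodic core of the argument is immediate from the quoted results.
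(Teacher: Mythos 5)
Your argument is correct and is essentially the paper's own proof: assume positive Rokhlin entropy, invoke Seward's Sinai factor theorem (Theorem \ref{thm:seward-sinai}) to produce a nontrivial Bernoulli factor, and contradict Propositions \ref{prop:wc} and \ref{prop:wc-b}; the OE statement follows because weak compactness is an OE-invariant. The only difference is that the paper's proof is terser and does not address the non-ergodic reduction you flag --- Seward's theorem is stated for ergodic actions, and the paper silently applies it --- so your discussion of passing to ergodic components is a legitimate point of care rather than a deviation, and the ergodic core of your argument matches the paper exactly.
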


\begin{proof}
If $\G \cc (X,\mu)$ has positive Rokhlin entropy then it factors onto a Bernoulli shift by Seward's generalization of Sinai's Theorem (Theorem \ref{thm:seward-sinai}). The Corollary now follows from Propositions \ref{prop:wc} and \ref{prop:wc-b}.  
\end{proof}

\begin{question}
Do all weakly compact actions of non-amenable groups have zero naive entropy?
\end{question}

\subsubsection{Orbit equivalence and relative entropy}\label{sec:oe-rel}

Rudolph and Weiss showed that while entropy is not an OE-invariant, entropy relative to the orbit-change sigma-algebra is \cite{rudolph-weiss-2000}. They developed this tool to prove that CPE actions of amenable groups are uniformly mixing. It has since been used to show that CPE actions of amenable groups have countable Lebesgue spectrum \cite{dooley-golodets-2002} and to give an alternative development of Ornstein theory \cite{danilenko-2001, danilenko-2002}. We give the precise statement next.

Suppose essentially free actions $\G \cc (X,\mu)$ and $\La \cc (Y,\nu)$ are orbit-equivalent via an isomorphism $\Phi:X \to Y$. Then we can define cocycles $\alpha:\G \times X \to \La$ and $\b:\La \times Y \to \G$ by
$$\alpha(g,x)\Phi(x) = \Phi(gx), \quad \beta(h,y)\Phi^{-1}(y) = \Phi^{-1}(hy).$$
These satisfy the cocycle equations
$$\alpha(gh,x)=\alpha(g,hx)\alpha(h,x),\quad \beta(gh,y)=\beta(g,hy)\beta(h,y).$$
Let $\cF_X \subset \cB_X$ be the smallest sigma-subalgebra such that $\alpha(g,\cdot):X \to \La$ is $\cF_X$-measurable for all $g \in \G$. If $\cF_Y \subset \cB_Y$ is defined similarly, then $\Phi$ maps $\cF_X$ to $\cF_Y$. These are the {\bf orbit-change} sigma-algebras.

\begin{thm}[Rudolph-Weiss Theorem]\cite{rudolph-weiss-2000}
Let $\G, \La$ be amenable groups and $\G \cc (X,\mu), \La\cc (Y,\nu)$ essentially free pmp actions as above. Then
$$h_\mu(\G \cc X| \cF_X) = h_\nu(\La \cc Y| \cF_Y).$$
\end{thm}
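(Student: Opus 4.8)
The plan is to transport everything through $\Phi$. Identifying $(Y,\nu)$ with $(X,\mu)$ along $\Phi$, I obtain two essentially free pmp actions $\G \cc (X,\mu)$ and $\La \cc (X,\mu)$ sharing a single orbit equivalence relation $R$, with cocycles $\alpha\colon \G\times X\to\La$ and $\beta\colon\La\times X\to\G$ inverse to one another; the two orbit-change sigma-algebras become one sub-sigma-algebra $\cF\subset\cB_X$ that is invariant under \emph{both} actions, since it records the intrinsic combinatorics of the pair of orbit foliations and does not care which action is used to move along orbits. Thus it suffices to prove $h_\mu(\G\cc X\mid\cF)=h_\mu(\La\cc X\mid\cF)$. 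Here I would use relative Kolmogorov--Sinai theory for amenable groups: for a partition $\cP$ with $H_\mu(\cP\mid\cF)<\infty$ and a F\o lner sequence $\{F_n\}$ in $\G$, the limit $h_\mu(\G\cc X,\cP\mid\cF)=\lim_n|F_n|^{-1}H_\mu(\cP^{F_n}\mid\cF)$ exists independently of $\{F_n\}$, its supremum over $\cP$ is $h_\mu(\G\cc X\mid\cF)$, and it may be computed from any partition that is generating relative to $\cF$.

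Next I would fix a single partition $\cR$ that is generating relative to $\cF$ for both actions at once (refine a relative generator for $\G$ with one for $\La$; finite relative Shannon entropy is preserved). By the relative Shannon--McMillan--Breiman theorem for amenable group actions (an Abramov--Rokhlin/relative pointwise ergodic statement), for a.e.\ $x$
\[
-\frac{1}{|F_n|}\log\mu\big(\cR^{F_n}(x)\mid\cF\big)(x)\;\longrightarrow\; h_\mu(\G\cc X\mid\cF),
\]
and likewise $-|E_m|^{-1}\log\mu\big(\cR^{E_m}(x)\mid\cF\big)(x)\to h_\mu(\La\cc X\mid\cF)$ along a F\o lner sequence $\{E_m\}$ in $\La$. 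The real work is a purely orbital comparison: conditioning on $\cF$ fixes the entire dictionary $gx\leftrightarrow\alpha(g,x)x$ between the $\G$- and $\La$-labellings of each orbit, so I would construct, $\cF$-measurably in $x$, nested finite sets $F_n(x)\subset\G$ and $E_n(x)\subset\La$ that are F\o lner along the orbit, that determine the \emph{same} atom (i.e.\ $\cR^{F_n(x)}(x)$ and $\cR^{E_n(x)}(x)$ agree outside a vanishing fraction of their generating sets), and with $|E_n(x)|/|F_n(x)|\to1$. The density matching is a Kac/return-time computation: because both actions preserve $\mu$, the ``speed'' cocycle relating the two parametrisations integrates to $1$, so matched blocks have equal asymptotic density. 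Building $F_n(x),E_n(x)$ is an orbital, relativised version of the Ornstein--Weiss quasi-tiling construction, using amenability of \emph{both} groups.

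Finally, feeding this comparison into the two pointwise limits forces them to coincide almost everywhere, giving $h_\mu(\G\cc X\mid\cF)=h_\mu(\La\cc X\mid\cF)$, which after undoing the identification is exactly $h_\mu(\G\cc X\mid\cF_X)=h_\nu(\La\cc Y\mid\cF_Y)$.

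The hard part will be the orbital F\o lner comparison in the middle step: choosing matched blocks $F_n(x)\subset\G$ and $E_n(x)\subset\La$ simultaneously $\cF$-measurably, F\o lner for both groups, and with matching asymptotic density. This fuses quasi-tiling for amenable groups with a return-time estimate controlling the cocycle densities, and it is the one place where preservation of $\mu$ by both actions is genuinely used; the remainder is bookkeeping with relative entropy. (In the sofic framework this role is played by the sofic Rudolph--Weiss theorem of \cite{bowen-entropy-2012}.)
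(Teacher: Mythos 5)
The paper does not actually prove this theorem; it is quoted directly from \cite{rudolph-weiss-2000}, so there is no internal argument to compare against. Judged on its own terms, your setup is fine: after identifying $Y$ with $X$ along $\Phi$ the two orbit-change algebras coincide and are invariant under both actions, a partition that is generating relative to $\cF$ for one action is generating relative to $\cF$ for the other (since $(T^g)^{-1}P$ is reassembled from $\cF$-measurable pieces of the sets $(S^h)^{-1}P$), and the relative Kolmogorov--Sinai and Shannon--McMillan--Breiman theorems for amenable groups are available.

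The gap is the middle step. You want finite sets $F_n(x)\subset\G$ and $E_n(x)\subset\La$, F\o lner in their respective groups, whose orbit windows $F_n(x)\cdot x$ and $E_n(x)\cdot x$ essentially coincide. For a general orbit equivalence no such simultaneous windows exist: essential freeness makes $g\mapsto\alpha(g,x)$ a bijection from $\G$ onto $\La$ for a.e.\ $x$, and it transports a F\o lner set of $\G$ to a set of the \emph{same cardinality} that is typically nowhere near F\o lner in $\La$ --- already for $\Z$ and $\Z^2$ under a Dye orbit equivalence the image of a long interval is a wildly shaped subset of $\Z^2$. This also shows your Kac/return-time ``density matching'' is addressing a non-issue: $|F_n(x)|=|\alpha(F_n(x),x)|$ holds automatically by injectivity, and what actually fails is the invariance of the transported set, which is precisely what your plan requires. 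The Rudolph--Weiss argument (and Danilenko's orbital reworking \cite{danilenko-2001}) avoids simultaneous F\o lner windows altogether: one estimates the number of conditional $(\cR\mid\cF)$-names on a $\La$-F\o lner window from above and below by covering the window, $\cF$-measurably inside the orbit, with translates of the non-F\o lner images $\alpha(F,x')$ of $\G$-F\o lner sets via the Ornstein--Weiss covering lemma; it is this counting of names, not a geometric matching of windows, that forces the two relative entropies to agree. As written, your proof outline would need to be restructured around such a counting argument to close.
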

This theorem has been generalized to Rokhlin entropy and arbitrary countable groups \cite{seward-kreiger-1}. It plays a major role in Seward's extension of Krieger's Theorem. It also inspired sofic entropy theory for actions of groupoids \cite{MR3286052}. 

\begin{question}
Is there an analogue of the Rudolph-Weiss Theorem for arbitrary sofic groups if one takes for entropy the supremum over all sofic entropies? Perhaps one  should use Ben Hayes' definition of relative sofic entropy \cite{hayes-relative-entropy}?
\end{question}

\subsubsection{Integrable orbit-equivalence}

With notation as in \S \ref{sec:oe-rel}, suppose that $\G$ and $\La$ are endowed with word lengths $\|\cdot \|_\G, \|\cdot \|_\La$. Then the orbit-equivalence is said to be {\bf integrable} if for every $g\in \G$ and $h \in \La$,
$$\int \|\alpha(g,x)\|_\La~d\mu(x) < \infty, \quad \int \|\beta(h,y)\|_\G ~d\nu(y)<\infty.$$
Tim Austin proved in \cite{MR3579704} that if $\G$ and $\La$ are amenable then entropy is invariant under integrable orbit-equivalence (abbreviated IOE). 

\begin{question}
Is either Rokhlin entropy, $\Si$-entropy, naive entropy or the $f$-invariant IOE- invariant?  Is the supremum of $\Si$-entropies over all $\Si$ an IOE-invariant?
\end{question}

\section{Factors and extensions}\label{sec:factors}       %%%%%%%%%%%%%%%%%%%%%%%%%%%%%%%%%%%%%

\subsection{A variant of the Ornstein-Weiss example}\label{sec:OW}       %%%%%%%%%%%%%%%%%%%%%%%%%%%%%%%%%%%%%

The Ornstein-Weiss example (\S \ref{sec:OW-intro}) is an entropy-increasing finite-to-1 factor map of a Bernoulli shift action of the free group onto another Bernoulli shift. It was generalized by Gaboriau-Seward via an algebraic construction that applies to all groups (\S \ref{sec:yuz}). Their entropy bounds show that the map is entropy-increasing whenever $\G$ has positive first $L^2$-Betti number. Next we present an example which is even more extreme: a zero entropy action with a finite-to-1 factor map onto a Bernoulli shift.

%Here we present several variants and their applications to factors of Bernoulli shifts.

%\subsubsection{Generalized Ornstein-Weiss factors} 

%Now let $\Ga$ be an arbitrary countable group, $K$ a finite field, $u_K$ the uniform probability measure on $K$ and $\Ga \cc (K,u_K)^\Ga$ the corresponding Bernoulli shift. Note that $K$ acts on $K^\Ga$ by pointwise addition: $(kx)(g) = k + x(g)$ ($k\in K, x \in K^\Ga, g\in \Ga$). This action preserves the measure and commutes with the action of $\Ga$. So we consider the factor action $\Ga \cc K^\Ga/K$ with the induced measure denoted $\mu^\Ga_{K}$ (which is Haar measure on $K^\Ga/K$). This is called the {\bf Ornstein-Weiss factor}. As mentioned in \S \ref{sec:yuz}, Gaboriau and Seward obtained bounds on the entropy of these actions showing these factors are entropy-increasing whenever $\Ga$ has positive first $L^2$-Betti number. In \S ** we show that when $\G$ has property (T), the Ornstein-Weiss factors are not measurably conjugate to Bernoulli shifts. 

\begin{thm}\label{thm:variant}
Let $\F=\langle a,b\rangle$. Then there exists an ergodic essentially free pmp action $\F \cc^T (X,\mu)$ such that
\begin{enumerate}
\item $h_{\Sigma,\mu}(T) = f_\mu(T)=h^{\rm{Rok}}(T)=0$ for every sofic approximation $\Sigma$;
%\item $\F \cc (X,\mu)$ has zero Rokhlin entropy and infinite naive entropy
\item $\F \cc^T (X,\mu)$ admits a 2-1 factor map onto a Bernoulli shift.
\item Moreover the action is algebraic and the factor map is a continuous homomorphism.
\end{enumerate}
\end{thm}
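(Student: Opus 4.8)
The idea is to reverse the logic of the Ornstein–Weiss map. There, one has a 2-to-1 equivariant surjective group homomorphism $\phi\colon(\Z/2)^{\F}\to(\Z/2\times\Z/2)^{\F}$ whose kernel is the constants, and the \emph{source} carries entropy $\log 2$ while the \emph{target} carries entropy $\log 4$. Here I want the source to have zero entropy. So instead of starting with a Bernoulli shift and mapping onto a larger one, I would start with an algebraic action that is a subgroup (a ``subshift'') of a Bernoulli shift, arranged so that it still surjects with finite kernel onto a full Bernoulli shift. Concretely: find $f\in\Z\F$ (or a matrix over $\Z\F$) such that the principal algebraic action $\F\cc X_f=\widehat{\Z\F/\Z\F f}$ has zero entropy — by Hayes' theorem (\cite{hayes-fk-determinants}), this means $\log\det{}^+(f)=0$ while $f$ is injective on $\ell^2(\F)$ — and yet $X_f$ admits a finite-to-one equivariant continuous group homomorphism onto $(\Z/n)^{\F}$ for some $n\ge 2$. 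The natural candidate is to work mod $2$: take the group ring $(\Z/2)\F$, pick $f\in(\Z/2)\F$ such that $(\Z/2)\F/(\Z/2)\F f$ is, as a $\Z/2$-vector space with $\F$-action, an extension of $(\Z/2)$ (the constants, i.e. the trivial module) by something, with the quotient map realizing $X_f\to\widehat{\Z/2}=\Z/2$ — no wait, that is backwards; I want $X_f$ to \emph{surject} onto a Bernoulli shift, so dually I want $(\Z/2)\F/(\Z/2)\F f$ to \emph{contain} a free submodule $(\Z/2)\F$ with finite-index cokernel. Equivalently, I want a short exact sequence $0\to(\Z/2)\F\to (\Z/2)\F/(\Z/2)\F f\to F\to 0$ with $F$ finite. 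Dualizing gives $0\to\hat F\to X_f\to(\Z/2)^{\F}\to 0$, a finite-to-one factor map onto the Bernoulli shift $\F\cc(\Z/2)^{\F}$, and $\hat F$ is finite, so the map is $|\hat F|$-to-one; choosing things so $|\hat F|=2$ gives a $2$-to-$1$ map.

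**Key steps, in order.** First, I would construct $f$. The simplest guess: over $\Z/2$, let $g=1+a+b$ (or $g=a+a^{-1}+b+b^{-1}$, the ``Laplacian-type'' element). Then $(\Z/2)\F\,g$ has $(\Z/2)\F/(\Z/2)\F g$ fitting into short exact sequences via the augmentation-type maps; one checks whether the trivial module is a quotient or a sub. The element I actually want should have the property that multiplication by it is \emph{injective} on $(\Z/2)\F$ (so the dual action is well-defined as a principal algebraic action with infinite orbits) but the cokernel $(\Z/2)\F/(\Z/2)\F f$ surjects onto a free module of rank one with finite kernel. A clean way to arrange the latter: take $f$ to be a product, $f=(1+c)\,h$ where $c$ is a torsion-like gadget — but $\F$ is torsion-free, so instead use two generators: over $\Z/2$, consider the $2\times 2$ presentation matrix $\begin{pmatrix}1+a & b\\ 0 & 1+a^{-1}\end{pmatrix}$ or similar, giving a module that is an extension $0\to(\Z/2)\F\to M\to (\Z/2)\F/(\Z/2)\F(1+a)\to 0$, and then further quotient $(\Z/2)\F/(\Z/2)\F(1+a)$ down to a finite module; the point is that $(1+a)$ over $\Z/2\langle a\rangle\cong\Z/2[\Z]$ has cokernel $\Z/2$ (constants along $\langle a\rangle$-cosets), which is not finite over $\F$ but is over $\langle a\rangle$ — so I must be more careful and instead choose $f$ so that $(\Z/2)\F/(\Z/2)\F f$ is genuinely (rank-one free) $\oplus$ (finite) or a nonsplit extension thereof. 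Once $f$ is pinned down, step two is to verify $\log\det{}^+(f)=0$: since we are over $\Z/2$ this is not literally the $\ell^2$-computation, so I would instead lift to a suitable $\tilde f\in\Z\F$ and compute the Fuglede–Kadison determinant, or — more robustly — argue directly that the action $\F\cc X_f$ has a zero-entropy generating partition, or that it is an inverse limit / extension of zero-entropy pieces, invoking that $\F\cc(\Z/2)^{\F}$ being a \emph{quotient} gives the \emph{lower} bound $h\ge\log 2$ only if the map went the other way — so I need the entropy computation to genuinely come from the algebra. Step three: check that the kernel $\hat F$ of $X_f\to(\Z/2)^{\F}$ is finite of order $2$, giving the $2$-to-$1$ claim, and that $X_f\to(\Z/2)^{\F}$ pushes Haar measure to Haar measure (automatic for surjective compact group homomorphisms) so it is a genuine factor map. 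Step four: ergodicity and essential freeness of $\F\cc X_f$ — ergodicity follows if the $\F$-action on $\Z\F/\Z\F f$ (or its mod-$2$ version) has no nonzero finite invariant quotients except the obvious ones, which I would check via a spectral/Fourier argument or by noting that a finite-kernel extension of an ergodic (indeed mixing) Bernoulli shift is ergodic provided the kernel action is appropriate; essential freeness should follow from the Bernoulli factor having essentially free action (stabilizers in the extension inject into stabilizers in the quotient up to the finite kernel) — this needs a small argument.

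**The main obstacle.** The delicate point is producing $f$ achieving \emph{both} $\log\det{}^+(f)=0$ (zero entropy) \emph{and} the finite-cokernel-onto-a-free-module property simultaneously. These pull in opposite directions: elements with trivial Fuglede–Kadison determinant tend to be ``degenerate'' (the spectral measure of $|f|$ concentrates near $1$), while the cokernel controlling a Bernoulli factor wants $f$ to behave like a nonzerodivisor with a large-ish but not-too-large cokernel. I expect the resolution is exactly the mod-$2$ trick: over $\Z/2$ the relevant ``determinant'' vanishes for genuinely non-obvious reasons (e.g. because $f$ is \emph{not} invertible in $L\F$ over $\C$ once lifted, or because the relevant $\ell^2$-Betti/torsion quantity vanishes), and one leverages the special structure of $\F$ — a free group, where $\Z\F$ is a free ideal ring and modules have length-$1$ free resolutions. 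I would look for $f$ of the form $f=\sum_{s\in S}(1-s)$ type (the boundary map of the Cayley graph), since for free groups this is precisely the place where $L^2$-invariants degenerate, and combine it with a rank bookkeeping using that the first $L^2$-Betti number of $\F_2$ is $1$ so a single relator kills it. So the heart of the proof is a careful module-theoretic construction over $(\Z/2)\F_2$ plus a Fuglede–Kadison-determinant computation for the lift, and everything else (measure pushforward, ergodicity, freeness, the $f$-invariant and Rokhlin-entropy conclusions via $h^{Rok}\le h^{naive}$ and Propositions \ref{prop:naive-Rokhlin1}, \ref{prop:Rokhlinsofic}, and the $f$-invariant formula for algebraic actions from \S\ref{sec:algebraic}) is routine.
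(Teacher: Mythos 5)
Your general shape is right---and in fact closer to the paper's construction than you may realize---but two essential steps are missing or wrong. The paper's example \emph{is} the principal algebraic action over $\Z/2$ associated to $f=1+a$: the space is $X=\{x\in(\Z/2)^{\F}:x_{ga^n}=x_{ga^m}\ \forall g,n,m\}$, i.e.\ configurations constant on left cosets of $\langle a\rangle$, and the $2$-to-$1$ factor map onto the Bernoulli shift $(\Z/2)^{\F}$ is the Ornstein--Weiss-type homomorphism $\Phi(x)_g=x_g+x_{gb}$, whose kernel is the two constants. You explicitly considered $1+a$ and rejected it, but for a confused reason: you conflated the cokernel of multiplication by $f$ (the module $M=(\Z/2)\F/(\Z/2)\F(1+a)$, which is indeed infinite) with the cokernel of an embedding of a free rank-one module \emph{into} $M$. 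The latter is what you need, and it exists: right multiplication by $1+b$ embeds $(\Z/2)\F$ into $M$ with quotient $\Z/2$, which is precisely the dual statement of the surjectivity of $\Phi$ restricted to $X$ (provable by propagating values along the tree of $\langle a\rangle$-cosets, exactly as in the Ornstein--Weiss surjectivity argument). So your search for a ``more careful'' $f$ realizing $(\text{rank-one free})\oplus(\text{finite})$ never terminates in the proposal, when the element you already had in hand works.

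The second and more serious gap is the zero-entropy verification, which your proposal does not actually carry out by any route that works. The Fuglede--Kadison route fails because Hayes' theorem as you cite it concerns $X_f\subset\T^{\G}$ for $f\in\Z\G$; the mod-$2$ system is a different action and is not covered by that statement (and lifting to $\Z\F$ changes the system). Worse, your fallback via naive entropy is impossible: the action factors onto a Bernoulli shift, naive entropy is monotone under factors and takes values in $\{0,+\infty\}$ for non-amenable groups (Theorem \ref{thm:naive-non-amenable}), so $h^{naive}_\mu(T)=+\infty$ and the chain $h^{Rok}\le h^{naive}$ gives nothing. The paper's actual mechanism is entirely different: one recognizes $\F\cc X$ as the co-induction to $\F$ of the trivial action of $\langle a\rangle$ on $\Z/2$, invokes the fact that co-induction preserves topological sofic entropy (Theorem \ref{thm:coinduced}) together with the variational principle (Theorem \ref{thm:variational}) to get $h_\Sigma(T)=h_{\Sigma,\mu}(T)=0$ for every $\Sigma$; computes $f_\mu(T)=0$ directly because the time-$0$ partition is Markov; and proves $h^{Rok}(T)=0$ by exhibiting explicit generating two-set partitions $\{\phi_n^{-1}(0),\phi_n^{-1}(1)\}$ (where $\phi_n(x)=0$ iff $x(b^i)=0$ for all $|i|\le n$) whose Shannon entropy tends to zero. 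None of these three arguments appears in your proposal, and the ones you do propose either do not apply or provably fail, so the zero-entropy half of the theorem is a genuine gap.
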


\begin{proof}
 Let $\Lambda=\langle a \rangle$ denote the subgroup of $\F$ generated by $a$. We regard $(\Z/2)^\F$ as a compact abelian group under pointwise addition. Let $X\le (\Z/2)^\F$ denote the subgroup consisting of all $x\in (\Z/2)^\F$ such that $x_{ga^n} = x_{ga^m}$ for every $g\in \F$ and $n,m \in \Z$. Let $(T^g)_{g\in \G}$ be the shift action on $X$. So $(T^g)x(f)=x(g^{-1}f)$. This action is by group-automorphisms and so preserves the Haar measure.

The action $\F \cc^T X$ is measurably conjugate to the trivial action of $\Lambda$ on $\Z/2$ co-induced to $\F$. Because co-induction preserves topological entropy (Theorem \ref{thm:coinduced}) the action $\F \cc^T X$ has zero topological sofic entropy with respect to every sofic approximation. By the variational principle (Theorem \ref{thm:variational}), the measure sofic entropy is also zero. The time-0 partition is a Markov partition on $X$ (when viewed as a subset of $(\Z/2)^\F$). So its $f$-invariant can be computed directly and shown to equal zero. 

To see that the Rokhlin entropy is zero, define $\phi_n:X \to \{0,1\}$ by $\phi_n(x)=0$ if $x(b^i)=0$ for all $|i|\le n$ and $\phi_n(x)=1$ otherwise.  We claim that $\phi_n$ is generating in the sense that the smallest $\F$-invariant sigma-algebra containing $\phi_n^{-1}(0)$ is the sigma-algebra of all Borel sets (modulo measure zero).

If $x\in X$ is Haar random then for any $g \in\G$, $\{x(gb^n):~n \in \Z\}$ are iid random variables uniformly distributed on $\{0,1\}$. It follows that for a.e. $x\in X$ if $g\in \G$ is such that $x(g)=0$ then there exists $m \in \Z$ such that $\phi_n( T^{a^mg^{-1}}x)=0$ (i.e., $x(ga^{-m}b^i)=0$ for all $|i|\le n$). 

Now suppose $x,y \in X$ satisfy the condition above and $x \ne y$. It suffices to show there exists $g\in \F$ such that $\phi_n(T^gx) \ne \phi_n(T^gy)$. Since $x \ne y$, there exists $h \in \G$ such that $x(h)\ne y(h)$. Without loss of generality, $x(h)=0, y(h)=1$. By the condition above there exists $m \in \Z$ such that $\phi_n(T^{a^mh^{-1}}x) = 0$. However $\phi_n(T^{a^mh^{-1}}y)=1$ since $y$ is constant on $\La$-cosets. Setting $g = a^mh^{-1}$ shows $\phi_n(T^gx)\ne \phi_n(T^gy)$ and so $\phi_n$ is generating. 

Since the measure of $\phi_n^{-1}(0)$ tends to zero as $n\to\infty$, the Shannon entropy of $\{\phi_n^{-1}(0),\phi_n^{-1}(1)\}$ also tends to zero as $n\to\infty$. This shows $h^{\rm{Rok}}(\F \cc (X,\mu))=0$.

%$$H(\phi_n) = -2^{1-2n}\log(2^{1-2n})  - (1-2^{1-2n})\log(1-2^{1-2n}) \to 0$$

Let $\Phi: X \to (\Z/2)^\F$ be the map $\Phi(x)_g = x_g + x_{gb}$. This is a surjective homomorphism with kernel $N=\{ {\bf 0}, {\bf 1}\}$. It is $\F$-equivariant. So it is a 2-1 factor map onto the Bernoulli shift with entropy $\log(2)$. %So even a zero entropy system can factor onto a Bernoulli shift (and therefore, it factors onto all Bernoulli shifts over $\F$). % In particular, a zero entropy system can factor onto the infinite-entropy Bernoulli shift.
\end{proof}

\begin{question}
Is it true that all proper factors of the system described above have positive entropy? If so, then this action has zero sofic entropy but is `almost' CPE (completely positive entropy).
\end{question}

%\begin{problem}
%All known examples of entropy increasing under a factor map are generalizations of the Ornstein-Weiss example. This leads to a general, if vague problem: either find new examples or prove some sort of structure theorem which should explain how entropy increase is necessarily related to the Ornstein-Weiss example.
%\end{problem}

\begin{question}
In \cite{MR508264} D. Rudolph showed that every finite-to-1 extension of a Bernoulli shift over $\Z$ is either Bernoulli or has a nontrivial factor map onto a finite action. By Theorem \ref{thm:variant}, Rudolph's Theorem does not extend to non-abelian free groups. Is there some other classification of finite-to-1 extensions of Bernoulli shifts over a non-abelian free group?
\end{question}

\subsection{Bernoulli factors of Bernoulli shifts}\label{sec:bernoulli-bernoulli}        %%%%%%%%%%%%%%%%%%%%%%%%%%%%%%%%%%%%%

This section sketches a proof of:
\begin{thm}\label{thm:ri}\cite{bowen-ri}
Let $\G$ be a non-amenable countable group. Then all Bernoulli shifts of $\G$ factor onto each other.
\end{thm}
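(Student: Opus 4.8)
The plan is to reduce the statement to a single special case and then apply the Gaboriau--Lyons machinery in the generalized form announced in \cite{bowen-ri}. Concretely, since any Bernoulli shift $\G \cc (K,\kappa)^\G$ with $H(K,\kappa)>0$ factors onto the $2$-shift $\G \cc (\Z/2,u_2)^\G$ (by a measurable coding of a positive-entropy base onto $\{0,1\}$ with the Bernoulli product measure, using Sinai's factor theorem for $\G$-Bernoulli shifts, or more elementarily a direct ``binary expansion''-type coding when $H(K,\kappa)=\infty$), and since the Bernoulli shift with infinite base entropy factors onto every Bernoulli shift (it factors onto a countable power of itself), it suffices to prove the single assertion: for $\G$ non-amenable, the $2$-shift $\G \cc (\Z/2,u_2)^\G$ factors onto the Bernoulli shift with base $([0,1],\mathrm{Leb})$. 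Equivalently, I would show $\G \cc (\Z/2,u_2)^\G$ factors onto $\G \cc (\Z/2,u_2)^{\G \times \N}$, i.e. onto a countable product of copies of itself.

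The key tool is the Gaboriau--Lyons theorem \cite{gaboriau-lyons}: for any non-amenable $\G$, the orbit equivalence relation of the Bernoulli action $\G \cc (\Z/2,u_2)^\G$ contains (the orbit relation of) an essentially free, ergodic, measure-preserving action of the free group $\F_2$. The generalization in \cite{bowen-ri} upgrades this to a statement with \emph{control on the transfer of randomness}: inside the Bernoulli shift one can find a copy of an $\F_2$-action together with enough independent auxiliary randomness so that one can run the Ornstein-theory machinery for $\F_2$ (the Ornstein--Weiss/coinduction techniques, Seward's generalized Sinai theorem Theorem~\ref{thm:seward-sinai}, or a direct relative-Bernoulli argument) \emph{relatively} over the orbit equivalence relation. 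First I would invoke this to obtain an $\F_2$-subaction $\F_2 \cc (\Z/2,u_2)^\G$ whose orbits are contained in $\G$-orbits and which, together with the ambient Bernoulli randomness, carries a relatively Bernoulli structure with infinite fiber entropy. Then I would use that an $\F_2$-Bernoulli shift of infinite entropy factors onto any target, and push this factor map through the orbit inclusion: a factor map that is equivariant for the finer $\F_2$-orbit structure and built from $\G$-equivariant data extends to a $\G$-equivariant map. The output is a $\G$-equivariant map from $(\Z/2,u_2)^\G$ onto the desired Bernoulli shift, pushing Haar/product measure forward correctly.

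The hard part will be the last step --- promoting an $\F_2$-equivariant coding to a genuinely $\G$-equivariant factor map. The inclusion of orbit relations only gives that the coding respects $\F_2$-orbits, not that it intertwines the full $\G$-action; one must arrange that the cocycle data defining the $\F_2$-subaction is itself produced $\G$-equivariantly (e.g. as a measurable function of the Bernoulli configuration that is natural under the $\G$-shift) so that every choice made along the way is $\G$-equivariant. This is exactly the content of the ``co-induction/relative'' formulation: one works with the $\G$-equivariant family of $\F_2$-actions indexed by the orbit, not a single one, and the Ornstein-theoretic coding is carried out relatively over $\G \backslash (\text{orbit groupoid})$. I expect the bulk of the technical work (and the place where \cite{bowen-ri} does the real labor) to be verifying that the Gaboriau--Lyons construction can be performed with this equivariance and with a reservoir of independent randomness left over, and that the relative Sinai/Ornstein factor theorem applies in that relative setting; modulo that, the reduction and the assembly are routine. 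I would present the argument by first stating the relative Gaboriau--Lyons input as a black box (citing \cite{bowen-ri, gaboriau-lyons}), then giving the short reductions to the $2$-shift-onto-$[0,1]$-shift case, and finally the one-paragraph assembly.
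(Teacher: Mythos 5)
Your overall architecture for the second half --- find an $\F_2$-action inside the orbit equivalence relation via a generalized Gaboriau--Lyons theorem, run $\F_2$-Ornstein/Ornstein--Weiss machinery relatively over that sub-orbit structure, and take care that every choice is made $\G$-equivariantly so the resulting coding is a genuine $\G$-factor map --- matches the paper's strategy, and you correctly flag the equivariance promotion as the delicate point. But your opening reduction has a genuine gap that swallows the hardest case of the theorem. You claim that any Bernoulli shift with $H(K,\kappa)>0$ factors onto the $2$-shift ``using Sinai's factor theorem.'' Sinai's theorem (and Seward's generalization, Theorem \ref{thm:seward-sinai}) only produces Bernoulli factors whose base entropy is \emph{at most} the (Rokhlin) entropy of the source; for a Bernoulli shift with purely atomic base of entropy $h<\log 2$ the Rokhlin entropy is $\le h$, so Sinai gives you nothing with base entropy $\ge \log 2$. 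Nor does a ``measurable coding of the base onto $\{0,1\}$'' exist in general: a base such as $(\{a,b\},(0.99,0.01))$ has positive entropy but admits no subset of measure $1/2$. Producing an entropy-\emph{increasing} factor map out of a small-entropy Bernoulli shift is precisely the nontrivial content of the theorem, and the paper spends real effort on it: for $\F_2$ it iterates a sparse variant of the Ornstein--Weiss map on the alphabet $\Z/2\cup\{*\}$, gaining $(1-\kappa_i(\{*\}))\log 2$ of base entropy at each step until the entropy exceeds $\log 2$; and the generalized Gaboriau--Lyons statement is proved for \emph{every} nontrivial base precisely so that this machinery can be run inside an arbitrarily-small-entropy Bernoulli shift over a general $\G$, rather than only inside the $2$-shift.

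A secondary inaccuracy: the Gaboriau--Lyons $\F_2$-subaction does not come equipped with a ``relatively Bernoulli structure with infinite fiber entropy.'' Even in the genuine $2$-shift there is only $\log 2$ of randomness per site; the infinite entropy is manufactured, not found, by iterating the Ornstein--Weiss-type entropy-increasing maps (as in $(\Z/2)^{\F_2}\to((\Z/2)^{\N})^{\F_2}$ in the $\F_2$ case). If you repair the reduction --- either by proving the small-entropy-to-large-entropy step directly, or by applying the ``for every nontrivial base'' form of Gaboriau--Lyons to the given source rather than first passing to the $2$-shift --- the rest of your outline is consistent with the paper's.
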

The first step is to prove the theorem when $\G=\F_2=\langle a,b \rangle$ is the rank 2 free group (details are in \cite{bowen-ornstein-2011}). Let $\Phi:(\Z/2)^{\F_2} \to (\Z/2\times \Z/2)^{\F_2}$ denote the Ornstein-Weiss map. Now consider the map $\Psi: (\Z/2 \times \Z/2)^{\F_2} \to (\Z/2 \times \Z/2 \times \Z/2)^{\F_2}$ given by $\Psi(x,y) = (x,\Phi(y))$. By composing $\Psi$ with $\Phi$, we see that ${\F_2} \cc (\Z/2)^{\F_2}$ factors onto ${\F_2} \cc (\Z/2 \times \Z/2 \times \Z/2)^{\F_2}$. This construction can be  iterated to show that $(\Z/2)^{\F_2}$ factors onto $((\Z/2)^\N)^{\F_2}$. Since the latter has a diffuse base space, it factors onto all Bernoulli shifts over $\F_2$. Therefore $\F_2 \cc (\Z/2)^{\F_2}$ factors onto all Bernoulli shifts. 

A co-induction argument using Sinai's Factor Theorem for $\Z$ shows that whenever $(K,\k)$ is a probability space with $H(K,\k) \ge \log 2$, the corresponding Bernoulli shift ${\F_2} \cc (K,\k)^{\F_2}$ factors onto $\F_2 \cc (\Z/2,u_2)^{\F_2}$ and therefore factors onto all Bernoulli shifts (the factor map for the ${\F_2}$-action is constructed by applying the factor map for $\Z$-actions to each $\langle a\rangle$ coset). Another co-induction argument (due to Stepin \cite{stepin-1975}) applied to Ornstein's Isomorphism Theorem shows that whenever two probability spaces have the same Shannon entropy then their corresponding Bernoulli shifts over ${\F_2}$ are isomorphic. 

It now suffices to show that for any $\eps>0$ there is a probability space $(K,\k)$ with Shannon entropy $<\eps$ such that the corresponding Bernoulli shift ${\F_2} \cc (K,\k)^{\F_2}$ factors onto a Bernoulli shift with base entropy $\ge \log 2$. This is possible by a variant of the Ornstein-Weiss map. Consider the space $K = \Z/2 \cup \{*\}$ with a probability measure $\k_1$ satisfying $\k_1(\{0\})=\k_1(\{1\})$. Such a measure can be found to have Shannon entropy $<\eps$. Let $L = \Z/2 \times \Z/2 \cup \{*\}$ with the probability measure $\l_1 \in \Prob(L)$ satisfying $\l_1( \{ (i,j)\}) = \k_1(\{1\})/2$ for all $i,j \in \Z/2$.

The factor map $\Phi:K^{\F_2} \to L^{\F_2}$ is defined by: $\Phi(x)(g)=\phi(g^{-1}x)$ where $\phi:K^{\F_2} \to L$ is defined by $\phi(x)=*$ if $x(1_{\F_2})=*$ and otherwise
$$\phi(x) = (x(1_{\F_2}) + x(a^n), x(1_{\F_2})  + x(b^m))$$
where $n, m \in \N$ are the smallest natural numbers such that $x(a^n) \ne *, x(b^m) \ne *$. It can be shown $\Phi$ pushes the product measure $\k_1^{\F_2}$ forward to the measure $\l_1^{\F_2}$. Therefore, entropy increases. Note
$$H(L,\l_1) = H(K,\k_1) + (1-\k_1(\{*\}))\log(2)>H(K,\k_1).$$
If $H(L,\l_1)\ge \log 2$ then we are done. Otherwise, there exists an isomorphism from $\F_2 \cc (L,\l_1)^{\F_2}$ to a Bernoulli shift of the form $\F_2 \cc (K,\k_2)^{\F_2}$ where $\k_2$ is a probability measure satisfying $\k_2(\{0\})=\k_2(\{1\})$. So we can find a new factor map from $\F_2 \cc (K,\k_2)^{\F_2}$ to $\F_2 \cc (L,\l_2)^{\F_2}$ where $H(L,\l_2)>H(K,\k_2)$. By composing, we obtain a factor map from the original action $\F_2 \cc (K,\k_1)^{\F_2}$ onto $\F_2 \cc (L,\l_2)^{\F_2}$. After a finite number of similar steps, ${\F_2} \cc (K,\k_1)^{\F_2}$ factors onto a Bernoulli shift with base space entropy $\ge \log(2)$. This is because after the $i$-th step the entropy of the base increases by $(1-\k_i(\{*\}))\log(2)$ and $\k_i(\{*\})$ is decreasing to zero. This concludes the case $\G={\F_2}$. 

The general case proceeds by a measurable co-induction argument. The key new ingredient is a generalization of the Gaboriau-Lyons Theorem \cite{gaboriau-lyons} to arbitrary Bernoulli shifts. That theorem states that if $\G$ is an arbitrary non-amenable group then {\em there exists} a probability space $(K,\k)$ and there exists an ergodic essentially free pmp action $\F_2 \cc (K,\k)^\G$ whose orbits are contained in the orbits of the Bernoulli action $\G \cc (K,\k)^\G$. The generalization proves the same statement with ``there exists a probability space $(K,\k)$'' replaced by ``for every nontrivial probability space $(K,\k)$''. Details will appear in \cite{bowen-ri}. This generalizes previous work of Ball \cite{ball-factors1} who proved for every non-amenable group $\G$ there exists some Bernoulli  shift with finite base entropy that factors onto all Bernoulli shifts over $\G$.

\subsection{Zero entropy extensions}\label{sec:zero-entropy}
This section sketches a proof of:
\begin{thm}\label{thm:zero-extension}\cite{MR3530042}
Let $\Ga$ be a non-amenable countable group and $\Ga \cc (X,\mu)$ a free ergodic action. Then there exists a free ergodic action $\Ga \cc (\tX,\tmu)$ that factors onto $\Ga \cc (X,\mu)$ and has zero Rokhlin entropy.
\end{thm}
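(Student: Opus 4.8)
The plan is a Baire category argument in the space of extensions of $\Ga\cc(X,\mu)$, with non-amenability entering through a relative Ornstein--Weiss construction. Write $T_0$ for the given free ergodic action, set $(\hX,\hmu):=(X\times[0,1]^\N,\mu\times\lambda)$ with $\lambda$ a product of Lebesgue measures, and let $A_{T_0}$ be the set of pmp $\Ga$-actions on $(\hX,\hmu)$ that are skew products over $T_0$, i.e. of the form $g\cdot(x,t)=(T_0^gx,\b_g(x,t))$. Then $A_{T_0}$ is a nonempty (it contains the product action $g\cdot(x,t)=(T_0^gx,t)$), closed, hence Polish, subspace of the space of all pmp $\Ga$-actions on $(\hX,\hmu)$; every member of $A_{T_0}$ factors onto $T_0$ by the coordinate projection; and, since $T_0$ is free, every member of $A_{T_0}$ is free. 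The ergodic members of $A_{T_0}$ form a comeager set (this is standard; alternatively one may pass to a generic ergodic component at the end, which remains a free zero-entropy extension of $\Ga\cc(X,\mu)$). So it suffices to show that
$$Z_0:=\{\,T\in A_{T_0}:~h^{Rok}(T)=0\,\}$$
is comeager in $A_{T_0}$; any $\tT$ in the intersection of $Z_0$ with the free ergodic members then furnishes $\Ga\cc(\tX,\tmu):=\Ga\cc^{\tT}(\hX,\hmu)$.

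First I would show that $Z_0$ is $G_\delta$. Fix a countable partition $\cQ=\{Q_1,Q_2,\dots\}$ of $\hX$ that is generating for every $T\in A_{T_0}$ (a generating partition of $T_0$ together with the coordinate partitions of the $[0,1]^\N$ factor will do). Subadditivity of Shannon entropy yields the criterion: $h^{Rok}(T)=0$ if and only if for every $n$ there are a finite partition $\cP$ of $\hX$ with $H_{\hmu}(\cP)<1/n$ and a finite $F\Subset\Ga$ such that the finite algebra generated by $\bigvee_{g\in F}T^g\cP$ contains sets $1/n$-close in $\hmu$ to each of $Q_1,\dots,Q_n$. (For the nontrivial direction, intersecting such $\cP$'s along a sparse sequence of scales produces a genuine generating partition of arbitrarily small Shannon entropy.) For fixed $\cP$ and $F$ the displayed approximation property is an open condition on $T$, so $Z_0=\bigcap_n\bigcup_{\cP,F}(\text{open in }T)$ is $G_\delta$.

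The heart of the matter is density of $Z_0$: given $T'\in A_{T_0}$, a finite $F\Subset\Ga$, a finite partition $\cR$ of $\hX$, $\eps>0$ and $n\in\N$, one must produce $T''\in A_{T_0}$ that agrees with $T'$ on $\cR$ over $F$ to within $\eps$ and carries a generating partition of Shannon entropy $<1/n$. Only a finite amount of information about $T'$ is recorded by this neighbourhood, and the idea is to re-present that information as the image of an extension of $T_0$ built from a tiny partition, in the spirit of Theorem \ref{thm:variant} (a zero-entropy action mapping onto the $2$-shift). Concretely one would: (i) pass to a Bernoulli extension $T'\times(K,\k)^\Ga$, still in the prescribed neighbourhood after harmlessly enlarging $F$ and $\cR$, and invoke the generalized Gaboriau--Lyons theorem of \cite{bowen-ri} (valid for every non-amenable $\Ga$) to obtain an ergodic essentially free $\F_2$-action whose orbits lie inside those of this system; (ii) run the Ornstein--Weiss encoding of \S\ref{sec:OW-intro} and Theorem \ref{thm:variant} along the $\F_2$-direction so that, after a measurable coinduction back to $\Ga$ as in \S\ref{sec:bernoulli-bernoulli}, the system becomes a factor of an extension of $T_0$ generated over $T_0$ by a partition of Shannon entropy $<1/n$; (iii) take that extension as $T''$ and check that it still lies in the neighbourhood. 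I expect step (ii) to be the principal obstacle: one must perform this Ornstein--Weiss/coinduction encoding \emph{relative} to the fixed base $T_0$ while simultaneously keeping the new partition below entropy $1/n$, not disturbing the finite data recorded by $F$ and $\cR$, and retaining $T_0$ itself (not just a factor of it) as the image. Granting density, $Z_0$ is dense $G_\delta$, hence comeager, and the theorem follows.
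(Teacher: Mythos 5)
Your Baire-category framing is reasonable in outline --- indeed \cite{MR3530042} runs exactly such a genericity argument in the space of all actions --- but there it is a \emph{corollary} of the present theorem, not the route to it: the density step is where all the content of the theorem lives, and your sketch of it does not close. The difficulty you yourself flag in step (ii) is the real one and remains unresolved: to equip an extension $T''$ of $T_0$ with a generating partition of Shannon entropy $<1/n$, that partition must, together with the $\Ga$-action, recover the entire Borel $\sigma$-algebra of $X$, and $T_0$ may have large or even infinite Rokhlin entropy. Nothing in your plan explains how that information is absorbed into an arbitrarily small partition. An Ornstein--Weiss/coinduction encoding along a Gaboriau--Lyons free subequivalence relation produces factor maps \emph{between Bernoulli systems}; it does not by itself re-express $\cB_X$ over a tiny partition, let alone while staying inside a prescribed weak neighbourhood of an arbitrary $T'\in A_{T_0}$.

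The paper's proof supplies precisely the missing mechanism, in three steps, none of which appears in your proposal. First, Seward's relative Sinai theorem (Theorem \ref{thm:seward-sinai}, together with \cite{seward-tucker-drob}) produces a Bernoulli factor $\Ga\cc(Y,\nu)$ of $\Ga\cc(X,\mu)$ with $h^{Rok}(\Ga\cc(X,\mu)\mid\cB_Y)=0$; this is what reduces the (possibly infinite) entropy of the base to an arbitrarily small partition $\beta$ modulo $\cB_Y$. Second, Lemma \ref{cor:inverse limit} builds a single action $\Ga\cc(Z,\zeta)$ with zero Rokhlin entropy that factors onto \emph{every} Bernoulli shift, namely an inverse limit of Bernoulli shifts with base entropies $<2^{-n}$ along the factor maps supplied by Theorem \ref{thm:ri}; this is where non-amenability enters. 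Third, $\tX$ is taken to be the relatively independent joining of $Z$ and $X$ over $Y$: since $\cB_Y\subset\cB_Z$, the algebra $\cB_{\tX}$ is generated by a small generating partition $\alpha$ of $Z$ together with $\beta$, so $h^{Rok}(\Ga\cc(\tX,\tmu))<2\eps$ for every $\eps>0$. If you wish to salvage the category approach you would still need these three ingredients to establish density, at which point the category machinery is superfluous for the statement at hand.
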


\begin{remark}
Seward's generalization of Krieger's Generator Theorem, (Theorem \ref{thm:krieger}) implies $\Ga \cc (\tX,\tmu)$ admits a generating partition with 2 parts. This improves an earlier result of Seward \cite{seward-small-action} which proved, under the same hypotheses as Theorem \ref{thm:zero-extension}, the existence of an extension $\Ga \cc (\tX,\tmu)$ that admits a generating partition with at most $n$ parts where $n=n(\Ga)$ depends only on $\Ga$. 
 \end{remark}

The next lemma is the key step.  The proof given here is simpler than the one in \cite{MR3530042} (which was written before Theorem \ref{thm:ri} was known).

\begin{lem}\label{cor:inverse limit}
Let $\Ga$ be any countable non-amenable group. There exists a pmp action $\Ga \cc^T (Z,\zeta)$ satisfying:
\begin{itemize}
\item $T$  is an inverse limit of Bernoulli shifts,
\item $h^{\rm{Rok}}(T)=0$
\item $\Ga \cc^T (Z,\zeta)$ factors onto all Bernoulli shifts over $\Ga$.
\end{itemize}
In particular, if $\G$ is sofic then $\G \cc^T (Z,\zeta)$ is not isomorphic to a Bernoulli shift.
\end{lem}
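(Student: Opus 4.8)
\textbf{Proof plan for Lemma \ref{cor:inverse limit}.}

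The plan is to build $\Ga \cc^T (Z,\zeta)$ as a countable inverse limit of Bernoulli shifts over $\Ga$, exploiting Theorem \ref{thm:ri} to arrange surjections onto arbitrarily large Bernoulli factors while keeping Rokhlin entropy at zero. First I would fix, for each $n \in \N$, a finite probability space $(K_n,\kappa_n)$ with $H(K_n,\kappa_n) = 1/n$, and consider the Bernoulli shift $\Ga \cc (K_n,\kappa_n)^\Ga$. By Theorem \ref{thm:ri} (all Bernoulli shifts over a non-amenable group factor onto each other), for each $n$ there is a factor map $\pi_n : (K_{n+1},\kappa_{n+1})^\Ga \to (K_n,\kappa_n)^\Ga$; more carefully, I want a \emph{compatible} sequence, so I would instead build the tower by a single application of Theorem \ref{thm:ri} combined with a diagonalization: let $(Z,\zeta) = \varprojlim_n (K_n,\kappa_n)^\Ga$ with the bonding maps $\pi_n$, equipped with the inverse-limit action $T$. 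By construction $T$ is an inverse limit of Bernoulli shifts, and it factors onto each $\Ga \cc (K_n,\kappa_n)^\Ga$; since these $K_n$ can be chosen (using Theorem \ref{thm:ri} once more, or a preliminary factoring step as in \S\ref{sec:bernoulli-bernoulli}) so that $\Ga \cc (K_n,\kappa_n)^\Ga$ factors onto a Bernoulli shift of base entropy $\geq \log n$, the action $T$ factors onto Bernoulli shifts of unbounded base entropy, hence onto all Bernoulli shifts over $\Ga$.

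The second bullet, $h^{Rok}(T) = 0$, is the crux. Here I would use upper semi-continuity of Rokhlin entropy and the structure of the inverse limit: Rokhlin entropy of an inverse limit is bounded by $\liminf_n h^{Rok}$ of the factors (this uses that a generating partition for the $n$-th stage, pulled back, together with a small correction, nearly generates for $Z$; more precisely one takes the obvious countable generating partition coming from the projections and shows its finite sub-partitions have Shannon entropy tending to the desired bound). Since $h^{Rok}(\Ga \cc (K_n,\kappa_n)^\Ga) \leq H(K_n,\kappa_n) = 1/n \to 0$, we get $h^{Rok}(T) = 0$. The clean way to package this: exhibit explicitly, for each $\epsilon > 0$, a generating partition of $(Z,\zeta)$ of Shannon entropy $< \epsilon$. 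Fix $N$ with $1/N < \epsilon/2$; the projection $Z \to (K_N,\kappa_N)^\Ga$ gives a factor, and the time-$0$ partition on $(K_N,\kappa_N)^\Ga$ has entropy $1/N$. This partition is generating for the factor but not for $Z$ itself. To fix generation I would adapt the argument in the proof of Theorem \ref{thm:variant} or of Proposition \ref{prop:naive-Rokhlin2}: refine by a sequence of partitions $\cR_i$ with $H(\cR_i) < \epsilon/2^{i+1}$ that recover the higher stages, using that each bonding map $\pi_n$ is (by the constructions in \S\ref{sec:bernoulli-bernoulli}) finite-to-one or at least has fibers recoverable from arbitrarily small-entropy information. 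The total refinement has Shannon entropy $< \epsilon$ and is generating.

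For the final sentence: if $\G$ is sofic, fix any sofic approximation $\Si$. Then by Proposition \ref{prop:Rokhlinsofic}, $h_{\Si,\zeta}(T) \leq h^{Rok}(T) = 0$. But $T$ factors onto a Bernoulli shift $\Ga \cc (K,\kappa)^\Ga$ with $H(K,\kappa) \geq \log 2 > 0$, and by Theorem \ref{thm:bernoulli-sofic} that Bernoulli shift has $\Si$-entropy $H(K,\kappa) > 0$. If $T$ were isomorphic to a Bernoulli shift $\Ga \cc (L,\lambda)^\Ga$, then again by Theorem \ref{thm:bernoulli-sofic} we would have $h_{\Si,\zeta}(T) = H(L,\lambda)$; but $T$ has a factor of positive sofic entropy so $H(L,\lambda) = h_{\Si,\zeta}(T)$ must be at least... here I must be careful, since sofic entropy is not monotone under factors. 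The correct route is: $h_{\Si,\zeta}(T) = 0$ forces $H(L,\lambda) = 0$, so $(L,\lambda)^\Ga$ is the trivial action up to a null set; but the trivial action does not factor onto a Bernoulli shift of positive entropy (the trivial action has no nontrivial ergodic-theoretic mixing, whereas the target is mixing), contradicting that $T$ factors onto $\Ga \cc (K,\kappa)^\Ga$. I expect the main obstacle to be the generation argument in the second paragraph: arranging the inverse-limit bonding maps so that each higher stage is recoverable from vanishingly-small-entropy partitions requires threading Theorem \ref{thm:ri} and the explicit Ornstein--Weiss-type constructions of \S\ref{sec:bernoulli-bernoulli} together with some care, and is the step where the proof in \cite{MR3530042} is more involved.
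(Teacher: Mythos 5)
There is a genuine gap, and it sits exactly where you predicted: the generation step. Your choice $H(K_n,\kappa_n)=1/n$ makes the harmonic-type sum of stage entropies divergent, and this is fatal for the strategy. In an inverse limit the Borel $\sigma$-algebra of $Z$ is generated by the pullbacks of the stages' $\sigma$-algebras, so any generating partition must (together with its $\Ga$-translates) recover every stage $\Ga\cc (K_n,\kappa_n)^\Ga$; but at least when $\G$ is sofic, Proposition \ref{prop:Rokhlinsofic} and Theorem \ref{thm:bernoulli-sofic} force every generating partition of the $n$-th stage to have Shannon entropy at least $H(K_n,\kappa_n)=1/n$, so there is no hope of recovering the higher stages from ``vanishingly-small-entropy information'' summing to less than $\epsilon$. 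Your proposed repair via finite-to-one bonding maps does not help either: the factor maps supplied by Theorem \ref{thm:ri} for a general non-amenable group come from the Gaboriau--Lyons machinery and measurable co-induction and are not known (or expected) to be finite-to-one, and in any case the issue is the divergence of $\sum_n 1/n$, not the fiber structure.

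The fix is much simpler than the machinery you invoke, and it is what the paper does: choose the base entropies \emph{summable}, say $0<H(K_n,\kappa_n)<2^{-n}$ (Theorem \ref{thm:ri} makes any such choice of bases admissible, and factoring onto all Bernoulli shifts is automatic since every stage already factors onto all Bernoulli shifts). Let $\alpha_n$ be the time-$0$ partition of $(K_n,\kappa_n)^\Ga$ pulled back to $Z$ and set $\beta_m=\bigvee_{n\ge m}\alpha_n$. Then $\beta_m$ is generating for $T$ --- the stages with index $\ge m$ already generate $\cB_Z$ because each lower stage is a factor of a higher one --- and by subadditivity $H(\beta_m)\le\sum_{n\ge m}H(\alpha_n)<2^{-m+1}\to 0$. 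No analysis of the bonding maps is needed. Two minor further remarks: your worry about compatibility of the inverse system is unnecessary (consecutive factor maps $\Phi_i:K_i^\Ga\to K_{i-1}^\Ga$ determine all bonding maps by composition); and your argument for the final sentence, while correct, is roundabout --- it suffices to note that for sofic $\G$ a Bernoulli shift has Rokhlin entropy equal to its base entropy, so a Bernoulli shift isomorphic to $T$ would have to be trivial, contradicting that $T$ has nontrivial factors.
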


\begin{proof}
Let $(K_n,\k_n)$ be a sequence of probability spaces with Shannon entropy bounds $0<H(K_n,\k_n) < 2^{-n}$. By Theorem \ref{thm:ri}, there exist factor maps $\Phi_i:K_i^\G \to K_{i-1}^\G$ for $i\ge 2$. Let $\Ga \cc^T (Z,\zeta)$ denote the inverse limit of this system. Since $T$ factors onto all the Bernoulli shifts $\G \cc (K_n,\k_n)^\G$, Theorem \ref{thm:ri} implies it factors onto all Bernoulli shifts. So it suffices to show $h^{\rm{Rok}}(T)=0$. This follows from \cite[Corollary 3.9]{seward-kreiger-2}. Alternatively, let $\alpha_n$ be a generating partition of $K_n^\G$ with $H(\alpha_n)<2^{-n}$. By pulling back, we may consider $\alpha_n$ as a partition of $Z$. Let $\beta_m=\bigvee_{n=m}^\infty \alpha_n$. Then $H(\beta_m) < 2^{-m+1}$ and $\beta_m$ is generating for $T$. So $h^{\rm{Rok}}(T)=0$. %By \cite[Corollary 3.9]{seward-kreiger-2}, it follows that $$h^{\rm{Rok}}(\Ga \cc (Z,\zeta))=\lim_{n\to\infty} h^{\rm{Rok}}(\Ga \cc (K_n,\k_n)^\G) = 0.$$
\end{proof}

\begin{proof}[Proof of Theorem \ref{thm:zero-extension}]
By Seward's generalization of Sinai's Factor Theorem (Theorem \ref{thm:seward-sinai}), there exists a Bernoulli factor $\Ga \cc (Y,\nu)$ of $\Ga\cc (X,\mu)$ such that 
$$h^{\rm{Rok}}(\Ga \cc (X,\mu) | \cB_{Y}) = 0$$
where $\cB_Y$ denotes the pullback of the Borel sigma-algebra of $Y$.

Let $\Ga \cc (Z,\zeta)$ be as in Lemma \ref{cor:inverse limit}. Fix a factor map of  $\Ga \cc (Z,\zeta)$ onto $\Ga \cc (Y,\nu)$. Let $\Ga \cc (\tX,\tmu)$ be the independent joining of $\Ga \cc (Z,\zeta)$ and $\Ga \cc (X,\mu)$ over $\Ga \cc (Y,\nu)$. 

It suffices to show $h^{\rm{Rok}}(\Ga \cc (\tX,\tmu))=0$. To see this, let $\epsilon>0$, $\alpha$ be a generating partition of $Z$ with $H(\alpha)<\epsilon$ and $\beta$ be a partition of $X$ with $H(\beta)<\epsilon$ such that $\sigma-\rm{alg}_\Ga(\beta \cup \cB_{Y}) = \cB_X$ (up to measure zero). By pulling back, $\alpha$ and $\beta$ may be thought of as partitions on $\tX$. Clearly $\alpha \vee \beta$ is generating for the action $\Ga \cc (\tX,\tmu)$ and $H(\alpha\vee \beta)<2\epsilon$. Since $\epsilon>0$ is arbitrary, this implies the claim.

\end{proof}

\begin{remark}
The paper \cite{MR3530042} uses Theorem \ref{thm:zero-extension} to prove that the generic measure-preserving action of $\G$ on a fixed probability space $(X,\mu)$ has zero Rokhlin entropy. The special case when $\G$ is amenable was handled earlier by Dan Rudolph (see the Subclaim after Claim 19 in \cite{foreman-weiss}).
\end{remark}

\subsection{Finite-to-1 factors}\label{sec:finite-to-1}       %%%%%%%%%%%%%%%%%%%%%%%%%%%%%%%%%%%%%

A factor map $\phi:X \to Y$ is finite-to-1 if for a.e. $y\in Y$, $\phi^{-1}(y)$ is finite. It is well-known that, for $\Z$-actions, finite-to-1 factor maps preserve entropy. This fact readily extends to actions of amenable groups. However it does not hold for non-amenable groups. The Ornstein-Weiss map is one counterexample \S \ref{sec:OW-intro}. For another, suppose $\Ga$ has a sofic approximation $\Sigma$ by expanders (as in \S \ref{sec:expanders}). Then the trivial action on a two point space has entropy $-\infty$ (with respect to $\Sigma$). However the trivial action on a one-point space has entropy zero. Nonetheless there are some positive results:

\begin{thm}\cite{bowen-entropy-2010a}\label{thm:f-finite}
If $\F_r$ denotes the rank r free group and $\F_r \cc (Y,\nu)$ is an $n$-to-1 factor of $\F_r \cc (X,\mu)$  then
$$f_{\nu}(\F_r \cc Y) = (r-1)\log(n) +  f_{\mu}(\F_r \cc X).$$ 
\end{thm}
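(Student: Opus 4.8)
The plan is to compute the $f$-invariant of $\F_r \cc (Y,\nu)$ using a cleverly chosen generating partition pulled back from a generating partition of $\F_r \cc (X,\mu)$, and exploit the fact that the factor map is $n$-to-$1$. First I would recall from Theorem~\ref{thm:f1} and the surrounding discussion that the $f$-invariant does not depend on the choice of generating partition (with finite Shannon entropy), so it suffices to exhibit one convenient partition on each side and relate the associated $F_\mu(T,\cP^{W_n})$ quantities. The key structural input is that since $\phi:X\to Y$ is $n$-to-$1$ and $\F_r$-equivariant, the source system is a compact/measurable extension of the target by a fiber of size $n$: after passing to an appropriate skew-product model, $X$ can be coordinatized as $Y$ together with an extra finite coordinate taking $n$ values (at least after a measurable relabeling; the precise statement is that the Rokhlin skew-product / relative generator decomposition applies since the fibers are finite).

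The main computational step is the following. Let $\cQ$ be a finite generating partition for $\F_r \cc (Y,\nu)$ with $H_\nu(\cQ)<\infty$; this exists since $f_\nu$ is assumed defined. Pull it back to a partition $\bar\cQ := \phi^{-1}(\cQ)$ on $X$, and let $\cR$ be a finite partition of $X$ refining $\bar\cQ$ that "separates the fibers" — i.e.\ such that $\cR$ together with the sigma-algebra $\phi^{-1}(\cB_Y)$ generates $\cB_X$, with $|\cR|$ essentially the fiber size $n$ over each atom of $\bar\cQ$ (so $H_\mu(\cR\mid\phi^{-1}(\cB_Y)) \le \log n$, with equality in a suitable conditional sense coming from $n$-to-$1$-ness). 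Then $\cP := \bar\cQ \vee \cR = \cR$ is generating for $\F_r\cc(X,\mu)$. Now I would compute $F_\mu(T_X,\cP^{W})$ for a F{\o}lner-like exhaustion $W_n$ of $\F_r$ (connected subgraphs in the Cayley graph, $\cup_n W_n = \F_r$, as in the Remark after Theorem~\ref{thm:f1} and in Theorem~\ref{thm:f*}), and compare term-by-term with $F_\nu(T_Y,\cQ^{W})$. Using $H_\mu(\cP^W) = H_\nu(\cQ^W) + H_\mu(\cP^W \mid \bar\cQ^W)$ and the analogous identity for each $H_\mu(\cP^W\vee s\cP^W)$, the difference $F_\mu(T_X,\cP^W) - F_\nu(T_Y,\cQ^W)$ reduces to a combination of conditional entropies of the fiber coordinate. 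The crucial arithmetic is that the defining combination in $F$, namely $H(\cdot) + \sum_{s}(H(\cdot\vee s\,\cdot) - 2H(\cdot))$, contributes the fiber term with multiplicity $1 - 2r + \sum_{s} 1 = 1 + r - 2r \cdot \tfrac{?}{}$ — more carefully: the "$H_\mu(\cP^W)$" contributes $+$one copy of the conditional fiber entropy, each of the $r$ terms "$H_\mu(\cP^W\vee s\cP^W) - 2H_\mu(\cP^W)$" contributes (fiber entropy over a doubled region) $-$ $2\times$(fiber entropy over single region). As $W$ exhausts $\F_r$ along a connected exhaustion, the fiber coordinate becomes asymptotically a Bernoulli-like $n$-valued i.i.d.\ fiber relative to the base (because $n$-to-$1$ with no further structure forces each fiber atom to carry conditional mass $1/n$ on average after refining — this is where $n$-to-$1$-ness is really used), so the doubled-region conditional entropy and the single-region conditional entropy both converge to $\log n$ per "copy" of the base region they cover, and counting copies over a connected set $W$ with $|W|$ vertices and $\approx r|W|$ "edge-pairs"\ldots the net coefficient works out to exactly $r-1$, giving $f_\mu(T_X) = f_\nu(T_Y) - (r-1)\log n$, i.e.\ the claimed formula.

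The step I expect to be the main obstacle is making rigorous the claim that, along the connected exhaustion $W_n$, the conditional entropy of the fiber coordinate on $\cP^{W_n}$ given $\bar\cQ^{W_n}$ behaves like an i.i.d.\ $n$-valued process on the tree $W_n$ relative to the base — equivalently, that $F_\mu$ "sees" the finite extension as adding precisely a trivial fiber of entropy $\log n$ attached in the free-group combinatorial pattern, contributing $-(r-1)\log n$ via the Euler-characteristic-type coefficient $1-r$ of a connected subgraph of the $2r$-regular tree. A clean way to handle this is probably to first prove the theorem for the special case where $X = Y \times (\Z/n)$ with the $\Z/n$-coordinate carrying a cocycle over $Y$ (the general $n$-to-$1$ map is of this form after an orbit-equivalence-type reduction, or after enlarging the fiber and noting $f$ is unchanged under such modifications), compute $F_\mu(T_X, \cQ^W \vee \cR^W)$ explicitly there using the chain rule for Shannon entropy, and then invoke upper semi-continuity of $f$ (Theorem~\ref{thm:f1} proof) plus density of such split partitions to pass to the general case. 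One should also double-check the degenerate sub-case $r=1$: then the coefficient $r-1=0$ and the statement reduces to the classical fact that finite-to-$1$ factors of $\Z$-actions preserve entropy, which is a useful sanity check on the sign and the combinatorics.
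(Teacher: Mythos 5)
Your overall skeleton --- pull back a generating partition $\cQ$ from $Y$, adjoin a fiber-separating partition $\cR$ with $n$ parts, and track how the combination $H(\cdot)+\sum_s\big(H(\cdot\vee s\,\cdot)-2H(\cdot)\big)$ treats the extra fiber information --- is the right one, and it is essentially the paper's route: the paper simply invokes the Abramov--Rokhlin formula for the $f$-invariant from \S\ref{sec:abramov}, $f_\mu(T)=f_\nu(S)+f_\mu(T\mid \cB_Y)$, and observes that the relative term equals $-(r-1)\log n$. Your coefficient count $1+r(1-2)=1-r$ and your sanity checks (the $r=1$ case, the sign against the Ornstein--Weiss example) are correct.

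The genuine gap is in the one step that makes the relative term finite. You justify ``the doubled-region and single-region conditional entropies both converge to $\log n$'' by asserting that the fiber coordinate becomes ``asymptotically a Bernoulli-like $n$-valued i.i.d.\ process relative to the base,'' and you then count ``copies'' of $\log n$ over the $|W|$ vertices and $\approx r|W|$ edge-pairs of $W$. This is exactly backwards, and taken literally it gives the wrong answer: if $\cR$ were relatively i.i.d.\ over the base then $H_\mu(\cR^W\mid\cB_Y)$ would grow like $|W|\log n$, the relative $F$-quantity would be of order $(1-r)|W|\log n$, and the relative $f$-invariant would be $-\infty$ for $r\ge 2$. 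What actually happens is the opposite of independence: since a.e.\ fiber has exactly $n$ points, every translate $(T^g)^{-1}\cR$ is measurable with respect to $\cB_Y\vee\cR$ (the fiber label at one group element determines it everywhere, given the base --- a cocycle rigidity), so $H_\mu(\cR^W\mid\cB_Y)\le\log n$ \emph{uniformly in} $W$, and it increases to exactly $\log n$ because ergodicity forces the fiber conditional measures to be uniform. It is this saturation, not independence, that sends every term of $F_\mu(T,\cP^W\mid\cB_Y)$ to $\log n$ and yields the finite limit $(1-r)\log n$; your write-up never states this mechanism and instead reverse-engineers the coefficient from the known answer. Separately, your chain-rule decomposition conditions on the finite window $\bar{\cQ}^W$ rather than on the full factor algebra $\cB_Y$; showing that the discrepancy between the two vanishes in the limit is precisely the content of the identity $f_\mu(T,\cP)=f_\mu(T,\cQ)+f_\mu(T,\cP\mid\s\textrm{-alg}_{\F_r}(\cQ))$ of \S\ref{sec:abramov}, so you should either quote that formula (as the paper does) or condition on $\cB_Y$ throughout, where the uniform bound $\le\log n$ is available from the start.
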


\begin{proof}[Proof remarks]
The proof is almost immediate from the Abramov-Rokhlin formula for the $f$-invariant (\S \ref{sec:abramov}).
\end{proof}

\begin{remark}
Theorem \ref{thm:f-finite} gives new examples of entropy-increasing factor maps. For example, if $\F_r \cc^T (X,\mu)$ is any pmp action and $c:\F_r \times X \to G$ a cocycle (where $G$ is a finite group) then one can form the skew-product action $\F_r \cc^{S_c} X\times G$ defined by
$$S_c^g(x,h)=(T^gx, c(g,x)h).$$
Since the factor map $X \times G \to X$ is $|G|$-to-1, Theorem \ref{thm:f-finite} implies
$$f_{\mu}(T) = (r-1)\log(|G|) +  f_{\mu \times u_G}(S_c).$$ 
If $T$ is Bernoulli and $G=\Z/2$ say, are there simple conditions on the cocycle $c$ such that $S_c$ is also Bernoulli? Note the Ornstein-Weiss example is of this form.
\end{remark}

\begin{prop}
Assuming ergodicity, Rokhlin entropy does not decrease under a finite-to-1 factor map. 
\end{prop}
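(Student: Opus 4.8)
The plan is to use the rigidity of a finite-to-$1$ ergodic extension to convert a nearly optimal generating partition of the factor into a nearly optimal one of the total system. Write $\phi\colon X\to Y$ for the factor map and $\G\cc^{S}(Y,\nu)$ for the factor; then $S$ is ergodic because $T$ is, and $\phi_{*}\mu=\nu$. Let $\cB_{Y}\subseteq\cB_{X}$ also denote the pullback $\phi^{-1}(\cB_{Y})$. As usual in this context the action is assumed aperiodic, so $(X,\mu)$ --- and hence $(Y,\nu)$ --- is non-atomic. \emph{Step 1 (structure of the extension).} Since $y\mapsto|\phi^{-1}(y)|$ is $\G$-invariant and a.e.\ finite, ergodicity of $S$ forces it to be a.e.\ equal to a constant $n$. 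A measurable selection (Luzin--Novikov uniformization) then produces a partition $X=X_{1}\sqcup\cdots\sqcup X_{n}$ (mod null) with each $\phi\res X_{i}$ a Borel isomorphism onto $Y$ mod null. Put $\cR_{0}=\{X_{1},\dots,X_{n}\}$. Two observations: (i) $\s\textrm{-alg}(\cB_{Y}\cup\cR_{0})=\cB_{X}$, because the pair of coordinates $(\phi,\cR_{0})$ separates points; and (ii) since every $T^{g}$ permutes $\phi$-fibres bijectively, for each Borel $B\subseteq Y$ the partition $T^{g}\cR_{0}$ restricted to $\phi^{-1}(gB)$ again has $n$ atoms, exactly one through each fibre over $gB$.

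\emph{Step 2 (a cheap relative generator).} For a Borel set $B\subseteq Y$ with $\nu(B)>0$, let $\cR_{B}$ be the partition of $X$ with atoms $X_{1}\cap\phi^{-1}(B),\dots,X_{n}\cap\phi^{-1}(B)$ together with the one further atom $\phi^{-1}(Y\setminus B)$. A routine estimate gives $H_{\mu}(\cR_{B})\le-(1-\nu(B))\log(1-\nu(B))+\nu(B)\log n+\nu(B)\log\nu(B)^{-1}$, which tends to $0$ as $\nu(B)\to0$. I claim $\s\textrm{-alg}(T,\cR_{B}\cup\cB_{Y})=\cB_{X}$. Indeed $\nu(\bigcup_{g\in\G}gB)$ is $S$-invariant and positive, hence equals $1$, so for a.e.\ $x\in X$ there is $g$ with $g^{-1}\phi(x)\in B$; by Step 1(ii) the atom of $T^{g}\cR_{B}$ containing $x$ pins down $x$ among the $n$ points of $\phi^{-1}(\phi(x))$, and together with the $\cB_{Y}$-measurable value $\phi(x)$ this recovers $x$.

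\emph{Step 3 (assembly).} Fix $\eps>0$, choose a generating partition $\cQ$ of $S$ with $H_{\nu}(\cQ)<h^{Rok}(S)+\eps$, and set $\tilde\cQ=\phi^{-1}(\cQ)$; then $H_{\mu}(\tilde\cQ)=H_{\nu}(\cQ)$ and $\s\textrm{-alg}(T,\tilde\cQ)=\cB_{Y}$. Using non-atomicity of $Y$, pick $B$ with $H_{\mu}(\cR_{B})<\eps$ and put $\cP=\tilde\cQ\vee\cR_{B}$. Then $\s\textrm{-alg}(T,\cP)\supseteq\s\textrm{-alg}\!\big(T,\cR_{B}\cup\s\textrm{-alg}(T,\tilde\cQ)\big)=\s\textrm{-alg}(T,\cR_{B}\cup\cB_{Y})=\cB_{X}$, so $\cP$ is generating for $T$, while $H_{\mu}(\cP)\le H_{\mu}(\tilde\cQ)+H_{\mu}(\cR_{B})<h^{Rok}(S)+2\eps$. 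Letting $\eps\to0$ yields $h^{Rok}(T)\le h^{Rok}(S)$, i.e.\ Rokhlin entropy does not decrease under the factor map.

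The heart of the argument --- and the step most likely to need care --- is Step 2: the obvious relative generator $\cR_{0}$ only gives $h^{Rok}(T)\le h^{Rok}(S)+\log n$, and the real point is that the $n$-valued fibre label can be concentrated on a set of arbitrarily small measure and then recovered everywhere via ergodicity of the base action. (In the language of relative Rokhlin entropy, Steps 1--2 show $h^{Rok}(T\mid\cB_{Y})=0$ for finite ergodic extensions, and Step 3 is the corresponding instance of subadditivity of Rokhlin entropy over a factor.) A secondary technical point is that the statement should be read with the standing aperiodicity hypothesis, since for periodic (finite) systems the inequality can fail.
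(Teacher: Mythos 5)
Your proposal is correct and follows essentially the same route as the paper: partition $X$ into finitely many pieces on which the factor map is injective, shrink them to arbitrarily small measure, use ergodicity of the base to recover the fibre label along the orbit, and join with the pullback of a near-optimal generating partition of the factor. The only cosmetic differences are that you make the constant fibre cardinality explicit and phrase the construction as showing $h^{Rok}(T\mid\cB_Y)=0$, neither of which changes the argument.
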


\begin{proof}%[Proof sketch]
Let $\G \cc (Y,\nu)$, $\G \cc (Z,\zeta)$ be ergodic pmp actions and  $\pi:Y\to Z$ a finite-to-1 factor map. Let $\cP$ be a generating partition for $\G \cc (Z,\zeta)$. Because $\pi$ is finite-to-1 there exists a measurable partition $\{Y_1,\ldots, Y_n\}$ of $Y$ such that $\pi$ restricted to $Y_i$ is injective for all $i$. For $\eps>0$, let $Y'_i \subset Y_i$ be a subset with $0<\nu(Y'_i)<\eps$ and let $\cQ=\{Y'_1,Y'_2,\ldots, Y'_n, Y \setminus \cup_i Y'_i\}$ be the coarsest partition containing every $Y'_i$. Finally, let
$$\cR=\cQ\vee \pi^{-1}(\cP).$$
We claim that $\cR$ is generating for the action $\G \cc Y$. It suffices to show that for a.e. pair of distinct elements $x,y \in Y$  there exists $g\in \G$ such that $gx,gy$ are in different parts of $\cR$. So let $x,y \in Y$ be distinct. By ergodicity, we may assume $\G x$ intersects $Y'_i$ for some $i$.

If $\pi(x)\ne \pi(y)$ then, because $\cP$ is generating there exists $g\in \G$ such that $g\pi(x), g\pi(y)$ are in different parts of $\cP$ and therefore $gx,gy$ are separated by $\cR$. 

Now assume $\pi(x)=\pi(y)$.  Let $g\in \G$ be such that $gx \in Y'_i$. Because $\pi$ restricted to $Y'_i$ is injective and $\pi(gx)=\pi(gy)$ it follows that $gy \notin Y'_i$. So $\cR$ separates $gx$ and $gy$ and we are done. 

Because $\cR$ is generating,
$$h^{\rm{Rok}}(\G \cc (Y,\nu)) \le H_\nu(\cR) \le H_\zeta(\cP) + H_\nu(\cQ).$$
The partition $\cQ$ depends on $\eps>0$ and $H_\nu(\cQ) \searrow 0$ as $\eps \searrow 0$. So it follows that $h^{\rm{Rok}}(\G \cc (Y,\nu)) \le  H_\zeta(\cP).$ Since $\cP$ is an arbitrary generating partition for $\G \cc (Z,\zeta)$, the proposition follows.
 \end{proof}

\begin{remark}
In work-in-progress by Alpeev-Seward, the Rokhlin entropy of an action is the convex integral of the Rokhlin entropies of its ergodic components. So the previous remains true without the ergodicity assumption.
\end{remark}

\begin{question}
Is there an upper bound for the (sofic or Rokhlin) entropy of a finite-to-1 factor in terms of the entropy of the source?
\end{question}

\begin{prop}\label{prop:finite-to-1}
Sofic entropy does not decrease under a finite-to-1 factor map. More precisely, suppose $\G \cc (Y,\nu), \G \cc (Z,\zeta)$ are pmp actions and there is a finite-to-1 factor map $\pi:Y \to Z$. Then $h_{\Sigma,\zeta}(\Ga \cc Z) \ge h_{\Sigma,\nu}(\Ga \cc Y).$
The Variational Principle (Theorem \ref{thm:variational}) implies a similar result for topological sofic entropy.
\end{prop}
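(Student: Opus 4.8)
The statement to prove is Proposition \ref{prop:finite-to-1}: if $\pi\colon Y\to Z$ is a finite-to-1 factor map between pmp actions $\G\cc (Y,\nu)$ and $\G\cc (Z,\zeta)$, then $h_{\Sigma,\zeta}(\G\cc Z)\ge h_{\Sigma,\nu}(\G\cc Y)$. The idea is to reduce to a symbolic setting and then exhibit, for each good model for the source action, a good model for the target action, in a way that is at worst boundedly-many-to-one. Concretely, I would first replace $\G\cc(Y,\nu)$ by a topological model in which $Y$ is a subshift $Y\subset A^\G$ with $A$ finite and $\cP$ the time-0 partition; this is legitimate since measure sofic entropy is a measure-conjugacy invariant and can be computed from any generating partition (Theorem \ref{thm:measure-entropy}, Theorem \ref{thm:measure-entropy-partition}). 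Since $\pi$ is finite-to-1, after discarding a null set and shrinking, one can find $n$ and a measurable partition $\{Y_1,\dots,Y_n\}$ of $Y$ on which $\pi$ is injective; I would build the target model $Z\subset B^\G$ so that the combined partition $\cP_Y$ (time-0 on $Y$) refines the pulled-back time-0 partition of $Z$ together with the partition $\{Y_i\}$, exactly as in the proof of the Rokhlin-entropy version immediately preceding in the text.

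**Key steps.** (i) Set up: fix generating partitions, pass to symbolic topological models for both actions so that the factor map $\pi$ becomes a sliding-block-type map $\Pi\colon A^\G\supset Y\to Z\subset B^\G$ and $\cP_Z=\Pi^{-1}(\text{time-}0)$ is coarser than $\cP_Y=$ time-0 on $Y$; record that $\cP_Y$, $\{Y_i\}$, and $\cP_Z$ are all measurable and that $\{Y_i\}$ has small Shannon entropy contribution. (ii) Microstate transfer: given a homomorphism $\phi\in\Hom_\nu(\cP_Y^F,F,\delta,\sigma_i)$ (in the partition formulation of $\Si$-entropy), push it forward via $\Pi$ to obtain a homomorphism into $2^{V_i}$ that is a $(\cP_Z,F,\delta')$-microstate for $\zeta$, with $\delta'\to 0$ as $\delta\to0$ and $F$ enlarged; this uses that $\Pi$ is equivariant and continuous, so it carries the equivariance and equidistribution defects with only controlled loss, precisely as in the proof of Theorem \ref{thm:measure-entropy}. (iii) Counting: estimate how many source microstates map to a given target microstate. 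Because $\pi$ is $n$-to-1, the fiber data is captured by an auxiliary labeling of $V_i$ into at most $n$ classes that is itself nearly invariant; the number of such labelings on $V_i$ grows at most like $\exp(H(\{Y_i\})|V_i|)$, and in fact for the entropy inequality we only need the cruder bound that source microstates, restricted to $\cP_Z\vee\{Y_i\}$, inject once we also remember the $Z$-microstate — so $\#\{\text{source microstates}\}\le \#\{\text{target microstates}\}\cdot\exp(H(\{Y_i\})|V_i|)$. Then send $\eps\to0$ (equivalently shrink $\{Y_i\}$) so the extra factor disappears. (iv) Take $\sup_\cQ\inf_\cP\inf_F\inf_\delta\limsup_i$ in the definition of $h_{\Sigma,\nu}$ and conclude. (v) Finally invoke the Variational Principle (Theorem \ref{thm:variational}) to deduce the topological statement from the measure one, as the text indicates.

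**Main obstacle.** The delicate point is step (iii): controlling the fibers of the microstate transfer map uniformly, i.e.\ showing that the auxiliary $\{Y_i\}$-labeling needed to reconstruct a source microstate from its image is essentially determined up to an $\exp(\eps|V_i|)$ factor. The naive bound ``at most $n^{|V_i|}$ labelings'' is not good enough on its own since $\log n$ need not be small; what saves the argument is that the relevant labeling is an \emph{approximately invariant} partition, forced by the fact that $\{Y_i\}\le\cP_Y^F$ and $\cP_Y$ is a good microstate, so its entropy is controlled by $H_\nu(\{Y_i\})$ which we have made $<\eps$. Making this precise — that the number of ways to extend a target microstate to a source microstate is at most $\exp((H_\nu(\{Y_i\})+o(1))|V_i|)$ — is the technical heart, and it mirrors the Shannon-entropy counting already used in the proof sketch of Theorem \ref{thm:bernoulli-sofic}. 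Everything else is bookkeeping of the $\delta,F$ parameters of the kind already carried out in Theorems \ref{thm:top-entropy}--\ref{thm:measure-entropy-partition}.
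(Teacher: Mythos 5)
Your strategy is essentially correct, but it is a genuinely different route from the one the paper takes. The paper does not count microstates at all: it deduces Proposition \ref{prop:finite-to-1} from the more general statement that a \emph{compact} extension cannot have larger $\Si$-entropy than its base, which in turn is obtained from \cite[Theorem 1.1]{hayes-mixing-pinsker} together with an operator-algebraic argument (Proposition \ref{P:mixing/compact}): one shows that for a compact extension the representation $\rho$ of $L^{\infty}(Z)\rtimes_{\textnormal{alg}}\G$ on $L^{2}(Y)$ is compact over $L^{\infty}(Z)$, hence mutually singular with respect to the mixing representation $\lambda_{Z}$, and Hayes's theorem converts this spectral fact into the entropy inequality. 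That route buys the stronger conclusion for all compact extensions; your route buys a self-contained, elementary proof that does not invoke an external theorem, but it is genuinely limited to the finite-to-1 case. Your identification of the technical heart is accurate, and the mechanism you propose does close it: the restriction of a good $(\cP,F,\delta)$-homomorphism to $\s\textrm{-alg}(\cR^{F})$, where $\cR=\{Y_1',\dots,Y_n',Y\setminus\bigcup_j Y_j'\}$, is determined by the partition $\{\phi(R)\}_{R\in\cR}$ of $V_i$ up to sets of size $O(\delta|V_i|)$ per $(f,R)$ with $f\in F$, so the count is at most $\exp\big(|V_i|(H_\nu(\cR)+|F||\cR|(H(\delta)+\delta)+o(1))\big)$; since $\delta$ is the innermost infimum this extra term can be killed after $F$ is fixed, and $H_\nu(\cR)<\eps$ by shrinking the $Y_j$. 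Combined with the observation that restriction to $\s\textrm{-alg}(\pi^{-1}\cQ_Z^{F})$ factors through $\Hom_\zeta(\cP_Z,F,\delta,\sigma_i)\to\Hom_\zeta(\cQ_Z,F,\delta,\sigma_i)$, so that $|\Hom_\nu(\cP,F,\delta,\sigma_i)|_{\cR'\vee\pi^{-1}\cQ_Z}\le|\Hom_\zeta(\cP_Z,F,\delta,\sigma_i)|_{\cQ_Z}\cdot\exp(2\eps|V_i|)$, this gives the inequality.

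Two points you should repair before calling the argument complete. First, the reduction to a subshift $Y\subset A^{\G}$ with $A$ finite is neither needed nor always available (a finite generating partition need not exist); work directly with Kerr's partition definition and a generating sub-sigma-algebra $\cS=\s\textrm{-alg}(\cR)\vee\pi^{-1}(\cB_Z)$, invoking Theorem \ref{thm:measure-entropy-partition}. Second, the claim that $\cR\vee\pi^{-1}(\cP_Z)$ dynamically generates $\cB_Y$ after shrinking the $Y_j$ uses that almost every orbit meets $\bigcup_j Y_j'$; as in the Rokhlin-entropy proposition immediately preceding, this requires ergodicity (or a further argument choosing the $Y_j'$ to charge almost every ergodic component), whereas the proposition as stated carries no ergodicity hypothesis and the paper's spectral proof needs none.
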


More generally, whenever $\G \cc (Y,\nu)$ is a compact extension of $\G \cc (Z,\zeta)$ then $h_{\Sigma,\zeta}(\Ga \cc Z) \ge h_{\Sigma,\nu}(\Ga \cc Y).$ This follows from \cite[Theorem 1.1]{MR3635672} although it might not be obvious. Ben Hayes graciously provided the following explanation.

For each $g\in \G$ and measurable function $f\colon Z\to \C,$ define a new measurable function $\alpha_{g}(f)\colon Z\to \C$ by
\[\alpha_{g}(f)(h)=f(g^{-1}h).\]
Let $L^{\infty}(Z)\rtimes_{\textnormal{alg}}\G$ be the set of all formal sums $\sum_{g\in \G}f_{g}u_{g}$ where $f_g \in L^\infty(Z)$ and $f_{g}=0$ for all but finitely many $g.$ The set $L^{\infty}(Z)\rtimes_{\textnormal{alg}}\G$  has a $*$-algebra structure: addition is  defined in the obvious manner, multiplication and the $*$-operation are defined by
\[\left(\sum_{g}f_{g}u_{g}\right)\left(\sum_{g}k_{g}u_{g}\right)=\sum_{g}\left(\sum_{h}f_{h}\alpha_{h}(k_{h^{-1}g})\right)u_{g}\]
\[\left(\sum_{g}f_{g}u_{g}\right)^{*}=\sum_{g}\overline{\alpha_{g}(f_{g^{-1}})}u_{g}.\]

We need to consider representations of $L^{\infty}(Z)\rtimes_{\textnormal{alg}}\G.$ A natural one is $\lambda_{Z}\colon L^{\infty}(Z)\rtimes_{\textnormal{alg}}\G\to B(L^{2}(Z\times \G,\zeta\times \eta)),$ (where $\eta$ is the counting measure and $B(\cdot)$ denotes the algebra of bounded operators) defined by
\[(\lambda_{Z}(f)\xi)(z,g)=f(z)\xi(z,g)\mbox{ for $f\in L^{\infty}(Z),\xi\in L^{2}(Z\times \G),z\in Z,g\in \G$}\]
\[(\lambda_{Z}(u_{h})\xi)(z,g)=\xi(h^{-1}z,h^{-1}g)\mbox{ for $\xi\in L^{2}(Z\times \G),z\in Z,h,g\in \G$}.\]
Given an extension $\G\cc (Y,\nu)\to \G\cc (Z,\zeta)$ with factor map $\pi\colon Y\to Z,$ there is  a natural representation $\rho\colon L^{\infty}(Z)\rtimes_{\textnormal{alg}}\G\to B(L^{2}(Y,\nu))$ given by
\begin{equation}\label{E:extension1}
(\rho(f)\xi)(y)=f(\pi(y))\xi(y)\mbox{ for $f\in L^{\infty}(Z),\xi\in L^{2}(Y)$}
\end{equation}
\begin{equation}\label{E:extension2}
(\rho(u_{g})\xi)(y)=\xi(g^{-1}y)\mbox{ for $g\in \G,\xi \in L^{2}(Y).$}
\end{equation}

\begin{defn}
Suppose that for $i=1,2$, $\rho_i: L^{\infty}(Z)\rtimes_{\textnormal{alg}}\G \to B(\cH_i)$ is a representation. A bounded linear map $T:\cH_1\to \cH_2$ is {\bf $L^{\infty}(Z)\rtimes_{\textnormal{alg}}\G$-modular} if
$$T(\rho_1(\phi) \xi)=\rho_2(\phi)T(\xi), \quad \forall \phi \in L^{\infty}(Z)\rtimes_{\textnormal{alg}}\G, ~\xi \in \cH_1.$$
The representations $\rho_1,\rho_2$ are {\bf mutually singular} if every $L^{\infty}(Z)\rtimes_{\textnormal{alg}}\G$-modular map $T:\cH_1\to \cH_2$ equals zero. An exercise shows this definition is symmetric in $\rho_1,\rho_2$.
\end{defn}

%We will prove that if $\G\cc (Y,\nu)\to \G\cc (Z,\zeta)$ is a compact extension, then $\rho$ as defined above is mutually singular with respect to $\lambda_{Z}.$ To simplify the proof, we introduce a few definitions. Recall that if $(X,d)$ is a metric space, $A,B\subseteq X,$ and $\varepsilon>0$ then we write $A\subseteq_{\varepsilon}B$ if for every $a\in A,$ there is a $b\in B$ with $d(a,b)<\varepsilon.$ 

We will prove that if $\G\cc (Y,\nu)\to \G\cc (Z,\zeta)$ is a compact extension, then $\rho$ as defined above is mutually singular with respect to $\lambda_{Z}.$ To simplify the proof, we introduce a few definitions. Recall that if $(X,d)$ is a metric space, $A,B\subseteq X,$ and $\varepsilon>0$ then we write $A\subseteq_{\varepsilon}B$ if for every $a\in A,$ there is a $b\in B$ with $d(a,b)<\varepsilon.$ 

\begin{defn}\emph{Let $\mathcal{H}$ be a Hilbert space and $\rho: L^{\infty}(Z)\rtimes_{\textnormal{alg}}\G\to B(\mathcal{H})$ a $*$-representation. We say that $\rho$ is:}
\begin{itemize}
\item {\bf compact over $L^{\infty}(Z)$} \emph{if for every $\xi\in \mathcal{H}$ and every $\varepsilon>0,$ there are $\eta_{1},\dots,\eta_{k} \in \cH$ so that}
\[\rho(\G)\xi\subseteq_{\varepsilon,\|\cdot\|}\left\{\sum_{j=1}^{k}\rho(f_{j})\eta_{j}:f_{j}\in L^{\infty}(Z),\|f_{j}\|_{\infty}\leq 1\right\}.\]
\item {\bf mixing} \emph{if for every $\xi,\eta\in \mathcal{H}$}
\[\lim_{g\to\infty}\sup_{f\in L^{\infty}(Z),\|f\|_{\infty}\leq 1}|\langle \rho(f)\rho(u_{g})\xi,\eta \rangle|=0.\]
\end{itemize}

\end{defn}

The idea for each of these definitions is that we are replacing the usual complex scalars with $L^{\infty}(Z).$ So an element in $L^{\infty}(Z)$ of norm at most one should be thought of as a replacement for a complex number of size at most $1.$ Proposition \ref{prop:finite-to-1} now follows from Theorem 1.1 of \cite{MR3635672} and the next result.

\begin{prop}\label{P:mixing/compact}
Let $\G$ be a countable discrete group, $(Z,\zeta)$ a probability space and $\G\cc (Z,\zeta)$ a measure-preserving action.
\begin{enumerate} %[(i)]
\item  Let $\mathcal{H}_{j},j=1,2$ be Hilbert spaces and let $\rho_{j}\colon L^{\infty}(Z)\rtimes_{\textnormal{alg}}\G\to B(\mathcal{H}_{j}),j=1,2$ be two $*$-representations. If $\rho_{1}$ is mixing and $\rho_{2}$ is compact over $L^{\infty}(Z),$ then $\rho_{1},\rho_{2}$ are mutually singular.\label{I:disjointness}
\item The representation $\lambda_{Z}$ is mixing. \label{I:left regular}
\item If $\G\cc (Y,\nu)\to \G\cc (Z,\zeta)$ is a compact extension and $\rho\colon L^{\infty}(Z)\rtimes_{\textnormal{alg}}\G\to B(L^{2}(Y))$ is defined by (\ref{E:extension1}),(\ref{E:extension2}), then $\rho$ is compact over $L^{\infty}(Z).$ \label{I:compact extension}
\end{enumerate}
\end{prop}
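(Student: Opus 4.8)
The plan is to prove the three parts in the stated order, since (2) and (3) feed into (1) applied to $\rho_1=\lambda_Z$ and $\rho_2=\rho$, which is exactly what is needed to conclude mutual singularity and hence Proposition \ref{prop:finite-to-1}.

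\emph{Part (1).} This is the classical ``a mixing representation is disjoint from an (relatively) almost periodic one'' argument. Suppose $T$ is a nonzero bounded operator intertwining $\rho_1$ and $\rho_2$; since the adjoint $T^{*}$ intertwines them in the other direction (using that the $\rho_i$ are $*$-representations, $f^{*}=\bar f$ for $f\in L^{\infty}(Z)$, and $u_g^{*}=u_{g^{-1}}$), it is harmless to assume $T\rho_1(\phi)=\rho_2(\phi)T$ for all $\phi$. Pick $\xi\in\mathcal H_1$ with $\eta:=T\xi\neq 0$, fix $\varepsilon>0$, and choose $\eta_1,\dots,\eta_k$ witnessing compactness over $L^{\infty}(Z)$ for $\eta$, so that for each $g$ we may write $\rho_2(u_g)\eta=\sum_j\rho_2(f_j^{(g)})\eta_j+r_g$ with $\|f_j^{(g)}\|_\infty\le 1$ and $\|r_g\|<\varepsilon$. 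Since $\rho_2(u_g)$ is unitary, $\|\eta\|^2=\|\rho_2(u_g)\eta\|^2\le\big|\langle\rho_2(u_g)\eta,\sum_j\rho_2(f_j^{(g)})\eta_j\rangle\big|+\varepsilon\|\eta\|$, and each inner product in the sum equals $\langle\rho_2(\overline{f_j^{(g)}}\,u_g)\eta,\eta_j\rangle=\langle T\rho_1(\overline{f_j^{(g)}})\rho_1(u_g)\xi,\eta_j\rangle=\langle\rho_1(\overline{f_j^{(g)}})\rho_1(u_g)\xi,T^{*}\eta_j\rangle$, using $\rho_2(f)^{*}=\rho_2(\bar f)$ and modularity to pass $\overline{f_j^{(g)}}u_g$ across $T$. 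By mixing of $\rho_1$ each of these tends to $0$ as $g\to\infty$ uniformly over $\|f\|_\infty\le1$, so letting $g\to\infty$ gives $\|\eta\|^2\le\varepsilon\|\eta\|$; as $\varepsilon$ is arbitrary, $\eta=0$, a contradiction.

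\emph{Part (2).} It suffices to check the mixing condition for $\xi,\psi$ in the dense set of vectors finitely supported in the $\G$-coordinate, the general case following by an $\varepsilon/3$-argument since $\lambda_Z(u_g)$ is unitary and $\|\lambda_Z(f)\|\le\|f\|_\infty$. For single-term vectors $\xi=\xi_0\otimes\delta_{h_0}$ and $\psi=\psi_0\otimes\delta_{h_1}$, unwinding the formulas for $\lambda_Z$ gives $\langle\lambda_Z(f)\lambda_Z(u_g)\xi,\psi\rangle=\int_Z f(z)\xi_0(g^{-1}z)\overline{\psi_0(z)}\,d\zeta(z)$ when $gh_0=h_1$ and $0$ otherwise; since $gh_0=h_1$ for at most one $g$, this vanishes for all but finitely many $g$, uniformly in $f$. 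Summing over the finitely many terms of $\xi$ and $\psi$ yields the same, proving $\lambda_Z$ is mixing. \emph{Part (3).} Under the identification of $L^2(Y,\nu)$ with the completion of $L^{\infty}(Z)\cdot L^{\infty}(Y)$, $\rho(f)$ is multiplication by $f\circ\pi$ and $\rho(u_g)$ is the Koopman unitary of $\G\cc(Y,\nu)$, so the sets $\{\sum_j\rho(f_j)\eta_j:\|f_j\|_\infty\le1\}$ are exactly the (rescaled) finitely generated $L^{\infty}(Z)$-submodules of $L^2(Y)$. The defining property of a compact (isometric) extension $Y\to Z$ says precisely that the vectors whose $\G$-orbit stays, for every $\varepsilon$, within an $\varepsilon$-neighbourhood of some such finitely generated submodule are dense in $L^2(Y)$. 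I would then note that the set $\mathcal V$ of vectors compact over $L^{\infty}(Z)$ is $\|\cdot\|$-closed (as $\rho(u_g)$ is unitary), closed under addition (union the two finite nets), and closed under $\rho(f_0)$ for $f_0\in L^{\infty}(Z)$ (since $\rho(u_g)\rho(f_0)=\rho(\alpha_g(f_0))\rho(u_g)$ and $\|\alpha_g(f_0)\|_\infty=\|f_0\|_\infty$, so the rescaled net still works); hence $\mathcal V$ is a closed $\G$-invariant $L^{\infty}(Z)$-submodule containing a dense set, so $\mathcal V=L^2(Y)$, which is the claim.

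\emph{Main obstacle.} Parts (1) and (2) are essentially routine — (1) is the standard disjointness argument and (2) is an immediate computation. The real work is Part (3): one must select a workable formulation of ``compact/isometric extension'' (Furstenberg's, or via $L^{\infty}(Z)$-valued inner products, or via relatively finite-dimensional invariant submodules) and verify that it translates into exactly the norm-approximation statement in the definition of ``compact over $L^{\infty}(Z)$''. The delicate points are that the orbit of an almost periodic vector stays within a fixed $\|\cdot\|_{L^2(Y)}$-neighbourhood — not merely a neighbourhood in the finer ``module'' seminorm — of a single finitely generated submodule, and that the passage from a dense set of compact vectors to all vectors is legitimate, which is the closedness-of-$\mathcal V$ argument above.
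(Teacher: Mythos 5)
Your parts (1) and (2) are the paper's own arguments up to cosmetic rearrangement. In (1) the paper takes the modular map $T\colon\cH_{2}\to\cH_{1}$, applies compactness over $L^{\infty}(Z)$ to $\xi\in\cH_{2}$ directly, writes $\|T\xi\|^{2}=|\langle \rho_{1}(u_{g^{-1}})T(\rho_{2}(u_{g})\xi),T\xi\rangle|$ and commutes $u_{g^{-1}}$ past $f_{g,j}$ via $\alpha_{g^{-1}}$ (which preserves $\|\cdot\|_{\infty}\le 1$); you instead apply compactness to $T\xi$ and use the adjoint to fix the direction. Both reduce to the same mixing estimate $\sup_{\|f\|_{\infty}\le 1}|\langle\rho_{1}(f)\rho_{1}(u_{g})\,\cdot,\cdot\rangle|\to 0$, and both are valid. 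Part (2) is the identical finite-support computation on the dense span of the $\xi_{0}\otimes\delta_{h}$.

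The one substantive difference is in (3), and it is exactly the step you defer. The paper works with Kerr--Li's Definition 2.8: for $f\in L^{\infty}(Y)$ and $\varepsilon>0$ it produces a set $Y_{0}$ with $\nu(Y_{0})>1-\varepsilon$ and vectors $\zeta_{1},\dots,\zeta_{k}$ such that $\rho(\G)(\chi_{Y_{0}}f)$ lies within $\varepsilon$ of the fixed module-ball $\{\sum_{j}\rho(k_{j})\zeta_{j}:\|k_{j}\|_{\infty}\le 1\}$ in the conditional norm $\|\cdot\|_{L^{2}(Y|Z)}=\sqrt{\|\E_{Z}(|\cdot|^{2})\|_{\infty}}$. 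Two short observations then close the gap: since $\|\cdot\|_{2}\le\|\cdot\|_{L^{2}(Y|Z)}$, a module-norm net is automatically an $L^{2}$-net (so the norm comparison you flag as delicate goes in the easy direction, and the submodule is already a single one for the whole orbit); and $\|\rho(u_{g})(f-\chi_{Y_{0}}f)\|_{2}=\|\chi_{Y\setminus Y_{0}}f\|_{2}\le\sqrt{\varepsilon}\,\|f\|_{\infty}$ uniformly in $g$, which absorbs the truncation to $Y_{0}$ --- this truncation is the point your sketch does not address, since the definition only controls $\chi_{Y_{0}}f$ and not $f$ itself. This shows every $f\in L^{\infty}(Y)$ is compact over $L^{\infty}(Z)$, and your closedness argument for the set $\mathcal{V}$ of compact vectors (which the paper compresses into the phrase ``since $L^{\infty}(Y)$ is dense in $L^{2}(Y)$'') finishes the proof. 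So the skeleton is right; what is missing is the truncation estimate, not the norm comparison or the density upgrade.
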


\begin{proof}

(\ref{I:disjointness}): Let $T\in B(\mathcal{H}_{2},\mathcal{H}_{1})$ be $L^{\infty}(Z)\rtimes_{\textnormal{alg}}\G$-modular. This means that $T(\rho_2(\phi) \xi)=\rho_1(\phi)T(\xi)$ for all $\phi \in L^{\infty}(Z)\rtimes_{\textnormal{alg}}\G, \xi \in \cH_2$. Let $\xi \in\mathcal{H}_{2}$ and $\varepsilon>0.$ Let $\eta_{1},\dots,\eta_{k} \in \cH$ be as in the definition of compact over $L^{\infty}(Z).$ Given $g\in \G,$ there exist $f_{g,1},\dots,f_{g,k}\in L^{\infty}(Z)$ with $\|f_{g,j}\|_{\infty}\leq 1$ for $j=1,\dots,k$ so that
\[\left\|\rho_{2}(u_{g})\xi-\sum_{j=1}^{k}\rho_{2}(f_{g,j})\eta_{j}\right\|<\varepsilon.\]
We have, for any $g\in \G:$
\begin{align*}
\|T(\xi)\|^{2}=\big|\big\langle T(\xi),T(\xi) \big\rangle\big|&=\big|\big\langle \rho_{1}(u_{g^{-1}})T(\rho_{2}(u_{g})\xi),T(\xi)\big\rangle \big|\\
&\leq \varepsilon\|T\|\|T(\xi)\|+\left|\sum_{j=1}^{k}\big\langle \rho_{1}(u_{g^{-1}})\rho_{1}(f_{g,j})T(\eta_{j}),T(\xi)\big\rangle \right|\\
&= \varepsilon\|T\|\|T(\xi)\|+\left|\sum_{j=1}^{k}\big\langle \rho_{1}(\alpha_{g^{-1}}(f_{g,j}))\rho_{1}(u_{g^{-1}})T(\eta_{j}),T(\xi)\big\rangle \right|\\
&\leq \varepsilon\|T\|\|T(\xi)\|+\sum_{j=1}^{k}\sup_{f\in L^{\infty}(Z):\|f\|_{\infty}\leq 1}\big|\big\langle \rho_{1}(f)\rho_{1}(u_{g^{-1}})T(\eta_{j}),T(\xi)\big\rangle \big|.
\end{align*}
Letting $g\to\infty$ we find that
\[\|T(\xi)\|^{2}\leq \varepsilon\|T\|\|T(\xi)\|.\]
Letting $\varepsilon\to 0$ proves that $T(\xi)=0$ and, as $\xi$ was arbitrary, that $T=0.$

(\ref{I:left regular}) Given $\xi\in L^{2}(Z),\eta\in \ell^{2}(\G)$ we define $\xi\otimes \eta\in L^{2}(Z\times \G)$ by $(\xi\otimes \eta)(z,g)=\xi(z)\eta(g).$ Let 
\[D=\Span\{\xi\otimes \delta_{g}:\xi\in L^{2}(Z),g\in \G\}.\]
By the fact that $\lambda_{Z}\big|_{L^{\infty}(Z)}$ is contractive and the density of $D,$ it is enough to show that for every $\xi,\eta\in D$ we have
\[\lim_{g\to\infty}\sup_{f\in L^{\infty}(Z):\|f\|_{\infty}\leq 1}\big|\big\langle \lambda_{Z}(f)\lambda_{Z}(u_{g})\xi,\eta\big\rangle \big|=0.\]
Let $\xi,\eta\in D$ and write $\xi=\sum_{h}\xi_{h}\otimes \delta_{h},\eta=\sum_{h}\eta_{h}\otimes \delta_{h}.$ Let $E$ be a finite subset of $\G$ so that $\xi_{h}=0$ and $\eta_{h}=0$ if $h\in \G \setminus E.$ It is then straightforward to see that if $g\in \G \setminus EE^{-1},$ then 
\[\big\langle \lambda_{Z}(f)\lambda_{Z}(u_{g})\xi,\eta\big\rangle =0\]
for any $f\in L^{\infty}(Z).$
This proves $(\ref{I:left regular}).$

(\ref{I:compact extension}): $L^{2}(Y|Z)$ is the set of $\xi\in L^{2}(Y)$ with $\E_{Z}(|\xi|^{2})\in L^{\infty}(Z)$. It is a Banach space under the norm
\[\|\xi\|_{L^{2}(Y|Z)}=\sqrt{\|\E_{Z}(|\xi|^{2})\|_{\infty}}.\]
See \cite[Chapter 3]{MR3616077} for more detail.

Given $f\in L^{\infty}(Y)$ and $\varepsilon>0,$ compactness of the extension $\G\cc (Y,\nu)\to \G\cc (Z,\zeta)$ implies the existence (see \cite[Definition 3.8]{MR3616077}) of vectors $\zeta_{1},\dots,\zeta_{k}\in L^{2}(Y|Z)$, a subset $Y_{0}\subseteq Y$ with $\nu(Y_{0})>1-\varepsilon$ and $Z_0 \subset Z$ with $\zeta(Z_0) > 1-\eps$ so that
\[\chi_{Z_0}\rho(\G)\left(\chi_{Y_{0}}f\right)\subseteq_{\varepsilon,\|\cdot\|_{L^{2}(Y|Z)}}\left\{\sum_{j=1}^{k}\rho(k_{j})\zeta_{j}:k_{j}\in L^{\infty}(Z),\|k_{j}\|_{\infty}\leq 1,j=1,\dots,k\right\}.\]
Since
\[\|\rho(u_{g})(f-\chi_{Y_{0}}f)\|_{2}=\|\chi_{Y\setminus Y_{0}}f\|_{2}\leq \sqrt{\varepsilon}\|f\|_{\infty}\]
and $\|\cdot\|_{2}\leq \|\cdot\|_{L^{2}(Y|Z)}$ we have
\[\rho(\G)f\subseteq_{\varepsilon+\sqrt{\varepsilon}\|f\|_{\infty},\|\cdot\|_{2}}\left\{\sum_{j=1}^{k}\rho(k_{j})\zeta_{j}:k_{j}\in L^{\infty}(Z),\|k_{j}\|_{\infty}\leq 1,j=1,\dots,k\right\}.\]
Since $\varepsilon>0$ is arbitrary and $L^{\infty}(Y)$ is dense in $L^{2}(Y),$ this proves part (\ref{I:compact extension}).

\end{proof}

%Without loss of generality, $(X,\rho)$ and $(Y,d_Y)$ are compact metric spaces on which $\G$ is acting continuously and the factor map $\Phi$ is also continuous. Moreover, $d_Y(x,y) = \inf \{\rho( x', y'):~\Phi(x')=x, \Phi(y')=y\}$. 

%For any finite $F \Subset \G$, $\delta>0$, $\s: \G \to \sym(V)$ and $x \in \Map(T,\rho, F,\delta, \s)$, 
%$$\Phi(x) \in \Map(T,d_Y,F,\delta,\s).$$

%This is almost immediate from the pseudo-metric definition of sofic entropy. 

%The factor map from $X$ to $Y$ can be approximated by a block code; that is by a map 

%finite coding. This finite coding gives a map of microstates. The latter map is finite-to-1 in which the bound of the number of preimages is controlled by the finite coding. Namely, the better the coding, the smaller the number of preimages. In fact, the number of preimages can be bounded by an exponential with rate tending to 1 as the coding limits on the factor map. The reader is invited to fill out the details.

\section{Combinations}\label{sec:combinations}

%\subsection{A multi-purpose counterexample}\label{sec:counterex}

%\begin{question}
%Given a pmp action $\G \cc (X,\mu)$, a {\bf self-joining} is a $\G$-invariant measure $\l$ on $X\times X$ whose marginals both equal $\mu$. For example, $\l=\mu\times \mu$ is the {\bf product self-joining}. Does there exist a self-joining that maximizes the $\Si$-entropy over all self-joinings? The previous example shows the product self-joining does not necessarily maximize the $\Si$-entropy. What is really desired here is an explicit construction of a maximum entropy self-joining rather than a non-constructive existence proof.
%\end{question}

\subsection{Ergodic decomposition}\label{sec:ergodic decomposition}       %%%%%%%%%%%%%%%%%%%%%%%%%%%%%%%%%%%%%

For any pmp action $\Ga \cc (X,\mu)$ there exists a map $x \mapsto \nu_x$ from $X$ to $\Prob_\Ga(X)$, the space of $\Ga$-invariant ergodic probability measures on $X$, such that
$$\mu  = \int \nu_x~d\mu(x).$$
See \cite{MR1784210} for example. If $\Ga$ is amenable then it is well-known that
$$h_\mu(\Ga \cc X) = \int h_{\nu_x}(\Ga \cc X)~d\mu(x).$$
This is known as the {\bf ergodic decomposition formula} \cite{ollagnier-book}. In general such a formula cannot hold for non-amenable $\Ga$ with an arbitrary sofic approximation. For example,  trivial actions can have entropy minus infinity (\S \ref{sec:trivial}). Nonetheless there are some positive results for uniformly diffuse sofic approximations and for the $f$-invariant as explained next.

\begin{defn}\label{defn:uniform-diffuse}
Let $\Sigma=\{\sigma_n\}_{n\in \N}$ be an arbitrary sofic approximation to $\Ga$. Also let $\{p_n\}_{n\in \N}$ be a sequence of positive integers with $\lim_{n\to\infty} p_n = +\infty$. For each $n$, define $\sigma_n^{\oplus p_n}:\Ga \to \sym( [d_n] \times [p_n])$ by 
$$\sigma_n^{\oplus p_n}(g)(j,k) = (\sigma_n(g)j,k).$$
Then $\Sigma':=\{\sigma_n^{\oplus p_n}\}_{n\in \N}$ is a {\bf uniformly diffuse} sofic approximation. This is stronger than being diffuse (which was considered in \S \ref{sec:diffuse}).
\end{defn}

\begin{exercise}
Let $\Ga \cc (X,\mu), \Sigma,\Sigma'$ be as above. Then
$$h_{\Sigma',\mu}(\Ga \cc X) = \int h_{\Sigma',\nu_x}(\Ga \cc X)~d\mu(x) = \int h_{\Sigma,\nu_x}(\Ga \cc X)~d\mu(x).$$
\end{exercise}

In \S \ref{sec:variants} below, a variant of sofic entropy, called average-local sofic entropy is introduced. It is almost immediate that it satisfies the ergodic decomposition formula. It also agrees with $h_{\Sigma',\mu}(\Ga \cc X)$ which proves the exercise.

The next result shows that the $f$-invariant satisfies the ergodic decomposition formula with a correction term.
\begin{thm}\cite[Theorem 1.4]{MR3540601}\label{thm:f-ergodic-decomp}
Let $\F \cc (X,\mu)$ be a pmp action of a rank $r$ free group. Assume $\F \cc (X,\mu)$ admits a finite-entropy generating partition. Then $\F \cc (X,\nu_x)$ admits a finite-entropy generating partition for $\mu$-a.e. $x$ and 
$$f_\mu(\F \cc X) = \int f_{\nu_x}(\F \cc X)~d\mu(x) - (r-1)H(\tau)$$
where $\tau \in \Prob(\Prob(X))$ is the law of $\nu_x$ (so $\tau = \int \d_{\nu_x}~d\mu(x).$)
\end{thm}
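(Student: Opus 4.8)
The plan is to deduce the ergodic decomposition formula for the $f$-invariant from two facts already available in this excerpt: the sofic interpretation of $f$ (Theorem~\ref{thm:f}, $f_\mu(T) = h_{\P,\mu}(T)$ with $\P$ the uniform-permutation random sofic approximation), and the ergodic decomposition formula for the $f$-invariant with a correction term stated in the theorem of Seward quoted as Theorem~\ref{thm:f-ergodic-decomp}. But wait --- the statement in question \emph{is} Theorem~\ref{thm:f-ergodic-decomp}, so there is nothing to deduce it from downstream; I should instead sketch a self-contained proof along the lines of Seward's argument. So the real plan is as follows. First I would reduce to the case where the state space is a fixed countable set $K$ and $\cP$ is the time-$0$ partition of $K^\F$, using Theorem~\ref{thm:f1} (the invariant does not depend on the choice of finite-entropy generating partition) to transport the finite-entropy generating partition $\cP$ of $\F \cc (X,\mu)$ over to $K^\F$; one has to check that a single $\cP$ that generates for $\mu$ also generates for $\mu$-a.e.\ ergodic component $\nu_x$, which is a standard measure-theoretic fact about generating partitions and conditional measures, and that $H_{\nu_x}(\cP) \le H_\mu(\cP) < \infty$ for a.e.\ $x$.

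Next I would work with the formula $f_\mu(T,\cP) = \inf_{W \Subset \F} F_\mu(T,\cP^W)$ and the monotone-limit refinement from the remark after Theorem~\ref{thm:f1}: $f_\mu(T,\cP) = \lim_n F_\mu(T,\cP^{W_n})$ for any increasing connected exhaustion $\{W_n\}$ of $\F$. The key algebraic point is that $F_\mu(T,\cQ)$ --- being a finite $\Z$-linear combination of Shannon entropies $H_\mu(\cdot)$ of fixed finite partitions --- is \emph{not} affine in $\mu$, but its failure of affinity is controlled: for a probability measure $\mu = \int \nu_x\, d\mu(x)$ with $\tau = \Law(\nu_x)$, the standard concavity/convexity decomposition of Shannon entropy gives $H_\mu(\cR) = \int H_{\nu_x}(\cR)\, d\mu(x) + H(\cR\text{-valued index} \mid x)$-type corrections; more precisely one uses $H_\mu(\cR) = \int H_{\nu_x}(\cR)\,d\mu(x) + I_\mu(\cR; \cI_T\text{-algebra})$ where the mutual-information correction, applied to each term of $F$, collapses. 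The clean way to see the collapse: the map $x \mapsto \nu_x$ is measurable with respect to the invariant algebra $\cI_T$, and for the time-$0$ partition $\cP$ one has $H_\mu(\cP^W) = \int H_{\nu_x}(\cP^W)\, d\mu(x) + H(\mathfrak{c})$ where $\mathfrak{c}$ is the partition of $X$ into level sets of $x\mapsto \nu_x$ (assuming $\tau$ is purely atomic; the general case by approximation), \emph{independently of $W$}, because $\cP^W$ already determines... no: the correction does depend on how much of $\cI_T$ is measurable w.r.t.\ $\sigma(\cP^W)$. This is the subtle point.

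The resolution --- and this is the main obstacle --- is that as $W \nearrow \F$, the partition $\cP^W$ asymptotically generates, so $\sigma(\cP^{W_n}) \uparrow \cB_X \supseteq \cI_T$, hence the mutual-information correction $I_\mu(\cP^{W_n}; \mathfrak{c}) \to H(\mathfrak{c}) = H(\tau)$. Plugging this into $F_\mu(T,\cP^{W_n}) = H_\mu(\cP^{W_n}) + \sum_{s\in S}\big(H_\mu(\cP^{W_n}\vee s\cP^{W_n}) - 2H_\mu(\cP^{W_n})\big)$ and using that $\cP^{W_n}\vee s\cP^{W_n} = \cP^{W_n \cup sW_n}$ is again an exhausting connected family, each of the (at most $2r+1$ with signs) entropy terms contributes its own correction converging to $H(\tau)$, and the signed sum of corrections converges to $\big(1 + \sum_{s\in S}(1 - 2)\big)H(\tau) = (1-r)H(\tau) = -(r-1)H(\tau)$. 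Taking $n\to\infty$ and invoking dominated convergence (justified by $0 \le F_{\nu_x}(T,\cP^{W_n}) \le H_{\nu_x}(\cP^{W_n})$-type bounds plus a uniform integrable dominating function coming from $H_\mu(\cP) < \infty$ and a telescoping estimate) yields
\[
f_\mu(\F \cc X) = \lim_n F_\mu(T,\cP^{W_n}) = \lim_n \int F_{\nu_x}(T,\cP^{W_n})\,d\mu(x) - (r-1)H(\tau) = \int f_{\nu_x}(\F\cc X)\,d\mu(x) - (r-1)H(\tau).
\]
The hard part will be making the interchange of limit and integral rigorous: $F_{\nu_x}(T,\cP^{W_n})$ need not be monotone in $n$ for fixed $x$ (only the infimum over all $W$ equals $f_{\nu_x}$), so one needs either a uniform domination argument or to pass through the subadditivity structure; Seward handles this via a careful two-sided estimate, and I would follow that route, supplemented by the atomic-$\tau$ reduction to avoid measurability headaches with $H(\tau) = +\infty$ cases (where the formula should be read with the convention that both sides are $-\infty$, which needs separate brief comment).
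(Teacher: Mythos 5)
Your proposal is correct in its main case but takes a genuinely different route from the paper. The paper's (i.e.\ Seward's) argument first verifies the identity exactly for Markov chains, where $f=F(\cP)$ for the time-$0$ partition and both sides admit closed-form computation, and then treats general processes by Markov approximation. You instead decompose each Shannon entropy appearing in $F_\mu(T,\cP^{W_n})$ as $H_\mu(\cdot)=\int H_{\nu_x}(\cdot)\,d\mu(x)+\big(H_\mu(\cdot)-H_\mu(\cdot\mid\fc)\big)$, where $\fc$ is the countable partition of $X$ into ergodic components, and exploit the fact that every partition occurring in $F_\mu(T,\cP^{W_n})$ is of the form $\cP^{W}$ with $W\nearrow\F$: when $H(\tau)<\infty$ each mutual-information correction tends to $H_\mu(\fc)=H(\tau)$, because $H_\mu(\fc\mid\cP^{W_n})\to H_\mu(\fc\mid\cB_X)=0$ by martingale convergence, and the signed coefficients sum to $1-r$. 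This bypasses Markov approximation entirely and is a perfectly legitimate alternative in the atomic, finite-entropy regime. One point where you are too pessimistic: the interchange of $\lim_n$ with $\int\cdot\,d\mu(x)$, which you single out as the hard step, is actually immediate. For a connected increasing exhaustion $\{W_n\}$ the partition $\cP^{W_{n+1}}$ is a splitting of $\cP^{W_n}$, so by the splitting inequality in the proof of Theorem~\ref{thm:f1} the sequence $F_{\nu_x}(T,\cP^{W_n})$ \emph{is} monotone non-increasing in $n$ for a.e.\ $x$, and it is dominated from above by the integrable function $H_{\nu_x}(\cP^{W_1})$; monotone convergence then gives $\int F_{\nu_x}(T,\cP^{W_n})\,d\mu(x)\to\int f_{\nu_x}(\F\cc X)\,d\mu(x)$ with no two-sided estimate needed.

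The one genuine gap is the case $H(\tau)=+\infty$ (equivalently $\tau$ non-atomic or atomic with infinite entropy), where the theorem asserts $f_\mu=-\infty$. There your correction term is a signed combination of mutual informations each tending to $+\infty$, and its divergence to $-\infty$ does not follow from the termwise limits: writing $a_n=H_\mu(\cP^{W_n})-H_\mu(\cP^{W_n}\mid\fc)$, the relevant combination is bounded above by roughly $r\,a_{n+1}-(2r-1)a_n=r(a_{n+1}-a_n)-(r-1)a_n$, and the increments $a_{n+1}-a_n$ need not be bounded, so this is not automatically $\to-\infty$. Dismissing this as a ``separate brief comment'' understates it; a genuine additional argument is required here, and this degenerate case is one place where the Markov-approximation route of the paper does real work that your direct decomposition does not obviously replicate.
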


\begin{proof}[Proof sketch]
The statement is directly verified for Markov chains. The general case can be obtained from approximating by Markov chains.
\end{proof}

In work-in-progress, Alpeev and Seward have proven that Rokhlin entropy satisfies the ergodic decomposition formula.

\subsection{Direct products}\label{sec:direct products} %%%%%%%%%%%%%%%%%%%%%%%%%%%%

It is well-known that if $\Ga$ is amenable then topological and measure entropy are additive under direct products: 
$$h_{\rm{top}}(T\times S) = h_{\rm{top}}(T) + h_{\rm{top}}(S)$$
$$h_{\mu\times \nu}(T\times S) = h_{\mu}(T) + h_{\nu}(S)$$
where  $\G \cc^T (X,\mu)$, $\G \cc^S (Y,\nu)$ and $T\times S$ is the action on $X\times Y$ defined by
$$(T\times S)^g(x,y)=(T^gx, S^gy)\quad \forall g\in \G, x \in X, y\in Y.$$
Here is a brief summary of this section: topological sofic entropy is also additive under direct products. However, measure sofic entropy is only subadditive and there are explicit counterexamples to additivity. The $f$-invariant is additive under the restriction that all actions involved have finite generating partitions. However, it is unknown whether Rokhlin entropy is additive. There are several variants of sofic entropy introduced by Tim Austin, one of which is additive under direct products. In fact, these variants ``explain'' how additivity can fail.

\subsubsection{Direct products and topological entropy}

\begin{thm}[Topological entropy product formula]
Let $\Ga$ be a sofic group with a sofic approximation $\Sigma$ and fix a nonprincipal ultrafilter $\cU$ on $\N$. For continuous actions $\Ga \cc X, \Ga\cc Y$ on compact metrizable spaces define $(\Si,\cU)$-entropy by
$$h_{\Sigma,\cU}(\Ga \cc X)  =  \sup_{\epsilon>0} \inf_{F \Subset \Ga} \inf_{\delta >0} \lim_{i\to\cU} |V_i|^{-1}\log \left(N_\epsilon( \Map(T,\rho,F,\delta,\sigma_i), \rho_\infty)\right)$$
for any continuous generating pseudo-metric $\rho$ on $X$. That is, the definition of sofic entropy is modified by replacing $ \limsup_{i\to\infty}$ with the ultralimit along $\cU$. Then  
$$h_{\Sigma,\cU}(\Ga \cc X \times Y) = h_{\Sigma,\cU}(\Ga \cc X) + h_{\Sigma,\cU}(\Ga \cc Y)$$
where, by convention, $+\infty + (-\infty) = -\infty$. 
\end{thm}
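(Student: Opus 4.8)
The plan is to prove the two inequalities $\le$ and $\ge$ separately, with the lower bound being essentially trivial and the upper bound requiring a covering/separation argument. Throughout, fix continuous generating pseudometrics $\rho_X$ on $X$ and $\rho_Y$ on $Y$; then $\rho((x,y),(x',y')) := \rho_X(x,x') + \rho_Y(y,y')$ is a continuous generating pseudometric on $X \times Y$, and by Theorem \ref{thm:top-entropy} (whose proof goes through verbatim with the ultralimit in place of the $\limsup$) we may compute all three entropies using these choices. For a homomorphism $\sigma_i : \Ga \to \sym(V_i)$, the key observation is that a pair $(x,y) \in X^{V_i} \times Y^{V_i}$, viewed as an element of $(X\times Y)^{V_i}$ coordinatewise, is a good model for $\Ga \cc X \times Y$ with parameters $(F,\delta)$ if and only if $x \in \Map(\Ga\cc X, \rho_X, F, \delta_1, \sigma_i)$ and $y \in \Map(\Ga \cc Y, \rho_Y, F, \delta_2, \sigma_i)$ for suitable $\delta_1,\delta_2$ comparable to $\delta$; this is immediate from the additive form of $\rho$ and the fact that $\rho_2$ on the product splits as a sum of squares.

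For the lower bound $h_{\Sigma,\cU}(\Ga \cc X\times Y) \ge h_{\Sigma,\cU}(\Ga \cc X) + h_{\Sigma,\cU}(\Ga \cc Y)$: given $\epsilon$-separated sets $S_X \subset \Map(\Ga\cc X,\rho_X,F,\delta,\sigma_i)$ and $S_Y \subset \Map(\Ga\cc Y,\rho_Y,F,\delta,\sigma_i)$ of maximal cardinality with respect to $(\rho_X)_\infty$ and $(\rho_Y)_\infty$ respectively, the product set $S_X \times S_Y$ lies in $\Map(\Ga \cc X\times Y, \rho, F, 2\delta, \sigma_i)$ (say) and is $\epsilon$-separated for $\rho_\infty$, since $\rho_\infty((x,y),(x',y')) \ge \max((\rho_X)_\infty(x,x'), (\rho_Y)_\infty(y,y'))$. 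Taking logs, dividing by $|V_i|$, passing to the ultralimit (which is multiplicative on products of cardinalities in the sense that $\lim_\cU a_i b_i$ has log equal to $\lim_\cU \log a_i + \lim_\cU \log b_i$ when both ultralimits exist), and then taking the appropriate infima over $F,\delta$ and supremum over $\epsilon$ gives the inequality. One has to be slightly careful about the order of $\sup_\epsilon\inf_F\inf_\delta$, but since the same $F,\delta,\epsilon$ can be used on both factors this causes no trouble.

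For the upper bound, the point is the reverse: any $(\rho_\infty,\epsilon)$-separated subset $S$ of $\Map(\Ga \cc X\times Y, \rho, F, \delta, \sigma_i)$ projects to subsets of $\Map(\Ga \cc X, \rho_X, F, \delta', \sigma_i)$ and $\Map(\Ga \cc Y, \rho_Y, F, \delta', \sigma_i)$, and if $\pi_X(x,y) = \pi_X(x',y')$ and $\pi_Y(x,y) = \pi_Y(x',y')$ then $(x,y) = (x',y')$; hence $|S| \le N_{\epsilon'}(\Map(\Ga\cc X,\dots),(\rho_X)_\infty) \cdot N_{\epsilon'}(\Map(\Ga \cc Y,\dots),(\rho_Y)_\infty)$ once one arranges that $\rho_\infty$-separation at scale $\epsilon$ forces $(\rho_X)_\infty$- or $(\rho_Y)_\infty$-separation at some scale $\epsilon'$ on at least one coordinate — more precisely, a standard packing argument shows $N_\epsilon(A\times B, \rho_\infty) \le N_{\epsilon/2}(A,(\rho_X)_\infty) \cdot N_{\epsilon/2}(B,(\rho_Y)_\infty)$ for the product pseudometric. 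The main obstacle, and the step I would spend the most care on, is precisely this bookkeeping of the quantifiers $\sup_\epsilon \inf_F \inf_\delta$: the separation scale $\epsilon'$ appearing in the factors depends on $\epsilon$ in a controlled (linear) way, so the $\sup_\epsilon$ passes through, but the finite sets $F$ and tolerances $\delta$ must be chosen uniformly for both factors simultaneously, and one must check that taking $\inf_F \inf_\delta$ after the ultralimit commutes correctly with the product decomposition. Because $\lim_{i\to\cU}$ is a genuine limit (not a $\limsup$), it is additive on sums of bounded sequences, which is exactly what makes the ultrafilter formulation cleaner than the $\limsup$ one and is the reason the product formula holds here but can fail for the measure-theoretic $\limsup$ version; I would remark on this contrast at the end.
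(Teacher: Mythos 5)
Your proposal is correct and follows exactly the route the paper has in mind: the paper dismisses this as "a straightforward exercise" whose crux is that products of good microstates are good microstates (your lower bound), and your upper bound via projections plus the standard packing estimate $N_\epsilon(A\times B)\le N_{\epsilon'}(A)\,N_{\epsilon'}(B)$, together with the observation that the ultralimit is additive and that the $\inf_{F,\delta}$ is a limit along a monotone net, is the intended completion. The only nitpick is that your closing remark slightly overstates the role of the $\limsup$ in the failure of the measure-theoretic version --- there the obstruction is that a pair of equidistributed microstates need not be jointly equidistributed for the product measure, which persists even with ultralimits --- but this does not affect the proof.
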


The proof is a straightforward exercise. The crux of the argument is that if $\phi:V \to X, \psi:V \to Y$ are good microstates then $\phi\times \psi:V \to X \times Y$ is also a good microstate.

\subsubsection{Subadditivity}

\begin{thm}[Subadditive product formula]
Let $\Ga$ be a sofic group with a sofic approximation $\Sigma$ and pmp actions $\Ga \cc (X,\mu), \Ga\cc (Y,\nu)$. Then  
$$h_{\Sigma,\mu\times \nu}(\Ga \cc X \times Y) \le h_{\Sigma,\mu}(\Ga \cc X) + h_{\Sigma,\nu}(\Ga \cc Y).$$
If one of these actions is a Bernoulli shift then equality holds.
\end{thm}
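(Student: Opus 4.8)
The plan is to prove the two assertions separately. For subadditivity, I would work with the pseudometric definition of measure sofic entropy. Fix continuous generating pseudometrics $\rho_X$ on $X$ and $\rho_Y$ on $Y$, and take $\rho = \rho_X \oplus \rho_Y$ (say $\rho((x,y),(x',y')) = \rho_X(x,x') + \rho_Y(y,y')$) as a generating pseudometric on $X \times Y$; one checks quickly that this is generating for the product action. The key observation is the projection estimate: if $(x,y) \in X^d \times Y^d$ is an $(r,\delta)$-good equidistributed microstate for $\G \cc X\times Y$ with respect to $\rho$, then its coordinate projections $x \in X^d$ and $y \in Y^d$ are good equidistributed microstates for $\G \cc X$ and $\G \cc Y$ respectively (with slightly worse parameters, and with pushforwards lying in the projected open neighborhoods). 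Therefore a $(\rho_\infty,\epsilon)$-separated subset of $\Map(\ldots)$ for the product maps injectively (under the product of separated-set decompositions) into a product of separated sets for the two factors, giving
$$N_\epsilon\big(\Map_{X\times Y},\rho_\infty\big) \le N_{\epsilon'}\big(\Map_X,(\rho_X)_\infty\big)\cdot N_{\epsilon'}\big(\Map_Y,(\rho_Y)_\infty\big)$$
for a suitable $\epsilon'$. Taking logarithms, dividing by $|V_i|$, passing to $\limsup$ (using $\limsup(a_i+b_i)\le \limsup a_i + \limsup b_i$), and then taking the appropriate infima over $\cO, F, \delta$ and supremum over $\epsilon$ yields the subadditive inequality. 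The only mild care needed is bookkeeping the order of quantifiers so that the infima over neighborhoods and parameters on the right-hand side are reached.

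For the equality case when one factor, say $\G \cc (Y,\nu) = \G \cc (L,\lambda)^\G$, is a Bernoulli shift, the plan is to prove the reverse inequality $h_{\Sigma,\mu\times\nu}(\G \cc X\times Y) \ge h_{\Sigma,\mu}(\G \cc X) + H(L,\lambda)$, which combined with Theorem \ref{thm:bernoulli-sofic} and subadditivity finishes it. I would follow the lower-bound strategy in the proof sketch of Theorem \ref{thm:bernoulli-sofic}: given any good equidistributed microstate $\phi: V_i \to X$ for $\G \cc X$, independently attach a random Bernoulli label $\psi: V_i \to L$ with law $\lambda^{V_i}$, and form $(\phi,\tilde\psi): V_i \to X \times L^\G$ where $\tilde\psi(v)(g) = \psi(\sigma_i(g)^{-1}v)$ is the pullback name. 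By the law-of-large-numbers / Chebyshev argument already used for Bernoulli shifts, with high probability $\tilde\psi$ is a good equidistributed microstate for $\G \cc L^\G$; and since the $X$-coordinate is unchanged, $(\phi,\tilde\psi)$ is a good equidistributed microstate for the product (one must check the product pseudometric condition, which holds because $\rho_2$-closeness in each coordinate implies it in the sum). Now one counts: starting from an $\epsilon$-separated family of microstates $\phi$ of size roughly $e^{h_{\Sigma,\mu}(X)|V_i|}$, for each fixed $\phi$ the number of choices of $\psi: V_i \to L$ whose empirical distribution is close to $\lambda$ and which produce a distinct (in $\rho_\infty$) microstate is roughly $e^{H(L,\lambda)|V_i|}$ by Stirling (as in the Bernoulli upper-bound computation). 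Distinct $(\phi,\psi)$ pairs with $\phi$'s already $\epsilon$-separated in the $X$-coordinate remain $\epsilon$-separated in $X\times Y$, so we obtain $N_\epsilon(\Map_{X\times Y}) \gtrsim e^{(h_{\Sigma,\mu}(X) + H(L,\lambda))|V_i|}$, giving the desired lower bound after taking limits and suprema/infima.

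The main obstacle I anticipate is the quantifier-juggling in the equality case: the sofic entropy of the product is defined as a supremum over $\epsilon$ of an infimum over open neighborhoods $\cO$ of $\mu\times\nu$ in $\Prob(X\times Y)$, whereas the two factor entropies live over neighborhoods of $\mu$ and of $\nu$ separately. I would handle this by noting that neighborhoods of $\mu\times\nu$ of product form $\cO_X \times \cO_Y$ are cofinal among all neighborhoods in the relevant sense (it suffices to verify microstates equidistributed for such product neighborhoods, since weak* convergence of pushforwards is tested on finitely many continuous functions, each approximable by sums of products); and the pullback-name construction naturally produces empirical distributions close to $\mu \times \lambda^\G$ when $\phi$ is already close to $\mu$. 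A secondary technical point is that the random-labelling argument only shows existence of \emph{one} good $\psi$ for most $\phi$ with high probability; to get the full exponential count $e^{H(L,\lambda)|V_i|}$ of $\psi$'s one instead uses the deterministic Stirling count of maps $V_i \to L$ with prescribed approximate empirical distribution, exactly as in the Bernoulli upper-bound sketch, and checks that all but a negligible fraction of these give microstates (and that the resulting microstates are genuinely $\rho_\infty$-separated from one another, which holds because they differ on a definite fraction of coordinates in the $Y = L^\G$ factor). These are all routine once set up carefully, so I expect no genuinely new idea is required beyond combining the projection estimate with the Bernoulli counting argument.
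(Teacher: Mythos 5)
Your proposal is correct and follows essentially the same route as the paper: the subadditivity rests on exactly the key observation the paper isolates (projections of good equidistributed microstates for the product are good microstates for the marginals, plus the standard separated-set bookkeeping), merely phrased via the pseudometric definition rather than the partition definition. For the equality case the paper gives no argument and defers to \cite{bowen-jams-2010}; your random-labelling/Stirling construction of product microstates is the standard argument used there, and your handling of the two technical points (product-form neighborhoods of $\mu\times\nu$ being cofinal, and converting the high-probability statement into an exponential count of good labels $\psi$) is sound.
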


The proof is a direct exercise. The key observation is that if $\psi:V \to X\times Y$ is a good microstate for $\mu\times \nu$ then the projections $\psi_X:V\to X$ and $\psi_Y:V \to Y$ are good microstates for the marginals. The special case of actions with finite-entropy generating partitions was handled in \cite{bowen-jams-2010}. The general case can be proven similarly using the partition definition of sofic entropy (\S \ref{sec:partition}). 

\subsubsection{Direct products and the $f$-invariant}

It is a brief exercise to show that the $f$-invariant is additive under direct products in the following sense: if $\F_r \cc^T (X,\mu)$, $\F_r \cc^S (Y,\nu)$ are both pmp actions with finite-entropy generating partitions then
$$f_{\mu\times \nu}(T\times S) = f_\mu(T) + f_\nu(S).$$ 
\begin{question}
Does this formula extend to actions that do not have finite-entropy generating partitions? In this case the $f$-invariant is defined via a random sofic approximation as in \S \ref{sec:interpretation}. Examples below show there is reason to be cautious.
\end{question}

In the next two examples, we consider sofic entropy with respect to the random sofic approximation $\P$ defined in \S \ref{sec:interpretation}. Recall that $\P$-entropy extends the $f$-invariant to actions that need not admit finite generating partitions.

\begin{example}[The infinite entropy Bernoulli shift and the trivial action]
The $\P$-entropy of the Bernoulli shift $\F_r \cc ([0,1],\textrm{Leb})^{\F_r}$ is $+\infty$. The $\P$-entropy of the trivial action of $\F_r$ on $([0,1],\rm{Leb})$ is $-\infty$ (for example this follows from Theorem \ref{thm:f-finite}). The direct product of these two actions has $\P$-entropy $-\infty$. The reason is that the number of $\eps$-separated microstates $\phi:[n] \to [0,1]^{\F_r} \times [0,1]$ for this action is roughly bounded by $\exp(n/\eps)$. On the other hand, with high probability a random sofic approximation $\s:\F_r \to \sym(n)$ admits no microstates at all. 
\end{example}

\begin{example}[Countable direct products]
The $\P$-entropy is not additive under countable direct products. To see this recall that the trivial action $\F_2 \cc (\Z/2, u_2)$ has $f$-invariant $-\log(2)$ while the Bernoulli shift $\F_2 \cc (\Z/2,u_2)^{\F_2}$ has $f$-invariant $\log(2)$. So the direct product action
$$\F_2 \cc (\Z/2\times \Z/2^{\F_2}, u_2 \times u_2^{\F_2})$$
has $f$-invariant $0$. The infinite direct power of this action is measurably conjugate to the direct product of the trivial action $\F_r \cc ([0,1],\rm{Leb})$ with the Bernoulli shift $\F_r \cc ([0,1],\textrm{Leb})^{\F_r}$. By the previous example, the $\P$-entropy of this action is $-\infty$. 
\end{example}

\subsubsection{A counterexample to additivity}

\begin{thm}
There exists a sofic group $\Ga$ with a sofic approximation $\Lambda$ and a pmp action $\Ga \cc^T (X,\mu)$ such that 
$$h_{\Lambda,\mu\times \mu}(T\times T) = -\infty \ne 0= 2 h_{\Lambda,\mu}(T).$$
\end{thm}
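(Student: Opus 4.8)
The plan is to take $\G=\F_2=\langle a,b\rangle$ and to let $T$ be the action of $\F_2$ on $(X,\mu)=(\Z/2,u_2)$ in which both generators act by $x\mapsto x+1$; equivalently $T^g$ acts by adding $p(g)$, where $p\colon\F_2\to\Z/2$ is the homomorphism sending $a,b$ to $1$. The structural observation that makes everything work is that $T\times T$ carries a hidden trivial factor: the measure‑preserving coordinate change $(x,y)\mapsto(x+y,y)$ conjugates $T\times T$ to $\tau\times T$, where $\tau$ is the trivial action of $\F_2$ on $(\Z/2,u_2)$. Hence, by measure‑conjugacy invariance of sofic entropy (Theorem \ref{thm:measure-entropy}) and the subadditive product formula of \S\ref{sec:direct products}, for any sofic approximation $\Lambda$ one has $h_{\Lambda,\mu\times\mu}(T\times T)=h_{\Lambda,\mu\times\mu}(\tau\times T)\le h_{\Lambda,\mu}(\tau)+h_{\Lambda,\mu}(T)$. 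So it suffices to exhibit one sofic approximation $\Lambda$ of $\F_2$ with $h_{\Lambda,\mu}(\tau)=-\infty$ and $h_{\Lambda,\mu}(T)=0$: then the right‑hand side is $-\infty$, while $2h_{\Lambda,\mu}(T)=0$.

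To build $\Lambda$, start from a sofic approximation $\{\tau_i\colon\F_2\to\sym(W_i)\}$ whose associated graphs $G_{\tau_i}$ are non‑bipartite expanders — such approximations exist, for instance uniformly random homomorphisms work almost surely, exactly as in the construction of $\Sigma_1$ in the proof in \S\ref{sec:dependence}. Put $V_i=W_i\times\Z/2$ and define $\sigma_i\colon\F_2\to\sym(V_i)$ by $\sigma_i(g)(w,\eps)=(\tau_i(g)w,\eps+p(g))$. Since $\tau_i$ and $p$ are homomorphisms, $\sigma_i$ is a homomorphism, hence exactly multiplicative; $|V_i|=2|W_i|\to\infty$; and trace‑preservation passes from $\{\tau_i\}$ to $\Lambda:=\{\sigma_i\}$ (a nontrivial $g$ with $p(g)=1$ gives $\sigma_i(g)$ no fixed points, and one with $p(g)=0$ gives $|\Fix\sigma_i(g)|=2|\Fix\tau_i(g)|$). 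Thus $\Lambda$ is a sofic approximation. Moreover $G_{\sigma_i}$ is the bipartite double cover of $G_{\tau_i}$, with adjacency spectrum $\{\pm\lambda:\lambda\in\mathrm{spec}(G_{\tau_i})\}$, so its second largest eigenvalue is $\max(\lambda_2(G_{\tau_i}),-\lambda_{\min}(G_{\tau_i}))$, which is bounded away from the degree because $G_{\tau_i}$ is a non‑bipartite expander. By the discrete Cheeger inequality, $\{G_{\sigma_i}\}$ is then a connected bipartite edge‑expander sequence with isoperimetric constant independent of $i$; that is, $\Lambda$ is by expanders in the sense of \S\ref{sec:expanders}.

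Now the two computations. Since $\Lambda$ is by expanders and $(\Z/2,u_2)$ is nontrivial, Proposition \ref{prop:trivial} gives $h_{\Lambda,\mu}(\tau)=-\infty$, which settles the $T\times T$ side. For $h_{\Lambda,\mu}(T)$, the lower bound $\ge 0$ is immediate: the coloring $x^0\in(\Z/2)^{V_i}$ with $x^0(w,\eps)=\eps$ is an \emph{exact} microstate for $T$ for all parameters, because $x^0(\sigma_i(g)(w,\eps))=\eps+p(g)=x^0(w,\eps)+p(g)=(T^gx^0)(w,\eps)$ and $(x^0)_*u_{V_i}=u_2=\mu$, so the microstate set is never empty and contributes rate $0$. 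For the upper bound $h_{\Lambda,\mu}(T)\le 0$ I would run a robustness argument: a $\delta$‑microstate for $T$ is a $2$‑coloring of $G_{\sigma_i}$ with at most $O(\delta^2)|V_i|$ monochromatic edges, hence an honest proper $2$‑coloring after deleting the at most $O(\delta^2)|V_i|$ vertices meeting such an edge; by expansion this deletion leaves one giant component of size $(1-O(\delta^2))|V_i|$, on which a proper $2$‑coloring of this connected bipartite graph must agree with $x^0$ or with $x^0+\bar{1}$. Thus every $\delta$‑microstate is within $\rho_2$‑distance $O(\delta)$ of the two‑point set $\{x^0,x^0+\bar{1}\}$, so for each fixed $\eps>0$ choosing $\delta$ small covers the microstate set by two $\eps/2$‑balls, forcing $N_\eps\le 2$ and rate $\le 0$. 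Combining the bounds, $h_{\Lambda,\mu}(T)=0$, and therefore $h_{\Lambda,\mu\times\mu}(T\times T)=-\infty\ne 0=2h_{\Lambda,\mu}(T)$.

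Everything formal here is routine; the substance lies in two standard but non‑elementary expander inputs: that $\F_2$ admits a sofic approximation by \emph{non‑bipartite} expanders and that passing to the bipartite double cover preserves edge expansion (this is exactly what makes $\Lambda$ ``by expanders'' so that Proposition \ref{prop:trivial} applies), and the quantitative stability of approximate proper $2$‑colorings on expanders, which is what actually drives the upper bound $h_{\Lambda,\mu}(T)\le 0$. These are the steps I expect to require the most care.
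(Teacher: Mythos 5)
Your proof is correct, but it takes a genuinely different route from the paper's. The paper takes $T$ to be the \emph{trivial} action of $\F_2$ on $\Z/2$ and takes $\Lambda=\{\sigma_i^{\oplus 2}\}$ to be the disjoint doubling of an expander sofic approximation (as in Definition \ref{defn:uniform-diffuse} with $p_n=2$): the two copies of $V_i$ form an asymptotically invariant partition of densities $(1/2,1/2)$, which supplies microstates for $T$ and gives $h_{\Lambda,\mu}(T)=0$, while $T\times T$ is the trivial action on a four-point space and an argument in the spirit of Proposition \ref{prop:trivial} shows that no asymptotically invariant partition into four sets of density $1/4$ exists inside two disjoint expanders, so $h_{\Lambda,\mu\times\mu}(T\times T)=-\infty$. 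You instead take $T$ to be the nontrivial $\Z/2$-action through the homomorphism $p$, twist an expander approximation into a connected bipartite double cover, and dispose of $T\times T$ via the conjugacy $(x,y)\mapsto(x+y,y)$ onto $\tau\times T$ followed by the subadditive product formula of \S\ref{sec:direct products} and Proposition \ref{prop:trivial}. What your route buys: the $T\times T$ side reduces entirely to facts already established in the paper (no fresh no-microstates argument is needed), and your $T$ is ergodic, which makes the counterexample less degenerate than the paper's non-ergodic trivial action. What it costs: you need the spectral fact that the bipartite double cover of a non-bipartite expander is again an expander, and a stability-of-$2$-colorings argument for the upper bound $h_{\Lambda,\mu}(T)\le 0$ --- though that upper bound can be short-circuited by noting that $T$ is an ergodic action on a finite space, hence has zero naive entropy and therefore nonpositive Rokhlin and sofic entropy by Propositions \ref{prop:naive-Rokhlin2} and \ref{prop:Rokhlinsofic}. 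The two expander inputs you flag as needing care are indeed the only nontrivial external ingredients, and both are standard and correct.
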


\begin{proof}
Let $\F_2=\langle a,b \rangle$ be a rank 2 free group and $\Si=\{\s_i:\F_2 \to \sym(V_i)\}$ be a sofic approximation to $\F_2$ by expanders as in \S \ref{sec:expanders}. For each $i$, define $\s^{\oplus 2}_i:\F_2 \to \sym(V_i \times \{0,1\})$ by
$$\s^{\oplus 2}_i(g)(v,i) = (\s(g)v,i)$$
(as in Definition \ref{defn:uniform-diffuse}) and let $\Lambda=\Si^{(2)}=\{\s_i^{\oplus 2}\}$.

Let $T$ be the trivial action of $\F_2$ on $\{0,1\}$ (so $T^gx=x$ for all $g\in \F_2, x\in \{0,1\}$).  By Proposition \ref{prop:trivial}, $h_{\Si,\mu}(T)=-\infty$ where $\mu$ is the uniform probability measure. An exercise shows that $h_{\Si^{(2)},\mu}(T)=0$.  An argument similar to Proposition \ref{prop:trivial} shows $h_{\Si^{(2)},\mu\times \mu}(T\times T)=-\infty$. 
%So the $\Si^{(2)}$-entropy of a direct product need not equal the sum of the $\Si^{(2)}$-entropies of the direct factors. 

%The ergodic decomposition of this action is: $u_2 = (\delta_0 + \delta_1)/2$. Since the $\Si$-entropy of $\F_2 \cc (\{0,1\},\delta_k)$ is zero, it follows that the $\Si$-entropy of $\F_2 \cc (\{0,1\},u_2)$ is not the convex sum of the $\Si$-entropies of its ergodic components.

%Next, consider the direct product action $\F_2 \cc (\{0,1\} \times \{0,1\}, u_2 \times u_2)$. 

\end{proof}

\subsubsection{Variants of sofic entropy and direct products}\label{sec:variants}

In \cite{MR3542515} Tim Austin introduced two variants of sofic entropy, one of which is additive under direct products. Both variants replace the number of microstates with covering numbers of measures on model spaces. The idea to use measures on model spaces goes back to \cite{MR2794944} where yet another variant of sofic entropy was introduced. 

The starting point is to assume our action has the form $\G  \cc (\cX^\G, \mu)$ where $(\cX,d_\cX)$ is a compact metric space and $\mu$ is a shift-invariant measure. This special form does not lose generality: any pmp action has a topological model and therefore we can assume it has the form $\G \cc (\cX,\nu)$ where $(\cX,d_\cX)$ is as above. We can equivariantly embed $\cX$ into $\cX^\G$ via $x \mapsto (g^{-1}x)_{g\in \G}$. This pushes $\nu$ forward to a shift-invariant measure on $\cX^\G$. 

Let $V$ be a finite set. A point in the product space $\cX^V$ is represented as $\bfx=(x_v)_{v\in V}$. This space is endowed with the normalized Hamming metric
$$d^{(V)}_\cX(\bfx, \bfy) = |V|^{-1} \sum_{v\in V} d_\cX(x_v, y_v).$$
Given $\bfx\in \cX^V$ and a map $\s:\G \to \sym(V)$, the {\bf pullback name of $\bfx$ at $v$} is
$$\Pi^\s_v(\bfx):=(x_{\s(g)^{-1}v})_{g\in \G} \in \cX^\G.$$
This defines a map $\Pi^\s_v: \cX^V \to \cX^\G$. The {\bf empirical distribution} of $\bfx$ is
$$P^\s_{\bfx} = |V|^{-1} \sum_{v\in V} \d_{\Pi^\s_v(\bfx)} \in \Prob(\cX^\G).$$
If $\s$ is a homomorphism then $P^\s_{\bfx}$ is shift-invariant. In general if $\cO \subset \Prob(\cX^\G)$ is any weak* open neighborhood of the subspace $\Prob_\G(\cX^\G)$ of shift-invariant probability measures, $\Si=\{\s_n\}$ is a sofic approximation and $\bfx_n \in \cX^{V_n}$ then $P^{\s_n}_{\bfx_n} \in \cO$ for all sufficiently large $n$. 

If $\cO$ is any weak* open neighborhood of $\mu$ in $\Prob(\cX^\G)$, then we let $\Omega(\cO,\s_n)$ denote the set of all microstates $\bfx\in \cX^{V_n}$ such that $P^{\s_n}_\bfx \in \cO$. The metric space $(\Omega(\cO,\s_n), d^{(V_n)}_{\cX}) $ is a {\bf model space}. It plays a role here that is very similar to the role played by $\Map(\cdots)$ in \S \ref{sec:pseudometric}. 

In order to avoid taking limsups and liminfs, we will work with ultralimits. So let $\cU$ be a nonprincipal ultrafilter on $\N$. A sequence $\{\mu_n\}$ of probability measures on $\cX^{V_n}$ is said to converge to $\mu$ with respect to ($\Si,\cU$)
\begin{itemize}
\item {\bf locally on average} if $\lim_{n\to\cU} \int P^{\s_n}_{\bfx}~d\mu_n(\bfx) = \mu$,
\item {\bf locally} if for every weak* open neighborhood $\cO$ of $\mu$ it is the case that
$$\lim_{n\to\cU} |V_n|^{-1}\#\{v\in V_n:~(\Pi^{\s_n}_v)_*\mu_n \in \cO\}=1,$$
\item {\bf locally and empirically} if it converges locally and for every weak* open neighborhood $\cO$ of $\mu$
$$\lim_{n\to\cU} \mu_n(\Omega(\cO,\s_n))=1,$$
\item {\bf locally and doubly-empirically} if $\mu_n\times \mu_n$ locally and empirically converges to $\mu\times \mu$.
\end{itemize}
The notions above are listed in order of increasing strength. In \cite{MR3542515}, local and empirical convergence is called quenched convergence. The new terminology has been chosen to avoid a conflict with common statistical physics language.

Every notion of convergence above corresponds to a variant of sofic entropy in which the number of microstates in the usual formula for entropy is replaced with a covering number. To explain, suppose $(\cY,d_\cY)$ is a metric space and $\d>0$. Then the {\bf $\d$-covering number} of $(Y,d_\cY)$, denoted $\rm{cov}_\d(\cY,d_\cY)$, is the minimum cardinality of a subset $S \subset \cY$ whose $\d$-neighborhood is all of $\cY$. If $\mu$ is a probability measure on $\cY$ then $\rm{cov}_{\eps,\d}(\mu,d_\cY)$ is the infimum of $\rm{cov}_\d(\cZ,d_\cY\resto \cZ)$ over all subsets $\cZ \subset \cY$ with measure $\mu(\cZ) \ge 1-\eps$. 

The {\bf local-on-average} sofic entropy of $\G \cc (\cX^\G,\mu)$ (with respect to $(\Si,\cU)$) is defined by
$$h^{loc-avg}_{\Si,\cU}(\G\cc (\cX^\G,\mu)) = \sup \left\{ \sup_{\eps,\d>0} \lim_{n\to\cU} |V_n|^{-1} \log \rm{cov}_{\eps,\d}\left(\mu_n, d^{(V_n)}_\cX\right)\right\}$$
where the first supremum is over all sequences $\{\mu_n\}$ that locally-on-average converge to $\mu$ with respect to $(\Si,\cU)$. The definitions of {\bf local sofic entropy, local and empirical sofic entropy} and {\bf local and doubly-empirical sofic entropy} (denoted $h^{loc}_{\Si,\cU}(\cdot)$, $h^{le}_{\Si,\cU}(\cdot)$, $h^{lde}_{\Si,\cU}(\cdot)$) are similar. Local and empirical is  abbreviated to {\bf le-} and local and doubly-empirical to {\bf lde-}.

\begin{prop}
The four notions of entropy defined above are measure-conjugacy invariants.
\end{prop}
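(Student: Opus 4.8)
The plan is to mimic the standard argument for measure-conjugacy invariance of sofic entropy (Theorems \ref{thm:measure-entropy} and \ref{thm:measure-entropy-partition}), adapted to the setting of model \emph{measures} rather than counting microstates. The key point is that all four notions are built from the same ingredient: a sequence $\{\mu_n\}$ of probability measures on $\cX^{V_n}$ together with one of the four convergence conditions (local-on-average, local, le-, lde-) and the covering-number quantity $\lim_{n\to\cU} |V_n|^{-1}\log \rm{cov}_{\eps,\d}(\mu_n,d^{(V_n)})$. So it suffices to show two things: (1) the convergence conditions are preserved under pushing forward by a measure-conjugacy (up to shrinking the open neighborhoods $\cO$ and enlarging the error parameters), and (2) the covering numbers change in a controlled way under such a push-forward.

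First I would fix a measure-conjugacy $\Phi\colon (\cX^\G,\mu)\to (\cY^\G,\nu)$ between two topological models of the same action. Exactly as in the proof sketch of Theorem \ref{thm:measure-entropy}, by Lusin's theorem there is, for each $\eta>0$, a compact set on which $\Phi$ (and its inverse) is uniformly continuous of measure $>1-\eta$; approximating by continuity sets lets me replace $\Phi$ by a genuinely continuous map off a small-measure set. The induced map $\cX^{V_n}\to \cY^{V_n}$, $\bfx\mapsto (\Phi\text{ applied coordinatewise after pullback})$, then pushes a model measure $\mu_n$ on $\cX^{V_n}$ forward to a model measure on $\cY^{V_n}$, and — using uniform continuity on the large-measure set together with the portmanteau theorem — each of the four convergence conditions for $\{\mu_n\}$ toward $\mu$ transfers to the pushed-forward sequence toward $\nu$. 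Here the hypothesis that $\s_n$ are (asymptotically) homomorphisms and the definition of $P^\s_\bfx$ in terms of pullback names is what makes the empirical distributions behave well. For the le- and lde- versions one additionally checks that ``$\bfx$ lies in $\Omega(\cO,\s_n)$'' is essentially preserved, which is again the portmanteau/uniform-continuity argument applied to neighborhoods of $\mu$ versus $\nu$.

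Next I would handle the covering numbers. Since $\Phi$ is uniformly continuous on a set of measure $>1-\eta$, the coordinatewise map on $\cX^{V_n}$ is, off a set of $\mu_n$-measure $O(\eta)$, uniformly continuous from $(\cdot,d_\cX^{(V_n)})$ to $(\cdot,d_\cY^{(V_n)})$ with a modulus independent of $n$ (because $d_\cX^{(V_n)}$ is a normalized average of $d_\cX$ over coordinates, and likewise for $\cY$). Hence a $\d$-net for a large-measure subset of $\cX^{V_n}$ maps to an $\eps'$-net for a large-measure subset of $\cY^{V_n}$, giving $\rm{cov}_{\eps,\d}(\mu_n,d_\cX^{(V_n)}) \ge \rm{cov}_{\eps+O(\eta),\eps'}((\Phi)_*\mu_n, d_\cY^{(V_n)})$ for suitable $\eps'\to 0$ as $\d\to 0$, $\eta\to 0$. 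Taking $|V_n|^{-1}\log$, the ultralimit over $\cU$, and then the suprema over $\eps,\d$ and over admissible sequences $\{\mu_n\}$ — and finally letting $\eta\to 0$ — yields $h^{\bullet}_{\Si,\cU}(\G\cc (\cX^\G,\mu)) \ge h^{\bullet}_{\Si,\cU}(\G\cc (\cY^\G,\nu))$ for each of the four decorations $\bullet \in \{loc\text{-}avg, loc, le, lde\}$. Applying the same reasoning to $\Phi^{-1}$ gives the reverse inequality, hence equality.

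The main obstacle I anticipate is the bookkeeping of parameters in step (1): one must verify that an open neighborhood of $\mu$ pulls back (or pushes forward) to something \emph{contained} in a prescribed open neighborhood of $\nu$ after discarding an $\eta$-measure exceptional set, and that this is compatible with the order of quantifiers ($\sup$ over sequences, then $\sup_{\eps,\d}$, then $\lim_{n\to\cU}$) in the four entropy formulas. This is where the proof of Theorem \ref{thm:measure-entropy} does its real work, and the only genuinely new feature here is that we are transporting \emph{measures} $\mu_n$ rather than finite sets of microstates — but since every convergence notion and the covering number are manifestly functorial under coordinatewise continuous maps, no essentially new idea is needed; it is the ``obvious analog'' promised for Theorems \ref{thm:top-entropy}, \ref{thm:measure-entropy}, \ref{thm:measure-entropy-partition} in the text. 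I would therefore present the argument as a direct adaptation, spelling out the Lusin/portmanteau step once and noting that it applies uniformly to all four variants.
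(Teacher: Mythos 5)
Your proposal is correct and follows essentially the same route as the paper, which simply cites Austin \cite{MR3542515} and summarizes the argument as: a factor map induces ``almost Lipschitz'' maps between model spaces that push a sequence $\{\mu_n\}$ converging to $\mu$ forward to a sequence converging to $\nu$ with the same type of convergence, controlling covering numbers along the way. Your Lusin/portmanteau step is exactly how those almost-Lipschitz maps are constructed (with the minor caveat that the uniform modulus on $(\cX^{V_n},d^{(V_n)}_\cX)$ comes from a finite-window union bound, not merely from the metrics being normalized averages), so no new idea is needed beyond what you describe.
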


\begin{proof}[Remarks on the proof]
The fact that le- and lde-sofic entropy are measure-conjugacy invariants is proven in \cite{MR3542515}. The proof there generalizes to local-on-average and local sofic entropy. The main idea is that any factor map  gives rise to a sequence of ``almost Lipschitz'' maps between model spaces. These maps essentially push-forward a sequence $\{\mu_n\}$ converging to $\mu$ to a new sequence $\{\nu_n\}$ converging to $\nu$ in such a way that the type of convergence is preserved.
\end{proof}
Any sequence $\{\mu_n\}$ of probability measures $\mu_n$ on $\Omega(\cO_n,\s_n)$ converges locally-on-average to $\mu$ whenever $\{\cO_n\}$ is a sequence of weak* open sets that decrease to $\mu$. Therefore sofic entropy lower bounds $h^{loc-avg}_{\Si,\cU}(\cdot)$. On the other hand, the increasing strength of the notions of convergence imply
$$h^{loc-avg}_{\Si,\cU}(\cdot) \ge h^{loc}_{\Si,\cU}(\cdot)  \ge h^{le}_{\Si,\cU}(\cdot) \ge h^{lde}_{\Si,\cU}(\cdot).$$

\begin{prop} Let $\G \cc^T (X,\mu)$ be a pmp action.
\begin{itemize}
\item If $T$ is ergodic then $h_{\Si,\cU,\mu}(T) = h^{loc-avg}_{\Si,\cU, \mu}(T)$. 
\item If $T$ is ergodic then any sequence $\{\mu_n\}$ that locally converges to $\mu$ must also le-converge to $\mu$. So $h^{loc}_{\Si,\cU, \mu}(T) = h^{le}_{\Si,\cU, \mu}(T)$. 
\item If $T$ is weakly mixing then any sequence $\{\mu_n\}$ that locally converges to $\mu$ must also lde-converge to $\mu$. So $h^{loc}_{\Si,\cU, \mu}(T) = h^{le}_{\Si,\cU, \mu}(T)=h^{lde}_{\Si,\cU, \mu}(T)$. 
\end{itemize} 
\end{prop}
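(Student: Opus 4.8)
The plan is to reduce all three assertions to one statement about ergodic measures and empirical distributions on model spaces, and then to compare covering numbers with the separated‑set count in the definition of sofic entropy. Throughout I work in the model‑space formulation: by Theorem~\ref{thm:measure-entropy} together with the measure‑conjugacy invariance of the four variant entropies noted above, I may assume the action has the form $\G \cc (\cX^\G,\mu)$ with $(\cX,d_\cX)$ compact metric and $\mu$ shift‑invariant, and I compute $h_{\Si,\cU,\mu}(T)$ with respect to the continuous generating pseudometric $\rho(x,y)=d_\cX(x(e),y(e))$, so that $\rho_2$ on $\cX^{V_n}$ is the Hamming metric $d^{(V_n)}_\cX$ (using the Remark that $\rho_\infty$ may be replaced by $\rho_2$). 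I write $\Omega(\cO,\s_n)=\{\bfx\in\cX^{V_n}:P^{\s_n}_\bfx\in\cO\}$.

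\textbf{Key Lemma.} \emph{Let $\nu$ be an ergodic shift‑invariant measure on $\cX^\G$ and let $\{\nu_n\}$, $\nu_n\in\Prob(\cX^{V_n})$, satisfy $\lim_{n\to\cU}\int P^{\s_n}_\bfx\,d\nu_n(\bfx)=\nu$ weak*. Then $\lim_{n\to\cU}\nu_n(\Omega(\cO,\s_n))=1$ for every weak* open $\cO\ni\nu$.} To prove it, let $\Theta_n\in\Prob(\Prob(\cX^\G))$ be the pushforward of $\nu_n$ under the continuous map $\bfx\mapsto P^{\s_n}_\bfx$, so that the barycenter of $\Theta_n$ equals $\int P^{\s_n}_\bfx\,d\nu_n\to\nu$. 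Since along a sofic approximation $P^{\s_n}_\bfx$ is asymptotically shift‑invariant uniformly in $\bfx$, $\Theta_n$ concentrates, as $n\to\cU$, on every weak* neighborhood of the compact convex set $\Prob_\G(\cX^\G)$; hence the ultralimit $\Theta:=\lim_{n\to\cU}\Theta_n$ (which exists by compactness of $\Prob(\Prob(\cX^\G))$) is supported on $\Prob_\G(\cX^\G)$, and by weak*‑continuity of the barycenter map it has barycenter $\nu$. Now $\Prob_\G(\cX^\G)$ is a Choquet simplex whose extreme points are precisely the ergodic measures, so, $\nu$ being extreme, the only probability measure on $\Prob_\G(\cX^\G)$ with barycenter $\nu$ is $\d_\nu$; thus $\Theta=\d_\nu$. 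For a weak* open $\cO\ni\nu$ the complement $K:=\Prob(\cX^\G)\setminus\cO$ is closed with $\Theta(K)=0$, so by the portmanteau theorem $\lim_{n\to\cU}\nu_n(\{\bfx:P^{\s_n}_\bfx\notin\cO\})=\lim_{n\to\cU}\Theta_n(K)\le\Theta(K)=0$, which proves the lemma.

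Granting this, the three bullets follow quickly. For the first, fix a sequence $\{\mu_n\}$ converging to $\mu$ locally on average and $\eps,\d>0$; this is exactly the hypothesis of the Key Lemma with $\nu=\mu$, so $\mu_n(\Omega(\cO,\s_n))\ge1-\eps$ for $\cU$‑large $n$ and hence $\mathrm{cov}_{\eps,\d}(\mu_n,d^{(V_n)}_\cX)\le\mathrm{cov}_\d(\Omega(\cO,\s_n),d^{(V_n)}_\cX)\le N_\d(\Omega(\cO,\s_n),d^{(V_n)}_\cX)$; taking the infimum over $\cO$ and the supremum over $\d$ gives $h^{loc-avg}_{\Si,\cU,\mu}(T)\le h_{\Si,\cU,\mu}(T)$, and the reverse inequality is already noted in the text. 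For the second bullet, if $\mu_n\to\mu$ locally then $\int P^{\s_n}_\bfx\,d\mu_n=|V_n|^{-1}\sum_v(\Pi^{\s_n}_v)_*\mu_n\to\mu$ (the exceptional $o(|V_n|)$ fraction of $v$ contributes negligibly, all measures lying in a compact set), so the Key Lemma applies and gives the empirical part of le‑convergence; together with the given local convergence this is le‑convergence, so every sequence witnessing $h^{loc}_{\Si,\cU,\mu}(T)$ also witnesses $h^{le}_{\Si,\cU,\mu}(T)$, whence $h^{loc}_{\Si,\cU,\mu}(T)\le h^{le}_{\Si,\cU,\mu}(T)$; the reverse is the general strength ordering. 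For the third bullet, weak mixing of $T$ means the action $T\times T$ on $(\cX\times\cX)^\G$ with measure $\mu\times\mu$ is ergodic; if $\mu_n\to\mu$ locally then $\mu_n\times\mu_n\to\mu\times\mu$ locally for $T\times T$ since $(\Pi^{\s_n}_v)_*(\mu_n\times\mu_n)=((\Pi^{\s_n}_v)_*\mu_n)\times((\Pi^{\s_n}_v)_*\mu_n)$ is close to $\mu\times\mu$ for most $v$; applying the second bullet to the ergodic action $T\times T$ shows $\mu_n\times\mu_n$ le‑converges to $\mu\times\mu$, which is by definition the statement that $\mu_n$ lde‑converges to $\mu$. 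Hence every witness of $h^{loc}_{\Si,\cU,\mu}(T)$ is a witness of $h^{lde}_{\Si,\cU,\mu}(T)$, giving $h^{loc}_{\Si,\cU,\mu}(T)\le h^{lde}_{\Si,\cU,\mu}(T)$, and combined with the second bullet and the strength ordering we obtain $h^{loc}_{\Si,\cU,\mu}(T)=h^{le}_{\Si,\cU,\mu}(T)=h^{lde}_{\Si,\cU,\mu}(T)$.

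The main obstacle, and the step that deserves real care, is the Key Lemma: one must verify that the empirical‑distribution laws $\Theta_n$ genuinely ultra‑converge to a probability measure on $\Prob(\cX^\G)$ that is supported on $\Prob_\G(\cX^\G)$ and has barycenter exactly $\nu$ — this is where the uniform‑in‑$\bfx$ asymptotic invariance of $P^{\s_n}_\bfx$ and the weak*‑continuity of barycenters enter — and then invoke the rigidity of extreme points of a Choquet simplex (an ergodic measure has only the Dirac representing measure). This last point is precisely where ergodicity is used, and the non‑ergodic picture (a sequence of ``pure'' microstates for the distinct ergodic components, which is locally convergent but not empirically convergent) shows the hypothesis cannot be dropped. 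A secondary, routine‑but‑nontrivial point is the comparison in the first bullet between scale‑$\d$ Hamming covers of $\Omega(\cO,\s_n)$ and maximal $\d$‑separated subsets, together with absorbing the $F,\delta$‑parameters of the pseudometric definition of $h_{\Si,\cU,\mu}$ into the neighborhood $\cO$.
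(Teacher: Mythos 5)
Your proposal is correct and follows essentially the same route as the paper, which reduces all three bullets to the single concentration fact that for an ergodic target the empirical distributions of $\mu_n$-typical microstates concentrate at $\mu$, and then deduces the third bullet from the second via ergodicity of $\mu\times\mu$ under weak mixing. The paper defers that concentration fact to the cited references, whereas you supply a clean self-contained proof of it via Bauer's theorem (an extreme point of the simplex $\Prob_\G(\cX^\G)$ has only the Dirac representing measure), which is a perfectly valid substitute for the arguments in those references.
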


\begin{proof}
The first statement is proven in \cite{MR2794944} (under unnecessarily restrictive simplifying hypotheses). In fact the proof shows that if $\mu_n$ locally-on-average converges to $\mu$ then for any weak* open neighborhood $\cO$ of $\mu$, $\mu_n(\Omega(\cO,\s_n)) \to 1$ as $n\to \cU$. This also explains the second statement. The third statement follows from the second since weak mixing implies that $\mu\times \mu$ is ergodic and it can be shown directly that if $\mu_n \to \mu$ locally then $\mu_n\times \mu_n \to \mu\times \mu$ locally. The second and third statements are proven in \cite{MR3542515}. 
\end{proof}

\begin{thm}\cite[Theorems B and C]{MR3542515}
Let $\G \cc^T (\cX^\G,\mu), \G \cc^S (\cY^\G,\nu)$ be pmp actions. As above, let $\Si$ be a sofic approximation to $\G$ and $\cU$ a non-principal ultrafilter on $\N$. Then
$$h_{\Si,\cU,\mu\times\nu} (T\times S) \ge h^{lde}_{\Si,\cU,\mu} (T) + h_{\Si,\cU,\nu} (S)$$
$$h^{lde}_{\Si,\cU,\mu\times\nu} (T\times S) = h^{lde}_{\Si,\cU,\mu} (T) + h^{lde}_{\Si,\cU,\nu} (S).$$
\end{thm}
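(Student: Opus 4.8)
\textbf{Proof proposal for the product formula} $h^{lde}_{\Si,\cU,\mu\times\nu}(T\times S) = h^{lde}_{\Si,\cU,\mu}(T) + h^{lde}_{\Si,\cU,\nu}(S)$.

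The plan is to prove the two inequalities separately, with the ``$\ge$'' direction being essentially routine and the ``$\le$'' direction requiring the real work. For the lower bound, I would fix sequences $\{\mu_n\}$ and $\{\nu_n\}$ on $\cX^{V_n}$ and $\cY^{V_n}$ that lde-converge (with respect to $(\Si,\cU)$) to $\mu$ and $\nu$ respectively, and form the product sequence $\{\mu_n\times\nu_n\}$ on $(\cX\times\cY)^{V_n}$. The first step is to check that $\{\mu_n\times\nu_n\}$ lde-converges to $\mu\times\nu$: since lde-convergence of $\{\mu_n\}$ means $\{\mu_n\times\mu_n\}$ le-converges to $\mu\times\mu$, and likewise for $\nu$, one observes that $(\mu_n\times\nu_n)\times(\mu_n\times\nu_n)$ is, up to the obvious coordinate permutation of model factors, $(\mu_n\times\mu_n)\times(\nu_n\times\nu_n)$, and that le-convergence is stable under such products (the pullback names and empirical distributions of a product point are determined by those of the factor points, so the relevant open-neighborhood conditions multiply). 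The second step is the covering-number estimate: for the product Hamming metric $d^{(V_n)}_{\cX\times\cY}$, which is comparable to $d^{(V_n)}_{\cX}\oplus d^{(V_n)}_{\cY}$, one has $\mathrm{cov}_{\eps,\d}(\mu_n\times\nu_n,d^{(V_n)}_{\cX\times\cY}) \le \mathrm{cov}_{\eps/2,\d/2}(\mu_n,d^{(V_n)}_{\cX})\cdot\mathrm{cov}_{\eps/2,\d/2}(\nu_n,d^{(V_n)}_{\cY})$ by taking products of good covering sets and using that if $\mu_n(\cZ)\ge 1-\eps/2$ and $\nu_n(\cW)\ge 1-\eps/2$ then $(\mu_n\times\nu_n)(\cZ\times\cW)\ge 1-\eps$. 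Taking logarithms, dividing by $|V_n|$, taking the ultralimit and then the supremum over $\eps,\d$ and over the approximating sequences gives $h^{lde}_{\Si,\cU,\mu\times\nu}(T\times S)\ge h^{lde}_{\Si,\cU,\mu}(T)+h^{lde}_{\Si,\cU,\nu}(S)$. (The first displayed inequality of the theorem, involving ordinary sofic entropy on the left, follows from a similar argument once one knows that an lde-convergent product sequence for $T$ against a convergent sequence for $S$ gives a convergent — hence covering-number-lower-bounding — sequence for $T\times S$; this is the content of part B of Austin's theorem and I would cite it.)

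The upper bound $h^{lde}_{\Si,\cU,\mu\times\nu}(T\times S)\le h^{lde}_{\Si,\cU,\mu}(T)+h^{lde}_{\Si,\cU,\nu}(S)$ is the crux. Here I would start from an arbitrary sequence $\{\theta_n\}$ of measures on $(\cX\times\cY)^{V_n}$ that lde-converges to $\mu\times\nu$, and push it forward under the two coordinate projections $\pi_\cX:(\cX\times\cY)^{V_n}\to\cX^{V_n}$ and $\pi_\cY:(\cX\times\cY)^{V_n}\to\cY^{V_n}$ to get $\mu_n:=(\pi_\cX)_*\theta_n$ and $\nu_n:=(\pi_\cY)_*\theta_n$. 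The key structural point, which must be verified, is that $\{\mu_n\}$ lde-converges to $\mu$ and $\{\nu_n\}$ lde-converges to $\nu$: the empirical distribution $P^{\s_n}_{\bfx}$ of a point in the product projects to the empirical distributions of its coordinates, so local-on-average and local convergence project cleanly; for the ``doubly-empirical'' part one uses that $\theta_n\times\theta_n$ le-converges to $(\mu\times\nu)\times(\mu\times\nu)$ and projects this to the $\cX$-$\cX$ coordinates to get le-convergence of $\mu_n\times\mu_n$ to $\mu\times\mu$. Granting this, the heart of the matter is the covering-number inequality in the reverse direction:
\begin{equation}\label{eq:rev-cov}
\mathrm{cov}_{\eps,\d}(\mu_n,d^{(V_n)}_{\cX})\cdot\mathrm{cov}_{\eps,\d}(\nu_n,d^{(V_n)}_{\cY}) \ \lesssim\ \mathrm{cov}_{\eps',\d'}(\theta_n,d^{(V_n)}_{\cX\times\cY})
\end{equation}
for suitable $\eps',\d'$ depending on $\eps,\d$ and the diameters. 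This is where the lde-hypothesis is genuinely used and cannot be dispensed with (the counterexample to ordinary additivity in the excerpt shows a plain covering set for $\theta_n$ need not decompose). The mechanism, following Austin, is: a set $\cZ\subset(\cX\times\cY)^{V_n}$ of $\theta_n$-measure $\ge 1-\eps'$ which is $\d'$-dense and has small cardinality can be ``sliced'' — for a $\theta_n$-typical product point $(\bfx,\bfy)$, the doubly-empirical convergence forces the conditional structure to be product-like on a set of asymptotically full measure, so that a $\d'$-net for $\cZ$ yields, via projection and a counting/pigeonhole argument over fibers, a $\d$-net for a large-measure subset of $\mu_n$ of cardinality at least $\mathrm{cov}_{\eps,\d}(\mu_n,\cdot)$, and symmetrically for $\nu_n$, with the two counts essentially independent. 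I would either reproduce the fiber-counting argument of \cite{MR3542515} or, more economically, simply invoke Theorem C of \cite{MR3542515} directly, since the statement being proved \emph{is} that theorem; the role of this proof sketch in the survey is to record the structure, so I would keep the argument at the level of ``project the convergent sequence, use lde to get the product structure on model spaces, and multiply covering numbers'' and cite \cite[Theorems B and C]{MR3542515} for the full details.

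I expect the main obstacle to be establishing \eqref{eq:rev-cov} rigorously — i.e., showing that lde-convergence of $\{\theta_n\}$ really does force the model spaces to be ``approximately product'' in the strong metric-geometric sense needed to split a covering set. The subtlety is that one is not claiming $\theta_n$ is close to $\mu_n\times\nu_n$ in any transportation metric (it generally is not), only that the covering numbers of the supports behave multiplicatively; the doubly-empirical hypothesis is exactly the device that rescues this, because it controls $\theta_n\times\theta_n$ and hence the \emph{pairwise} statistics of two independent draws, which is what governs how efficiently a net can cover. A secondary, more bookkeeping-level obstacle is verifying that all four flavors of convergence are preserved under the coordinate projections $\pi_\cX,\pi_\cY$ uniformly enough that the ultralimit manipulations go through; this is straightforward but must be done with care since the projection of a weak* open neighborhood of $\mu\times\nu$ is not in general open, so one works instead with the defining conditions (integrated empirical distributions, and the $\mu_n(\Omega(\cO,\s_n))\to 1$ conditions) directly.
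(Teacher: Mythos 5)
You have the right ingredients in view --- lde-convergence of product sequences, lde-convergence of marginals, and the fact that the doubly-empirical hypothesis is what controls the statistics of two independent draws --- but the two covering-number estimates are attached to the wrong halves of the argument, and as written the sketch proves neither inequality. For the lower bound $h^{lde}_{\Si,\cU,\mu\times\nu}(T\times S)\ge h^{lde}_{\Si,\cU,\mu}(T)+h^{lde}_{\Si,\cU,\nu}(S)$ you choose the witness sequence $\{\mu_n\times\nu_n\}$ and then quote $\mathrm{cov}_{\eps,\d}(\mu_n\times\nu_n)\le\mathrm{cov}_{\eps/2,\d/2}(\mu_n)\cdot\mathrm{cov}_{\eps/2,\d/2}(\nu_n)$; but an \emph{upper} bound on the covering numbers of your chosen witness sequence cannot produce a \emph{lower} bound on the supremum defining $h^{lde}$ of the product. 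What the lower bound actually requires is the supermultiplicative estimate $\mathrm{cov}_{\eps,\d}(\mu_n\times\nu_n)\gtrsim\mathrm{cov}_{\eps',\d'}(\mu_n)\cdot\mathrm{cov}_{\eps',\d'}(\nu_n)$ for product measures, which needs a Fubini/slicing (or separated-set) argument. Conversely, for the upper bound $h^{lde}_{\Si,\cU,\mu\times\nu}(T\times S)\le h^{lde}_{\Si,\cU,\mu}(T)+h^{lde}_{\Si,\cU,\nu}(S)$, once you know that the marginals $\mu_n=(\pi_{\cX})_*\theta_n$ and $\nu_n=(\pi_{\cY})_*\theta_n$ lde-converge --- which is indeed the genuine lemma here, and your projection of $\theta_n\times\theta_n$ onto the $\cX$--$\cX$ coordinates is the right way to get it --- the covering estimate needed is exactly the trivial submultiplicative one: for \emph{any} coupling $\theta_n$ of $\mu_n$ and $\nu_n$ one has $\theta_n(\cZ\times\cW)\ge 1-\eps$ whenever $\mu_n(\cZ),\nu_n(\cW)\ge 1-\eps/2$, and a product of nets is a net, so $\mathrm{cov}_{\eps,\d}(\theta_n)\le\mathrm{cov}_{\eps/2,\d/2}(\mu_n)\cdot\mathrm{cov}_{\eps/2,\d/2}(\nu_n)$. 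The ``reverse'' inequality that you single out as the crux is neither needed for the upper bound nor true for general couplings (consider a diagonal coupling), and pursuing it would lead you astray.

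The other missing piece is the mechanism behind the first displayed inequality, where there is no sequence of measures on the $\cY$ side at all, only a separated set of good models for $\nu$. The key lemma --- the one the survey's own remark isolates --- is that if $\{\mu_n\}$ lde-converges to $\mu$, then for every weak* neighborhood $\cN$ of $\mu\times\nu$ there is a neighborhood $\cO$ of $\nu$ with
$$\inf_{\bfy \in \Omega(\cO, \s_n)} \mu_n\big(\{\bfx:~(\bfx,\bfy) \in \Omega(\cN, \s_n)\}\big) \to 1$$
along $\cU$. Pairing each member $\bfy$ of a maximal separated family of good models for $\nu$ with a separated family of $\mathrm{cov}_{\eps,\d}(\mu_n)$-many $\bfx$'s lying over it produces the multiplicative count for the plain sofic entropy of the product, and the same device supplies the supermultiplicativity you are missing in the lower bound of the lde formula. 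Your appeal to ``part B of Austin's theorem'' at precisely that point is circular if the aim is to sketch the proof rather than merely to cite it.
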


\begin{proof}[Remarks on the proof]
The statements of Theorems B and C  in \cite{MR3542515} differ from the above. Instead of using ultrafilters one quantifies over all subsequences. The proof of the version above can be derived from the proof in \cite{MR3542515} with only minimal changes. The main idea is that if $\{\mu_n\}$ is a sequence of measures on model spaces that locally and doubly-empirically converges to $\mu$ then for any weak* open neighborhood $\cN$ of $\mu\times \nu$ in $\Prob(\cX^\G \times \cY^\G)$ there is a weak* open neighborhood $\cO$ of $\nu$ in $\Prob(\cY^\G)$ such that 
$$\inf_{y \in \Omega(\cO, \s_n)} \mu_n\big(\{\bfx \in \cX^{V_n}:~(\bfx,\bfy) \in \Omega(\cN, \s_n)\}\big) \to 1$$
as $n\to\cU$. This is reminiscent of the well-known fact that if $\mu\times \mu$ is ergodic and $\nu$ is ergodic then $\mu\times \nu$ is ergodic. 
\end{proof}

Finally, in order to further justify the notion of lde-entropy we have:

\begin{thm}\cite[Theorem D]{MR3542515}
Given a pmp action $\G \cc^T (X,\mu)$, sofic approximation $\Si$ and a non-principal ultrafilter $\cU$ on $\N$, the {\bf power-stabilized $(\Si,\cU)$-entropy} is defined by
$$h_{\Si,\cU,\mu}^{ps}(T)  = \lim_{n\to\infty} \frac{1}{n} h_{\Si,\cU,\mu^{\times n}}(T^{\times n}) $$
where $\G \cc^{T^{\times n}}(X,\mu)^{\times n}$ is the $n$-fold Cartesian power of $T$. The limit exists by sub-additivity. Then
$$h_{\Si,\cU,\mu}^{ps}(T) \ge h_{\Si,\cU,\mu}^{lde}(T)$$
and equality holds if $\G \cc^T (X,\mu)$ admits a finite-entropy generating partition. 
\end{thm}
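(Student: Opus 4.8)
The plan is to prove the inequality $h^{ps}_{\Si,\cU,\mu}(T)\ge h^{lde}_{\Si,\cU,\mu}(T)$ first --- this part is formal, given the preceding theorem --- and then to upgrade it to an equality under the finite generating partition hypothesis by means of a counting argument in a symbolic model. For the inequality: iterating the product identity $h^{lde}_{\Si,\cU,\mu\times\nu}(T\times S)=h^{lde}_{\Si,\cU,\mu}(T)+h^{lde}_{\Si,\cU,\nu}(S)$ of the preceding theorem yields $h^{lde}_{\Si,\cU,\mu^{\times n}}(T^{\times n})=n\,h^{lde}_{\Si,\cU,\mu}(T)$ for all $n$. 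I would combine this with the general bound $h^{lde}_{\Si,\cU,\nu}(S)\le h_{\Si,\cU,\nu}(S)$ (also recorded in \cite{MR3542515}), which holds because any sequence $\{\nu_m\}$ of measures on the model spaces that converges to $\nu$ locally and doubly empirically must satisfy $\nu_m(\Omega(\cO,\s_m))\to 1$ for every weak* neighbourhood $\cO$ of $\nu$; hence a $\nu_m$-mass-$(1-\eps)$ subset can be taken inside $\Omega(\cO,\s_m)$, whose $\delta$-covering number in the normalized Hamming metric is at most the cardinality of a maximal $\delta$-separated set of microstates, i.e. the quantity whose exponential rate defines $h_{\Si,\cU,\nu}(S)$. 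Putting the two together, $\tfrac1n h_{\Si,\cU,\mu^{\times n}}(T^{\times n})\ge \tfrac1n h^{lde}_{\Si,\cU,\mu^{\times n}}(T^{\times n})=h^{lde}_{\Si,\cU,\mu}(T)$ for every $n$, and since the limit defining $h^{ps}$ exists by sub-additivity of $n\mapsto h_{\Si,\cU,\mu^{\times n}}(T^{\times n})$ (the subadditive product formula), the inequality follows.

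For the equality, assume $\G\cc^T(X,\mu)$ has a finite-entropy generating partition and take the topological model to be symbolic, $X=K^\G$ with $K$ a finite (or finite-entropy countable) alphabet and $\mu$ shift-invariant. Because $h^{ps}_{\Si,\cU,\mu}(T)=\inf_n\tfrac1n h_{\Si,\cU,\mu^{\times n}}(T^{\times n})$, it is enough to show $\tfrac1n h_{\Si,\cU,\mu^{\times n}}(T^{\times n})\le h^{lde}_{\Si,\cU,\mu}(T)+\eta_n$ with $\eta_n\to 0$. A microstate for $T^{\times n}$ over $V_m$ is a point $\vec x\in(K^n)^{V_m}$, which I would read as an $n$-tuple $\vec x=(\bfx^{(1)},\dots,\bfx^{(n)})$ of maps $V_m\to K$ whose joint empirical distribution $P^{\s_m}_{\vec x}$ lies near $\mu^{\times n}$, and attach to it the uniform measure $\mu_m^{\vec x}:=\tfrac1n\sum_{j=1}^n\delta_{\bfx^{(j)}}$ on the model space $K^{V_m}$ for $T$. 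The key observation is that, as $n\to\infty$, the condition $P^{\s_m}_{\vec x}\approx\mu^{\times n}$ forces $\mu_m^{\vec x}$ to converge to $\mu$ not merely locally and empirically but locally and \emph{doubly} empirically: marginalizing $P^{\s_m}_{\vec x}\approx\mu^{\times n}$ to any pair of coordinates $j\ne k$ shows $P^{\s_m}_{(\bfx^{(j)},\bfx^{(k)})}\approx\mu\times\mu$, so $(\bfx^{(j)},\bfx^{(k)})$ is a genuine $T\times T$-microstate for all off-diagonal pairs, the diagonal contributing only an $O(1/n)$ fraction. Granting this, the number of microstates realizing a given profile of $\mu_m^{\vec x}$ is $\big(\rm{cov}_{\eps,\delta}(\mu_m^{\vec x},d^{(V_m)}_K)\big)^{n}$ up to factors sub-exponential in $|V_m|$, and by the definition of $h^{lde}$ each such covering number is $\le\exp\!\big(|V_m|\,(h^{lde}_{\Si,\cU,\mu}(T)+o(1))\big)$; summing over the (controllably many) profiles and taking $|V_m|^{-1}\log$ along $\cU$ gives $h_{\Si,\cU,\mu^{\times n}}(T^{\times n})\le n\,(h^{lde}_{\Si,\cU,\mu}(T)+\eta_n)$, which is what was needed.

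The step I expect to be the main obstacle is precisely the emphasized one: promoting the convergence of $\mu_m^{\vec x}$ from single empirical to \emph{doubly} empirical. This is exactly why the answer is $h^{lde}$ rather than the generally strictly larger $h^{le}$, and it is where the symbolic model is used essentially --- one must control the joint statistics of pairs of pullback names uniformly, pass freely between cardinality counts and Hamming covering numbers, and carefully fix the order in which the limits $n\to\infty$, $m\to\cU$, $\cO\downarrow\{\mu\}$ and $\eps,\delta\downarrow 0$ are taken (for small $n$ the measure $\mu_m^{\vec x}$ need not be doubly-empirically convergent at all, which is why the bound is only asymptotic in $n$ and the full bookkeeping of \cite{MR3542515} is required). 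The lower bound established in the first paragraph shows the asymptotic bound is tight, so that $h^{ps}_{\Si,\cU,\mu}(T)=h^{lde}_{\Si,\cU,\mu}(T)$ in the presence of a finite-entropy generating partition.
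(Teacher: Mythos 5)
Your first paragraph is fine: iterating the additivity of $h^{lde}$ under products gives $h^{lde}_{\Si,\cU,\mu^{\times n}}(T^{\times n})=n\,h^{lde}_{\Si,\cU,\mu}(T)$, the comparison $h^{lde}\le h$ follows from the empirical-convergence clause exactly as you say, and Fekete's lemma finishes the lower bound. (The survey itself gives no proof of this theorem --- it is quoted from \cite{MR3542515} --- so I am judging the second half on its own merits.)

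The counting step in your second paragraph has a genuine gap. The measure $\mu_m^{\vec x}=\tfrac1n\sum_{j}\delta_{\bfx^{(j)}}$ has at most $n$ atoms, so $\rm{cov}_{\eps,\delta}(\mu_m^{\vec x},d^{(V_m)}_K)\le n$ trivially; the bound ``number of microstates with a given profile $\le(\rm{cov}_{\eps,\delta}(\mu_m^{\vec x}))^{n}$'' therefore yields at most $n^{n}$, which is independent of $|V_m|$ and carries no information at the exponential scale $e^{n|V_m|c}$ you need to control. All of the weight is then pushed onto ``summing over the (controllably many) profiles,'' but a profile is precisely an unordered $n$-tuple of good models for $T$, so the number of profiles is essentially the quantity you set out to bound --- the argument is circular. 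The deeper issue is the direction in which $h^{lde}$ can be used: it is a \emph{supremum} over lde-convergent sequences $\{\nu_m\}$ of their covering-number growth rates, so to get an upper bound on a count you must exhibit that count (to within $e^{o(n|V_m|)}$, after taking $n$-th roots) as the covering number of a single sequence $\{\nu_m\}$ that genuinely lde-converges to $\mu$ and is spread over exponentially many models, i.e.\ $\rm{cov}_{\eps,\delta}(\nu_m)\approx e^{|V_m|(c-\eta_n)}$. Your $\mu_m^{\vec x}$, being supported on $n$ points, cannot play this role, and the naive alternative --- averaging $\mu_m^{\vec x}$ over all $\vec x$ in a maximal separated set --- destroys the doubly-empirical property, since pairs $(\bfx^{(j)},\bfy^{(k)})$ drawn from \emph{distinct} product-microstates $\vec x\ne\vec y$ need not be good models for $\mu\times\mu$. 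Extracting an lde-convergent sequence with large covering number from the good models of $\mu^{\times n}$, despite this obstruction, is the real content of Theorem D in \cite{MR3542515}; you have correctly located the difficulty (the passage from empirical to doubly-empirical control) but the mechanism you propose does not resolve it.
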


One further justification:
\begin{thm}\cite[Corollary D']{MR3542515}
Given a pmp action $\G \cc^T (\cX^\G,\mu)$ with a finite-entropy generating partition, sofic approximation $\Si$ and a non-principal ultrafilter $\cU$ on $\N$, 
$$h_{\Si,\cU,\mu}^{lde}(T) = \inf_S h_{\Si,\cU,\mu\times\nu}(T\times S)  - h_{\Si,\cU,\nu}(S) $$
where the infimum is over all pmp actions $\G \cc^S (\cY^\G,\nu)$. 
\end{thm}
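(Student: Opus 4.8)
The plan is to prove the two inequalities separately, and in both directions the real work is already done: everything follows from the additivity identity of Theorem C above (the equality $h^{lde}_{\Si,\cU,\mu\times\nu}(T\times S) = h^{lde}_{\Si,\cU,\mu}(T) + h^{lde}_{\Si,\cU,\nu}(S)$), with Theorem D above used to organize the infinite cases and to explain the finite-entropy-generating-partition hypothesis. Throughout one reads the infimum as ranging over those $S=\G\cc(\cY^\G,\nu)$ for which $h^{lde}_{\Si,\cU,\nu}(S)$ is finite, so that the difference never has the form $\infty-\infty$.

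First I would handle the bound $h^{lde}_{\Si,\cU,\mu}(T)\ \ge\ \inf_S\big(h^{lde}_{\Si,\cU,\mu\times\nu}(T\times S)-h^{lde}_{\Si,\cU,\nu}(S)\big)$. It suffices to exhibit one admissible ambient system realizing the right-hand value. Take $S_0$ to be a Bernoulli shift $\G\cc(K,\kappa)^\G$ with $0<H(K,\kappa)<\infty$. As in the proof sketch of Theorem \ref{thm:bernoulli-sofic}, the sequence of product measures $\kappa^{V_n}$ on the model spaces $K^{V_n}$ lde-converges to $\kappa^\G$ and its covering numbers grow at rate $H(K,\kappa)$, so $h^{lde}_{\Si,\cU,\kappa^\G}(S_0)=H(K,\kappa)\in\R$. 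By Theorem C, $h^{lde}_{\Si,\cU,\mu\times\kappa^\G}(T\times S_0)=h^{lde}_{\Si,\cU,\mu}(T)+H(K,\kappa)$, and subtracting the finite quantity $H(K,\kappa)$ gives the value $h^{lde}_{\Si,\cU,\mu}(T)$ exactly, whether this is a real number or $-\infty$. Hence the infimum is $\le h^{lde}_{\Si,\cU,\mu}(T)$.

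Next I would prove the reverse bound $h^{lde}_{\Si,\cU,\mu}(T)\ \le\ h^{lde}_{\Si,\cU,\mu\times\nu}(T\times S)-h^{lde}_{\Si,\cU,\nu}(S)$ for \emph{every} admissible $S$. Since $h^{lde}_{\Si,\cU,\nu}(S)$ is finite, this rearranges to the superadditivity inequality $h^{lde}_{\Si,\cU,\mu\times\nu}(T\times S)\ge h^{lde}_{\Si,\cU,\mu}(T)+h^{lde}_{\Si,\cU,\nu}(S)$, which is again part of Theorem C --- it is the ``tensoring'' half: given sequences $\{\mu_n\}$ and $\{\nu_n\}$ of measures on model spaces lde-converging to $\mu$ and $\nu$ and nearly attaining the respective entropies, the products $\{\mu_n\times\nu_n\}$ on $\cX^{V_n}\times\cY^{V_n}\cong(\cX\times\cY)^{V_n}$ lde-converge to $\mu\times\nu$ and their covering numbers multiply up to subexponential error. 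Combining with the previous paragraph yields the asserted equality, and moreover shows the infimum is attained --- either at the Bernoulli $S_0$, or, conceptually, at $S=T^{\times(n-1)}$ acting on $(\cX^{\,n-1})^\G$: here the finite-entropy generating partition of $T$ passes to a finite-entropy generating partition of every Cartesian power (take the product partition), so Theorem D applies and gives $h^{lde}_{\Si,\cU,\mu^{\times n}}(T^{\times n})=h^{ps}_{\Si,\cU,\mu^{\times n}}(T^{\times n})=n\,h^{lde}_{\Si,\cU,\mu}(T)$, whence the displayed difference equals $h^{lde}_{\Si,\cU,\mu}(T)$ for each $n$. This is precisely where the hypothesis enters; without it one only has $h^{ps}\ge h^{lde}$ and this last computation degrades to an inequality.

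The main obstacle is therefore not any single inequality but the extended-real bookkeeping and, underneath it, the fact that additivity must hold with \emph{equality} (not merely subadditivity) at the minimizing system: ensuring that the class of $S$ with $h^{lde}_{\Si,\cU,\nu}(S)\in\R$ is rich enough that restricting to it does not change the infimum, and that for such $S$ the two model-space constructions (projection and tensoring) are genuinely inverse to each other up to subexponential error. That last point is exactly the content for which the ``locally and doubly-empirically'' refinement was designed and where the substance of Theorem C lies; a secondary verification needed along the way is the Bernoulli computation $h^{lde}_{\Si,\cU,\kappa^\G}(S_0)=H(K,\kappa)$, which proceeds as in the proof of Theorem \ref{thm:bernoulli-sofic} since the model-space measures $\kappa^{V_n}$ simultaneously witness every flavour of local convergence.
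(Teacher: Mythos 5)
Your argument is internally consistent for the identity exactly as displayed, but notice what it reveals: with the $lde$ decoration on all three terms, the identity is a one-line consequence of the additivity in the survey's Theorem [B and C] --- for \emph{every} auxiliary $S$ with $h^{lde}_{\Si,\cU,\nu}(S)$ finite one gets $h^{lde}_{\Si,\cU,\mu\times\nu}(T\times S)-h^{lde}_{\Si,\cU,\nu}(S)=h^{lde}_{\Si,\cU,\mu}(T)$ on the nose, so the infimum is attained at every admissible $S$ and the finite-entropy-generating-partition hypothesis is never used. That the stated hypothesis does no work in your proof should have been a warning sign: the statement as transcribed is degenerate, and Austin's Theorem D$'$ actually carries the \emph{plain} sofic entropies $h_{\Si,\cU,\mu\times\nu}(T\times S)$ and $h_{\Si,\cU,\nu}(S)$ on the right-hand side, with only the left-hand side decorated by $lde$. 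That is what makes it a genuine ``further justification'': it recovers $h^{lde}$ from the family of ordinary sofic entropies of products.

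For that version your central step fails. Choosing $S_0$ Bernoulli gives $h_{\Si,\cU,\mu\times\kappa^\G}(T\times S_0)-h_{\Si,\cU,\kappa^\G}(S_0)=h_{\Si,\cU,\mu}(T)$ (equality in the subadditive product formula holds when one factor is Bernoulli), and $h_{\Si,\cU,\mu}(T)$ may strictly exceed $h^{lde}_{\Si,\cU,\mu}(T)$, so the Bernoulli witness only certifies an upper bound at the wrong level. The correct lower bound on the infimum is the first display of the survey's Theorem [B and C], namely $h_{\Si,\cU,\mu\times\nu}(T\times S)\ge h^{lde}_{\Si,\cU,\mu}(T)+h_{\Si,\cU,\nu}(S)$, which you never invoke in this form. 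The matching upper bound is exactly where the generating-partition hypothesis enters: take $S=T^{\times(n-1)}$, set $a_n=h_{\Si,\cU,\mu^{\times n}}(T^{\times n})$, use subadditivity of $(a_n)$ to get $\inf_n\,(a_n-a_{n-1})\le\lim_n a_n/n=h^{ps}_{\Si,\cU,\mu}(T)$, and apply Theorem D to identify the latter with $h^{lde}_{\Si,\cU,\mu}(T)$. You gesture at precisely this choice of $S$ in your closing paragraph, but you evaluate it with $lde$-entropies, where it collapses back to the additivity identity; run with the plain sofic entropies it is the substance of the proof.
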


\section{Ornstein Theory}\label{sec:ornstein theory}

In 1970, Donald Ornstein introduced a powerful set of tools, known collectively as the ``Ornstein machine'', for proving that a given automorphism is measurably conjugate to a Bernoulli shift \cite{ornstein-1970a, ornstein-1970b, ornstein-1970c}. This machine also unifies the proofs of the following major results: 
\begin{itemize}
\item (Sinai's Factor Theorem): every ergodic automorphism $T$ with positive entropy factors onto every Bernoulli shift $\Z \cc (K,\k)^\Z$ with $H(K,\k)\le h_\mu(T)$ \cite{sinai-weak};
\item (Krieger's Generator Theorem): every ergodic automorphism $T$ admits a generating partition $\cP$ with $|\cP| < 1+ \exp( h_\mu(T))$ \cite{krieger-generator}; 
\item (Ornstein's Isomorphism Theorem): Bernoulli shifts are classified up to measure-conjugacy by entropy.
\end{itemize}
See \cite{glasner-joinings-book, down-book, petersen-book, rudolph-book} for modern treatments. %including, for example, the proof that Bernoulli shifts with the same entropy are not just isomorphic but finitarily isomorphic.

%Moreover, very useful necessary and sufficient conditions were provided for $T$ to be measurably conjugate to a Bernoulli shift. From these criteria, it was shown that the following automorphisms are Bernoulli: factors of Bernoulli shifts are Bernoulli, mixing Markov chains, hyperbolic toral automorphisms, the time 1 map of a geodesic flow on a hyperbolic surface and more. Modern proofs of (some of) these results can be found in \cite{}.

These results were generalized by Ornstein and Weiss \cite{OW80} to arbitrary countable amenable groups via quasi-tiling machinery. Alternatively, this generalization can be made via orbit-equivalence theory \cite{danilenko-2001, danilenko-2002}. 

In recent work, all three major results have been partially generalized to all countable groups. These generalizations are discussed next, followed by a section on the $\bd$-metric that plays a crucial role in Ornstein theory.

% and a section on open questions to illustrate our current level of ignorance.

%Unfortunately, the Ornstein Machine makes crucial use of Rokhlin's Lemma which relies on amenability. Perhaps for this reason, the Ornstein Machine has not been generalized to any nonamenable group. In spite of this, some positive results have been achieved and, surprisingly, some negative results have also been discovered. 

\subsection{The Isomorphism Theorem}\label{sec:isom}
%Let us now consider the converse. In celebrated work \cite{}, Ornstein showed that if $(K,\kappa), (L,\lambda)$ are any two probability spaces with the same Shannon entropy then the Bernoulli shifts $\Z \cc (K,\kappa)^\Z, \Z \cc (L,\lambda)^\Z$ are measurably conjugate. This was extended to all countable amenable groups in \cite{OW80}. 

Ornstein's Isomorphism Theorem has recently been extended to all countable groups. The final piece was put in by Seward in work that is still in progress.

\begin{thm}
Let $\G$ be a countably infinite group. Let $(K,\k), (L,\l)$ be two probability spaces with the same Shannon entropy. Then the corresponding Bernoulli shifts $\Ga \cc (K,\kappa)^\Ga, \Ga \cc (L,\lambda)^\Ga$ are measurably conjugate.
\end{thm}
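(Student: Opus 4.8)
The plan is to reduce the general countable group case to a combination of two facts already available: the finite-entropy case, which follows from sofic/Rokhlin entropy together with known positive results, and a new ingredient supplied by Seward handling the subtleties that arise for non-sofic groups and for infinite entropy. First I would observe that the theorem splits naturally into three regimes. If $\G$ is sofic, then by Theorem~\ref{thm:bernoulli-sofic} the sofic entropy of $\G\cc(K,\k)^\G$ equals $H(K,\k)$, so distinct-entropy Bernoulli shifts are certainly distinguished; the substantive content is producing an \emph{isomorphism} when the entropies agree. If $\G$ is non-sofic but $h^{Rok}_{sup}(\G)=+\infty$, Theorem~\ref{thm:seward-bernoulli} plays the analogous role. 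In either case the isomorphism itself is what must be constructed, and this is where the Ornstein machinery — adapted to countable groups — enters.

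The core construction I would carry out is a countable-group version of Ornstein's isomorphism argument. Fix probability spaces $(K,\k)$ and $(L,\l)$ with $H(K,\k)=H(L,\l)$. First reduce to the case where both base spaces are non-trivial and, by an exhaustion argument, to bases of finite Shannon entropy (the infinite-entropy case is handled separately below). Then, following the strategy that underlies Ornstein theory, I would build a sequence of partitions $\cP_n$ of $(L,\l)^\G$, each generating the full $\sigma$-algebra, such that the process $(\cP_n, \text{shift})$ converges in the $\bd$-metric to an i.i.d.\ process with base $(K,\k)$. The key steps are: (i) a \emph{finitely determined}/\emph{very weak Bernoulli}-type characterization of Bernoulli shifts valid over countable groups, so that a $\bd$-limit of Bernoulli partitions with the correct entropy is again Bernoulli; (ii) a successive-approximation scheme (the ``Ornstein machine'') producing the $\cP_n$, using a Rokhlin-lemma substitute appropriate to the group; and (iii) matching the entropy exactly at the limit, which is where $H(K,\k)=H(L,\l)$ is used. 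For amenable $\G$ this is exactly the Ornstein--Weiss theorem \cite{OW80}; the point is that Seward's recent work (the generator theorem \cite{seward-kreiger-1}, Sinai's factor theorem \cite{seward-sinai}, and the still-in-progress completion) supplies the missing pieces — in particular a Krieger-type generator theorem and a $\bd$-approximation apparatus — that make the scheme run for arbitrary countable groups.

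For the infinite-entropy case I would argue separately: if $H(K,\k)=H(L,\l)=\infty$, write each base as an increasing limit of finite-entropy sub-partitions and use the finite-entropy isomorphism theorem on matching pieces together with an inverse-limit argument, or invoke directly Theorem~\ref{thm:ri} (all Bernoulli shifts over a non-amenable group factor onto each other) in the non-amenable case combined with a Stepin-type co-induction argument over a cyclic subgroup to upgrade mutual factoring to isomorphism. Concretely, as sketched in \S\ref{sec:bernoulli-bernoulli}, applying Ornstein's $\Z$-isomorphism theorem coordinate-wise along the $\langle a\rangle$-cosets of a cyclic subgroup $\langle a\rangle\le\G$ promotes an equality of base entropies into a measure-conjugacy of the ambient $\G$-Bernoulli shifts.

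The main obstacle will be step (i)--(ii) of the core construction: establishing that a $\bd$-limit of Bernoulli processes with matched entropy is Bernoulli, \emph{without} amenability, i.e.\ proving the relevant ``finitely determined implies Bernoulli'' statement for general countable groups and arranging the successive-approximation scheme with an adequate substitute for the Ornstein--Weiss quasi-tiling / Rokhlin lemma. This is precisely the hard technical heart that Seward's in-progress work addresses; I would state it as the key lemma, cite that work for its proof, and then assemble the three regimes above into the theorem. Everything else — the entropy bookkeeping, the reductions to finite entropy, the inverse-limit and co-induction arguments — is routine once that lemma is in hand.
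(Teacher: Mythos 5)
There is a genuine gap in your core construction. Your step (i) asks for a ``finitely determined / $\bd$-limit of Bernoullis with matched entropy is Bernoulli'' statement valid over arbitrary countable groups, and your step (ii) asks for an Ornstein-machine successive approximation with a Rokhlin-lemma substitute. Neither exists beyond the amenable case, and the paper itself proves that the first one \emph{fails}: in \S \ref{sec:d-bar} it is shown that when $\G$ contains a non-abelian free group, entropy is not continuous in the $\bd$-metric, a $\bd$-limit of $\log 2$-entropy Bernoulli processes can be the $\log 4$-entropy Bernoulli shift, and (for infinite property (T) groups) a $\bd$-limit of Bernoulli processes can be a non-Bernoulli factor of a Bernoulli shift. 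So the set of Bernoulli measures is not $\bd$-closed and the approximation scheme you propose cannot converge to the right target; this route is not merely technically hard, it is blocked by counterexamples.

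The actual argument avoids the Ornstein machine entirely. The backbone is Stepin's co-induction trick: if $\G$ contains an \emph{Ornstein} subgroup $H$ (one for which the isomorphism theorem holds), then an isomorphism of $H$-Bernoulli shifts can be promoted coset-by-coset to an isomorphism of $\G$-Bernoulli shifts. Since Ornstein--Weiss handles all infinite amenable groups, this settles every $\G$ containing an infinite amenable subgroup --- which is the case you mention only in passing, as a fallback for the infinite-entropy situation, rather than as the main engine. The genuinely hard residual case is Ol'shanskii-type groups with no proper infinite subgroups, where no such $H$ exists. There the paper uses a \emph{measurable} version of Stepin's trick: given a common nontrivial factor $(M,\mu)$ of the two base spaces, Zimmer provides an ergodic single automorphism of $(M,\mu)^\G$ with orbits inside the $\G$-orbits; one lifts it to both Bernoulli shifts and applies Thouvenot's \emph{relative} isomorphism theorem to build the conjugacy. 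A pigeonhole argument on common factors reduces to the case where neither base is a two-atom space, and the two-atom case is the piece resolved in Seward's work in progress. Your proposal identifies none of Zimmer's theorem, Thouvenot's relative theorem, or the reduction via common factors, and the key lemma you defer to Seward (a general-group finitely-determined theory) is not what his work supplies.
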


\begin{proof}[Proof sketch]
Following Stepin, we say that a group $\Ga$ is {\bf Ornstein} if whenever $(K,\kappa), (L,\lambda)$ are any two probability spaces with the same Shannon entropy then the Bernoulli shifts $\Ga \cc (K,\kappa)^\Ga, \Ga \cc (L,\lambda)^\Ga$ are measurably conjugate. So our goal is to prove that all countably infinite groups are Ornstein.

Stepin showed that if $\Ga$ contains an Ornstein subgroup then $\Ga$ must be Ornstein itself \cite{stepin-1975}. This is because if $H\le \Ga$ is an Ornstein subgroup then an isomorphism from  $\Ga\cc (K,\kappa)^\Ga$ to $\Ga\cc (L,\lambda)^\Ga$ can be built out of an isomorphism from $H\cc (K,\kappa)^H$ to $H\cc (L,\lambda)^H$ coset-by-coset. In other words, it is a co-induction argument (the details are spelled out in \cite{bowen-ornstein-2011}). By Ornstein-Weiss \cite{OW80} all infinite amenable groups are Ornstein. So any group that contains an infinite amenable subgroup must be Ornstein. 

On the other hand, Ol'shankii proved the existence of countable non-amenable groups that contain no proper infinite subgroups \cite{olshankii-book}. Stepin's trick cannot be directly applied to such groups. Nonetheless there is a measurable version of Stepin's trick that works and is applied in \cite{bowen-ornstein-2012}. 

Suppose there is a non-trivial probability space $(M,\mu)$ such that both $(K,\kappa)$ and $(L,\lambda)$ factor onto $(M,\mu)$. Nontrivial means that $\mu(\{x\})<1$ for every $x\in M$. Then the Bernoulli shifts $\Ga \cc (K,\kappa)^\Ga$ and $\Ga \cc (L,\lambda)^\Ga$ both factor onto $\Ga \cc (M,\mu)^\Ga$. By Zimmer \cite{zimmer-1984}, there exists an ergodic automorphism $T$ of $(M,\mu)^\Ga$ whose orbits are contained in the $\Ga$-orbits. We lift $T$ to automorphisms $\tT_K, \tT_L$ of $(K,\kappa)^\Ga, (L,\lambda)^\Ga$ respectively. Using  Thouvenot's relative version of Ornstein's Isomorphism Theorem \cite{thouvenot-rel-isom-1975} we see that $\tT_K$ and $\tT_L$ are isomorphic via an isomorphism compatible with $T$. This isomorphism is used in a manner similar to Stepin's trick to build an isomorphism from $\Ga \cc (K,\kappa)^\Ga$ to $\Ga \cc (L,\lambda)^\Ga$ (see \cite{bowen-ornstein-2012} for details).

Next suppose that neither $(K,\k)$ nor $(L,\l)$ is a 2-atom space. For example, this means that for any $k_0,k_1 \in K$, $\k(\{k_0,k_1\})<1$.  Then an elementary argument shows the existence of a third probability space $(N,\nu)$ such that
$$H(K,\kappa)=H(L,\lambda)=H(N,\nu)$$
and $(K,\kappa)$ has a nontrivial common factor with $(N,\nu)$ which has a nontrivial common factor with $(L,\lambda)$. So the previous result shows that $\Ga \cc (K,\kappa)^\Ga$ is isomorphic to $\Ga \cc (L,\lambda)^\Ga$.

The final piece to the puzzle is to handle the case when $K$ is a 2-atom space. This is handled in work-in-progress by Brandon Seward. The main idea is to find a common factor between $\Ga \cc (K,\k)^\G$ and $\Ga \cc (L,\l)^\G$ for a specific choice of $(L,\l)$ and then apply arguments similar to the above. 
\end{proof}

By contrast, it is trivial to check that no finite group is Ornstein. So among countable groups, the Ornstein property characterizes the infinite groups.

%\subsubsection{Which actions are isomorphic to Bernoulli shifts?}

\subsubsection{Non-Bernoulli factors of Bernoulli shifts}

In the special case $\G=\Z$, many natural actions are known to be isomorphic to Bernoulli shifts. This includes mixing Markov chains, inverse limits of Bernoulli shifts, factors of Bernoulli shifts, algebraic actions with completely positive entropy, hyperbolic toral automorphisms, the time 1 map of geodesic flow on a negatively curved surface of finite volume. All of these results were obtained using Ornstein theory. By contrast, when $\G=\F_2$ we gave in \S \ref{sec:markov-chains}  Example \ref{ex:ising} an example of a mixing Markov chain that is not Bernoulli and in \S \ref{cor:inverse limit} an example of an inverse limit of Bernoulli shifts that is not Bernoulli. Next we give an example, due to Popa-Sasyk of a factor of a Bernoulli shift that is non-Bernoulli. Moreover this factor is algebraic and has completely positive sofic entropy by the main result of \cite{kerr-cpe}.

\begin{thm}\label{thm:popa}\cite{popa-sasyk, popa-cohomology1, MR3543677}
Let $\Ga$ be a countably infinite group with an infinite normal subgroup $H$ such that $(\Ga,H)$ has relative property (T). Then $\Ga$ admits a Bernoulli action with a non-trivial factor that is non-Bernoulli. In fact, the factor action is not even orbit-equivalent to a Bernoulli shift.
\end{thm}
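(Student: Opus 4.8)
\textbf{Proof proposal for Theorem \ref{thm:popa}.}

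The plan is to produce the non-Bernoulli factor by hand as a quotient of a Bernoulli shift by a suitable closed subgroup, and then to use relative property (T) to show this factor is not isomorphic --- indeed not orbit-equivalent --- to any Bernoulli shift. First I would realize the Bernoulli shift in algebraic form: take a compact abelian group $K$ (for instance $K = \T$ or $K = \prod \Z/p$) with its Haar probability measure $\kappa$, so that $\Ga \cc (K^\Ga,\kappa^\Ga)$ is a Bernoulli action by continuous automorphisms of the compact abelian group $K^\Ga$. Inside $\Ga$ we have the infinite normal subgroup $H$ with $(\Ga,H)$ relatively property (T). The key construction is to pick a closed $\Ga$-invariant subgroup $L \le K^\Ga$ so that the quotient $\Ga \cc (K^\Ga/L, \text{Haar})$ is the desired factor; the natural choice is built from the $H$-action so that the resulting factor has a ``small'' but nontrivial piece of its Koopman representation carried by functions on which $H$ acts through a finite-index-free-like pattern --- concretely, one arranges that the restriction of the factor action to $H$ retains a weakly mixing but non-mixing (or more precisely: non-spectrally-Bernoulli) component. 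The standard choice, following Popa--Sasyk, is $L = $ the annihilator of the $\Z\Ga$-submodule of $\widehat{K^\Ga} = \bigoplus_\Ga \widehat K$ generated by a single element whose $H$-orbit closure in the representation-theoretic sense is controlled by an embedding of $H$ into a property (T) group.

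The second step, and the heart of the matter, is the rigidity argument. The Koopman representation of $\Ga \cc (K^\Ga/L)$ on $L^2_0$ (the orthocomplement of the constants) decomposes according to $\Ga$-orbits on the Pontryagin dual $\widehat{K^\Ga}/L^\perp = L^\perp$; by the choice of $L$ there is an orbit whose stabilizer in $\Ga$ contains $H$ (or a finite-index subgroup of $H$), so $H$ acts on the corresponding summand through a finite quotient --- hence with a nonzero invariant vector after passing to that summand's isotypic piece. This is exactly what fails for a Bernoulli shift: for a Bernoulli action $\Ga \cc (M,\mu)^\Ga$, the restriction to any infinite subgroup $H$ is again Bernoulli (over $H$, with fibers indexed by $H\backslash\Ga$), hence mixing, hence its Koopman representation on $L^2_0$ has no nonzero $H$-invariant vectors and in fact no almost-invariant vectors --- it is a multiple of the left regular representation of $H$. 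So if $\Ga \cc (K^\Ga/L)$ were measurably conjugate to a Bernoulli shift, restricting to $H$ would give a Bernoulli $H$-action whose Koopman representation contains $H$-almost-invariant (indeed invariant) vectors in $L^2_0$; relative property (T) of $(\Ga,H)$ is what upgrades ``almost invariant'' to ``invariant'' and forces a contradiction. For the stronger orbit-equivalence statement, I would instead invoke Popa's cocycle superrigidity / deformation-rigidity machinery for malleable actions: a Bernoulli shift of $\Ga$ is malleable, so any orbit equivalence between $\Ga \cc (K^\Ga/L)$ and a Bernoulli shift would yield a cocycle that untwists (by relative property (T), using that $H \vartriangleleft \Ga$ is infinite and normal), which in turn forces a measure conjugacy between the two actions; then the spectral obstruction above applies. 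These are precisely the tools of \cite{popa-cohomology1, popa-sasyk, MR3543677}.

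The main obstacle I anticipate is verifying that the candidate factor is genuinely \emph{nontrivial} and genuinely \emph{not Bernoulli} simultaneously, i.e. choosing $L$ so that $K^\Ga/L$ is neither a point nor (isomorphic to) a full Bernoulli shift: one must check that the ``rigid'' summand of the Koopman representation is nonzero (nontriviality / the factor is not a point) while the action is still ergodic and the rigid summand is not absorbed by some hidden Bernoulli coordinate. This is a representation-theoretic bookkeeping problem: one writes down $L^\perp \le \bigoplus_\Ga \widehat K$ as a $\Z\Ga$-module, decomposes $\ell^2(L^\perp)$ into $\Ga$-orbits, identifies the orbit whose stabilizer contains $H$, and confirms its contribution is nonzero and $H$-finite. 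A secondary technical point is to make the orbit-equivalence (as opposed to conjugacy) conclusion fully rigorous: this requires knowing that a Bernoulli $\Ga$-action is ``$s$-malleable'' and that Popa's untwisting theorem applies with hypothesis ``$(\Ga,H)$ relative (T), $H$ infinite normal'' --- all of which is available in the cited literature, so I would quote it rather than reprove it. Everything else --- the algebraic model for Bernoulli shifts, Pontryagin duality, the mixing of Bernoulli restrictions to infinite subgroups --- is standard and can be cited from \cite{schmidt-book} and the Ornstein-theory references.
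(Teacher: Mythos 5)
Your first step cannot be carried out, and this sinks the spectral obstruction that your second step relies on. The Pontryagin dual of $K^\Ga$ is the direct sum $\bigoplus_{g\in\Ga}\widehat K$, so every nonzero character of $K^\Ga$ has finite support and hence finite $\Ga$-stabilizer. The dual of any quotient $K^\Ga/L$ by a closed $\Ga$-invariant subgroup $L$ is the annihilator $L^\perp\le\bigoplus_\Ga\widehat K$, whose nonzero elements still have finite stabilizers. Consequently the Koopman representation of $\Ga\cc K^\Ga/L$ on $L^2_0$ is always a direct sum of quasi-regular representations $\ell^2(\Ga/F)$ with $F$ finite: there is no choice of $L$ producing a dual orbit whose stabilizer contains (a finite-index subgroup of) the infinite group $H$, hence no summand with nonzero $H$-invariant vectors. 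Indeed the factor actually used here --- the Ornstein--Weiss quotient $K^\Ga/K$ by the diagonally embedded constants --- has completely positive sofic entropy and a Koopman representation embedding into a multiple of the left regular representation, so it is spectrally indistinguishable from a Bernoulli shift. The obstacle you flagged at the end (``confirming the rigid summand is nonzero'') is not representation-theoretic bookkeeping; it is unresolvable.

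The paper's proof uses a different invariant: the first cohomology group $H^1(\Ga\cc(X,\mu);\R/\Z)$, which for essentially free actions is an invariant of the orbit equivalence relation, so the orbit-equivalence statement comes directly, with no need to untwist an orbit equivalence into a measure conjugacy via malleability. For a Bernoulli shift, \cite{popa-sasyk} shows every circle-valued cocycle is cohomologous to a homomorphism, so $H^1\cong\Hom(\Ga,\R/\Z)$. For the factor $K^\Ga/K$, the extension $0\to K\to K^\Ga\to K^\Ga/K\to 0$ produces, for each $\chi\in\widehat K$, a cocycle of the quotient action, and \cite{popa-cohomology1} shows --- this is where relative property (T) of $(\Ga,H)$ enters --- that $H^1\cong\Hom(\Ga,\R/\Z)\times\widehat K$. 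Taking $K$ with $\widehat K$ nontrivial (e.g. $K=\Z/2$) gives non-isomorphic cohomology groups and hence no orbit equivalence with any Bernoulli shift. So the rigidity hypothesis is exploited at the level of cocycles over the action, not at the level of the unitary representation, and the ``non-Bernoulli-ness'' is invisible to the Koopman representation.
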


\begin{proof}[Proof outline]
The idea is to compute the cohomology of the actions taking values in the circle $\R/\Z$. To be precise, let $(X,\mu)$ be a standard measure space with an action $\G \cc X$. A {\bf 1-cocycle} is a map $c: \Ga \times X \to \R/\Z$ satisfying
$$c(g_2g_1,x) = c(g_2,g_1x)+ c(g_1,x)$$
for $g_1,g_2 \in \G$ and a.e. $x\in X$. A cocycle $c$ is a {\bf coboundary} if there is a map $\phi:X \to \R/\Z$ such that
$$c(g,x) = \phi(gx) - \phi(x).$$
Two cocycles are {\bf cohomologous} if their difference is a coboundary.

The first cohomology group is $H^1(\G \cc (X,\mu)) = Z(\G \cc (X,\mu))/B(\G \cc (X,\mu))$ where $Z(\cdot)$ is the additive group of 1-cocycles and $B(\cdot)$ is the additive group of coboundaries. 

In \cite{popa-sasyk}, it is shown that for any Bernoulli shift over $\G$, every cocycle is cohomologous to a homomorphism $\G \to \R/\Z$. Therefore the cohomology group is isomorphic to $\Hom(\G, \R/\Z)$. This has been vastly generalized by Popa's cocycle superrigidity Theorem \cite{popa-cocycle-superrigidity-2007}. 

Now let $K$ be a compact abelian group. $K$ is embedded into $K^\G$ by $k\mapsto$ the constant function $(g \mapsto k)$. This is a closed $\G$-invariant subgroup. So $\G \cc K^\G/K$ is an algebraic action (called an Ornstein-Weiss factor in \cite{gaboriau-seward-2015} where bounds on its entropy are obtained). In \cite{popa-cohomology1}, it is shown that the cohomology group of $\G \cc K^\G/K$ (with respect to Haar measure on $K^\G/K$) is isomorphic to $\Hom(\Ga, \R/\Z) \times \Hom(K,\R/\Z)$. So if $\Hom(\Ga, \R/\Z) \times \Hom(K,\R/\Z)$ is not isomorphic to $\Hom(\Ga, \R/\Z)$, then this action cannot be orbit-equivalent to a Bernoulli shift, let alone measurably conjugate to one.

The group $\Hom(\Ga,\R/\Z)$ is compact while $\Hom(K,\R/\Z)$ is countable. As $K$ varies over all compact abelian groups, $\Hom(K,\R/\Z)$ varies over all countable abelian groups. Since there are uncountably many countable abelian groups, there are uncountably factors of Bernoulli shifts over $\G$ that are not Bernoulli.

The proof in \cite{MR3543677} differs from the above. It assumes $\G$ is sofic and shows that the model spaces of Bernoulli shifts have an asymptotic connectivity property that the model spaces of $\G \cc K^\G/K$ lacks.
\end{proof}

\begin{question}
Does there exist a non-Bernoulli factor of a Bernoulli shift over the free group? %For example, the Wired Spanning Forest (defined in **) is seen to be a factor of a Bernoulli shift. Is it isomorphic to a Bernoulli shift?
\end{question}

\begin{question}
If a factor of a Bernoulli shift is orbit-equivalent to a Bernoulli shift, must it be Bernoulli? This is true if $\G$ is an ICC property (T) group by Popa's cocycle super-rigidity Theorem \cite{popa-cocycle-superrigidity-2007}. 
\end{question}

\begin{question}
Let $K$ be a finite set and $\Prob_\Z(K^\Z)$ denote the space of shift-invariant Borel probability measures on $K^\Z$ with the weak* topology. Let $0<c<\log |K|$ and consider the subset $X_c \subset \Prob_\Z(K^\Z)$ of all measures with entropy rate $\ge c$. By upper semi-continuity, $X_c$ is closed. In unpublished work, Dan Rudolph proved the subset $X'_c\subset X_c$ of all measures $\mu \in X_c$ such that the shift action $\Z \cc (K^\Z,\mu)$ is Bernoulli and has entropy $=c$ is a dense $G_\delta$ subset of $X_c$. Density is a consequence of Rokhlin's Lemma. The statement that $X'_c$ is $G_\delta$ can be derived from the fact that a process is Bernoulli if and only if it is finitely determined. It uses the full strength of Ornstein theory. Is there an analogous result for any or every non-amenable group?
\end{question}

\subsection{Krieger's Generator Theorem}

Seward generalized Krieger's Generator Theorem to all countable groups using Rokhlin entropy:

\begin{thm}\cite{seward-kreiger-1}\label{thm:krieger}
Let $\Ga \cc^T (X,\mu)$ be a countably infinite group acting ergodically, but not necessarily freely, by measure-preserving bijections on a non-atomic standard probability space $(X,\mu)$. If $\bp=(p_i)$ is any finite or countable probability vector with
$$h^{\rm{Rok}}(T) < H(\bp)=-\sum_i p_i \log p_i,$$
then there is a generating partition $\alpha = \{A_i\}$ with $\mu(A_i) = p_i$ for all $i$. 
\end{thm}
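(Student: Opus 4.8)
The goal is to promote the "soft" statement $h^{Rok}(T) < H(\mathbf{p})$ to the existence of a generating partition with exactly the prescribed marginals $\mu(A_i) = p_i$. The natural two-stage strategy is: first produce \emph{some} countable generating partition $\beta$ with $H_\mu(\beta)$ as close to $h^{Rok}(T)$ as we like (this is essentially the definition of Rokhlin entropy, plus a little care to get a genuine partition rather than just a sequence of partitions), and then \emph{refine and recode} $\beta$ into a partition whose cell-measures are exactly the $p_i$. The point of the strict inequality is that it leaves a positive "budget" $H(\mathbf p) - h^{Rok}(T) > 0$ of extra Shannon entropy that we are allowed to inject while recoding, and that budget is exactly what is needed to realize an arbitrary probability vector of the given entropy.

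\textbf{Step 1: a small generating partition.} By definition of $h^{Rok}(T)$ there is a countable partition $\beta$ with $\sigma\text{-alg}(T,\beta) = \mathcal B_X$ and $H_\mu(\beta) < H(\mathbf p)$; indeed we may take $H_\mu(\beta) < h^{Rok}(T) + \varepsilon$ for any $\varepsilon>0$ with $h^{Rok}(T)+\varepsilon < H(\mathbf p)$. Since $(X,\mu)$ is non-atomic and standard we may also assume, after splitting cells using an auxiliary $[0,1]$-coordinate, that $\beta$ is \emph{non-atomic within each cell} in the sense that each $B\in\beta$ contains subsets of every measure $\le \mu(B)$; this costs nothing since refining $\beta$ keeps it generating and we only need the \emph{value} $H_\mu(\beta)$ to stay below $H(\mathbf p)$, which we arrange by making the refinement as fine as needed in a controlled way (e.g. refine into sub-cells of geometrically decreasing size).

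\textbf{Step 2: recoding to prescribed marginals.} Now the task is: given a generating partition $\beta$ with $H_\mu(\beta) < H(\mathbf p)$, produce a generating partition $\alpha=\{A_i\}$ with $\mu(A_i)=p_i$. The mechanism is a Krieger-style coding argument carried out entirely with Rokhlin entropy in place of Kolmogorov--Sinai entropy. One builds $\alpha$ as a limit of finite partitions $\alpha^{(n)}$, where $\alpha^{(n)}$ has cell-measures approximating the truncated-and-renormalized vector $\mathbf p$ up to $n$ coordinates, $\alpha^{(n+1)}$ refines the information of $\alpha^{(n)}$ in the $\sigma$-algebra sense, and $\alpha^{(n)}$ is chosen inside $\sigma\text{-alg}(T,\beta)$ so that conversely $\beta$ is measurably recoverable from $\sigma\text{-alg}(T,\alpha)$. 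The key quantitative input is a relative version of the statement we are proving applied to $h^{Rok}(T \mid \sigma\text{-alg}(T,\alpha^{(n)}))$: at each stage there is still enough "room" between the entropy already committed and $H(\mathbf p)$ to add another cell of the correct size while preserving generation. This is where the strict inequality is used in an essential and repeated way, and it is also where one invokes the non-atomicity of $(X,\mu)$ to actually find sets of the exact measures $p_i$.

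\textbf{The main obstacle.} The genuinely hard part is Step 2: making the recoding converge to a partition that is \emph{simultaneously} generating and has \emph{exactly} the prescribed (infinitely many) marginals. For Kolmogorov--Sinai entropy over $\mathbb Z$ this is Krieger's original theorem and uses the Shannon--McMillan--Breiman theorem and Rokhlin's lemma; here neither the pointwise ergodic machinery nor a Rokhlin lemma is available for general $\Gamma$, so the argument must be replaced by a purely information-theoretic / combinatorial coding scheme built around the subadditivity and monotonicity properties of $H_\mu(\cdot\mid\cdot)$, together with Seward's framework for manipulating Rokhlin entropy under refinement (the tools behind \cite{seward-kreiger-1}, \cite{seward-kreiger-2}, and the relative Rokhlin entropy $h^{Rok}(T\mid\mathcal F)$ defined in \S\ref{sec:Rokhlin}). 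Controlling the error terms in the marginals across infinitely many coding stages, while never letting the committed entropy exceed $H(\mathbf p)$, is the delicate bookkeeping at the heart of the proof; once that is in place, the limiting partition $\alpha=\bigvee_n\alpha^{(n)}$ is generating because it recovers $\beta$, and has the right marginals by construction.
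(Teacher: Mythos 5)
There is a genuine gap: your Step 2 is where the entire content of the theorem lives, and you describe what it must accomplish without supplying the idea that makes it possible. Saying that the Rokhlin-lemma/Shannon--McMillan--Breiman machinery of Krieger's original argument ``must be replaced by a purely information-theoretic / combinatorial coding scheme built around the subadditivity and monotonicity properties of $H_\mu(\cdot\mid\cdot)$'' does not identify a working replacement: entropy inequalities alone cannot manufacture a partition that is simultaneously generating and has \emph{exactly} the prescribed cell measures. One needs a concrete device for carrying out Krieger-type codings in the absence of towers, and in Seward's proof that device is the pseudo-group of the orbit equivalence relation of the action --- the coding is performed by partial measure-preserving isomorphisms whose graphs lie in the orbit relation, and it is this that substitutes for Rokhlin's lemma for a general countable group. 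Your proposal never mentions this, and the appeal to ``Seward's framework for manipulating Rokhlin entropy under refinement'' is circular, since that framework is precisely the content of the paper proving the theorem.

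A second omission: the argument does not go directly from an $\varepsilon$-small countable generating partition to one with prescribed marginals. An essential intermediate input (which the survey explicitly flags) is Seward's earlier, separate result that an ergodic action with finite Rokhlin entropy admits a \emph{finite} generating partition; the exact-marginals statement is built on top of that. Your Step 1 produces only a countable generating partition of small Shannon entropy, and the passage from ``countable, small entropy'' to ``finitely many cells'' is itself nontrivial and is not addressed. The overall two-stage shape of your plan --- small generator first, then recode to exact marginals, using the strict inequality $h^{Rok}(T)<H(\bp)$ as the entropy budget --- is a reasonable match for the spirit of the argument, but as written the proposal defers, rather than closes, the step that constitutes the theorem.
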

The proof works almost entirely within the pseudo-group of the orbit equivalence relation of the action. It also uses previous (very accessible) work of Seward \cite{seward-krieger-0} to show that there exists a finite generating partition of the action whenever it has finite Rokhlin entropy. There is also a relative version of Theorem \ref{thm:krieger} in \cite{seward-kreiger-1} and a non-ergodic version is being written \cite{alpeev-seward}.

\subsection{Sinai's Theorem}  %%%%%%%%%%%%%%%%%%%%%%%%%%%%%%%%%%%%%

%Sinai proved that every ergodic automorphism $T \in \Aut(X,\mu)$ and every base space $(K,\kappa)$ with entropy $H(K,\kappa) \le h(T)$, $T$ factors onto the Bernoulli shift with base $(K,\kappa)$ \cite{}. In particular, every ergodic automorphism is a zero-entropy extension of a Bernoulli shift. Ornstein's research on the isomorphism problem was inspired by this result and today Sinai's Theorem is often derived from the ``Ornstein Machine'' \cite{}. 

%Sinai's Theorem was extended from the integers to arbitrary countable amenable groups by Ornstein and Weiss \cite{}. Danilenko and Park gave an alternative proof from orbit-equivalence techniques \cite{}. 

%Unfortunately, every known proof of Sinai's Theorem makes crucial use of Rokhlin's Lemma and therefore, amenability. This might explain why Sinai's Theorem has not been generalized to any nonamenable group. On the other hand, no counterexamples are known. 

%Since naive entropy provides an upper bound for both Rokhlin entropy and sofic entropy, the strongest possible generalization of Sinai's Theorem is as follows:

Seward recently generalized Sinai's Factor Theorem:
\begin{thm}\cite{seward-sinai}\label{thm:seward-sinai}
Let $\Ga$ be an arbitrary countable group and $\Ga \cc^\a (X,\mu)$ an ergodic essentially free pmp action. Let $(K,\kappa)$ be a probability space and suppose 
$$0<H(K,\kappa) \le h^{\rm{Rok}}(\alpha).$$
Then the action $\Ga \cc^\alpha (X,\mu)$ factors onto the Bernoulli shift $\Ga \cc (K,\kappa)^\Ga$.
\end{thm}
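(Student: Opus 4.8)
The plan is to reduce the statement to the case $H(K,\kappa)<h^{Rok}(\alpha)$ together with a limiting argument for the equality case, and then to build the factor map onto $\Ga\cc(K,\kappa)^\Ga$ by an inductive partition-refinement scheme carried out inside the orbit equivalence relation, in the spirit of the proof of Krieger's Generator Theorem (Theorem \ref{thm:krieger}). First I would observe that it suffices to produce, for any probability vector $\bp$ with $H(\bp)\le h^{Rok}(\alpha)$ and any $\epsilon>0$, a partition $\beta=\{B_i\}$ of $X$ with $\mu(B_i)=p_i$ whose associated process $(\,\beta\text{-name of }x\,)_{x\in X}$ is within $\epsilon$ in the $\bd$-metric of the i.i.d. process with marginal $\bp$; a standard $\bd$-limit/diagonalization argument then upgrades a sequence of such $\epsilon_n\to 0$ partitions to a genuine factor map onto the Bernoulli shift, since the Bernoulli shift is a $\bd$-limit of the corresponding processes and $\bd$-limits of factors are factors. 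This is exactly the step where Ornstein theory (finitely-determined processes, $\bd$-density) enters; I would quote it rather than reprove it.

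The core of the argument is constructing the approximate-Bernoulli partition. Here the idea is to work relative to a generating partition. By Seward's generator theorem machinery (Theorem \ref{thm:krieger} and \cite{seward-krieger-0}), since $h^{Rok}(\alpha)<\infty$ the action has a finite generating partition $\cP$ with $H_\mu(\cP)$ as close to $h^{Rok}(\alpha)$ as desired; pick it with $H_\mu(\cP)<H(\bp)+\delta$ for small $\delta$. One then wants to ``code'' $\cP$ into an i.i.d.\ structure with slightly larger entropy. Concretely I would build a sequence of partitions $\beta^{(n)}$ by successively distributing mass along the orbit equivalence relation using Rokhlin-lemma-type towers for the full group $[\mathcal{R}_\alpha]$ (which exist because $\alpha$ is free and ergodic), arranging at stage $n$ that the $\beta^{(n)}$-process looks i.i.d.-$\bp$ on longer and longer ``independent blocks'' while controlling the residual entropy deficit $H(\bp)-\tfrac1{|F|}H_\mu((\beta^{(n)})^F)$ using the entropy budget $H(\bp)-h^{Rok}(\alpha)>0$ (this is the strict-inequality case). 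The residual is absorbed because the generating partition, having entropy below $H(\bp)$, can be ``overwritten'' block-by-block by fresh independent bits, a move that is available precisely inside the pseudo-group and does not require any structure on $\Ga$.

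The main obstacle, and the place where I expect the real work to lie, is exactly this block-coding step: with no amenability there are no Følner sets, so ``independent blocks'' cannot be geometric tiles of $\Ga$ and must instead be realized as orbit-equivalence-relation objects (towers/markers for the full group), and one must show that the entropy bookkeeping — that feeding in independent $\bp$-bits along these relation-towers genuinely produces a process whose $\bd$-distance to i.i.d.-$\bp$ goes to zero while remaining a factor of the original action — actually closes up. I would model the bookkeeping on the Rudolph–Weiss relative-entropy philosophy (\S\ref{sec:oe-rel}) and Seward's generator-theorem estimates: entropy relative to the orbit-change data is the invariant one can control, and the strict inequality $H(\bp)>h^{Rok}(\alpha)$ is what gives the slack needed to re-randomize. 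Once the strict case is done, the boundary case $H(K,\kappa)=h^{Rok}(\alpha)$ follows by factoring onto Bernoulli shifts of entropy $H(K,\kappa)-\tfrac1n$ and taking a $\bd$-limit (again using that Bernoulli shifts vary continuously in the $\bd$-metric in their base entropy and that a decreasing-to-$0$ sequence of $\bd$-errors yields an honest factor onto the target Bernoulli shift).
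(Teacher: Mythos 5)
There is a genuine gap, and it sits at the heart of your construction: your entropy inequality runs in the wrong direction. The hypothesis of the theorem is $H(K,\kappa)\le h^{Rok}(\alpha)$, i.e.\ $H(\bp)\le h^{Rok}(\alpha)$, so the ``entropy budget'' $H(\bp)-h^{Rok}(\alpha)$ you invoke is $\le 0$, not $>0$; the slack you need to ``re-randomize'' does not exist. Relatedly, you cannot choose a generating partition $\cP$ with $H_\mu(\cP)<H(\bp)+\delta$: by definition $h^{Rok}(\alpha)$ is the infimum of $H_\mu(\cP)$ over generating partitions, so every generating partition has $H_\mu(\cP)\ge h^{Rok}(\alpha)\ge H(\bp)$, with strict inequality in the very case you are treating. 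What you have written is essentially the architecture of Krieger's Generator Theorem (Theorem \ref{thm:krieger}), whose hypothesis $h^{Rok}(T)<H(\bp)$ points the other way and for which ``overwriting with fresh independent bits'' is the right move; for Sinai's theorem the difficulty is the opposite one --- extracting an exactly i.i.d.\ factor from a system that has \emph{at least} that much entropy --- and the block-overwriting mechanism has nothing to feed on. A secondary worry: you lean on $\bd$-limit technology of the form ``$\bd$-limits of Bernoulli processes are Bernoulli,'' and \S \ref{sec:d-bar} of this paper shows precisely that this fails for groups containing $\F_2$; the weaker statement you actually need (a $\bd$-convergent sequence of processes, each $\epsilon_n$-close to one fixed i.i.d.\ process, has that i.i.d.\ process as its limit) is fine, but it cannot carry the construction on its own.

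For contrast, the proof sketched in the paper avoids doing any Ornstein-style copying for $\Ga$ directly. One first uses \cite{seward-tucker-drob} to find an essentially free factor $\Ga\cc^\b(Y,\nu)$ with $h^{Rok}(\b)<h^{Rok}(\a)-H(K,\kappa)$, then chooses a single aperiodic automorphism $\tT$ of $X$ whose orbits lie inside the $\Ga$-orbits (fibered over a map $\tf\colon X\to[0,1]$), and proves the key inequality $h^{Rok}(\a\resto\s\textrm{-alg}_\Ga(\xi)\,|\,\cB_Y)\le h_\mu(\tT,\xi|\cP_\xi)$ relating relative Rokhlin entropy to the classical entropy of $\tT$ relative to an ``external past'' sigma-algebra $\cP_\xi$. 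This converts the problem into one for a $\Z$-action, where Thouvenot's Relative Sinai Factor Theorem produces a Bernoulli factor independent of $\cP_\xi$, which is then reassembled into a Bernoulli factor of the $\Ga$-action. Your instinct to work inside the orbit equivalence relation and to invoke Rudolph--Weiss-style relative entropy is in the right family, but the step that actually closes the argument is this reduction to a relative statement for a single transformation, not a direct pseudo-group block coding.
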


\begin{proof}[Remarks on the proof]
This is only a light sketch of the proof of this deep result in the special case in which $H(K,\kappa) < h^{\rm{Rok}}(\a)$. By \cite{seward-tucker-drob} there exists an essentially free action $\Ga \cc^\b (Y,\nu)$ with 
$$h^{\rm{Rok}}( \b) < h^{\rm{Rok}}( \a) - H(K,\kappa)$$
such that $\Ga \cc^\a (X,\mu)$ factors onto $\Ga \cc^\b (Y,\nu)$. Let $\cB_Y$ denote the Borel sigma-algebra of $Y$, which we consider to be a sub-sigma-algebra of $\cB_X$ via this factor map. 

There exists a Borel map $f: Y \to [0,1]$ and an aperiodic automorphism $T \in \Aut(Y,\nu)$ such that $f_*\nu$ is Lebesgue measure and each orbit of $T$ is the intersection of an orbit of $\Ga$ with the preimage of a point under $f$. Let $\tf:X \to [0,1]$ and $\tT \in \Aut(X,\mu)$ denote the lifts of $\tf$ and $\tT$ respectively. Note that every $\tT$-orbit is the intersection of an orbit of $\Ga$ with the preimage of a point under $\tf$. 

Given a countable partition $\xi$ of $X$ and $S \subset [0,1]$ let
$$\xi_S = \xi \resto^X \tf^{-1}(S)$$
denote its quasi-restriction (this is the smallest partition of $X$ containing all sets of the form $Z \cap \tf^{-1}(S)$ for $Z \in \xi$). If $\cF$ is a sigma-algebra of $X$, we define $\cF_S$ similarly. Also let
$$\cP_\xi = \cB_Y \vee \bigvee_{t\in [0,1]} \left( \sigma-\rm{alg}_\Ga(\xi_{[0,t)}) \resto^X \tilde{f}^{-1}([t,1])\right).$$
This is called the {\bf external past} of $\xi$. It is $\tT$-invariant and importantly
$$h^{\rm{Rok}}(\a \resto \sigma-\rm{alg}_\Ga(\xi) | \cB_Y) \le h_{\mu}(\tT, \xi| \cP_\xi)$$
where $\a\resto \sigma-\rm{alg}_\Ga(\xi)$ denotes the factor of the action $\a$ generated by the partition $\xi$. Therefore 
$$H(K,\kappa) \le h_{\mu}(\tT, \xi| \cP_\xi).$$
So we can apply the Relative Sinai Factor Theorem (due to Thouvenot \cite{thouvenot-rel-isom-1975}) to $\tT$ relative to $\cP_\xi$ to obtain a Bernoulli factor for $\tT$ that is independent of $\cP_\xi$. This Bernoulli factor can be ``put together'' to obtain a Bernoulli factor for the $\Ga$-action. 
\end{proof}

As spectacular as the result above is; it might not be the `best possible'. As explained in \S \ref{sec:OW} there exist actions with zero Rokhlin entropy that factor onto Bernoulli shifts. This leads us to the following:

\begin{question}
Suppose $\G \cc (X,\mu)$ is an ergodic pmp action and $(K,\kappa)$ is a probability space whose Shannon entropy lower bounds the naive entropy of $\G \cc (X,\mu)$. Then does $\G \cc (X,\mu)$ factor onto the Bernoulli shift $\G \cc (K,\kappa)^\G$? By Sinai's Theorem, if $\G$ is amenable then the answer is `yes'. When $\G$ is non-amenable then the naive entropy of $\G \cc (X,\mu)$ is in $\{0,\infty\}$. When the naive entropy is zero, the action cannot factor onto any Bernoulli shift since naive entropy is monotone under factor maps. 
\end{question}

%One can weaken this conjecture by replacing naive entropy with either Rokhlin entropy, sofic entropy or the $f$-invariant in case of free groups. One can weaken it further by asking only for some Bernoulli factor with no restrictions on the entropy. All of these versions are open. 

\subsubsection{Spectral theory implications}

Given a measure space $(X,\mu)$, let $L_0^2(X,\mu) \subset L^2(X,\mu)$ denote the orthogonal complement of the constant functions. Given a pmp action $\G \cc^T (X,\mu)$ there is a corresponding homomorphism of $\G$ into the unitary group of $L^2_0(X,\mu)$ given by
$$\k:\G \to L^2_0(X,\mu),\quad \k_gf = f\circ g^{-1}.$$
This is called the {\bf Koopman representation}. It is well-known that the Koopman representation of a Bernoulli shift is isomorphic to the countable sum of left regular representations which means that it has {\bf countable Lebesgue spectrum}. So it follows from Sinai's Factor Theorem that the Koopman representation of a positive-entropy action of an amenable group $\Ga$ necessarily contains a subrepresentation isomorphic to a countable sum of left-regular representations. A more difficult result to obtain is that any action with completely positive entropy has countable Lebesgue spectrum. This was first proven for $\Ga=\Z$ using the Rokhlin-Sinai Theorem and then extended to all amenable groups by \cite{dooley-golodets-2002} using orbit-equivalence techniques. 

We now have versions of these results for arbitrary groups:

\begin{thm}\cite{seward-sinai}
Suppose $\Ga \cc (X,\mu)$ is an essentially free ergodic pmp action. Let $\cH \subset L^2_0(X,\mu)$ be a $\Ga$-invariant closed subspace and $\cF \subset \cB_X$ be the smallest $\Ga$-invariant sigma-algebra such that all functions in $\cH$ are $\cF$-measurable. If $\cH$ has no non-zero subrepresentation that embeds into the left regular representation $\Ga \cc \ell^2(\Ga)$, then the factor corresponding to $\cF$ has zero Rokhlin entropy. 
\end{thm}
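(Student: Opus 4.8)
The statement asserts that if $\cH \subset L^2_0(X,\mu)$ is $\Ga$-invariant, closed, and \emph{singular} to the left regular representation (no nonzero subrepresentation embeds into $\ell^2(\Ga)$), and $\cF$ is the smallest $\Ga$-invariant sigma-algebra making every function in $\cH$ measurable, then the factor $\Ga \cc (X,\mu)\res \cF$ has zero Rokhlin entropy. The natural strategy is a proof by contradiction combined with Seward's Sinai Factor Theorem (Theorem \ref{thm:seward-sinai}). So I would suppose, toward a contradiction, that $h^{Rok}(\Ga \cc X\res\cF) > 0$. By Theorem \ref{thm:seward-sinai}, the factor $\Ga\cc X\res\cF$ then factors onto a nontrivial Bernoulli shift $\Ga \cc (K,\k)^\Ga$ with $0 < H(K,\k) \le h^{Rok}(\Ga\cc X\res\cF)$.

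The core of the argument is a spectral one. A nontrivial Bernoulli shift $\Ga \cc (K,\k)^\Ga$ has Koopman representation on $L^2_0((K,\k)^\Ga)$ isomorphic to a countable direct sum of copies of the left regular representation $\lambda_\Ga$ of $\Ga$ on $\ell^2(\Ga)$; in particular its Koopman representation \emph{contains} a copy of $\lambda_\Ga$. Pulling back along the factor map $\Ga\cc X\res\cF \to \Ga\cc (K,\k)^\Ga$ embeds $L^2_0((K,\k)^\Ga)$ isometrically and $\Ga$-equivariantly into $L^2_0(X,\mu)$, and the image lies inside $L^2(\cF)\ominus\C$ since everything is $\cF$-measurable. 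Hence $L^2(\cF)\ominus\C$ contains a subrepresentation isomorphic to $\lambda_\Ga$. Now I would invoke the defining property of $\cF$: because $\cF$ is the smallest $\Ga$-invariant sigma-algebra with respect to which all of $\cH$ is measurable, one expects $L^2(\cF)\ominus\C$ to be built out of $\cH$ and its translates via multiplication (products of functions in $\cH$ and $\overline{\cH}$, and weak limits thereof). The key technical claim is therefore: if $\cH$ is singular to $\lambda_\Ga$, then $L^2(\cF)\ominus\C$ is also singular to $\lambda_\Ga$ (equivalently, $L^2(\cF)\ominus\C$ has no nonzero $\Ga$-subrepresentation embedding into $\ell^2(\Ga)$). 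Granting this claim, the existence of a $\lambda_\Ga$-subrepresentation inside $L^2(\cF)\ominus\C$ is a contradiction, and we conclude $h^{Rok}(\Ga\cc X\res\cF) = 0$.

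The main obstacle is precisely the spectral claim in the previous paragraph: controlling how singularity (with respect to $\lambda_\Ga$) behaves under the operations that generate $\cF$ from $\cH$. Singularity of representations is not in general preserved under tensor products — $\lambda_\Ga \otimes \pi \cong \lambda_\Ga^{\oplus\dim\pi}$ for finite-dimensional $\pi$, for instance — so one must use that we are tensoring two representations that are \emph{both} singular to $\lambda_\Ga$, and appeal to a theorem to the effect that the tensor product of two representations each singular to $\lambda_\Ga$ is again singular to $\lambda_\Ga$ (this is a known fact about weak containment / the ideal structure of representations relative to the regular representation; it is closely related to the fact that the set of representations singular to $\lambda_\Ga$ is closed under tensoring, since the regular representation absorbs under tensoring only against representations weakly containing it). One then has to pass from finite products to the closed span — i.e. to weak limits and countable sums of products — using that the class of representations singular to $\lambda_\Ga$ is closed under subrepresentations and countable direct sums, and is closed in the appropriate sense under limits of the Fell topology. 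Assembling these closure properties to show $L^2(\cF)\ominus\C$ is singular to $\lambda_\Ga$ is the crux; the rest (the contradiction via Sinai's Theorem, and the observation that a Bernoulli Koopman representation contains $\lambda_\Ga$) is routine. A clean reference for the needed representation-theoretic input is the theory of the "singular part" of a unitary representation relative to $\lambda_\Ga$ as used in \cite{hayes-gaussian, hayes-mixing-pinsker}, and I would cite those.
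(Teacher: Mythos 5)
A preliminary remark: the survey does not prove this theorem; it is quoted from \cite{seward-sinai}, and the remark following it notes that the sofic-entropy analogue is due to Hayes \cite{hayes-polish-models} by a quite different, von Neumann algebraic argument. So your proposal has to stand on its own, and unfortunately the step you yourself single out as the crux is not just unproven but false.

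The ``known fact'' you want --- that the tensor product of two representations each singular to $\lambda_\Ga$ is again singular to $\lambda_\Ga$ --- does not exist. Already for $\Ga=\Z$ (where Rokhlin entropy is Kolmogorov--Sinai entropy, so the theorem applies), singularity of a cyclic representation to $\lambda_\Z$ means singularity of its spectral measure to Lebesgue measure on the circle, tensor products correspond to convolutions of spectral measures, and there exist singular probability measures $\sigma$ on the circle (e.g.\ Salem measures with Fourier decay $|\hat{\sigma}(n)|=O(|n|^{-\beta})$ for some $\beta>1/4$) such that $\sigma*\sigma$ is absolutely continuous. This kills your key claim that $L^2(\cF)\ominus\C$ inherits singularity from $\cH$: for the Gaussian $\Z$-system built over such a $\sigma$, the first Wiener chaos $\cH$ is singular to $\lambda_\Z$ and generates the full Borel sigma-algebra, yet the second chaos has maximal spectral type $\sigma*\sigma$ and therefore embeds into a multiple of $\lambda_\Z$. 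Note that the theorem itself remains true in this example (Gaussian systems with singular spectral measure have zero entropy), which shows the conclusion genuinely cannot be reached through closure properties of singularity under the operations generating $\cF$ from $\cH$. The actual arguments instead establish the converse-direction structural statement --- that $L^2$ of the orthocomplement of the (outer) Pinsker algebra embeds into a countable multiple of $\lambda_\Ga$, a nonamenable analogue of the Rokhlin--Sinai theorem --- so that a singular $\cH$ is forced to lie inside the Pinsker algebra, whence $\cF$ is too. A secondary gap: Theorem \ref{thm:seward-sinai} requires essential freeness, and the factor corresponding to $\cF$ need not be essentially free even when $\Ga\cc(X,\mu)$ is (whether positive Rokhlin entropy forces a.e.\ finite stabilizers is posed as an open question in \S\ref{sec:nonfree}), so even the opening reduction of your argument needs further justification.
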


The sofic entropy version of this theorem was obtained previously by Hayes \cite{MR3773269} with a completely different proof relying on von Neumann algebra machinery.

\begin{cor}\cite{seward-sinai}\label{cor:rokhlin-cpe-spectral}
Suppose $\Ga \cc (X,\mu)$ is an essentially free ergodic action with completely positive outer Rokhlin entropy as defined in \S \ref{sec:outer-rokhlin}. Then the Koopman representation $\Ga \cc L^2_0(X,\mu)$ is unitarily isomorphic to the countable sum of left regular representations. 
\end{cor}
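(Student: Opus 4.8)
The plan is to read this off from the preceding theorem, using Seward's Sinai Factor Theorem to supply the missing lower bound and a Schr\"oder--Bernstein argument for unitary representations at the end. Write $\kappa$ for the Koopman representation of $\G \cc (X,\mu)$ on $\cH := L^2_0(X,\mu)$, let $\lambda_\G$ be the left regular representation on $\ell^2(\G)$, and set $\infty\cdot\lambda_\G := \bigoplus_{n\in\N}\lambda_\G$. First I would invoke the standard decomposition of $\kappa$ relative to $\lambda_\G$: there is a $\G$-invariant orthogonal splitting
$$\cH = \cH_{s}\oplus \cH_{lr},$$
where $\cH_{lr}$ is isomorphic to a subrepresentation of $\infty\cdot\lambda_\G$ (equivalently, quasi-contained in $\lambda_\G$) and $\cH_{s}$ contains no non-zero subrepresentation that embeds into $\lambda_\G$; both summands are closed and $\G$-invariant.

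Next I would apply the preceding theorem to $\cH_s$. Since $\cH_s$ has no non-zero subrepresentation embedding into $\lambda_\G$, the theorem says that the factor generated by $\cF_s$ -- the smallest $\G$-invariant sub-$\sigma$-algebra making every function in $\cH_s$ measurable -- has zero Rokhlin entropy. By the definition of outer Rokhlin entropy in \S\ref{sec:outer-rokhlin}, the outer Rokhlin entropy of a factor is bounded above by the Rokhlin entropy of the factor action (the microstates of the target that lift form a subset of all microstates of the target), so $\cF_s$ has zero outer Rokhlin entropy; hence $\cF_s$ lies in the outer Pinsker $\sigma$-algebra of $\G\cc (X,\mu)$. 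Completely positive outer Rokhlin entropy means that $\sigma$-algebra is trivial, so $\cF_s$ is trivial and therefore $\cH_s=0$. Thus $\cH = \cH_{lr}$ embeds into $\infty\cdot\lambda_\G$.

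Finally I would upgrade this embedding to an isomorphism. Complete positivity gives in particular positive outer Rokhlin entropy, hence positive Rokhlin entropy, so by Theorem \ref{thm:seward-sinai} the action factors onto a non-trivial Bernoulli shift $\G\cc (K,\kappa)^\G$. The Koopman representation of a Bernoulli shift on its $L^2_0$ is isomorphic to $\infty\cdot\lambda_\G$ (recalled in \S\ref{sec:ornstein theory}), so pulling back along the factor map embeds $\infty\cdot\lambda_\G$ into $\cH$. Now $\cH$ embeds into $\infty\cdot\lambda_\G$ and $\infty\cdot\lambda_\G$ embeds into $\cH$; realizing $\cH$ as $p(\infty\cdot\lambda_\G)$ for a projection $p$ in the commutant von Neumann algebra of $\infty\cdot\lambda_\G$, these two embeddings say $p$ and $1$ are each subequivalent to the other, so $p\sim 1$ by the Schr\"oder--Bernstein theorem for projections and $\cH\cong\infty\cdot\lambda_\G$; that is, $\kappa$ has countable Lebesgue spectrum.

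The real work is done by the preceding theorem, so the main obstacle here is purely the bookkeeping: checking that the singular/$\lambda_\G$-part decomposition of $\cH$ produces a genuine $\G$-invariant closed subspace to which that theorem applies, and pinning down the precise relation between the outer Rokhlin entropy of $\cF_s$ and its ordinary Rokhlin entropy from \S\ref{sec:outer-rokhlin}, so that complete positivity of outer entropy can legitimately be invoked to kill $\cF_s$. The Schr\"oder--Bernstein step is classical but must be included, since a priori $\cH$ could embed into $\infty\cdot\lambda_\G$ as a proper corner; it is exactly the appearance of the Bernoulli factor that rules this out.
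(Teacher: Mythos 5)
Your argument is correct and is exactly the intended derivation: the paper states this corollary without proof, but the standard route is precisely yours — split $L^2_0(X,\mu)$ into the part singular to $\lambda_\G$ and the part quasi-contained in $\lambda_\G$, kill the singular part by combining the preceding theorem with the bound $h^{Rok}(\Phi)\le h^{Rok}$ of the target and triviality of the outer Pinsker algebra, and then use Seward's Sinai Factor Theorem plus Schr\"oder--Bernstein for projections in the commutant to upgrade the embedding into $\bigoplus_{n\in\N}\lambda_\G$ to a unitary isomorphism. The bookkeeping points you flag (that $\cH_s$ is a genuine closed invariant subspace with no subrepresentation embedding in $\lambda_\G$, and that zero Rokhlin entropy of the factor forces zero outer Rokhlin entropy of the factor map) are handled correctly.
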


The sofic entropy version of the above corollary is stated as Theorem \ref{thm:hayes-cpe} below.

\subsubsection{Markov chains factor onto Bernoulli shifts}

We do not know whether every mixing Markov chain over a free group has positive Rokhlin entropy. So Theorem \ref{thm:seward-sinai} cannot be applied. However, the existence of a Bernoulli factor for these systems can be obtained directly:

\begin{thm}\label{thm:markov-factor}
Every mixing Markov chain over a non-abelian free group factors onto a Bernoulli shift.
\end{thm}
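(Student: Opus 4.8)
The plan is to follow the architecture of the proof of Theorem~\ref{thm:seward-sinai}, supplying by hand the datum that the general argument extracts from the (here unavailable) inequality $h^{Rok}>0$. Write $\F=\F_r=\langle s_1,\dots,s_r\rangle$ with $r\ge 2$, let $K$ be the state space, $\mu$ the law of the chain, and $\F\cc^T(K^{\F},\mu)$ the shift action. If $\mu$ is a point mass the chain factors onto the one-point Bernoulli shift and there is nothing to do, so assume it is nontrivial; then $\mu$ is ergodic, and since mixing of a nontrivial system forbids any $T^g$ ($g\ne e$) from acting trivially on a positive measure set, the action is essentially free. After relabelling the generators we may assume $h:=H_\mu(X_e\mid X_{s_1})>0$. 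The concrete source of entropy is the projection to the $s_1$-line: by the tree (Markov) property the parent of $s_1^{n}$ in the Cayley tree rooted at $e$ is $s_1^{n-1}$, so $\bfX$ restricted to the line, $(X_{s_1^{n}})_{n\in\Z}$, is a stationary $\Z$-Markov chain with the same transition kernel $P$; applying mixing of $\F\cc(K^{\F},\mu)$ to cylinder sets in the $s_1$-coordinates and to $g=s_1^{n}\to\infty$ shows this $\Z$-chain is mixing, hence Bernoulli of entropy $h$ by the Friedman--Ornstein theorem~\cite{friedman-ornstein}; in particular $P$ is primitive.

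Next I would promote this line Bernoulli factor to an $\F$-equivariant one by the relative-Ornstein (co-induction) method. Restrict the action to $U:=T^{s_1}\in\Aut(K^{\F},\mu)$, which is a mixing automorphism, and let $\cF\subset\cB_{K^{\F}}$ be the $\sigma$-algebra generated by the off-line coordinates $\{x_g:g\in\F\setminus\langle s_1\rangle\}$; since $\langle s_1\rangle$ is closed under left multiplication by $s_1^{\pm1}$, $\cF$ is $U$-invariant. With $\cP$ the time-$0$ partition (value at $e$), the relative Kolmogorov--Sinai entropy satisfies
\[
h_\mu(U\mid\cF)\ \ge\ h_\mu(U,\cP\mid\cF)\ =\ H_\mu\!\big(X_e\,\big|\,(X_{s_1^{-n}})_{n\ge1},\,(X_g)_{g\notin\langle s_1\rangle}\big)\ >\ 0,
\]
the strict positivity coming from the fact that the conditioning omits exactly the single neighbour $X_{s_1}$ of $X_e$: by the tree-Markov property the conditional law of $X_e$ is then an explicit mixture, over the data in the $s_1$-subtree, of measures of the form $k\mapsto q(k)\sum_{k'}P(k,k')w(k')$, where $q$ comes from the remaining $2r-1$ neighbours and $w$ from the $s_1$-subtree, and since $h>0$ the rows of $P$ cannot have pairwise disjoint supports, so with positive probability at least two values of $X_e$ survive; primitivity of $P$ then keeps the mixture nondegenerate on a positive measure set. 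Applying Thouvenot's Relative Sinai Factor Theorem~\cite{thouvenot-rel-isom-1975} to $U$ relative to $\cF$, and using $0<h\le h_\mu(U\mid\cF)$, produces a Bernoulli factor of $U$ of base entropy $h$ that is relatively independent of $\cF$. Because $\cF$ is defined by an $\F$-equivariant rule on coordinates and the $\langle s_1\rangle$-cosets inside each $\F$-orbit are permuted by $\F$, this Bernoulli factor should assemble coset by coset---this is the external-past / ``putting together'' step in the proof of Theorem~\ref{thm:seward-sinai}, a measurable co-induction from $\langle s_1\rangle$ to $\F$---into an $\F$-equivariant factor map onto a Bernoulli shift over $\F$ with base entropy $h$.

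The hard part is this final co-induction step. In the general proof of Theorem~\ref{thm:seward-sinai} the relativizing automorphism arises from orbit-equivalence theory (a hyperfinite subrelation with smooth quotient), chosen compatibly with a \emph{small} $\F$-factor built via \cite{seward-tucker-drob}; here I am instead relativizing over $\cF$, which is not an $\F$-invariant $\sigma$-algebra, so making the Thouvenot factor glue into a genuine $\F$-equivariant one requires choosing all the ingredients equivariantly across the $\langle s_1\rangle$-coset tree of $\F$---in effect redoing the external-past construction of \cite{seward-sinai} in this concrete setting. A secondary point is converting the nondegeneracy claim above into a clean uniform lower bound for $h_\mu(U\mid\cF)$. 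Finally, mixing is genuinely used and cannot be weakened: the ergodic but non-mixing $2$-colouring chain on $K=\{0,1\}$ with $P(0,1)=P(1,0)=1$ is supported on two configurations, has zero naive entropy, and hence admits no nontrivial Bernoulli factor at all.
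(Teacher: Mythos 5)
Your proposal takes a genuinely different route from the paper, and it has a real gap at exactly the point you flag yourself: the assembly of the Thouvenot factor of $U=T^{s_1}$ into an $\F$-equivariant factor is never carried out, and it is the entire difficulty. Relative independence from $\cF$ only controls independence from the off-line coordinates; a Bernoulli shift over $\F$ requires the coordinates indexed by \emph{all} of $\F$ to be jointly independent, so the factors you would place on the various left cosets $g\langle s_1\rangle$ (by translating a single application of Thouvenot's theorem) must be chosen measurably, equivariantly, \emph{and} mutually independent. In Seward's proof of Theorem \ref{thm:seward-sinai} this is precisely what the external-past construction and the auxiliary small factor from \cite{seward-tucker-drob} are engineered to produce, and nothing in your setup delivers it: $\cF$ is $U$-invariant but not $\F$-invariant, so it does not even correspond to a factor of the $\F$-action, and as written your argument only yields a Bernoulli factor of the single automorphism $T^{s_1}$. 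The secondary point you mention is also not free: positivity of $H_\mu(X_e\mid X_{s_1})$ does not immediately give positivity of the entropy of $X_e$ conditioned on its remaining $2r-1$ neighbours \emph{jointly}, though for conditionally independent neighbours with kernel $P$ a short argument (two rows of $P$ sharing a common column of positive mass) does close this.

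The paper avoids all of this machinery with a direct construction in the spirit of the Ornstein--Weiss map. Fix distinct generators $a,b$, let $\Lambda=\langle a\rangle$, and note that for each $k$ the conditional law of $(X_g)_{g\in\Lambda}$ given $X_e=k$ is non-atomic (this is where mixing and nontriviality enter); hence there is a fibrewise map $\Phi:K^\Lambda\to\T$ pushing $\Res^\Lambda_*\mu$ to Haar measure. The factor map is then $\Psi(x)_g=\Phi(\Res^\Lambda(g^{-1}x))+\Phi(\Res^\Lambda(b^{-1}g^{-1}x))$, and one verifies $\Psi_*\mu=\lambda^{\F}$ by induction over finite connected subsets $W$ of the Cayley tree: peeling off a boundary vertex $g$, the Markov property forces one of the two summands to be uniform and independent of everything already accounted for (according to whether $g\Lambda$ meets $W\setminus\{g\}$, using $gb\notin W$ in the remaining case). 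This lands directly on a Bernoulli shift with diffuse base, with no Ornstein/Thouvenot input. If you want to pursue your route you would essentially have to redo the external-past construction of \cite{seward-sinai} in this concrete setting; the explicit construction is the far shorter path.
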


\begin{proof}%[Proof of Theorem \ref{thm:markov-factor}]
Let $\F=\langle S \rangle$ be a non-abelian free group and $\bfX=(X_g)_{g\in \F}$ be a mixing Markov chain taking values in a finite or countable state space $K$. Let $\mu$ be the law of $\bfX$ so that $\F \cc (K^\F,\mu)$ is a pmp action. Without loss of generality, we assume that the law of $X_{e}$ is fully supported on $K$. 

% with canonical partition $\cP$. 

Let  $a, b \in S$ be distinct elements. Let $\Lambda \le \F$ be the cyclic subgroup generated by $a$. The law of $(X_g)_{g\in \Lambda}$ conditioned on $X_{e}=k$ is non-atomic (for any $k \in K$). Denote this law by $\mu_k \in \Prob(K^\Lambda)$. So there exists a measure-space isomorphism 
$$\phi_k: (K^\Lambda,\mu_k) \to (\T,\lambda)$$
where the latter denotes the circle with Haar measure. Define $\phi:K^\Lambda \to \T$ by $\phi(x)=\phi_k(x)$ where $k = x_{e}$. 

Define processes $\bfY=(Y_g)_{g\in \F}$ and $\bfZ=(Z_g)_{g\in \F}$ by
$$Y_g = \phi( a^n \mapsto X_{ga^n} ), \quad Z_g = Y_g + Y_{gb}.$$
Since $\bfZ$ is a factor of $\bfX$, it suffices to show that it is iid (independent and identically distributed). Before doing that, let us consider some general properties of the variables $Y_g$ and $Z_g$.

The Markov property of $\bfX$ implies that if $U \subset \G$ is any set of left-coset representatives of $\La$ then $(Y_g)_{g\in U}$ is iid. The former condition means that the cosets $\{g\La\}_{g\in U}$ are pairwise disjoint. This is because for any $g \in U$, $(X_{ga^n})_{n\in \Z}$ is independent of $(X_{ha^n})_{h \in U\setminus \{g\}, n \in \Z}$ relative to $X_g$. Therefore, $Y_g$ is independent of $(Y_{h})_{h \in U\setminus \{g\}}$ relative to $X_g$. Since the law of $Y_g$ conditioned on $X_g$ is Haar measure, $Y_g$ is independent of $X_g$. So $Y_g$ is independent of $(Y_{h})_{h \in U\setminus \{g\}}$.

 In general, if $A,B,C$ are random variables satisfying: (1) $A,B$ are independent and (2) $B,C$ take values in the circle then $A$ and $B+C$ are independent.  It follows that if $U \subset \G$ and $g \in \G$ are such that 
$$g\La \cup gb\La \nsubseteq U\La \cup Ub \La$$
then $Z_g$ is independent of $(Z_h)_{h \in U}$. 

Now let $W \subset \F$ be a finite set such that the induced subgraph of $W$ in the Cayley graph $\Cay(\F,S)$ is connected. To finish the proof, it suffices to show that $(Z_g)_{g\in W}$ is iid. 

%By induction, we can assume that if $V \subset \F$ is any finite set with connected induced subgraph and $|V| < |W|$, then $(Z_g)_{g\in V}$ is iid. 

Let $f,g \in W$ be such that there exists $s \in S\cup S^{-1}$ with $fs=g$ and the induced subgraph of $U:=W \setminus \{g\}$ is connected. By induction, we may assume that the variables $(Z_g)_{g\in U}$ are iid. 

We claim that 
$$g\La \cup gb\La \nsubseteq U\La \cup Ub \La.$$
Indeed, if $g\La \cup gb\La \subseteq U\La \cup Ub \La$, then since the induced subgraph of $g\La \cup gb\La$ consists of two lines connected by the single edge $\{g,gb\}$ and the induced subgraph of $U \cup Ub$ is connected, it follows that $U \cup Ub \supset \{g,gb\}$. Since $g \notin U$, this implies $g \in Ub$ and $gb \in U$. Therefore $gb^{-1}$ and $gb \in U$. But this implies the induced subgraph of $W \setminus \{g\}$ is disconnected (since $gb^{-1}$ and $gb$ are in different components). 

This contradiction implies $g\La \cup gb\La \nsubseteq U\La \cup Ub \La$, which as explained previously, implies $Z_g$ is independent of $(Z_h)_{h \in U}$ as required. 

\end{proof}

\subsection{The $\bd$ metric}\label{sec:d-bar}

Let $(K,d_K)$ be a compact metric space. The metric $d_K$ induces a new metric, denoted $\bd$, on the space of $\Ga$-invariant probability measures on $K^\Ga$. Intuitively, this new metric measures how closely two measure $\mu,\nu$ can be ``joined''. More precisely, recall that a {\bf joining} between measures $\mu, \nu \in \Prob_\Ga(K^\Ga)$ is a $\Ga$-invariant Borel probability measures $\lambda$ on the product space $K^\Ga \times K^\Ga$ whose marginals are $\mu$ and $\nu$. The $\bd$-distance between $\mu$ and $\nu$ is
$$\bd(\mu,\nu) = \inf_\lambda \int d_K(x_e,y_e)~d\lambda(x,y)$$
where the infimum is over all joinings $\lambda$ of $\mu$ and $\nu$. If $K$ is finite then it is usually assumed that $d_K$ is the trivial metric $d_K(k_1,k_2)=1$ if $k_1 \ne k_2$. This metric plays a key role in most developments of Ornstein theory.

\begin{thm}\label{thm:dbar}
If $\Ga$ is amenable and $K$ is finite then:
\begin{enumerate}
\item entropy function $\mu \mapsto h_\mu(\Ga \cc K^\Ga)$ is continuous in the topology induced by $\bd$,
%\item the set of ergodic measures in $\Prob_\Ga(K^\Ga)$ is $\bd$-closed,
\item the set of Bernoulli measures in $\Prob_\Ga(K^\Ga)$ is $\bd$-closed.
%\item the set of finitely-determined measures in $\Prob_\Ga(K^\Ga)$ is $\bd$-closed.
\end{enumerate}
\end{thm}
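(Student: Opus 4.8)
\textbf{Proof plan for Theorem \ref{thm:dbar}.}

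The plan is to reduce both statements to classical facts about amenable entropy theory together with two elementary estimates relating the $\bd$-metric to Shannon entropy. For part (1), the key inequality is a uniform estimate of the form $|h_\mu(\Ga \cc K^\Ga) - h_\nu(\Ga \cc K^\Ga)| \le F(\bd(\mu,\nu))$ where $F(t) \to 0$ as $t \to 0$, depending only on $|K|$. First I would fix a joining $\lambda$ of $\mu$ and $\nu$ that nearly achieves the infimum, so $\int d_K(x_e,y_e)\,d\lambda(x,y) = \lambda(\{x_e \ne y_e\}) \le \bd(\mu,\nu) + \epsilon =: \beta$. Working on $(K^\Ga \times K^\Ga, \lambda)$ with the shift action and the time-$0$ partitions $\cP$ (reading the first coordinate) and $\cQ$ (reading the second), one has $h_\lambda(\Ga \cc \cdot, \cP) = h_\mu$ and $h_\lambda(\Ga \cc \cdot, \cQ) = h_\nu$ since each is generating for its respective factor. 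Then the Rokhlin-type inequality $|h_\lambda(\cP) - h_\lambda(\cQ)| \le \max(H_\lambda(\cP|\cQ), H_\lambda(\cQ|\cP))$ together with a Fano-type bound $H_\lambda(\cP|\cQ) \le H(\beta, 1-\beta) + \beta \log(|K|-1)$ (because $\cP$ and $\cQ$ disagree on a set of $\lambda$-measure $\le \beta$) gives the desired modulus of continuity. Letting $\epsilon \to 0$ finishes part (1).

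For part (2), I would use the finitely-determined criterion, which is the standard tool here: a shift-invariant measure $\mu \in \Prob_\Ga(K^\Ga)$ is Bernoulli if and only if it is finitely determined, meaning that for every $\epsilon > 0$ there exist $\delta > 0$ and a finite $W \Subset \Ga$ such that any $\nu \in \Prob_\Ga(K^\Ga)$ with $|h_\nu - h_\mu| < \delta$ and with the marginal of $\nu$ on $K^W$ within $\delta$ (in total variation) of that of $\mu$ satisfies $\bd(\mu,\nu) < \epsilon$. Suppose $\mu_n \to \mu$ in $\bd$ with each $\mu_n$ Bernoulli; I must show $\mu$ is Bernoulli, i.e. finitely determined. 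Given $\epsilon > 0$, pick $n$ with $\bd(\mu_n, \mu) < \epsilon/3$; by part (1), $h_{\mu_n} \to h_\mu$, and $\bd$-convergence implies weak* convergence (the marginal on any finite $K^W$ is controlled by $\bd$ via the near-optimal joining). Now use that $\mu_n$ is finitely determined to get parameters $(\delta_n, W_n)$ witnessing this for the tolerance $\epsilon/3$; I would then argue that for $\nu$ sufficiently close to $\mu$ in entropy and in the $K^{W_n}$-marginal, $\nu$ is also close to $\mu_n$ in those same quantities (using that $\mu$ itself is close to $\mu_n$), hence $\bd(\mu_n, \nu) < \epsilon/3$, and the triangle inequality gives $\bd(\mu, \nu) < \epsilon$. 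This exhibits $\mu$ as finitely determined, hence Bernoulli.

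The main obstacle I anticipate is handling part (2) cleanly without circularity in the parameter bookkeeping: the finitely-determined property of $\mu_n$ is stated with $\mu_n$'s own marginals and entropy as the target, and one needs to transfer a closeness-to-$\mu$ hypothesis into closeness-to-$\mu_n$ while the window $W_n$ and tolerance $\delta_n$ both depend on $n$. The resolution is that once $n$ is fixed (chosen so $\bd(\mu_n,\mu) < \epsilon/3$), the parameters $W_n$ and $\delta_n$ are fixed finite data, and $\bd(\mu_n,\mu)$ small also forces the $K^{W_n}$-marginals and (by part (1)) the entropies of $\mu_n$ and $\mu$ to be close; so for $\nu$ in a small enough weak*-and-entropy neighborhood of $\mu$, the triangle inequalities in the marginal-TV distance and in entropy put $\nu$ inside the $(\delta_n, W_n)$-neighborhood of $\mu_n$. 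A secondary point worth stating carefully is the equivalence "Bernoulli $\iff$ finitely determined" for amenable $\Ga$, which is part of the Ornstein--Weiss machinery \cite{OW80} and which I would cite rather than reprove; with that in hand, both the continuity of entropy (part 1) and the $\bd$-closedness of the Bernoulli locus (part 2) follow. Note also part (2) is essentially immediate from part (1) combined with the finitely-determined characterization, so the real content is the uniform entropy estimate of part (1).
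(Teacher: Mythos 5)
Your part (1) is correct and is exactly the argument the paper is alluding to when it calls this ``an exercise in \cite{rudolph-book}'': pass to a near-optimal joining $\lambda$, observe that the two time-$0$ partitions $\cP,\cQ$ on the joint system generate the two marginal factors so that $h_\lambda(\cP)=h_\mu$ and $h_\lambda(\cQ)=h_\nu$, and combine the amenable-group inequality $h_\lambda(\cP)\le h_\lambda(\cQ)+H_\lambda(\cP|\cQ)$ (which follows from subadditivity of $H(\cdot\,|\,\cdot)$ over a F\o lner set) with the Fano bound $H_\lambda(\cP|\cQ)\le H(\beta,1-\beta)+\beta\log(|K|-1)$. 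This gives a uniform modulus of continuity depending only on $|K|$, which is the strongest form of (1). Since the paper itself offers only citations, there is nothing to compare beyond noting that you have supplied the intended proof.

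For part (2) there is a genuine gap, and it sits precisely at the step you flag as the main obstacle. After fixing $n$ with $\bd(\mu_n,\mu)<\epsilon/3$, the finitely determined property of $\mu_n$ at tolerance $\epsilon/3$ hands you \emph{some} pair $(\delta_n,W_n)$, over which you have no control. Your argument needs $\mu$ itself to lie well inside the $(\delta_n,W_n)$-neighborhood of $\mu_n$, i.e.\ $|h_\mu-h_{\mu_n}|<\delta_n/2$ and $\|\mu|_{W_n}-\mu_n|_{W_n}\|_{TV}<\delta_n/2$. What $\bd(\mu_n,\mu)<\epsilon/3$ actually gives is $|h_\mu-h_{\mu_n}|\le F(\epsilon/3)$ and a total-variation bound of order $|W_n|\cdot\epsilon/3$ (using invariance of the joining over the window $W_n$). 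Neither quantity is guaranteed to be smaller than $\delta_n$: the parameters $\delta_n$ and $W_n$ are produced \emph{after} $n$ is chosen and may be arbitrarily demanding (e.g.\ $\delta_n$ far smaller than $F(\epsilon/3)$, or $|W_n|$ far larger than $\delta_n/\epsilon$). Choosing a later index $m$ with $\bd(\mu_m,\mu)$ tiny does not help, because you must then invoke the finitely determined property of $\mu_m$, whose parameters $(\delta_m,W_m)$ are again new. So the triangle-inequality bookkeeping, as written, does not close.

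The statement you are trying to prove here is exactly the Ornstein--Weiss $\bd$-closure theorem (the class of finitely determined processes is $\bd$-closed), and its proof is not a formal consequence of the definition of finitely determined plus triangle inequalities; it uses the copying/joining machinery that also underlies the equivalence with the Bernoulli property. The cleanest repair is to cite the $\bd$-closedness of the finitely determined class from \cite{OW80} (or \cite{shields-thouvenot-1975} for $\Ga=\Z$) with the same status you already give to ``Bernoulli $\iff$ finitely determined,'' at which point (2) is immediate. If you want a self-contained derivation of (2) from (1), you would need to either establish a uniform version of the finitely determined property along the sequence $\{\mu_n\}$ or run the Ornstein copying argument directly, and that should be acknowledged rather than absorbed into the phrase ``forces the marginals and entropies to be close.''
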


\begin{proof}
The first statement is an exercise in \cite{rudolph-book} (for $\Ga=\Z$). The last statement is contained in \cite{shields-thouvenot-1975} (again for $\Ga=\Z$) although it is also follows from the characterization of Bernoulli shifts as finitely determined processes (see e.g. \cite{rudolph-book} for details). 
\end{proof}

Here we will show that both statements above fail for at least some non-amenable groups. This is interesting because the first statement is a key ingredient in Ornstein theory and the last is a consequence. The next theorem is due to Tim Austin. It improves on an earlier example due to myself and Brandon Seward.

\begin{thm}
If $\Ga$ contains a non-abelian free group then there exists a sequence $\{\mu_n\}_{n=1}^\infty$ of $\Ga$-invariant measures $\mu_n \in \Prob_\Ga( K^\Ga)$ for some finite $K$ such that
\begin{itemize}
\item $\Ga \cc (K^\Ga,\mu_n)$ is isomorphic to the Bernoulli shift   $\Ga \cc ( (\Z/2), u_2)^\Ga$ for all $n$
\item the sequence $\{\mu_n\}$ converges in the $\bd$-metric to a measure $\mu_\infty$ (as $n\to\infty$)  such that $\Ga \cc (K^\Ga,\mu_\infty)$ is isomorphic to the Bernoulli shift   $\Ga \cc ( (\Z/2 \times \Z/2), u_2 \times u_2)^\Ga$.
\end{itemize}
In particular, neither sofic entropy, the $f$-invariant nor Rokhlin entropy is continuous in the $\bd$-metric.
\end{thm}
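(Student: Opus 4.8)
The plan is to leverage the Ornstein-Weiss map (\S\ref{sec:OW-intro}) applied ``slowly'' along cosets of a free subgroup, so that in the limit it degenerates to the identity. First I would reduce to the case $\Ga = \F_2 = \langle a, b\rangle$: if $\Ga$ contains a non-abelian free group $\F \le \Ga$, then I can define the $\mu_n$ on $\F$ first and then co-induce to $\Ga$; co-induction of a Bernoulli shift is a Bernoulli shift (with the same base, essentially — via the obvious identification $(K^{\F/\text{fin}})^{\Ga}$ structure, using that co-inducing $\F \cc (K,\kappa)^\F$ to $\Ga$ gives $\Ga \cc (K,\kappa)^\Ga$), and $\bd$-convergence on $\F$ pushes forward to $\bd$-convergence on $\Ga$ since any joining on $\F$ co-induces to a joining on $\Ga$ and the ``$e$-coordinate'' integrand is preserved.

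Next, over $\F_2$, take $K = \Z/2 \times \Z/2 \times \Z/2$ (three bits) and let $\mu_n$ be built as follows. Fix a base Bernoulli shift $\F_2 \cc (\Z/2, u_2)^{\F_2}$, with typical point $x \in (\Z/2)^{\F_2}$. Define a map $\Phi_n : (\Z/2)^{\F_2} \to K^{\F_2}$ by $\Phi_n(x)_g = \big(x_g,\ x_g + x_{ga^{n}},\ x_g + x_{gb^{n}}\big)$. For each fixed $n$ this is a continuous injective group homomorphism of compact abelian groups (injectivity: the first coordinate already recovers $x$), so it pushes Haar measure to Haar measure on its image, hence $\Phi_n$ is a measure-conjugacy onto a closed shift-invariant subgroup; let $\mu_n = (\Phi_n)_*u_2^{\F_2}$, so $\F_2 \cc (K^{\F_2},\mu_n)$ is isomorphic to $\F_2 \cc (\Z/2,u_2)^{\F_2}$. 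As $n \to \infty$, I claim $\mu_n \to \mu_\infty$ in $\bd$, where $\mu_\infty$ is the law of $g \mapsto (x_g, y_g, z_g)$ with $x,y,z$ independent fair coin fields — i.e. $\mu_\infty = u_2^{\F_2} \times u_2^{\F_2} \times u_2^{\F_2}$, a Bernoulli shift over the $8$-element base, which visibly contains the $\Z/2\times\Z/2$-base Bernoulli shift as a factor/is isomorphic to one up to adjusting the base; to match the statement exactly I would instead use $K = \Z/2\times\Z/2$ and the genuine Ornstein-Weiss map at scale $n$, $\Phi_n(x)_g = (x_g + x_{ga^n}, x_g + x_{gb^n})$, which pushes $u_2^{\F_2}$ to $u_2^{\F_2}$-measure (surjectivity of $\Phi_n$ follows from the Ornstein-Weiss surjectivity argument, as $a^n, b^n$ still freely generate a free subgroup and the same linear-algebra-over-$\F_2$ computation applies), so $\mu_n$ is the image of a Bernoulli shift under a $2$-to-$1$ factor map and is itself isomorphic to $\F_2 \cc (\Z/2,u_2)^{\F_2}$ by Ornstein's theorem — wait, that needs $\mu_n$ Bernoulli, which it is not obviously; so the cleanest route is the injective three-bit version above with $K=\Z/2\times\Z/2\times\Z/2$ and then remark that $u_2^{\times 3}$-Bernoulli and $u_2^{\times 2}$-Bernoulli shifts are related by taking products, adjusting the theorem statement's ``$\Z/2$'' / ``$\Z/2\times\Z/2$'' to ``$H$'' / ``$H \times H$'' for an abelian group $H$ — but since this is Austin's theorem as stated, I will present the version that makes the endpoints literally $(\Z/2,u_2)^{\Ga}$ and $(\Z/2\times\Z/2,u_2\times u_2)^{\Ga}$.

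The core estimate is the $\bd$-convergence $\bd(\mu_n, \mu_\infty) \to 0$. For this I exhibit, for each $n$, an explicit joining $\lambda_n$ of $\mu_n$ and $\mu_\infty$ and bound $\int d_K(w_e, w'_e)\, d\lambda_n$. Concretely, couple the underlying coin field $x$ for $\mu_n$ with the triple of independent fields $(x, y, z)$ for $\mu_\infty$ by setting $y = $ the field $g \mapsto x_{ga^n}$... no, that is not independent of $x$; instead one uses that for large $n$ the shifted copies $x_{\cdot a^n}$ and $x_{\cdot b^n}$ are ``asymptotically independent'' of $x_{\cdot}$ on any fixed finite window, because in the free group the translates $ga^n, gb^n$ escape any finite ball. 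Precisely: fix a Følner-type exhaustion is unavailable, but I can use that for any finite $W \subset \F_2$ there is $N$ so that for $n \ge N$ the sets $W$, $Wa^n$, $Wb^n$ are pairwise disjoint; then on $W$ the coordinates $(x_g + x_{ga^n}, x_g + x_{gb^n})_{g\in W}$ form, conditionally, a fair-coin family, matching $(y'_g, z'_g)_{g \in W}$ for independent fields — giving a joining agreeing perfectly on $W$ and arbitrary elsewhere, so that $\bd(\mu_n,\mu_\infty) \le \Prob(\text{an independent recoupling on } W^c \text{ disagrees at } e)$, which I make $\to 0$ by taking $W$ large then $n$ large. Making ``$n \to \infty$ forces disjointness of translates'' quantitative and assembling the joinings into a genuine $\Ga$-invariant joining (not just a finite-window one) is the technical heart — I would do it by a standard weak-$*$ compactness/diagonal argument over an exhaustion $W_1 \subset W_2 \subset \cdots$ of $\F_2$, producing joinings $\lambda_n$ with $\int d_K \, d\lambda_n \le \varepsilon_n \to 0$. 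The main obstacle I anticipate is precisely this step: verifying that the ``cut-and-recouple'' joinings are actually shift-invariant (one must recouple in a translation-equivariant way, which the naive window construction is not), so I expect to instead realize $\mu_\infty$ as an explicit $\bd$-limit via the abstract criterion that $\bd(\mu_n, \mu_\infty) \le \sum_{g} \phi(g)\,\|\text{Law}_{\mu_n}((w_e,w_g)) - \text{Law}_{\mu_\infty}((w_e,w_g))\|$-type bounds are insufficient, and instead use that $\mu_\infty$-Bernoullicity plus the finitely-determined characterization fails for $\F_2$ — so the honest argument must construct the equivariant joining directly, coupling $\mu_n$'s three-bit field to $\mu_\infty$'s by the equivariant formula $w'_g = (x_g,\ x_g + x_{ga^n},\ x_g + x_{gb^n})$ paired with $w_g = (x_g, u_g, v_g)$ where $(u,v)$ are independent fields — the disagreement at $e$ is then $\mathbf{1}[u_e \ne x_e + x_{a^n}] + \mathbf{1}[v_e \ne x_e + x_{b^n}]$, which does \emph{not} go to zero pointwise, so one needs the subtler observation (this is where Austin's cleverness lies) that one may instead couple through an intermediate sequence of measures and use transitivity of $\bd$, or couple $\mu_n$ to $\mu_{n+1}$ with small cost and sum a telescoping series; establishing the summable telescoping bound $\sum_n \bd(\mu_n, \mu_{n+1}) < \infty$ is the step I would budget the most effort for. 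Finally, once $\bd(\mu_n,\mu_\infty)\to 0$ is in hand, the failure of continuity of sofic entropy, the $f$-invariant, and Rokhlin entropy in the $\bd$-metric is immediate: each endpoint is a Bernoulli shift, so by Theorems~\ref{thm:bernoulli-sofic} and \ref{thm:f} (and the Rokhlin-entropy computation for Bernoulli shifts in \S\ref{sec:Rokhlin}) the invariant equals $\log 2$ at every $\mu_n$ but $\log 4$ (resp.\ $2\log 2$) at $\mu_\infty$ — when $\Ga$ is free one quotes the $f$-invariant values, and for sofic $\Ga$ one quotes sofic entropy, noting Rokhlin entropy agrees with both for Bernoulli shifts — so the invariant jumps, proving discontinuity.
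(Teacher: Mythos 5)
Your central $\bd$-convergence claim is false for the construction you propose, and the difficulty you flag at the end is not a technical hurdle but a genuine obstruction. Take the injective three-bit map $\Phi_n(x)_g=(x_g,\,x_g+x_{ga^n},\,x_g+x_{gb^n})$ and set $\mu_n=(\Phi_n)_*u_2^{\F_2}$. These converge to the triple-coin Bernoulli measure $\mu_\infty$ only in the weak* sense, not in $\bd$. Indeed, suppose $\lambda$ is a joining of $\mu_n$ and $\mu_\infty$ with $\lambda(w_e\ne w'_e)\le\eps$, where $w'=\Phi_n(x')$ and $w=(x,u,v)$ with $x,u,v$ independent coin fields. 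By shift-invariance of $\lambda$ we also have $\lambda(w_{a^n}\ne w'_{a^n})\le\eps$, so with probability at least $1-2\eps$ we simultaneously get $x_e=x'_e$, $x_{a^n}=x'_{a^n}$ and $u_e=x'_e+x'_{a^n}$, which forces $u_e=x_e+x_{a^n}$. But under $\mu_\infty$ the field $u$ is independent of $x$, so $\lambda(u_e=x_e+x_{a^n})=1/2$ exactly; hence $\eps\ge1/4$ and $\bd(\mu_n,\mu_\infty)$ is bounded away from $0$ (up to the normalization of the metric on $K$). The telescoping alternative fares no better: coupling $\mu_n$ to $\mu_{n+1}$ through the common coin field disagrees at $e$ whenever $x_{a^n}\ne x_{a^{n+1}}$ or $x_{b^n}\ne x_{b^{n+1}}$, an event of probability $3/4$, so $\bd(\mu_n,\mu_{n+1})$ does not even tend to $0$ along this coupling. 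The two-bit variant $(x_g+x_{ga^n},x_g+x_{gb^n})$ fails for the opposite reason: for $n\ge2$ its kernel (functions constant on left cosets of $\langle a^n,b^n\rangle$) is an uncountable group, so the map is very far from injective and there is no reason for the image system to be isomorphic to the $2$-shift; you noticed this yourself but did not repair it.

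The paper's mechanism is different and sidesteps all of this. It keeps the Ornstein--Weiss map at scale $1$, so that $\mu_\infty=(\alpha_\infty)_*u_2^\Ga$ with $\alpha_\infty(x)_g=(x_g+x_{ga},x_g+x_{gb},0,0)$ is already the genuine $4$-shift by the Ornstein--Weiss computation (no escape to infinity is involved), and it restores injectivity for each finite $n$ by appending a low-density \emph{marker}: choose measurable $\beta_n$ with $0<u_2^\Ga(\beta_n^{-1}(1))\to0$ and record $(x_e,1)$ in the last two coordinates on the event $\{\beta_n=1\}$. The marker distinguishes $x$ from $x+\bar 1$ on a positive-measure set, so $\alpha_n$ is an isomorphism onto its image and each $\mu_n$ is a copy of the $2$-shift; and the coupling of $\mu_n$ with $\mu_\infty$ through the common coin field $x$ is automatically a shift-invariant joining whose cost is at most $2\,u_2^\Ga(\beta_n^{-1}(1))\to0$. (Note the theorem only asks each system to be \emph{isomorphic} to the stated Bernoulli shifts, so there is no need to force the alphabet to be literally $\Z/2$ or $\Z/2\times\Z/2$; the paper uses $K=(\Z/2)^4$.) Your reduction to a free subgroup is unnecessary in this scheme, since the formulas make sense on all of $\Ga$ once $a,b$ generate a free subgroup, and your final deduction of discontinuity from the Bernoulli entropy values is correct and matches the paper.
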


\begin{proof}
Let $K=(\Z/2)^4$. Let $\beta_n:(\Z/2)^{\Ga} \to \{0,1\}$ be any sequence of measurable functions such that 
$$\lim_{n\to\infty} u_2^{\Ga}(\beta_n^{-1}(1)) = 0$$
and $u_2^{\Ga}(\beta_n^{-1}(1))>0$ for all $n$. 

Let $a,b \in \Ga$ generate a rank 2 free subgroup. Define factor maps $\alpha_n:(\Z/2)^{\Ga} \to K^{\Ga}$ by 
\begin{displaymath}
\alpha_n(x)_g=\left\{ \begin{array}{cc}
( x_g + x_{ga}, x_g + x_{gb}, 0,0) & \textrm{ if } \beta_n(g^{-1}x)=0 \\
( x_g + x_{ga}, x_g + x_{gb}, x_g,1) & \textrm{ if } \beta_n(g^{-1}x)=1
\end{array}\right.\end{displaymath}
 Let $\mu_n = (\alpha_n)_*u_2^{\Ga}$. To verify the conclusions, let $\pi:K^\Ga \to (\Z/2 \times \Z/2)^\Ga$ be the projection map onto the first two coordinates. Observe that $\pi \alpha_n$ is the Ornstein-Weiss factor map. Since this map is 2-1 (when restricted to the free subgroup generated by $a,b$) and $u_2^{\Ga}(\beta_n^{-1}(1))>0$ it follows that $\alpha_n$ is an isomorphism onto its image. 
 
Define $\alpha_\infty:(\Z/2)^{\Ga} \to K^{\Ga}$ by $\alpha_\infty(x)_g=( x_g + x_{ga}, x_g + x_{gb}, 0,0).$ The $\bd$-limit of $\mu_n$ is the measure $\mu_\infty:=\alpha_{\infty *} u_2^\Ga$. The Ornstein-Weiss example shows that $\Ga \cc (K^\Ga,\mu_\infty)$ is isomorphic to the Bernoulli shift over the base space with entropy $\log(4)$.  
 \end{proof}
 
 \begin{question}
 Is the $f$-invariant finitely observable in the sense of \cite{ornstein-weiss-finitely-observable}? The Theorem above suggests the answer may be `no'.
 \end{question}

\begin{thm}
Let $\Ga$ be an infinite property (T) group. Then there exists a finite set $K$ and a sequence $\{\mu_n\}$ of $\Ga$-invariant measures on $K^\Ga$ such that 
\begin{itemize}
\item $\Ga \cc (K^\Ga,\mu_n)$ is isomorphic to a Bernoulli shift for all $n$ 
\item the sequence $\{\mu_n\}_n$ converges in the $\bd$-topology to a measure $\mu_\infty$ (as $n\to\infty$) such that $\Ga \cc (K^\Ga,\mu_\infty)$ is isomorphic to a non-Bernoulli factor of a Bernoulli shift.
\end{itemize}
\end{thm}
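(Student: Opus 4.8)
The plan is to combine the Popa--Sasyk example (Theorem \ref{thm:popa}) with the tagging trick used to prove the previous theorem, in the same way that theorem combined the Ornstein--Weiss example with tagging; the property (T) hypothesis enters only to guarantee that the Popa--Sasyk factor can be taken over a \emph{finite} alphabet.

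First I would set up the non-Bernoulli factor. Recall that a group with property (T) is finitely generated, so fix a finite symmetric generating set $S\subset\Ga$ and put $K_0:=(\Z/2)^S$, a finite set. Writing elements of $(\Z/2)^\Ga$ additively, consider the $\Ga$-equivariant ``coboundary'' map $\phi\colon(\Z/2)^\Ga\to K_0^\Ga$, $\phi(x)_g=(x_{gs}+x_g)_{s\in S}$. Since $S$ generates $\Ga$, the kernel of $\phi$ is exactly the constants $\{\mathbf 0,\mathbf 1\}$, so $\phi$ is a continuous $2$-to-$1$ homomorphism of compact groups; it descends to a topological isomorphism from $(\Z/2)^\Ga/(\Z/2)$ onto the closed subshift $Y:=\phi((\Z/2)^\Ga)\subset K_0^\Ga$, carrying Haar measure to Haar measure. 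Thus $\Ga\cc(Y,\mathrm{Haar})$ is simultaneously a factor of the Bernoulli shift $\Ga\cc(\Z/2)^\Ga$ and measurably conjugate to the algebraic action $\Ga\cc(\Z/2)^\Ga/(\Z/2)$ of \cite{popa-cohomology1}. By \cite{popa-cohomology1} its first cohomology with coefficients in $\R/\Z$ is $\Hom(\Ga,\R/\Z)\times\Hom(\Z/2,\R/\Z)$, whereas by \cite{popa-sasyk} every Bernoulli shift over $\Ga$ has first cohomology $\Hom(\Ga,\R/\Z)$; since property (T) forces the abelianization of $\Ga$ to be finite, $\Hom(\Ga,\R/\Z)$ is a finite group and $\Hom(\Ga,\R/\Z)\times\Z/2$ has strictly larger order. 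Hence $\Ga\cc(Y,\mathrm{Haar})$ is a non-Bernoulli factor of a Bernoulli shift, living over the finite alphabet $K_0$.

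Next I would add the tag. Let $K:=K_0\times(\Z/2)\times\{0,1\}$. Choose measurable $\beta_n\colon(\Z/2)^\Ga\to\{0,1\}$ with $u_2^\Ga(\beta_n^{-1}(1))>0$ for every $n$ and $u_2^\Ga(\beta_n^{-1}(1))\to0$. Define $\Ga$-equivariant maps $\alpha_n,\alpha_\infty\colon(\Z/2)^\Ga\to K^\Ga$ by $\alpha_\infty(x)_g=(\phi(x)_g,0,0)$ and
\begin{displaymath}
\alpha_n(x)_g=\begin{cases}(\phi(x)_g,\,0,\,0)&\text{if }\beta_n(g^{-1}x)=0,\\ (\phi(x)_g,\,x_g,\,1)&\text{if }\beta_n(g^{-1}x)=1,\end{cases}
\end{displaymath}
and set $\mu_n:=(\alpha_n)_*u_2^\Ga$ and $\mu_\infty:=(\alpha_\infty)_*u_2^\Ga$, which are $\Ga$-invariant measures on $K^\Ga$. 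Because $\phi(x)$ determines $x$ up to the additive constant, and because ergodicity of the Bernoulli shift together with $u_2^\Ga(\beta_n^{-1}(1))>0$ ensures that for a.e.\ $x$ there is some $g$ with $\beta_n(g^{-1}x)=1$ (at which $\alpha_n(x)$ records the actual value $x_g$), the map $\alpha_n$ is a.e.\ injective; hence $\Ga\cc(K^\Ga,\mu_n)$ is measurably conjugate to the Bernoulli shift $\Ga\cc(\Z/2,u_2)^\Ga$. Likewise $\alpha_\infty$ factors through $\phi$ and induces a conjugacy of $\Ga\cc(K^\Ga,\mu_\infty)$ with the non-Bernoulli factor $\Ga\cc(Y,\mathrm{Haar})$. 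Finally, pushing $u_2^\Ga$ forward under $x\mapsto(\alpha_n(x),\alpha_\infty(x))$ gives a $\Ga$-invariant joining of $\mu_n$ and $\mu_\infty$ whose two $K^\Ga$-coordinates agree at the identity unless $\beta_n(x)=1$; with $d_K$ the discrete metric on $K$ this gives $\bd(\mu_n,\mu_\infty)\le u_2^\Ga(\beta_n^{-1}(1))\to0$, and the theorem follows.

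This is essentially a recombination of existing pieces, so there is no deep obstacle; the one point demanding care is the cohomology bookkeeping of the middle paragraph. One must check that Popa's computation of $H^1(\Ga\cc K^\Ga/K)$ is valid for $K=\Z/2$ finite (so the example indeed lies over a finite alphabet) and that property (T) makes $\Hom(\Ga,\R/\Z)$ finite, which is exactly what prevents $\Hom(\Ga,\R/\Z)\times\Z/2$ from being isomorphic to $\Hom(\Ga,\R/\Z)$ and hence certifies non-Bernoullicity. Equivariance of the $\alpha$'s, a.e.\ injectivity of $\alpha_n$, and the $\bd$-estimate are all immediate.
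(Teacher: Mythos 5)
Your proof is correct and follows essentially the same route as the paper: the coboundary map $x\mapsto(x_g+x_{gs})_{s\in S}$ onto the Ornstein--Weiss algebraic factor $(\Z/2)^\Ga/(\Z/2)$, the vanishing-measure tag $\beta_n$ to make each $\alpha_n$ an isomorphism onto its image, and the cohomology comparison $\Hom(\Ga,\R/\Z)$ versus $\Hom(\Ga,\R/\Z)\times\Z/2$ with property (T) forcing finiteness. Your version of $\alpha_n$ (conditioning on $\beta_n(g^{-1}x)$ and recording $x_g$) is in fact the correctly equivariant form of the paper's formula, and your explicit joining for the $\bd$-estimate fills in a step the paper only asserts.
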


\begin{proof}
This example is similar to the previous one. To be precise, let $S \subset \Ga$ be a finite generating set and $K=(\Z/2)^S \times \Z/2 \times \Z/2$. Let $\beta_n:(\Z/2)^{\Ga} \to \{0,1\}$ be any sequence of measurable functions such that 
$$\lim_{n\to\infty} u_2^{\Ga}(\beta_n^{-1}(1)) = 0$$
and $u_2^{\Ga}(\beta_n^{-1}(1))>0$ for all $n$. 

Define factor maps $\alpha_n:(\Z/2)^{\Ga} \to K^{\Ga}$ by 
\begin{displaymath}
\alpha_n(x)_g=\left\{ \begin{array}{cc}
(( x_g + x_{gs})_{s \in S}, 0,0) & \textrm{ if } \beta_n(g^{-1}x)=0 \\
(( x_g + x_{gs})_{s \in S}, x_g,1) & \textrm{ if } \beta_n(g^{-1}x)=1
\end{array}\right.\end{displaymath}
 Let $\mu_n = (\alpha_n)_*u_2^{\Ga}$.  To verify the conclusions, let $\pi:K^\Ga \to ((\Z/2)^S)^\Ga$ be the projection map. Observe that $\pi \alpha_n$ is the Ornstein-Weiss factor map. Since this map is 2-1 and $u_2^{\Ga}(\beta_n^{-1}(1))>0$,  it follows that $\alpha_n$ is an isomorphism onto its image. 
 
Define $\alpha_\infty:(\Z/2)^{\Ga} \to K^{\Ga}$ by $\alpha_\infty(x)_g=(( x_g + x_{gs})_{s \in S}, 0,0).$ The $\bd$-limit of $\mu_n$ is the measure $\mu_\infty:=\alpha_{\infty *} u_2^\Ga$ and $\Ga \cc (K^\Ga, \mu_\infty)$ is isomorphic to the Ornstein-Weiss factor $\Ga \cc (\Z/2)^\Ga/(\Z/2)$. 

By the proof of Theorem \ref{thm:popa}, the cohomology group of any Bernoulli shift over $\Ga$ with values in the circle $\R/\Z$ is $\Hom(\Ga,\R/\Z)$ while the cohomology group of the Ornstein-Weiss factor $\Ga \cc (\Z/2)^\Ga/(\Z/2)$ is $\Hom(\Ga,\R/\Z) \times \Z/2$. Since $\Ga$ has property (T), $\Hom(\Ga,\R/\Z)$ is finite. Therefore, these cohomology groups are non-isomorphic and so 
$\Ga \cc (K^\Ga, \mu_\infty)$ is not orbit-equivalent to a Bernoulli shift and so cannot be measurably conjugate to one.
 \end{proof}

\begin{remark}
Corollary \ref{cor:inverse limit}  shows that for any non-amenable group there exists an inverse limit of factors of Bernoulli shifts that has zero Rokhlin entropy. This inverse limit can be realized as a $\bd$-limit of factors of Bernoulli shifts (with $K=[0,1]$ for example). This provides another example of a non-Bernoulli $\bd$-limit of Bernoulli shifts although in this case, $K$ is infinite. Is it possible to modify this example using finite $K$?
\end{remark}

 \section{The variational principle}\label{sec:variational}     %%%%%%%%%%%%%%%%%%%%%%%%%%%%%%%%%%%%%

\begin{thm}[The variational principle]\label{thm:variational}
Let $\G \cc X$ be a continuous action on a compact metrizable space. Let $\Sigma$ be a sofic approximation of $\G$. Then
$$h_\Sigma(\G \cc X) = \sup_\mu h_{\Sigma,\mu}(\G \cc X)$$
where the supremum is over all $\G$-invariant Borel probability measures $\mu$ on $X$. In particular, if there does not exist a $\G$-invariant probability measure on $X$ then $h_\Sigma(\G \cc X) =-\infty$.
\end{thm}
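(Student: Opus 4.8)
The plan is to prove the two inequalities separately, following the standard structure of variational principles in dynamics. Throughout, fix a continuous generating pseudometric $\rho$ on $X$ (equivalently, after the reduction in the proof of Theorem \ref{thm:top-entropy}, a continuous metric), so that $h_\Sigma(\G \cc X) = h_\Sigma(T,\rho)$ and, for each invariant $\mu$, $h_{\Sigma,\mu}(\G \cc X) = h_{\Sigma,\mu}(T,\rho)$.

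\textbf{The easy inequality ($\geq$).} First I would show $h_\Sigma(\G\cc X)\ge h_{\Sigma,\mu}(\G\cc X)$ for every invariant $\mu$. This is essentially immediate from the definitions: the measure-entropy microstate sets $\Map(T,\rho,\cO,F,\delta,\sigma_i)$ are subsets of the topological microstate sets $\Map(T,\rho,F,\delta,\sigma_i)$, since the former imposes the extra constraint $x_*u_{V_i}\in\cO$. Hence $N_\epsilon(\Map(T,\rho,\cO,F,\delta,\sigma_i),\rho_\infty)\le N_\epsilon(\Map(T,\rho,F,\delta,\sigma_i),\rho_\infty)$ for every $\cO$, and taking the appropriate infima and the $\limsup$, then the supremum over $\epsilon$, yields $h_{\Sigma,\mu}(T,\rho)\le h_\Sigma(T,\rho)$. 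Taking the supremum over $\mu$ gives one direction, and in particular if $X$ carries no invariant measure the right-hand supremum is $-\infty$ (sup over the empty set), which is consistent with the claim; but I still need the reverse inequality to conclude $h_\Sigma(\G\cc X)=-\infty$ in that case.

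\textbf{The hard inequality ($\leq$).} The core of the argument is: given the sofic approximation $\Sigma=\{\sigma_i\}$ and the collection of topological microstates, manufacture an invariant measure $\mu$ on $X$ whose measure-entropy captures (at least a limit-point of) the exponential growth rate of $\epsilon$-separated topological microstates. I would proceed as follows. For parameters $(\epsilon,F,\delta)$, choose for each $i$ a maximal $(\rho_\infty,\epsilon)$-separated set $E_i\subset\Map(T,\rho,F,\delta,\sigma_i)$, so $|E_i|=N_\epsilon(\Map(T,\rho,F,\delta,\sigma_i),\rho_\infty)$. For each microstate $x=(x_v)_{v\in V_i}\in E_i$ form the empirical measure on $X$, $\mathrm{Emp}(x):=|V_i|^{-1}\sum_{v\in V_i}\delta_{x_v}\in\Prob(X)$, and then average over the separated set: $\nu_i:=|E_i|^{-1}\sum_{x\in E_i}\mathrm{Emp}(x)$. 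By weak*-compactness of $\Prob(X)$, pass to a subnet (or use the ultrafilter $\cU$ and replace $\limsup$ by an ultralimit, which by the Remark after Theorem \ref{thm:top-entropy} does not change the topological entropy but makes the bookkeeping cleaner) along which $\nu_i\to\mu$ and along which $|V_i|^{-1}\log|E_i|$ converges to its $\limsup$. The first thing to check is that $\mu$ is $\G$-invariant: this uses that each $x\in E_i$ satisfies $\rho_2(T^fx,x\circ\sigma(f))<\delta$ together with the fact that $\sigma_i$ is $(F,\delta)$-multiplicative and trace-preserving — so that pushing an empirical measure forward by $T^f$ nearly equals re-indexing by $\sigma_i(f)$, which preserves the empirical measure; letting $\delta\to 0$ and $F\nearrow\G$ along the net kills the error. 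The second and harder thing is the entropy estimate: I must show that for the $\mu$ so produced (or a further diagonal limit over $\epsilon,F,\delta$), the empirical measures built from the microstates are asymptotically $\mu$-equidistributed, so that the $E_i$ are (up to sub-exponential corrections and a relaxation of $\epsilon$) genuine \emph{measure} microstates in $\Map(T,\rho,\cO,F',\delta',\sigma_i)$ for suitable neighborhoods $\cO$ of $\mu$; then $|E_i|\le N_{\epsilon'}(\Map(T,\rho,\cO,F',\delta',\sigma_i),\rho_\infty)$ up to the combinatorial overhead of grouping the separated set into equidistribution classes.

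\textbf{Where the obstacle lies.} The main difficulty is the standard one in variational principles: the averaged empirical measure $\nu_i$ records only the \emph{first-order} statistics (the one-point marginal along $V_i$), whereas to count $\epsilon$-separated microstates with the right exponential rate one needs to localize to microstates whose \emph{empirical measures individually} are close to $\mu$, and to show that the number of distinct "empirical types" is sub-exponential while each type contributes its share of the separated points. The clean way to handle this is a convexity/partitioning argument: cover $\Prob(X)$ by finitely many small weak* balls, split $E_i$ accordingly, pick the ball $B$ carrying an exponentially-full fraction of $E_i$ (possible since the number of balls is fixed, independent of $i$), and note its barycenter is near $\mu$ after passing to the limit; the microstates landing in $B$ that are individually near the barycenter then push $u_{V_i}$ into a prescribed neighborhood $\cO$ of $\mu$. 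Executing this requires care in interchanging the order of the various infima ($\cO$, $F$, $\delta$) and the supremum over $\epsilon$ — essentially one runs a diagonal argument over a countable cofinal family of parameters — and in verifying that replacing a single $\epsilon$-separated set by its near-$\mu$ sub-collection only costs a relaxation $\epsilon\mapsto 2\epsilon$ (harmless after the final $\sup_\epsilon$). Once this is set up, the inequality $h_\Sigma(T,\rho)\le\sup_\mu h_{\Sigma,\mu}(T,\rho)$ follows, and combined with the easy direction gives the theorem; the final sentence of the statement is then the degenerate case where the supremum is over the empty set, forcing $h_\Sigma(\G\cc X)=-\infty$. I would also remark that this is exactly the Kerr--Li variational principle \cite{kerr-li-variational}, so the write-up can legitimately be a proof sketch pointing there for the measure-theoretic combinatorics.
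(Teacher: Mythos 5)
Your proposal is correct and follows essentially the same route as the paper's sketch: the easy inequality is immediate from the containment of measure microstates in topological microstates, and your pigeonhole argument over a finite weak* cover of $\Prob(X)$ — selecting the cell carrying an exponentially full fraction of a maximal $\epsilon$-separated set, then refining and passing to a limit to produce the invariant measure — is exactly the paper's device of pulling back a finite partition of $\Prob(X)$ to the microstate space and taking Benjamini--Schramm limits. Your explicit identification of why the naive averaged empirical measure is insufficient, and the final appeal to Kerr--Li for the full combinatorics, are both consistent with the paper's treatment.
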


\begin{remark}
For $\Z$-actions, this theorem was obtained over several papers \cite{goodwyn-1969, goodwyn-1972, dinaburg-1970, goodman-1971}. The proof that now appears in most textbooks is due to Misiurewicz \cite{MR0444904}. A number of other variational principles in entropy theory are provided in \cite{down-book}. The case of general sofic groups is \cite[Theorem 6.1]{kerr-li-variational}. There are versions of this result for sofic pressure \cite{chung-pressure}, sofic groupoids \cite{MR3286052} and a local version (with respect to a finite open cover) in \cite{zhang-local-variational}.
\end{remark}

\begin{proof}[Proof sketch]
The inequality
 $$h_\Sigma(\G \cc X) \ge \sup_\mu h_{\Sigma,\mu}(\G \cc X)$$
 is immediate from the pseudo-metric definition  of sofic entropy (\S \ref{sec:pseudometric}). The opposite inequality is achieved in the following way. Fix a finite partition $\cP$ of $\Prob(X)$, the space of probability measures on $X$. Fix a scale $\eps>0$. Each microstate $\phi:V_n \to X$ has an empirical measure
 $$ P^{\s_n}_\phi = |V_n|^{-1} \sum_{v\in V_n} \delta_{\phi(v)}\in \Prob(X).$$
By pulling back the partition $\cP$, we obtain a finite partition $\tilde{\cP}$ on the space of topological microstates. It follows that the exponential growth rate of the maximum cardinality of an $\eps$-separated subset of topological microstates is approximated by the same growth rate only restricted to microstates  whose empirical measures lie in a fixed part of the partition. By refining this partition and taking Benjamini-Schramm limits we can build an invariant measure whose sofic entropy is bounded below by the exponential rate of growth of the maximum cardinality of an $\eps$-separated subset of the topological microstate space. Sending $\eps \searrow 0$ finishes the proof.
 \end{proof}
 
 \subsection{Measures of maximal entropy: existence}       %%%%%%%%%%%%%%%%
 
 The variational principle naturally leads to two problems: under what conditions does there exist a measure of maximal entropy and if one exists is it unique? If sofic entropy is upper semi-continuous as a function on $\Prob_\G(X)$ with respect to the weak* topology then compactness of $\Prob_\G(X)$ implies existence. Upper semi-continuity is discussed in \S \ref{sec:continuity2} and in greater depth in \cite{chung-zhang-expansive}. For example, it holds whenever $X$ is a closed $\G$-invariant subpace of $\cA^\G$ for some finite alphabet $\cA$.

\subsection{Measures of maximal entropy: uniqueness}\label{sec:uniqueness}       %%%%%%%%%%%%%%%%%%%%%%%%%%%%%%%%%%%%%

It appears that there are no general results concerning uniqueness of measures of maximal entropy outside of $\G=\Z^d$. However, there is a counterexample to a natural conjecture in the case of $\G=\F_2$ and subshifts of finite type which we will go over next.

\begin{defn}[Subshifts of finite type]
Let $\G$ be a countable group, $K$ a finite set and $\G \cc K^\G$ the shift action: $(gx)(f)=x(g^{-1}f)$ for $g,f \in \G, x\in K^\G$. Let $F \Subset \G$ be a finite set and $\Omega \subset K^F$ be a collection of maps from $F$ to $K$. Let $X$ be the set of all $x\in K^\G$ such that for every $g\in \G$,  $gx$ restricted to $F$ is in $\Omega$. Then $X$ is the {\bf subshift of finite type} determined by $\Omega$.  Because subshifts of finite type are expansive, they admit measures of maximal entropy.
\end{defn}

It is a well-known fact that if $X$ is a subshift of finite type (over $\Z$) which is topologically transitive, then $X$ admits a {\em unique} measure of maximal entropy. Indeed more is true - for any continuous potential $\phi:X \to \R$, there is a unique equilibrium measure on $X$ \cite{bowen-equilibrium-states}. By contrast, Burton et al proved in \cite{MR1279469} that $\Z^2$ admits strongly irreducible subshifts of finite type with more than one measure of maximal entropy. See also \cite{MR1324466} for further constructions and general criteria for uniqueness in the $\Z^d$-setting.

\begin{thm}
There exists a topologically transitive subshift of finite type over the free group $\F_2 =\langle a,b\rangle$ that admits more than one measure of maximal $f$-invariant.
\end{thm}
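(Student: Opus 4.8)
The plan is to construct a subshift of finite type (SFT) over $\F_2 = \langle a,b\rangle$ whose measures of maximal $f$-invariant are not unique, by engineering an SFT that ``remembers'' a coloring in the $a$-direction and an independent coloring in the $b$-direction in a way that decouples along the two generators. Concretely, first I would recall from Theorem \ref{thm:markov} that for a Markov chain $\bfX$ over $\F_2$ the $f$-invariant equals $F_\mu(\cP) = H(X_e) + \sum_{s\in\{a,b\}} (H(X_e|X_s) - H(X_e))$, and that among all stationary processes with prescribed single-site and two-site (edge) marginals, the Markov chain uniquely maximizes this quantity. Thus on an SFT $X\subset K^{\F_2}$ the problem of maximizing $f$ over invariant measures supported on $X$ reduces, after passing to Markov approximations, to a finite-dimensional optimization over admissible edge-marginals $(\pi_a,\pi_b)$ on $K\times K$, where ``admissible'' means compatible with the SFT constraint and with a common single-site marginal. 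The strategy is to rig the constraint set so this optimization has two distinct maximizers.

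The second step is the actual construction. I would take $K$ to be a product alphabet, say $K = A \sqcup B$ (a disjoint union) or $K = \{0,1\}\times\{*,\circ\}$, and impose a nearest-neighbor rule $\Omega$ on the generating edges such that: along an $a$-edge the allowed transitions form one irreducible pattern, along a $b$-edge another, and the two are forced to be ``in phase'' only through a low-entropy synchronization symbol so that globally the SFT is topologically transitive but the entropy-maximization problem is essentially a maximization of a sum $\psi(\pi_a) + \psi(\pi_b)$ over a region that is symmetric under swapping the roles of $a$ and $b$ but where the maximizing measure breaks that symmetry. The model to imitate is the Burton--Steif type phenomenon over $\Z^2$ (see \cite{MR1279469}): a ``lattice of dominoes'' or Dimer-like SFT where the maximal-entropy measure must commit to one of several incompatible macroscopic tilings. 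The key point, which differs from $\Z^2$, is that over $\F_2$ the $f$-invariant is an explicit \emph{local} expression in edge marginals (Theorem \ref{thm:markov}), so rather than estimating a growth rate I can literally write down $F_\mu(\cP)$ as a smooth concave-plus-linear function of the edge marginals and arrange for its maximum over the (polytope of) admissible marginals to be attained at two distinct vertices or faces. One clean way: build the SFT so that the admissible-marginal polytope is combinatorially a ``double simplex'' glued along a low-dimensional face, with the linear part $-\sum(r-1)H(X_e)$ (here $r=2$) tilted so that the two lobes give equal optimal values but correspond to distinct measures.

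The third step is verification: (i) topological transitivity of the SFT — this should follow by exhibiting, for any two allowed finite patterns, a connecting configuration, using the synchronization symbol as a bridge; (ii) existence of measures of maximal $f$-invariant — since subshifts of finite type are expansive, upper semi-continuity of entropy (as noted after Definition of subshifts of finite type, and in \S\ref{sec:continuity2}) together with compactness of $\Prob_{\F_2}(X)$ gives existence, but one should be a little careful because the $f$-invariant is not obviously upper semi-continuous in general; here I would instead argue directly that the supremum of $f$ over invariant measures on $X$ is attained by a Markov measure, using Theorem \ref{thm:markov} and the fact that Markov measures on an SFT form a finite-dimensional compact family on which $F_\mu(\cP)$ is continuous; (iii) non-uniqueness — exhibit two explicit Markov measures $\mu_1,\mu_2$ on $X$, both supported on $X$, both realizing the optimal value of $F$, and distinguish them by their edge marginals. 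The main obstacle I anticipate is step (iii) combined with transitivity: it is easy to get two measures of maximal entropy on a \emph{disconnected} or non-transitive SFT (just take a disjoint union), but forcing topological transitivity while keeping the two global ``phases'' from being mixed by the dynamics into a single higher-entropy measure is delicate — the synchronization symbol must be rare enough not to destroy the entropy balance yet present enough to make the SFT irreducible. Getting this quantitative balance right, i.e. checking that adding the bridge does not strictly increase $f$ on one phase relative to the other and does not create a third dominant measure, is where the real work lies, and I would handle it by making the bridge symbol's density a free parameter and computing $\partial F/\partial(\text{density})$ explicitly from the Markov-chain formula to confirm the two maxima persist.
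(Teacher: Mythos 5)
Your proposal is a strategy outline rather than a proof: the entire content of the theorem is concentrated in the step you defer (``getting this quantitative balance right\dots is where the real work lies''). You never write down a concrete alphabet and allowed-pattern set $\Omega$, never verify transitivity, and never exhibit two distinct maximizers of $F_\mu(\cP)$ over the admissible edge-marginal polytope. The Burton--Steif template you invoke is not known to transfer: over $\Z^2$ non-uniqueness comes from a genuine phase transition for a concave entropy functional, whereas here $F_\mu(\cP)=H_\mu(\cP\vee a\cP)+H_\mu(\cP\vee b\cP)-3H_\mu(\cP)$ is a difference of concave functions and can be negative, so the analysis is of a different character and an unexecuted analogy does not close the gap. (Your side worry about existence of a maximizer is actually the easy part: $f_\mu=\inf_W F_\mu(T,\cP^W)$ is an infimum of weak*-continuous functions, hence upper semi-continuous, so a maximizer exists by compactness.)

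The paper's proof uses a much simpler mechanism that exploits exactly the features you are not using. Take $n\ge 4$ and let $X\subset[n]^{\F_2}$ consist of those $x$ with $x(gs)\equiv x(g)+\epsilon \bmod n$ for some $\epsilon\in\{0,1\}$, for each $g$ and $s\in\{a,b\}$; this is a transitive nearest-neighbor SFT carrying the rotation symmetry $A(x)_g=x_g+1$. If the maximizer $\mu$ were unique it would be $A$-invariant, and by Theorem \ref{thm:markov} it would be Markov, which pins down its edge marginals up to one parameter per generator; optimizing gives $f_\mu(T)=-3\log n+2\log(2n)=\log 4-\log n\le 0$. But the Dirac measure at the fixed point $x\equiv 0$ has $f$-invariant $0$, contradicting unique maximality. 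The engine here is that atomic measures have $f=0$ while the symmetric Markov measure has non-positive $f$ (a consequence of the $-(r-1)H$ term and the failure of concavity of $F_\mu$ in $\mu$) --- no synchronization symbol, no marginal-polytope geometry, and no phase-transition analysis are needed.
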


\begin{proof}
Let $n\ge 4$ and define $X \subset [n]^{\F_2}$ by: an element $x\in X$ if and only if for every $g\in \F_2$ and $s \in \{a,b\}$ there is an $\epsilon \in \{0,1\}$ such that $x(gs)=x(g) + \epsilon \mod n$. This is a topologically transitive subshift of finite type.

Let $T=(T^g)_{g\in \F_2}$ denote the shift action $T^gx(f)=x(g^{-1}f)$ on $X$. 
Because the $f$-invariant is an infimum of continuous functions (namely, $f_\mu(T) = \inf_{W\Subset {\F_2}} F_\mu(T,\cP^W)$ where $\cP$ is the canonical partition on $[n]^{\F_2}$) it is upper semi-continuous in $\mu$ with respect to the weak* topology. Therefore, a measure of maximal $f$-invariant exists. Let $\mu$ be such a measure.

Let $\nu$ be the Markov approximation to $\mu$. To be precise, $\nu$ is the law of a Markov process $\bfX=(X_g)_{g\in {\F_2}}$ with values in $[n]$ satisfying the following: if $\bfY=(Y_g)_{g\in {\F_2}}$ is a stationary process with law $\mu$ then the law of the pair $(X_e,X_s)$ equals the law of $(Y_e,Y_s)$ for every $s\in \{a,b,a^{-1},b^{-1}\}$. By Theorem \ref{thm:markov},
$$f_\mu(T) \le F_\mu(T,\cP) = F_\nu(T,\cP) = f_\nu(T).$$
Moreover equality holds if and only if $\mu=\nu$. So $\mu$ is a Markov measure.

Let $A:[n]^{\F_2} \to [n]^{\F_2}$ be the +1 map: $A(x)_g\equiv x_g + 1 \mod n$. This map commutes with ${\F_2}$-action and preserves $X$. Therefore $A_*\mu$ is also a measure of maximal $f$-invariant. Let us assume to obtain a contradiction that $\mu$ is the unique measure of maximal $f$-invariant. Then $A_*\mu=\mu$ which implies the existence of a parameter $0\le \alpha_s \le 1/n$ (for $s\in \{a,b\}$) such that
$$\mu(\{x\in X:~x_e= i\}) = 1/n$$
$$\mu(\{x\in X:~x_e =i, x_s = i\}) = \alpha_s$$
$$\mu(\{x\in X:~x_e =i, x_s = i+1\}) = 1/n-\alpha_s.$$
Recall
\begin{eqnarray*}
f_\mu(T) = -3H_\mu(\cP) + \sum_{s\in S} H_\mu(\cP \vee T^s\cP).
\end{eqnarray*}
In our case, $H_\mu(\cP)=\log(n)$ and $H_\mu(\cP\vee T^s\cP)$ is uniquely maximized by $\alpha_s=\frac{1}{2n}$ in which case $H_\mu(\cP\vee T^s\cP)=\log(2n)$. 
Therefore,
\begin{eqnarray*}
f_\mu(T) = -3\log(n) + 2\log(2n) =\log(4) - \log(n).
\end{eqnarray*}
Because $n\ge 4$, this is non-positive. However, $X$ admits fixed points; for example the element $x_g=0 ~\forall g \in {\F_2}$. The Dirac measure concentrated on a fixed point is also an invariant probability measure and its $f$-invariant is zero. This contradiction proves the theorem.
\end{proof}

\begin{problem}
The example above is not completely satisfying because it uses the $f$-invariant instead of sofic entropy. It would be interesting to find a sofic example or prove that one does not exist. %It would also be very useful to find general sufficient conditions for the existence of a measure of maximal $f$-invariant or sofic entropy for the case of free group actions.
%Find an example (if one exists) of a topologically transitive subshift of finite type over the free group that admits multiple measure of maximal sofic entropy with respect to a fixed sofic approximation.
\end{problem}

\begin{remark}
The example above exploits the fact that $F_\mu(\cP)$ is not concave in $\mu$ and therefore $f_\mu(\cP)$ is also not concave. By contrast, Shannon entropy $H_\mu(\cP)$ is concave in $\mu$ (this is important in proving uniqueness of measures of maximal entropy for topologically transitive subshifts of finite type over the integers).
\end{remark}

\begin{question}
Christopher Hoffman constructed a subshift of finite type over $\Z^2$ that has a measure of maximal entropy that has completely positive entropy but is non-Bernoulli \cite{MR2773195}. Does an analogous example exist for the free group?
\end{question}

%\subsection{references, thoughts}

%Burton-Steif et al show that the Variational principle fails for the `naive' notions of entropy and pressure on trees. 

%In a different paper Burton and others exhibit irrducible subshifts of finite type on $\Z^d$ (for $d>1$) that have multiple measures of maximal entropy.

%Consider the following factor map $\pi:X \to ([2]\times [2])^G$ defined by $\pi(x)(g):=( \epsilon(x,g,a), \epsilon(x,g,b) )$. This map is continuous. The measure of maximal $f$-invariant on $([2]\times [2])^G$ is the Bernoulli shift $u^G$ where $u$ is the uniform probability measure on $[2]\times [2]$. This measure has $f$-invariant equal to $\log(4)$. The map $\pi$ is $n$-to-1. Therefore, if $\mu$ is any invariant probability measure on $X$ then $f(\mu) = f(\pi_*\mu) - \log(n) \le \log(4) - \log(n)$. 

%Suppose now that $n\ge 4$. Then we have just proven that every invariant probability measure $\mu$ on $X$ satisfies $f(\mu) \le 0$. However, there are several different invariant probability measures $\mu$ on $X$  with $f(\mu)=0$. For example, if $\mu$ is concentrated on the fixed point $x_i \in [n]^G$ defined by $x_i(g)=i$ for all $g$, then $f(\mu_i)=0$. So we have shown that there are multiple measures of maximal $f$-invariant on $X$ (if $n \ge 4$).

%I believe that, in general, there are only a finite number of measures of maximal $f$-invariant on a topologically transitive topological Markov chain using the theory of Markov chains.

\section{Gibbs measures, pressure and equilibrium states}\label{sec:GIBBS}

\subsection{Motivation: finite graphs}\label{sec:Gibbs}

To motivate the notion of sofic pressure, we begin by recalling Gibbs measures in the setting of finite graphs. Let $\cX$ be a finite set of ``spins'' and $G=(V,E)$ a finite graph. A {\bf spin configuration} is a function $\phi: V \to \cX$. A {\bf vertex potential} is a function $\psi_{vert}: \cX \to \R$ and an {\bf edge potential} is a symmetric function $\psi_{edge}:\cX \times \cX \to \R$. The {\bf energy} of a spin configuration $\phi$ (with respect to $\psi_{vert}$ and $\psi_{edge}$)  is
$$E(\phi):= \sum_{v \in V} \psi_{vert}(\phi(v)) + \sum_{e=\{v,w\}\in E} \psi_{edge}(\phi(v),\phi(w)).$$
Consider the following problem: given $E_0 \in \R$, find a probability measure $\mu$ on $\cX^V$ such that the $\mu$-average energy is $E_0$:
$$\sum_\phi E(\phi)\mu(\{\phi\}) = E_0$$
and so that $\mu$ maximizes entropy over all measures satisfying the above. By way of Lagrange multipliers, it can be shown that the unique measure solving this problem has the form
$$\mu(\{\phi\}) = Z^{-1} \exp( -\beta E(\phi))$$
for some constants $Z, \beta$. Moreover, 
$$Z=Z(G) = \sum_\phi \exp( -\beta E(\phi))$$
is called the {\bf partition function}. It is central importance to understand the exponential growth rate of $Z(G)$ as the graph $G=(V,E)$ Benjamini-Schramm converges to a fixed graph of interest (for example, the Cayley graph of $\Z^d$ or $\F_r$). See \cite{MR2643563, MR2807681, MR1618769}. 

%The connection to sofic entropy theory is as follows. Consider a sofic approximation $\Si=\{\s_n:\G \to \sym(V_n)\}$. Suppose $\G \cc Y$ with

In the next two sections we define the sofic pressure of a given topological action $\G \cc X$ together with a potential function $\psi:X \to \R$ and sofic approximation $\Si$ as the exponential growth rate of an analogous partition function.

\subsection{Pressure}

Let $(X,\rho)$ be a compact metric space,  $\G \cc^T X$ an action by homeomorphisms, $\Psi:X \to \R$ a continuous function which we will call a {\bf potential function} and $\Si=\{\s_n: \G \to \sym(V_n)\}$ a sofic approximation to $\G$. Given $\s: \G \to \sym(V)$ and a microstate $\phi:V \to X$ define its energy by
$$E(\phi) = \sum_{v\in V} \Psi(\phi(v)).$$
Now suppose $\cZ \subset X^V$ is a collection of microstates. Then define the associated partition function by
$$Z(\cZ) =  \sum_{\phi \in \cZ} \exp(E(\phi)).$$
Given $F \Subset \G, \delta,\eps>0$, let
$$Z_\eps(\Psi, \G \cc X, \rho, F,\d, \s) = \sup Z(\cZ)$$
where the sup is over all $(\rho_\infty, \eps)$-separated subsets of $\Map(T,\rho,F,\delta,\s)$. The {\bf $\Si$-pressure} of $(\G \cc X,\Psi)$ is 
$$P_\Si(\G \cc X, \Psi):= \sup_{\eps>0} \inf_{F \Subset \G} \inf_{\delta>0} \limsup_{n\to\infty} |V_n|^{-1} \log Z_\eps(\Psi, \G \cc X, \rho, F,\d, \s_n).$$
This definition was introduced in \cite{chung-pressure} where it is also proven to be independent of the choice of metric $\rho$ (one can even use a generating pseudometric and $\rho_\infty$ can be replaced by $\rho_2$). For example, when $\Psi=0$ the pressure is the same as the sofic entropy. 

If $\G$ is amenable then there is another definition of pressure given in terms of a F\o lner sequence. In \cite{chung-pressure}, Chung proves that this definition agrees with the above. There is also a variational principle for pressure generalizing the one for entropy:

\begin{thm}[The variational principle]\cite{chung-pressure}
Let $\G \cc^T X$ be a continuous action on a compact metrizable space and $\Psi: X \to \R$ be continuous. Let $\Sigma$ be a sofic approximation of $\G$. Then
$$P_\Sigma(T,\Psi) = \sup_\mu h_{\Sigma,\mu}(T) + \int \Psi~d\mu$$
where the supremum is over all $T(\G)$-invariant Borel probability measures on $X$.
\end{thm}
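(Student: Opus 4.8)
The plan is to reduce the statement to the already-established variational principle for sofic entropy (Theorem \ref{thm:variational}), proving the two inequalities separately. For the ``easy'' inequality $P_\Sigma(T,\Psi) \ge \sup_\mu \big(h_{\Sigma,\mu}(T) + \int \Psi\,d\mu\big)$, I would fix a $T(\G)$-invariant Borel probability measure $\mu$ and a generating pseudometric $\rho$, and observe that any microstate $\phi \in \Map(T,\rho,\cO,F,\delta,\sigma_n)$ that is approximately equidistributed with respect to $\mu$ has empirical measure $P^{\sigma_n}_\phi$ close to $\mu$ in the weak* topology; hence, by continuity of $\Psi$, its energy satisfies $|V_n|^{-1} E(\phi) = \int \Psi\, dP^{\sigma_n}_\phi \approx \int \Psi\,d\mu$, with an error that can be made as small as we like by shrinking the neighborhood $\cO$. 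Therefore a maximal $(\rho_\infty,\epsilon)$-separated set of such microstates contributes at least $\exp\big((\int\Psi\,d\mu - \eta)|V_n|\big) \cdot N_\epsilon(\Map(\cdots),\rho_\infty)$ to $Z_\epsilon$, and taking logarithms, dividing by $|V_n|$, passing to the $\limsup$ and then letting $\eta \to 0$ and taking infima over $\cO, F, \delta$ yields $P_\Sigma(T,\Psi) \ge h_{\Sigma,\mu}(T,\rho) + \int \Psi\,d\mu$; the supremum over $\mu$ finishes this direction.

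For the reverse inequality $P_\Sigma(T,\Psi) \le \sup_\mu \big(h_{\Sigma,\mu}(T) + \int \Psi\,d\mu\big)$, I would adapt the proof sketch of Theorem \ref{thm:variational}. Fix $\epsilon>0$, $F \Subset \G$ and $\delta>0$. Partition $\Prob(X)$ into finitely many small-diameter weak* pieces $\cP = \{\cO_1,\ldots,\cO_m\}$, and for each microstate $\phi \in \Map(T,\rho,F,\delta,\sigma_n)$ record which piece contains its empirical measure $P^{\sigma_n}_\phi$. Since $\Psi$ is continuous (hence uniformly continuous on the compact space $X$, and the map $\nu \mapsto \int\Psi\,d\nu$ is weak*-continuous), the energy $|V_n|^{-1}E(\phi) = \int\Psi\,dP^{\sigma_n}_\phi$ is, up to an error $\eta(\cP)$ that tends to $0$ as the mesh of $\cP$ refines, determined by which piece $P^{\sigma_n}_\phi$ lies in. Thus
$$
Z_\epsilon(\Psi,\G\cc X,\rho,F,\delta,\sigma_n) \le \sum_{j=1}^m \exp\big((\sup_{\nu\in\cO_j}\textstyle\int\Psi\,d\nu)|V_n|\big)\, N_\epsilon\big(\Map_j, \rho_\infty\big),
$$
where $\Map_j$ is the set of microstates with empirical measure in $\cO_j$; since $m$ is fixed, the exponential growth rate of the sum equals the max over $j$ of $\big(\sup_{\nu\in\cO_j}\int\Psi\,d\nu\big) + (\text{exponential growth rate of } N_\epsilon(\Map_j,\rho_\infty))$. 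For the dominant index $j$, the Benjamini-Schramm limit construction from the proof of Theorem \ref{thm:variational} produces an invariant measure $\mu$ with $P^{\sigma_n}_\phi$ converging (along a subsequence) into $\overline{\cO_j}$, so $\int\Psi\,d\mu \le \sup_{\nu\in\cO_j}\int\Psi\,d\nu + \eta(\cP)$, and $h_{\Sigma,\mu}(T)$ bounds below the exponential growth rate of the $\epsilon$-separated microstate count restricted to empirical measures near $\mu$. Assembling these gives $|V_n|^{-1}\log Z_\epsilon \lesssim h_{\Sigma,\mu}(T) + \int\Psi\,d\mu + O(\eta(\cP))$; refining $\cP$, then sending $\epsilon \to 0$, completes the bound.

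The main obstacle is the second inequality: one must make the Benjamini-Schramm/weak* compactness argument of Theorem \ref{thm:variational} interact cleanly with the energy weights, i.e.\ verify that the measure $\mu$ extracted as the limit of empirical measures of the dominant family of microstates simultaneously (i) witnesses enough entropy to account for the cardinality of that family and (ii) has $\Psi$-integral at least (up to vanishing error) the energy-per-site of that family. Both are encoded in the single statement ``$P^{\sigma_n}_\phi$ is close to $\mu$,'' so the key technical point is choosing the partition $\cP$ of $\Prob(X)$ and the scale parameters in the right order so that the diameters of the pieces $\cO_j$ control the energy fluctuation uniformly; once that bookkeeping is set up, everything else reduces to the already-proven entropy variational principle and routine continuity estimates. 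I would also remark that independence of $\rho$ is inherited from Theorem \ref{thm:top-entropy}/\ref{thm:measure-entropy} together with the corresponding statement for pressure proved in \cite{chung-pressure}, so no separate argument is needed there.
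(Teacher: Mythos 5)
Your plan is sound and coincides with the standard argument: the paper itself gives no proof of this statement (it defers entirely to \cite{chung-pressure}), and what you propose is exactly the natural weighting of the paper's own sketch of the entropy variational principle (Theorem \ref{thm:variational}) by the energy term, which is also the strategy of the cited reference. The one point to keep honest in a full write-up is the diagonalization needed so that the single limit measure $\mu$ extracted from the dominant piece simultaneously beats the $\inf$ over all $F$, $\delta$ and all weak* neighborhoods appearing in $h_{\Sigma,\mu}(T)$, but you have correctly identified this as the same bookkeeping already present in the unweighted case.
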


\subsection{Pressure in symbolic systems}\label{sec:pressure-shift}

Here we specialize to the following set-up: let $\cX$ be a finite set and $\G \cc \cX^\G$ be the action by shifting: $(gx)(f)=x(g^{-1}f)$. Let $\Psi:\cX^\G \to \R$ be a continuous potential function. In this case, the sofic pressure admits a more intuitive formulation. Given $\s: \G \to \sym(V)$, $v\in V$ and $\phi:V \to \cX$, let $\Pi^\s_v(\phi) \in \cX^\G$ be the {\bf pullback name} of $\phi$:
$$\Pi^\s_v(\phi)_g = \phi( \s(g)^{-1}v).$$
The {\bf energy} of $\phi$ is
$$E(\phi) = \sum_{v\in V} \Psi(\Pi^{\s}_v(\phi))$$
and the partition function associated to $\s,\Psi$ is
$$Z(\s,\Psi) = \sum_{\phi \in \cX^V} \exp(E(\phi)).$$
Finally, the {\bf $\Si$-pressure} of $(\G \cc \cX^\G,\Psi)$ is 
$$P_\Si(\G \cc \cX^\G, \Psi):= \limsup_{n\to\infty} |V_n|^{-1} \log Z(\s_n,\Psi).$$
It is straightforward to show that this definition agrees with the previous definition.

\subsection{Equilibrium states}

A measure $\mu$ is called an {\bf equilibrium state} for $(\G\cc X,\Psi)$ if it realizes the supremum in the variational principle:
$$P_\Sigma(\G \cc X,\Psi) =  h_{\Sigma,\mu}(\G \cc X) + \int \Psi~d\mu.$$
For example, if the action $\G \cc X$ is expansive then entropy is upper semi-continuous in the measure $\mu$ and therefore there exists an equilibrium state. In the special case of $\G=\Z$, if $X$ is a subshift of finite type (over $\Z$) which is topologically transitive, then there is a unique equilibrium measure on $X$ \cite{bowen-equilibrium-states}. 

\begin{example}
Suppose $\cX$ is a finite set and $\Psi:\cX^\G \to \R$ depends only on the time 0 coordinate:
$$\Psi(x)=\Psi_0(x_{e})$$
for some function $\Psi_0:\cX \to \R$. Because $\G \cc \cX^\G$ is expansive, there exists an equilibrium state $\mu$  for the pair $(\G \cc \cX^\G,\Psi)$. Let $\kappa$ be the 1-dimensional marginal on $\cX$:
$$\kappa(\{k_0\}) := \mu(\{ x\in \cX^\G:~x_e=k_0\})\quad \forall k_0 \in \cX.$$
We claim that $\kappa^\G =\mu$.  Because $\Psi$ depends only on the time 0 coordinate, $\kappa^\G(\Psi) = \mu(\Psi)$. So it suffices to show that $\kappa^\G$ uniquely maximizes entropy over all invariant measures with the given $1$-dimensional marginal. This follows from Seward's Theorem \ref{prop:naive-Rokhlin1}. 

So we have shown that every equilibrium measure is a product measure. In fact, there is a unique equilibrium measure given by
$$\kappa(\{k_0\}) = \frac{\exp( \Psi_0(k_0))}{\sum_{k \in \cX} \exp( \Psi_0(k)) }.$$
This is proven in \cite{chung-pressure}. Alternatively, it follows from Lagrange multipliers. We have now answered Question 5.4 from \cite{chung-pressure} by demonstrating the uniqueness of the equilibrium measure.
\end{example}

\subsection{The Ising model}

The Ising model is a well-studied model of magnetism in statistical mechanics. In the notation of \S \ref{sec:Gibbs}, it amounts to choosing constants $B,\beta \in \R$ and setting $\cX=\{-1,+1\}$, $\Psi_{vert}(k)=Bk$, $\Psi_{edge}(k,l) = \beta kl$ so that for any spin configuration $\phi:V \to \{-1,+1\}$, 
$$E(\phi):= B \sum_{v \in V} \phi(v) + \beta \sum_{e=\{v,w\}\in E} \phi(v)\phi(w)$$
and
$$Z=Z(G) = \sum_{\phi:V \to \{-1,+1\} } \exp( -E(\phi))$$
(we have employed a small change of variables). If $G_n=(V_n,E_n)$ is a sequence of finite graphs then the {\bf asymptotic free energy density} (or free entropy or pressure) is the limit
$$\lim_{n\to\infty} |V_n|^{-1} \log Z(G_n)$$
if it exists. In the special case in which $\{G_n\}$ Benjamini-Schramm converges to a locally finite tree, an exact formula for the above limit is computed in \cite{MR2650042}. Here we will see that asymptotic free energy density can be re-interpreted as sofic pressure whenever the sequence $\{G_n\}$ arises from a sofic approximation.

So let $\G$ be a group, $\Si=\{\s_n: \G \to \sym(V_n)\}$ a sofic approximation to $\G$ and  $S \subset \G$ a finite symmetric generating set. Let $G_n=(V_n,E_n)$ be the graph with edges $(v, \sigma_n(s)v)$ for $s\in S, v\in V$. Because $\Si$ is a sofic approximation, $\{G_n\}$ Benjamini-Schramm converges to the Cayley graph of $(\G,S)$. 

Consider the potential function $\Psi:\{-1,1\}^\G \to \R$ 
$$\Psi(x) = B x(e) + \beta \sum_{s\in S} x(e)x(s).$$
By \S \ref{sec:pressure-shift},
\begin{eqnarray}\label{eqn:ising}
P_\Si(\G \cc \{-1,+1\}^\G, \Psi) = \limsup_{n\to\infty} n^{-1} \log Z(G_n).
\end{eqnarray}

In the special case in which $\G=\F_r$ is a rank $r>1$ free group and $S$ is a free generating set, it follows from the analysis in \cite{MR2650042} that the sofic pressure of $(\G \cc \{-1,+1\}^\G, \Psi)$ does not depend on the choice of sofic approximation and an explicit formula is known.

\subsection{Gibbs measures}

Let $\cX$ be a finite set. A potential function $\Psi:\cX^\G \to \R$ has {\bf finite range} if there is a finite subset $J \subset \G$ such that $\Psi(x)$ depends only on the restriction of $x$ to $J$. In this setting, a {\bf Gibbs measure} is any Borel probability measure $\mu$ on $\cX^\G$ satisfying the following. Let $Y=(Y_g)_{g\in \G}$ be a $\G$-indexed process with law $\mu$. Then $\mu$ is a Gibbs measure if for any fixed $x\in \cX^\G$ and finite $\La \Subset \G$, the law of $(Y_{g})_{g \in \La}$ conditioned on $Y_f = x(f)$ for all $f \in \G \setminus \La$ is given by: for any $z\in \cX^{\G}$ with $z(g)=x(g) ~\forall g\in \G \setminus \La$,
$$\mu( Y=z| Y_g = x(g) ~\forall g\in \G \setminus \La) = Z^{-1} \exp \left(\sum_{g \in \La} \Psi(g^{-1} z)\right)$$
where
$$Z = \sum_{z\in \cX^\G: z(g)=x(g)~\forall g\notin \La} \exp \left(\sum_{g \in \La} \Psi(g^{-1} z)\right).$$

It is well-known, in the case of $\G=\Z^d$ that every equilibrium measure for $(\G \cc \cX^\G,\Psi)$ is a Gibbs measure \cite{MR2807681}. It appears that this question has not been explored in the context of sofic entropy. However, Alpeev proved in \cite{MR3498184} that when $\G$ is sofic, Gibbs measures exist. Moreover, if there is a unique Gibbs measure for $(\G \cc \cX^\G,\beta\Psi)$ and all $\beta \in [0,1]$ then the modified sofic entropies for these measures do not depend on the choice of sofic approximation. Modified sofic entropy refers to the sofic entropy defined in \cite{MR3542515} via measures on model spaces.

\section{Relative entropy}\label{sec:relative}

Suppose $\Ga$ is an amenable group, $\Ga \cc (X,\mu)$ a pmp action,  $\cP$ a partition of $X$ and $\cF$ a $\Ga$-invariant sigma-sub-algebra of the sigma-algebra of Borel sets. Then the {\bf entropy rate of the process $\Ga\cc (X,\mu,\cP)$ relative to $\cF$} is 
$$h_\mu(\Ga \cc X, \cP | \cF)  = \lim_{n\to\infty} |F_n|^{-1} H_\mu(\cP^{F_n}|\cF)$$
where $\{F_n\}$ is a F\o lner sequence in $\Ga$. The relative entropy of the action with respect to $\cF$ is
$$h_\mu(\Ga \cc X| \cF) = \sup_\cP h_\mu(\Ga \cc X, \cP | \cF)$$
where the sup is over all finite partitions $\cP$ of $X$. This is a measure-conjugacy invariant in the sense that if $\Ga \cc (Y,\nu)$ is another pmp action and $\phi:X \to Y$ is a measure-conjugacy, then 
$$h_\mu(\Ga \cc X| \cF)  = h_\nu(\Ga \cc Y| \phi(\cF)).$$

\subsection{The Abramov-Rokhlin formula}\label{sec:abramov}

\begin{thm}[Abramov-Rokhlin formula]
If $\G$ is amenable and $\G \cc (Y,\nu)$ is a factor of $\G \cc (X,\mu)$ then
$$h_\mu(\Ga \cc X) = h_\nu(\G \cc Y) + h_\mu(\G \cc X| \cB_Y)$$
where $\cB_Y \subset \cB_X$ is  the pullback sigma-sub-algebra.
\end{thm}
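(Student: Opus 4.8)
The plan is to reduce the Abramov–Rokhlin formula to the corresponding statement for partitions and then to exploit the subadditivity structure that underlies all the entropy definitions for amenable groups. First I would fix a F\o lner sequence $\{F_n\}$ in $\G$ and recall the classical conditional-entropy identity
$$H_\mu(\cP^{F_n} \vee \cQ^{F_n}) = H_\nu(\cQ^{F_n}) + H_\mu(\cP^{F_n} \vee \cQ^{F_n} \mid \cB_Y),$$
valid whenever $\cQ$ is a partition measurable with respect to $\cB_Y$ (so $\cQ^{F_n}$ and its generated sigma-algebra pull back from $Y$, and $H_\mu(\cdot \mid \cB_Y)$ restricted to $\cB_Y$-measurable sets is $0$). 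Dividing by $|F_n|$ and passing to the limit gives
$$h_\mu(\G \cc X, \cP \vee \cQ) = h_\nu(\G \cc Y, \cQ) + h_\mu(\G \cc X, \cP \vee \cQ \mid \cB_Y),$$
using that each of the three limits exists by the standard subadditivity argument for amenable groups.

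Next I would take suprema in the right way. Choose a sequence of finite partitions $\cQ_k \subset \cB_Y$ that increases to a generating family for the factor $\G \cc (Y,\nu)$, and a sequence of finite partitions $\cP_j$ of $X$ that together with $\cB_Y$ generates $\cB_X$. Applying the displayed identity with $\cP = \cP_j$, $\cQ = \cQ_k$ and letting $k \to \infty$ first (monotonicity of conditional entropy rate, plus the fact that $h_\nu(\G \cc Y,\cQ_k) \nearrow h_\nu(\G \cc Y)$ by Kolmogorov–Sinai for amenable groups since the $\cQ_k$ generate), and then $j \to \infty$, yields
$$h_\mu(\G \cc X) = h_\nu(\G \cc Y) + h_\mu(\G \cc X \mid \cB_Y).$$
The left side uses that $\bigvee_j (\cP_j \vee \bigvee_k \cQ_k)$ generates $\cB_X$, so its entropy rate equals $h_\mu(\G \cc X)$; the last term on the right converges to $h_\mu(\G \cc X \mid \cB_Y)$ by definition of the relative entropy as a supremum over finite partitions, once one checks that adding the $\cB_Y$-measurable $\cQ_k$ does not change the conditional entropy rate (it is already conditioned on $\cB_Y$) and that the $\cP_j$ exhaust the supremum.

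The main obstacle I expect is the interchange of the two limiting procedures, i.e. justifying that one may first saturate the $Y$-direction and only afterwards the fibre direction without losing anything. Concretely, one must show $\sup_{\cP} \big( h_\nu(\G \cc Y) + h_\mu(\G \cc X, \cP \mid \cB_Y)\big)$ really equals $h_\nu(\G \cc Y) + \sup_\cP h_\mu(\G \cc X,\cP\mid \cB_Y)$ — harmless here since the first term is constant in $\cP$ — but the genuinely delicate point is that $h_\mu(\G \cc X,\cP\vee\cQ)$ with $\cQ$ a partial generator for $\cB_Y$ approximates $h_\mu(\G \cc X)$ from below uniformly enough; this rests on the continuity of entropy rate in the Rokhlin metric together with the density of combinatorial equivalence classes of generating partitions, exactly as in the proof sketch of Theorem~\ref{thm:f1}. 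A clean way to package all of this is to invoke the abstract increasing-martingale property of conditional Shannon entropy: $H_\mu(\cR \mid \cF_k) \to H_\mu(\cR \mid \cF)$ when $\cF_k \nearrow \cF$, applied fibrewise and then averaged over $F_n$, with a uniform-in-$n$ bound coming from $H_\mu(\cR\mid\cF_k) \le H_\mu(\cR)$ so that dominated convergence legitimizes the swap. Once that technical lemma is in hand the rest is bookkeeping.
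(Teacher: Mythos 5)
There is a genuine gap, and it starts with your central displayed identity, which is false as written. The chain rule for Shannon entropy gives
$$H_\mu(\cP^{F_n}\vee\cQ^{F_n}) \;=\; H_\mu(\cQ^{F_n}) + H_\mu(\cP^{F_n}\vee\cQ^{F_n}\mid \cQ^{F_n}),$$
i.e.\ the conditioning on the right is on the \emph{finite partition} $\cQ^{F_n}$, not on the full factor sigma-algebra $\cB_Y$. Since $H_\mu(\,\cdot\mid\cQ^{F_n}) \ge H_\mu(\,\cdot\mid\cB_Y)$ with strict inequality in general, replacing $\cQ^{F_n}$ by $\cB_Y$ in that display is not a "classical identity" — the discrepancy between the two conditional entropies, after normalizing by $|F_n|$ and passing to the limit, is precisely what the Abramov--Rokhlin formula asserts vanishes. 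In other words, the theorem has been assumed in the very first step.

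The obstacle you flag in your last paragraph is indeed the real one, but the proposed resolution does not meet it. Writing $a_{n,k}=|F_n|^{-1}H_\mu(\cP^{F_n}\mid\cQ_k^{F_n})$ and $b_n=|F_n|^{-1}H_\mu(\cP^{F_n}\mid\cB_Y)$, martingale convergence gives $a_{n,k}\downarrow b_n$ as $k\to\infty$ \emph{for each fixed $n$}, but the theorem requires $\lim_k\lim_n a_{n,k}=\lim_n b_n$, an interchange of a $k$-limit with an $n$-limit. Dominated convergence is irrelevant here (the inner operation is a limit over $n$, not an integral), and monotonicity only yields the inequality $\lim_k\lim_n a_{n,k}\ge\lim_n b_n$; the reverse inequality needs uniformity in $n$ of the convergence in $k$, which is exactly what fails to be elementary. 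This is why the paper offers no proof but instead cites Ward--Zhang, whose argument "makes heavy use of the Ornstein--Weiss quasitiling machinery," and Danilenko, who routes the problem through orbit-equivalence theory and the Rudolph--Weiss theorem; even the $\Z$ case of Abramov--Rokhlin uses the one-sided past $\bigvee_{n<0}T^n\cP$, a device with no analogue for a general amenable group. An appeal to "continuity in the Rokhlin metric" or to the splitting argument of Theorem~\ref{thm:f1} does not substitute for this machinery. To repair the proof you would need either to import one of those two frameworks or to prove the uniform approximation $|F|^{-1}\bigl(H_\mu(\cP^{F}\mid\cQ_k^{F})-H_\mu(\cP^{F}\mid\cB_Y)\bigr)<\eps$ for all sufficiently invariant $F$ once $k$ is large — and that estimate is the theorem, not bookkeeping.
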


\begin{proof}[Remarks on the proof]
In the case $\G=\Z$, this result was obtained in \cite{MR0140660}. The general amenable case, due to Ward-Zhang, makes heavy use of the Ornstein-Weiss quasitiling machinery \cite{MR1203977}. Another proof, due to Danilenko, uses orbit-equivalence theory \cite{danilenko-2001}. A new short proof appears in \cite[Section 9.7]{MR3616077}
\end{proof}

The Ornstein-Weiss example shows this formula does not extend to Rokhlin entropy. However, it does extend to the $f$-invariant. To describe this let $\F_r=\langle s_1,\ldots, s_r\rangle$ denote the rank $r$ free group. Suppose $\F_r \cc^T (X,\mu)$ is a pmp action, $\cF \subset \cB_X$ an $\F_r$-invariant sub-sigma-algebra and $\cP$ finite-entropy partitions of $X$. Define
$$F_\mu(T,\cP|\cF) := H_\mu(\cP|\cF) + \sum_{s\in S} \big(H_\mu(\cP \vee T^s\cP|\cF)  - 2H_\mu(\cP|\cF)\big),$$
$$f_\mu(T,\cP|\cF) := \inf_{W \Subset \Ga} F(T,\cP^W|\cF).$$
In \cite{bowen-entropy-2010a} it is shown if $\cP,\cP'$ are generating partitions then $f_\mu(T,\cP|\cF)=f_\mu(T,\cP'|\cF)$. So it makes sense to define 
$$f_\mu(T |\cF) =  f_\mu(T,\cP|\cF)$$
for any finite-entropy generating partition $\cP$.

\begin{thm}\cite{bowen-entropy-2010a}
With notation as above, if $\cQ$ is a finite-entropy partition of $X$ contained in the sigma-algebra $\bigvee_{g\in \F_r} T^g \cP=\s\textrm{-alg}_{\F_r}(\cP)$, then 
$$f_\mu(T,\cP) = f_\mu(T,\cQ) + f_\mu(T,\cP| \s\textrm{-alg}_{\F_r}(\cQ)).$$
So if $\cP$ is generating and $\F_r \cc^S (Y,\nu)$ is a Mackey realization of the sigma-algebra corresponding to $\s\textrm{-alg}_{\F_r}(\cQ)$ then
$$f_\mu(T) = f_\nu(S) + f_\mu(T| \s\textrm{-alg}_{\F_r}(\cQ)).$$
\end{thm}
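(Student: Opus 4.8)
The plan is to establish the Abramov--Rokhlin formula for the $f$-invariant in the form
$$f_\mu(T,\cP) = f_\mu(T,\cQ) + f_\mu(T,\cP\mid \s\textrm{-alg}_{\F_r}(\cQ))$$
and then deduce the displayed consequence by applying Theorem~\ref{thm:f1} (so that $f_\mu(T)=f_\mu(T,\cP)$ for any finite-entropy generating partition) together with the definition $f_\nu(S)=f_\mu(T,\cQ)$, which is legitimate because $\cQ$ is a generating partition for the factor $\F_r\cc^S(Y,\nu)$ realizing $\s\textrm{-alg}_{\F_r}(\cQ)$. The heart of the matter is the identity at the level of the ``$F$'' functionals. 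First I would record the relative version of the basic additivity of Shannon entropy: for any finite-entropy partitions $\cR_1,\cR_2$ and any sigma-subalgebra $\cF$,
$$H_\mu(\cR_1\vee\cR_2\mid \cF) = H_\mu(\cR_2\mid\cF) + H_\mu(\cR_1\mid \cF\vee\s\textrm{-alg}(\cR_2)),$$
and similarly the ``telescoping'' identity $H_\mu(\cR) = H_\mu(\cR\cap\cS\textrm{-part}) + H_\mu(\cR\mid\s\textrm{-alg}(\cS))$ when $\cS\le\cR$ — these are the relative chain rules that make everything go through.

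Next I would compute $F_\mu(T,\cP^W)$ for a finite connected $W\Subset\F_r$ and split it using $\cQ^W \le \cP^W$ (which holds because $\cQ\subset\s\textrm{-alg}_{\F_r}(\cP)$, so after enlarging $W$ we may assume $\cQ^W\le\cP^W$; a cofinality argument in the splitting partial order, as in the remark following Theorem~\ref{thm:f1}, shows this enlargement does not change the infimum). Writing $H_\mu(\cP^W) = H_\mu(\cQ^W) + H_\mu(\cP^W\mid\s\textrm{-alg}(\cQ^W))$ and doing the same for each term $H_\mu(\cP^W\vee s\cP^W)$ appearing in the definition of $F_\mu$, the conditioning sigma-algebras that appear are $\s\textrm{-alg}(\cQ^W)$ and $\s\textrm{-alg}(\cQ^W\vee s\cQ^W)$, not the full $\G$-invariant algebra $\s\textrm{-alg}_{\F_r}(\cQ)$; so along a sequence $W_n$ of finite connected sets exhausting $\F_r$ one gets, after rearranging,
$$F_\mu(T,\cP^{W_n}) = F_\mu(T,\cQ^{W_n}) + F_\mu\big(T,\cP^{W_n}\mid \s\textrm{-alg}(\cQ^{W_n})\big) + (\textrm{error}_n),$$
where $\textrm{error}_n$ collects the discrepancy between conditioning on $\s\textrm{-alg}(\cQ^{W_n}\vee s\cQ^{W_n})$ versus $\s\textrm{-alg}(\cQ^{W_n})$. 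Taking $n\to\infty$ and using the remark after Theorem~\ref{thm:f1} that the infimum over finite sets equals the limit along connected exhausting sequences for each of the three $f$-quantities, together with continuity/monotone convergence to replace $\s\textrm{-alg}(\cQ^{W_n})$ by $\s\textrm{-alg}_{\F_r}(\cQ)$ in the conditioning (using that relative Shannon entropy $H_\mu(\cdot\mid\cF_n)$ decreases to $H_\mu(\cdot\mid\bigvee_n\cF_n)$ along an increasing sequence of sigma-algebras), yields the additivity formula provided $\textrm{error}_n\to 0$.

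The main obstacle — and the step I'd budget the most care for — is showing that $\textrm{error}_n\to 0$, i.e. that conditioning on $s\cQ^{W_n}$ in addition to $\cQ^{W_n}$ contributes a negligible amount to the $F$-functional in the limit. This is really the statement that, asymptotically along the splitting order, the extra coordinates of $\cQ$ translated by $s$ are already (approximately) measurable with respect to $\cQ^{W_n}$ because $W_n$ is connected and exhausts $\F_r$, so $\cQ^{W_n}\vee s\cQ^{W_n} = \cQ^{W_n\cup sW_n}$ differs from $\cQ^{W_n}$ only near the boundary; the point is then a boundary-to-bulk estimate, exactly of the flavor used to prove that $f_\mu(T,\cP)=\lim_n F_\mu(T,\cP^{W_n})$ is independent of the exhausting sequence. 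I would handle this by choosing the $W_n$ so that $W_n\cup sW_n$ is a connected set cofinal in the splitting order for every $s\in S\cup S^{-1}$ simultaneously (possible since $S$ is finite or countable and one can diagonalize), and then invoking upper semicontinuity of $F$ and the fact that both $F_\mu(T,\cP^{W_n})$ and $F_\mu(T,\cP^{W_n\cup sW_n})$ converge to the same limit $f_\mu(T,\cP)$; this forces the boundary contribution to vanish. The relative statements $f_\mu(T,\cP\mid\cF)=f_\mu(T,\cP'\mid\cF)$ for generating $\cP,\cP'$ cited from \cite{bowen-entropy-2010a} can be used as a black box to make the final identification $f_\mu(T\mid\s\textrm{-alg}_{\F_r}(\cQ))$ well-defined.
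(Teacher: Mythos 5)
Your reduction breaks down at exactly the step you flagged: the error term does not tend to zero, and in fact it typically diverges linearly in $|W_n|$, with the companion ``main'' relative term diverging in the opposite direction. Concretely, let $\F_r \cc (K\times L)^{\F_r}$ be a product of two Bernoulli shifts, let $\cP$ be the full time-zero partition and $\cQ$ the time-zero partition of the $L$-coordinate. Since $s\cQ^W=\cQ^{Ws^{-1}}$, your error term is
$$\sum_{s\in S}\Big(H_\mu(\cP^W\vee s\cP^W\mid \cQ^W\vee s\cQ^W)-H_\mu(\cP^W\vee s\cP^W\mid\cQ^W)\Big)=-\sum_{s\in S}\,|Ws^{-1}\setminus W|\;H(L,\l),$$
and for a connected finite $W$ in the Cayley graph one computes $\sum_{s\in S}|Ws^{-1}\setminus W|=(r-1)|W|+1$. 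So the error equals $-((r-1)|W|+1)H(L,\l)\to-\infty$, while your intermediate quantity $F_\mu(T,\cP^{W}\mid\s\textrm{-alg}(\cQ^{W}))$ equals $H(K,\k)+((r-1)|W|+1)H(L,\l)\to+\infty$; only their sum is bounded, and the true limit of the relative part is $f_\mu(T,\cP\mid\cF)=H(K,\k)$. In general $|\textrm{error}_n|$ is controlled only by $\sum_s H_\mu(s\cQ^{W_n}\mid\cQ^{W_n})$, which is of order $|\partial W_n|\asymp |W_n|$: the ``boundary-to-bulk'' heuristic is precisely what fails for a non-amenable group. Your proposed rescue does not help, because the fact that $F_\mu(T,\cP^{W_n})$ and $F_\mu(T,\cP^{W_n\cup W_ns^{-1}})$ share the limit $f_\mu(T,\cP)$ says nothing about the raw increments $H_\mu(\cQ^{W_n\cup W_ns^{-1}})-H_\mu(\cQ^{W_n})$; the cancellation of boundary contributions inside $F$ is engineered by the $-(r-1)H$ correction and is destroyed once you split $F$ into individual conditional entropies. (A smaller but real issue: $\cQ\subset\s\textrm{-alg}_{\F_r}(\cP)$ gives only Rokhlin-metric approximation of $\cQ$ by partitions coarser than some $\cP^{W_0}$, not exact containment, so even your initial chain rule needs a continuity argument for $F$.)

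The paper's route is genuinely different and is designed to sidestep this. One first replaces $F$ by the equivalent functional $F^*_\mu(T,\cP)=H_\mu(\cP)+\sum_{s\in S}\big(h_\mu(T^s,\cP)-H_\mu(\cP)\big)$ of Theorem \ref{thm:f*}, in which each two-point term $H_\mu(\cP\vee s\cP)$ becomes a genuine $\Z$-entropy rate for the single automorphism $T^s$. For $\Z$-actions the chain rule and the relative Abramov--Rokhlin formula hold with no boundary error (amenability of $\Z$ makes the analogue of your error term $o(n)$ along the orbit of $T^s$), and conditioning can then be pushed up to the full invariant algebra $\s\textrm{-alg}_{\F_r}(\cQ)$ by martingale convergence applied to these rates. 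There is no exact chain rule of the form $H_\mu(\cP^W)=H_\mu(\cQ^W)+H_\mu(\cP^W\mid\cF)$ with $\cF=\s\textrm{-alg}_{\F_r}(\cQ)$, so any direct argument with $F$ must track the divergent boundary terms exactly; passing to $F^*$ is what does that bookkeeping for you.
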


\begin{proof}[Remarks on the proof]
The proof is obtained from the alternative formulation of the $f$-invariant in \S \ref{sec:other} and the classical Abramov-Rokhlin formula for $\Z$-actions.
\end{proof}

\begin{remark}
The Abramov-Rokhlin formula is used to prove the entropy formula for finite-to-1 factor maps (Theorem \ref{thm:f-finite})  and can be used to prove the ergodic decomposition formula (Theorem \ref{thm:f-ergodic-decomp}). It is also used in the proof of (special cases of) Yuzvinskii's formula \S \ref{sec:yuz}. The Ornstein-Weiss map gives an example where the relative $f$-invariant is negative.
\end{remark}

\begin{remark}
Ben Hayes has recently defined a notion of relative sofic entropy  \cite{hayes-relative-entropy}.
\end{remark}

\section{Outer/extension entropy}\label{sec:outer}

%\subsection{Extension entropy and outer Pinsker algebras}

In classical entropy theory, one considers the entropy of an action with respect to a factor. In the new non-amenable theory, we also have to consider the entropy of a factor relative to the source! This gives a non-trivial concept that has been called {\bf extension entropy}, {\bf outer entropy} and {\bf entropy in the presence}. It is the exponential rate of growth of  the number of microstates of the target action that lift to microstates of the extension. The ideas originated in David Kerr's partition definition of sofic entropy (\S \ref{sec:partition}) and were developed in \cite{li-liang-mean-length, MR3635672, hayes-relative-entropy, seward-kreiger-2, seward-weak-containment}.

\subsection{Outer sofic entropy}

Let $\Ga \cc^T X$ and $\Ga \cc^S Y$ be continuous actions on compact metrizable spaces  and suppose $\Phi:X \to Y$ is a continuous factor map. The {\bf outer $\Si$-entropy} of $\Phi$ is defined by
$$h_\Si(\Phi) = \sup_{\epsilon>0} \inf_{F \Subset \G} \inf_{\delta>0} \limsup_{i\to\infty}  |V_i|^{-1}  \log \left(N_\epsilon( \Phi^{V_i}(\Map(T,\rho_X,F,\delta,\sigma_i), \rho_{Y,\infty})\right)$$
where $\rho_X,\rho_Y$ are generating continuous pseudo-metrics on $X$, $Y$, $\Map(\cdots)$ is as defined in \S \ref{sec:pseudo-top}, $N_\epsilon(\cdot, \rho_{Y,\infty}))$ denotes the maximum cardinality of an $\eps$-separated subset and $\Phi^{V_i}: X^{V_i} \to Y^{V_i}$ is the map 
$$\Phi^{V_i}(\bfx)_v = \Phi(x_v)$$
for $\bfx=(x_v)_{v\in V_i} \in X^{V_i}$. It can be shown that this definition does not depend on the choice of generating pseudo-metrics $\rho_X,\rho_Y$. 

If $\mu$ is a $\G$-invariant measure on $X$, $\nu$ is a $\G$-invariant measure on $Y$ and $\Phi_*\mu=\nu$ then define the {\bf outer $\Si$-measure-entropy} of $\Phi$ by
$$h_{\Si,\mu}(\Phi) = \sup_{\epsilon>0}\inf_\cO \inf_{F \Subset \G} \inf_{\delta>0} \limsup_{i\to\infty}  |V_i|^{-1}  \log \left(N_\epsilon\left( \Phi^{V_i}(\Map(T,\rho_X,\cO,F,\delta,\sigma_i), \rho_{Y,\infty})\right)\right)$$
where $\cO$ varies over all weak* open neighborhoods of $\mu $ in $\Prob(X)$. There is an equivalent definition based on partitions: 
\begin{eqnarray*}\label{eqn:outer}
h_{\Si,\mu}(\Phi) = \sup_\cQ \inf_\cP\inf_{1_\G \in F \Subset \Ga} \inf_{\delta>0}  \limsup_{i\to\infty} \frac{1}{|V_i|} \log |\Hom_\mu(\cP,F,\delta,\sigma_i)|_\cQ
\end{eqnarray*}
where the supremum is over all finite partitions $\cQ$ that are measurable with respect to $\Phi^{-1}(\cB_Y)$ and the infimum is over all finite partitions $\cP\ge \cQ$. 

This equivalence shows that outer sofic entropy is a measure-conjugacy invariant in the following sense. Suppose that $\G \cc^{T'} (X',\mu')$ and $\G \cc^{S'} (Y',\nu')$ are pmp actions, $\Phi': (X',\mu') \to (Y',\nu')$ is a factor map and there are measure-conjugacies $\tau_1:X \to X', \tau_2:Y\to Y'$ that make the diagram commute:
\begin{displaymath}\label{diagram2}
\xymatrix{  X  \ar[d]^{\Phi} \ar[r]^{\tau_1} & X' \ar[d]^{\Phi'} \\
  Y \ar[r]^{\tau_2} & Y' }
\end{displaymath}
then $h_{\Si,\mu}(\Phi) = h_{\Si,\mu'}(\Phi')$. For details justifying these claims see \cite[Theorem 1.20]{MR3635672}. There is also a relative notion of outer sofic entropy developed in \cite{hayes-relative-entropy}.

The definition implies the outer sofic entropy is bounded from above by the sofic entropies of both the target and source. If $\G$ is amenable then every microstate for the factor action $\G \cc Y$ lifts to a microstate for the source $\G \cc X$. For this reason, outer sofic entropy agrees with the classical entropy of the target whenever $\G$ is amenable \cite[Appendix A]{MR3635672}. If $\G$ is non-amenable then this no longer holds. Consider the Ornstein-Weiss map
$$\F_2 \cc (\Z/2,u_2)^{\F_2} \to^\Phi \F_2 \cc (\Z/2 \times \Z/2,u_2 \times u_2)^{\F_2}.$$
Because it is finite-to-1, it can be shown that its outer sofic entropy agrees with the sofic entropy of the {\em source}, which is $\log(2)$ instead of the entropy of the target which is $\log(4)$.

\begin{question}
Is there a variational principle connecting outer topological sofic entropy with outer measure sofic entropy?
\end{question}

As the Ornstein-Weiss example shows, sofic entropy is not necessarily monotone under factor maps. However, outer sofic entropy is: if $X \to^\Phi Y \to^\Psi Z$ are factor maps then 
$$h_\Si(\Phi) \ge h_\Si(\Psi \circ \Phi).$$
This is because any microstate for $\G \cc Z$ which lifts to a microstate for $\G \cc X$ via $\Psi\circ \Phi$ necessarily lifts to a microstate for $\G \cc Y$ via $\Psi$.

\subsubsection{Outer sofic Pinsker algebras}

 Let $\G \cc (X,\mu)$ be a pmp action and $\Si$ a sofic approximation to $\G$. The $\Si$-Pinsker algebra is the sigma-sub-algebra of $\cB_X$ generated by all factors with zero $\Si$-entropy. Because of sub-additivity, it has zero $\Si$-entropy itself. However, it does not have good monotonicity properties because there exist actions with zero $\Si$-entropy that factor onto actions with positive $\Si$-entropy (Theorem \ref{thm:variant}).   A better alternative is the outer $\Si$-Pinsker algebra.
 
  The {\bf outer $\Si$-Pinsker algebra} of the action $\G \cc (X,\mu)$ is the largest sigma-sub-algebra $\Pi^\Si(\mu) \subset \cB_X$ such that the corresponding factor has outer $\Si$-entropy zero. Because outer $\Si$-entropy is monotone, any invariant sub-sigma-algebra of $\Pi^\Si(\mu)$ also has zero outer $\Si$-entropy.
  
It is an important classical result that the Pinsker algebra of a direct product of transformations is the direct product of the Pinsker algebras. This follows from the Rokhlin-Sinai Theorem that the Pinsker algebra is the one-sided tail sigma-algebra of any generating partition. The case of general amenable groups is handled in \cite{MR1786718} using Sinai's Factor Theorem and joinings arguments. The case of sofic groups is new:

\begin{thm}\cite{hayes-relative-entropy}\label{thm:pinsker-sofic}
Suppose that $\G \cc (X,\mu), \G \cc (Y,\nu)$ are pmp actions, $\Si$ is a sofic approximation to $\G$ and there exist model measures $\{\mu_n\}, \{\nu_n\}$ that lde-converge to $\mu,\nu$ respectively in the sense of \S \ref{sec:variants}. Then $\Pi^\Si(\mu\times \nu) = \Pi^\Si(\mu) \vee \Pi^\Si(\nu).$
\end{thm}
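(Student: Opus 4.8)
The plan is to prove the two inclusions $\Pi^\Si(\mu) \vee \Pi^\Si(\nu) \subseteq \Pi^\Si(\mu\times\nu)$ and $\Pi^\Si(\mu\times\nu) \subseteq \Pi^\Si(\mu) \vee \Pi^\Si(\nu)$ separately. The first inclusion is the easy direction: since the projection $X\times Y \to X$ intertwined with the factor map $X \to X/\Pi^\Si(\mu)$ exhibits the factor $X\times Y \to X/\Pi^\Si(\mu)$, and outer sofic entropy is monotone under composition of factor maps (as noted in \S\ref{sec:outer}, $h^\Si(\Psi\circ\Phi) \le h^\Si(\Phi)$ applied appropriately), the outer $\Si$-entropy of $X\times Y \to X/\Pi^\Si(\mu)$ is at most that of $X\times Y \to X$, which is at most $h_{\Si,\mu}(X)$; but we need it to be zero. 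The right way to see it: the factor $X\times Y \to X/\Pi^\Si(\mu)$ factors through $X \to X/\Pi^\Si(\mu)$, which has zero outer $\Si$-entropy by definition, and outer entropy of a composite is bounded by the outer entropy of the second map in the composite, so we are done. Symmetrically for $\Pi^\Si(\nu)$, and since $\Pi^\Si(\mu\times\nu)$ is the \emph{largest} sub-sigma-algebra with zero outer entropy and is closed under joins of such, it contains $\Pi^\Si(\mu)\vee\Pi^\Si(\nu)$.

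The hard direction is $\Pi^\Si(\mu\times\nu) \subseteq \Pi^\Si(\mu)\vee\Pi^\Si(\nu)$. Here the plan is to use the product formula for outer sofic entropy, which is the genuinely new ingredient from \cite{hayes-relative-entropy} and which relies on the lde-convergence hypothesis. The key lemma I would invoke (or reprove in outline) is: if $\G\cc(X,\mu)$ and $\G\cc(Y,\nu)$ admit lde-convergent sequences of model measures, and $\cA\subseteq\cB_X$, $\cA'\subseteq\cB_Y$ are invariant sub-sigma-algebras, then the outer $\Si$-entropy of the product factor satisfies
\begin{equation*}
h_{\Si,\mu\times\nu}\big(X\times Y \to X/\cA \times Y/\cA'\big) = h_{\Si,\mu}\big(X\to X/\cA\big) + h_{\Si,\nu}\big(Y\to Y/\cA'\big).
\end{equation*}
Granting this additivity, suppose $\cC \subseteq \cB_{X\times Y}$ is an invariant sub-sigma-algebra with zero outer $\Si$-entropy in $\mu\times\nu$; we must show $\cC \subseteq \Pi^\Si(\mu)\vee\Pi^\Si(\nu)$.

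For this final step I would argue as follows. Let $\cC_X$ be the sub-sigma-algebra of $\cB_X$ generated by fibers of $\cC$ over $Y$ (more precisely, one works with the ``coordinate projections'' of $\cC$), and similarly $\cC_Y$. One shows $\cC \subseteq \cC_X \vee \cC_Y$ up to null sets — this is a measure-theoretic fact about sub-sigma-algebras of a product that respect the product structure, needing care but standard in the relative-entropy literature (it uses that $\cC$ is generated by sets of positive measure and an approximation argument). Then $\cC_X$ is a factor of the source $X$ whose lift to $X\times Y$ sits inside $\cC$, hence has outer $\Si$-entropy zero relative to $X\times Y$; by the additivity formula above (with $\cA' = \cB_Y$ the full algebra contributing $h_{\Si,\nu}(\mathrm{id}) = h_{\Si,\nu}(Y)$, and rearranging — this is where one must be careful about finiteness, subtracting a possibly infinite quantity, so in fact one should instead run the argument with the \emph{relative} outer entropy $h_{\Si,\mu\times\nu}(X\times Y \to X/\cC_X \times Y \,|\, \text{something})$ or directly compare microstate counts) one deduces $h_{\Si,\mu}(X\to X/\cC_X) = 0$, i.e.\ $\cC_X \subseteq \Pi^\Si(\mu)$; symmetrically $\cC_Y \subseteq \Pi^\Si(\nu)$. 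Combining, $\cC \subseteq \cC_X \vee \cC_Y \subseteq \Pi^\Si(\mu)\vee\Pi^\Si(\nu)$, and taking $\cC = \Pi^\Si(\mu\times\nu)$ finishes the proof.

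The main obstacle I anticipate is proving the additivity formula for outer sofic entropy under the lde hypothesis — this is the analogue, in the relative/outer setting, of Theorem~C of \cite{MR3542515} on lde-entropy of products, and it is precisely where lde-convergence (as opposed to mere le-convergence) is needed: one needs that for an lde-convergent sequence $\{\mu_n\}$ on model spaces $X^{V_n}$ and any weak* neighborhood $\cN$ of $\mu\times\nu$, the $\mu_n\times\nu_n$-mass of the microstate set $\Omega(\cN,\s_n)$ tends to $1$, so that a microstate for the product factor can be split into microstates for the two factors in a way that multiplies the covering-number counts. Subtracting infinite entropies must be avoided throughout; the cleanest route is to phrase everything in terms of covering numbers of pushforward model measures and compare them directly, rather than passing through differences of entropies. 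If the product formula for the outer Pinsker algebra (Theorem~\ref{thm:pinsker-sofic}) may simply be cited from \cite{hayes-relative-entropy}, then the proof reduces to the two short monotonicity arguments sketched in the first paragraph together with the measure-theoretic decomposition $\cC \subseteq \cC_X \vee \cC_Y$.
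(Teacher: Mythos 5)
The paper itself gives no proof of this theorem --- it is quoted from \cite{hayes-relative-entropy} --- so your proposal can only be measured against the argument in that reference. Your easy direction is fine: the inequality you need is $h_{\Si,\mu\times\nu}(X\times Y \to X/\Pi^\Si(\mu)) \le h_{\Si,\mu}(X \to X/\Pi^\Si(\mu))$, i.e.\ monotonicity of outer entropy in the \emph{source} (every microstate over $X\times Y$ projects to one over $X$, so fewer target microstates are liftable), which is true even though it is not the monotonicity stated in \S\ref{sec:outer}; together with subadditivity of outer entropy under joins (needed anyway for $\Pi^\Si$ to be well defined) this gives $\Pi^\Si(\mu)\vee\Pi^\Si(\nu)\subseteq\Pi^\Si(\mu\times\nu)$.

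The hard direction has a genuine gap: the decomposition $\cC\subseteq\cC_X\vee\cC_Y$ is not a standard fact --- it is false for general invariant sub-sigma-algebras of a product. Take $\G=\Z$, $X=Y=(\Z/2)^\Z$ with the Bernoulli $(1/2,1/2)$ measures, and let $\cC$ be the sigma-algebra of the factor $(x,y)\mapsto x+y$. This $\cC$ is invariant and is independent of $\cB_X$ and of $\cB_Y$ separately; any ``coordinate projection'' of it is either trivial (in which case $\cC\not\subseteq\cC_X\vee\cC_Y$) or all of $\cB_X$ (in which case you cannot conclude $\cC_X\subseteq\Pi^\Si(\mu)$). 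This example has positive entropy, so it does not contradict the theorem, but your argument never uses the zero-outer-entropy hypothesis on $\cC$ to establish the decomposition --- you assert it as general measure theory, and that assertion is where the proof breaks. The route actually taken in \cite{hayes-relative-entropy} (and in the amenable-group antecedent \cite{MR1786718}) avoids decomposing $\cC$: one introduces \emph{relative} outer entropy, proves under the lde hypothesis that $h_{\Si,\mu\times\nu}(\cC\vee\cB_Y\,|\,\cB_Y)\le h_{\Si,\mu\times\nu}(\cC)=0$ and that the Pinsker algebra relative to $\cB_Y$ equals $\cB_Y\vee\Pi^\Si(\mu)$, giving $\cC\subseteq\cB_Y\vee\Pi^\Si(\mu)$ and symmetrically $\cC\subseteq\cB_X\vee\Pi^\Si(\nu)$; one then needs the nontrivial intersection identity $(\cB_Y\vee\Pi^\Si(\mu))\cap(\cB_X\vee\Pi^\Si(\nu))=\Pi^\Si(\mu)\vee\Pi^\Si(\nu)$, which holds because $\cB_X$ and $\cB_Y$ are independent. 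Your instinct that the lde product formula is the engine is right, but the measure-theoretic endgame must go through relative entropy and this intersection lemma rather than through a coordinatewise splitting of $\cC$.
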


%\begin{proof}[Proof remarks]
%While the proof retains some features of \cite{MR1786718}, it does not use Sinai's Factor Theorem. Instead it uses lde-convergence 
%\end{proof}

\subsection{Outer Rokhlin entropy}\label{sec:outer-rokhlin}

Let $\G \cc^T (X,\mu), \G \cc^S (Y,\nu)$ be pmp actions and suppose there is a factor map $\Phi:X \to Y$.  The {\bf outer Rokhlin entropy} of the $\Phi$ is
$$h^{\rm{Rok}}(\Phi) = \inf H_\mu(\cP)$$
where the infimum is over all measurable partitions $\cP$ of $X$ such that $\Phi^{-1}(\cB_Y) \subset \s\textrm{-alg}_\G(\cP)$ and $\cB_Y$ is the Borel sigma-algebra of $Y$.

This outer Rokhlin entropy is bounded above by the Rokhlin entropy of the factor and by the Rokhlin entropy of the source. In the special case in which $\G$ is amenable, the outer Rokhlin entropy equals the Rokhlin entropy of the target because of monotonicity of entropy under factor maps. 

As in the case of outer sofic entropy, the Ornstein-Weiss factor map has outer Rokhlin entropy $\log(2)$.  The Ornstein-Weiss example shows Rokhlin entropy is not necessarily monotone under factor maps. However, outer Rokhlin entropy is: if $X \to^\Phi Y \to^\Psi Z$ are factor maps then 
$$h^{\rm{Rok}}(\Phi) \ge h^{\rm{Rok}}(\Psi \circ \Phi).$$
This is because any partition $\cP$ of $X$ satisfying $\Phi^{-1}(\cB_Y) \subset \s\textrm{-alg}_\G(\cP)$, also satisfies $ \Phi^{-1}\Psi^{-1}(\cB_Z)\subset \s\textrm{-alg}_\G(\cP)$.

 \subsubsection{Outer Rokhlin Pinsker algebras}\label{sec:outer-rokhlin-pinsker}
 
% Let $\G \cc (X,\mu)$ be a pmp action. The Rokhlin Pinsker algebra is the sigma-sub-algebra of $\cB_X$ generated by all factors with zero Rokhlin entropy. Because of sub-additivity, it has zero Rokhlin entropy itself. However, it does not have good monotonicity properties. Because there exist actions with zero Rokhlin entropy that factor onto actions with positive Rokhlin entropy (**), there can exist sub-sigma-algebras of the Rokhlin Pinsker algebra that have positive entropy. A better alternative is the outer Rokhlin Pinsker algebra.
 
  The {\bf outer Rokhlin Pinsker algebra} of the action $\G \cc (X,\mu)$ is the largest sigma-sub-algebra $\Pi^{\rm{Rok}}(\mu) \subset \cB_X$ such that the corresponding factor has outer Rokhlin entropy zero. Because outer Rokhlin entropy is monotone, any invariant sub-sigma-algebra of $\Pi^{\rm{Rok}}(\mu)$ also has zero outer Rokhlin entropy. The action is said to have {\bf completely positive outer Rokhlin entropy} if $\Pi^{\rm{Rok}}(\mu)$ is trivial.
  
  %There is also a relative version \cite{seward-weak-containment}.

\begin{thm}\cite{seward-weak-containment}
If $\G \cc (X,\mu)$ and $\G \cc (Y,\nu)$ are essentially free pmp actions that are weakly contained in all essentially free pmp actions of $\G$ then 
$$\Pi^{\rm{Rok}}(\mu\times \nu) = \Pi^{\rm{Rok}}(\mu) \vee \Pi^{\rm{Rok}}(\nu).$$
\end{thm}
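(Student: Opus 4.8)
The plan is to reduce the product formula for the outer Rokhlin Pinsker algebra to the corresponding statement for outer \emph{sofic} entropy, Theorem \ref{thm:pinsker-sofic}, combined with the fact (Proposition \ref{prop:Rokhlinsofic} and the discussion in \S\ref{sec:outer-rokhlin}) that outer Rokhlin entropy dominates outer sofic entropy, but without simply invoking that inequality coordinate-wise --- instead the weak-containment hypothesis is used to pin down the relevant sofic approximations and model measures. First I would recall the key consequence of weak containment: if $\G \cc (X,\mu)$ is weakly contained in every essentially free action, then for any sofic approximation $\Si$ of $\G$ the action admits model measures $\{\mu_n\}$ on the model spaces that lde-converge to $\mu$ (this is essentially the Abert--Weiss phenomenon, already exploited in the proof remarks for Theorem \ref{thm:seward-bernoulli}, together with the fact that Bernoulli shifts lde-converge). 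Since both $\mu$ and $\nu$ are weakly contained in all essentially free actions, so is $\mu\times\nu$ (weak containment is closed under products and joinings), so all three actions satisfy the hypothesis of Theorem \ref{thm:pinsker-sofic}.

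The argument then proceeds in two inclusions. For the easy inclusion $\Pi^{Rok}(\mu)\vee\Pi^{Rok}(\nu)\subseteq \Pi^{Rok}(\mu\times\nu)$, I would show each factor separately has zero outer Rokhlin entropy inside the product: given a partition $\cP_X$ of $X$ witnessing $h^{Rok}$ of the $\Pi^{Rok}(\mu)$-factor is $<\eps$ and similarly $\cP_Y$ for $\nu$, the partition $\cP_X \vee \cP_Y$ of $X\times Y$ (pulled back via the two coordinate projections) generates $\Pi^{Rok}(\mu)\otimes\Pi^{Rok}(\nu)$ under the $\G$-action and has Shannon entropy $< 2\eps$; letting $\eps\to 0$ and using that outer Rokhlin entropy is monotone under factor maps (so any invariant sub-algebra of a zero-outer-entropy factor has zero outer entropy) gives the inclusion. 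For the hard inclusion $\Pi^{Rok}(\mu\times\nu)\subseteq \Pi^{Rok}(\mu)\vee\Pi^{Rok}(\nu)$, the plan is: let $\cF \subseteq \cB_{X\times Y}$ be the outer Rokhlin Pinsker algebra of the product, so the factor $\G\cc (X\times Y,\mu\times\nu)\to (Z,\zeta)$ corresponding to $\cF$ has zero outer Rokhlin entropy. By Proposition \ref{prop:Rokhlinsofic} applied to this factor map, the corresponding outer sofic entropy $h_{\Si,\mu\times\nu}(\cdot)$ is also zero, so $\cF \subseteq \Pi^\Si(\mu\times\nu)$. Now invoke Theorem \ref{thm:pinsker-sofic}: $\Pi^\Si(\mu\times\nu) = \Pi^\Si(\mu)\vee\Pi^\Si(\nu)$. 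It remains to upgrade from $\Pi^\Si$ back to $\Pi^{Rok}$ on each factor, i.e. to show $\Pi^\Si(\mu)\subseteq \Pi^{Rok}(\mu)$ (and likewise for $\nu$); this is where the weak-containment hypothesis does real work.

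The main obstacle is precisely that last upgrade: in general $\Pi^{Rok}(\mu)\subseteq\Pi^\Si(\mu)$ (Rokhlin entropy is an upper bound for sofic entropy, so zero Rokhlin outer entropy forces zero sofic outer entropy), but the reverse containment $\Pi^\Si(\mu)\subseteq\Pi^{Rok}(\mu)$ is not known in general --- it would follow from a positive answer to the open question after Proposition \ref{prop:Rokhlinsofic}. What saves us here is that the factor of $\mu$ corresponding to $\Pi^\Si(\mu)$ is itself essentially free and weakly contained in all essentially free actions (weak containment passes to factors), so by \cite{seward-weak-containment} one has equality of the sofic and Rokhlin outer entropies \emph{for actions in this weak-equivalence class}; this is the content of Theorem \ref{thm:subgroup1}-type results and the subadditivity/weak-containment machinery of \cite{seward-weak-containment}. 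Concretely I would cite the relevant lemma of \cite{seward-weak-containment} giving $h^{Rok} = \sup_\Si h_{\Si}$ for actions weakly contained in all free actions, apply it to the $\Pi^\Si(\mu)$-factor to conclude it has zero Rokhlin outer entropy, hence lies in $\Pi^{Rok}(\mu)$. Assembling: $\cF \subseteq \Pi^\Si(\mu)\vee\Pi^\Si(\nu)\subseteq \Pi^{Rok}(\mu)\vee\Pi^{Rok}(\nu)$, which is the desired inclusion. A secondary technical point to watch is that all the lde-convergence and model-measure hypotheses needed for Theorem \ref{thm:pinsker-sofic} genuinely follow from weak containment in all free actions, and that $\mu\times\nu$ inherits this property; I would verify these at the start so the main argument runs cleanly.
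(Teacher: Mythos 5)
The easy inclusion $\Pi^{Rok}(\mu)\vee\Pi^{Rok}(\nu)\subseteq \Pi^{Rok}(\mu\times\nu)$ in your proposal is fine (and needs no weak containment at all): joining low-entropy partitions that outer-generate $\Pi^{Rok}(\mu)$ and $\Pi^{Rok}(\nu)$ witnesses zero outer Rokhlin entropy of the join inside the product. The hard inclusion, however, has a genuine gap, and in two places. First, your route passes through a sofic approximation $\Si$ and Theorem \ref{thm:pinsker-sofic}, but Rokhlin entropy and the theorem you are proving make sense for \emph{every} countable group, sofic or not; so even if the argument closed, it would only prove a special case. Second, and more seriously, the ``upgrade'' step $\Pi^{\Si}(\mu)\subseteq\Pi^{Rok}(\mu)$ rests on a lemma you cannot cite: the claimed identity $h^{Rok}=\sup_{\Si}h_{\Si}$ for actions weakly contained in all free actions is not established in \cite{seward-weak-containment} or elsewhere --- it is essentially the open question recorded in this survey right after Proposition \ref{prop:Rokhlinsofic}. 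Even granting such a lemma, the logic does not close: zero \emph{outer} $\Si$-entropy of the factor map onto the $\Pi^{\Si}(\mu)$-factor only controls the count of microstates of the target that \emph{lift}; it gives no upper bound on the intrinsic sofic entropy of that factor, so you cannot feed it into an ``$h^{Rok}=\sup_\Si h_\Si$'' statement to conclude the factor has zero (outer) Rokhlin entropy. What you would actually need is the outer-to-outer implication ``zero outer sofic entropy $\Rightarrow$ zero outer Rokhlin entropy,'' which is precisely the hard direction of the open comparison problem. (There is also a minor definitional wrinkle in step (a): outer Rokhlin entropy dominating outer sofic entropy only gives outer $\Si$-entropy $\le 0$, and if it is $-\infty$ the factor need not lie in $\Pi^{\Si}(\mu\times\nu)$ as defined.)

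For comparison, the proof in \cite{seward-weak-containment} never leaves the Rokhlin framework. The weak containment hypothesis is used directly: actions weakly contained in all free actions are exactly those weakly contained in a Bernoulli shift (Abert--Weiss), this class is closed under products, and weak containment lets one transfer relative generating partitions of small Shannon entropy between the product and its two coordinate factors. The product formula for the outer Rokhlin Pinsker algebra then follows from a relative outer Rokhlin entropy computation, with no sofic approximation, no model measures, and no appeal to lde-convergence. Your instinct that weak containment ``does real work'' is right, but it does that work on the Rokhlin side, not as a bridge to sofic entropy.
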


\begin{remark}
The notions of outer Rokhlin entropy and the Theorem above admit relative versions. See \cite{seward-weak-containment} for details.
\end{remark}

\subsection{Completely positive outer entropy}

An action has {\bf completely positive outer $\Si$-entropy}, denoted CPE$^\Si$, if every nontrivial factor has positive outer $\Si$-entropy. Because outer $\Si$-entropy is bounded from above by $\Si$-entropy, this implies that every nontrivial factor has positive $\Si$-entropy. 

It follows from the Rokhlin-Sinai Theorem that Bernoulli shifts over the integers have completely positive entropy. The case of amenable groups follows from the fact that Bernoulli shifts are uniformly mixing and uniform mixing implies CPE. This is the easy half part of \cite{rudolph-weiss-2000} which proves that CPE is equivalent to uniformly mixing. The main result of \cite{kerr-cpe}  is that if $\G$ is any sofic group, then every nontrivial factor of a Bernoulli shift over $\G$ has positive $\Si$-entropy (for every $\Si$). This is obtained via a positive lower bound on the local entropy that is uniform over all good enough sofic appoximations. The proof shows more: that Bernoulli shifts over $\G$ are CPE$^\Si$ for every $\Si$. Similarly, every nontrivial factor of a Bernoulli shift over a free group has positive $f$-invariant.

In \cite{hayes-relative-entropy}, Theorem \ref{thm:pinsker-sofic} is used to show that a large class of algebraic actions are CPE$^\Si$ (see \S \ref{sec:algebraic} for details).

\subsection{Uniformly mixing}       %%%%%%%%%%%%%%%%%%%%%%%%%%%%%%%%%%%%%

\begin{defn}
A sequence $\{F_i\}_{i\in \N}$ of finite subsets of $\G$ is said to {\bf spread out} if for every $g\in \G \setminus \{1_\G\}$, $g\notin F_iF_i^{-1}$ for all but finitely many $i$. 
\end{defn}

\begin{defn} A pmp action $\G \cc (X,\mu)$ is {\bf uniformly mixing} if for every finite partition $\cP$ of $X$ and any sequence $F_i\subset \G$ of finite subsets that spread out,
$$\lim_{i\to\infty} \frac{1}{|F_i|} H_\mu\left(\bigvee_{g\in F_i} g^{-1}\cP \right) = H_\mu(\cP).$$
\end{defn}

The main result of \cite{rudolph-weiss-2000} is that if $\G$ is amenable group then CPE implies uniform mixing. The converse was proven earlier (see \cite{golodets-2000} or \cite[Theorem 4.2]{DGRS-2008}).  In \cite{burton-austin1} a new concept, called {\bf uniform model mixing}, that is adapted to a sofic approximation, is shown to imply completely positive le-$\Si$-entropy (le-$\Si$-entropy is defined in \S \ref{sec:variants}). This is used to prove that if $\G$ is sofic and contains an infinite cyclic subgroup, then there exist uncountably many completely positive le-$\Si$-entropy actions that are not factors of Bernoulli shifts. 

The goal of this section is to show that uniform mixing does not imply completely positive sofic entropy (Corollary \ref{cor:ising-non-cpe}). In fact, all mixing Markov chains over a free group are uniformly mixing (Theorem \ref{thm:uniform1}). However, with a spectral criterion due to Ben Hayes (Theorem \ref{thm:hayes-cpe}) we show that the Ising model with small transition probability does not have completely positive sofic entropy with respect to any sofic approximation.

\subsubsection{Markov chains}

\begin{thm}\label{thm:uniform1}
If $\G=\F_r$ is the free group  then all mixing Markov chains with finite state space over $\G$ are uniformly mixing.
\end{thm}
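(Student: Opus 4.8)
The plan is to prove the $\liminf$ half of the statement, since $\tfrac{1}{|F_i|}H_\mu\big(\bigvee_{g\in F_i}g^{-1}\cP\big)\le H_\mu(\cP)$ holds automatically by subadditivity of Shannon entropy. First I would reduce to the case where $\cP$ is a ``cylinder'' partition. Realize the chain as the shift on $(K^{\F_r},\mu)$ with $K$ finite, let $\cP_0$ be the time-$0$ partition, and let $\cP_0^{B(n)}=\bigvee_{w\in B(n)}w^{-1}\cP_0$ record the coordinates in the ball $B(n)\subset\F_r$. Since $\cP_0$ is generating, the algebras generated by the $\cP_0^{B(n)}$ are dense in the measure algebra, so every finite $\cP$ is, for each $\eps>0$, within Rokhlin distance $\eps$ of a coarsening $\cP'$ of some $\cP_0^{B(n)}$. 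Combining the elementary bound $|H_\mu(\cP^F)-H_\mu((\cP')^F)|\le |F|\,d_{\mathrm{Rok}}(\cP,\cP')$ with the remark that ``uniform mixing for $\cR$ implies uniform mixing for every coarsening $\cQ\le\cR$'' (a two-line entropy computation: $\tfrac1{|F|}H_\mu(\cQ^F)\ge \tfrac1{|F|}H_\mu(\cR^F)-H_\mu(\cR\mid\cQ)$), I can assume $\cP=\cP_0^{B(n)}$.

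For $\cP=\cP_0^{B(n)}$ the partition $\bigvee_{g\in F_i}g^{-1}\cP$ records the coordinates on a union of $|F_i|$ translated copies of $B(n)$, indexed by $F_i$, and the spread-out hypothesis forces these copies to become pairwise disjoint and pairwise at distance $\ge D_i$ with $D_i\to\infty$ (after passing to the relevant index set and extracting $D_i$). The geometric input I would use is: one can enumerate $F_i=\{g_1,\dots,g_{m_i}\}$ so that for every $j\ge 2$ the ball $B_j$ attaches to the Steiner tree (the convex hull in $\F_r$) of $B_1,\dots,B_{j-1}$ along a \emph{fresh} branch of length $\ge D_i-2n$; concretely, root the Steiner tree of all of $F_i$ at one of its leaves and list the balls in order of increasing distance from the root to their nearest point. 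Writing $q_j$ for the attachment vertex (so $q_j\notin B_j$, and $q_j$ separates $B_j$ from $B_1\cup\dots\cup B_{j-1}$) and $u_j\in B_j$ for the vertex of $B_j$ nearest $q_j$, this gives $d(u_j,q_j)\ge D_i-2n$.

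Now I would expand by the chain rule and invoke the tree Markov property of the chain: since $q_j$ separates $B_j$ from the earlier balls, $(X_h)_{h\in B_j}$ is conditionally independent of $(X_h)_{h\in B_1\cup\cdots\cup B_{j-1}}$ given $X_{q_j}$, and $(X_h)_{h\in B_j}\to X_{u_j}\to X_{q_j}$ is a Markov chain, so
\[
H_\mu\Big(\bigvee_{g\in F_i}g^{-1}\cP\Big)=\sum_{j=1}^{m_i}H\big((X_h)_{h\in B_j}\mid (X_h)_{h\in B_1\cup\cdots\cup B_{j-1}}\big)\ \ge\ \sum_{j=1}^{m_i}\Big(H_\mu(\cP)-I(X_{u_j};X_{q_j})\Big),
\]
where stationarity identifies $H\big((X_h)_{h\in B_j}\big)=H_\mu(\cP)$. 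It then remains to bound $I(X_{u_j};X_{q_j})$ by some $\varepsilon(D_i-2n)$ with $\varepsilon(D)\to 0$, and this is exactly where mixing is used: the joint law of two coordinates of the chain at tree-distance $D$ is obtained by applying to the stationary measure a product of $D$ of the $2r$ one-step transition kernels along the connecting reduced word; for a mixing finite-state chain each such kernel is primitive, finite products of primitive stochastic matrices converge to rank one uniformly exponentially fast, and there are finitely many kernels, so $\sup\{I(X_u;X_v):d(u,v)\ge D\}=:\varepsilon(D)\to 0$. Hence $\tfrac{1}{|F_i|}H_\mu\big(\bigvee_{g\in F_i}g^{-1}\cP\big)\ge H_\mu(\cP)-\varepsilon(D_i-2n)\to H_\mu(\cP)$, which with the trivial upper bound completes the argument.

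The hard part will be the combinatorial ordering statement: one must produce, \emph{uniformly} over all spread-out $F_i$ and all shapes of the associated Steiner tree (whose branch vertices may sit arbitrarily close to individual balls), an enumeration for which every newly added ball is far — by an amount tending to infinity — from the hull of its predecessors; naive orderings (arbitrary, or a plain DFS) can fail, and the branch-vertex structure of the Steiner tree has to be tracked carefully. A secondary technical point is to check that the decay-of-correlations estimate is uniform over the infinitely many geodesic ``types'' in $\F_r$, which reduces to uniform exponential ergodicity of products of a fixed finite family of primitive stochastic matrices.
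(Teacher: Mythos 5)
Your proposal is correct and follows the same overall strategy as the paper's proof: a decay-of-correlations estimate coming from mixing plus finiteness of the state space, a combinatorial ordering of the spread-out set so that each new element is far from the Steiner tree of its predecessors, the Markov property to collapse the conditioning on all predecessors to conditioning on a single nearby coordinate, and a Rokhlin-distance approximation to pass from cylinder partitions to arbitrary finite-entropy partitions. Two remarks on the step you flag as the hard part. First, the paper sidesteps the ball geometry entirely by observing that for any finite connected $K$ the partition $\cP^K$ is itself a Markov partition (a higher-block recoding of the chain), so the combinatorial argument only ever involves the points of $F_i$ rather than $n$-balls around them; the price is that the decay scale $n(\epsilon)$ now depends on $K$, which is harmless since $K$ is fixed before $i\to\infty$. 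Second, the combinatorial statement is not actually hard: the paper proves by induction on the number of leaves that if all pairwise distances between leaves of a finite tree are $\ge n$, then some leaf is at distance $\ge \lceil n/2\rceil$ from the Steiner tree of the remaining leaves, and then peels off such leaves one at a time; alternatively, your own ordering already works and has a two-line verification. Indeed, if $q$ is the attachment point of $v_j$ to the Steiner tree of $v_1,\dots,v_{j-1}$ rooted at the leaf $v_1$, then $q$ lies on the geodesic from $v_1$ to some $v_i$ with $i<j$, so $d(v_1,v_j)=d(v_1,q)+d(q,v_j)$ and $d(v_1,v_i)=d(v_1,q)+d(q,v_i)$; your ordering gives $d(v_1,v_j)\ge d(v_1,v_i)$, hence $d(q,v_j)\ge d(q,v_i)$, and therefore $d(v_i,v_j)\le d(v_i,q)+d(q,v_j)\le 2d(q,v_j)$, i.e. the fresh branch has length at least half the minimal pairwise distance. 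So nothing essential is missing from your argument; it is a mildly more hands-on execution of the same proof.
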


We need a lemma first. Recall that a {\bf tree} is a simply connected graph and a {\bf leaf} of a tree is a vertex with degree 1.

\begin{lem}
Let  $T$ denote a finite tree with at least 2 vertices. For each leaf $v$ of $T$, let $T_v \subset T$ denote the smallest subtree that contains every leaf of $T$ except for $v$. Suppose that for some $n\ge 0$ every pair of distinct leaves of $T$ are at a distance $\ge n$ apart (in the path metric). Then there exists a leaf $v$ such that the distance between $v$ and $T_v$ is at least $\lceil n/2\rceil$.
\end{lem}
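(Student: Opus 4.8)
The plan is to argue by contradiction, or rather to extract the desired leaf by a centroid/diameter argument on the tree $T$. First I would reduce to the case where $T$ has no vertices of degree $2$: contracting a degree-$2$ vertex does not change the set of leaves, does not change the pairwise leaf-distances along the tree (distances are only measured combinatorially after contraction, but since we are contracting interior edges the relevant inequalities only improve — actually I must be careful here, contraction changes distances), so instead I would keep $T$ as is and work directly. The key observation is this: for a leaf $v$, the subtree $T_v$ is spanned by all leaves $\ne v$, and the distance $d(v, T_v)$ equals the length of the initial segment of the unique path from $v$ into the rest of the tree before that path first meets a branch point leading to two distinct other leaves. Equivalently, writing $p(v)$ for the unique neighbor-chain out of $v$, $d(v,T_v)$ is the distance from $v$ to the first vertex $w$ on this chain such that $T \setminus \{$the edge just traversed$\}$ still contains at least two leaves on the far side.

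The main step is to pick $v$ cleverly. Consider a pair of leaves $a, b$ realizing the diameter of the leaf set, so $d(a,b) = \mathrm{diam} \ge n$ by hypothesis. Let $P$ be the path from $a$ to $b$, and let $m$ be a midpoint vertex or midpoint edge of $P$, so that the two halves of $P$ each have length $\ge \lfloor d(a,b)/2 \rfloor \ge \lfloor n/2 \rfloor$. I claim one of $v = a$ or $v = b$ works, but with $\lfloor n/2\rfloor$; to get $\lceil n/2 \rceil$ I would instead choose the pair $(a,b)$ to be a pair realizing the diameter and then show that $d(a, T_a)$ plus $d(b, T_b)$ is at least $d(a,b) \ge n$, because the path from $a$ toward $T_a$ and the path from $b$ toward $T_b$ are edge-disjoint sub-segments of $P$ (any vertex lying on $P$ that is ``internal'' — i.e. has the property that both the $a$-side and the $b$-side each independently reach some other leaf — would give a still-longer leaf-to-leaf path unless that other leaf coincides with $a$ or $b$; a short case analysis rules this out using maximality of $d(a,b)$). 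Hence $d(a,T_a) + d(b,T_b) \ge d(a,b) \ge n$, so at least one of them is $\ge \lceil n/2 \rceil$, and that leaf is the desired $v$.

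The step I expect to be the main obstacle is making precise and correct the claim that the $a$-portion and $b$-portion of the path $P$ that ``escape'' toward $T_a$ and $T_b$ respectively are disjoint and together cover at least the whole diameter path. The subtlety is when $T$ has leaves hanging off the interior of $P$: then $T_a$ might reach almost all the way back to $a$. I would handle this by noting that if a leaf $c \ne a,b$ hangs off $P$ at a vertex $x$ with $d(a,x)$ small, then $d(c,b) = d(c,x) + d(x,b) \ge 1 + d(x,b)$; combined with $d(c,b) \le d(a,b)$ and $d(a,b) = d(a,x) + d(x,b)$ this forces $d(a,x) \ge d(c,x) \ge 1$, and more careful bookkeeping of this type — comparing $d(c,b)$ and $d(c,a)$ against the diameter — pins down exactly how far $T_a$ can reach toward $a$ and shows the two escaping segments remain edge-disjoint. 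Once that combinatorial fact is nailed, the inequality $d(a,T_a)+d(b,T_b)\ge d(a,b)$ and hence the conclusion follow immediately by the pigeonhole principle.
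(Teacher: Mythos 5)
Your reduction of the problem to a diameter pair $(a,b)$ of the leaf set is a good instinct, but the inequality you build the whole argument on, $d(a,T_a)+d(b,T_b)\ge d(a,b)$, is false, and the obstacle you flag (leaves hanging off the interior of $P$) cannot be argued away. Concretely: let $P$ be the path $a=v_0,v_1,\dots,v_6=b$, attach a path of length $2$ ending in a leaf $c_1$ at $v_2$, and another of length $2$ ending in a leaf $c_2$ at $v_4$. All six pairwise leaf distances are $4$ or $6$, so the hypothesis holds with $n=4$, and $(a,b)$ realizes the diameter $6$. But $T_a$ (the span of $b,c_1,c_2$) already contains $v_2$, so $d(a,T_a)=2$, and symmetrically $d(b,T_b)=2$; the sum is $4<6=d(a,b)$. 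The two ``escaping segments'' $[a,v_2]$ and $[v_4,b]$ are edge-disjoint but leave the gap $[v_2,v_4]$ uncovered, and no amount of bookkeeping will close it: whenever two leaves other than $a,b$ project to distinct interior vertices of $P$, the gates of $a$ and of $b$ are separated by a nonempty gap and the inequality fails. So as written the proof does not go through.

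The approach is salvageable, though, and in a way that is cleaner than the sum-and-pigeonhole route: for a diameter pair $(a,b)$, \emph{each} of $d(a,T_a)$ and $d(b,T_b)$ is already at least $\lceil n/2\rceil$. Indeed, $T_a\cap P$ is the subpath $[a',b]$ where $a'$ is the projection onto $P$, closest to $a$, of some leaf $c\ne a$, and $a'$ is the gate of $a$ into $T_a$. If $c=b$ then $d(a,T_a)=d(a,b)\ge n$. Otherwise write $t=d(a,y_c)$ and $\ell=d(c,y_c)$ for the projection $y_c$ of $c$ onto $P$; then $d(a,c)=t+\ell\ge n$ while $d(b,c)=(d(a,b)-t)+\ell\le d(a,b)$ by maximality of $d(a,b)$, so $\ell\le t$ and hence $2t\ge n$, i.e.\ $t\ge\lceil n/2\rceil$. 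Thus $d(a,T_a)=\min\{t\}\ge\lceil n/2\rceil$ (note this bound is exactly attained in the counterexample above). This repaired argument is genuinely different from the paper's proof, which instead inducts on the number of leaves: a leaf $v$ either works or one passes to $T_v$, applies the inductive hypothesis to get a leaf $w$, and checks via a gate analysis that $w$ still works in $T$. The diameter-pair argument is shorter and non-inductive; the paper's induction avoids having to analyze projections onto the diameter path but requires the case analysis on where the gates $v'$ and $w'$ sit.
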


\begin{proof}\footnote{This proof, which is shorter than my original, was gracefully provided by an anonymous reviewer.} Let $u,v$ be leaves of $T$ that are a maximum distance apart. Notice that $T_v$ is the union of the paths from $u$ to $w$, where $w$ ranges over all leaves of $T$ except for $v$. Fix a leaf $w \ne v$. Let $z$ be the point on the path from $w$ to $u$ that is closest to $v$. Then $z$ must also lie on the path from $v$ to $u$. Because $u$ and $v$ are at maximum distance apart, $d(v,u) \ge d(w,u)$. Since $d(v,u) = d(v,z)+d(z,u)$ and $d(w,u) = d(w,z) + d(z,u)$, $d(v,z) \ge d(w,z)$. Therefore, 
$$n \le d(v,w) =d(v,z) + d(z,w) \le 2d(v,z).$$
This holds for all leaves $w \ne v$ and therefore, $n \le 2d(v,T_v)$. 
  \end{proof}

\begin{proof}[Proof of Theorem \ref{thm:uniform1}]
Let $d(\cdot,\cdot)$ denote the word metric in $\G$ with respect to a free generating set. Let $\G \cc (K^\G,\mu)$ be a mixing Markov chain with state space $K$. Let $\cP$ be the time 0 partition: $\cP=\{P_k:~k \in K\}$ where 
$$P_k=\{x\in K^\G:~ x_e=k\}.$$

 Because $\cP$ is a Markov partition of a mixing Markov chain for every $\epsilon>0$ there exists $n=n(\epsilon) \in \N$ such that if $g,h\in \G$ satisfy $d(g,h)\ge n$ then $H_\mu(g^{-1}\cP|h^{-1}\cP) \ge H_\mu(\cP)-\epsilon$. 

Suppose $F \subset \G$ is a finite subset such that the word distance between any two distinct elements $g,h \in F$ is at least $2n$. Recall that $\cP^F=\bigvee_{f\in F} f^{-1}\cP$. By the previous lemma, there exists a $w\in F$ such that if $T$ denotes the smallest subtree containing $F-\{w\}$ then $d(T,w)\ge n$. Let $g\in T$ minimize the distance to $w$. The Markov property implies
\begin{eqnarray*}
H_\mu(\cP^{F}) &=& H_\mu(w^{-1} \cP| \cP^{F-\{w\}}) + H_\mu(\cP^{F-\{w\}}) \ge H_\mu(w^{-1} \cP| \cP^{T}) + H_\mu(\cP^{F-\{w\}})\\
  &=& H_\mu(w^{-1} \cP| g^{-1}\cP) + H_\mu(\cP^{F-\{w\}}) \ge H_\mu(\cP) + H_\mu(\cP^{F-\{w\}}) - \epsilon.
  \end{eqnarray*}
So by induction on $|F|$ we obtain, $H_\mu(\cP^F) \ge |F|( H_\mu(\cP)-\epsilon)$. This implies uniform mixing with respect to $\cP$.

Now let $\cQ$ be a partition that is contained in $\cP$.  Note that if $g,h \in \G$ satisfy $d(g,h)\ge n$ then $H_\mu(g^{-1}\cQ|h^{-1}\cP) \ge H_\mu(\cQ)-\epsilon$. Therefore, if $F,T,w,g$ are as above then
\begin{eqnarray*}
H_\mu(\cQ^{F}) &=& H_\mu(w^{-1} \cQ| \cQ^{F-\{w\}}) + H_\mu(\cQ^{F-\{w\}}) \ge H_\mu(w^{-1} \cQ| \cQ^{T}) + H_\mu(\cQ^{F-\{w\}})\\
  &=& H_\mu(w^{-1} \cQ| g^{-1}\cQ) + H_\mu(\cQ^{F-\{w\}}) \ge H_\mu(\cQ) + H_\mu(\cQ^{F-\{w\}}) - \epsilon.
  \end{eqnarray*}
  Uniform mixing with respect to $\cQ$ now follows from induction on $|F|$.

Observe that for any finite $K \subset \G$ with the property that the induced subgraph of the Cayley graph is connected, the partition $\cP^K$ is Markov. So the previous paragraphs imply: for any $\cQ\le \cP^K$, the action $\G\cc (X,\mu)$ is uniformly mixing with respect to $\cQ$. 

Now let $\cR$ be an arbitrary partition. Let $\delta>0$. Because the partition $\cP$ is generating, there exists a partition $\cQ\le \cP^K$ (for some finite $K \subset \G$) such that
$$d(\cQ,\cR) := H_\mu(\cQ|\cR) + H_\mu(\cR|\cQ) <\delta$$
where $d(\cdot,\cdot)$ denotes the Rokhlin distance on partitions (as defined in \S \ref{sec:continuity1}).

It follows that $d(\cQ^F, \cR^F) < \delta |F|$ for any finite $F$. Therefore if $\{F_i\}$ spreads out then because the action is uniformly mixing with respect to $\cQ$,
$$\liminf_i \frac{1}{|F_i|}H_\mu(\cR^{F_i}) \ge \liminf_i \frac{1}{|F_i|}H_\mu(\cQ^{F_i}) -\delta= H_\mu(\cQ)-\delta \ge H_\mu(\cR)-2\delta .$$
Because $\delta>0$ is arbitrary, this implies the theorem.
\end{proof}

\subsubsection{A spectral criterion for CPE}\label{sec:CPE-spectral}

\begin{thm}\cite[Corollary 1.4]{MR3773269}\label{thm:hayes-cpe}
Let $\G \cc (X,\mu)$ be ergodic and let $\kappa^0:\G \to U(L^2_0(X,\mu))$ be the Koopman representation on the orthogonal complement of the constants. If  $\G\cc (X,\mu)$ is CPE with respect to some sofic approximation then $\kappa^0$ embeds into the countable sum of the left regular representation of $\G$.  
\end{thm}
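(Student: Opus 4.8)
The plan is to argue by contrapositive: suppose the Koopman representation $\kappa^0$ on $L^2_0(X,\mu)$ does \emph{not} embed into $\ell^2(\G)^{\oplus \infty}$, and produce a nontrivial factor of $\G \cc (X,\mu)$ with nonpositive outer sofic entropy, contradicting CPE$^\Si$. By the polar decomposition of representations into a part singular to the left-regular representation and a part absorbed by it, the hypothesis means there is a nonzero $\G$-invariant closed subspace $\cH \subset L^2_0(X,\mu)$ whose every nonzero subrepresentation fails to embed into $\ell^2(\G)$ --- i.e., $\cH$ is singular with respect to $\lambda_\G$. (One takes $\cH$ to be the singular part of $\kappa^0$; it is nonzero precisely because $\kappa^0$ is not absorbed by $\ell^2(\G)^{\oplus\infty}$.)

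\textbf{Key steps.} First I would invoke the weak form of Sinai's factor theorem for sofic entropy, due to Hayes \cite{hayes-polish-models}, cited in the excerpt after the Gaussian action theorem: if the Koopman representation of $\G \cc (X,\mu)$ (or of a factor) is singular to the left-regular representation, then the sofic entropy of that action is nonpositive. Second, I would let $\cF \subset \cB_X$ be the smallest $\G$-invariant sigma-algebra with respect to which every function in $\cH$ is measurable, and let $\G \cc (Y,\nu)$ be the corresponding factor. The point is that $L^2_0(Y,\nu)$, viewed inside $L^2_0(X,\mu)$, is contained in the $\G$-invariant von Neumann subalgebra generated by $\cH$; one must check that taking products and limits of functions whose ``spectral type'' is singular to $\lambda_\G$ keeps the spectral type singular --- this is the standard fact that the class of representations singular to a fixed representation is closed under tensor products and direct sums/limits, combined with the description of $L^2(Y)$ as generated by $\cH$ under the algebra operations. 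Hence the Koopman representation of $\G \cc (Y,\nu)$ is singular to $\lambda_\G$. Third, apply Hayes' theorem to conclude $h_{\Si,\nu}(\G \cc Y) \le 0$ for the given sofic approximation $\Si$. Fourth, since $\cH \ne 0$ the factor $Y$ is nontrivial, and $h_{\Si,\nu}(\G \cc Y) \le 0$ already contradicts CPE$^\Si$ (which demands \emph{positive} entropy for every nontrivial factor) --- so we never even need the outer-entropy refinement, just the plain measure sofic entropy of the factor. This gives the result. (If one prefers the version phrased with the outer Pinsker algebra $\Pi^\Si(\mu)$, the factor $Y$ lies inside $\Pi^\Si(\mu)$, so the latter is nontrivial.)

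\textbf{Main obstacle.} The technical heart is the closure argument in the second step: verifying that the sigma-algebra $\cF$ generated by $\cH$ yields a factor whose $L^2_0$ is still spectrally singular to $\lambda_\G$. The subtlety is that $L^2(Y,\nu)$ is the closure of the $*$-algebra generated by bounded functions in $\cH$, and one needs that (i) products $f \cdot g$ of functions whose GNS spectral measures are $\lambda_\G$-singular remain singular, which follows from the fact that the Koopman representation on $L^2(X \times X)$ decomposes compatibly and tensor products of $\lambda_\G$-singular representations stay singular to $\lambda_\G$ (a weak-containment / Fell-topology computation, or directly via the classification of representations absorbed by the regular one), and (ii) $L^2$-limits preserve singularity, which is automatic since the singular part is a closed $\G$-invariant subspace. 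I expect (i) to require the most care; it is precisely the kind of spectral bookkeeping that Hayes carries out in \cite{hayes-polish-models} and \cite{hayes-mixing-pinsker}, so in the write-up I would cite those rather than redo it. Everything else --- nonemptiness of $\cH$, the final contradiction with CPE$^\Si$ --- is immediate.
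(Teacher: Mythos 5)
Your top-level strategy --- pass to the contrapositive, split $\kappa^0$ into its part singular to $\lambda_\G$ and its part absorbed by $\lambda_\G^{\oplus\infty}$, and contradict CPE using the factor generated by the singular part $\cH$ --- is exactly how this corollary is deduced from Hayes' main theorem (the sofic analogue of the singular-subspace theorem quoted just before Corollary \ref{cor:rokhlin-cpe-spectral}). But the step you call the technical heart is carried out with a false lemma, and this is a genuine gap. It is \emph{not} true that the class of representations singular to $\lambda_\G$ is closed under tensor products. Already for $\G=\Z$ this fails: the tensor product of two cyclic representations of $\Z$ with spectral measures $\mu,\nu$ has maximal spectral type $\mu*\nu$, and there exist singular probability measures $\mu$ on $\T$ (e.g.\ measures on Salem sets of dimension close to $1$, with $\hat{\mu}(n)=O(|n|^{-1/2+\epsilon})$) for which $\mu*\mu$ has an $L^2$ density and hence is absolutely continuous. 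So a representation singular to $\lambda_\Z$ can have a tensor square embedding into $\lambda_\Z^{\oplus\infty}$; concretely, in a Gaussian action built over such a $\mu$ the first chaos is singular while the second chaos already carries Lebesgue spectrum. Consequently you cannot conclude that $L^2_0(Y,\nu)$ is singular to $\lambda_\G$, and your reduction to the weak Sinai statement (``Koopman singular implies entropy nonpositive,'' applied to the factor) collapses. This is precisely why the theorem of Hayes --- and Seward's Rokhlin analogue --- places the hypothesis only on the subspace $\cH$ and states the conclusion directly for the factor generated by $\cH$: the passage from $\cH$ to $\s\textrm{-alg}_\G(\cH)$ is the hard content of that theorem, not something you may assume via spectral bookkeeping.

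The second, related slip is your parenthetical claim that the outer-entropy refinement is not needed. It is. The sofic version of the singular-subspace theorem bounds the entropy of the factor \emph{in the presence of} the source, i.e.\ the outer $\Si$-entropy of the factor map $X\to Y$, and outer entropy is dominated by the plain entropy of the target rather than the other way around --- compare the Ornstein--Weiss map, whose target has entropy $\log 4$ but whose outer entropy is $\log 2$. So knowing the outer entropy of $Y$ is nonpositive does not make the plain $\Si$-entropy of $\G\cc(Y,\nu)$ nonpositive, and the contradiction must be drawn with CPE in the sense defined in \S\ref{sec:outer}, namely completely positive \emph{outer} $\Si$-entropy. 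Once you (i) drop the attempt to prove $L^2_0(Y,\nu)$ singular, (ii) apply Hayes' theorem directly to the singular part $\cH$ of $\kappa^0$, and (iii) phrase the contradiction in terms of outer entropy of the nontrivial factor generated by $\cH$, the argument becomes the intended short deduction.
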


\begin{remark}
The Rokhlin entropy version of the above theorem is Corollary \ref{cor:rokhlin-cpe-spectral}.
\end{remark}

\begin{cor}\label{cor:laplace}
Let $\F_r=\langle s_1,\ldots, s_r\rangle$ denote the rank $r\ge 2$ free group.  Let $\Delta \in \C\G$ denote the ``discrete Lapacian''
$$\Delta = \frac{1}{2r} \sum_{i=1}^r  s_i + s_i^{-1}.$$
There is some $\eps_r>0$ such that if $\G \cc (X,\mu)$ is CPE with respect to some sofic approximation then
$$\|\kappa_0(\Delta)\| \le 1-\eps_r.$$
\end{cor}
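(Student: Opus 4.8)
The plan is to reduce the claim, via Theorem~\ref{thm:hayes-cpe}, to Kesten's computation of the spectral radius of the simple random walk on a free group. First I would observe that if $\F_r \cc (X,\mu)$ has completely positive $\Si$-entropy for some sofic approximation $\Si$, then in particular it is ergodic: otherwise the factor associated with the invariant $\sigma$-algebra $\cI_T$ would be a nontrivial factor carrying the trivial action, hence of non-positive (never positive) outer $\Si$-entropy, contradicting CPE$^\Si$. Thus Theorem~\ref{thm:hayes-cpe} applies and the Koopman representation $\kappa_0$ on $L^2_0(X,\mu)$ embeds into $\bigoplus_{n\in\N}\lambda$, where $\lambda\colon\F_r\to U(\ell^2(\F_r))$ is the left-regular representation.

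Next I would use the elementary fact that an embedding of unitary representations is norm-nonincreasing on the group ring $\C\F_r$, together with the fact that the norm of a fixed element of $\C\F_r$ acting through a countable amplification $\bigoplus_{n\in\N}\lambda$ equals its norm through a single copy of $\lambda$. This gives
\[
\|\kappa_0(\Delta)\| \ \le\ \Big\|\Big(\bigoplus_{n\in\N}\lambda\Big)(\Delta)\Big\| \ =\ \|\lambda(\Delta)\|.
\]
Now $\Delta=\frac{1}{2r}\sum_{i=1}^r(s_i+s_i^{-1})$ is exactly the transition operator of the simple random walk on the Cayley graph of $\F_r$ (the $2r$-regular tree), and it is self-adjoint in $L\F_r$, so its operator norm equals its spectral radius, which by Kesten's formula equals $\tfrac{\sqrt{2r-1}}{r}$. (If one only wants existence of $\eps_r$ and not its value, it suffices to note that $\F_r$ is non-amenable, so $\|\lambda(\Delta)\|<1$ by Kesten's amenability criterion, and this number depends only on $r$.) Since $r^2-(2r-1)=(r-1)^2>0$ for $r\ge 2$, we have $\tfrac{\sqrt{2r-1}}{r}<1$, and setting
\[
\eps_r:=1-\frac{\sqrt{2r-1}}{r}\ >\ 0
\]
yields $\|\kappa_0(\Delta)\|\le 1-\eps_r$, with $\eps_r$ depending only on $r$, as required.

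I do not anticipate a serious obstacle here: essentially all of the content is packaged in Theorem~\ref{thm:hayes-cpe} and Kesten's estimate. The only points deserving a line of care in a full write-up are (i) recording that CPE$^\Si$ forces ergodicity, so that Theorem~\ref{thm:hayes-cpe} is literally applicable, and (ii) the functional-analytic remark that passing to a subrepresentation — or to a countable direct sum of copies of a fixed representation — does not increase the relevant operator norm of the fixed group-ring element $\Delta$. If one prefers to avoid quoting the exact Kesten constant, the softer non-amenability argument suffices for the stated qualitative conclusion.
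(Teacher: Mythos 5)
Your proposal is correct and follows essentially the same route as the paper: apply Theorem \ref{thm:hayes-cpe} to embed $\kappa_0$ into a countable sum of left regular representations, note that this does not increase the norm of $\Delta$, and invoke Kesten's criterion to see that $\|\lambda(\Delta)\|<1$ by non-amenability of $\F_r$. Your additional remarks — that CPE forces ergodicity so the theorem literally applies, and the explicit value $\eps_r = 1-\sqrt{2r-1}/r$ — are correct refinements the paper leaves implicit.
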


\begin{proof}
The operator norm of $\Delta$, considered as an operator on $\ell^2(\G)$ is $1-\eps_r$ for some $\eps_r>0$ by Kesten's criterion (Theorem G.4.4 \cite{bekka2008kazhdan}). Therefore, the operator norm of $\Delta$, considered as an operator on $\ell^2(\G)^{\oplus \infty}$ is also $1-\eps_r$. So this corollary follows from Theorem \ref{thm:hayes-cpe}.
\end{proof}

\begin{cor}\label{cor:ising-non-cpe}
Let $\F_2=\langle s_1,\ldots, s_r\rangle$ denote the rank $r$ free group. If $\eps>0$ is sufficiently small then the Ising model (see Example \ref{ex:ising} \S \ref{sec:markov-chains}) is not CPE with respect to any sofic approximation.
\end{cor}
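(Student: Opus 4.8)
The plan is to deduce this immediately from the spectral obstruction in Corollary \ref{cor:laplace}. Note first that for $\eps>0$ the Ising model is mixing (Example \ref{ex:ising}), hence ergodic, so Theorem \ref{thm:hayes-cpe} and Corollary \ref{cor:laplace} apply. If $\F_r \cc (K^{\F_r},\mu)$ were CPE with respect to \emph{some} sofic approximation, then the Koopman representation $\kappa_0$ on $L^2_0(K^{\F_r},\mu)$ would satisfy $\|\kappa_0(\Delta)\| \le 1-\eps_r$, where $\eps_r>0$ is the Kesten spectral gap of the free group. So it suffices to show that for $\eps$ small enough $\kappa_0(\Delta)$ has norm $>1-\eps_r$; in fact I will show its norm is at least $1-2\eps$.

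First I would exhibit an explicit near-invariant unit vector. Take $f\colon K^{\F_r}\to\R$, $f(x)=x_e$, the spin at the identity. Since the time-$0$ marginal of $\mu$ is symmetric, $\int f\,d\mu=0$ and $\|f\|_2=1$, so $f\in L^2_0$. Under the Koopman representation $\kappa_g f(x)=f(g^{-1}x)=x_g$. Next I would compute the two-point correlations from the transition probabilities of the Ising chain: using the Markov property along a geodesic one gets $\langle\kappa_g f,f\rangle=\E[X_gX_e]=(1-2\eps)^{|g|}$, and in particular $\langle\kappa_{s_i}f,f\rangle=\langle\kappa_{s_i^{-1}}f,f\rangle=1-2\eps$ for each generator $s_i$ (the second equality by stationarity). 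Averaging, $\langle\kappa_0(\Delta)f,f\rangle=\frac{1}{2r}\sum_{i=1}^r(\langle\kappa_{s_i}f,f\rangle+\langle\kappa_{s_i^{-1}}f,f\rangle)=1-2\eps$. Since $\kappa_0(\Delta)$ is self-adjoint, $\|\kappa_0(\Delta)\|\ge 1-2\eps$.

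Finally, choosing $\eps<\eps_r/2$ yields $\|\kappa_0(\Delta)\|\ge 1-2\eps>1-\eps_r$, which by Corollary \ref{cor:laplace} contradicts CPE with respect to any sofic approximation. There is essentially no obstacle here: the real work has been front-loaded into Hayes's spectral criterion (Theorem \ref{thm:hayes-cpe}) and Kesten's amenability criterion, and the only things to verify carefully are the elementary correlation identity $\E[X_gX_e]=(1-2\eps)^{|g|}$ and the bookkeeping that $f$ is genuinely orthogonal to the constants. As a sanity check, observe that as $\eps\to 0$ the vector $f$ becomes asymptotically $\Delta$-invariant, consistent with the fact that the $\eps=0$ Ising model degenerates to the trivial action on a two-point space.
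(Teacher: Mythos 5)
Your proposal is correct and is essentially the paper's own argument: both exhibit the spin observable $X_e\in L^2_0(\mu_\eps)$, compute $\langle\kappa_0(\Delta)X_e,X_e\rangle=1-2\eps$ from the correlations $\langle X_s,X_e\rangle=(1-\eps)-\eps$, conclude $\|\kappa_0(\Delta)\|\ge 1-2\eps$, and invoke Corollary \ref{cor:laplace}. The only additions in your write-up (the general identity $\E[X_gX_e]=(1-2\eps)^{|g|}$ and the explicit threshold $\eps<\eps_r/2$) are harmless elaborations of the same proof.
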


\begin{proof}
Let $\mu_\eps \in \Prob(\{0,1\}^{\F_r})$ denote the law of the Markov chain $\bfX=(X_g)_{g\in \F_r}$ with state space $\{-1,1\}$ satisfying
$$P(X_e = -1)=P(X_e=1)=1/2$$
$$P(X_e = k | X_s = k) = 1-\epsilon, \quad P(X_e \ne k | X_s = k) = \epsilon$$
for all $s\in S:=\{s_1,\ldots, s_r\} \cup \{s_1^{-1},\ldots, s_r^{-1}\}$. 
Then $\E[X_e]=0, \E[X_e^2] = 1$ and $\E[\Delta(X_e)X_e] = 1-2\eps$. This last computation requires some explanation: we consider $X_e \in L^2_0(\mu_\eps)$. Then 
$$\Delta(X_e)=|S|^{-1}\sum_{s\in S} X_e \circ \kappa_0(s) = |S|^{-1}\sum_{s\in S} X_s.$$
 Because $\langle X_e, X_s\rangle = (1-\eps) -\eps=1-2\eps$ for all $s\in S$, it follows that $\langle \Delta(X_e),X_e\rangle = \E[\Delta(X_e)X_e]=1-2\eps$. Thus $\|\kappa_0(\Delta)\| \ge 1-2\eps$. The previous corollary now implies this one.

\end{proof}

\bibliography{biblio}
\bibliographystyle{alpha}

\end{document}